\theoremstyle{plain}
\newtheorem{theorem}{Theorem}[section]
\newtheorem*{theorem-nn}{Theorem}
\newtheorem{lemma}[theorem]{Lemma}
\newtheorem{proposition}[theorem]{Proposition}
\newtheorem*{proposition-nn}{Proposition}
\newtheorem{corollary}[theorem]{Corollary}
\newtheorem{algorithmF}{Algorithm}
\newtheorem*{algorithmKS1}{Algorithm KS1}
\newtheorem*{algorithmKS2}{Algorithm KS2}
\newtheorem*{algorithmFC}{Algorithm FC}
\newtheorem*{algorithmN1T}{Algorithm N1T}
\newtheorem*{algorithmTC}{Algorithm TC}
\newtheorem*{algorithmIndmf}{Algorithm Indmf}
\theoremstyle{definition}
\newtheorem{definition}[theorem]{Definition}
\newtheorem{example}[theorem]{Example}
\newtheorem{remark}[theorem]{Remark}
\newtheorem*{acknowledgments}{Acknowledgments}
\theoremstyle{remark}
\newcommand{\bZ}{\mathbbm{Z}}\newcommand{\bQ}{\mathbbm{Q}}
\newcommand{\bC}{\mathbbm{C}}\newcommand{\bG}{\mathbbm{G}}
\newcommand{\bF}{\mathbbm{F}}
\newcommand{\bR}{\mathbbm{R}}
\newcommand{\cC}{\mathcal{C}}\newcommand{\cD}{\mathcal{D}}
\newcommand{\cH}{\mathcal{H}}\newcommand{\cS}{\mathcal{S}}
\newcommand{\GL}{{\rm GL}}\newcommand{\SL}{{\rm SL}}
\newcounter{sub}
{\begin{list}{(\arabic{sub})}{\usecounter{sub}%
\setlength{\leftmargin}{2em}}}{\end{list}}
\def\image{\mbox{Im }}
\def\ker{\mbox{Ker }}
\def\id{\mbox{id}}
\def\rank{\mbox{rank }}
\def\Hom{\mbox{Hom}}
\title[Rationality problem for algebraic tori]
{Rationality problem for algebraic tori}
\author[A. Hoshi]{Akinari Hoshi}
\address{Department of Mathematics, Niigata University, Niigata 950-2181, Japan}
\email{hoshi@math.sc.niigata-u.ac.jp}
\author[A. Yamasaki]{Aiichi Yamasaki}
\address{Department of Mathematics, Kyoto University, Kyoto 606-8502, Japan}
\email{aiichi.yamasaki@gmail.com}
\thanks{{\it Key words and phrases.} Rationality problem, 
%birational classification, 
algebraic tori, 
stably rational, retract rational, flabby resolution, Krull-Schmidt theorem, 
Bravais group, Tate cohomology.\\ 
This work was partially supported by JSPS KAKENHI Grant Numbers 22740028, 24540019, 25400027.
Some part of this work was done during the authors' visit to 
National Taiwan University, the National Center for Theoretic Sciences 
(Taipei Office), whose support is gratefully acknowledged.}
\subjclass[2010]{Primary 11E72, 12F20, 13A50, 14E08, 20C10, 20G15.}
\begin{document}
\begin{abstract}
We give the complete stably rational classification
of algebraic tori of dimensions $4$ and $5$ over a field $k$. 
In particular, the stably rational classification of 
norm one tori whose Chevalley modules are of rank $4$ and $5$ is given. 
We show that there exist exactly $487$ (resp. $7$, resp. $216$) stably rational 
(resp. not stably but retract rational, resp. not retract rational) 
algebraic tori of dimension $4$, 
and there exist exactly $3051$ (resp. $25$, resp. $3003$) stably rational 
(resp. not stably but retract rational, resp. not retract rational) 
algebraic tori of dimension $5$. 
We make a procedure to compute a flabby resolution of a $G$-lattice 
effectively by using the computer algebra system GAP. 
Some algorithms may determine whether the flabby class of a $G$-lattice 
is invertible (resp. zero) or not. 
Using the algorithms, we determine all the flabby and coflabby $G$-lattices 
of rank up to $6$ and verify that they are stably permutation. 
We also show that the Krull-Schmidt theorem for $G$-lattices 
holds when the rank $\leq 4$, and fails when the rank is $5$. 
Indeed, there exist exactly $11$ (resp. $131$) $G$-lattices 
of rank $5$  (resp. $6$) which are decomposable into two different ranks. 
Moreover, when the rank is $6$, 
there exist exactly $18$ $G$-lattices which are decomposable into 
the same ranks but the direct summands are not isomorphic. 
We confirm that $H^1(G,F)=0$ for any Bravais group $G$ of dimension $n\leq 6$ 
where $F$ is the flabby class of the corresponding $G$-lattice of rank $n$.  
In particular, $H^1(G,F)=0$ for any maximal finite subgroup $G\leq {\rm GL}(n,\bZ)$ 
where $n\leq 6$. 
As an application of the methods developed, 
some examples of not retract (stably) rational fields over $k$ are given. 
\end{abstract}

\maketitle

\tableofcontents

%\newpage

%
%%%%%%%%%%%%%%%%%%%%%%%%%%%%%%%%%%%%%%%%%%%%%%%%%%%%%%%%%%%%%%%%%%%%%%%%%%%%%%%%%%%%%%
\section{Introduction}\label{seInt}

Let $k$ be a field and $K$ %and $K^\prime$ 
be a finitely generated field extension of $k$. 
A field $K$ is called {\it rational over $k$} 
(or {\it $k$-rational} for short) 
if $K$ is purely transcendental over $k$, 
i.e. $K$ is isomorphic to $k(x_1,\ldots,x_n)$, 
the rational function field over $k$ with $n$ variables $x_1,\ldots,x_n$ 
for some integer $n$. 
$K$ is called {\it stably $k$-rational} 
if $K(y_1,\ldots,y_m)$ is $k$-rational for some algebraically 
independent elements $y_1,\ldots,y_m$ over $K$. 
%$K$ and $K^\prime$ are called {\it stably $k$-isomorphic} if 
%$K(y_1,\ldots,y_m)\simeq K^\prime(z_1,\ldots,z_n)$ 
%for some algebraically independent elements $y_1,\ldots,y_m$ over $K$ 
%and $z_1,\ldots,z_n$ over $K^\prime$. 
%%
When $k$ is an infinite field, 
%%%
$K$ is called {\it retract $k$-rational} 
if there is a $k$-algebra $R$ contained in $K$ such that 
(i) $K$ is the quotient field of $R$, and (ii) 
the identity map $1_R : R\rightarrow R$ factors through a localized 
polynomial ring over $k$, i.e. there is an element $f\in k[x_1,\ldots,x_n]$, 
which is the polynomial ring over $k$, and there are $k$-algebra 
homomorphisms $\varphi : R\rightarrow k[x_1,\ldots,x_n][1/f]$ 
and $\psi : k[x_1,\ldots,x_n][1/f]\rightarrow R$ satisfying 
$\psi\circ\varphi=1_R$ (cf. \cite{Sal84a}). 
$K$ is called {\it $k$-unirational} 
if $k\subset K\subset k(x_1,\ldots,x_n)$ for some integer $n$. 
It is not difficult to see that 
``$k$-rational'' $\Rightarrow$ ``stably $k$-rational'' $\Rightarrow$ 
``retract $k$-rational'' $\Rightarrow$ ``$k$-unirational''. 

Let $L$ be a finite Galois extension of $k$ and $G={\rm Gal}(L/k)$ 
be the Galois group of the extension $L/k$. 
Let $M=\bigoplus_{1\leq i\leq n}\bZ\cdot u_i$ be a $G$-lattice with 
a $\bZ$-basis $\{u_1,\ldots,u_n\}$, 
i.e. finitely generated $\bZ[G]$-module 
which is $\bZ$-free as an abelian group. 
Let $G$ act on the rational function field $L(x_1,\ldots,x_n)$ 
over $L$ with $n$ variables $x_1,\ldots,x_n$ by 
\begin{align}
\sigma(x_i)=\prod_{j=1}^n x_j^{a_{i,j}},\quad 1\leq i\leq n\label{acts}
\end{align}
for any $\sigma\in G$, when $\sigma (u_i)=\sum_{j=1}^n a_{i,j} u_j$, 
$a_{i,j}\in\bZ$. 
The field $L(x_1,\ldots,x_n)$ with this action of $G$ will be denoted 
by $L(M)$.
%An algebraic torus $T$ over $k$ is a group $k$-scheme 
%such that $T\otimes_k \overline{k}\simeq \bG_{m,\overline{k}}^n$. 
There is the duality between the category of $G$-lattices 
and the category of algebraic $k$-tori which split over $L$ 
(see \cite[page 27, Example 6]{Vos98}). 
%A torus $T$ corresponds in this duality to the dual $X(T)^\circ$ 
%of the character group $X(T)={\rm Hom}(T,\bG_m)$. 
In fact, if $T$ is an algebraic $k$-torus, then the character 
group $X(T)={\rm Hom}(T,\bG_m)$ of $T$ may be regarded as a $G$-lattice. 
Conversely, for a given $G$-lattice $M$, there exists an algebraic $k$-torus 
$T$ which splits over $L$ such that $X(T)$ is isomorphic to $M$ as a $G$-lattice. 

The invariant field $L(M)^G$ of $L(M)$ under the action of $G$ 
may be identified with the function field of the algebraic $k$-torus $T$. 
Note that the field $L(M)^G$ is always $k$-unirational 
(see \cite[page 40, Example 21]{Vos98}).
Tori of dimension $n$ over $k$ correspond bijectively 
to the elements of the set $H^1(\mathcal{G},\GL(n,\bZ))$ 
where $\mathcal{G}={\rm Gal}(k_{\rm s}/k)$ since 
${\rm Aut}(\bG_m^n)=\GL(n,\bZ)$. 
The $k$-torus $T$ of dimension $n$ is determined uniquely by the integral 
representation $h : \mathcal{G}\rightarrow \GL(n,\bZ)$ up to conjugacy, 
and the group $h(\mathcal{G})$ is a finite subgroup of $\GL(n,\bZ)$ 
(see \cite[page 57, Section 4.9]{Vos98})). 

Let $K/k$ be a separable field extension of degree $n$ 
and $L/k$ be the Galois closure of $K/k$. 
Let $G={\rm Gal}(L/k)$ and $H={\rm Gal}(L/K)$. 
The Galois group $G$ may be regarded as a transitive subgroup of 
the symmetric group $S_n$ of degree $n$. 
Let $R^{(1)}_{K/k}(\bG_m)$ be the norm one torus of $K/k$,
i.e. the kernel of the norm map $R_{K/k}(\bG_m)\rightarrow \bG_m$ where 
$R_{K/k}$ is the Weil restriction (see \cite[page 37, Section 3.12]{Vos98}). 
The norm one torus $R^{(1)}_{K/k}(\bG_m)$ has the 
Chevalley module $J_{G/H}$ as its character module 
and the field $L(J_{G/H})^G$ as its function field 
where $J_{G/H}=(I_{G/H})^\circ={\rm Hom}_\bZ(I_{G/H},\bZ)$ 
is the dual lattice of $I_{G/H}={\rm Ker}\ \varepsilon$ and 
$\varepsilon : \bZ[G/H]\rightarrow \bZ$ is the augmentation map 
(see \cite[Section 4.8]{Vos98}). 
We have the exact sequence $0\rightarrow \bZ\rightarrow \bZ[G/H]
\rightarrow J_{G/H}\rightarrow 0$ and rank $J_{G/H}=n-1$. 
Write $J_{G/H}=\oplus_{1\leq i\leq n-1}\bZ x_i$. 
Then the action of $G$ on $L(J_{G/H})=L(x_1,\ldots,x_{n-1})$ is 
nothing but (\ref{acts}). 

The aim of this paper is to give the stably rational classification of 
%a birational classification of 
algebraic $k$-tori of dimensions $4$ and $5$ 
(cf. \cite{Vos98}, \cite{Kun07} and the references therein). 
It is easy to see that all the $1$-dimensional algebraic $k$-tori $T$, 
i.e. the trivial torus $\bG_m$ and the norm one torus $R_{L/k}^{(1)}(\bG_m)$ 
with $[L:k]=2$, are $k$-rational. 
\begin{theorem}[{Voskresenskii \cite{Vos67}}]\label{thVo}
All the $2$-dimensional algebraic $k$-tori $T$ are $k$-rational. 
In particular, for any finite subgroups $G\leq \GL(2,\bZ)$, $L(x_1,x_2)^G$ 
is $k$-rational. 
\end{theorem}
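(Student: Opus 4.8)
The plan is to use the lattice–torus dictionary recalled above to convert the geometric statement into one about invariant fields, and then to treat the finitely many possibilities one by one. By the correspondence, a $2$-dimensional $k$-torus $T$ is given by a homomorphism $h\colon\mathcal{G}\to\GL(2,\bZ)$ with finite image $G=h(\mathcal{G})$, and, putting $L=(k_{\rm s})^{\ker h}$ and $M=X(T)\cong\bZ^2$, the function field of $T$ is $k$-isomorphic to $L(M)^G=L(x_1,x_2)^G$. Conjugate subgroups give isomorphic $G$-lattices and hence $k$-isomorphic invariant fields, so it is enough to establish the ``in particular'' clause for a single representative of each conjugacy class of finite subgroups of $\GL(2,\bZ)$.

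First I would classify those subgroups. The crystallographic restriction — a finite-order element of $\GL(2,\bZ)$ has integer trace and eigenvalues that are roots of unity, so its order lies in $\{1,2,3,4,6\}$ — shows that every finite subgroup is conjugate to a subgroup of one of the two maximal finite subgroups: the dihedral group of order $8$ fixing the square lattice and the dihedral group of order $12$ fixing the hexagonal lattice. This produces the finite list of cases to be analyzed.

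Next I would dispose of the groups whose rational representation $G\to\GL(2,\bQ)$ is reducible. Then $M$ sits in an exact sequence $0\to M_1\to M\to M_2\to 0$ of $G$-lattices of rank $1$, dual to an exact sequence $1\to T_2\to T\to T_1\to 1$ of tori with $T_1,T_2$ one-dimensional and therefore $k$-rational by the cases $\bG_m$ and $R^{(1)}_{E/k}(\bG_m)$, $[E:k]=2$, noted before the theorem. When $T_2=\bG_m$, Hilbert 90 splits the $T_2$-torsor $T\to T_1$ over the function field $k(T_1)$, so $L(M)^G\cong k(T_1)(t)$ is $k$-rational. Concretely these reduce to elementary invariant computations: a transposition of the multiplicative generators yields the rational invariants $x_1+x_2$ and $x_1x_2$, while an inversion $x\mapsto x^{-1}$ yields $x+x^{-1}$, and the reducible monomial actions are assembled from these.

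The main obstacle is the irreducible case, i.e. the lattices on which $C_3,C_4,C_6$ or the dihedral groups $D_3\cong S_3,D_4,D_6$ act with no rational invariant line; the prototype is the norm one torus $R^{(1)}_{K/k}(\bG_m)$ of a cyclic or non-Galois cubic extension, together with the rotation tori attached to $\bZ[\om]$ and $\bZ[i]$. Here there is no $k$-subtorus to peel off, and when the $R^{(1)}_{E/k}(\bG_m)$-torsor arising in the reducible analysis is non-split the naive invariants fail to generate a purely transcendental extension. For these I would construct explicit rational coordinates by a two-step tower: first adjoin the invariants of a normal cyclic subgroup $N\trianglelefteq G$, checking that $L(M)^N$ is rational in two new variables on which $G/N$ again acts by a permutation/inversion monomial action, and then take $G/N$-invariants. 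The crux — exactly where the low dimension is essential — is to verify that this tower collapses to a rational field; equivalently, the associated conic or Severi–Brauer obstruction vanishes because the torus carries the identity as a $k$-point, so that the smooth compactification of $T$ is a geometrically rational $k$-surface with a rational point (a del Pezzo surface in the anisotropic cases). Establishing this vanishing in each irreducible case is the real content of the theorem.
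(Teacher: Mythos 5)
The paper does not prove this theorem; it is quoted from Voskresenskii \cite{Vos67} (and the authors only reprove the \emph{stable} rationality of the $13$ classes as a byproduct of their algorithms, e.g.\ in Example \ref{exKun}, which is strictly weaker than $k$-rationality). So your proposal has to stand on its own, and as written it does not: it is an accurate map of where the difficulty lies, but the difficult part is announced rather than carried out.

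Two concrete gaps. First, in the reducible case your Hilbert~90 argument only disposes of the subcase where the kernel torus $T_2$ is $\bG_m$. When $T_2=R^{(1)}_{E/k}(\bG_m)$ is anisotropic, the generic fibre of $T\to T_1$ is a torsor under $(T_2)_{k(T_1)}$ classified by $k(T_1)^{\times}/N_{E\cdot k(T_1)/k(T_1)}$, and the identity of $T$ is a $k$-point of the total space, not a section over the generic point, so it does not trivialize this torsor; you need a separate argument (or an explicit change of variables) for lattices such as $\bZ[C_2]$-extensions that are reducible over $\bQ$ but indecomposable over $\bZ$. Second, and more seriously, for the irreducible actions ($C_3$, $C_4$, $C_6$, $S_3$, $D_4$, $D_6$) you reduce everything to the assertion that a smooth compactification of $T$ is a geometrically rational surface with a $k$-point and conclude rationality from ``the vanishing of the Severi--Brauer obstruction.'' That implication is false in general: minimal cubic surfaces and conic bundles with a rational point are unirational but not $k$-rational. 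What Voskresenskii's proof actually does is identify the compactification in each irreducible case as a del Pezzo surface of degree $8$ or $6$ (a smooth quadric, resp.\ the hexagonal compactification) containing the identity as a $k$-point, and then invoke the theorem that del Pezzo surfaces of degree $\ge 5$ with a rational point are $k$-rational. Your sentence ``Establishing this vanishing in each irreducible case is the real content of the theorem'' is correct, which is exactly why the proposal is not yet a proof: you must either supply that del Pezzo analysis or exhibit explicit invariant generators for each of the irreducible classes.
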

There are $13$ non-conjugate finite subgroups of $\GL(2,\bZ)$. 
By Theorem \ref{thVo}, we see that 
for decomposable $3$-dimensional $k$-tori $T=T_1\times T_2$ 
with dim $T_1=1$ and dim $T_2=2$, 
the function fields $L(T)=L(M)^G$ are $k$-rational where 
$M=M_1\oplus M_2$ with rank $M_1=1$ and rank $M_2=2$. 

Let $S_n$ (resp. $A_n$, $D_n$, $C_n$) be the symmetric 
(resp. the alternating, the dihedral, the cyclic) group 
of degree $n$ of order $n!$ (resp. $n!/2$, $2n$, $n$). 
For $2\leq n\leq 4$, the GAP ID $(n,i,j,k)$ of a finite subgroup 
$G$ of $\GL(n,\bZ)$ stands for 
the $k$-th $\bZ$-class of the $j$-th $\bQ$-class of 
the $i$-th crystal system of dimension $n$ as in 
\cite[Table 1]{BBNWZ78} and \cite{GAP}. 
There are $73$ $\bZ$-classes forming $32$ $\bQ$-classes 
which are classified into $7$ crystal systems in $\GL(3,\bZ)$. 

The birational classification of %the 
$3$-dimensional $k$-tori is given by Kunyavskii \cite{Kun90}. 
\begin{theorem}[{Kunyavskii \cite{Kun90}}]\label{thKu}
Let $L/k$ be a Galois extension and $G\simeq 
{\rm Gal}(L/k)$ be a finite subgroup of $\GL(3,\bZ)$ 
which acts on $L(x_1,x_2,x_3)$ via $(\ref{acts})$. 
Then $L(x_1,x_2,x_3)^G$ is not $k$-rational if and only if 
$G$ is conjugate to one of the $15$ groups which are given 
as in {\rm Table} $1$.  
Moreover, if $L(x_1,x_2,x_3)^G$ is not $k$-rational, 
then it is not retract $k$-rational. 
\end{theorem}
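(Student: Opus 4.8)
The plan is to convert the rationality question for $T$ into a computation with the character lattice $M=\bZ^3$ carrying the given action of $G\leq\GL(3,\bZ)$, and then to run through the $73$ $\bZ$-classes one at a time. Two structural results carry the weight. First, by Saltman's criterion, $L(M)^G=k(T)$ is retract $k$-rational if and only if the flabby class $[M]^{fl}$ attached to a flabby resolution $0\to M\to P\to F\to 0$ (with $P$ permutation and $F$ flabby) is invertible. Second, by the Voskresenskii--Endo--Miyata theory, the vanishing $[M]^{fl}=0$, i.e.\ $F$ stably permutation, forces $k(T)$ to be stably $k$-rational. The theorem thus reduces to computing $[M]^{fl}$ for every $G$ and sorting the cases, the assertion being that no intermediate case (with $[M]^{fl}$ invertible but nonzero) occurs and that $[M]^{fl}$ is non-invertible for exactly the $15$ groups of Table $1$.

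First I would build, for each representative $G$, an explicit flabby resolution: take a surjection from a permutation lattice onto the dual $M^{\circ}=\Hom(M,\bZ)$ and dualize, so that $F$ appears as the resulting cokernel---a finite piece of linear algebra over $\bZ[G]$. For the large majority of the $73$ classes one finds $F$ stably permutation, so $[M]^{fl}=0$ and $k(T)$ is stably $k$-rational. Since the theorem asserts genuine $k$-rationality and not merely stable rationality, I would then close the stable/non-stable gap using the small dimension: a $3$-dimensional torus either splits off a factor to which Theorem \ref{thVo} applies or is realized as a rational fibration with an explicit birational section, and in each of these $58$ cases one writes down a $k$-rational parametrization directly. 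This is laborious but conceptually routine.

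The decisive step is to certify, for the remaining $15$ groups, that $[M]^{fl}$ is \emph{not} invertible---precisely what upgrades ``not stably $k$-rational'' to ``not retract $k$-rational.'' Here I would use that invertibility is inherited under restriction to subgroups and is constrained by Tate cohomology: if $F$ were a direct summand of a permutation lattice $\bigoplus_j\bZ[G/H_j]$, then by Shapiro's lemma each $\hat H^{i}(G',F)$ would embed as a direct summand of a sum of groups $\hat H^{i}(G'\cap {}^{g}H_j,\bZ)$, so a class not expressible through such induced-from-trivial pieces obstructs invertibility. For each of the $15$ candidate groups one locates a subgroup $G'$ carrying such an obstruction; since invertibility restricts, $[M]^{fl}$ itself cannot be invertible, and hence $k(T)$ is not retract $k$-rational. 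One then checks that no such obstruction appears for the other $58$ groups, in agreement with the $[M]^{fl}=0$ computation above.

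The main obstacle is exactly this non-invertibility certification. Exhibiting a stably permutation presentation is a constructive, positive task, whereas proving that a given flabby $F$ is not a summand of \emph{any} permutation lattice is a negative statement and demands a genuine invariant---one strong enough to kill invertibility for all $15$ bad groups yet vanishing for the $58$ good ones. Organizing these Tate-cohomology computations uniformly over the subgroup lattice of each $G$, and matching the result precisely against Kunyavskii's list in Table $1$, is the crux of the argument; it is also where a computer-algebra implementation becomes indispensable, since the obstruction must be evaluated on many subgroups across several $\bZ$-classes simultaneously.
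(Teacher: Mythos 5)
Your overall strategy --- compute $[M_G]^{fl}$ for all $73$ conjugacy classes, conclude stable $k$-rationality from $[M_G]^{fl}=0$ via Endo--Miyata/Voskresenskii, and conclude failure of retract $k$-rationality from non-invertibility via Saltman --- is exactly the route the paper takes in its ``alternative proof'' (Example \ref{exKun}, using {\tt flfl} and {\tt IsInvertibleF}). However, there is a genuine gap at the step where you pass from ``stably $k$-rational'' to ``$k$-rational'' for the $58$ non-exceptional groups. You assert that each such torus ``either splits off a factor to which Theorem \ref{thVo} applies or is realized as a rational fibration with an explicit birational section,'' and that writing down a $k$-rational parametrization is ``conceptually routine.'' It is not: the lattice-theoretic formalism only ever detects the stable birational class, and the paper itself concedes precisely this point (``Kunyavskii's theorem claims not only the stably $k$-rationality but also the $k$-rationality, and we could not confirm the $k$-rationality by this method''). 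Upgrading stable rationality to rationality for three-dimensional tori is the hard geometric core of Kunyavskii's original work, requiring the birational classification of torus compactifications (minimal models, conic-bundle structures over non-closed fields), and no argument of that kind appears in your proposal. For the indecomposable classes in particular, the splitting-off-a-factor reduction is unavailable by definition, so the claimed dichotomy does not even get started. As written, your proof establishes the theorem with ``$k$-rational'' replaced by ``stably $k$-rational,'' which is a strictly weaker statement.

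A secondary, smaller concern: your certificate of non-invertibility for the $15$ exceptional groups rests entirely on Shapiro-type constraints on Tate cohomology of $F$ restricted to subgroups. These constraints are necessary but not a priori sufficient --- invertible lattices are flabby and coflabby, but the converse fails (cf.\ the $Q_8$ example in the paper), so a lattice can pass every cohomological test and still fail to be a permutation summand. The paper's algorithm therefore has a second, non-cohomological stage: it forms a flabby resolution $0\to F\to Q\to E\to 0$ and tests integrally whether a splitting $\pi:Q\to F$ exists, which reduces to solving a system of linear equations over $\bZ$. For rank $3$ the cohomological obstruction happens to suffice (in particular one must pass to proper subgroups for the three $W_i$ classes, where $H^1(G,F)=0$), but your write-up should either verify this explicitly or include the splitting test as a fallback; otherwise the ``crux'' step you identify is not actually closed by the invariant you propose.
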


%%%%%%%%%%%%%%%%%%%%%%%%%%%%%%%%%%%%%%%%%%%%%%%%%%%%%%%%%%%
%\input{table1}
\vspace*{2mm}
\begin{center}
Table $1$: $L(M)^G$ not retract $k$-rational, 
rank $M=3$, $M$: indecomposable ($15$ cases)\vspace*{2mm}\\
%{\tiny
%\fontsize{6pt}{8pt}\selectfont
\fontsize{8pt}{10pt}\selectfont
\begin{tabular}{lll} 
${}^tG$ in \cite{Kun90} & GAP ID& $G$ \\\hline
$U_1$ & $(3,3,1,3)$ & $C_2^2$\\
$U_2$ & $(3,3,3,3)$ & $C_2^3$\\
$U_3$ & $(3,4,4,2)$ & $D_4$\\
$U_4$ & $(3,4,6,3)$ & $D_4$\\
$U_5$ & $(3,7,1,2)$ & $A_4$
\end{tabular}\quad
\begin{tabular}{lll}
${}^tG$ in \cite{Kun90} & GAP ID& $G$ \\\hline
$U_6$ & $(3,4,7,2)$ & $D_4\times C_2$\\
$U_7$ & $(3,7,2,2)$ & $A_4\times C_2$\\
$U_8$ & $(3,7,3,3)$ & $S_4$\\
$U_9$ & $(3,7,3,2)$ & $S_4$\\
$U_{10}$ & $(3,7,4,2)$ & $S_4$
\end{tabular}\quad
\begin{tabular}{lll} 
${}^tG$ in \cite{Kun90} & GAP ID& $G$ \\\hline
$U_{11}$ & $(3,7,5,3)$ & $S_4\times C_2$\\
$U_{12}$ & $(3,7,5,2)$ & $S_4\times C_2$\\
$W_1$ & $(3,4,3,2)$ & $C_4\times C_2$\\
$W_2$ & $(3,3,3,4)$ & $C_2^3$\\
$W_3$ & $(3,7,2,3)$ & $A_4\times C_2$
\end{tabular}
%}
\end{center}
\vspace*{4mm}

For the last statement of Theorem \ref{thKu}, 
see \cite[page 25, the fifth paragraph]{Kan12}.

If we adopt the action of $G$ as in (\ref{acts}), 
we should take the transpose ${}^tG$ of the matrix group $G$ 
as in \cite{Kun90} (cf. Theorem \ref{thKu2} in Section \ref{seApp}).  
We will give an alternative proof of Theorem \ref{thKu} 
using the algorithms of this paper (see Example \ref{exKun}). 
For $n=4$, some birational invariants are computed by 
Popov \cite{Pop98}. 

Let $T=R^{(1)}_{K/k}(\bG_m)$ be the norm one torus defined by $K/k$. 
The rationality problem for norm one tori is investigated 
by \cite{EM74}, \cite{CTS77}, \cite{Hur84}, \cite{CTS87}, 
\cite{LeB95}, \cite{CK00}, \cite{LL00}, \cite{Flo} and \cite{End11}. 
%
%\label{th13}
\begin{theorem}[{Endo and Miyata \cite[Theorem 1.5]{EM74}, Saltman \cite[Theorem 3.14]{Sal84a}}]\label{th13-1}
Let $K/k$ be a finite Galois field extension and $G={\rm Gal}(K/k)$. 
The following conditions are equivalent:\\
{\rm (i)} $R^{(1)}_{K/k}(\bG_m)$ is retract $k$-rational;\\
{\rm (ii)} all the Sylow subgroups of $G$ are cyclic. 
\end{theorem}

\begin{theorem}[{Endo and Miyata \cite[Theorem 2.3]{EM74}, Colliot-Th\'{e}l\`{e}ne and Sansuc \cite[Proposition 3]{CTS77}}]\label{th13-2}
Let $K/k$ be a finite Galois field extension and $G={\rm Gal}(K/k)$. 
The following conditions are equivalent:\\
{\rm (i)} $R^{(1)}_{K/k}(\bG_m)$ is stably $k$-rational;\\
{\rm (ii)} all the Sylow subgroups of $G$ are cyclic and $H^4(G,\bZ)\simeq \widehat H^0(G,\bZ)$ 
where $\widehat H$ is the Tate cohomology $($see Section \ref{sePre}$)$;\\
{\rm (iii)} $G=C_m$ or $G=C_n\times \langle\sigma,\tau\mid\sigma^k=\tau^{2^d}=1,
\tau\sigma\tau^{-1}=\sigma^{-1}\rangle$ where $d\geq 1, k\geq 3$, 
$n,k$: odd, and ${\rm gcd}\{n,k\}=1$;\\
{\rm (iv)} $G=\langle s,t\mid s^m=t^{2^d}=1, tst^{-1}=s^r, m: odd,\ 
r^2\equiv 1\pmod{m}\rangle$.
\end{theorem}
\begin{theorem}[Endo {\cite[Theorem 2.1]{End11}}]
Let $K/k$ be a finite non-Galois, separable field extension 
and $L/k$ be the Galois closure of $K/k$. 
Assume that the Galois group of $L/k$ is nilpotent. 
Then the norm one torus $R^{(1)}_{K/k}(\bG_m)$ is not 
retract $k$-rational.
\end{theorem}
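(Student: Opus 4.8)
The plan is to translate retract rationality into the language of flabby classes and then use the nilpotency of $G$ to reduce to a single $p$-group. By the criterion of Saltman and Endo--Miyata (\cite{Sal84a}, \cite{EM74}; compare Theorem \ref{th13}), the norm one torus $R^{(1)}_{K/k}(\bG_m)$ is retract $k$-rational if and only if the flabby class $[J_{G/H}]^{fl}$ of its character lattice is invertible. So it suffices to show that $[J_{G/H}]^{fl}$ is \emph{not} invertible whenever $G$ is nilpotent and $H$ is not normal in $G$ (non-normality of $H$ being exactly the failure of $K/k$ to be Galois).

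The second ingredient is that invertibility is detected on Sylow subgroups: a flabby $G$-lattice $F$ is invertible if and only if $F|_{G_p}$ is invertible for every Sylow subgroup $G_p$ of $G$ (\cite{EM74}, \cite{CTS77}). Since $G$ is nilpotent, $G=\prod_p G_p$, and correspondingly $H=\prod_p H_p$ with $H_p=H\cap G_p$; because $H$ is not normal in $G$, there is a prime $p$ for which $H_p$ is not normal in $G_p$. I would then compute the restriction $J_{G/H}|_{G_p}$. Writing $P=\bZ[G_p/H_p]$ with norm element $N$, letting $G_{p'}=\prod_{\ell\neq p}G_\ell$ and $H_{p'}=H\cap G_{p'}$, and setting $m=[G_{p'}:H_{p'}]$, the $G_p$-set $G/H$ is a disjoint union of $m$ copies of $G_p/H_p$; restricting $0\rightarrow \bZ\rightarrow \bZ[G/H]\rightarrow J_{G/H}\rightarrow 0$ therefore gives $J_{G/H}|_{G_p}\cong P^{m}/\langle (N,\ldots,N)\rangle$. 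The $\bZ[G_p]$-linear change of coordinates $(a_1,\ldots,a_m)\mapsto(a_1,a_2-a_1,\ldots,a_m-a_1)$ carries $(N,\ldots,N)$ to $(N,0,\ldots,0)$, whence
\[
J_{G/H}|_{G_p}\ \cong\ J_{G_p/H_p}\oplus \bZ[G_p/H_p]^{\,m-1}.
\]
As $\bZ[G_p/H_p]^{m-1}$ is a permutation lattice, $[J_{G/H}|_{G_p}]^{fl}=[J_{G_p/H_p}]^{fl}$, and invertibility of $[J_{G/H}]^{fl}$ would force invertibility of $[J_{G_p/H_p}]^{fl}$. The problem is thus reduced to a single $p$-group: for a $p$-group $P$ with a non-normal subgroup $Q$, show that $[J_{P/Q}]^{fl}$ is not invertible.

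This last step is the main obstacle. First I would pass to the normal core and replace $(P,Q)$ by $(P/\mathrm{core}_P(Q),\,Q/\mathrm{core}_P(Q))$, which does not change $P/Q$ as a $P$-set, so one may assume $Q$ is core-free and still non-normal. The point is then to produce an explicit nonzero cohomological obstruction to invertibility tied to the non-normality of $Q$. Restriction to cyclic (in particular central) subgroups is of no use here, since norm one tori of cyclic groups are rational and hence contribute invertible flabby classes; the witness must therefore be a suitably chosen \emph{non-cyclic} section of $P$ adapted to $Q$, on which one evaluates the relevant $\widehat{H}^{-1}$ (or applies the invertibility test developed in this paper) and checks that it does not vanish. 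Controlling this obstruction \emph{uniformly} over all $p$-groups possessing a non-normal subgroup is the delicate part, and it is precisely where Endo's detailed $p$-group analysis in \cite{End11}---or, alternatively, the algorithmic flabby-class computations developed in the present paper---carries out the real work.
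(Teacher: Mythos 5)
The paper does not actually prove this statement; it is quoted verbatim from Endo \cite[Theorem 2.1]{End11}, so there is no in-paper proof to compare against. Judged on its own terms, your reduction is correct and is indeed the standard opening move: the translation via Theorem \ref{thEM73}\,(iii) to non-invertibility of $[J_{G/H}]^{fl}$, the fact that invertibility of a flabby class is detected on Sylow subgroups, the decomposition $H=\prod_p(H\cap G_p)$ for nilpotent $G$ with some $H_p$ non-normal, and the computation $J_{G/H}|_{G_p}\cong J_{G_p/H_p}\oplus\bZ[G_p/H_p]^{m-1}$ (your coordinate change is fine) are all valid, and they correctly reduce the theorem to: for a $p$-group $P$ with a non-normal subgroup $Q$, the class $[J_{P/Q}]^{fl}$ is not invertible.

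But that reduced statement \emph{is} the theorem; everything before it is formal bookkeeping. Your proposal stops exactly there, observing that restriction to cyclic subgroups cannot work and that ``a suitably chosen non-cyclic section'' must carry the obstruction, and then defers the actual construction to Endo's $p$-group analysis. No candidate section is produced, no cohomology group is computed, and no argument is given that such an obstruction exists for \emph{every} pair $(P,Q)$ with $Q$ non-normal (passing to the core-free quotient is correct but does not by itself produce one). Appealing to ``the algorithmic flabby-class computations developed in the present paper'' cannot close this either: those algorithms decide invertibility for one explicit lattice at a time and cannot certify a statement quantified over all $p$-groups. So the proposal is an honest and correct reduction followed by an acknowledged gap at the mathematical heart of the theorem; as a proof it is incomplete.
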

\begin{theorem}[Endo {\cite[Theorem 3.1]{End11}}]\label{th15}
Let $K/k$ be a finite non-Galois, separable field extension 
and $L/k$ be the Galois closure of $K/k$. 
Let $G={\rm Gal}(L/k)$ and $H={\rm Gal}(L/K)\leq G$. 
Assume that all the Sylow subgroups of $G$ are cyclic. 
Then the norm one torus $R^{(1)}_{K/k}(\bG_m)$ is retract $k$-rational, 
and the following conditions are equivalent:\\
{\rm (i)} 
$R^{(1)}_{K/k}(\bG_m)$ is stably $k$-rational;\\
{\rm (ii)} 
$G=D_n$ with $n$ odd $(n\geq 3)$ 
or $G=C_m\times D_n$ where $m,n$ are odd, 
$m,n\geq 3$, ${\rm gcd}\{m,n\}=1$, and $H\leq D_n$ is of order $2$;\\
{\rm (iii)} 
$H=C_2$ and $G\simeq C_r\rtimes H$, $r\geq 3$ odd, where 
$H$ acts non-trivially on $C_r$. 
\end{theorem}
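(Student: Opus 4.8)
The plan is to reduce every rationality assertion to a statement about the flabby class $[M]^{\mathrm{fl}}$ of the Chevalley module $M = J_{G/H}$, and then to exploit that a group all of whose Sylow subgroups are cyclic (a $Z$-group) is metacyclic, $G \simeq C_m \rtimes C_n$ with $\gcd(m,n)=1$. Two standard criteria drive everything: $L(M)^G$ is retract $k$-rational if and only if $[M]^{\mathrm{fl}}$ is invertible (Saltman), and $L(M)^G$ is stably $k$-rational if and only if $[M]^{\mathrm{fl}}=0$, i.e. $M$ is stably permutation (Voskresenskii, Endo--Miyata). Since $L(J_{G/H})^G$ is the function field of $R^{(1)}_{K/k}(\bG_m)$, the whole theorem becomes a computation of one flabby class.

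For the retract rationality statement I would invoke the Endo--Miyata dichotomy behind Theorem \ref{th13}(i): a finite group $G$ has the property that \emph{every} flabby $G$-lattice is invertible exactly when all its Sylow subgroups are cyclic. Under the hypothesis this applies directly to $M=J_{G/H}$, so $[J_{G/H}]^{\mathrm{fl}}$ is invertible and $R^{(1)}_{K/k}(\bG_m)$ is retract $k$-rational for every admissible $H$. Equivalently, invertibility is detected by restriction to the Sylow subgroups, restriction carries flabby lattices to flabby lattices, and over a cyclic group every flabby lattice is invertible.

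The equivalence (ii) $\Leftrightarrow$ (iii) is purely group-theoretic. For $r$ odd, $\mathrm{Aut}(C_r)$ is abelian and any involution acts as $-1$ on a product of the primary components and trivially on the complementary factor; writing $C_r = C_m \times C_n$ along this splitting gives $C_r \rtimes C_2 \simeq C_m \times (C_n \rtimes C_2) = C_m \times D_n$ with $\gcd(m,n)=1$, and non-triviality of the action forces $n \ge 3$. Identifying $H$ with the order-$2$ complement recovers (ii). For the implication (ii)/(iii) $\Rightarrow$ (i) I would show directly that $J_{D_n/C_2}$ ($n$ odd) is stably permutation by writing down an exact sequence linking it to the permutation lattices $\bZ[G/H']$, and then pass to $C_m \times D_n$ using the product decomposition and $\gcd(m,n)=1$; here I expect to reuse the stably permutation structure already isolated in the Galois dihedral analysis.

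The hard direction is the converse (i) $\Rightarrow$ (ii)/(iii): granting $[J_{G/H}]^{\mathrm{fl}}=0$, I must exclude every remaining pair $(G,H)$ with $G=C_m\rtimes C_n$ a $Z$-group and $H\neq 1$ (the non-Galois hypothesis rules out $H=1$). My strategy is to enumerate these metacyclic structures together with the admissible $H$ and, for each case outside (ii)/(iii), to detect a nonzero flabby class by a cohomological obstruction: restricting to a well-chosen subquotient $G'$ and showing that an invariant of $[J_{G/H}]^{\mathrm{fl}}$ — for instance a suitable Tate cohomology group of a flabby representative — is nonzero, using that a vanishing flabby class restricts to a vanishing flabby class on every subgroup. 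The main obstacle is precisely producing these obstructions uniformly: because invertibility already holds, Sylow restriction no longer sees anything, so the vanishing must be tested by finer invariants. I would drive this by transferring Endo--Miyata's explicit flabby-class computations for the Galois norm one tori (Theorem \ref{th13}(ii)) to the non-Galois setting through the induction/restriction relations tying $J_{G/H}$ to $\bZ[G/H]=\mathrm{Ind}_H^G\bZ$ and to $J_G$.
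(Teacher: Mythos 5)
Your framework coincides with the one the paper itself indicates: Theorem \ref{th15} is quoted from Endo and obtained by translating the lattice statement Theorem \ref{thE0} through the dictionary of Theorem \ref{thEM73}, with retract rationality coming from Theorem \ref{thEM7415} (every flabby lattice over a group with cyclic Sylow subgroups is invertible), and your reduction of (ii) $\Leftrightarrow$ (iii) to the fact that the unique involution of $\mathrm{Aut}(C_{p^a})$, $p$ odd, is inversion is correct. However, there is a concrete error in your (ii) $\Rightarrow$ (i): $J_{D_n/C_2}$ is \emph{not} stably permutation for $n\ge 2$. Restricting $0\to\bZ\to\bZ[G/H]\to J_{G/H}\to 0$ to the cyclic subgroup $C_n$, which acts simply transitively on $G/H$, gives $\widehat H^{-1}(C_n,J_{G/H})\cong\widehat H^{0}(C_n,\bZ)=\bZ/n\neq 0$, so $J_{G/H}$ is not even flabby, hence not invertible, hence not stably permutation. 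What has to be shown is $[J_{G/H}]^{fl}=0$, i.e.\ that the flabby cokernel $F$ of a resolution $0\to J_{G/H}\to P\to F\to 0$ is stably permutation; the explicit identity you intend to write down must live at that level (compare the rank-$4$ $S_3$-identity $M\oplus\bZ\simeq\bZ[S_3/\langle\sigma\rangle]\oplus\bZ[S_3/\langle\tau\rangle]$ quoted in Section \ref{seInt}, which concerns a flabby lattice, not $J_{S_3/C_2}$ itself).

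The more serious gap is the converse (i) $\Rightarrow$ (iii), which you only plan and whose planned tool is too weak. Because $[J_{G/H}]^{fl}$ is already invertible, any flabby representative $F$ has $\widehat H^{\pm 1}(H',F)=0$ for \emph{every} subgroup $H'$, so the only cohomological invariants left are $\widehat H^{0}$-, trace- and character-type data; these give a necessary condition for $F$ to satisfy a \emph{particular} candidate isomorphism with permutation lattices (this is exactly the paper's Algorithm F4), not a proof that no such isomorphism exists, and Theorem \ref{thE0} in fact requires the stronger conclusion that $[J_{G/H}]^{fl}$ has infinite order in $C(G)/S(G)$ outside case (iii) — note that even for the single group $F_{20}$ the paper needs an extra ad hoc argument beyond these invariants in Step 5 of Section \ref{seProof1}. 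Restriction to subgroups cannot rescue this: the obstruction may vanish on every proper subgroup while the class itself is nonzero. Endo's actual argument identifies $[J_{G/H}]^{fl}$ with a class of projective ideals over orders in products of cyclotomic fields attached to the metacyclic presentation $G=C_m\rtimes C_n$ and proves non-vanishing by the arithmetic of those orders; nothing in your sketch substitutes for this, and you acknowledge as much, so the hard implication is missing.
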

\begin{theorem}[{Colliot-Th\'{e}l\`{e}ne and Sansuc \cite[Proposition 9.1]{CTS87}, 
\cite[Theorem 3.1]{LeB95}, 
%\cite{LeB95}, 
\cite[Proposition 0.2]{CK00}, \cite{LL00}, 
Endo \cite[Theorem 4.1]{End11}, see also 
\cite[Remark 4.2 and Theorem 4.3]{End11}}]\label{thS}
Let $K/k$ be a non-Galois separable field extension 
of degree $n$ and $L/k$ be the Galois closure of $K/k$. 
Assume that ${\rm Gal}(L/k)=S_n$, $n\geq 3$, 
and ${\rm Gal}(L/K)=S_{n-1}$ is the stabilizer of one of the letters 
in $S_n$.\\
{\rm (i)}\ 
$R^{(1)}_{K/k}(\bG_m)$ is retract $k$-rational 
if and only if $n$ is a prime;\\
{\rm (ii)}\ 
$R^{(1)}_{K/k}(\bG_m)$ is $($stably$)$ $k$-rational 
if and only if $n=3$.
\end{theorem}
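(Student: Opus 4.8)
The plan is to recast both statements in terms of the flabby class $[M]^{fl}$ of the $G$-lattice $M=J_{G/H}$ with $G=S_n$ and $H=S_{n-1}$, using the standard dictionary (Voskresenskii, Endo--Miyata \cite{EM74}, Saltman \cite{Sal84a}; cf. \cite{Vos98}): $L(M)^G$ is stably $k$-rational if and only if $[M]^{fl}=0$, and retract $k$-rational if and only if $[M]^{fl}$ is invertible. Here $M$ sits in the exact sequence $0\to\bZ\to\bZ^n\to M\to 0$, where $\bZ^n=\bZ[S_n/S_{n-1}]$ is the natural permutation lattice on the $n$ points and $\bZ$ is the diagonal spanned by $s=e_1+\cdots+e_n$. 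Both assertions will then follow from a Sylow-local analysis of $[M]^{fl}$.

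For part (i) I would first reduce invertibility to Sylow subgroups: $[M]^{fl}$ is invertible if and only if $\mathrm{Res}_{G_p}[M]^{fl}$ is invertible for every prime $p$, where $G_p$ is a Sylow $p$-subgroup of $S_n$ (Endo--Miyata). The engine is the elementary observation that whenever $G_p$ fixes one of the $n$ points, $\mathrm{Res}_{G_p}M$ is itself a permutation lattice: if $e_n$ is fixed, the sublattice $\bZ^{n-1}$ spanned by the remaining points injects into $M$ (since $\bZ^{n-1}\cap\bZ s=0$) and is onto (since $e_n=s-\sum_{i<n}e_i$ forces $\bZ^{n-1}+\bZ s=\bZ^n$), whence $\mathrm{Res}_{G_p}M\cong\bZ^{n-1}$ and $\mathrm{Res}_{G_p}[M]^{fl}=0$. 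Now the Sylow $p$-subgroup of $S_n$ has a fixed point exactly when $p\nmid n$ (its orbit sizes are the base-$p$ digits of $n$), so the only primes that can obstruct invertibility are those with $p\mid n$. When $n=\ell$ is prime the single such prime is $\ell$ itself, $G_\ell=C_\ell$ acts regularly, $\mathrm{Res}_{C_\ell}M\cong J_{C_\ell}$, and over a cyclic group every flabby class is invertible; hence $[M]^{fl}$ is invertible and $R^{(1)}_{K/k}(\bG_m)$ is retract $k$-rational.

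For the converse, let $n$ be composite and $p$ a prime divisor; then the Sylow $p$-subgroup has no fixed point, and restricting further to the semiregular subgroup $V=\langle\sigma_1,\ldots,\sigma_{n/p}\rangle\cong C_p^{n/p}$ generated by $n/p$ disjoint $p$-cycles, one is reduced to showing that $\mathrm{Res}_V M$ is \emph{not} invertible (invertibility is inherited by restriction, so non-invertibility over $V$ forces it over $S_n$). Crucially $V$ cannot be shrunk further, since any proper elementary abelian subgroup regains fixed points and thus restricts to a permutation lattice by the previous step; the prototype $V=C_p\times C_p$ (the case $n=2p$) is already a classical non-invertible lattice — for $p=2$ it is precisely the rank-$3$ lattice over $C_2^2$ of Table $1$. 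I expect this cohomological computation over $C_p^{n/p}$, exhibiting a nonzero obstruction to invertibility, to be the main technical obstacle of part (i).

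For part (ii) the two small cases are immediate: $n=2$ gives a $1$-dimensional torus and $n=3$ a $2$-dimensional one, both $k$-rational (the latter by Theorem \ref{thVo}), so both are stably $k$-rational. The failure for composite $n\ge 4$ is already settled, since such tori are not even retract $k$-rational by part (i). The genuinely delicate case is $n=\ell\ge 5$ prime, where $[M]^{fl}$ is invertible and yet one must prove it is nonzero. Here no Sylow restriction can detect this: the restriction to $C_\ell$ is the stably permutation lattice $J_{C_\ell}$ (cf. Theorem \ref{th13}(ii)) and to every other Sylow it is a permutation lattice, so all local restrictions of $[M]^{fl}$ vanish; moreover the gap between stable and retract rationality is invisible to unramified cohomology, which already vanishes by retract rationality. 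Thus the non-vanishing of $[M]^{fl}$ is a purely global, integral phenomenon, and I would establish it through an Endo--Miyata-type classification of when $J_{G/H}$ is stably permutation (the Krull--Schmidt and decomposition analysis that is the theme of this paper), checking that $S_\ell$ with $\ell\ge 5$ does not occur. I expect this — proving an invertible flabby class is nonzero even though all of its local restrictions vanish — to be the hardest point of the theorem, which is presumably why part (ii) rests on the combined work of \cite{CTS87} and the later references.
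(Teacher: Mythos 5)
You should first be aware that the paper contains no proof of Theorem \ref{thS}: it is quoted from \cite{CTS87}, \cite{LeB95}, \cite{CK00}, \cite{LL00} and \cite{End11} (and restated later in flabby-class form), the paper itself only re-verifying the cases $n=5,6$ by machine (Tables $5$ and $6$, Theorem \ref{th1M} (v), Example \ref{exal4}). Judged against the standard arguments in those references, your formal reductions are correct and are indeed the right skeleton: the translation to $[J_{S_n/S_{n-1}}]^{fl}$ via Theorem \ref{thEM73}, the detection of invertibility on Sylow subgroups together with $\mathrm{Res}_H[M]^{fl}=[\mathrm{Res}_HM]^{fl}$ (Lemma \ref{lemp3}), the observation that any subgroup fixing a letter restricts $J_{S_n/S_{n-1}}$ to the permutation lattice $\bZ^{n-1}$ (your splitting $\bZ^{n-1}\oplus\bZ s=\bZ^n$ is correct), the identification $\mathrm{Res}_{C_\ell}J_{S_\ell/S_{\ell-1}}\simeq J_{C_\ell}$, and the disposal of $n=2,3$ (Theorem \ref{thVo}) and of composite $n$ in (ii). This settles the ``if'' direction of (i) completely.

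Both statements carrying the real content are, however, left unproved. (a) For composite $n$ you must exhibit a fixed-point-free $p$-subgroup over which the restriction is \emph{not} invertible; you name the candidate $V\simeq C_p^{n/p}$ but supply no obstruction (no computation of $\widehat H^{-1}$, $H^1(V',[\mathrm{Res}\,M]^{fl})$ or $\Sha^2_\omega$, which is how \cite{CTS87} and \cite{End11} actually detect non-invertibility), and your claim that $V$ cannot be shrunk is false: for $n=6$ the subgroup $\langle(12)(34),(34)(56)\rangle\simeq C_2\times C_2$ is a proper fixed-point-free subgroup of your $C_2^3$, so even the choice of test subgroup still needs an argument. (b) For prime $n=\ell\geq 5$ you must show that the invertible class $[J_{S_\ell/S_{\ell-1}}]^{fl}$ is nonzero; you correctly note that no Sylow-local or unramified-cohomology argument can see this, and then defer to an unspecified ``Endo--Miyata-type classification''. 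Note that Theorem \ref{thE0} does not apply here ($S_\ell$ does not have all Sylow subgroups cyclic for $\ell\geq 5$), and the only verification present in the paper is the computational one for $\ell=5$, namely the impossibility of $\bZ[G]\oplus\bZ[G/C_2]\oplus\bZ[G/C_5]\simeq\bZ[G/D_5]\oplus 2F$ for $F_{20}\leq S_5$ in Example \ref{exal4}, pulled up to $S_5$ via Lemma \ref{lemp3}. So what you have is a correct reduction of the theorem to its two hard kernels, not a proof of either of them.
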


Let $[R^{(1)}_{K/k}(\bG_m)]^{(t)}$ be the product of $t$ copies of 
the norm one torus $R^{(1)}_{K/k}(\bG_m)$. 
\begin{theorem}[Endo {\cite[Theorem 4.4]{End11}}]\label{thA}
Let $K/k$ be a non-Galois separable field extension 
of degree $n$ and $L/k$ be the Galois closure of $K/k$. 
Assume that ${\rm Gal}(L/k)=A_n$, $n\geq 4$, 
and ${\rm Gal}(L/K)=A_{n-1}$ is the stabilizer of one of the letters 
in $A_n$.\\
{\rm (i)}\ 
$R^{(1)}_{K/k}(\bG_m)$ is retract $k$-rational 
if and only if $n$ is a prime.\\
{\rm (ii)}\ For some positive integer $t$, 
$[R^{(1)}_{K/k}(\bG_m)]^{(t)}$ is stably $k$-rational 
if and only if $n=5$.
\end{theorem}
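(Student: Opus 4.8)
The plan is to translate everything into the flabby class $[J_{G/H}]^{fl}$ of the Chevalley module, with $G=A_n$ and $H=A_{n-1}$ the stabiliser of a point, so that $G/H=\{1,\dots,n\}$ as a $G$-set. By the standard dictionary (Saltman, Endo--Miyata), $R^{(1)}_{K/k}(\bG_m)$ is retract $k$-rational iff $[J_{G/H}]^{fl}$ is invertible, and $[R^{(1)}_{K/k}(\bG_m)]^{(t)}$ is stably $k$-rational for some $t$ iff $[J_{G/H}]^{fl}$ is a torsion element of the flabby class monoid. Since invertibility is detected after restriction to the Sylow subgroups, I would reduce both parts to analysing how a $p$-Sylow subgroup $G_p\leq A_n$ acts on the $n$ points. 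A clean local observation drives everything: if a $p$-subgroup $P$ fixes one of the $n$ points --- equivalently $P$ lies in a conjugate of $H$ --- then splitting off that coordinate from the permutation module $\bZ[G/H]$ exhibits $J_{G/H}|_P$ as a permutation lattice, so $[J_{G/H}|_P]^{fl}=0$.

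For part (i) with $n$ prime this settles every prime at once. For $q\neq n$ the number of fixed points of any $q$-subgroup is $\equiv n\not\equiv 0\pmod q$, so it fixes a point; while for $q=n$ the Sylow subgroup is $C_n$ acting regularly, and $[J_{C_n}]^{fl}=0$ by the cyclic case of Theorem \ref{th13}(ii). Hence every local class vanishes, $[J_{G/H}]^{fl}$ is invertible, and $T$ is retract rational. For the converse, suppose $n$ is composite and let $p$ be its smallest prime factor, so $p^2\leq n$. Because $p\mid n$ we have $|A_n|_p>|A_{n-1}|_p$, so the $p$-Sylow $G_p$ cannot sit inside a point stabiliser and therefore has no fixed point; moreover $G_p$ is non-cyclic (it contains $V_4$ when $p=2$, and two disjoint $p$-cycles when $p$ is odd). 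A non-cyclic fixed-point-free $p$-subgroup obstructs invertibility, the model case being the classical non-invertibility of $[J_{C_p\times C_p}]^{fl}$, i.e. the non-cyclic instance of Theorem \ref{th13}(i). Thus $[J_{G/H}]^{fl}$ fails to be invertible and $T$ is not retract rational.

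This already forces the torsion condition in (ii) to occur only for prime $n$ (torsion $\Rightarrow$ invertible $\Rightarrow$ retract rational $\Rightarrow$ $n$ prime), reducing (ii) to the primes, where $n=3$ is immediate since $A_3=C_3$ is stably rational by Theorem \ref{th13}(ii). The hard part is (ii) for the remaining primes, separating $n=5$ from $n\geq 7$. Here the Sylow-local analysis is useless: the argument above shows that for \emph{every} prime $n$ all local classes vanish, yet the conclusion must be torsion for $n=5$ and non-torsion for $n\geq 7$ --- so the distinguishing invariant is genuinely global, the transfer/local--global principle detecting only invertibility and not triviality up to torsion. For $n=5$ I would compute an explicit flabby resolution of the rank-$4$ lattice $J_{A_5/A_4}$ and read off that its class is torsion.

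For $n=p\geq 7$ the crux is to produce an invariant of $[J_{A_p/A_{p-1}}]^{fl}$ lying in a torsion-free group and show it is nonzero, certifying infinite order. I expect this global non-vanishing to be the main obstacle: it cannot be reduced to proper subgroups (all restrictions are locally trivial), so one must exploit the full $A_p$-lattice structure --- presumably via an explicit flabby resolution of the rank-$(p-1)$ module together with a computation, in the spirit of the GAP procedures developed in this paper, that pins down a non-torsion component for every prime $p\geq 7$ while it happens to vanish (up to torsion) at $p=5$.
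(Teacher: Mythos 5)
This theorem is not proved in the paper at all: it is quoted from Endo \cite[Theorem 4.4]{End11}, and the paper's only new contribution here is the $n=5$ strengthening (Corollary \ref{corA}, via the computation $[J_{A_5/A_4}]^{fl}=0$ in Theorem \ref{th1M}). Measured against Endo's actual argument, your reduction to flabby classes and your proof of invertibility for prime $n$ (every $q$-Sylow with $q\neq n$ fixes a letter, so the restricted lattice splits off as a permutation lattice; the $n$-Sylow is $C_n$ acting regularly and $[J_{C_n}]^{fl}=0$) are correct and standard. But the converse of (i) has a genuine gap: the principle you invoke --- that a non-cyclic $p$-subgroup acting without fixed points forces non-invertibility --- is false as stated. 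For $P=C_2\times C_2$ acting on the six-point set $P/A\sqcup P/B\sqcup P/C$ (with $A,B,C$ the three subgroups of order $2$) there are no fixed points and $P$ is non-cyclic, yet each orbit contributes $[J_{P/A}]^{fl}=[J_{C_2}]^{fl}=0$ and the total class vanishes. What is actually needed is that some orbit $G_p/Q$ of the Sylow action has $G_p/{\rm core}_{G_p}(Q)$ non-cyclic, so that Endo's theorem on nilpotent Galois groups (quoted after Theorem \ref{th13}) applies to that orbit, together with the observation that one non-invertible summand in $[J_X]^{fl}=\sum_i[J_{G_p/Q_i}]^{fl}$ already rules out invertibility of the sum (a direct summand of an invertible class is invertible). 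None of this is supplied.

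The more serious gap is (ii) for primes $n=p\geq 7$. You assert the obstruction ``cannot be reduced to proper subgroups'' and propose a separate computation for each prime, which is not a proof and rests on a false premise: Sylow restrictions only detect invertibility, but restriction to a non-nilpotent subgroup can detect infinite order. Concretely, let $F=N_{A_p}(C_p)\cong C_p\rtimes C_{(p-1)/2}$; it acts transitively on the $p$ letters with point stabilizer $H'\cong C_{(p-1)/2}$, all Sylow subgroups of $F$ are cyclic, and $H'$ is non-normal and core-free. Theorem \ref{thE0} (Endo's Theorem 3.1, quoted in this very paper) says $[J_{F/H'}]^{fl}$ has finite order if and only if $H'\cong C_2$, i.e.\ if and only if $p=5$. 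Since $[J_{A_p/A_{p-1}}]^{fl}$ restricts to $[J_{F/H'}]^{fl}$ and finiteness of order is preserved under restriction (as in Lemma \ref{lemp3}), the class has infinite order for every prime $p\geq 7$, while for $p=5$ the restriction is consistent with the finiteness given by \cite[Corollary 3.3]{Dre75} and with the paper's computation that the class is in fact zero. This single uniform restriction argument is the missing piece, and it is precisely a reduction to a proper subgroup of the kind you claimed impossible.
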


The first main result of this paper is 
%a birational classification 
the stably rational classification of %the 
algebraic $k$-tori of dimension $4$. 
There are $710$ $\bZ$-classes forming $227$ $\bQ$-classes 
which are classified into $33$ crystal systems in $\GL(4,\bZ)$. 

Let $F_{20}\simeq C_5\rtimes C_4$ be the Frobenius group of order $20$. 
\begin{theorem}[%Birational 
Stably rational classification of %the 
algebraic $k$-tori of dimension $4$]\label{th1}
Let $L/k$ be a Galois extension and $G\simeq 
{\rm Gal}(L/k)$ be a finite subgroup of $\GL(4,\bZ)$ 
which acts on $L(x_1,x_2,x_3,x_4)$ via $(\ref{acts})$. \\
{\rm (i)} 
$L(x_1,x_2,x_3,x_4)^G$ is stably $k$-rational 
if and only if 
$G$ is conjugate to one of the $487$ groups which are not in 
{\rm Tables} $2$, $3$ and $4$.\\
{\rm (ii)} 
$L(x_1,x_2,x_3,x_4)^G$ is not stably but retract $k$-rational 
if and only if $G$ is conjugate to one of the $7$ groups which are 
given as in {\rm Table} $2$.\\
{\rm (iii)} 
$L(x_1,x_2,x_3,x_4)^G$ is not retract $k$-rational if and only if 
$G$ is conjugate to one of the $216$ groups which are given as 
in {\rm Tables} $3$ and $4$.
\end{theorem}
%%%%%%%%%%%%%%%%%%%%%%%%%%%%%%%%%%%%%%%%%%%%%%%%%%%%%%%%%%%%%%%%%%%
%\input{table234}
\vspace*{2mm}
\begin{center}
Table $2$: $L(M)^G$ not stably but retract $k$-rational, 
rank $M=4$, $M$: indecomposable ($7$ cases)\vspace*{2mm}\\
%{\tiny 
{\scriptsize
%{\footnotesize
\begin{tabular}{ll}
GAP ID & $G$ \\\hline
$(4,31,1,3)$ & $F_{20}$\\
$(4,31,1,4)$ & $F_{20}$
\end{tabular}
\begin{tabular}{ll}
GAP ID & $G$ \\\hline
$(4,31,2,2)$ & $C_2\times F_{20}$\\
$(4,31,4,2)$ & $S_5$
\end{tabular}
\begin{tabular}{ll}
GAP ID & $G$ \\\hline
$(4,31,5,2)$ & $S_5$\\
$(4,31,7,2)$ & $C_2\times S_5$
\end{tabular}
\begin{tabular}{ll}
GAP ID & $G$ \\\hline
$(4,33,2,1)$ & $C_3\rtimes C_8$\\
$$ & $$
\end{tabular}
}
\end{center}

\begin{center}
\vspace*{4mm}
Table $3$: $L(M)^G$ not retract $k$-rational, $M=M_1\oplus M_2$,\\ 
rank $M_1=3$, rank $M_2=1$, $M_2$: indecomposable ($64$ cases)\vspace*{2mm}\\
%\fontsize{6pt}{6pt}\selectfont
%{\tiny 
{\scriptsize
\begin{tabular}{l}
GAP ID  \\\hline
$(4,4,3,6)$\\
$(4,4,4,4)$\\
$(4,4,4,6)$\\
$(4,5,1,9)$\\
$(4,5,2,4)$\\
$(4,5,2,7)$\\
$(4,6,1,4)$\\
$(4,6,1,8)$
\end{tabular}
\begin{tabular}{l}
GAP ID  \\\hline
$(4,6,2,4)$\\
$(4,6,2,8)$\\
$(4,6,2,9)$\\
$(4,6,3,3)$\\
$(4,6,3,6)$\\
$(4,7,3,2)$\\
$(4,7,4,3)$\\
$(4,7,5,2)$
\end{tabular}
\begin{tabular}{l}
GAP ID  \\\hline
$(4,7,7,2)$\\
$(4,12,2,4)$\\
$(4,12,3,7)$\\
$(4,12,4,6)$\\
$(4,12,4,8)$\\
$(4,12,4,9)$\\
$(4,12,5,6)$\\
$(4,12,5,7)$
\end{tabular}
\begin{tabular}{l}
GAP ID  \\\hline
$(4,13,1,3)$\\
$(4,13,2,4)$\\
$(4,13,3,4)$\\
$(4,13,4,3)$\\
$(4,13,5,3)$\\
$(4,13,6,3)$\\
$(4,13,7,6)$\\
$(4,13,7,7)$
\end{tabular}
\begin{tabular}{l}
GAP ID  \\\hline
$(4,13,7,8)$\\
$(4,13,8,3)$\\
$(4,13,8,4)$\\
$(4,13,9,3)$\\
$(4,13,10,3)$\\
$(4,24,1,5)$\\
$(4,24,2,3)$\\
$(4,24,2,5)$
\end{tabular}
\begin{tabular}{l}
GAP ID  \\\hline
$(4,24,3,5)$\\
$(4,24,4,3)$\\
$(4,24,4,5)$\\
$(4,24,5,3)$\\
$(4,24,5,5)$\\
$(4,25,1,2)$\\
$(4,25,1,4)$\\
$(4,25,2,4)$
\end{tabular}
\begin{tabular}{l}
GAP ID  \\\hline
$(4,25,3,2)$\\
$(4,25,3,4)$\\
$(4,25,4,4)$\\
$(4,25,5,2)$\\
$(4,25,5,4)$\\
$(4,25,6,2)$\\
$(4,25,6,4)$\\
$(4,25,7,2)$
\end{tabular}
\begin{tabular}{l}
GAP ID  \\\hline
$(4,25,7,4)$\\
$(4,25,8,2)$\\
$(4,25,8,4)$\\
$(4,25,9,4)$\\
$(4,25,10,2)$\\
$(4,25,10,4)$\\
$(4,25,11,2)$\\
$(4,25,11,4)$\\
\end{tabular}
}
\end{center}

\vspace*{4mm}
\begin{center}
Table $4$: $L(M)^G$ not retract $k$-rational, 
rank $M=4$, $M$: indecomposable ($152$ cases)\vspace*{2mm}\\
%{\tiny 
{\scriptsize
\begin{tabular}{l} 
GAP ID \\\hline
$(4,5,1,12)$\\
$(4,5,2,5)$\\
$(4,5,2,8)$\\
$(4,5,2,9)$\\
$(4,6,1,6)$\\
$(4,6,1,11)$\\
$(4,6,2,6)$\\
$(4,6,2,10)$\\
$(4,6,2,12)$\\
$(4,6,3,4)$\\
$(4,6,3,7)$\\
$(4,6,3,8)$\\
$(4,12,2,5)$\\
$(4,12,2,6)$\\
$(4,12,3,11)$\\
$(4,12,4,10)$\\
$(4,12,4,11)$
\end{tabular}
\begin{tabular}{l} 
GAP ID \\\hline
$(4,12,4,12)$\\
$(4,12,5,8)$\\
$(4,12,5,9)$\\
$(4,12,5,10)$\\
$(4,12,5,11)$\\
$(4,13,1,5)$\\
$(4,13,2,5)$\\
$(4,13,3,5)$\\
$(4,13,4,5)$\\
$(4,13,5,4)$\\
$(4,13,5,5)$\\
$(4,13,6,5)$\\
$(4,13,7,9)$\\
$(4,13,7,10)$\\
$(4,13,7,11)$\\
$(4,13,8,5)$\\
$(4,13,8,6)$
\end{tabular}
\begin{tabular}{l} 
GAP ID \\\hline
$(4,13,9,4)$\\
$(4,13,9,5)$\\
$(4,13,10,4)$\\
$(4,13,10,5)$\\
$(4,18,1,3)$\\
$(4,18,2,4)$\\
$(4,18,2,5)$\\
$(4,18,3,5)$\\
$(4,18,3,6)$\\
$(4,18,3,7)$\\
$(4,18,4,4)$\\
$(4,18,4,5)$\\
$(4,18,5,5)$\\
$(4,18,5,6)$\\
$(4,18,5,7)$\\
$(4,19,1,2)$\\
$(4,19,2,2)$
\end{tabular}
\begin{tabular}{l} 
GAP ID \\\hline
$(4,19,3,2)$\\
$(4,19,4,3)$\\
$(4,19,4,4)$\\
$(4,19,5,2)$\\
$(4,19,6,2)$\\
$(4,22,1,1)$\\
$(4,22,2,1)$\\
$(4,22,3,1)$\\
$(4,22,4,1)$\\
$(4,22,5,1)$\\
$(4,22,5,2)$\\
$(4,22,6,1)$\\
$(4,22,7,1)$\\
$(4,22,8,1)$\\
$(4,22,9,1)$\\
$(4,22,10,1)$\\
$(4,22,11,1)$
\end{tabular}
\begin{tabular}{l} 
GAP ID \\\hline
$(4,24,2,4)$\\
$(4,24,2,6)$\\
$(4,24,4,4)$\\
$(4,24,5,4)$\\
$(4,24,5,6)$\\
$(4,25,1,3)$\\
$(4,25,2,3)$\\
$(4,25,2,5)$\\
$(4,25,3,3)$\\
$(4,25,4,3)$\\
$(4,25,5,3)$\\
$(4,25,5,5)$\\
$(4,25,6,3)$\\
$(4,25,6,5)$\\
$(4,25,7,3)$\\
$(4,25,8,3)$\\
$(4,25,9,3)$
\end{tabular}
\begin{tabular}{l} 
GAP ID \\\hline
$(4,25,9,5)$\\
$(4,25,10,3)$\\
$(4,25,10,5)$\\
$(4,25,11,3)$\\
$(4,25,11,5)$\\
$(4,29,1,1)$\\
$(4,29,1,2)$\\
$(4,29,2,1)$\\
$(4,29,3,1)$\\
$(4,29,3,2)$\\
$(4,29,3,3)$\\
$(4,29,4,1)$\\
$(4,29,4,2)$\\
$(4,29,5,1)$\\
$(4,29,6,1)$\\
$(4,29,7,1)$\\
$(4,29,7,2)$\\
\end{tabular}
\begin{tabular}{l} 
GAP ID \\\hline
$(4,29,8,1)$\\
$(4,29,8,2)$\\
$(4,29,9,1)$\\
$(4,32,1,2)$\\
$(4,32,2,2)$\\
$(4,32,3,2)$\\
$(4,32,4,2)$\\
$(4,32,5,2)$\\
$(4,32,5,3)$\\
$(4,32,6,2)$\\
$(4,32,7,2)$\\
$(4,32,8,2)$\\
$(4,32,9,4)$\\
$(4,32,9,5)$\\
$(4,32,10,2)$\\
$(4,32,11,2)$\\
$(4,32,11,3)$
\end{tabular}
\begin{tabular}{l} 
GAP ID \\\hline
$(4,32,12,2)$\\
$(4,32,13,3)$\\
$(4,32,13,4)$\\
$(4,32,14,3)$\\
$(4,32,14,4)$\\
$(4,32,15,2)$\\
$(4,32,16,2)$\\
$(4,32,16,3)$\\
$(4,32,17,2)$\\
$(4,32,18,2)$\\
$(4,32,18,3)$\\
$(4,32,19,2)$\\
$(4,32,19,3)$\\
$(4,32,20,2)$\\
$(4,32,20,3)$\\
$(4,32,21,2)$\\
$(4,32,21,3)$
\end{tabular}
\begin{tabular}{l} 
GAP ID \\\hline
$(4,33,1,1)$\\
$(4,33,3,1)$\\
$(4,33,4,1)$\\
$(4,33,5,1)$\\
$(4,33,6,1)$\\
$(4,33,7,1)$\\
$(4,33,8,1)$\\
$(4,33,9,1)$\\
$(4,33,10,1)$\\
$(4,33,11,1)$\\
$(4,33,12,1)$\\
$(4,33,13,1)$\\
$(4,33,14,1)$\\
$(4,33,14,2)$\\
$(4,33,15,1)$\\
$(4,33,16,1)$\\
\\
\end{tabular}
}
\end{center}
\vspace*{2mm}
%
%%%%%%%%%%%%%%%%%%%%%%%%%%%%%%%%%%%%%%%%%%%%%%%%%%%%%%%%%%%%%%%%%%%%%%%%
\vspace*{2mm}

More detailed information of %a birational 
the stably rational classification of %the 
algebraic $k$-tori of dimension $4$ is given as in Table $7$. 
In Table $7$, $\#$ on the second column stands for the number 
of $\bZ$-classes in each $\bQ$-classes, and the list $[s,r,u]$ stands 
for the number $s$ (resp. $r$, $u$) of $\bZ$-classes 
whose invariant field $L(M)^G$ is stably $k$-rational 
(resp. not stably but retract $k$-rational, 
not retract $k$-rational) in each $\bQ$-classes $(4,i,j)$. 
For example, $[11,0,2]$ in the GAP ID $(4,5,1)$ 
in Table $7$ means that the $1$st $\bQ$-class of the $5$th 
crystal system of dimension $4$ consists of $13$ $\bZ$-classes 
and $L(M)^G$ is $k$-stably rational for $11$ $\bZ$-classes of them, 
and is not retract $k$-rational for $2$ $\bZ$-classes. 

Let $G(n,i)$ be the $i$-th group of order $n$ in GAP \cite{GAP}. 
Let $dTm$ be the $m$-th transitive subgroup of $S_d$ 
(cf. \cite{BM83} and \cite{GAP}). 

By Theorem \ref{th1}, we have the following theorem.
\begin{theorem}\label{th11}
Let $K/k$ be a separable field extension of degree $5$ 
and $L/k$ be the Galois closure of $K/k$. 
Assume that $G={\rm Gal}(L/k)$ is a transitive subgroup of $S_5$ 
which acts on $L(x_1,x_2,x_3,x_4)$ via $(\ref{acts})$, 
and $H={\rm Gal}(L/K)$ is the stabilizer of one 
of the letters in $G$. Then %a birational 
the stably rational classification of %the 
norm one tori $R_{K/k}^{(1)}(\bG_m)$ is given as in Table $5$.
\end{theorem}

\begin{center}
\vspace*{1mm}
Table $5$: \vspace*{2mm}\\

%{\small 
\begin{tabular}{lllll} 
 & & & GAP ID of the& $L(J_{G/H})^G$\\
$G$& & $G(n,i)$ & $G$-action on $J_{G/H}$ & $=L(x_1,x_2,x_3,x_4)^G$\\\hline
$5T1$&$C_5$&$G(5,1)$&$(4,27,1,1)$& stably $k$-rational\\
$5T2$&$D_5$&$G(10,1)$&$(4,27,3,2)$& stably $k$-rational\\
$5T3$&$F_{20}$&$G(20,3)$&$(4,31,1,3)$& 
not stably but retract $k$-rational\\
$5T4$&$A_5$&$G(60,5)$&$(4,31,3,2)$& stably $k$-rational\\
$5T5$&$S_5$&$G(120,34)$&$(4,31,4,2)$& 
not stably but retract $k$-rational
\end{tabular}
%}
\vspace*{2mm}
\end{center}
Theorem \ref{th11} is already known except for the case of $A_5$ 
(see Theorems \ref{th13-1}, \ref{th13-2}, \ref{th15}, \ref{thS} and \ref{thA}). 
Stably $k$-rationality of $R_{K/k}^{(1)}(\bG_m)$ for the case $A_5$ 
is asked by S. Endo in \cite[Remark 4.6]{End11}.
By Theorems \ref{thA} and \ref{th11}, we get: 
\begin{corollary}\label{corA}
Let $K/k$ be a non-Galois separable field extension 
of degree $n$ and $L/k$ be the Galois closure of $K/k$. 
Assume that ${\rm Gal}(L/k)=A_n$, $n\geq 4$, 
and ${\rm Gal}(L/K)=A_{n-1}$ is the stabilizer of one of the letters 
in $A_n$. Then 
$R^{(1)}_{K/k}(\bG_m)$ is stably $k$-rational 
if and only if $n=5$.
\end{corollary}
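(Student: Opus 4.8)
The plan is to combine the two prior results that the corollary explicitly cites, namely Theorem \ref{thA} and Theorem \ref{th11}. Theorem \ref{thA}(ii) tells us that for the $A_n$-norm one torus $R^{(1)}_{K/k}(\bG_m)$ with $\mathrm{Gal}(L/K)=A_{n-1}$, some power $[R^{(1)}_{K/k}(\bG_m)]^{(t)}$ is stably $k$-rational if and only if $n=3,5$. Since stable rationality of $T$ itself trivially implies stable rationality of any power $T^{(t)}$, the condition ``$n=3$ or $n=5$'' is \emph{necessary} for $R^{(1)}_{K/k}(\bG_m)$ to be stably $k$-rational. So the entire content of the corollary reduces to proving the converse: that for $n=3$ and $n=5$ the single torus (not merely a power) is already stably $k$-rational.

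First I would dispose of $n=3$. Here $G=A_3=C_3$ is cyclic of order $3$, and $H=A_2$ is trivial, so the extension $K/k$ is in fact Galois with cyclic group $C_3$. By Theorem \ref{th13}(ii) (Endo--Miyata), the norm one torus of a cyclic extension $G=C_m$ is stably $k$-rational, which covers $n=3$ immediately.

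The case $n=5$ is the substantive one, and here I would invoke Theorem \ref{th11} directly. When $n=5$ and $\mathrm{Gal}(L/k)=A_5$, the group $G=A_5$ acts transitively on the five letters with point stabilizer $H=A_4$, and this is precisely the case $5T4$ recorded in Table $5$. The table entry for $5T4$ states that $L(J_{G/H})^G = L(x_1,x_2,x_3,x_4)^G$ is stably $k$-rational; since $L(J_{G/H})^G$ is by definition the function field of $R^{(1)}_{K/k}(\bG_m)$ (as explained in the introduction via the Chevalley module $J_{G/H}$), this gives stable $k$-rationality of the norm one torus for $n=5$.

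Putting the two directions together yields the equivalence. The only genuine work hides inside Theorem \ref{th11}/Table $5$: establishing that the $5T4=A_5$ entry is stably rational rather than merely stably rational up to taking powers. This is exactly the new input resolving Endo's question from \cite[Remark 4.6]{End11}, and it rests on the explicit flabby-resolution computation that produces the $\bZ$-class $(4,31,3,2)$ and certifies its flabby class as stably permutation. The main obstacle is therefore not the logical assembly of the corollary—which is a short two-line deduction—but the underlying lattice computation feeding Theorem \ref{th11}, which I would treat as already established upstream.
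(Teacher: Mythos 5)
Your proposal is correct and follows essentially the same route as the paper, which derives the corollary directly by combining Theorem \ref{thA} (giving necessity of $n=3,5$ via the power statement) with Theorem \ref{th11}/Table $5$ (giving stable rationality for the $5T4=A_5$ entry), the $n=3$ case being the cyclic one already covered by Theorem \ref{th13}. The substantive content is indeed the $A_5$ computation feeding Theorem \ref{th11}, which the paper establishes separately.
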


For $n=5$, the CARAT ID $(n,i,j)$ of a finite subgroup 
$G$ of $\GL(5,\bZ)$ stands for 
the $j$-th $\bZ$-class of the $i$-th $\bQ$-class 
in $\GL(5,\bZ)$ in the CARAT package \cite{Carat} of 
GAP\footnote{The generators and some information about the groups 
$G\leq \GL(5,\bZ)$ for the CARAT ID $(5,i,j)$ are available 
at the second-named author's web page 
{\tt http://www.math.h.kyoto-u.ac.jp/\~{}yamasaki/Algorithm/}.} 
(see Section \ref{seCarat} for the details of the CARAT ID of dimension 
$n\leq 6$). 
There are $6079$ $\bZ$-classes forming $955$ $\bQ$-classes in $\GL(5,\bZ)$. 

The second main result of this paper gives 
%a birational 
the stably rational classification of %the 
algebraic $k$-tori of dimension $5$. 
We will display Tables $11$ to $15$ of Theorem \ref{th2} 
in Section \ref{tables}. 
\begin{theorem}[%Birational 
Stably rational classification of %the 
algebraic $k$-tori of dimension $5$]\label{th2}
Let $L/k$ be a Galois extension and $G\simeq 
{\rm Gal}(L/k)$ be a finite subgroup of $\GL(5,\bZ)$ 
which acts on $L(x_1,x_2,x_3,x_4,x_5)$ via $(\ref{acts})$.\\
{\rm (i)} 
$L(x_1,x_2,x_3,x_4,x_5)^G$ is stably $k$-rational if and only if 
$G$ is conjugate to one of the $3051$ groups which are not in 
{\rm Tables} $11$, $12$, $13$, $14$ and $15$.\\
{\rm (ii)} 
$L(x_1,x_2,x_3,x_4,x_5)^G$ is not stably but retract $k$-rational 
if and only if $G$ is conjugate to one of the $25$ groups which are given as 
in {\rm Table} $11$.\\
{\rm (iii)} 
$L(x_1,x_2,x_3,x_4,x_5)^G$ is not retract $k$-rational if and only if 
$G$ is conjugate to one of the $3003$ groups which are given as 
in {\rm Tables} $12$, $13$, $14$ and $15$.
\end{theorem}
\begin{remark}\label{rem25}
For the $25$ groups $G$ as in Theorem \ref{th2} (ii), 
the corresponding $G$-lattices $M$ are decomposable $M\simeq M_1\oplus M_2$ 
where $M_1$ is a $G/N$-lattice of rank $4$, 
$N=\{\sigma\in G\mid\sigma(v)=v\ {\rm for\ any}\ v\in M_1\}$ 
and $G/N$ is one of the $7$ groups as in Theorem \ref{th1} (ii) (Table $2$) 
(see Example \ref{exN}). 
In particular, if $M$ is an indecomposable $G$-lattice of rank $5$, then 
$L(M)^G$ is either stably $k$-rational or not retract $k$-rational. 
\end{remark}

More detailed information of %a birational
the stably rational classification of %the 
algebraic $k$-tori of dimension $5$ is given as 
in Table $16$ (see also the explanation of Table $7$ above). 
\begin{theorem}\label{th22}
Let $K/k$ be a separable field extension 
of degree $6$ and $L/k$ be the Galois closure of $K/k$. 
Assume that $G={\rm Gal}(L/k)$ is a transitive subgroup of $S_6$ 
which acts on $L(x_1,x_2,x_3,x_4,x_5)$ via $(\ref{acts})$, 
and $H={\rm Gal}(L/K)$ is the stabilizer of one 
of the letters in $G$. Then %a birational 
the stably rational classification of %the 
norm one tori $R_{K/k}^{(1)}(\bG_m)$ is given as in Table $6$.
\end{theorem}
\begin{center}
\vspace*{2mm}
Table $6$: \vspace*{2mm}\\
%{\small
\begin{tabular}{lllll}
 & & & CARAT ID of the& $L(J_{G/H})^G$\\
$G$& & $G(n,i)$ & $G$-action on $J_{G/H}$ & $=L(x_1,x_2,x_3,x_4,x_5)^G$\\\hline
$6T1$&$C_6$&$G(6,2)$&$(5,461,4)$& stably $k$-rational\\
$6T2$&$S_3$&$G(6,1)$&$(5,173,4)$& stably $k$-rational\\
$6T3$&$D_6$&$G(12,4)$&$(5,391,4)$& stably $k$-rational\\
$6T4$&$A_4$&$G(12,3)$&$(5,580,2)$& not retract $k$-rational\\
$6T5$&$C_3\times S_3$&$G(18,3)$&$(5,823,4)$& not retract $k$-rational\\
$6T6$&$C_2\times A_4$&$G(24,13)$&$(5,606,2)$& not retract $k$-rational\\
$6T7$&$S_4$&$G(24,12)$&$(5,607,2)$& not retract $k$-rational\\
$6T8$&$S_4$&$G(24,12)$&$(5,608,2)$& not retract $k$-rational\\
$6T9$&$S_3^2$&$G(36,10)$&$(5,855,6)$& not retract $k$-rational\\
$6T{10}$&$C_3^2\rtimes C_4$&$G(36,9)$&$(5,853,5)$& not retract $k$-rational\\
$6T{11}$&$C_2\times S_4$&$G(48,48)$&$(5,623,2)$& not retract $k$-rational\\
$6T{12}$&$A_5$&$G(60,5)$&$(5,952,1)$& not retract $k$-rational\\
$6T{13}$&$S_3^2\rtimes C_2$&$G(72,40)$&$(5,892,4)$& not retract $k$-rational\\
$6T{14}$&$S_5$&$G(120,34)$&$(5,947,1)$& not retract $k$-rational\\
$6T{15}$&$A_6$&$G(360,118)$&$(5,951,1)$& not retract $k$-rational\\
$6T{16}$&$S_6$&$G(720,763)$&$(5,953,1)$& not retract $k$-rational
\end{tabular}
\vspace*{2mm}
%}
\end{center}

In Theorems \ref{th1}, \ref{th11}, \ref{th2} and \ref{th22}, 
we do not know whether the field $L(M)^G$ is $k$-rational 
when the field is stably $k$-rational except for few cases 
(see \cite[Chapter 2]{Vos98}). 

%%%%%%%%%%%%%%%%%%%%%%%%%%%%%%%%%%%%%%%%%%%%%%%%%

%\input{table7}
%\include{table7}
\newpage
\begin{center}
\begin{table}[t]
Table $7$: %birational 
stably rational classification of %the 
algebraic $k$-tori of dimension $4$
\vspace*{2mm}\\
%{\tiny 
%\fontsize{6pt}{8pt}\selectfont
\fontsize{8pt}{10pt}\selectfont
\begin{tabular}{lllll}
GAP ID & \# & $[s,r,u]$ & $G(n,i)$ & \\\hline
$(4,1,1)$&1&[1,0,0]&$G(1,1)$&$\{1\}$\\
$(4,1,2)$&1&[1,0,0]&$G(2,1)$&$C_2$\\
$(4,2,1)$&2&[2,0,0]&$G(2,1)$&$C_2$\\
$(4,2,2)$&2&[2,0,0]&$G(2,1)$&$C_2$\\
$(4,2,3)$&2&[2,0,0]&$G(4,2)$&$C_2^2$\\
$(4,3,1)$&3&[3,0,0]&$G(2,1)$&$C_2$\\
$(4,3,2)$&3&[3,0,0]&$G(4,2)$&$C_2^2$\\
$(4,4,1)$&6&[6,0,0]&$G(4,2)$&$C_2^2$\\
$(4,4,2)$&7&[7,0,0]&$G(4,2)$&$C_2^2$\\
$(4,4,3)$&6&[5,0,1]&$G(4,2)$&$C_2^2$\\
$(4,4,4)$&6&[4,0,2]&$G(8,5)$&$C_2^3$\\
$(4,5,1)$&13&[11,0,2]&$G(4,2)$&$C_2^2$\\
$(4,5,2)$&9&[4,0,5]&$G(8,5)$&$C_2^3$\\
$(4,6,1)$&12&[8,0,4]&$G(8,5)$&$C_2^3$\\
$(4,6,2)$&12&[6,0,6]&$G(8,5)$&$C_2^3$\\
$(4,6,3)$&8&[3,0,5]&$G(16,14)$&$C_2^4$\\
$(4,7,1)$&2&[2,0,0]&$G(4,1)$&$C_4$\\
$(4,7,2)$&2&[2,0,0]&$G(4,1)$&$C_4$\\
$(4,7,3)$&2&[1,0,1]&$G(8,2)$&$C_4\times C_2$\\
$(4,7,4)$&4&[3,0,1]&$G(8,3)$&$D_4$\\
$(4,7,5)$&2&[1,0,1]&$G(8,3)$&$D_4$\\
$(4,7,6)$&2&[2,0,0]&$G(8,3)$&$D_4$\\
$(4,7,7)$&2&[1,0,1]&$G(16,11)$&$C_2\times D_4$\\
$(4,8,1)$&2&[2,0,0]&$G(3,1)$&$C_3$\\
$(4,8,2)$&2&[2,0,0]&$G(6,2)$&$C_6$\\
$(4,8,3)$&3&[3,0,0]&$G(6,1)$&$S_3$\\
$(4,8,4)$&3&[3,0,0]&$G(6,1)$&$S_3$\\
$(4,8,5)$&3&[3,0,0]&$G(12,4)$&$D_6$\\
$(4,9,1)$&1&[1,0,0]&$G(6,2)$&$C_6$\\
$(4,9,2)$&1&[1,0,0]&$G(6,2)$&$C_6$\\
$(4,9,3)$&1&[1,0,0]&$G(12,5)$&$C_6\times C_2$\\
$(4,9,4)$&1&[1,0,0]&$G(12,4)$&$D_6$\\
$(4,9,5)$&1&[1,0,0]&$G(12,4)$&$D_6$\\
$(4,9,6)$&2&[2,0,0]&$G(12,4)$&$D_6$\\
$(4,9,7)$&1&[1,0,0]&$G(24,14)$&$C_2^2\times S_3$\\
$(4,10,1)$&1&[1,0,0]&$G(4,1)$&$C_4$\\
$(4,11,1)$&1&[1,0,0]&$G(3,1)$&$C_3$\\
$(4,11,2)$&1&[1,0,0]&$G(6,2)$&$C_6$\\
$(4,12,1)$&7&[7,0,0]&$G(4,1)$&$C_4$\\
$(4,12,2)$&6&[3,0,3]&$G(8,2)$&$C_4\times C_2$\\
$(4,12,3)$&13&[11,0,2]&$G(8,3)$&$D_4$\\
$(4,12,4)$&13&[7,0,6]&$G(8,3)$&$D_4$\\
$(4,12,5)$&11&[5,0,6]&$G(16,11)$&$C_2\times D_4$\\
$(4,13,1)$&6&[4,0,2]&$G(8,2)$&$C_4\times C_2$\\
$(4,13,2)$&6&[4,0,2]&$G(8,2)$&$C_4\times C_2$\\
$(4,13,3)$&6&[4,0,2]&$G(8,3)$&$D_4$\\
$(4,13,4)$&6&[4,0,2]&$G(8,3)$&$D_4$\\
$(4,13,5)$&5&[2,0,3]&$G(16,10)$&$C_4\times C_2^2$\\
$(4,13,6)$&6&[4,0,2]&$G(16,11)$&$C_2\times D_4$\\
$(4,13,7)$&12&[6,0,6]&$G(16,11)$&$C_2\times D_4$\\
$(4,13,8)$&6&[2,0,4]&$G(16,11)$&$C_2\times D_4$\\
$(4,13,9)$&5&[2,0,3]&$G(16,11)$&$C_2\times D_4$\\
$(4,13,10)$&5&[2,0,3]&$G(32,46)$&$C_2^2\times D_4$\\
$(4,14,1)$&4&[4,0,0]&$G(6,2)$&$C_6$\\
$(4,14,2)$&4&[4,0,0]&$G(6,2)$&$C_6$\\
$(4,14,3)$&8&[8,0,0]&$G(6,1)$&$S_3$\\
$(4,14,4)$&4&[4,0,0]&$G(12,5)$&$C_6\times C_2$\\
$(4,14,5)$&4&[4,0,0]&$G(12,4)$&$D_6$\\
$(4,14,6)$&6&[6,0,0]&$G(12,4)$&$D_6$\\
$(4,14,7)$&6&[6,0,0]&$G(12,4)$&$D_6$
\end{tabular}
\ \ 
\begin{tabular}{lllll}
GAP ID & \# & $[s,r,u]$ & $G(n,i)$ & \\\hline
$(4,14,8)$&6&[6,0,0]&$G(12,4)$&$D_6$\\
$(4,14,9)$&6&[6,0,0]&$G(12,4)$&$D_6$\\
$(4,14,10)$&6&[6,0,0]&$G(24,14)$&$C_2^2\times S_3$\\
$(4,15,1)$&2&[2,0,0]&$G(12,5)$&$C_6\times C_2$\\
$(4,15,2)$&2&[2,0,0]&$G(12,5)$&$C_6\times C_2$\\
$(4,15,3)$&2&[2,0,0]&$G(12,5)$&$C_6\times C_2$\\
$(4,15,4)$&2&[2,0,0]&$G(12,4)$&$D_6$\\
$(4,15,5)$&4&[4,0,0]&$G(12,4)$&$D_6$\\
$(4,15,6)$&2&[2,0,0]&$G(24,14)$&$C_2^2\times S_3$\\
$(4,15,7)$&4&[4,0,0]&$G(24,14)$&$C_2^2\times S_3$\\
$(4,15,8)$&2&[2,0,0]&$G(24,15)$&$C_6\times C_2^2$\\
$(4,15,9)$&2&[2,0,0]&$G(24,14)$&$C_2^2\times S_3$\\
$(4,15,10)$&4&[4,0,0]&$G(24,14)$&$C_2^2\times S_3$\\
$(4,15,11)$&2&[2,0,0]&$G(24,14)$&$C_2^2\times S_3$\\
$(4,15,12)$&2&[2,0,0]&$G(48,51)$&$C_2^3\times S_3$\\
$(4,16,1)$&3&[3,0,0]&$G(8,3)$&$D_4$\\
$(4,17,1)$&3&[3,0,0]&$G(6,1)$&$S_3$\\
$(4,17,2)$&2&[2,0,0]&$G(12,4)$&$D_6$\\
$(4,18,1)$&3&[2,0,1]&$G(8,2)$&$C_4\times C_2$\\
$(4,18,2)$&5&[3,0,2]&$G(16,3)$&$(C_4\times C_2)\rtimes C_2$\\
$(4,18,3)$&7&[4,0,3]&$G(16,3)$&$(C_4\times C_2)\rtimes C_2$\\
$(4,18,4)$&5&[3,0,2]&$G(16,11)$&$C_2\times D_4$\\
$(4,18,5)$&7&[4,0,3]&$G(32,27)$&$C_2^4\rtimes C_2$\\
$(4,19,1)$&2&[1,0,1]&$G(16,4)$&$C_4\rtimes C_4$\\
$(4,19,2)$&2&[1,0,1]&$G(16,2)$&$C_4^2$\\
$(4,19,3)$&2&[1,0,1]&$G(32,25)$&$C_4\times D_4$\\
$(4,19,4)$&4&[2,0,2]&$G(32,28)$&$(C_4\times C_2^2)\rtimes C_2$\\
$(4,19,5)$&2&[1,0,1]&$G(32,34)$&$C_4^2\rtimes C_2$\\
$(4,19,6)$&2&[1,0,1]&$G(64,226)$&$D_4^2$\\
$(4,20,1)$&1&[1,0,0]&$G(12,2)$&$C_{12}$\\
$(4,20,2)$&1&[1,0,0]&$G(12,2)$&$C_{12}$\\
$(4,20,3)$&2&[2,0,0]&$G(12,1)$&$C_3\rtimes C_4$\\
$(4,20,4)$&2&[2,0,0]&$G(24,5)$&$C_4\times S_3$\\
$(4,20,5)$&1&[1,0,0]&$G(24,9)$&$C_{12}\times C_2$\\
$(4,20,6)$&1&[1,0,0]&$G(24,10)$&$C_3\times D_4$\\
$(4,20,7)$&2&[2,0,0]&$G(24,6)$&$D_{12}$\\
$(4,20,8)$&1&[1,0,0]&$G(24,10)$&$C_3\times D_4$\\
$(4,20,9)$&2&[2,0,0]&$G(24,5)$&$C_4\times S_3$\\
$(4,20,10)$&2&[2,0,0]&$G(24,10)$&$C_3\times D_4$\\
$(4,20,11)$&2&[2,0,0]&$G(24,6)$&$D_{12}$\\
$(4,20,12)$&4&[4,0,0]&$G(24,8)$&$(C_6\times C_2)\rtimes C_2$\\
$(4,20,13)$&4&[4,0,0]&$G(24,8)$&$(C_6\times C_2)\rtimes C_2$\\
$(4,20,14)$&1&[1,0,0]&$G(24,7)$&$C_2\times (C_3\rtimes C_4)$\\
$(4,20,15)$&1&[1,0,0]&$G(48,35)$&$C_2\times C_4\times S_3$\\
$(4,20,16)$&2&[2,0,0]&$G(48,38)$&$D_4\times S_3$\\
$(4,20,17)$&2&[2,0,0]&$G(48,38)$&$D_4\times S_3$\\
$(4,20,18)$&1&[1,0,0]&$G(48,45)$&$C_6\times D_4$\\
$(4,20,19)$&1&[1,0,0]&$G(48,36)$&$C_2\times D_{12}$\\
$(4,20,20)$&4&[4,0,0]&$G(48,38)$&$D_4\times S_3$\\
$(4,20,21)$&2&[2,0,0]&$G(48,43)$&$C_2\times ((C_6\times C_2)\rtimes C_2)$\\
$(4,20,22)$&1&[1,0,0]&$G(96,209)$&$C_2\times S_3\times D_4$\\
$(4,21,1)$&2&[2,0,0]&$G(6,2)$&$C_6$\\
$(4,21,2)$&2&[2,0,0]&$G(12,5)$&$C_6\times C_2$\\
$(4,21,3)$&4&[4,0,0]&$G(12,4)$&$D_6$\\
$(4,21,4)$&2&[2,0,0]&$G(24,14)$&$C_2^2\times S_3$\\
$(4,22,1)$&2&[1,0,1]&$G(9,2)$&$C_3^2$\\
$(4,22,2)$&2&[1,0,1]&$G(18,5)$&$C_6\times C_3$\\
$(4,22,3)$&3&[2,0,1]&$G(18,3)$&$C_3\times S_3$\\
$(4,22,4)$&3&[2,0,1]&$G(18,3)$&$C_3\times S_3$\\
$(4,22,5)$&5&[3,0,2]&$G(18,4)$&$C_3^2\rtimes C_2$
\end{tabular}
%}
\end{table}
\end{center}
%
%\newpage
\begin{center}
\begin{table}[t]
Table $7$ (continued): 
%birational 
stably rational classification of %the 
algebraic $k$-tori of dimension $4$
\vspace*{2mm}\\\
%{\tiny 
%\fontsize{6pt}{8pt}\selectfont
\fontsize{8pt}{10pt}\selectfont
\begin{tabular}{lllll}
GAP ID & \# & $[s,r,u]$ & $G(n,i)$ & \\\hline
$(4,22,6)$&3&[2,0,1]&$G(36,12)$&$C_6\times S_3$\\
$(4,22,7)$&3&[2,0,1]&$G(36,13)$&$C_2\times (C_3^2\rtimes C_2)$\\
$(4,22,8)$&4&[3,0,1]&$G(36,10)$&$S_3^2$\\
$(4,22,9)$&5&[4,0,1]&$G(36,10)$&$S_3^2$\\
$(4,22,10)$&4&[3,0,1]&$G(36,10)$&$S_3^2$\\
$(4,22,11)$&4&[3,0,1]&$G(72,46)$&$C_2\times S_3^2$\\
$(4,23,1)$&1&[1,0,0]&$G(18,5)$&$C_6\times C_3$\\
$(4,23,2)$&1&[1,0,0]&$G(36,14)$&$C_6^2$\\
$(4,23,3)$&1&[1,0,0]&$G(36,12)$&$C_6\times S_3$\\
$(4,23,4)$&1&[1,0,0]&$G(36,12)$&$C_6\times S_3$\\
$(4,23,5)$&2&[2,0,0]&$G(36,13)$&$C_2\times (C_3^2\rtimes C_2)$\\
$(4,23,6)$&2&[2,0,0]&$G(36,12)$&$C_6\times S_3$\\
$(4,23,7)$&1&[1,0,0]&$G(72,48)$&$C_2\times C_6\times S_3$\\
$(4,23,8)$&1&[1,0,0]&$G(72,49)$&$C_2^2\times (C_3^2\rtimes C_2)$\\
$(4,23,9)$&2&[2,0,0]&$G(72,46)$&$C_2\times S_3^2$\\
$(4,23,10)$&2&[2,0,0]&$G(72,46)$&$C_2\times S_3^2$\\
$(4,23,11)$&1&[1,0,0]&$G(144,192)$&$C_2^2\times S_3^2$\\
$(4,24,1)$&6&[5,0,1]&$G(12,3)$&$A_4$\\
$(4,24,2)$&6&[2,0,4]&$G(24,13)$&$C_2\times A_4$\\
$(4,24,3)$&6&[5,0,1]&$G(24,12)$&$S_4$\\
$(4,24,4)$&6&[3,0,3]&$G(24,12)$&$S_4$\\
$(4,24,5)$&6&[2,0,4]&$G(48,48)$&$C_2\times S_4$\\
$(4,25,1)$&5&[2,0,3]&$G(24,13)$&$C_2\times A_4$\\
$(4,25,2)$&5&[2,0,3]&$G(24,13)$&$C_2\times A_4$\\
$(4,25,3)$&5&[2,0,3]&$G(24,12)$&$S_4$\\
$(4,25,4)$&5&[3,0,2]&$G(24,12)$&$S_4$\\
$(4,25,5)$&5&[1,0,4]&$G(48,49)$&$C_2^2\times A_4$\\
$(4,25,6)$&5&[1,0,4]&$G(48,48)$&$C_2\times S_4$\\
$(4,25,7)$&5&[2,0,3]&$G(48,48)$&$C_2\times S_4$\\
$(4,25,8)$&5&[2,0,3]&$G(48,48)$&$C_2\times S_4$\\
$(4,25,9)$&5&[2,0,3]&$G(48,48)$&$C_2\times S_4$\\
$(4,25,10)$&5&[1,0,4]&$G(48,48)$&$C_2\times S_4$\\
$(4,25,11)$&5&[1,0,4]&$G(96,226)$&$C_2^2\times S_4$\\
$(4,26,1)$&1&[1,0,0]&$G(8,1)$&$C_8$\\
$(4,26,2)$&1&[1,0,0]&$G(16,7)$&$D_8$\\
$(4,27,1)$&1&[1,0,0]&$G(5,1)$&$C_5$\\
$(4,27,2)$&1&[1,0,0]&$G(10,2)$&$C_{10}$\\
$(4,27,3)$&2&[2,0,0]&$G(10,1)$&$D_5$\\
$(4,27,4)$&1&[1,0,0]&$G(20,4)$&$D_{10}$\\
$(4,28,1)$&1&[1,0,0]&$G(12,2)$&$C_{12}$\\
$(4,28,2)$&1&[1,0,0]&$G(24,6)$&$D_{12}$\\
$(4,29,1)$&3&[1,0,2]&$G(18,3)$&$C_3\times S_3$\\
$(4,29,2)$&2&[1,0,1]&$G(36,12)$&$C_6\times S_3$\\
$(4,29,3)$&5&[2,0,3]&$G(36,10)$&$S_3^2$\\
$(4,29,4)$&4&[2,0,2]&$G(36,9)$&$C_3^2\rtimes C_4$\\
$(4,29,5)$&2&[1,0,1]&$G(72,46)$&$C_2\times S_3^2$\\
$(4,29,6)$&3&[2,0,1]&$G(72,45)$&$C_2\times (C_3^2\rtimes C_4)$\\
$(4,29,7)$&4&[2,0,2]&$G(72,40)$&$S_3^2\rtimes C_2$\\
$(4,29,8)$&4&[2,0,2]&$G(72,40)$&$S_3^2\rtimes C_2$\\
$(4,29,9)$&3&[2,0,1]&$G(144,186)$&$C_2\times (S_3^2\rtimes C_2)$\\
$(4,30,1)$&1&[1,0,0]&$G(12,1)$&$C_3\rtimes C_4$\\
$(4,30,2)$&1&[1,0,0]&$G(24,5)$&$C_4\times S_3$\\
$(4,30,3)$&1&[1,0,0]&$G(24,10)$&$C_3\times D_4$\\
$(4,30,4)$&2&[2,0,0]&$G(24,8)$&$(C_6\times C_2)\rtimes C_2$
\end{tabular}
\ \ 
\begin{tabular}{lllll}
GAP ID & \# & $[s,r,u]$ & $G(n,i)$ & \\\hline
$(4,30,5)$&1&[1,0,0]&$G(36,6)$&$C_3\times (C_3\rtimes C_4)$\\
$(4,30,6)$&1&[1,0,0]&$G(48,38)$&$D_4\times S_3$\\
$(4,30,7)$&1&[1,0,0]&$G(72,30)$&$C_3\times ((C_6\times C_2)\rtimes C_2)$\\
$(4,30,8)$&1&[1,0,0]&$G(72,23)$&$(C_6\times S_3)\rtimes C_2$\\
$(4,30,9)$&1&[1,0,0]&$G(72,21)$&$(C_3\times (C_3\rtimes C_4))\rtimes C_2$\\
$(4,30,10)$&1&[1,0,0]&$G(144,154)$&$(C_2\times S_3^2)\rtimes C_2$\\
$(4,30,11)$&1&[1,0,0]&$G(144,136)$&$(C_2\times (C_3^2\rtimes C_4))\rtimes C_2$\\
$(4,30,12)$&2&[2,0,0]&$G(144,115)$&$(C_2\times (C_3^2\rtimes C_4))\rtimes C_2$\\
$(4,30,13)$&1&[1,0,0]&$G(288,889)$&$(C_2^2\times S_3^2)\rtimes C_2$\\
$(4,31,1)$&4&[2,2,0]&$G(20,3)$&$C_5\rtimes C_4$\\
$(4,31,2)$&2&[1,1,0]&$G(40,12)$&$C_2\times (C_5\rtimes C_4)$\\
$(4,31,3)$&2&[2,0,0]&$G(60,5)$&$A_5$\\
$(4,31,4)$&2&[1,1,0]&$G(120,34)$&$S_5$\\
$(4,31,5)$&2&[1,1,0]&$G(120,34)$&$S_5$\\
$(4,31,6)$&2&[2,0,0]&$G(120,35)$&$C_2\times A_5$\\
$(4,31,7)$&2&[1,1,0]&$G(240,189)$&$C_2\times S_5$\\
$(4,32,1)$&2&[1,0,1]&$G(8,4)$&$Q_8$\\
$(4,32,2)$&2&[1,0,1]&$G(16,6)$&$C_8\rtimes C_2$\\
$(4,32,3)$&2&[1,0,1]&$G(16,8)$&$QD_8$\\
$(4,32,4)$&2&[1,0,1]&$G(16,13)$&$(C_4\times C_2)\rtimes C_2$\\
$(4,32,5)$&3&[1,0,2]&$G(24,3)$&$\SL(2,3)$\\
$(4,32,6)$&2&[1,0,1]&$G(32,43)$&$(C_2\times D_4)\rtimes C_2$\\
$(4,32,7)$&2&[1,0,1]&$G(32,7)$&$(C_8\rtimes C_2)\rtimes C_2$\\
$(4,32,8)$&2&[1,0,1]&$G(32,11)$&$(C_4\times C_4)\rtimes C_2$\\
$(4,32,9)$&5&[3,0,2]&$G(32,6)$&$((C_4\times C_2)\rtimes C_2)\rtimes C_2$\\
$(4,32,10)$&2&[1,0,1]&$G(32,49)$&$(C_2\times D_4)\rtimes C_2$\\
$(4,32,11)$&3&[1,0,2]&$G(48,29)$&$\GL(2,3)$\\
$(4,32,12)$&2&[1,0,1]&$G(64,134)$&$((C_2\times D_4)\rtimes C_2)\rtimes C_2$\\
$(4,32,13)$&4&[2,0,2]&$G(64,32)$&$((C_8\rtimes C_2)\rtimes C_2)\rtimes C_2$\\
$(4,32,14)$&4&[2,0,2]&$G(64,138)$&\\
%(((C_4\times C_2)\rtimes C_2)\rtimes C_2)\rtimes C_2$\\
$(4,32,15)$&2&[1,0,1]&$G(64,34)$&\\
%(((C_4\times C_2)\rtimes C_2)\rtimes C_2)\rtimes C_2$\\
$(4,32,16)$&3&[1,0,2]&$G(96,204)$&$((C_2\times D_4)\rtimes C_2)\rtimes C_3$\\
$(4,32,17)$&2&[1,0,1]&$G(128,928)$&$D_4^2\rtimes C_2$\\
$(4,32,18)$&3&[1,0,2]&$G(192,201)$&\\
%(((C_2\times D_4)\rtimes C_2)\rtimes C_3)\rtimes C_2$\\
$(4,32,19)$&3&[1,0,2]&$G(192,1493)$&\\
%(((C_2\times D_4)\rtimes C_2)\rtimes C_3)\rtimes C_2$\\
$(4,32,20)$&3&[1,0,2]&$G(192,1494)$&\\
%(((C_2\times D_4)\rtimes C_2)\rtimes C_3)\rtimes C_2$\\
$(4,32,21)$&3&[1,0,2]&$G(384,5602)$&\\
%((((C_2\times D_4)\rtimes C_2)\rtimes C_3)\rtimes C_2)\rtimes C_2$\\
$(4,33,1)$&1&[0,0,1]&$G(24,11)$&$C_3\times Q_8$\\
$(4,33,2)$&1&[0,1,0]&$G(24,1)$&$C_3\rtimes C_8$\\
$(4,33,3)$&1&[0,0,1]&$G(24,3)$&$\SL(2,3)$\\
$(4,33,4)$&1&[0,0,1]&$G(48,17)$&$(C_3\times Q_8)\rtimes C_2$\\
$(4,33,5)$&1&[0,0,1]&$G(48,33)$&$\SL(2,3)\rtimes C_2$\\
$(4,33,6)$&1&[0,0,1]&$G(48,29)$&$\GL(2,3)$\\
$(4,33,7)$&1&[0,0,1]&$G(72,25)$&$C_3\times \SL(2,3)$\\
$(4,33,8)$&1&[0,0,1]&$G(96,201)$&$(\SL(2,3)\rtimes C_2)\rtimes C_2$\\
$(4,33,9)$&1&[0,0,1]&$G(96,193)$&$\GL(2,3)\rtimes C_2$\\
$(4,33,10)$&1&[0,0,1]&$G(96,67)$&$\SL(2,3)\rtimes C_4$\\
$(4,33,11)$&1&[0,0,1]&$G(144,125)$&$(C_3\times \SL(2,3))\rtimes C_2$\\
$(4,33,12)$&1&[0,0,1]&$G(192,988)$&$(\GL(2,3)\rtimes C_2)\rtimes C_2$\\
$(4,33,13)$&1&[0,0,1]&$G(288,860)$&\\
%(((C_2\times D_4)\rtimes C_2)\rtimes C_3)\rtimes C_3$\\
$(4,33,14)$&2&[0,0,2]&$G(576,8277)$&\\
%((((C_2\times D_4)\rtimes C_2)\rtimes C_3)\rtimes C_3)\rtimes C_2$\\
$(4,33,15)$&1&[0,0,1]&$G(576,8282)$&\\
%((((C_2\times D_4)\rtimes C_2)\rtimes C_3)\rtimes C_3)\rtimes C_2$\\
$(4,33,16)$&1&[0,0,1]&\multicolumn{2}{l}{$G(1152,157478)$}%\\
%(((((C_2\times D_4)\rtimes C_2)\rtimes C_3)\rtimes C_3)\rtimes C_2)\rtimes C_2
\\ \\
\end{tabular}
%}
\end{table}
\end{center}
\newpage

%%%%%%%%%%%%%%%%%%%%%%%%%%%%%%%%%%%%%%%%%%%%
Let $M$ be a $G$-lattice. 
$M$ is called a {\it permutation} $G$-lattice if $M$ has a $\bZ$-basis
permuted by $G$, i.e. $M\simeq \oplus_{1\leq i\leq m}\bZ[G/H_i]$ 
for some subgroups $H_1,\ldots,H_m$ of $G$.
$M$ is called a {\it stably permutation} $G$-lattice if $M\oplus P\simeq P^\prime$ 
for some permutation $G$-lattices $P$ and $P^\prime$. 
$M$ is called {\it invertible} if it is a direct summand of a permutation $G$-lattice, 
i.e. $P\simeq M\oplus M^\prime$ for some permutation $G$-lattice 
$P$ and a $G$-lattice $M^\prime$. 
$M$ is called {\it coflabby} if $H^1(H,M)=0$
for any subgroup $H$ of $G$. 
$M$ is called {\it flabby} if $\widehat H^{-1}(H,M)=0$ 
for any subgroup $H$ of $G$ where $\widehat H$ is the Tate cohomology 
(see Section \ref{sePre}). 

\begin{definition}[{%The categories $\cC(G)$, $\cS(G)$, $\cD(G)$ and $\cH^i(G)$, 
see \cite[Section 1]{EM74}, \cite[Section 4.7]{Vos98}}]
Let $\cC(G)$ be the category of all $G$-lattices. 
Let $\cS(G)$ be the full subcategory of $\cC(G)$ of all permutation $G$-lattices 
and $\cD(G)$ be the full subcategory of $\cC(G)$ of all invertible $G$-lattices.
Let 
\begin{align*}
\cH^i(G)=\{M\in \cC(G)\mid \widehat H^i(H,M)=0\ {\rm for\ any}\ H\leq G\}\ (i=\pm 1)
\end{align*}
be the class of ``$\widehat H^i$-vanish'' $G$-lattices 
where $\widehat H^i$ is the Tate cohomology (see Section \ref{sePre}). 
Then one has the inclusions 
$\cS(G)\subset \cD(G)\subset \cH^i(G)\subset \cC(G)$ $(i=\pm 1)$. 
\end{definition}

\begin{definition}[The commutative monoid $\cC(G)/\cS(G)$]\label{defCM}
We say that two $G$-lattices $M_1$ and $M_2$ are {\it similar} 
if there exist permutation $G$-lattices $P_1$ and $P_2$ such that 
$M_1\oplus P_1\simeq M_2\oplus P_2$. 
We denote the similarity class of $M$ by $[M]$. 
The set of similarity classes $\cC(G)/\cS(G)$ becomes a 
commutative monoid 
(with respect to the sum $[M_1]+[M_2]:=[M_1\oplus M_2]$ 
and the zero $0=[P]$ where $P\in \cS(G)$). 
\end{definition}

For a $G$-lattice $M$, there exists a short exact sequence of $G$-lattices
$0 \rightarrow M \rightarrow P \rightarrow F \rightarrow 0$
where $P$ is permutation and $F$ is flabby which is called a 
{\it flasque resolution} of $M$ (see Theorem \ref{thEM}). 
The similarity class $[F]\in \cC(G)/\cS(G)$ of $F$ is determined uniquely and is called 
{\it the flabby class} of $M$. 
We denote the flabby class $[F]$ of $M$ by $[M]^{fl}$. 
We say that $[M]^{fl}$ is {\it invertible} if $[M]^{fl}=[E]$ for some 
invertible $G$-lattice $E$ (see Definition \ref{defFlabby}). 

The flabby class $[M]^{fl}$ plays crucial role in the rationality problem for 
$L(M)^G$ as follows (see Voskresenskii's fundamental book \cite[Section 4.6]{Vos98}, 
see also e.g. Swan \cite{Swa83}, Kunyavskii \cite[Section 2]{Kun90}, 
Lemire, Popov and Reichstein \cite[Section 2]{LPR06}, Kang \cite{Kan12}, 
Yamasaki \cite{Yam12}):  
\begin{theorem}[Endo and Miyata, Voskresenskii, Saltman]\label{thEM73}
Let $L/k$ be a finite Galois extension with Galois group $G={\rm Gal}(L/k)$ 
and $M$ and $M^\prime$ be $G$-lattices.\\
{\rm (i)} $(${\rm Endo and Miyata} \cite[Theorem 1.6]{EM73}$)$ 
$[M]^{fl}=0$ if and only if $L(M)^G$ is stably $k$-rational.\\
{\rm (ii)} $(${\rm Voskresenskii} \cite[Theorem 2]{Vos74}$)$ 
$[M]^{fl}=[M^\prime]^{fl}$ if and only if $L(M)^G$ and $L(M^\prime)^G$ 
are stably $k$-isomorphic, i.e. there exist algebraically independent 
elements $x_1,\ldots,x_m$ over $L(M)^G$ and 
$y_1,\ldots,y_n$ over $L(M^\prime)^G$ such that 
$L(M)^G(x_1,\ldots,x_m)\simeq L(M^\prime)^G(y_1,\ldots,y_n)$.\\
{\rm (iii)} $(${\rm Saltman} \cite[Theorem 3.14]{Sal84a}$)$ 
$[M]^{fl}$ is invertible if and only if $L(M)^G$ is 
retract $k$-rational.
\end{theorem}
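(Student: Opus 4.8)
The plan is to treat parts (i)--(iii) in parallel, organizing everything around two tools: the formal calculus of flabby resolutions (additivity in $C(G)/S(G)$, and the fact that $[M\oplus P]^{fl}=[M]^{fl}$ for $P$ permutation, which is immediate from the uniqueness of the flabby class), and a single geometric input, the \emph{no-name lemma}: if $0\to M\to E\to P\to 0$ is an exact sequence of $G$-lattices with permutation quotient $P$, then $L(E)^G$ is rational over $L(M)^G$. The point of this lemma is that, after choosing a basis, the variables of $L(E)$ over $L(M)$ transform by the permutation action of $P$ twisted by monomials in $L(M)^\times$, and this twist can be straightened out by a multiplicative Hilbert~90 / Shapiro argument because $P$ is induced. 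Granting the lemma, I would first observe that (i) is the special case $M^\prime=0$ of (ii), since $L(0)^G=k$ and $[0]^{fl}=0$; so the heart of the matter is (ii), with (iii) its ``invertible/retract'' analogue.

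For the constructive half of (ii) --- that $[M]^{fl}=[M^\prime]^{fl}$ implies stable isomorphism --- the key device is a fiber product. Starting from flabby resolutions $0\to M\to P\to F\to 0$ and $0\to M^\prime\to P^\prime\to F^\prime\to 0$ and the relation $F\oplus A\cong F^\prime\oplus B$ with $A,B$ permutation, I would take direct sums with the identity sequences on $A$ and $B$ to obtain resolutions $0\to M\to P_1\to\Phi\to 0$ and $0\to M^\prime\to P_1^\prime\to\Phi\to 0$ with the \emph{same} flabby quotient $\Phi=F\oplus A\cong F^\prime\oplus B$ and permutation middles $P_1=P\oplus A$, $P_1^\prime=P^\prime\oplus B$. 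Forming the pullback $E=P_1\times_\Phi P_1^\prime$ of the two surjections onto $\Phi$ yields exact sequences $0\to M\to E\to P_1^\prime\to 0$ and $0\to M^\prime\to E\to P_1\to 0$, each with permutation quotient. Applying the no-name lemma to both shows that $L(E)^G$ is rational over each of $L(M)^G$ and $L(M^\prime)^G$, so these fields are stably isomorphic. Specializing $M^\prime=0$ recovers the easy half of (i): if $[M]^{fl}=0$ then $M$ admits a resolution with permutation middle and permutation quotient, whence $L(M)^G$ is stably $k$-rational.

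The converse halves --- stable isomorphism forces $[M]^{fl}=[M^\prime]^{fl}$, and in particular stable rationality forces $[M]^{fl}=0$ --- are where I expect the real difficulty, because they assert that the flabby class, a priori an invariant of the \emph{lattice} $M$, is in fact a \emph{birational} invariant of the field $L(M)^G$. The plan here is to identify $[M]^{fl}$ with an intrinsic invariant of the torus $T=T_M$: passing to a smooth projective $G$-equivariant compactification, one represents the flabby class by the Picard lattice of the compactification, so that the associated cohomology (equivalently the unramified $\mathrm{Br}/\mathrm{Pic}$ data) becomes a genuine stable-birational invariant, unchanged under the transcendental extensions produced by the no-name lemma. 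Establishing this identification, and its compatibility with the fiber-product construction, is the technical core of (ii), and it is the step I would draw from Voskresenskii's and Endo--Miyata's arguments rather than reprove from scratch.

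Finally, (iii) runs parallel with ``invertible'' in place of ``zero'' and ``retract rational'' in place of ``stably rational''. For the constructive direction, if $[F]=[E]$ with $E$ invertible, then $E$ is a direct summand of a permutation lattice, $P=E\oplus E^\prime$; the resulting splitting, fed through the no-name lemma for $P$, supplies the algebra $R$ and the $k$-algebra maps $\varphi,\psi$ with $\psi\circ\varphi=1_R$ demanded by the definition of retract rationality. The converse --- retract rationality forces $[M]^{fl}$ to be invertible --- is the analogue of the hard direction above and is the main obstacle for (iii): following Saltman, I would encode retractness as a lifting property for $T$-torsors and show that this lifting splits the flabby class off a permutation class, exhibiting it as a direct summand. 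Overall, I expect the cohomological birational-invariance steps in the converse directions of (ii) and (iii) to be substantially harder than the fiber-product constructions, which are essentially formal once the no-name lemma is available.
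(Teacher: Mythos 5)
The paper does not prove this theorem: it is stated purely as a citation, with each part attributed to its original source (Endo--Miyata, Voskresenskii, Saltman), so there is no in-paper argument to compare against. Judged on its own, your outline is the standard one and the parts you actually carry out are correct: the reduction of (i) to (ii) via $M'=0$, the additivity $[M\oplus P]^{fl}=[M]^{fl}$, the no-name lemma for extensions with permutation quotient (cocycle trivialization via Shapiro plus Hilbert 90, then Galois descent), and the fiber product $E=P_1\times_\Phi P_1'$ over a common flabby quotient, whose two projections have kernels $M'$ and $M$ respectively and so exhibit $L(E)^G$ as rational over both $L(M)^G$ and $L(M')^G$. This is exactly the Colliot-Th\'el\`ene--Sansuc/Voskresenskii mechanism. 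However, you should be clear that what you have written is a roadmap, not a proof: the converse directions --- that stable isomorphism of $L(M)^G$ and $L(M')^G$ forces $[M]^{fl}=[M']^{fl}$ (i.e.\ that the flabby class is a stable birational invariant, established via the Picard lattice of a smooth projective equivariant compactification), and that retract $k$-rationality forces $[M]^{fl}$ to be invertible (Saltman's torsor-lifting argument) --- are precisely the substantive content of the cited theorems, and you explicitly defer both to the literature. That deferral is consistent with how the paper itself treats the result, but it means your proposal does not constitute an independent proof; if a self-contained argument were required, those two steps are where all the remaining work lies. Also note a small point in (iii): producing the retraction $\psi\circ\varphi=1_R$ from a splitting $P=E\oplus E'$ is more delicate than ``feeding it through the no-name lemma'' --- one must construct the localized polynomial ring and the two $k$-algebra maps explicitly, which is the content of Saltman's Theorem 3.14 rather than a formal consequence of the splitting.
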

\begin{theorem}[{Colliot-Th\'{e}l\`{e}ne and Sansuc \cite[Corollaire 1]{CTS77}}]
\label{thCTS77}
Let $G$ be a finite group. 
The following conditions are equivalent:\\
{\rm (i)} $[J_G]^{fl}$ is coflabby;\\
{\rm (ii)} any Sylow subgroup of $G$ is cyclic or generalized quaternion $Q_{4n}$ 
of order $4n$ $(n\geq 2)$;\\
{\rm (iii)} any abelian subgroup of $G$ is cyclic;\\
{\rm (iv)} $H^3(H,\bZ)=0$ for any subgroup $H$ of $G$.
\end{theorem}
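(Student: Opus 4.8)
The plan is to run the cycle $\mathrm{(ii)}\Leftrightarrow\mathrm{(iii)}\Leftrightarrow\mathrm{(iv)}\Leftrightarrow\mathrm{(i)}$, separating the purely group-theoretic content from the cohomological content and routing everything through one computation. The engine is the dimension shift in the defining sequence $0\to\bZ\to\bZ[G]\to J_G\to 0$: since $\bZ[G]$ is an induced, hence cohomologically trivial, module, the long exact sequence collapses to
\[
\widehat H^{\,i}(H,J_G)\;\cong\;\widehat H^{\,i+1}(H,\bZ)\qquad(i\in\bZ,\ H\le G).
\]
In particular $H^2(H,J_G)\cong H^3(H,\bZ)$, so condition (iv) is \emph{exactly} the statement that $H^2(H,J_G)=0$ for all $H\le G$; this is the form in which I will feed (iv) into the flabby class.

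For $\mathrm{(ii)}\Leftrightarrow\mathrm{(iii)}$ I would argue classically, reducing to $p$-groups via Sylow's theorem: a $p$-group in which every abelian subgroup is cyclic has a unique subgroup of order $p$, hence is cyclic (for $p$ odd) or cyclic/generalized quaternion (for $p=2$), while conversely the abelian subgroups of cyclic and of generalized quaternion groups are cyclic. For $\mathrm{(ii)}\Leftrightarrow\mathrm{(iv)}$ I would use the Cartan--Eilenberg detection of $H^3(H,\bZ)$ on Sylow subgroups together with two explicit facts: $\widehat H^3=0$ for cyclic groups (period $2$, and $H^1(\cdot,\bZ)=0$) and for generalized quaternion groups $Q_{4n}$ (period $4$, with $\widehat H^3(Q_{4n},\bZ)=\widehat H^{-1}(Q_{4n},\bZ)=0$), whereas $H^3(C_p\times C_p,\bZ)\neq 0$ by the Künneth formula. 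Thus if all Sylow subgroups of $G$ are cyclic or generalized quaternion, so are those of every $H\le G$, forcing $H^3(H,\bZ)=0$; and if some Sylow is neither, it contains a copy of $C_p\times C_p$, on which $H^3\neq 0$, so (iv) fails.

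The implication $\mathrm{(iv)}\Rightarrow\mathrm{(i)}$ is then immediate: in a flabby resolution $0\to J_G\to P\to F\to 0$ (Theorem \ref{thEM}) one has $H^1(H,P)=0$ for the permutation lattice $P$, so $H^1(H,F)\hookrightarrow H^2(H,J_G)\cong H^3(H,\bZ)=0$, whence $F$ is coflabby and $[J_G]^{fl}=[F]$ is coflabby. For the converse I would first \emph{localize}. For any $H\le G$, writing $\bZ[G]|_H\cong\bZ[H]^{[G:H]}$ and tracking the norm element $N_G\mapsto(N_H,\dots,N_H)$ gives a split exact sequence, hence
\[
J_G|_H\;\cong\;J_H\oplus\bZ[H]^{\,[G:H]-1},
\]
so that $[J_G]^{fl}|_H=[J_H]^{fl}$. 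Since $H^1(H,-)$ annihilates permutation lattices it is well defined on $C(G)/S(G)$; combined with the fact that permutation lattices are coflabby, this shows that $[J_G]^{fl}$ being coflabby is equivalent to $H^1(H,[J_H]^{fl})=0$ for every $H\le G$.

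It remains to prove $\mathrm{(i)}\Rightarrow\mathrm{(iv)}$, and this is where I expect the \emph{main obstacle}. Arguing by contraposition, suppose (iv) fails and take $H_0\le G$ minimal with $H^3(H_0,\bZ)\neq 0$. As above $H_0$ must be a $p$-group (otherwise all its Sylow subgroups are proper, hence cyclic or generalized quaternion, forcing $H^3(H_0,\bZ)=0$), and by minimality it contains no proper $C_p\times C_p$; by the structure theorem for $p$-groups with a unique subgroup of order $p$ it is therefore $\cong C_p\times C_p$. The crux is to show that $[J_{C_p\times C_p}]^{fl}$ is \emph{not} coflabby, i.e. $H^1(C_p\times C_p,[J_{C_p\times C_p}]^{fl})\neq 0$. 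This does not follow formally from $H^2(C_p\times C_p,J)\cong H^3\neq 0$: a flabby resolution only yields $H^1(\cdot,F)=\ker\!\big(H^2(\cdot,J)\to H^2(\cdot,P)\big)$, and the connecting map need not vanish. Indeed non-coflabbiness is invisible on cyclic subgroups, where $H^3$ vanishes automatically, so the failure must be exhibited precisely at $C_p\times C_p$. I would resolve this by writing down an explicit flabby resolution of $J_{C_p\times C_p}$ and computing $H^1$ of its flabby part directly; note that this conclusion is strictly stronger than the non-invertibility of $[J_{C_p\times C_p}]^{fl}$ already guaranteed by Theorem \ref{th13}(i), since invertible lattices are coflabby.
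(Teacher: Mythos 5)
The paper does not actually prove this theorem: it is quoted from Colliot-Th\'el\`ene--Sansuc \cite[Corollaire 1]{CTS77}, and the only supporting statement the paper records is the identity $H^1(H,[J_G]^{fl})=H^3(H,\bZ)$ for all $H\leq G$ (the ``Note'' following Theorem~\ref{thEM82}, citing \cite[Theorem 7]{Vos70} and \cite[Proposition 1]{CTS77}), from which (i)$\Leftrightarrow$(iv) is immediate in both directions. Your handling of (ii)$\Leftrightarrow$(iii)$\Leftrightarrow$(iv) and of (iv)$\Rightarrow$(i) is correct and standard; the restriction formula $J_G|_H\cong J_H\oplus\bZ[H]^{\,[G:H]-1}$ and the reduction of (i)$\Rightarrow$(iv) to the single group $C_p\times C_p$ are also sound.

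The genuine gap is exactly where you flagged it: you never establish $H^1(C_p\times C_p,[J_{C_p\times C_p}]^{fl})\neq 0$, and this is the entire mathematical content of (i)$\Rightarrow$(iv). Your own analysis shows why it does not come for free --- the long exact sequence only gives $H^1(H,F)=\ker\bigl(H^2(H,J_G)\to H^2(H,P)\bigr)$, and $H^2(H,P)$ is generally nonzero for a permutation lattice $P$ --- so ``write down a resolution and compute'' is a promissory note, not a proof. The missing lemma is precisely the cited identity $H^1(H,[J_G]^{fl})\cong H^3(H,\bZ)$, whose proof in \cite{Vos70} and \cite{CTS77} amounts to showing that the map $H^2(H,J_G)\to H^2(H,P)$ vanishes; you should either import that proposition or actually carry out the computation for $C_p\times C_p$. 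Note that for $p=2$ the gap can be closed with results already in the paper: Theorem~\ref{thEM82} (Klein's four group is admitted in its condition (iii)) shows every flabby and coflabby $(C_2\times C_2)$-lattice is invertible, while $[J_{C_2\times C_2}]^{fl}$ is not invertible by Theorem~\ref{thEM7415}, hence not coflabby. For odd $p$ that shortcut is unavailable and the cited identity seems unavoidable.
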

\begin{remark} 
(1) 
It is known that 
each of the conditions (i)--(iv) of Theorem \ref{thCTS77} 
is equivalent to the condition that $G$ has periodic cohomology, 
i.e. there exists %some integer 
$q\neq 0$ and $u\in\widehat H^q(G,\bZ)$ such that the cup product map 
$u\cup -: \widehat H^n(G,\bZ)\rightarrow \widehat H^{n+q}(G,\bZ)$ 
is an isomorphism for any $n\in\bZ$ (see \cite[Theorem 11.6]{CE56}).\\
(2)  $H^3(H,\bZ)\simeq H^1(H,[J_G]^{fl})$ for any subgroup $H$ of $G$ 
(see \cite[Theorem 7]{Vos70} and \cite[Proposition 1]{CTS77}). 
\end{remark}
\begin{theorem}[{Endo and Miyata \cite[Theorem 2.1]{EM82}}]\label{thEM82}
Let $G$ be a finite group. 
The following conditions are equivalent:\\
{\rm (i)} $\cH^1(G)\cap \cH^{-1}(G)=\cD(G)$, i.e. 
any flabby and coflabby $G$-lattice is invertible;\\
{\rm (ii)} $[J_G\otimes_\bZ J_G]^{fl}=[[J_G]^{fl}]^{fl}$ is invertible;\\
{\rm (iii)} any p-Sylow subgroup of $G$ is cyclic for odd $p$ and 
cyclic or dihedral $($including Klein's four group$)$ for $p=2$.
\end{theorem}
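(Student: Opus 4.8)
The plan is to prove the cycle $(\mathrm{iii})\Rightarrow(\mathrm{i})\Rightarrow(\mathrm{ii})\Rightarrow(\mathrm{iii})$, using the rationality dictionary of Theorem \ref{thEM73}, the coflabbiness criterion of Theorem \ref{thCTS77}, and the retract-rationality criterion for norm one tori (Theorems \ref{thEM73}(iii) and \ref{th13}(i)). I first record the inclusion $D(G)\subseteq H^1(G)\cap H^{-1}(G)$, which is free: each $\bZ[G/H]$ restricts on any $K\leq G$ to a sum of lattices $\bZ[K/K']$ with $\widehat H^{\pm1}(K',\bZ)=0$, so permutation lattices are flabby and coflabby, and these classes are closed under direct summands. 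Thus the content of (i) is the reverse inclusion: every flabby and coflabby lattice is invertible.

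For $(\mathrm{i})\Rightarrow(\mathrm{ii})$ I would first establish the displayed identity. Tensoring the defining sequence $0\to\bZ\to\bZ[G]\to J_G\to0$ over $\bZ$ with the flat lattice $J_G$, and using the isomorphism $\bZ[G]\otimes_\bZ J_G\cong\bZ[G]^{\,|G|-1}$ (the diagonal action on $\bZ[G]\otimes_\bZ M$ is free), yields an exact sequence $0\to J_G\to\bZ[G]^{\,|G|-1}\to J_G\otimes_\bZ J_G\to0$ with permutation middle term. Splicing this with a flasque resolution of $J_G$ (Theorem \ref{thEM}) and running the additive calculus of flabby classes in $C(G)/S(G)$ gives $[J_G\otimes_\bZ J_G]^{fl}=[[J_G]^{fl}]^{fl}=:\Psi$. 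I would then show $\Psi$ is \emph{self-dual}: since $(J_G\otimes_\bZ J_G)^\circ\cong I_G\otimes_\bZ I_G$ and $I_G=J_G^\circ$ is the reciprocal syzygy of $\bZ$, one gets $[\Psi]=[\Psi^\circ]$. Because the duality $\widehat H^{-1}(H,F^\circ)\cong\widehat H^{1}(H,F)^\vee$ interchanges flabby and coflabby, a self-dual flabby class is represented by a lattice that is simultaneously flabby and coflabby; hence $\Psi\in H^1(G)\cap H^{-1}(G)$ unconditionally, and (i) forces $\Psi$ invertible, i.e. (ii).

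The structural heart is $(\mathrm{iii})\Rightarrow(\mathrm{i})$, for which I would use the localization principle that invertibility of a flabby class is detected on Sylow subgroups: a flabby lattice $F$ has $[F]\in D(G)$ iff $[\mathrm{Res}_{G_p}F]\in D(G_p)$ for every Sylow $G_p$. Since restriction preserves the flabby and the coflabby conditions, any $M\in H^1(G)\cap H^{-1}(G)$ restricts to a flabby and coflabby $G_p$-lattice, and under (iii) each $G_p$ is cyclic or dihedral. It then suffices to settle the base case over a cyclic $p$-group and over a dihedral $2$-group (including Klein's four group): flabby and coflabby implies invertible. For cyclic $p$-groups this is classical (flabby alone suffices); for dihedral $2$-groups it rests on their integral representation theory, where the indecomposable lattices can be enumerated and the flabby-and-coflabby ones checked to be invertible.

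For $(\mathrm{ii})\Rightarrow(\mathrm{iii})$ I argue by contraposition, again localizing: $\mathrm{Res}_{G_p}\Psi$ is the corresponding double class $[[J_{G_p}]^{fl}]^{fl}$ (restriction commutes with the flabby class, and $[\mathrm{Res}_{G_p}J_G]^{fl}=[J_{G_p}]^{fl}$ up to a permutation summand), so if (iii) fails at some $G_p$ I must show $\Psi$ is not invertible there. The generalized quaternion case is clean: if the $2$-Sylow is generalized quaternion with the odd Sylows cyclic, then $[J_G]^{fl}$ is coflabby by Theorem \ref{thCTS77}, is always flabby, yet is not invertible because not all Sylows are cyclic (Theorems \ref{thEM73}(iii) and \ref{th13}(i)); this single class already witnesses the failure of (i), and the quaternion-versus-dihedral discrepancy between Theorem \ref{thCTS77} and (iii) is precisely the effect of passing from $[J_G]^{fl}$ to the double class. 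For the remaining obstructions ($C_p\times C_p$ for odd $p$, and the non-cyclic non-dihedral $2$-groups, where $[J_{G_p}]^{fl}$ is no longer coflabby) I would construct explicit flabby and coflabby lattices that are not invertible. I expect the \emph{main obstacle} to be exactly this $p$-group representation theory: verifying flabby-and-coflabby $\Rightarrow$ invertible over dihedral $2$-groups, and manufacturing the non-invertible flabby-and-coflabby witnesses over the excluded $p$-groups, together with the rigorous justification of the Sylow-detection principle and of the flabby-class calculus underlying the identity in (ii).
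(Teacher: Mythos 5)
The paper itself offers no proof of this theorem: it is quoted from Endo and Miyata \cite{EM82}, so there is nothing internal to compare your argument against, and it must be judged on its own terms. Your architecture --- the cycle $(\mathrm{iii})\Rightarrow(\mathrm{i})\Rightarrow(\mathrm{ii})\Rightarrow(\mathrm{iii})$, the identity $[J_G]^{fl}=[J_G\otimes_\bZ J_G]$ from tensoring the augmentation sequence, Sylow localization, and (implicitly) the splitting Lemma \ref{lemSL}(ii) --- has the right shape, but the steps you defer are not peripheral; they are the theorem. The first gap is in $(\mathrm{i})\Rightarrow(\mathrm{ii})$: all you need is that $[[J_G]^{fl}]^{fl}$ admits a representative that is simultaneously flabby and coflabby, but your route to it, the asserted self-duality $[\Psi]=[\Psi^\circ]$, is not justified. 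Duality exchanges the flabby class of $M$ with the coflabby kernel class of $M^\circ$, and $(J_G\otimes_\bZ J_G)^\circ\cong I_G\otimes_\bZ I_G$ is a different lattice; since $H^1(H,J_G\otimes_\bZ J_G)\cong H^3(H,\bZ)$ is nonzero in general, the cokernel of a second flabby resolution is not coflabby for free, and producing a coflabby representative of that class is precisely the content of Section 2 of \cite{EM82}. You would need to supply that construction rather than assert it.

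The second gap is the base case of $(\mathrm{iii})\Rightarrow(\mathrm{i})$ over dihedral $2$-groups: you propose to enumerate the indecomposable lattices and check them, but $\bZ[C_2\times C_2]$ and $\bZ[D_4]$ already have infinite integral representation type (finite type forces cyclic Sylow subgroups of order at most $p^2$), so no finite check exists and the dihedral case requires a substantive structural argument. The third gap is in $(\mathrm{ii})\Rightarrow(\mathrm{iii})$: because your cycle derives (i) from (iii) and (ii) from (i), refuting (i) at a bad Sylow subgroup does not refute (ii); you must show that $[[J_{G_p}]^{fl}]^{fl}$ itself is non-invertible. The generalized quaternion case does close, since there $[J_{G_p}]^{fl}$ is coflabby (Theorem \ref{thCTS77}) and non-invertible (Theorem \ref{thEM7415}), and by Lemma \ref{lemSL}(ii) a coflabby non-invertible lattice cannot have an invertible flabby class --- the defining extension would split and exhibit it as a permutation summand. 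But for $C_p\times C_p$ with $p$ odd, and for the non-cyclic $2$-groups that are neither dihedral nor generalized quaternion, where $[J_{G_p}]^{fl}$ is no longer coflabby, you produce no witness at all; this is exactly the computation on which the equivalence is decided.
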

Note that $[J_G]^{fl}=[J_G\otimes_\bZ J_G]$ (see \cite[Section 2]{EM82}). 

For $G$-lattice $M$, 
it is not difficult to see 
\begin{align*}
\textrm{permutation}\ \ 
\Rightarrow\ \ 
&\textrm{stably\ permutation}\ \ 
\Rightarrow\ \ 
\textrm{invertible}\ \ 
\Rightarrow\ \ 
\textrm{flabby\ and\ coflabby}\\
&\hspace*{8mm}\Downarrow\hspace*{34mm} \Downarrow\\
&\hspace*{7mm}[M]^{fl}=0\hspace*{10mm}\Rightarrow\hspace*{5mm}[M]^{fl}\ 
\textrm{is\ invertible}.
\end{align*}

The above implications in each step cannot be reversed. 
Swan \cite{Swa60} gave an example of $Q_8$-lattice 
$M$ of rank $8$ which is not permutation but stably 
permutation: $M\oplus \bZ\simeq \bZ[Q_8]\oplus \bZ$. 
This also indicates that the direct sum cancellation fails 
(see also Theorem \ref{thKS}). 
Colliot-Th\'{e}l\`{e}ne 
and Sansuc \cite[Remarque R1]{CTS77} 
and \cite[Remarque R4]{CTS77} 
gave examples of $S_3$-lattice $M$ of rank $4$ which 
is not permutation but stably permutation: 
$M\oplus \bZ\simeq \bZ[S_3/\langle\sigma\rangle]
\oplus\bZ[S_3/\langle\tau\rangle]$ 
where $S_3=\langle\sigma,\tau\rangle$ 
(see also Table $8$ of Theorem \ref{thCFSP}) 
and 
of $F_{20}$-lattice $[J_{F_{20}/C_4}]^{fl}$ 
of the Chevalley module $J_{F_{20}/C_4}$ of rank $4$ 
which is  not stably permutation but invertible 
(see also Theorem \ref{thE0} and Theorem \ref{th1M} (ii), (iv) and (v)). 
By Theorems \ref{th13-1}, \ref{thEM73} (ii) and \ref{thCTS77}, 
the flabby class $[J_{Q_8}]^{fl}$ of the Chevalley module $J_{Q_8}$ 
of rank $7$ is not invertible but flabby and coflabby 
(we may take $[J_{Q_8}]^{fl}$ of rank $9$, see Example \ref{exQ8}). 
The inverse direction of the vertical implication holds 
if $M$ is coflabby (see Lemma \ref{lemSL}).

%%%%%%%%%%%%%%%%%%%%%%%%%%%%%%%%%%%%%%%%%%%%%%%%%

By using the interpretation as in Theorem \ref{thEM73}, 
Theorems \ref{th13-1}, \ref{th13-2}, \ref{th15}, \ref{thS} and \ref{thA} 
may be obtained by the following theorems:
\begin{theorem}[{Endo and Miyata \cite[Theorem 1.5]{EM74}}]\label{thEM7415}
Let $G$ be a finite group. The following conditions are equivalent:\\
{\rm (i)} $[J_G]^{fl}$ is invertible;\\
{\rm (ii)} all the Sylow subgroups of $G$ are cyclic;\\
{\rm (iii)} $\cH^{-1}(G)=\cH^1(G)=\cD(G)$, i.e. any flabby $($resp. coflabby$)$ 
$G$-lattice is invertible.
\end{theorem}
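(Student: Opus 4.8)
The plan is to establish the equivalence of the three conditions in a cycle, $(ii)\Rightarrow(iii)\Rightarrow(i)\Rightarrow(ii)$, leveraging the fact that $J_G$ is the Chevalley module appearing in the exact sequence $0\to\bZ\to\bZ[G]\to J_G\to 0$, so that its flabby class governs the rationality of the norm one torus $R^{(1)}_{L/k}(\bG_m)$. The central cohomological fact I will rely on is the identification $H^1(H,[J_G]^{fl})\simeq H^3(H,\bZ)$ for every subgroup $H\leq G$, together with the dual statement $\widehat H^{-1}(H,[J_G]^{fl})\simeq H^{-3}(H,\bZ)$; these reduce questions about flabbiness and coflabbiness of $[J_G]^{fl}$ to the vanishing of Tate cohomology of the trivial module $\bZ$, which is classical group cohomology.

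First I would prove $(ii)\Rightarrow(iii)$. Assuming all Sylow subgroups of $G$ are cyclic, I want to show any flabby (resp. coflabby) $G$-lattice $M$ is invertible. The standard route is Endo--Miyata's criterion that invertibility can be checked on Sylow subgroups: a $G$-lattice is invertible if and only if its restriction to each Sylow subgroup is invertible. For a cyclic group $C$, every $\widehat H^{-1}$-vanishing (flabby) lattice is automatically invertible, because over a cyclic group the categories $H^1(C)$ and $H^{-1}(C)$ collapse into $D(C)$; this is where cyclicity does the essential work. Patching the local invertibility back together via the restriction criterion yields that $M$ itself lies in $D(G)$, giving $H^{-1}(G)=H^1(G)=D(G)$.

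Next, $(iii)\Rightarrow(i)$ is immediate once I know $[J_G]^{fl}$ is flabby and coflabby, since then condition $(iii)$ forces it into $D(G)$, i.e. invertible. The flabbiness of $[J_G]^{fl}$ holds by construction of the flasque resolution (Theorem \ref{thEM}), and its coflabbiness is exactly the content of Theorem \ref{thCTS77}: I would note, however, that coflabbiness of $[J_G]^{fl}$ requires the Sylow subgroups to be cyclic or generalized quaternion, so I should instead argue directly that $[J_G]^{fl}$ is flabby and coflabby whenever the Sylows are cyclic (a special case of Theorem \ref{thCTS77} (ii)$\Rightarrow$(i)). Then $(iii)$ applies to $M=[J_G]^{fl}$ and delivers invertibility.

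Finally, for $(i)\Rightarrow(ii)$ I would argue contrapositively: if some Sylow $p$-subgroup $P$ of $G$ is non-cyclic, I must produce an obstruction to invertibility of $[J_G]^{fl}$. Using restriction, invertibility of $[J_G]^{fl}$ over $G$ forces invertibility of $[J_P]^{fl}$ over $P$. Via $H^1(P,[J_P]^{fl})\simeq H^3(P,\bZ)$, a non-cyclic $p$-group $P$ has nontrivial $H^3(P,\bZ)$ (for instance $(C_p)^2$ has nonzero third cohomology), and one shows this nonvanishing is incompatible with invertibility, since invertible lattices have vanishing Tate cohomology in a suitable range compatible with the relevant long exact sequences. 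The main obstacle I anticipate is precisely this last implication: establishing rigorously that the nonvanishing of $H^3(P,\bZ)$ obstructs invertibility rather than merely obstructing the vanishing $[J_G]^{fl}=0$. This requires carefully distinguishing the condition ``$[J_G]^{fl}$ is invertible'' from ``$[J_G]^{fl}=0$'' and using the cohomological characterization of $D(G)$ (via restriction to Sylow subgroups and the structure of cohomology of elementary abelian $p$-groups) rather than a naive cohomology-vanishing argument.
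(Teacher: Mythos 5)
A preliminary point: the paper does not prove this theorem at all --- it is quoted from Endo and Miyata with a citation --- so there is no internal proof to compare with, and your outline must stand on its own. It does not, and the decisive gap is in $(i)\Rightarrow(ii)$. Your proposed obstruction is the nonvanishing of $H^1(P,[J_P]^{fl})\simeq H^3(P,\bZ)$ for a non-cyclic Sylow subgroup $P$. Since invertible lattices are coflabby (Lemma \ref{lemSL}), this does rule out invertibility whenever some subgroup of $P$ has nontrivial $H^3(\cdot,\bZ)$, i.e.\ whenever $P$ contains $C_p\times C_p$. But it is powerless exactly when $P$ is generalized quaternion: by Theorem \ref{thCTS77}, $H^3(H,\bZ)=0$ for \emph{every} subgroup $H$ of $Q_{2^n}$, so $[J_{Q_{2^n}}]^{fl}$ is flabby and coflabby, and yet it is still not invertible --- this is precisely the example $[J_{Q_8}]^{fl}$ that the paper itself records in Example \ref{exQ8} and in the introduction. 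Your stated worry (``establishing rigorously that the nonvanishing of $H^3(P,\bZ)$ obstructs invertibility'') is aimed at the wrong difficulty: that direction is fine, but for quaternion Sylow subgroups there is no nonvanishing to exploit, and one needs a genuinely different lattice-theoretic argument over $\bZ[Q_{2^n}]$; this is the hard half of Endo--Miyata's proof. The side remark that invertible lattices have vanishing Tate cohomology ``in a suitable range'' is also only true in degrees $\pm 1$ (e.g.\ $\widehat H^0(G,\bZ)\neq 0$ for the invertible lattice $\bZ$).

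There are two further, lesser problems. In $(ii)\Rightarrow(iii)$ the reduction to Sylow subgroups is legitimate (invertibility of a $G$-lattice is detected on its restrictions to Sylow subgroups), but the cyclic case --- that every flabby, equivalently coflabby, $C_{p^n}$-lattice is invertible --- is the main content of the theorem and cannot be dispatched by saying ``the categories collapse''; the analogous collapse already fails for $Q_8$, whose Tate cohomology is just as periodic (Theorem \ref{thEM82}, Example \ref{exQ8}), so cyclicity must enter through the actual structure theory of $\bZ[C_{p^n}]$-lattices, not through periodicity or formal category-theoretic considerations. Finally, your $(iii)\Rightarrow(i)$ is correct but over-engineered: $[J_G]^{fl}$ is flabby by construction of the flasque resolution, so the hypothesis $H^{-1}(G)=D(G)$ alone already yields invertibility, and the excursion through coflabbiness of $[J_G]^{fl}$ (which would drag in the quaternion condition of Theorem \ref{thCTS77}) is unnecessary.
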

\begin{theorem}[{Endo and Miyata \cite[Theorem 2.3]{EM74}, 
see also \cite[Proposition 3]{CTS77}}]\label{thEM74M}
Let $G$ be a finite group. The following conditions are equivalent:\\
{\rm (i)} $[J_G]^{fl}=0$;\\
{\rm (ii)} $[J_G]^{fl}$ is of finite order in $\cC(G)/\cS(G)$;\\
{\rm (iii)} all the Sylow subgroups of $G$ are cyclic and $H^4(G,\bZ)\simeq \widehat H^0(G,\bZ)$;\\
{\rm (iv)} $G=C_m$ or $G=C_n\times \langle\sigma,\tau\mid\sigma^k=\tau^{2^d}=1,
\tau\sigma\tau^{-1}=\sigma^{-1}\rangle$ where $d\geq 1, k\geq 3$, 
$n,k$: odd, and ${\rm gcd}\{n,k\}=1$;\\
{\rm (v)} $G=\langle s,t\mid s^m=t^{2^d}=1, tst^{-1}=s^r, m: odd,\ 
r^2\equiv 1\pmod{m}\rangle.$
\end{theorem}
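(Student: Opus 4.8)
The plan is to organize everything around the single condition
\begin{quote}
$(\ast)$\quad every Sylow subgroup of $G$ is cyclic, and the cohomological period $q$ of $G$ divides $4$,
\end{quote}
and to prove that each of (i)--(v) is equivalent to $(\ast)$. The implication (i) $\Rightarrow$ (ii) is immediate, since $0$ has order one. For (ii) $\Rightarrow$ (first half of $(\ast)$) I first observe that a finite-order element of the monoid $C(G)/S(G)$ is automatically a unit: if $N[J_G]^{fl}=0$ with $N\geq 1$, then $(N-1)[J_G]^{fl}$ is an additive inverse of $[J_G]^{fl}$, and a similarity class admitting an inverse is exactly the class of an invertible $G$-lattice (a unit of $C(G)/S(G)$ corresponds to an element of $D(G)/S(G)$). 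Hence $[J_G]^{fl}$ is invertible, and Theorem~\ref{thEM7415} forces every Sylow subgroup of $G$ to be cyclic. By the structure theorem for groups all of whose Sylow subgroups are cyclic, $G$ is metacyclic, $G=\langle a,b\mid a^m=b^n=1,\ bab^{-1}=a^r\rangle$, which in particular makes $G$ a group with periodic cohomology.

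The technical heart is the identification of the order of $[J_G]^{fl}$ in $C(G)/S(G)$ with a cohomological index, which gives (ii) $\Leftrightarrow$ (iii) and the period half of $(\ast)$. Dualizing the augmentation sequence yields $0\to\bZ\to\bZ[G]\to J_G\to 0$; since $\bZ[G]$ is cohomologically trivial, the connecting maps give $\widehat H^i(H,J_G)\cong\widehat H^{i+1}(H,\bZ)$ for all $H\leq G$ and all $i$. Combining this with the relation $H^1(H,[J_G]^{fl})=H^3(H,\bZ)$ noted above, the plan is to express the order of $[J_G]^{fl}$ through the comparison between $\widehat H^0(G,\bZ)=\bZ/|G|\bZ$ and $H^4(G,\bZ)$ provided by cup product with a periodicity class. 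Because $G$ has periodic cohomology one has $|H^4(G,\bZ)|=|G|$ precisely when $q\mid 4$, and the claim is that $[J_G]^{fl}$ vanishes exactly in this case, i.e. exactly when $H^4(G,\bZ)=\widehat H^0(G,\bZ)$, which is condition (iii). I expect this identification of the order of the flabby class with the index $[\,\widehat H^0(G,\bZ):H^4(G,\bZ)\,]$ to be the main obstacle, together with controlling the $2$-primary behaviour when $n$ has a large power of $2$.

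It then remains to turn ``$q\mid 4$'' into the group-theoretic description. Here I would compute the period of the metacyclic group $G$ locally: for each prime $p$ the $p$-primary period is governed by the order of the conjugation action of $b$ on the cyclic Sylow $p$-subgroup of $\langle a\rangle$, and the global period is the least common multiple of these. Consequently $q\mid 4$ holds if and only if the automorphism $a\mapsto a^r$ has order at most $2$ on each such Sylow subgroup, i.e. $r^2\equiv 1\pmod m$ with $m$ odd (any odd prime dividing the order of the action would force $q$ to be divisible by an odd prime $>1$). This is exactly condition (v). The equivalence (v) $\Leftrightarrow$ (iv) is then elementary: given (v), the hypothesis $r^2\equiv 1\pmod m$ lets me split $\langle a\rangle=C_m$ by the Chinese Remainder Theorem into the part $C_n$ on which $b$ acts trivially ($r\equiv 1$) and the part $C_k$ on which $b$ acts by inversion ($r\equiv -1$), with $nk=m$, $\gcd(n,k)=1$ and $n,k$ odd; this gives $G=C_n\times(C_k\rtimes\langle b\rangle)$, which is $C_m$ when $k=1$ and the displayed semidirect product when $k\geq 3$, while the converse is the same computation read backwards.

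Finally, to close the cycle I would establish (iv) $\Rightarrow$ (i). The quickest route is to check that the groups in (iv) satisfy $(\ast)$ directly: $C_m$ has period $2$, and $C_n\times(C_k\rtimes C_{2^d})$ with $C_{2^d}$ acting by inversion on the odd part has period $4$ (the factor $\langle b^2\rangle$ is central and does not raise the period), so that the order formula of the second paragraph gives $[J_G]^{fl}=0$. Alternatively, and as a useful consistency check, one may exhibit an explicit stably permutation flabby resolution, in the spirit of the $S_3$-lattice relation $M\oplus\bZ\simeq\bZ[S_3/\langle\sigma\rangle]\oplus\bZ[S_3/\langle\tau\rangle]$ of Colliot-Th\'el\`ene and Sansuc recalled in the Introduction, handling the dihedral case $C_n\times D_k$ and reducing the general case to it via the direct factor $C_n$. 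Either way the implications (i) $\Rightarrow$ (ii) $\Rightarrow$ (iii) $\Rightarrow$ (v) $\Leftrightarrow$ (iv) $\Rightarrow$ (i) complete the proof.
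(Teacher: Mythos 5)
The paper does not actually prove this statement; it is quoted from Endo--Miyata \cite[Theorem 2.3]{EM74} and Colliot-Th\'el\`ene--Sansuc \cite[Proposition 3]{CTS77}, so your proposal can only be measured against the known argument. Your overall architecture --- reducing all five conditions to ``every Sylow subgroup of $G$ is cyclic and the cohomological period divides $4$'' --- is the standard and correct one, and several pieces are sound: (i)$\Rightarrow$(ii) is trivial; your observation that a finite-order class in $C(G)/S(G)$ is a unit, hence the class of a direct summand of a permutation lattice, is correct, and combined with Theorem~\ref{thEM7415} it does yield the cyclic-Sylow half of (iii); the equivalence (iv)$\Leftrightarrow$(v) by the Chinese Remainder Theorem and the local computation of the period of a metacyclic group are both fine.

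The genuine gap is the step you yourself flag as the technical heart: the identification of the order of $[J_G]^{fl}$ in $C(G)/S(G)$ with the index $[\widehat H^0(G,\bZ):H^4(G,\bZ)]$. This identification is false, and no repair of it can work, because the content of (i)$\Leftrightarrow$(ii) is precisely that the order of $[J_G]^{fl}$ is either $1$ or infinite; it is never a finite number greater than $1$, so it cannot equal an index that can be, say, $7$. Concretely, for the nonabelian group $G=C_7\rtimes C_3$ of order $21$ all Sylow subgroups are cyclic and $[\widehat H^0(G,\bZ):H^4(G,\bZ)]=7$ (the $7$-primary part of $H^4(G,\bZ)$ vanishes because the action satisfies $r^3\equiv 1$ but $r^2\not\equiv 1\pmod 7$), yet $[J_G]^{fl}$ has infinite order since $G$ is not of the form (v). With this formula removed, your plan establishes neither of the two hard implications. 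You still need (a) that finite order of $[J_G]^{fl}$ forces $H^4(G,\bZ)\cong\bZ/|G|\bZ$ --- in the original argument this is extracted from the behaviour of the classes of tensor powers of $J_G$ (note $[J_G]^{fl}=[J_G\otimes_\bZ J_G]$, as recalled after Theorem~\ref{thEM82}), not from an order formula --- and (b) that the groups in (iv) actually satisfy $[J_G]^{fl}=0$, which requires exhibiting an explicit stably permutation flabby class for $C_n\times(C_k\rtimes C_{2^d})$; you mention this only as an ``alternative'' and do not carry it out, and deducing it from ``period divides $4$'' presupposes the very equivalence you are trying to prove.
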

\begin{theorem}[{Endo \cite[Theorem 3.1]{End11}}]\label{thE0}
Let $G$ be a non-abelian group whose Sylow subgroups are all cyclic.
Let $H$ be a non-normal subgroup of $G$ which contains no normal subgroup 
of $G$ except $\{1\}$. 
Then $[J_{G/H}]^{fl}$ is invertible, and the following conditions are equivalent:\\
{\rm (i)} $[J_{G/H}]^{fl}=0$;\\
{\rm (ii)} $[J_{G/H}]^{fl}$ is of finite order in $\cC(G)/\cS(G)$;\\
{\rm (iii)} 
$G=D_n$ with $n$ odd $(n\geq 3)$ 
or $G=C_m\times D_n$ where $m,n$ are odd, 
$m,n\geq 3$, ${\rm gcd}\{m,n\}=1$, and $H\leq D_n$ is of order $2$;\\
{\rm (iv)} 
$H=C_2$ and $G\simeq C_r\rtimes H$, $r\geq 3$ odd, where 
$H$ acts non-trivially on $C_r$. 
\end{theorem}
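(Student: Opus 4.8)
The plan is to run the cycle $(i)\Rightarrow(ii)\Rightarrow(iii)\Leftrightarrow(iv)\Rightarrow(i)$, with the implication $(ii)\Rightarrow(iii)$ carrying essentially all of the difficulty. The implication $(i)\Rightarrow(ii)$ is immediate, since $0$ is of finite order. Before anything else I would record the group-theoretic shape of $G$: as every Sylow subgroup of $G$ is cyclic, $G$ is a $Z$-group, hence metacyclic, $G\cong\langle s,t\mid s^a=t^b=1,\ tst^{-1}=s^c\rangle$ with $\langle s\rangle\cong C_a$ normal, $\gcd(a,b)=1$, and $c^b\equiv 1\pmod a$. Because every subgroup of the cyclic normal subgroup $\langle s\rangle$ is characteristic in $\langle s\rangle$ and hence normal in $G$, the hypothesis that $H$ is core-free forces $H\cap\langle s\rangle=1$; thus $H$ embeds into $G/\langle s\rangle\cong C_b$ and is cyclic, say $H\cong C_h$ with $h\mid b$ and $h\geq 2$ (as $H$ is non-normal). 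Finally I would justify the parenthetical invertibility: the flabby part $F$ of any $G$-lattice is flabby, i.e. $F\in H^{-1}(G)$, and Theorem~\ref{thEM7415}(iii) gives $H^{-1}(G)=D(G)$ since the Sylow subgroups are cyclic, so $[J_{G/H}]^{fl}=[F]$ is invertible.

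For $(iii)\Leftrightarrow(iv)$ I would argue purely inside group theory. Writing $r$ for the order of the normal cyclic subgroup and identifying an order-two action of $H=C_2$ with an element $\varepsilon\in(\bZ/r)^\times$ satisfying $\varepsilon^2=1$, the Chinese Remainder Theorem splits $r=mn$ according to where $\varepsilon\equiv 1$ (call this part $C_m$) and where $\varepsilon\equiv -1$ (call it $C_n$); both $m,n$ are odd. Then $C_r\rtimes_\varepsilon C_2\cong C_m\times(C_n\rtimes_{-1}C_2)=C_m\times D_n$, the action is nontrivial exactly when $n>1$, and $m=1$ recovers $G=D_n$. Conversely each $C_m\times D_n$ with $\gcd(m,n)=1$ is visibly of the form $C_{mn}\rtimes C_2$ with $H=C_2\leq D_n$ core-free and non-normal, giving the equivalence.

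For $(iii)\Rightarrow(i)$ I must compute the flabby class directly, and here it is important that $n$ is \emph{odd}. Using $G/H\cong C_m\times(D_n/C_2)$ as a $G$-set (because $H$ lies in the second factor), the defining sequence $0\to\bZ\to\bZ[G/H]\to J_{G/H}\to0$ exhibits $J_{G/H}$ in terms of the permutation module $\bZ[C_m]\otimes_\bZ\bZ[D_n/C_2]$; since $\gcd(m,2n)=1$ the factor orders are coprime, so restriction to $D_n$ together with the coprimality lets me transfer the computation from $G$ to $D_n$ and reduce to showing $[J_{D_n/C_2}]^{fl}=0$ for $n$ odd. I would then build an explicit flabby resolution of $J_{D_n/C_2}$ and verify that its flabby term is stably permutation, the oddness of $n$ being exactly what makes the relevant $2$-torsion obstruction vanish; by Theorem~\ref{thEM73}(i) this is equivalent to the stable rationality of the corresponding norm one torus. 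Note that one cannot simply quote Theorem~\ref{thEM74M}: that result concerns $J_{D_n}$ (the regular case $H=\{1\}$, rank $2n-1$), whereas here the module is $J_{D_n/C_2}$ of rank $n-1$, so a separate resolution is genuinely needed.

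The heart of the proof is $(ii)\Rightarrow(iii)$. The main device is that restriction $\mathrm{Res}^G_{G'}$ is exact, preserves permutation and flabby lattices, hence commutes with the flabby class and induces a monoid homomorphism $C(G)/S(G)\to C(G')/S(G')$ that preserves finite order; combining this with Mackey's formula $\mathrm{Res}^G_{G'}\bZ[G/H]=\bigoplus_i\bZ[G'/(G'\cap g_iHg_i^{-1})]$ and additivity of the flabby class on short exact sequences yields $[\mathrm{Res}^G_{G'}J_{G/H}]^{fl}=\sum_i[J_{G'/(G'\cap g_iHg_i^{-1})}]^{fl}$. Taking $G'=\langle s\rangle\rtimes H=C_a\rtimes C_h$ isolates the diagonal double coset, whose term is $[J_{G'/H}]^{fl}$, and reduces the problem to a metacyclic group in which $H$ is a complement to the normal cyclic subgroup; passing to a subgroup in which $H$ becomes normal then lets me invoke the Galois-case criterion of Theorem~\ref{thEM74M}, namely that finite order of the flabby class is equivalent to the numerical condition $H^4=\widehat H^0$, forcing the top quotient to act through a single involution. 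The upshot is $h=2$ together with the odd, inversion-type action, which is precisely $(iii)$. The main obstacle I anticipate is the bookkeeping in the Mackey decomposition: one must show that the remaining double-coset summands are themselves of finite order (e.g. stably rational cyclic pieces) so that the finite-order hypothesis genuinely transfers to the single summand $[J_{G'/H}]^{fl}$ whose vanishing is equivalent to the dihedral constraint, and one must rule out the cases $h\geq 3$ and top-quotient order $>2$; this is exactly where the numerical cohomological criterion of Theorem~\ref{thEM74M} does the decisive work.
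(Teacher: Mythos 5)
The paper does not prove this statement: it is quoted verbatim as Endo's Theorem 3.1 from \cite{End11}, so there is no in-paper proof to compare your argument against. Judged on its own, your outline gets the easy parts right (the metacyclic structure of a $Z$-group, the core-freeness argument forcing $H\cap\langle s\rangle=\{1\}$, the CRT proof of $(iii)\Leftrightarrow(iv)$, and the justification of invertibility via Theorem \ref{thEM7415}), but both substantive implications are left as declarations of intent rather than proofs.

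For $(iv)\Rightarrow(i)$ you reduce, plausibly, to showing $[J_{D_n/C_2}]^{fl}=0$ for $n$ odd, and then write that you ``would build an explicit flabby resolution \dots and verify that its flabby term is stably permutation.'' That verification is the entire content of this implication (it is essentially Colliot-Th\'el\`ene--Sansuc's Remarque R1/R4 computation, cf.\ the $S_3$-lattice example $M\oplus\bZ\simeq\bZ[S_3/\langle\sigma\rangle]\oplus\bZ[S_3/\langle\tau\rangle]$ mentioned in Section \ref{seInt}), and it is not carried out. For $(ii)\Rightarrow(iii)$ there is a structural gap you yourself flag but do not close: after restricting to $G'=\langle s\rangle\rtimes H$ and applying Mackey, you obtain $[\mathrm{Res}^G_{G'}J_{G/H}]^{fl}=\sum_i[J_{G'/(G'\cap g_iHg_i^{-1})}]^{fl}$, and you want to ``isolate the diagonal double coset.'' But $C(G')/S(G')$ is only a monoid, and even within the group of invertible classes, finite order of a sum does not imply finite order of each summand; among the non-diagonal summands are terms such as $[J_{G'}]^{fl}$ (from trivial intersections), whose finite order is governed by Theorem \ref{thEM74M} and is exactly the kind of condition you are trying to derive, not something you may assume. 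Until you either show the auxiliary summands have finite order independently or restructure the reduction to avoid them, and then actually execute the $\widehat H^0$ versus $H^4$ computation that rules out $h\geq 3$ and larger top quotients, the hard direction is not established.
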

A partial result of Theorem \ref{thE0} was given by 
Colliot-Th\'{e}l\`{e}ne and Sansuc \cite[Remarque R4]{CTS77}. 
\begin{theorem}[{Colliot-Th\'{e}l\`{e}ne and Sansuc \cite[Proposition 9.1]{CTS87}, 
\cite[Theorem 3.1]{LeB95}, 
%\cite{LeB95}, 
\cite[Proposition 0.2]{CK00}, \cite{LL00}, 
Endo \cite[Theorem 4.1]{End11}, see also 
\cite[Remark 4.2 and Theorem 4.3]{End11}}]
Let $n\geq 2$ be an integer.\\
{\rm (i)} $[J_{S_n/S_{n-1}}]^{fl}$ is invertible if and only if $n$ is a prime.\\
{\rm (ii)} $[J_{S_n/S_{n-1}}]^{fl}=0$ if and only if $n=2,3$.\\
{\rm (iii)} $[J_{S_n/S_{n-1}}]^{fl}$ is of finite order in $\cC(G)/\cS(G)$ 
if and only if $n=2,3$.
\end{theorem}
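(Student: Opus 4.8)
\medskip
\noindent\textbf{Proof proposal.}
Throughout write $\Omega=\{1,\dots,n\}=S_n/S_{n-1}$ for the natural $S_n$-set, so that $\bZ[\Omega]$ is the permutation lattice of rank $n$, $I_{G/H}=\ker(\varepsilon\colon\bZ[\Omega]\to\bZ)$, and $J_{S_n/S_{n-1}}=J_\Omega=I_{G/H}^\circ$ sits in the exact sequence $0\to\bZ\to\bZ[\Omega]\to J_\Omega\to 0$ recalled in the introduction. Two functorial facts drive the argument. First, for any $Q\le S_n$ the restriction $\mathrm{Res}^{S_n}_Q$ carries permutation (resp. flabby, resp. invertible) lattices to permutation (resp. flabby, resp. invertible) lattices; hence a flabby resolution restricts to one, $[\mathrm{Res}_Q J_\Omega]^{fl}=\mathrm{Res}_Q[J_\Omega]^{fl}$, and being invertible, zero, or of finite order passes to every restriction. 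Second, if $Q$ is transitive on $\Omega$ with stabiliser $H'=Q\cap S_{n-1}$ then $\mathrm{Res}_Q J_\Omega=J_{Q/H'}$, whereas if $Q$ fixes a letter then $e_i-e_{\mathrm{fix}}\mapsto e_i$ shows $\mathrm{Res}_Q I_{G/H}$ is permutation, so $\mathrm{Res}_Q J_\Omega$ is permutation and $\mathrm{Res}_Q[J_\Omega]^{fl}=0$.

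For the forward implication of (i) let $n=p$ be prime. Since invertibility of a flabby class may be tested after restriction to the Sylow subgroups (the standard localisation property of invertible lattices), it suffices to treat each Sylow restriction. For $q\ne p$ a Sylow $q$-subgroup has all orbit lengths powers of $q$ summing to $p$, so $p\not\equiv0\pmod q$ forces a fixed letter and $\mathrm{Res}_{G_q}[J_\Omega]^{fl}=0$ by the second fact. For $q=p$ the Sylow subgroup is the cyclic $C_p$, so by Theorem \ref{thEM7415}(iii) the flabby lattice $\mathrm{Res}_{C_p}[J_\Omega]^{fl}$ is invertible. Hence $[J_{S_p/S_{p-1}}]^{fl}$ is invertible.

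The remaining implications proceed by exhibiting a single subgroup over which the restricted class is visibly bad, using that restriction can only improve it. For $n=2,3$ the torus has dimension $\le2$, hence is stably $k$-rational by Theorem \ref{thVo}, so $[J_{S_n/S_{n-1}}]^{fl}=0$ by Theorem \ref{thEM73}(i); this gives the ``if'' parts of (ii) and (iii). For a prime $p\ge5$ restrict to the Frobenius group $Q=\mathrm{AGL}(1,p)=C_p\rtimes C_{p-1}\le S_p$, transitive with stabiliser $H'=C_{p-1}$; all its Sylow subgroups are cyclic and $H'$ is non-normal and core-free, so Theorem \ref{thE0} applies, and as $Q$ is neither $D_n$ nor $C_m\times D_n$ the class $[J_{Q/H'}]^{fl}=\mathrm{Res}_Q[J_{S_p/S_{p-1}}]^{fl}$ is nonzero of infinite order. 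Thus each prime $p\ge5$ gives an invertible (by (i)) class of infinite order. For composite $n$ one shows non-invertibility, hence also infinite order: fixing a prime $p\mid n$ and writing $n=mp$ with $m\ge2$, embed $P=C_p\times C_p$ as $m$ blocks of size $p$ with $m$ distinct index-$p$ stabilisers (jointly trivial, so $P$ acts faithfully with no fixed letter). When $n=p^2$ this $P$ is regular and $\mathrm{Res}_P J_\Omega=J_P$, non-invertible by Theorem \ref{thEM7415} (its Sylow subgroup $C_p\times C_p$ is non-cyclic) and of infinite order by Theorem \ref{thEM74M} (as $C_p\times C_p$ is neither cyclic nor of the metacyclic type there); in particular the regular $V_4$ settles $n=4$. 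Combining, $[J_{S_n/S_{n-1}}]^{fl}$ is zero $\iff$ of finite order $\iff n\in\{2,3\}$, and invertible $\iff n$ is prime.

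The main obstacle is the general composite case with $m\ne p$: one must show that the flabby class of $\bigl(\ker(\bigoplus_{i}\bZ[P/K_i]\to\bZ)\bigr)^\circ$, for $P=C_p\times C_p$ acting as $m\ge2$ orbits of size $p$ with distinct stabilisers $K_1,\dots,K_m$, is neither invertible nor of finite order. I would handle this by writing down an explicit flabby resolution of this elementary-abelian lattice — precisely the computation the algorithms of this paper perform — and then detecting non-invertibility through the appropriate cohomological obstruction (a nonzero $H^1(P',-)$ for a cyclic $P'\le P$, so that the class fails to be coflabby, which for $p=2$ is the only surviving obstruction in view of Theorem \ref{thEM82}, and via the failure of the conditions in Theorems \ref{thCTS77} and \ref{thEM82} for odd $p$), together with a Theorem \ref{thEM74M}-style order computation. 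Every other step is a formal consequence of the two functorial facts above and the cited structural theorems.
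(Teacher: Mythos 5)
This theorem is one the paper quotes from the literature (Colliot-Th\'el\`ene--Sansuc, Endo, et al.) without proof, so there is no internal argument to compare yours against; I am judging your proposal on its own. Most of what you write is sound: the Sylow reduction for invertibility, the observation that a point stabiliser forces the restriction of $J_{S_n/S_{n-1}}$ to be permutation, the use of Theorem \ref{thEM7415} for $C_p$, the $n=2,3$ case via Theorems \ref{thVo} and \ref{thEM73}, the $\mathrm{AGL}(1,p)\le S_p$ argument with Theorem \ref{thE0} for primes $p\ge 5$, and the regular $C_p\times C_p$ for $n=p^2$ are all correct (you should cite the Sylow-detection lemma for invertible flabby classes, which is standard but not stated in this paper, and note that ``$m$ distinct index-$p$ stabilisers'' is impossible when $m>p+1$ — faithfulness only needs two distinct ones).

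The genuine gap is the one you flag yourself: non-invertibility for composite $n$ that is not a prime square, in particular every squarefree composite $n$ beginning with $n=6$. This is not a peripheral case — it is the heart of the ``only if'' direction of (i), and (ii) and (iii) for such $n$ rest on it — and your sketch for closing it does not work as written. The obstruction you name, ``a nonzero $H^1(P',-)$ for a cyclic $P'\le P$, so that the class fails to be coflabby,'' is vacuous: by Theorem \ref{thEM7415}(iii) every flabby lattice over a cyclic group is invertible, hence coflabby, so $H^1(P',F)=0$ automatically for every cyclic $P'$ and every flabby class $F$. The obstruction must be evaluated at the full bicyclic group $P=C_p\times C_p$, via the standard identification of $H^1\bigl(P,[M]^{fl}\bigr)$ with $\ker\bigl(H^2(P,M)\to\prod_{C}H^2(C,M)\bigr)$, the product over cyclic $C\le P$; showing this kernel is nonzero for $M=\bigl(\ker(\bigoplus_i\bZ[P/K_i]\to\bZ)\bigr)^\circ$ is precisely the computation of \cite[Proposition 9.1]{CTS87} that you have deferred. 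Be warned also that the tempting shortcut of arranging a regular $P$-orbit, which yields $0\to(\text{permutation})\to J_\Omega|_P\to J_P\to 0$, does not let you conclude $[J_\Omega|_P]^{fl}=[J_P]^{fl}$: a permutation \emph{sub}lattice can change the flabby class (the sequence $0\to\bZ\to\bZ[G/H]\to J_{G/H}\to 0$ is already a counterexample), and Lemma \ref{lemSwa} applies only when the invertible lattice is the quotient. Until the bicyclic cohomology computation is actually carried out, parts (i)--(iii) remain unproved for $n=6,10,12,\dots$.
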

\begin{theorem}[{Endo \cite[Theorem 4.5]{End11}}]
Let $n\geq 3$ be an integer.\\
{\rm (i)} $[J_{A_n/A_{n-1}}]^{fl}$ is invertible if and only if $n$ is a prime.\\
{\rm (ii)} $[J_{A_n/A_{n-1}}]^{fl}$ is of finite order in $\cC(G)/\cS(G)$ 
if and only if $n=3,5$.
\end{theorem}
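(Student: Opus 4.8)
The plan is to translate everything into the language of flabby classes via Theorem \ref{thEM73} and then argue $p$-locally. Set $G=A_n$, $H=A_{n-1}$, and let $\Omega=\{1,\dots,n\}$ be the $G$-set of letters, so that $J_{A_n/A_{n-1}}=J_\Omega$ sits in the norm sequence $0\to\bZ\to\bZ[\Omega]\to J_\Omega\to 0$. Two general reductions drive the whole argument. First, invertibility of $[J_\Omega]^{fl}$ is detected on Sylow subgroups: since ${\rm Res}$ carries flasque resolutions to flasque resolutions, ${\rm Res}_{G_p}[J_\Omega]^{fl}=[{\rm Res}_{G_p}J_\Omega]^{fl}$, and a restriction--corestriction argument (composition is multiplication by the index $[G:G_p]$, prime to $p$) shows $[J_\Omega]^{fl}$ is invertible iff ${\rm Res}_{G_p}[J_\Omega]^{fl}$ is invertible for every prime $p$. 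Second, finite order forces invertibility: if $t\,[F]=0$ in $C(G)/S(G)$ with $F$ flabby, then $F^{\oplus t}\oplus P\simeq P'$ for permutation lattices $P,P'$, so $F$ is a direct summand of the permutation lattice $P'$, i.e. invertible. Hence part (ii) will always live \emph{inside} the primality constraint coming from part (i).

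For part (i) in the prime direction I would first prove a fixed-point lemma: if a subgroup $Q\le A_n$ fixes some letter $\omega_0$, then $\omega_0$ gives a $Q$-equivariant section $\bZ\to\bZ[\Omega]|_Q$ of the augmentation, so $\bZ[\Omega]|_Q=\bZ\omega_0\oplus I_\Omega|_Q$ and $J_\Omega|_Q=(I_\Omega|_Q)^\circ$ is a direct summand of a permutation lattice, hence invertible. Now suppose $n=p$ is prime. For the single prime $p=n$ the Sylow subgroup is the cyclic $C_p$ generated by a $p$-cycle acting regularly, so ${\rm Res}_{C_p}J_\Omega=J_{C_p}$ has $[J_{C_p}]^{fl}=0$ by Theorem \ref{thEM7415}. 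For every prime $q<p$ we have $q\nmid n$, so the number of fixed points of a Sylow $q$-subgroup is $\equiv n\not\equiv 0\pmod q$, hence positive, and the fixed-point lemma gives an invertible restriction. By the Sylow reduction, $[J_{A_p/A_{p-1}}]^{fl}$ is invertible.

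For the converse (composite $n$) and for part (ii) I would argue as follows. If $n$ is composite, choose a prime $p\mid n$ and exhibit a non-cyclic $p$-subgroup $Q$ of $A_n$ acting on $\Omega$ \emph{without} fixed points for which ${\rm Res}_Q[J_\Omega]^{fl}$ is non-invertible; the cleanest instance is $n=p^2$, where a regular $C_p\times C_p$ gives ${\rm Res}_Q J_\Omega=J_{C_p\times C_p}$, non-invertible by Theorem \ref{thEM7415} since the Sylow $C_p\times C_p$ is non-cyclic (the rank-$3$ group $C_2^2$ in Table $1$ is the prototype of this obstruction). For part (ii), finite order implies invertible implies $n$ prime by part (i), so it remains to treat primes. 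Here the key device is restriction to the Frobenius normalizer $F=N_{A_p}(C_p)=C_p\rtimes C_{(p-1)/2}$: it acts transitively on $\Omega$ with point stabilizer the complement $C_{(p-1)/2}$, whence ${\rm Res}_F J_\Omega=J_{F/C_{(p-1)/2}}$. Since $F$ has all Sylow subgroups cyclic and $C_{(p-1)/2}$ is a non-normal core-free subgroup, Theorem \ref{thE0} applies and says $[J_{F/C_{(p-1)/2}}]^{fl}$ has finite order iff the complement is $C_2$, i.e. iff $p=5$ (the case $F=D_5$); for $p\ge 7$ the complement has order $\ge 3$, so this restricted class has infinite order and therefore $[J_{A_p/A_{p-1}}]^{fl}$ has infinite order. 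The remaining cases are $n=3$, where $A_3=C_3$ is cyclic and the class is $0$, and $n=5$.

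The main obstacle is two-fold and concentrated exactly where the restriction tests are \emph{inconclusive}. In the composite direction, realizing a fixed-point-free non-cyclic $p$-subgroup is easy, but verifying that its restricted flabby class is genuinely non-invertible (not merely that some cyclic restriction is invertible, which it always is) is the delicate point for general composite $n$, since the relevant $C_p\times C_p$-action is typically a sum of index-$p$ permutation modules rather than the regular one; this is precisely the computation the flabby-resolution algorithms of this paper are designed to settle. In part (ii) the Frobenius restriction is only a \emph{necessary} condition: for $p=5$ it merely shows ${\rm Res}_{D_5}[J_{A_5/A_4}]^{fl}=0$, whereas one must prove the \emph{global} vanishing $[J_{A_5/A_4}]^{fl}=0$, i.e. that $J_{A_5/A_4}$ is stably permutation. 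Producing an explicit permutation complement for this rank-$4$ $A_5$-lattice is the genuinely new positive result (answering Endo's question), and it is the true crux of the theorem.
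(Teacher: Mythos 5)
This theorem is not proved in the paper: it is quoted from Endo \cite[Theorem 4.5]{End11}, and the only ingredient the paper itself contributes is the sharper fact $[J_{A_5/A_4}]^{fl}=0$, obtained by computer (Table $5$, Example \ref{exp4} inside the proof of Theorem \ref{th1M}, where an explicit stable-permutation isomorphism for the $A_5$-lattice of GAP code $(4,31,3,2)$ is exhibited); Endo himself only had finite order for $n=5$, via \cite[Corollary 3.3]{Dre75}. Your skeleton is sound and several steps are complete and correct: the Sylow detection of invertibility, the fixed-point lemma, the observation that a $q$-Sylow subgroup of $A_p$ with $q<p$ fixes a letter, the implication ``finite order $\Rightarrow$ invertible'' (an element of $C(G)/S(G)$ with an additive inverse is a direct summand of a permutation class), and, most usefully, the restriction to $N_{A_p}(C_p)\simeq C_p\rtimes C_{(p-1)/2}$ acting transitively with point stabilizer $C_{(p-1)/2}$, which combined with Theorem \ref{thE0} correctly excludes finite order for every prime $p\geq 7$.

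Two essential pieces are missing, and you flag both yourself. (a) Non-invertibility for composite $n$: you treat only $n=p^2$, where the regular $C_p\times C_p$ gives $J_{C_p\times C_p}$ and Theorem \ref{thEM7415} applies. For a general composite $n$ the restriction of $\Omega$ to a Sylow $p$-subgroup is a disjoint union of several nontrivial orbits with proper nontrivial stabilizers, and fixed-point-freeness alone does not force non-invertibility (a regular cyclic orbit already has vanishing flabby class); deciding the matter requires checking $H^1(Q,[J_\Omega]^{fl})$ over \emph{all} subgroups $Q$ of the Sylow subgroup, or invoking Endo's general invertibility criterion for $J_{G/H}$ --- and already for $n=6$ it is not obvious at which prime and which subgroup the obstruction appears. (b) The case $n=5$ of (ii): the $D_5$-restriction only shows the restricted class vanishes, and the global statement $[J_{A_5/A_4}]^{fl}=0$ (or Endo's weaker finite-order statement via Dress) is exactly what you do not prove; it is supplied in this paper only by the explicit lattice isomorphism of Example \ref{exp4}. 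Since your treatment of (ii) for composite $n$ rests on (i), gap (a) propagates. As it stands the proposal establishes (i) for $n$ prime and $n=p^2$, and (ii) for $n=3$ and for primes $p\geq 7$, but not the full statement.
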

Note that $[J_{A_3/A_2}]^{fl}=[J_{C_3}]^{fl}=0$. 
By \cite[Corollary 3.3]{Dre75}, $[J_{A_5/A_4}]^{fl}$ is of finite order in 
$\cC(G)/\cS(G)$. 
Indeed, we get $[J_{A_5/A_4}]^{fl}=0$ 
(see Theorems \ref{th1} and \ref{th11}, Table $5$, Corollary \ref{corA} 
and Theorem \ref{th1M} (i) below). 

%%%%%%%%%%%%%%%%%%%%%%%%%%%%%%%%%%%%%%%%%%%%%%%%%%%%%%%%%%%%%%%%%%%%%%%%%%

\begin{definition}[The $G$-lattice $M_G$ of a finite subgroup $G$ of 
$\GL(n,\bZ)$]\label{defMG} 
Let $G$ be a finite subgroup of $\GL(n,\bZ)$. 
The $G$-lattice $M_G$ of rank $n$ 
is defined to be the $G$-lattice with a $\bZ$-basis $\{u_1,\ldots,u_n\}$ 
on which $G$ acts by $\sigma(u_i)=\sum_{j=1}^n a_{i,j}u_j$ for any $
\sigma=[a_{i,j}]\in G$. 
\end{definition}

%A birational 
The stably rational classification of %the 
$k$-tori of dimensions $4$ and $5$ 
(Theorem \ref{th1} and Theorem \ref{th2}) 
may be obtained by the following two theorems respectively. 
\begin{theorem}\label{th1M}
Let $G$ be a finite subgroup of $\GL(4,\bZ)$ and 
$M_G$ be the $G$-lattice of rank $4$ as in Definition \ref{defMG}.\\
{\rm (i)} 
$[M_G]^{fl}=0$ if and only if 
$G$ is conjugate to one of the $487$ groups which are not in 
{\rm Tables} $2$, $3$ and $4$.\\
{\rm (ii)} 
$[M_G]^{fl}$ is not zero but invertible 
if and only if $G$ is conjugate to one of the $7$ groups which are 
given as in {\rm Table} $2$.\\
{\rm (iii)} 
$[M_G]^{fl}$ is not invertible if and only if 
$G$ is conjugate to one of the $216$ groups which are given as 
in {\rm Tables} $3$ and $4$.\\
{\rm (iv)} 
$[M_G]^{fl}=0$ if and only if $[M_G]^{fl}$ is of finite order in $\cC(G)/\cS(G)$.\\
{\rm (v)} For the group $G\simeq S_5$ of the {\rm GAP ID} $(4,31,5,2)$ in {\rm (ii)}, 
we have 
\begin{align*}
-[M_G]^{fl}=[J_{S_5/S_4}]^{fl}\neq 0.
\end{align*}
{\rm (vi)} For the group $G\simeq F_{20}$ of the {\rm GAP ID} $(4,31,1,4)$ in {\rm (ii)}, 
we have 
\begin{align*}
-[M_G]^{fl}=[J_{F_{20}/C_4}]^{fl}\neq 0.
\end{align*}

\end{theorem}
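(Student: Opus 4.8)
The plan is to prove Theorem~\ref{th1M} by an exhaustive computation over the $710$ $\bZ$-classes of finite subgroups $G\le\GL(4,\bZ)$, assigning to each one of three mutually exclusive labels according to the position of the flabby class $[M_G]^{fl}$ in the chain $S(G)\subset D(G)\subset C(G)$. First I would fix a representative for each of the $710$ $\bZ$-classes, organised by the $227$ $\bQ$-classes and $33$ crystal systems of the BBNWZ classification available in GAP. For each representative I would form the lattice $M_G$ of Definition~\ref{defMG} and compute a flasque resolution $0\to M_G\to P\to F\to 0$ with $P$ permutation, so that $[M_G]^{fl}=[F]$; this is precisely the output of the flabby-resolution procedure developed in the later sections. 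Since parts (i)--(iii) partition all groups, it suffices to decide for each $G$ whether $[F]=0$ and whether $[F]$ is invertible, and then to sort the groups into the three tables; matching the resulting counts $487$, $7$, $216$ against Tables~$2$, $3$ and~$4$ establishes (i), (ii) and (iii) simultaneously. (The birational Theorem~\ref{th1} is then immediate from Theorem~\ref{thEM73}.)

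The algorithmic heart is the pair of decision procedures for membership of $[F]$ in $S(G)$ (the zero test) and in $D(G)$ (the invertibility test). The first obstacle I expect is that flabbyness together with coflabbyness does \emph{not} imply invertibility, as the rank-$9$ lattice $[J_{Q_8}]^{fl}$ in the excerpt shows; hence the invertibility test must be strictly finer than verifying $\widehat H^{\pm1}(H,F)=0$ for all $H\le G$, and the algorithms of the paper must supply this finer decision. The principal practical difficulty is therefore to separate the $7$ nonzero-but-invertible cases (Table~$2$) from the $216$ non-invertible ones (Tables~$3$ and~$4$) while keeping the ranks of the resolved lattices under control. As independent checks I would restrict to Sylow subgroups and invoke Theorems~\ref{thEM7415} and~\ref{thEM74M}, and compare the verdicts on the indecomposable rank-$3$ constituents (Table~$3$) against Kunyavskii's Theorem~\ref{thKu}.

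For part (iv), the only nontrivial direction is that finite order forces $[M_G]^{fl}=0$. Here I would first record the purely monoid-theoretic fact that a finite-order flabby class is automatically invertible: if $n[F]=0$ then $F^{\oplus n}\oplus P\cong P'$ for permutation lattices $P,P'$, whence $F$ is a direct summand of the permutation lattice $P'$, i.e. $F\in D(G)$. By (i)--(iii) a finite-order class is thus either $0$ or one of the $7$ classes of Table~$2$, so it remains only to prove that each of those $7$ has \emph{infinite} order. Six of them are attached to the groups $F_{20}$, $C_2\times F_{20}$, $S_5$ and $C_2\times S_5$ and reduce to the $F_{20}$ and $S_5$ computations below via restriction and the product structure; the seventh, $C_3\rtimes C_8$ at $(4,33,2,1)$, has all Sylow subgroups cyclic (so its class is invertible, consistent with Table~$2$) and its infinite order I would verify directly from the resolution. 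This passage from machine-verified invertibility to infinite order is the conceptual crux of (iv), as opposed to the bookkeeping.

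Finally, for (v) and (vi) I would identify $M_G$ explicitly for $G\simeq S_5$ at $(4,31,5,2)$ and for $G\simeq F_{20}$ at $(4,31,1,4)$ and relate it to the Chevalley module by duality: in both cases $M_G\cong I_{G/H}=J_{G/H}^{\circ}$, the $\bZ$-dual of the Chevalley module sitting at the companion codes $(4,31,4,2)$ and $(4,31,1,3)$ of Table~$5$. Since permutation lattices are self-dual, dualising a flasque resolution yields the relation $[M^{\circ}]^{fl}=-[M]^{fl}$ in the group of invertible elements of $C(G)/S(G)$, which with $M=I_{G/H}$ gives the stated sign identity $-[M_G]^{fl}=[J_{G/H}]^{fl}$. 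The nonvanishing of $[J_{S_5/S_4}]^{fl}$ and $[J_{F_{20}/C_4}]^{fl}$, and indeed their infinite order (the latter invertible but never of finite order since $5\neq 2,3$), then follow from the results quoted in the excerpt, namely Endo's Theorem~\ref{thE0} and the $S_n$ theorem; this simultaneously supplies the infinite-order input demanded by part (iv) for the $F_{20}$- and $S_5$-related members of Table~$2$.
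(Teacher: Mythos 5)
Your strategy for parts (i)--(iv) is essentially the paper's: an exhaustive machine classification of the $710$ $\bZ$-classes via flabby resolutions, the (correct) observation that a finite-order class is automatically invertible, so that (iv) reduces to showing the seven Table $2$ classes have infinite order, and restriction/product reductions to a few maximal groups. The genuine gap is in your treatment of (v) and (vi), on which your part (iv) also leans for the $F_{20}$- and $S_5$-related entries of Table $2$. First, the identification $M_G\cong I_{G/H}=J_{G/H}^{\circ}$ is false for $G$ at $(4,31,5,2)$: the standard representation of $S_5$ is self-dual over $\bQ$, so the dual of the Chevalley lattice at $(4,31,4,2)$ lies in the $\bQ$-class $(4,31,4)$, whereas $(4,31,5,2)$ lies in the other faithful $4$-dimensional $\bQ$-class (standard tensor sign); the two lattices appearing in (v) are not dual to one another. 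Second, and independently, the relation ``$[M^{\circ}]^{fl}=-[M]^{fl}$'' obtained by ``dualising a flasque resolution'' is not a valid fact: dualising $0\to M\to P\to F\to 0$ produces a \emph{coflasque} resolution $0\to F^{\circ}\to P^{\circ}\to M^{\circ}\to 0$ of $M^{\circ}$, not a flasque one. Concretely, $0\to I_G\to\bZ[G]\to\bZ\to 0$ shows $[I_G]^{fl}=0$ for every $G$, while $[J_G]^{fl}=[I_G^{\circ}]^{fl}$ is nonzero but invertible for, say, $G=C_7\rtimes C_3$ by Theorems \ref{thEM7415} and \ref{thEM74M}; so the two classes are not negatives of each other even in the invertible regime.

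What the paper actually does for (v)--(vi) is to take the rank-$8$ lattice $M_{G_1}\oplus M_{G_2}$, with $G_1$ at $(4,31,4,2)$ (resp.\ $(4,31,1,3)$) carrying $J_{G/H}$ and $G_2$ at $(4,31,5,2)$ (resp.\ $(4,31,1,4)$), and to prove $[M_{G_1}\oplus M_{G_2}]^{fl}=0$ by exhibiting an explicit stable-permutation isomorphism (Steps 6--7 of Section \ref{seProof1}); additivity of $\rho_G$ on direct sums then gives $-[M_{G_2}]^{fl}=[J_{G/H}]^{fl}$, whose nonvanishing and infinite order do follow from the quoted theorems as you intend. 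You would need to replace the duality argument by some such explicit computation (or by Swan's identity $-[F]=[[F]^{fl}]$ applied to a concrete resolution of $F$). Separately, your plan to ``verify directly from the resolution'' that $(4,33,2,1)$ has infinite order conceals the single hardest computation in the proof: the unique candidate stable-permutation relation there has the form $P\oplus 2F\simeq P'$, all Tate cohomologies of the two sides agree in a large range, and the paper must separate them by comparing eigenspace dimensions of the order-$8$ generator on a $C_3$-isotypic subspace after reduction modulo $3$; some finer invariant of this kind is unavoidable for that case.
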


\begin{theorem}\label{th2M}
Let $G$ be a finite subgroup of $\GL(5,\bZ)$ and 
$M_G$ be the $G$-lattice of rank $5$ as in Definition \ref{defMG}.\\
{\rm (i)} 
$[M_G]^{fl}=0$ if and only if 
$G$ is conjugate to one of the $3051$ groups which are not in 
{\rm Tables} $11$, $12$, $13$, $14$ and $15$.\\
{\rm (ii)} 
$[M_G]^{fl}$ is not zero but invertible 
if and only if $G$ is conjugate to one of the $25$ groups which are given as 
in {\rm Table} $11$.\\
{\rm (iii)} 
$[M_G]^{fl}$ is not invertible if and only if 
$G$ is conjugate to one of the $3003$ groups which are given as 
in {\rm Tables} $12$, $13$, $14$ and $15$.\\
{\rm (iv)} 
$[M_G]^{fl}=0$ if and only if $[M_G]^{fl}$ is of finite order in $\cC(G)/\cS(G)$.
\end{theorem}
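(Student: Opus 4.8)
The statement is purely lattice-theoretic, so the plan is to reduce it to a finite computation ranging over the integral conjugacy classes of finite subgroups of $\GL(5,\bZ)$ and to settle each class by an explicit flabby-resolution calculation. Because $M_G$ is determined by the integral representation only up to conjugacy, and because two $\bZ$-classes inside one $\bQ$-class can already carry different flabby classes (so the coarser $\bQ$-classification does not suffice), I would run over all $6079$ $\bZ$-classes furnished by the CARAT package, each presented by explicit integral matrix generators. For every representative $G$ the task is to assign $[M_G]^{fl}$ to exactly one of three boxes — zero, invertible but nonzero, not invertible — after which (i)--(iii) follow by matching the resulting labels against Tables $11$--$15$.

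For the engine I would use the flabby-resolution procedure developed in this paper: for each $G$ produce an explicit short exact sequence $0\to M_G\to P\to F\to0$ with $P$ permutation and $F$ flabby (Theorem \ref{thEM}), for instance by dualizing a permutation cover $Q\twoheadrightarrow M_G^\circ$ whose kernel is coflabby and setting $P=Q^\circ$, so that $F$ emerges as a concrete finite integral matrix group. The classification of $[F]=[M_G]^{fl}$ then proceeds in two stages. First I would test invertibility; this is the more tractable step, governed by cohomological conditions and by restriction to the Sylow subgroups, and a negative answer places $G$ directly in case (iii). Second, for the invertible classes I would decide whether $[F]=0$, i.e. whether $F$ is stably permutation, by searching for permutation lattices $P_1,P_2$ with $F\oplus P_1\simeq P_2$ and certifying the resulting $G$-lattice isomorphism as a $\GL(n,\bZ)$-conjugacy; a positive certificate yields case (i), and its absence together with invertibility yields case (ii).

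The hard part will be certifying $[F]=0$ positively. Invertibility is detected by local, cohomological criteria, but stable permutationality must be witnessed by an explicit finite-rank isomorphism, and manufacturing and verifying such witnesses uniformly across thousands of lattices is the delicate and computation-heavy step. Two reductions should keep it manageable: filtering by invertibility first removes at once the $3003$ classes of case (iii), and the decomposability recorded in Remark \ref{rem25} lets many rank-$5$ lattices be replaced by rank-$4$ data already classified in Theorem \ref{th1M}, so that the zero-test is confined to a short list of genuinely new cases.

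For (iv) it suffices to show that every nonzero flabby class among the $6079$ has infinite order, where I read ``finite order'' as $n[M_G]^{fl}=0$ for some $n\geq1$. The non-invertible classes are disposed of by a general principle: if $n[F]=0$ then $F^{\oplus n}$ is stably permutation, hence invertible, and $F$, being a direct summand of $F^{\oplus n}$, is then itself invertible; contrapositively, every class of case (iii) has infinite order. Only the $25$ invertible-but-nonzero classes of Table $11$ remain, and for these I would invoke Remark \ref{rem25}: each such $M_G$ splits as $M_1\oplus M_2$ with $M_1$ a $G/N$-lattice of rank $4$ whose quotient is one of the seven groups of Theorem \ref{th1M} (ii), whence $[M_G]^{fl}=[M_1]^{fl}$ is nonzero of infinite order by Theorem \ref{th1M} (iv), the base cases being the infinitude of $[J_{S_5/S_4}]^{fl}$ and $[J_{F_{20}/C_4}]^{fl}$. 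Together these give $[M_G]^{fl}=0$ if and only if $[M_G]^{fl}$ is of finite order.
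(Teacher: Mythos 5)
Your proposal is correct and follows essentially the same route as the paper: reduce to a finite computation over the $6079$ $\bZ$-classes, dispose of decomposable lattices via the rank $\leq 4$ classification (Lemma \ref{lemp1} and Theorems \ref{thKu}, \ref{th1M}), filter the indecomposable classes by the invertibility test, certify $[M_G]^{fl}=0$ for the survivors by explicit stable-permutation isomorphisms (the paper additionally shrinks the $311$ remaining cases to $18$ maximal groups via Lemma \ref{lemp3}), and prove (iv) exactly as you do — non-invertible classes have infinite order because a direct summand of an invertible lattice is invertible, and the $25$ invertible nonzero classes reduce through Remark \ref{rem25} and Lemma \ref{lemp1} to the rank-$4$ infinite-order statement of Theorem \ref{th1M} (iv).
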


%%%%%%%%%%%%%%%%%%%%%%%%%%%%%%%%%%%%%%%%%%%%%%

\begin{remark}
{\rm (i)} By the interpretation as in Theorem \ref{thEM73}, 
Theorem \ref{th1M} (v), (vi) claims that 
the corresponding two tori $T$ and $T'$ of dimension $4$ 
are not stably $k$-rational and are not stably $k$-isomorphic
%birationally isomorphic each other 
but the torus $T\times T'$ of dimension $8$ 
%itself 
is stably $k$-rational.\\
{\rm (ii)} When $[M]^{fl}$ is invertible, 
the inverse element of $[M]^{fl}$ is given by $-[M]^{fl}=[[M]^{fl}]^{fl}$ 
(see Lemma \ref{lemSwa}). 
Hence Theorem \ref{th1M} (v) also claims that 
$[[M_G]^{fl}]^{fl}=[J_{S_5/S_4}]^{fl}$ and 
$[[J_{S_5/S_4}]^{fl}]^{fl}=[M_G]^{fl}$. 
\end{remark}

We will give proofs of 
Theorem \ref{th1M} and Theorem \ref{th2M} in 
Section \ref{seProof1} and Section \ref{seProof2} respectively.\\

By using the algorithms in Section \ref{seAlg}, 
we show the following theorem: 

\begin{theorem-nn}[Theorem \ref{thfac} and Theorem \ref{thCFSP}]
Let $G$ be a finite subgroup of $\GL(n,\bZ)$ and 
$M_G$ be the $G$-lattice of rank $n$ as in Definition \ref{defMG}.\\
{\rm (i)}\ When $n \leq 3$, $M_G$ is flabby and coflabby if and only if
$M_G$ is permutation.\\
{\rm (ii)}\ When $n=4$, $M_G$ is flabby and coflabby if and only if
$M_G$ is permutation or the {\rm GAP ID} of $G$ is one of
$(4,14,2,2), (4,14,3,3), (4,14,3,4), (4,14,8,2)$.\\
$($There are $11$ conjugacy classes of subgroups of $S_4$ 
and hence $15$ flabby and coflabby $G$-lattices of rank $4$ in total.$)$
{\rm (iii)}\ When $n=5$, $M_G$ is flabby and coflabby if and only if
$M_G$ is permutation or the {\rm CARAT ID} of $G$ is one of
$(5,218,4), (5,911,4), (5,918,4), (5,931,4)$.\\
$($There are $19$ conjugacy classes of subgroups of $S_5$ 
and hence $23$ flabby and coflabby $G$-lattices of rank $5$ in total.$)$\\
{\rm (iv)}\ When $n=6$, $M_G$ is flabby and coflabby if and only if
$M_G$ is permutation or the {\rm CARAT ID} of $G$ is one of the $50$ triples 
%\vspace*{-3mm}

\begin{center}
{\rm
\begin{tabular}{lllll}
$(6,159,14)$,&$(6,161,14)$,&$(6,161,28)$,&$(6,197,14)$,&$(6,226,14)$,\\
$(6,226,40)$,&$(6,231,39)$,&$(6,238,27)$,&$(6,246,21)$,&$(6,366,27)$,\\
$(6,1087,20)$,&$(6,1090,18)$,&$(6,1142,8)$,&$(6,2043,4)$,&$(6,2051,9)$,\\
$(6,2068,6)$,&$(6,2069,6)$,&$(6,2069,12)$,&$(6,2070,12)$,&$(6,2079,14)$,\\
$(6,2079,28)$,&$(6,2088,18)$,&$(6,2105,12)$,&$(6,2154,26)$,&$(6,2156,40)$,\\
$(6,2156,80)$,&$(6,2188,39)$,&$(6,2968,4)$,&$(6,2969,4)$,&$(6,2969,8)$,\\
$(6,2977,6)$,&$(6,3068,7)$,&$(6,3068,8)$,&$(6,3071,7)$,&$(6,3071,8)$,\\
$(6,3073,7)$,&$(6,3073,8)$,&$(6,3073,15)$,&$(6,3073,16)$,&$(6,3076,7)$,\\
$(6,3076,8)$,&$(6,3091,11)$,&$(6,3091,12)$,&$(6,5210,14)$,&$(6,5262,11)$,\\
$(6,5321,6)$,&$(6,5421,6)$,&$(6,5475,6)$,&$(6,5477,11)$,&$(6,5487,11)$.
\end{tabular}
}
\end{center}
%{\footnotesize 
%\begin{align*}
%&(6,159,14),(6,161,14),(6,161,28),(6,197,14),(6,226,14),
%(6,226,40),(6,231,39),(6,238,27),(6,246,21),(6,366,27),\\
%&(6,1087,20),(6,1090,18),(6,1142,8),(6,2043,4),(6,2051,9),
%(6,2068,6),(6,2069,6),(6,2069,12),(6,2070,12),(6,2079,14),\\
%&(6,2079,28),(6,2088,18),(6,2105,12),(6,2154,26),(6,2156,40),
%(6,2156,80),(6,2188,39),(6,2968,4),(6,2969,4),(6,2969,8),\\
%&(6,2977,6),(6,3068,7),(6,3068,8),(6,3071,7),(6,3071,8),
%(6,3073,7),(6,3073,8),(6,3073,15),(6,3073,16),(6,3076,7),\\
%&(6,3076,8),(6,3091,11),(6,3091,12),(6,5210,14),(6,5262,11),
%(6,5321,6),(6,5421,6),(6,5475,6),(6,5477,11),(6,5487,11).
%\end{align*}
%}
\hfill\break
$($There are $56$ conjugacy classes of subgroups of $S_6$ 
and hence $106$ flabby and coflabby $G$-lattices of rank $6$ in total.$)$\\
{\rm (v)}\ 
When $n \leq 6$, $M$ is flabby and coflabby if and only if
$M$ is stably permutation. 
Indeed, flabby and coflabby $G$-lattices $M$ which are not permutation 
in {\rm (ii)}, {\rm (iii)}, {\rm (iv)} are stably permutation as in Table $8$.
\end{theorem-nn}
\begin{definition}[Decomposition type]
Let $G$ be a finite group and $M$ be a $G$-lattice. 
A $G$-lattice $M$ is said to be {\it decomposable} if there exist non-trivial 
$G$-lattices $U_1$ and $U_2$ such that $M\simeq U_1\oplus U_2$. 
A $G$-lattice is said to be {\it indecomposable} 
if it is not decomposable. 
When $M$ decomposes into indecomposable 
$G$-lattices $M\simeq U_1\oplus\cdots\oplus U_r$ of rank $n_1,\dots,n_r$, 
we say that a {\it decomposition type} ${\rm DT}(M)$ of $M$ is $(n_1,\dots,n_r)$.
\end{definition}
%
%Let $R$ be a ring and $M$ be a right $R$-module of finite length. 
%Krull-Schmidt Theorem claims that 
%Then there exists a decomposition $M=M_1\oplus\cdots\oplus M_r$
%
For $n\leq 6$, the number of $G$-lattices $M_G$ of rank $n$ 
for a given decomposition type ${\rm DT}(M_G)$ is as follows:
\vspace*{3mm}

\noindent\ 
\begin{tabular}{l|c|c}
${\rm DT}(M_G)$ & $(1)$ & Total\\\hline
$\# M_G$ & 2 & 2
\end{tabular}\quad 
\begin{tabular}{l|cc|c}
${\rm DT}(M_G)$ & $(1,1)$ & $(2)$& Total\\\hline
$\# M_G$ & 4 & 9 & 13
\end{tabular}\quad 
\begin{tabular}{l|ccc|c}
${\rm DT}(M_G)$ & $(1,1,1)$ & $(2,1)$ & $(3)$ & Total\\\hline
$\# M_G$ & 8 & 31 & 34 & 73
\end{tabular}\\

\noindent\ 
\begin{tabular}{l|ccccc|c}
${\rm DT}(M_G)$ & $(1,1,1,1)$ & $(2,1,1)$ & $(2,2)$ & $(3,1)$ & $(4)$ & Total\\\hline
$\# M_G$ & 16 & 96 & 175 & 128 & 295 & 710
\end{tabular}\\

\noindent\ 
\begin{tabular}{l|ccccccc|c}
${\rm DT}(M_G)$ & $(1^5)$ & $(2,1^3)$ & $(2^2,1)$ & $(3,1^2)$ & $(3,2)$ & $(4,1)$ & $(5)$ & Total\\\hline
$\# M_G$ & 32 & 280 & 1004 & 442 & {\bf 1480} & {\bf 1400} & 1452 & ${\bf 6090} \atop (6079)$
\end{tabular}\\

\noindent\ 
\begin{tabular}{l|ccccccccccc|c}
${\rm DT}(M_G)$ & $(1^6)$ & $(2,1^4)$ & $(2^2,1^2)$ & $(2^3)$ & 
$(3,1^3)$ & $(3,2,1)$ & $(3^2)$ & $(4,1^2)$ & $(4,2)$ & $(5,1)$ & $(6)$ & Total\\\hline
$\# M_G$ & 68 & 824 & 4862 & 6878 & 1466 & {\bf 10662} & {\bf 4235} & {\bf 5944} & 
21573 & {\bf 9931} & 18996 & ${\bf 85439} \atop (85308)$
\end{tabular}
\vspace*{4mm}

Let $G$ be a finite subgroup of $\GL(n,\bZ)$ and 
$M_G$ be the $G$-lattice of rank $n$ as in Definition \ref{defMG}. 
For $n\leq 4$, we will show that the Krull-Schmidt theorem holds for $M_G$, 
i.e. if $M_G\simeq M_1\oplus\cdots\oplus M_l\simeq N_1\oplus\cdots\oplus N_m$ 
for indecomposable $G$-lattices $M_i$ and $N_j$, 
then $l=m$ and, after a suitable renumbering of the $N_j$, $M_i\simeq N_i$ 
for any $1\leq i\leq m$. 
However, it turns out that the Krull-Schmidt theorem fails for $M_G$ 
when the rank of $M_G$ is $5$. 
We split the Krull-Schmidt theorem for $M_G$ into the following two parts: \\

(KS1) If $M_G\simeq M_1\oplus\cdots\oplus M_l\simeq N_1\oplus\cdots\oplus N_m$ 
for indecomposable $G$-lattices $M_i$ and $N_j$, 
then $l=m$ and, after a suitable renumbering of the $N_j$, 
rank $M_i=$ rank $N_i$ for any $1\leq i\leq m$;

(KS2) If $M_G\simeq M_1\oplus\cdots\oplus M_m\simeq N_1\oplus\cdots\oplus N_m$ 
for indecomposable $G$-lattices $M_i$ and $N_i$ 
with \rank $M_i=$ \rank $N_i$ for any $1\leq i\leq m$, 
then after a suitable renumbering of the $N_i$, 
$M_i\simeq N_i$ for any $1\leq i\leq m$.\\

Krull-Schmidt theorem holds for $M_G$ if and only if the conditions 
(KS1) and (KS2) hold for $M_G$. 

\begin{theorem-nn}[{Theorem \ref{thKS}}]
Let $G$ be a finite subgroup of $\GL(n,\bZ)$ and 
$M_G$ be the $G$-lattice of rank $n$ as in Definition \ref{defMG}.\\
{\rm (i)} When $n\leq 4$, the Krull-Schmidt theorem holds for $M_G$, 
i.e. if $M_G\simeq M_1\oplus\cdots\oplus M_l\simeq N_1\oplus\cdots\oplus N_m$ 
for indecomposable $G$-lattices $M_i$ and $N_j$, 
then $l=m$ and, after a suitable renumbering of the $N_j$, $M_i\simeq N_i$ 
for any $1\leq i\leq m$.\\
{\rm (ii)}When $n=5$, {\rm (KS2)} holds for $M_G$, and 
the Krull-Schmidt theorem fails for $M_G$ if and only if {\rm (KS1)} fails for $M_G$ if and only if the {\rm CARAT ID} of $G$ is one of the $11$ triples
\vspace*{-3mm}

{\footnotesize 
\begin{align*}
(5,188,4),(5,189,4),(5,190,6),(5,191,6),(5,192,6),(5,193,4),
(5,205,6),(5,218,8),(5,219,8),(5,220,4),(5,221,4). 
\end{align*}
}

\vspace*{-3mm}
\noindent
For the exceptional $11$ cases, 
the decomposition types of $M_G$ are $(3,2)$ and $(4,1)$ 
and $G$ is a subgroup of the group $C_2\times D_6$ of the {\rm CARAT ID} $(5,205,6)$.\\
{\rm (iii)} When $n=6$, {\rm (KS1)} fails for $M_G$ if and only if 
the {\rm CARAT ID} of $G$ is one of the $131$ triples 
\vspace*{-3mm}

{\footnotesize 
\begin{align*}
&(6,2013,8),(6,2018,4),(6,2023,6),(6,2024,6),(6,2025,6),
(6,2026,6),(6,2033,6),(6,2042,8),(6,2043,8),(6,2044,4),\\
&(6,2045,4),(6,2048,5),(6,2049,8),(6,2050,8),(6,2051,8),
(6,2052,8),(6,2058,5),(6,2059,5),(6,2067,5),(6,2068,5),\\
&(6,2069,5),(6,2069,11),(6,2070,9),(6,2071,9),(6,2072,10),
(6,2072,11),(6,2076,24),(6,2076,25),(6,2077,24),(6,2077,25),\\
&(6,2078,24),(6,2078,25),(6,2079,24),(6,2079,25),(6,2087,15),
(6,2088,15),(6,2089,17),(6,2089,18),(6,2094,9),(6,2102,24),\ \,\\
&(6,2102,25),(6,2105,9),(6,2106,9),(6,2107,10),(6,2107,11),
(6,2108,15),(6,2109,15),(6,2110,17),(6,2110,18),(6,2111,15),\\
&(6,2139,9),
\end{align*}
}
\vspace*{-3mm}
\vspace*{-3mm}
{\footnotesize 
\begin{align*}
&(6,40,4),(6,41,4),(6,44,6),(6,45,6),(6,47,4),
(6,53,4),(6,54,4),(6,54,8),(6,55,4),(6,63,4),\\
&(6,64,6),(6,65,4),(6,66,6),(6,67,6),(6,75,4),
(6,75,8),(6,76,8),(6,76,12),(6,77,8),(6,77,12),\\
&(6,78,4),(6,78,8),(6,79,6),(6,80,4),(6,81,8),
(6,81,12),(6,90,4),(6,99,4),(6,108,4),(6,108,8),\\
&(6,109,8),(6,109,12),(6,110,4),(6,111,6),(6,112,8),
(6,112,12),(6,113,4),(6,114,6),(6,115,6),(6,145,4),\\
&(6,2070,10),(6,2070,11),(6,2071,10),(6,2071,11),(6,2072,12),
(6,2072,13),(6,2076,26),(6,2076,27),(6,2077,26),(6,2077,27),\\
&(6,2078,26),(6,2078,27),(6,2079,26),(6,2079,27),(6,2087,16),
(6,2087,17),(6,2088,16),(6,2088,17),(6,2089,19),(6,2089,20),\\
&(6,2094,10),(6,2094,11),(6,2102,26),(6,2102,27),(6,2105,10),
(6,2105,11),(6,2106,10),(6,2106,11),(6,2107,12),(6,2107,13),\\
&(6,2108,16),(6,2108,17),(6,2109,16),(6,2109,17),(6,2110,19),
(6,2110,20),(6,2111,16),(6,2111,17),(6,2139,10),(6,2139,11).
\end{align*}
}

\vspace*{-3mm}
\noindent
For the former $51$ cases $($resp. the latter $80$ cases$)$, 
the decomposition types of $M_G$ are $(3,2,1)$ and $(4,1,1)$ 
$($resp. $(3,3)$ and $(5,1)$$)$ and $G$ is a subgroup of the group 
$C_2^2\times D_6$ of the {\rm CARAT ID} $(6,2139,9)$ 
$($resp. $D_6\times D_4$ of the {\rm CARAT ID} $(6,145,4)$$)$.\\
{\rm (iv)} When $n=6$, {\rm (KS2)} fails for $M_G$ if and only if 
the {\rm CARAT ID} of $G$ is one of the $18$ triples 
\vspace*{-3mm}

{\footnotesize 
\begin{align*}
&(6,2072,14),(6,2076,28),(6,2077,28),(6,2078,28),(6,2079,28),
(6,2089,21),(6,2102,28),(6,2107,14),(6,2110,21),(6,2295,2),\\
&(6,3045,3),(6,3046,3),(6,3047,3),(6,3052,5),
(6,3053,5),(6,3054,3),(6,3061,5),(6,3066,3).
\end{align*}
}

\vspace*{-5mm}
\noindent
For the former $10$ cases, 
the decomposition type of $M_G$ is $(4,2)$ 
and $G$ is the group $D_6$ of the {\rm CARAT ID} $(6,2295,2)$ 
or a subgroup of the $3$ groups $C_2\times D_6$ of 
the {\rm CARAT ID}s $(6,2102,28)$, $(6,2107,14)$ and $(6,2110,21)$. 
For the latter $8$ cases, the decomposition type of $M_G$ 
is $(5,1)$ and $G$ is a subgroup of the group 
$C_2\times S_5$ of the {\rm CARAT ID} $(6,3054,3)$.
\end{theorem-nn}

\bigskip

The following flow chart presents the structure of the GAP algorithms 
which will be given in Section \ref{seAlg}. 

\vspace*{1cm}

\begin{center}
\fbox{
\begin{minipage}{6cm}
Algorithm F1.\ 
Compute $[M_G]^{fl}$.
\end{minipage}}\hspace*{80mm}

$\bigg\downarrow$ \hspace*{4mm}\hspace*{50mm}

\fbox{
\begin{minipage}{6cm}
Algorithm F2.\ 
Is $[M_G]^{fl}$ invertible?
\end{minipage}}$\xrightarrow[\textrm{No}]{\hspace*{1cm}}$ 
$L(M)^G$ is not retract $k$-rational.\hspace*{18mm}

$\bigg\downarrow$ {\scriptsize Yes}\hspace*{50mm}

\fbox{
\begin{minipage}{6cm}
Algorithm F3.\ 
Is $[[M_G]^{fl}]^{fl}]=0$ ?
\end{minipage}}$\xrightarrow[\textrm{Yes}]{\hspace*{1cm}}$
$L(M)^G$ is stably $k$-rational.\hspace*{25mm}

$\bigg\downarrow$ {\scriptsize No}\hspace*{50mm}

\fbox{
\begin{minipage}{6cm}
Algorithm F4.\ 
Is $[M_G]^{fl}=0$ possible?
\end{minipage}}$\xrightarrow[\textrm{No}]{\hspace*{1cm}}$
$L(M)^G$ is not stably but retract $k$-rational.

$\bigg\downarrow$ {\scriptsize Yes}\hspace*{50mm}

%\fbox{
%\begin{minipage}{6cm}
%Algorithm F5.
%Is $[M_G]^{fl}$=0?\ \ \ or\ \ \\
%Algorithm F6. 
%Is $[M_G]^{fl}$=0?
%\end{minipage}}$\xrightarrow[\textrm{Yes}]{\hspace*{1cm}}$
%$L(M)^G$ is stably $k$-rational.\hspace*{24mm}
\fbox{
\begin{minipage}{6cm}
Algorithm F5.\ \\
Algorithm F6. \ 
Is $[M_G]^{fl}$=0?\\
Algorithm F7.\ 
\end{minipage}}$\xrightarrow[\textrm{Yes}]{\hspace*{1cm}}$
$L(M)^G$ is stably $k$-rational.\hspace*{24mm}
\end{center}

\vspace*{1cm}

Using the algorithms as in Section \ref{seAlg} and Section \ref{seFC}, 
we may verify the following isomorphism 
which gives the smallest example exhibiting the failure of 
the Krull-Schmidt theorem for permutation $G$-lattices 
(see Section \ref{seKSfail} and Dress's paper \cite[Proposition 9.6]{Dre73}): 
\begin{proposition-nn}[The Krull-Schmidt theorem fails for permutation $D_6$-lattices, see Proposition \ref{propKSfailD6}]
Let $D_6$ be the dihedral group of order $12$ and 
$\{1\}$, $C_2^{(1)}$, $C_2^{(2)}$, $C_2^{(3)}$, $C_3$, $C_2^2$, 
$C_6$, $S_3^{(1)}$, $S_3^{(2)}$ and $D_6$ be the 
conjugacy classes of subgroups of $D_6$. 
%The Krull-Schmidt theorem fails for the permutation $D_6$-lattices, 
%in particular, 
Then the following isomorphism of permutation $D_6$-lattices holds: 
\begin{align*}
& ~{} \bZ[D_6]\oplus\bZ[D_6/C_2^2]^{\oplus 2}\oplus\bZ[D_6/C_6]
\oplus\bZ[D_6/S_3^{(1)}]\oplus\bZ[D_6/S_3^{(2)}]\\
\simeq & ~{} \bZ[D_6/C_2^{(1)}]\oplus\bZ[D_6/C_2^{(2)}]
\oplus\bZ[D_6/C_2^{(3)}]\oplus\bZ[D_6/C_3]\oplus\bZ^{\oplus 2}.
\end{align*}
\end{proposition-nn}

\bigskip

By the classification of Bravais groups of dimension $n\leq 6$ 
as in Subsection \ref{ssBravais} and the algorithms in Section \ref{seAlg} 
and Section \ref{seFC}, 
we will show the following theorem. 

\begin{theorem-nn}[Theorem \ref{thBravais6}]
If $G$ is a Bravais group of dimension $n\leq 6$, 
then $H^1(G,[M_G]^{fl})=0$. 
In particular, if $G$ is a maximal finite subgroup 
$G\leq \GL(n,\bZ)$ where $n\leq 6$, then $H^1(G,[M_G]^{fl})=0$. 
\end{theorem-nn}

\bigskip

%%%%%%%%%%%%%%%%%%%%%%%%%%%%%%%%%%%%%%%%%%%%%%%%%%%%%%%%%%%%%%%%%%%%%%
As an application of the method of this paper, 
we give examples of not retract $k$-rational fields 
which are related to the rationality problem under the 
finite group action, e.g. Noether's problem 
(see \cite{HKY11}, \cite{Yam12}, \cite{HKK14}). 

Let $k$ be a field of {\rm char} $k\neq 2$ and $k(x,y,z)$ be 
the rational function field over $k$ with variables $x,y,z$. 
We consider the $k$-involution (i.e. $k$-automorphism of order $2$) 
\begin{align*}
\sigma_{a,b,c,d} : x\mapsto -x,\quad y\mapsto \frac{-ax^2+b}{y},\quad  z\mapsto 
\frac{-cx^2+d}{z}
\quad (a,b,c,d\in k^{\times})
\end{align*}
on $k(x,y,z)$
and the rationality problem of $k(x,y,z)^{\langle\sigma_{a,b,c,d}\rangle}$ over $k$, 
namely whether the fixed field $k(x,y,z)^{\langle\sigma_{a,b,c,d}\rangle}$ 
is $k$-rational. 
We see that the fixed field $k(x,y,z)^{\langle\sigma_{a,b,c,d}\rangle}$ 
is $k$-isomorphic to 
$k(x,y,z)^{\langle\sigma_{\tau(a),\tau(b),\tau(c),\tau(d)}\rangle}$ 
for $\tau\in D_4$ where $D_4=\langle (abdc),(ab)(cd)\rangle$ 
is the permutation group on the set $\{a,b,c,d\}$ which is isomorphic to 
the dihedral group of order $8$. 
Let $m=[k(\sqrt{a},\sqrt{b},\sqrt{c},\sqrt{d}):k]$. 
Hence the rationality problem is determined by the following $22$ cases:\\

\noindent
{\rm (C1)} $m=1$;\\
{\rm (C2)} $m=2$, 
{\rm (1)} $a,b,c\in k^{\times 2}$;
{\rm (2)} $a,b,d\in k^{\times 2}$;
{\rm (3)} $a,c,d\in k^{\times 2}$;
{\rm (4)} $b,c,d\in k^{\times 2}$;\\
{\rm (C3)} $m=2$, 
{\rm (1)} $a,b,cd\in k^{\times 2}$;
{\rm (2)} $b,d,ac\in k^{\times 2}$;
{\rm (3)} $d,c,ab\in k^{\times 2}$;
{\rm (4)} $c,a,bd\in k^{\times 2}$;\\
{\rm (C4)} $m=2$, 
{\rm (1)} $a, d, bc\in k^{\times 2}$;
{\rm (2)} $b, c, ad\in k^{\times 2}$;\\
{\rm (C5)} $m=2$, 
{\rm (1)} $a,bd,cd\in k^{\times 2}$;
{\rm (2)} $b,cd,ac\in k^{\times 2}$;
{\rm (3)} $d,ac,ab\in k^{\times 2}$;
{\rm (4)} $c,ab,bd\in k^{\times 2}$;\\
{\rm (C6)} $m=2$, $ab,ac,ad\in k^{\times 2}$;\\
%%%%%%%%%%%%%%%%%%%%%%%%%%%%%%%%%%%%%%%%%%%%%%%%
{\rm (C7)} $m=4$, 
{\rm (1)} $a, b\in k^{\times 2}$;
{\rm (2)} $b, d\in k^{\times 2}$;
{\rm (3)} $d, c\in k^{\times 2}$;
{\rm (4)} $c, a\in k^{\times 2}$;\\
{\rm (C8)} $m=4$, 
{\rm (1)} $a,d\in k^{\times 2}$;
{\rm (2)} $b, c\in k^{\times 2}$;\\
{\rm (C9)} $m=4$, 
{\rm (1)} $a, bc\in k^{\times 2}$;
{\rm (2)} $b, ad\in k^{\times 2}$;
{\rm (3)} $d, bc\in k^{\times 2}$;
{\rm (4)} $c, ad\in k^{\times 2}$;\\
{\rm (C10)} $m=4$, 
{\rm (1)} $a, bd\in k^{\times 2}$;
{\rm (2)} $b, dc\in k^{\times 2}$;
{\rm (3)} $d, ac\in k^{\times 2}$;
{\rm (4)} $c, ab\in k^{\times 2}$;\\
\hspace*{20.6mm}
{\rm (5)} $a, cd\in k^{\times 2}$;
{\rm (6)} $b, ac\in k^{\times 2}$;
{\rm (7)} $d, ab\in k^{\times 2}$;
{\rm (8)} $c, bd\in k^{\times 2}$;\\
{\rm (C11)} $m=4$, 
{\rm (1)}  $a,bcd\in k^{\times 2}$;
{\rm (2)} $b,acd\in k^{\times 2}$;
{\rm (3)} $d,abc\in k^{\times 2}$;
{\rm (4)} $c,abd\in k^{\times 2}$;\\
{\rm (C12)} $m=4$, 
{\rm (1)} $ab,cd\in k^{\times 2}$;
{\rm (2)} $bd,ac\in k^{\times 2}$;\\
{\rm (C13)} $m=4$, 
{\rm (1)}  $ab,ac\in k^{\times 2}$;
{\rm (2)} $bd,ab\in k^{\times 2}$;
{\rm (3)} $cd,bd\in k^{\times 2}$;
{\rm (4)} $ac,cd\in k^{\times 2}$;\\
{\rm (C14)} $m=4$, $ad,bc\in k^{\times 2}$;\\
{\rm (C15)} $m=4$, 
{\rm (1)} $ab, acd\in k^{\times 2}$;
{\rm (2)} $bd, abc\in k^{\times 2}$;
{\rm (3)} $cd, abd\in k^{\times 2}$;
{\rm (4)} $ac, bcd\in k^{\times 2}$;\\
{\rm (C16)} $m=4$, 
{\rm (1)} $ad, abc\in k^{\times 2}$;
{\rm (2)} $bc, abd\in k^{\times 2}$;\\
%%%%%%%%%%%%%%%%%%%%%%%%%%%%%%%%%%%%%%%%%%%%%%%
{\rm (C17)} $m=8$, 
{\rm (1)} $a\in k^{\times 2}$;
{\rm (2)} $b\in k^{\times 2}$;
{\rm (3)} $d\in k^{\times 2}$;
{\rm (4)} $c\in k^{\times 2}$;\\
{\rm (C18)} $m=8$, 
{\rm (1)} $ab\in k^{\times 2}$;
{\rm (2)} $ac\in k^{\times 2}$;
{\rm (3)} $bd\in k^{\times 2}$;
{\rm (4)} $cd\in k^{\times 2}$;\\
{\rm (C19)} $m=8$, 
{\rm (1)}  $ad\in k^{\times 2}$;
{\rm (2)} $bc\in k^{\times 2}$;\\
{\rm (C20)} $m=8$, 
{\rm (1)} $abc\in k^{\times 2}$;
{\rm (2)} $bcd\in k^{\times 2}$;
{\rm (3)} $abd\in k^{\times 2}$;
{\rm (4)} $acd\in k^{\times 2}$;\\
{\rm (C21)} $m=8$, $abcd\in k^{\times 2}$;\\
{\rm (C22)} $m=16$.

\bigskip

We see that if one of the conditions 
(C1), (C2), (C3), (C5), (C6), (C7), (C10), (C12), (C13) 
holds, then $k(x,y,z)^{\langle\sigma_{a,b,c,d}\rangle}$ 
is $k$-rational (see Lemma \ref{lem3}).

\begin{theorem-nn}[Theorem \ref{th3}]
Let $k$ be a field of {\rm char} $k\neq 2$ and $k(x,y,z)$ be 
the rational function field over $k$ with variables $x,y,z$. 
Let $\sigma_{a,b,c,d}$ be a $k$-involution on $k(x,y,z)$ defined by 
\begin{align*}
\sigma_{a,b,c,d} : x\mapsto -x,\quad y\mapsto \frac{-ax^2+b}{y},\quad  z\mapsto 
\frac{-cx^2+d}{z}
\quad (a,b,c,d\in k^{\times})
\end{align*}
and $m=[k(\sqrt{a},\sqrt{b},\sqrt{c},\sqrt{d}):k]$.\\ 
{\rm (i)} $k(x,y,z)^{\langle\sigma_{a,b,c,d}\rangle}=k(t_1,t_2,t_3,t_4)$ 
where $t_1,t_2,t_3,t_4$ satisfy the relation 
\begin{align*}
(t_1^2-a)(t_4^2-d)=(t_2^2-b)(t_3^2-c).
\end{align*}
{\rm (ii)} $k(x,y,z)^{\langle\sigma_{a,b,c,d}\rangle}$ 
is $k$-isomorphic to 
$k(x,y,z)^{\langle\sigma_{\tau(a),\tau(b),\tau(c),\tau(d)}\rangle}$ 
for $\tau\in D_4$ where $D_4=\langle (abdc),(ab)(cd)\rangle$ 
is the permutation group on the set $\{a,b,c,d\}$ which is isomorphic to 
the dihedral group of order $8$.\\
{\rm (iii)} If one of the following conditions holds, then 
$k(x,y,z)^{\langle\sigma_{a,b,c,d}\rangle}$ 
is not retract $k$-rational:\\
{\rm (C15)} $m=4$, 
{\rm (1)} $ab, acd\in k^{\times 2}$;
{\rm (2)} $bd, abc\in k^{\times 2}$;
{\rm (3)} $cd, abd\in k^{\times 2}$;
{\rm (4)} $ac, bcd\in k^{\times 2}$;\\
{\rm (C16)} $m=4$, 
{\rm (1)} $ad, abc\in k^{\times 2}$;
{\rm (2)} $bc, abd\in k^{\times 2}$;\\
{\rm (C18)} $m=8$, 
{\rm (1)} $ab\in k^{\times 2}$;
{\rm (2)} $ac\in k^{\times 2}$;
{\rm (3)} $bd\in k^{\times 2}$;
{\rm (4)} $cd\in k^{\times 2}$;\\
{\rm (C19)} $m=8$, 
{\rm (1)}  $ad\in k^{\times 2}$;
{\rm (2)} $bc\in k^{\times 2}$;\\
{\rm (C20)} $m=8$, 
{\rm (1)} $abc\in k^{\times 2}$;
{\rm (2)} $bcd\in k^{\times 2}$;
{\rm (3)} $abd\in k^{\times 2}$;
{\rm (4)} $acd\in k^{\times 2}$;\\
{\rm (C21)} $m=8$, $abcd\in k^{\times 2}$;\\
{\rm (C22)} $m=16$.
\end{theorem-nn}

We do not know whether the field 
$k(x,y,z)^{\langle\sigma_{a,b,c,d}\rangle}$ is $k$-rational 
for the cases 
(C4), (C8), (C9), (C11), (C14), (C17).\\

We organize this paper as follows. 
In Section \ref{sePre}, we recall known results 
and prepare some basic materials, 
e.g. Galois cohomology, Tate cohomology, flabby resolution 
of a $G$-lattice. 
We explain the relationship between these materials and 
the stably rational classification of algebraic $k$-tori. 
In Section \ref{seCarat}, 
we explain how to access the GAP ID 
and the CARAT ID of a finite subgroup $G$ of $\GL(n,\bZ)$ $(n\leq 6)$ 
of this paper. 
In Section \ref{seKSfail}, 
we show that 
the Krull-Schmidt theorem for $G$-lattices 
holds when the rank $\leq 4$, and fails 
when the rank is $5$ and $6$ by using the GAP ID and the CARAT ID. 
The classification of maximal finite groups $G\leq \GL(n,\bZ)$ 
and Bravais groups of dimension $n$ where $n\leq 6$ will be also given 
in Subsections \ref{ssMax} and \ref{ssBravais} respectively. 
In Section \ref{seAlg}, we give some algorithms to 
compute a flabby resolution of a $G$-lattice effectively in GAP. 
We also give some algorithms which 
may determine whether the flabby class of the $G$-lattice 
is invertible (resp. zero) or not. 
These algorithms enable us to classify the function fields of 
algebraic $k$-tori up to stably equivalence. %birationally. 
In Section \ref{seFC}, 
the classification of all the flabby and coflabby $G$-lattices 
of rank up to $6$ will be given. 
In Section \ref{seBravais}, we confirm that $H^1(G,[M_G]^{fl})=0$ 
for any Bravais group $G$ of dimension $n\leq 6$. 
In particular, we get that $H^1(G,[M_G]^{fl})=0$ for maximal finite 
subgroups $G\leq \GL(n,\bZ)$ where $n\leq 6$. 
In Section \ref{seNorm1}, we obtain 
the stably rational classification %a birational classification 
of %the 
norm one tori of dimensions $4$ and $5$. 
The same classification for dimensions $6$ and $10$ will also be given.
In Section \ref{seTate}, we give some GAP algorithms 
for computing the Tate cohomologies. 
In Sections \ref{seProof1} and \ref{seProof2}, we will prove 
Theorem \ref{th1M} and Theorem \ref{th2M} respectively by using 
the algorithms which are given in Sections \ref{seCarat}, 
\ref{seKSfail}, \ref{seAlg} and \ref{seTate}. 
Using the algorithms in Section \ref{seAlg}, 
we show Theorem \ref{th3} which provides 
some examples of not retract $k$-rational fields 
in Section \ref{seProof3}. 
In Section \ref{seApp}, 
an application of Theorem \ref{th3} which 
is related to Noether's problem will be given. 
Tables $11$ to $15$ in Theorems \ref{th2} and \ref{th2M} are located in
Section \ref{tables}. 
We also give detailed information of %a birational 
the stably rational classification of %the 
algebraic $k$-tori of dimension $5$ as in Table $16$ 
in Section \ref{tables}. 

\begin{acknowledgments}
The authors would like to thank 
Ming-chang Kang for giving them useful and valuable comments. 
They also would like to thank Shizuo Endo 
for valuable comments and for fruitful discussions, 
in particular, about Section \ref{seKSfail}. 
The first-named author wishes to gratefully thank his teacher 
Yumiko Hironaka for continuous encouragement. 
The authors also would like to thank the referee 
who draws their attention to Bravais groups 
and gives them useful suggestions and comments. 
Subsections \ref{ssMax}, \ref{ssBravais} and Section \ref{seBravais} 
are added according to referee's valuable advice. 
\end{acknowledgments}

%\bigskip

%
%%%%%%%%%%%%%%%%%%%%%%%%%%%%%%%%%%%%%%%%%%%%%%%%%%%%%%%%%%%%%%%%%%%%%%%%%%%%%%%%%%%%%%
\section{Preliminaries: Tate cohomology and flabby resolutions}\label{sePre}

First we recall the definitions of Galois cohomology and Tate cohomology.
See for details Cartan and Eilenberg \cite[ChapterXII]{CE56} 
and Brown \cite[Chapter VI]{Bro82}.

\begin{definition}[$n$-cochains, coboundary homomorphisms]
Let $G$ be a group and $M$ be an additive $G$-module.
Let $n \geq 0$ be an integer and $C^n(G,M)$ be the additive group
of all maps from $G^n$ to $M$ $(G^0=1)$.
The elements of $C^n(G,M)$ are called {\it the $n$-cochains}.
The {\it coboundary homomorphisms}
$$d^n: C^n(G,M) \rightarrow C^{n+1}(G,M)$$
are defined as
$$\begin{array}{rl}
(d^n\varphi)(g_1,\dots,g_{n+1})=& g_1 \cdot \varphi(g_2,\dots,g_{n+1}) \\
&+\sum_{i=1}^n(-1)^i\varphi(g_1,\dots,g_{i-1},g_ig_{i+1},g_{i+2},\dots,g_{n+1}) \\
&+(-1)^{n+1}\varphi(g_1,\dots,g_n).
\end{array}$$
\end{definition}

\begin{lemma}
$d^{n+1} \circ d^n=0$ holds,
and $\big(C(G,M),d\big)$ becomes a cochain complex.
\end{lemma}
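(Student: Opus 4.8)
The plan is to prove the identity $d^{n+1}\circ d^n=0$ by direct substitution, from which the cochain-complex assertion is immediate. First I would fix an arbitrary $\varphi\in C^n(G,M)$ and an arbitrary tuple $(g_1,\dots,g_{n+2})\in G^{n+2}$, set $\psi:=d^n\varphi\in C^{n+1}(G,M)$, and compute $(d^{n+1}\psi)(g_1,\dots,g_{n+2})$ by inserting the defining formula for $d^{n+1}$ and then, into each resulting evaluation of $\psi$, the defining formula for $d^n$. This produces a finite double sum of evaluations of $\varphi$, each carrying a sign of the form $(-1)^{i+j}$ and each belonging to one of the three structural types appearing in the coboundary formula: the left-multiplication-by-a-group-element type, the adjacent-product type $g_\ell g_{\ell+1}$, and the drop-the-last-argument type.

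The core of the argument is to exhibit a pairing on the index set of this double sum that matches each term with another term having identical $\varphi$-evaluation but opposite sign, so that everything cancels. The cleanest way I would organize this is to regard the three summands of $d^n$ as signed coface operators $\partial_0,\dots,\partial_{n+1}$ of a cosimplicial abelian group, where $\partial_0$ is the left-action term, $\partial_\ell$ (for $1\le \ell\le n$) merges $g_\ell g_{\ell+1}$, and $\partial_{n+1}$ deletes the last entry, so that $d^n=\sum_{i=0}^{n+1}(-1)^i\partial_i$. Then I would verify by a finite case check the cosimplicial identities $\partial_j\partial_i=\partial_i\partial_{j-1}$ for all $i<j$; granting these, the vanishing
\[
d^{n+1}\circ d^n=\sum_{i,j}(-1)^{i+j}\partial_j\partial_i=0
\]
follows by the standard formal reindexing that splits the sum according to $i<j$ and $i\ge j$ and uses the identity to cancel the two halves. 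Once $d^{n+1}\circ d^n=0$ is established for every $n$, the pair $(C(G,M),d)$ satisfies the defining condition of a cochain complex, which is the second assertion.

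The main obstacle is the bookkeeping of signs and index shifts in verifying the cosimplicial identities $\partial_j\partial_i=\partial_i\partial_{j-1}$ (equivalently, in matching the terms of the raw double sum). The genuinely nontrivial boundary cases are those involving $\partial_0$: when two left-multiplications are composed, or a left-multiplication is composed with the merge $\partial_1$, the matching of terms relies on associativity of the $G$-action, namely $g_1\cdot(g_2\cdot m)=(g_1g_2)\cdot m$, which is exactly what converts $\partial_0\partial_0$ into $\partial_0\partial_1$ after reindexing; similarly one must track the interaction of $\partial_{n+1}$ (drop last) with the top merge $\partial_n$. I expect the interior cases $1\le i<j\le n$ to be purely formal, so the only care needed is at these two ends. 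I would therefore dispatch the two end cases by hand and treat the interior uniformly, after which the formal cancellation closes the proof.
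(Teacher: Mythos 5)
Your proposal is correct: decomposing $d^n$ into signed coface operators, checking the cosimplicial identities $\partial_j\partial_i=\partial_i\partial_{j-1}$ for $i<j$ (with the only non-formal cases being those involving $\partial_0$, where associativity of the $G$-action is used), and then cancelling the double sum is exactly the standard argument. The paper itself gives no proof of this lemma, deferring to the cited references (Cartan--Eilenberg, Brown), and your write-up is precisely the proof found there, so there is nothing to fault.
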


\begin{definition}[Group cohomology]
Define {\it the group of $n$-cocycles} as
$$Z^n(G,M)=\ker(d^n),$$
{\it the group of $n$-coboundaries} as
$$\begin{cases}
B^0(G,M)=0, & \\
B^n(G,M)=\image(d^{n-1}) & (n \geq 1), \\
\end{cases}$$
and {\it the $n$-th cohomology group} as
$$H^n(G,M)=Z^n(G,M)/B^n(G,M)\ \ (n\geq 0).$$
\end{definition}

\begin{definition}[Tate cohomology]\label{defTate}
Let $G$ be a finite group and $M$ be an additive $G$-module.
Define the trace map $T_G:M \rightarrow M$ as
$$T_G(m)=\sum_{g \in G} g \cdot m,$$
{\it the group of $0$-cocycles} and {\it the group of $(-1)$-cocycles} as
$$\begin{cases}
\widehat Z^0(G,M)=M^G=H^0(G,M), & \\
\widehat Z^{-1}(G,M)=\ker(T_G), & \\
\end{cases}$$
{\it the group of $0$-coboundaries} and {\it the group of $(-1)$-coboundaries} as
$$\begin{cases}
\widehat B^0(G,M)=\image(T_G), & \\
\widehat B^{-1}(G,M)=\sum_{g \in G} \image (g-\id_M), & \\
\end{cases}$$
and {\it the $n$-th Tate cohomology group} as
$$\widehat H^n(G,M)=
\begin{cases}
H^n(G,M) & (n \geq 1),\\
\widehat Z^0(G,M)/\widehat B^0(G,M) & (n=0),\\
\widehat Z^{-1}(G,M)/\widehat B^{-1}(G,M) & (n=-1),\\
H_{-n-1}(G,M) & (n \leq -2)
\end{cases}$$
where $H_i$ is the $i$-th homology group. 
\end{definition}

Assume that $G$ is a finite group and $M$ is a $G$-lattice, 
i.e. finitely generated $\bZ[G]$-module which is $\bZ$-free as an abelian group. 
Both $\widehat Z^n(G,M)$ and $\widehat B^n(G,M)$ are free $\bZ$-modules 
of finite rank, and it turns out that the groups $\widehat H^n(G,M)$ 
have exponent dividing $\# G$ and hence finite for any $n\in\bZ$. 
We have that $\widehat H^n(G,\bZ[G])=0$ for $n=\pm 1$ (see Lemma \ref{lemSL})
and that $\widehat H^0(G,\bZ[G])=0$.

We will give some GAP algorithms for computing the Tate cohomology 
$\widehat H^n(G,M_G)$ in Section \ref{seTate}.\\

In order to construct a flabby resolution of $M$, 
we use the following long exact sequence:

\begin{lemma}[A long exact sequence]\label{longexact}
If $$0 \rightarrow A \xrightarrow{f} B \xrightarrow{g} C \rightarrow 0$$
is an exact sequence of $G$-lattices,
then there exists a long exact sequence of abelian groups 
\[
\cdots \rightarrow \widehat H^{k-1}(G,C)
\xrightarrow{d^\ast} \widehat H^k(G,A)
\xrightarrow{f^\ast} \widehat H^k(G,B)
\xrightarrow{g^\ast} \widehat H^k(G,C)
\xrightarrow{d^\ast} \widehat H^{k+1}(G,A)
\rightarrow \cdots
\]
where $f^\ast$ and $g^\ast$ are the maps in cohomology
induced from the cochain maps $f$ and $g$,
and $d^\ast$ is the connecting homomorphism
obtained by using the snake lemma.
\end{lemma}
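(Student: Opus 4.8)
The plan is to derive this standard long exact sequence of Tate cohomology from the short exact sequence of $G$-lattices by the usual homological-algebra machinery, adapted to the two-sided nature of Tate cohomology. First I would recall that, by the definitions in Definition \ref{defTate}, the Tate cohomology $\widehat H^n(G,-)$ for $n\geq 1$ is ordinary group cohomology, for $n\leq -2$ it is homology via $\widehat H^n(G,M)=H_{-n-1}(G,M)$, and the two middle degrees $n=0,-1$ are the modified fixed-point/coinvariant quotients involving the trace map $T_G$. The cleanest uniform way to obtain the sequence is to use a \emph{complete resolution} $(P_\bullet,\partial)$ of $\bZ$ over $\bZ[G]$: a doubly infinite acyclic complex of finitely generated free $\bZ[G]$-modules together with the standard augmentation, so that $\widehat H^n(G,M)=H^n\big(\Hom_{\bZ[G]}(P_\bullet,M)\big)$ for all $n\in\bZ$ simultaneously. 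This packages all four cases of the definition into a single cochain complex and removes the need to treat degrees separately.

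The key steps proceed as follows. Given the short exact sequence
$0\rightarrow A\xrightarrow{f} B\xrightarrow{g} C\rightarrow 0$,
I would apply the functor $\Hom_{\bZ[G]}(P_\bullet,-)$ termwise. Because each $P_n$ is free, hence projective, as a $\bZ[G]$-module, the sequence
\[
0\rightarrow \Hom_{\bZ[G]}(P_n,A)\rightarrow \Hom_{\bZ[G]}(P_n,B)\rightarrow \Hom_{\bZ[G]}(P_n,C)\rightarrow 0
\]
is exact for every $n$; that is, we obtain a short exact sequence of cochain complexes. The general snake-lemma argument for a short exact sequence of complexes then yields a long exact sequence in cohomology, with the connecting homomorphism $d^\ast$ supplied by the snake lemma, exactly as stated. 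Since the cohomology of $\Hom_{\bZ[G]}(P_\bullet,M)$ computes $\widehat H^\bullet(G,M)$ in every degree, this is precisely the asserted long exact sequence
\[
\cdots \rightarrow \widehat H^{k-1}(G,C)\xrightarrow{d^\ast}\widehat H^k(G,A)\xrightarrow{f^\ast}\widehat H^k(G,B)\xrightarrow{g^\ast}\widehat H^k(G,C)\xrightarrow{d^\ast}\widehat H^{k+1}(G,A)\rightarrow\cdots.
\]
The maps $f^\ast,g^\ast$ are visibly the ones induced by postcomposition with $f,g$, matching the claim.

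The main obstacle I anticipate is not the formal snake-lemma step but rather justifying the existence of a complete resolution and checking that it genuinely recomputes the degreewise definitions given in Definition \ref{defTate}. The splicing of a projective resolution of $\bZ$ with its dual (a projective \emph{co}resolution) into a doubly infinite acyclic complex is standard but requires that $G$ be finite, which is exactly our hypothesis; finiteness guarantees $\bZ$ has a resolution by finitely generated free modules and makes the trace map available to glue the positive and negative halves. One must then verify, in the two boundary degrees $n=0$ and $n=-1$, that $H^0$ and $H^{-1}$ of $\Hom_{\bZ[G]}(P_\bullet,M)$ agree with $M^G/\image(T_G)$ and $\ker(T_G)/\sum_g\image(g-\id_M)$ respectively; this is the one genuinely computational check, and it is the standard identification of Tate cohomology with complete-resolution cohomology. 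Once that identification is in hand, the long exact sequence is immediate and no $G$-lattice–specific input beyond the freeness of each $P_n$ is needed.
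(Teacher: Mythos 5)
Your argument is correct and is exactly the standard one the paper implicitly relies on: the lemma is stated without proof (with the connecting map explicitly attributed to the snake lemma, and with Cartan--Eilenberg and Brown cited at the start of Section 2 for the background on Tate cohomology), and your route via a complete resolution $P_\bullet$ of $\bZ$, the termwise-exact sequence of complexes $\Hom_{\bZ[G]}(P_\bullet,-)$ obtained from projectivity of each $P_n$, and the snake lemma is precisely that standard proof. You also correctly isolate the only nontrivial verification, namely that $H^0$ and $H^{-1}$ of the complete-resolution complex recover the trace-map descriptions $\widehat Z^0/\widehat B^0$ and $\widehat Z^{-1}/\widehat B^{-1}$ of Definition \ref{defTate}.
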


\begin{definition}[Dual $G$-lattice]
Let $M$ be a $G$-lattice. 
The $G$-lattice 
$M^\circ=\Hom_\bZ(M,\bZ)$ with $G$-action
$$ (g\cdot f)(m)=f(g^{-1}\cdot m) \quad (f \in M^\circ, m \in M, g \in G)$$
is called {\it the dual $G$-lattice} of $M$.
\end{definition}

Note that $(M^\circ)^\circ\simeq M$ and 
if $A(g)$ is the matrix representation of the action of $g \in G$
on the $G$-lattice $M$ with a fixed $\bZ$-basis, then the matrix representation of 
the action of $g$ on $M^\circ$ is given by ${}^tA(g^{-1})$. 
In particular, if $M$ is permutation $G$-lattice, then $M\simeq M^\circ$. 

\begin{lemma}[{see \cite[Theorem 2.2]{Arn84}}]
$H^n(G,M) \simeq \widehat H^{-n}(G,M^\circ)$.
\end{lemma}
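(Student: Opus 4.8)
The plan is to deduce the lemma from the symmetric Tate-duality statement $\widehat{H}^n(G,M)\simeq\widehat{H}^{-n}(G,M^\circ)$ for \emph{all} $n\in\bZ$ and every $G$-lattice $M$, and then specialize: since $\widehat{H}^n(G,M)=H^n(G,M)$ for $n\geq 1$, the lemma follows (for $n=0$ the left side is $M^G$, so the genuinely clean statement is the one for Tate cohomology, which I treat for $n\geq 1$). First I would fix a complete resolution $F_\bullet$ for $G$, i.e. an acyclic complex of finitely generated free $\bZ[G]$-modules with $\widehat{H}^n(G,N)=H^n(\Hom_{\bZ[G]}(F_\bullet,N))$ for every $G$-module $N$; this exists because $G$ is finite. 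Since each $F_n\cong\bZ[G]^{r_n}$ and $\bZ[G]^\circ\simeq\bZ[G]$, the $\bZ$-dual $F_n^\circ$ is again finitely generated free over $\bZ[G]$; and because $F_\bullet$ is an acyclic complex of $\bZ$-free modules it is degreewise $\bZ$-split (each image sits inside a free abelian group, hence is free), so applying $\Hom_\bZ(-,\bZ)$ preserves exactness.

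The key step is a natural identification of complexes. By the tensor--hom adjunction, together with the fact that $\bZ$ carries the trivial $G$-action, one has
\begin{align*}
\Hom_{\bZ[G]}(F_n,M^\circ)\ \simeq\ \Hom_\bZ\big(F_n\otimes_{\bZ[G]}M,\ \bZ\big),
\end{align*}
naturally in $F_n$, hence an isomorphism of complexes $\Hom_{\bZ[G]}(F_\bullet,M^\circ)\simeq\Hom_\bZ(C_\bullet,\bZ)$ where $C_\bullet=F_\bullet\otimes_{\bZ[G]}M$. The complex $C_\bullet$ consists of finitely generated free $\bZ$-modules and computes Tate homology, $H_n(C_\bullet)=\widehat{H}_n(G,M)$.

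Finally I would invoke the universal coefficient theorem over $\bZ$ for the $\bZ$-free complex $C_\bullet$:
\begin{align*}
0\to\mathrm{Ext}^1_\bZ\big(H_{n-1}(C_\bullet),\bZ\big)\to H^n\big(\Hom_\bZ(C_\bullet,\bZ)\big)\to\Hom_\bZ\big(H_n(C_\bullet),\bZ\big)\to 0.
\end{align*}
Every $\widehat{H}_i(G,M)$ is finite (its exponent divides $\#G$), so $\Hom_\bZ(H_n(C_\bullet),\bZ)=0$, while $\mathrm{Ext}^1_\bZ(A,\bZ)\simeq A$ for finite $A$. Thus $\widehat{H}^n(G,M^\circ)\simeq\widehat{H}_{n-1}(G,M)$, and rewriting Tate homology as cohomology via $\widehat{H}_{n-1}(G,M)=\widehat{H}^{-n}(G,M)$ gives $\widehat{H}^n(G,M^\circ)\simeq\widehat{H}^{-n}(G,M)$. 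Interchanging $M$ and $M^\circ$ (legitimate since $M^{\circ\circ}\simeq M$) and specializing to $n\geq 1$ yields $H^n(G,M)\simeq\widehat{H}^{-n}(G,M^\circ)$.

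The main obstacle I anticipate is bookkeeping rather than conceptual content: keeping the degree shift straight through the universal coefficient sequence, and accepting that the resulting isomorphism is non-canonical, since $\mathrm{Ext}^1_\bZ(A,\bZ)\simeq A$ (equivalently the duality pairing into $\widehat{H}^0(G,\bZ)=\bZ/\#G\bZ$) holds only up to a non-canonical choice for finite abelian groups. One must also take care to justify that $\bZ$-dualization preserves acyclicity of $F_\bullet$, which is exactly where the $\bZ$-freeness of $M$ and of each $F_n$ is used. Alternatively, to stay entirely within the tools of the excerpt, the same isomorphism can be reached by dimension shifting: embed $M\hookrightarrow\bZ[G]^s$ with $\bZ$-free cokernel $M'$, dualize to the exact sequence $0\to M'^\circ\to\bZ[G]^s\to M^\circ\to 0$, and apply the long exact sequence together with the vanishing $\widehat{H}^\ast(G,\bZ[G])=0$ to reduce $\widehat{H}^n(G,M)\simeq\widehat{H}^{-n}(G,M^\circ)$ to a single base degree, the remaining work being the nondegeneracy of the evaluation pairing there.
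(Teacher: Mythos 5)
The paper does not prove this lemma at all: it simply cites \cite[Theorem 2.2]{Arn84}, so there is no in-text argument to compare yours against. Your proof is correct and is essentially the standard derivation of the duality $\widehat H^n(G,M)\simeq \widehat H^{-n}(G,M^\circ)$ via a complete resolution. The key steps all check out: the adjunction $\Hom_{\bZ[G]}(F_n,M^\circ)\simeq \Hom_\bZ(F_n\otimes_{\bZ[G]}M,\bZ)$ is valid and natural (equivariance of $\phi$ is exactly what makes the pairing descend to $F_n\otimes_{\bZ[G]}M$); the complex $C_\bullet=F_\bullet\otimes_{\bZ[G]}M$ is degreewise $\bZ$-free since $C_n\simeq M^{r_n}$, so the universal coefficient sequence applies; the Tate groups are finite, killing the $\Hom$ term and turning the $\mathrm{Ext}^1$ term into $H_{n-1}(C_\bullet)$; and the index bookkeeping $\widehat H_{n-1}=\widehat H^{-n}$ matches the paper's convention $\widehat H^n=H_{-n-1}$ for $n\leq -2$. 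Your restriction to $n\geq 1$ is also the right reading of the statement, since the paper only defines $H^n(G,M)$ for $n\geq 1$ (for $n=0$ the left side would be the infinite group $M^G$ while the right side is finite). Two cosmetic points: the paragraph about $F_n^\circ$ being free and $\bZ$-dualization of $F_\bullet$ preserving acyclicity is not actually used in your main line of argument (the universal coefficient theorem applied to $C_\bullet$ does all that work), and the concluding dimension-shifting sketch is a legitimate alternative but, as you note, still owes a nondegeneracy argument in the base degree, so the complete-resolution argument is the one to keep.
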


\begin{definition}[Dual homomorphism]
For $f \in \Hom_\bZ(M_1,M_2)$,
we define {\it the dual homomorphism} $f^\circ \in \Hom_\bZ(M_2^\circ,M_1^\circ)$ by 
\[
f^\circ(l)(m)=l(f(m)) \quad (l \in M_2^\circ, m \in M_1).
\]
\end{definition}

\begin{lemma}
If $A \xrightarrow{f} B \xrightarrow{g} C$
is an exact sequence of $G$-lattices, then
$C^\circ \xrightarrow{g^\circ} B^\circ
\xrightarrow{f^\circ} A^\circ$
is also an exact sequence of $G$-lattices.
\end{lemma}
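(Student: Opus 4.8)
The plan is to prove exactness at the middle term, i.e. that $\image g^\circ=\ker f^\circ$ inside $B^\circ$. Since the assertion is about an exact sequence \emph{of $G$-lattices}, I would first check that $g^\circ$ and $f^\circ$ are genuinely $G$-equivariant, after which the claim reduces to exactness of the underlying sequence of abelian groups (exactness is a statement about the underlying $\bZ$-modules). The equivariance is a direct computation from the definitions: for $l\in B^\circ$, $m\in B$, $g_0\in G$ one has $f^\circ(g_0\cdot l)(m)=(g_0\cdot l)(f(m))=l(g_0^{-1}f(m))=l(f(g_0^{-1}m))=f^\circ(l)(g_0^{-1}m)=(g_0\cdot f^\circ(l))(m)$, where the crucial middle step uses that $f$ commutes with the $G$-action; the same computation applies to $g^\circ$.

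For the easy inclusion $\image g^\circ\subseteq\ker f^\circ$, I would simply dualize the relation $g\circ f=0$, which holds because $\image f\subseteq\ker g$. From the definition of the dual homomorphism, $f^\circ(g^\circ(l))(a)=g^\circ(l)(f(a))=l(g(f(a)))=l\big((g\circ f)(a)\big)=0$ for all $l\in C^\circ$, $a\in A$, so $f^\circ\circ g^\circ=0$ and every element of $\image g^\circ$ lies in $\ker f^\circ$.

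The substance is the reverse inclusion $\ker f^\circ\subseteq\image g^\circ$. Take $\varphi\in B^\circ$ with $f^\circ(\varphi)=0$; then $\varphi(f(a))=0$ for all $a\in A$, so $\varphi$ vanishes on $\image f=\ker g$ (here the full exactness $\image f=\ker g$, not merely the inclusion, is used). Consequently $\varphi$ factors through the quotient $B/\ker g$, and via the first isomorphism theorem $B/\ker g\cong\image g$ this produces a homomorphism $\overline\varphi\colon\image g\to\bZ$. What remains is to produce $\psi\in C^\circ$ with $\psi\circ g=\varphi$, that is $g^\circ(\psi)=\varphi$, which amounts to extending $\overline\varphi$ from the subgroup $\image g\subseteq C$ to all of $C$.

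I expect this extension step to be the main obstacle, and it is exactly where the $\bZ$-freeness of the lattices is indispensable: a $\bZ$-homomorphism into $\bZ$ does not extend across an arbitrary inclusion. In the setting in which this lemma is actually applied — dualizing a flasque resolution $0\to M\to P\to F\to 0$, so that $g$ is surjective — one has $\image g=C$ and $\overline\varphi$ is already the required $\psi$, giving $\varphi=g^\circ(\psi)$ with no extension needed. More generally, I would note that $\ker g$ is a pure (saturated) subgroup of $B$, because $B/\ker g\cong\image g$ embeds in the torsion-free lattice $C$, so $\ker g$ splits off as a direct summand of $B$; the extension of $\overline\varphi$ then goes through precisely when $\image g$ is likewise pure in $C$ (equivalently $C/\image g$ is torsion-free, so that $\mathrm{Ext}^1_\bZ(C/\image g,\bZ)=0$), which is automatic under the surjectivity hypothesis that holds in all the intended applications. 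The remaining verifications are formal diagram chasing.
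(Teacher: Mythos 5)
The paper states this lemma without proof, so there is nothing of the authors' to compare your argument against; taken on its own terms your proposal is correct, and you have in fact put your finger on a real defect in the statement itself. Exactness of a three-term sequence $A\xrightarrow{f}B\xrightarrow{g}C$ means only $\image f=\ker g$, and under that hypothesis alone the dual sequence need \emph{not} be exact at $B^\circ$: take $A=0$, $B=C=\bZ$ and $g$ multiplication by $2$; then $\ker f^\circ=B^\circ=\bZ$ while $\image g^\circ=2\bZ$. This is precisely the failure of the extension step you isolate, since here $C/\image g\simeq\bZ/2\bZ$ has torsion. Your diagnosis is the right one: the restriction map $\Hom_\bZ(C,\bZ)\to\Hom_\bZ(\image g,\bZ)$ has cokernel $\mathrm{Ext}^1_\bZ(C/\image g,\bZ)$, which vanishes exactly when $C/\image g$ is torsion-free, and in every use the paper makes of the lemma (dualizing a coflabby resolution $0\to Q\to R\to M^\circ\to 0$ into a flabby resolution) the map $g$ is surjective, so the lemma holds in the form actually needed. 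Your equivariance computation and the inclusion $\image g^\circ\subseteq\ker f^\circ$ are fine (modulo the slip that the test element $m$ should range over $A$, not $B$, when checking equivariance of $f^\circ$), and the observation that $\ker g$ is a direct summand of $B$ is true but not needed, since any homomorphism vanishing on $\ker g$ factors through $B/\ker g$ regardless. If one wanted to record the lemma in a form that is literally true, it should be stated for short exact sequences $0\to A\to B\to C\to 0$ of lattices, where $\mathrm{Ext}^1_\bZ(C,\bZ)=0$ yields exactness of the entire dualized short exact sequence.
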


\bigskip

We recall some basic facts of the theory of flabby (flasque) $G$-lattices
(see \cite{CTS77}, \cite{Swa83}, \cite[Chapter 2]{Vos98}, \cite[Chapter 2]{Lor05}, \cite{Swa10}).

\begin{definition}[Permutation, stably permutation, invertible, flabby and coflabby $G$-lattices]
Let $G$ be a finite group and $M$ be a $G$-lattice 
(i.e. finitely generated $\bZ[G]$-module which is $\bZ$-free 
as an abelian group). \\
{\rm (i)} $M$ is called a {\it permutation} $G$-lattice 
if $M$ has a $\bZ$-basis permuted by $G$, 
i.e. $M\simeq \oplus_{1\leq i\leq m}\bZ[G/H_i]$ 
for some subgroups $H_1,\ldots,H_m$ of $G$.\\
{\rm (ii)} $M$ is called a {\it stably permutation} $G$-lattice 
if $M\oplus P\simeq P^\prime$ 
for some permutation $G$-lattices $P$ and $P^\prime$.\\
{\rm (iii)} $M$ is called {\it invertible} (or {\it permutation projective}) 
if it is a direct summand of a permutation $G$-lattice, 
i.e. $P\simeq M\oplus M^\prime$ for some permutation $G$-lattice 
$P$ and a $G$-lattice $M^\prime$.\\
{\rm (iv)} $M$ is called {\it flabby} (or {\it flasque}) if $\widehat H^{-1}(H,M)=0$ 
for any subgroup $H$ of $G$ where $\widehat H$ is the Tate cohomology.\\
{\rm (v)} $M$ is called {\it coflabby} (or {\it coflasque}) if $H^1(H,M)=0$
for any subgroup $H$ of $G$.
\end{definition}

\begin{lemma}[{\cite[Propositions 1.1 and 1.2]{Len74}, see also 
\cite[Section 8]{Swa83}}]\label{lemSL}
Let $E$ be an invertible $G$-lattice. Then\\
{\rm (i)} $E$ is flabby and coflabby.\\
{\rm (ii)} If $C$ is a coflabby $G$-lattice, then any short exact sequence
$0 \rightarrow C \rightarrow N \rightarrow E \rightarrow 0$ splits.
\end{lemma}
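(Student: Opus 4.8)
The plan is to reduce both statements to the single permutation lattices $\bZ[G/H']$ and to use Shapiro's lemma together with the elementary vanishing $\widehat H^{-1}(H',\bZ)=H^1(H',\bZ)=0$, valid for every finite group $H'$ because $\bZ$ is torsion-free (so $\ker T_{H'}=0$ and $\Hom(H',\bZ)=0$).

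For (i), I first treat permutation lattices. If $P\simeq\bigoplus_i\bZ[G/H_i]$ and $H\leq G$, then as an $H$-set each $G/H_i$ splits into $H$-orbits, the orbit of $gH_i$ being $H$-isomorphic to $H/(H\cap gH_ig^{-1})$; hence the restriction of $P$ to $H$ is again permutation. Shapiro's lemma $H^1(H,\bZ[H/H''])\simeq H^1(H'',\bZ)=0$ then gives $H^1(H,P)=0$ for every $H\leq G$, so $P$ is coflabby. Since $P^\circ\simeq P$, the duality $H^n(H,M)\simeq\widehat H^{-n}(H,M^\circ)$ recorded above yields $\widehat H^{-1}(H,P)\simeq\widehat H^{-1}(H,P^\circ)\simeq H^1(H,P)=0$, so $P$ is also flabby. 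Finally, if $E$ is invertible, choose $E'$ with $E\oplus E'\simeq P$ permutation; as Tate cohomology is additive, $\widehat H^{-1}(H,E)$ and $H^1(H,E)$ are direct summands of $\widehat H^{-1}(H,P)=0$ and $H^1(H,P)=0$ respectively, and therefore vanish. Thus $E$ is flabby and coflabby.

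For (ii), I would reduce the invertible case to the permutation case by the usual summand trick: adding the split sequence $0\to 0\to E'\to E'\to 0$ to the given one yields
\[
0\to C\to N\oplus E'\to E\oplus E'\to 0,
\]
whose quotient $E\oplus E'\simeq P$ is permutation. A splitting $\sigma$ of this sequence, restricted to the summand $E\subseteq P$ and followed by the projection $N\oplus E'\to N$, produces a section of $N\to E$ (for $(e,0)\in P$ one writes $\sigma(e,0)=(n,e')$ and the splitting condition forces $e'=0$ and $\pi(n)=e$), so it suffices to split sequences $0\to C\to N\to P\to 0$ with $P$ permutation, and by additivity with $P=\bZ[G/H]$. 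For such a sequence the obstruction to splitting is its class in $\mathrm{Ext}^1_{\bZ[G]}(\bZ[G/H],C)$; concretely, lifting the canonical generator of $\bZ[G/H]$ to some $\tilde e\in N$, the assignment $h\mapsto h\tilde e-\tilde e$ is a $1$-cocycle of $H$ with values in $C$, and adjusting $\tilde e$ by a coboundary makes it $H$-fixed, whence $gH\mapsto g\tilde e$ defines a $G$-equivariant section. The existence of this adjustment is exactly the vanishing $H^1(H,C)=0$, i.e. the coflabbiness of $C$; equivalently the adjunction between induction and restriction gives $\mathrm{Ext}^1_{\bZ[G]}(\bZ[G/H],C)\simeq H^1(H,C)=0$.

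I do not anticipate a genuine obstacle, since every ingredient is standard. The only points requiring care are the two bookkeeping reductions—the $H$-orbit decomposition of $\bZ[G/H_i]$ in (i), and the verification that the splitting of $N\oplus E'$ restricts to a section of $N\to E$ in (ii)—together with keeping straight the two incarnations of Shapiro's lemma (for Tate cohomology in (i), for $\mathrm{Ext}^1$ in (ii)) relative to the definitions fixed in this paper.
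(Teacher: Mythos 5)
Your proof is correct. The paper gives no proof of this lemma — it simply cites \cite{Len74} and \cite{Swa83} — and your argument (restricting permutation lattices to subgroups, reducing to the summands $\bZ[G/H]$ via additivity and the summand trick $E\oplus E'\simeq P$, then invoking Shapiro's lemma together with the vanishings $\widehat H^{-1}(H'',\bZ)=H^1(H'',\bZ)=0$ and $H^1(H,C)=0$) is precisely the standard one found in those references, so there is nothing to add.
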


Let $\cC(G)$ be the category of all $G$-lattices and 
$\cS(G)$ be the full subcategory of $\cC(G)$ of all permutation $G$-lattices. 
Recall that two $G$-lattices $M_1$ and $M_2$ are {\it similar} 
if there exist permutation $G$-lattices $P_1$ and $P_2$ such that 
$M_1\oplus P_1\simeq M_2\oplus P_2$. 
The {\it similarity class} of $M$ is denoted by $[M]$. 
Then the set of similarity classes $\cC(G)/\cS(G)$ becomes a 
commutative monoid 
(with respect to the sum $[M_1]+[M_2]:=[M_1\oplus M_2]$ 
and the zero $0=[P]$ where $P\in \cS(G)$) (see Definition \ref{defCM}). 

%%%%%%%%%%%%%%%%%%%%%%%%%%%%%%%%%%%%%%%%%%%%%%%%%

\begin{theorem}[{Endo and Miyata \cite[Lemma 1.1]{EM74}, 
Colliot-Th\'el\`ene and Sansuc \cite[Lemma 3]{CTS77}, 
see also \cite[Lemma 8.5]{Swa83}, \cite[Lemma 2.6.1]{Lor05}}]\label{thEM}
For any $G$-lattice $M$,
there is a short exact sequence of $G$-lattices
$0 \rightarrow M \rightarrow P \rightarrow F \rightarrow 0$
where $P$ is permutation and $F$ is flabby.
\end{theorem}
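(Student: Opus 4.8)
The plan is to reduce the statement to the construction of a dual ``coflasque resolution'' and then transport it back by duality, exploiting the fact already recorded in the excerpt that $H^n(H,C)\simeq\widehat H^{-n}(H,C^\circ)$ for every subgroup $H\le G$. Concretely, I would first replace $M$ by its dual $N=M^\circ$ and aim to produce a short exact sequence $0\to C\to P\to N\to 0$ in which $P$ is permutation and $C$ is coflabby. Dualizing such a sequence (which stays exact because every lattice here is $\bZ$-free) yields $0\to N^\circ\to P^\circ\to C^\circ\to 0$; here $N^\circ\simeq M$, and $P^\circ\simeq P$ is again permutation since $P\simeq P^\circ$ for permutation lattices, while the duality $\widehat H^{-1}(H,C^\circ)\simeq H^1(H,C)=0$ shows that $F:=C^\circ$ is flabby. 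Thus the whole problem reduces to building a permutation lattice surjecting onto $N$ with coflabby kernel.

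For that construction I would first record that every permutation lattice $P$ satisfies $H^1(H,P)=0$ for all $H\le G$: by Mackey's double coset formula the restriction of $\bZ[G/K]$ to $H$ is a direct sum of lattices $\bZ[H/H']$ with $H'$ finite, and $H^1(H,\bZ[H/H'])\simeq H^1(H',\bZ)=\Hom(H',\bZ)=0$ because $H'$ is finite and $\bZ$ is torsion-free. The key step is then to choose $P$ so that not merely $P\to N$ but each induced map $P^H\to N^H$ on $H$-fixed points is surjective, for all subgroups $H\le G$ simultaneously. To achieve this I would set $P=\bigoplus_{H\le G}\bZ[G/H]^{\,s_H}$, where for each $H$ one fixes a $\bZ$-basis $n^{(H)}_1,\dots,n^{(H)}_{s_H}$ of the finitely generated free abelian group $N^H$ and lets the $i$-th copy of $\bZ[G/H]$ map via $gH\mapsto g\cdot n^{(H)}_i$ (well defined because $n^{(H)}_i\in N^H$). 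Taking $H=\{1\}$ already exhibits $P\to N$ as surjective, and for a given $H$ the invariant element $eH$ of the relevant summands maps onto a $\bZ$-basis of $N^H$, so $P^H\to N^H$ is onto.

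Finally I would run the long exact cohomology sequence of $0\to C\to P\to N\to 0$ for each subgroup $H$: the fragment $P^H\to N^H\to H^1(H,C)\to H^1(H,P)$ has surjective left arrow and vanishing right-hand term, which forces $H^1(H,C)=0$. Hence $C$ is coflabby, and dualizing as above delivers the required flabby resolution of $M$.

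I expect the main obstacle to be the simultaneous fixed-point surjectivity in the middle step: one must exhibit a single permutation lattice whose $H$-invariants surject onto $N^H$ for all $H\le G$ at once, and verify that the invariants of each $\bZ[G/H]$ genuinely contribute the class of $eH$ needed to hit the chosen basis of $N^H$. The remaining cohomological bookkeeping---the $H^1$-acyclicity of permutation lattices via Mackey and the compatibility of dualization with restriction to subgroups---is routine once this construction is in hand.
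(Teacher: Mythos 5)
Your proof is correct and follows essentially the same route the paper takes: it cites the result but its Section \ref{ss51} (the construction behind Algorithm F1) is exactly your argument --- build a permutation lattice mapping onto $M^\circ$ whose $H$-fixed points surject onto $(M^\circ)^H$ for every subgroup $H$, conclude the kernel is coflabby from the long exact sequence and $H^1(H,P)=0$, and dualize. The only cosmetic difference is that the paper takes the permutation lattice on the $G$-orbits of chosen bases of the $(M^\circ)^H$ rather than your direct sum $\bigoplus_H \bZ[G/H]^{s_H}$, which amounts to the same thing.
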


\begin{definition}[Flabby resolution]\label{defFlabby}
The exact sequence $0 \rightarrow M \rightarrow P \rightarrow F \rightarrow 0$ 
as in Theorem \ref{thEM} is called a {\it flabby resolution} of the $G$-lattice $M$.
$\rho_G(M)=[F] \in \cC(G)/\cS(G)$ is called {\it the flabby class} of $M$,
denoted by $[M]^{fl}=[F]$.
Note that $[M]^{fl}$ is well-defined: 
if $[M]=[M^\prime]$, $[M]^{fl}=[F]$ and $[M^\prime]^{fl}=[F^\prime]$
then $F \oplus P_1 \simeq F^\prime \oplus P_2$
for some permutation $G$-lattices $P_1$ and $P_2$,
and therefore $[F]=[F^\prime]$ (cf. \cite[Lemma 8.7]{Swa83}). 
We say that $[M]^{fl}$ is {\it invertible} if 
$[M]^{fl}=[E]$ for some invertible $G$-lattice $E$. 
\end{definition}

For $G$-lattice $M$, 
it is not difficult to see 
\begin{align*}
\textrm{permutation}\ \ 
\Rightarrow\ \ 
&\textrm{stably\ permutation}\ \ 
\Rightarrow\ \ 
\textrm{invertible}\ \ 
\Rightarrow\ \ 
\textrm{flabby\ and\ coflabby}\\
&\hspace*{8mm}\Downarrow\hspace*{34mm} \Downarrow\\
&\hspace*{7mm}[M]^{fl}=0\hspace*{10mm}\Rightarrow\hspace*{5mm}[M]^{fl}\ 
\textrm{is\ invertible}
\end{align*}
(see Section \ref{seInt}). 
Let $L/k$ be a finite Galois extension with Galois group $G={\rm Gal}(L/k)$ 
and $M$ be a $G$-lattice. 
The flabby class $[M]^{fl}$ plays crucial role in the rationality problem for $L(M)^G$. 
In particular, by Theorem \ref{thEM73} 
(Endo and Miyata \cite{EM73}, Voskresenskii \cite{Vos74}
and Saltman \cite{Sal84a}), we have the following fundamentals: 

{\rm (i)} $[M]^{fl}=0$ if and only if $L(M)^G$ is stably $k$-rational;

{\rm (ii)} $[M]^{fl}=[M^\prime]^{fl}$ if and only if $L(M)^G$ and $L(M^\prime)^G$ 
are stably $k$-isomorphic, i.e. there exist algebraically independent 
elements $x_1,\ldots,x_m$ over $L(M)^G$ and 
$y_1,\ldots,y_n$ over $L(M^\prime)^G$ such that 
$L(M)^G(x_1,\ldots,x_m)\simeq L(M^\prime)^G(y_1,\ldots,y_n)$;

{\rm (iii)} $[M]^{fl}$ is invertible if and only if $L(M)^G$ is 
retract $k$-rational.\\

Let $G\simeq {\rm Gal}(L/k)$ be a finite group and 
$M\simeq M_1\oplus M_2$ be a decomposable $G$-lattice. 
Let $N_i=\{\sigma\in G\mid\sigma(v)=v\ {\rm for\ any}\ v\in M_i\}$ 
be the kernel of the action of $G$ on $M_i$ $(i=1,2)$. 
Then $L(M)^G$ is the function field of an algebraic torus $T$ 
and is $k$-isomorphic to the free composite of 
$L(M_1)^G$ and $L(M_2)^G$ over $k$ where 
$L(M_i)^G=(L^{N_i})(M_i^{N_i})^{G/N_i}$ 
is the function field of some torus $T_i$ ($i=1,2$) with 
$T=T_1\times T_2$ 
and $M_i$ may be regarded as a $G/N_i$-lattice. 

\begin{lemma} \label{lemp1}
Let $G$ be a finite group and
$M \simeq M_1 \oplus M_2$ be a decomposable $G$-lattice with 
the flabby class $\rho_G(M)=[M]^{fl}$. 
Let $N_1$ be a normal subgroup of $G$ which acts on $M_1$ trivially. 
The $G$-lattice $M_1$ may be regarded as a $G/N_1$-lattice with the 
flabby class $\rho_{G/N_1}(M_1)$ as $G/N_1$-lattice. Then\\
{\rm (i)} $\rho_G(M)=\rho_G(M_1)+\rho_G(M_2)$.\\
{\rm (ii)} $\rho_{G}(M_1)=0$ if and only if $\rho_{G/N_1}(M_1)=0$.\\
{\rm (iii)} $\rho_G(M_1)$ is invertible if and only if 
$\rho_{G/N_1}(M_1)$ is invertible.
\end{lemma}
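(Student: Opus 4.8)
The plan is to handle (i) as a formal additivity property of the flabby class, and to prove (ii) and (iii) by one uniform strategy comparing flabby resolutions of $M_1$ over $G$ with flabby resolutions over $G/N_1$: one implication passes by \emph{inflation} from $G/N_1$ to $G$, the other by taking \emph{$N_1$-fixed points}. For (i) I would take flabby resolutions $0\to M_i\to P_i\to F_i\to 0$ of $M_i$ ($i=1,2$) furnished by Theorem \ref{thEM} and form their direct sum $0\to M_1\oplus M_2\to P_1\oplus P_2\to F_1\oplus F_2\to 0$. Here $P_1\oplus P_2$ is permutation and $F_1\oplus F_2$ is flabby, since $\widehat H^{-1}(H,F_1\oplus F_2)\simeq\widehat H^{-1}(H,F_1)\oplus\widehat H^{-1}(H,F_2)=0$ for every $H\leq G$; so this is a flabby resolution of $M$, and by the well-definedness of the flabby class (Definition \ref{defFlabby}) we obtain $\rho_G(M)=[F_1\oplus F_2]=[F_1]+[F_2]=\rho_G(M_1)+\rho_G(M_2)$.

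For (ii) and (iii) I would first isolate two functorial facts. First, inflation from $G/N_1$ to $G$ carries a permutation (resp.\ invertible) $G/N_1$-lattice to a permutation (resp.\ invertible) $G$-lattice, because $\bZ[(G/N_1)/(H/N_1)]$ inflates to $\bZ[G/H]$ and direct summands are preserved. Second, the fixed-point functor $(-)^{N_1}$ carries a permutation (resp.\ invertible) $G$-lattice to a permutation (resp.\ invertible) $G/N_1$-lattice. The core of the second fact is the identification $\bZ[G/K]^{N_1}\simeq\bZ[N_1\backslash G/K]$ as $G/N_1$-permutation lattices: since $N_1$ is normal, $G$ preserves the subgroup of $N_1$-invariants and permutes the $N_1$-orbit sums, which form a $\bZ$-basis indexed by the double cosets in $N_1\backslash G/K$; additivity then gives the invertible case.

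Granting these, the inflation direction of (ii) and (iii) is immediate. Assuming $\rho_{G/N_1}(M_1)$ is zero (resp.\ invertible), I would choose a flabby resolution $0\to M_1\to P\to F'\to 0$ over $G/N_1$ with $F'$ permutation (resp.\ invertible). Inflating to $G$ keeps $P$ permutation and keeps $F'$ permutation (resp.\ invertible) by the first fact; since permutation and invertible lattices are flabby (Lemma \ref{lemSL}(i)), the inflated sequence is a genuine flabby resolution over $G$ whose flabby term $F'$ is permutation (resp.\ invertible). Hence $\rho_G(M_1)$ is zero (resp.\ invertible).

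The converse direction, where the hypothesis that $N_1$ acts trivially on $M_1$ becomes essential, is the step I expect to be the main obstacle. Starting from a flabby resolution $0\to M_1\to P\to F\to 0$ over $G$ with $F$ permutation (resp.\ invertible), I would apply $(-)^{N_1}$. The crucial point is that short-exactness survives: the obstruction to surjectivity lives in $H^1(N_1,M_1)$, and since $N_1$ is finite and acts trivially on the $\bZ$-free module $M_1$ we have $H^1(N_1,M_1)=\Hom(N_1,M_1)=0$. Thus $0\to M_1\to P^{N_1}\to F^{N_1}\to 0$ is exact over $G/N_1$; by the second fact $P^{N_1}$ is permutation and $F^{N_1}$ is permutation (resp.\ invertible), hence flabby, so this is a flabby resolution of $M_1$ over $G/N_1$ and $\rho_{G/N_1}(M_1)$ is zero (resp.\ invertible). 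The two genuinely technical verifications are therefore that $(-)^{N_1}$ preserves short-exactness (reducing to $H^1(N_1,M_1)=0$) and that it preserves the permutation/invertible property (reducing to the double-coset identity); once these hold, combining the two directions proves (ii) and (iii), and (i) is purely formal.
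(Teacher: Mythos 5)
Your proof of (i) is exactly the paper's: the paper also forms the direct sum of two flabby resolutions and invokes well-definedness of the flabby class. For (ii) and (iii) the paper gives no argument at all, deferring to \cite[Lemme 2]{CTS77} and \cite[Lemma 4.1]{Kan09}; your inflation/fixed-points argument is precisely the standard proof found in those references, and it is correct. The two technical points you flag are indeed the right ones: exactness of $(-)^{N_1}$ follows from $H^1(N_1,M_1)=\Hom(N_1,M_1)=0$ for the trivial action on a $\bZ$-free module, and $\bZ[G/K]^{N_1}\simeq\bZ[N_1\backslash G/K]$ handles preservation of permutation (hence, by additivity, invertible) lattices. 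One small point worth making explicit: $\rho_G(M_1)=0$ (resp.\ invertible) only gives a flabby resolution whose quotient $F$ is \emph{stably} permutation (resp.\ stably isomorphic to an invertible lattice), so before applying inflation or $(-)^{N_1}$ you should replace the resolution $0\to M_1\to P\to F\to 0$ by $0\to M_1\to P\oplus Q\to F\oplus Q\to 0$ for a suitable permutation $Q$ so that the quotient is literally permutation (resp.\ invertible); this is routine and does not affect the argument.
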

\begin{proof}
(i) Let $0 \rightarrow M_i \rightarrow P_i \rightarrow F_i
\rightarrow 0$ be flabby resolutions of $M_i$ as $G$-lattices $(i=1,2)$. 
Then 
$0 \rightarrow M \rightarrow P_1 \oplus P_2
\rightarrow F_1 \oplus F_2 \rightarrow 0$
is a flabby resolution of $M$. 
Hence $\rho_G(M)=[F_1 \oplus F_2]=\rho_G(M_1)+\rho_G(M_2)$. 
(ii), (iii) See \cite[Lemme 2]{CTS77} and \cite[Lemma 4.1]{Kan09}.
\end{proof}
\begin{lemma}[{Swan \cite[Lemma 3.1]{Swa10}}]\label{lemSwa}
Let $0\rightarrow M_1\rightarrow M\rightarrow M_2\rightarrow 0$ 
be a short exact sequence of $G$-lattices with $M_2$ invertible. 
Then $\rho_G(M)=\rho_G(M_1)+\rho_G(M_2)$. 
In particular, if $\rho_G(M_1)$ is invertible, 
then $-\rho_G(M_1)=\rho_G(\rho_G(M_1))$. 
\end{lemma}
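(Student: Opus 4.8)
The plan is to compare a flabby resolution of $M$ with one of $M_1$, exploiting that $M_2$ is simultaneously flabby and coflabby (Lemma \ref{lemSL}(i)). First I would fix a flabby resolution $0\to M\to P\to F\to 0$ of $M$ (Theorem \ref{thEM}), so that $P$ is permutation, $F$ is flabby, and $\rho_G(M)=[F]$. Composing the inclusions $M_1\hookrightarrow M\hookrightarrow P$ embeds $M_1$ into $P$, and the third isomorphism theorem applied to $M_1\subseteq M\subseteq P$ yields a short exact sequence
\begin{align*}
0\to M_2\to P/M_1\to F\to 0,
\end{align*}
where $M_2\simeq M/M_1$ and $F\simeq P/M$. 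Since both $M_2$ (invertible, hence flabby) and $F$ are flabby, the long exact sequence of Tate cohomology (Lemma \ref{longexact}) forces $\widehat H^{-1}(H,P/M_1)=0$ for every $H\leq G$, so $P/M_1$ is flabby. Thus $0\to M_1\to P\to P/M_1\to 0$ is itself a flabby resolution of $M_1$, and $\rho_G(M_1)=[P/M_1]$.

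The key step is to show that the displayed sequence splits, i.e. $P/M_1\simeq M_2\oplus F$. Here I would dualize it to
\begin{align*}
0\to F^\circ\to (P/M_1)^\circ\to M_2^\circ\to 0.
\end{align*}
By the duality $H^1(H,F^\circ)\simeq\widehat H^{-1}(H,F)$, coflabbiness of $F^\circ$ is equivalent to flabbiness of $F$, so $F^\circ$ is coflabby; and $M_2^\circ$ is invertible because the dual of a direct summand of a (self-dual) permutation lattice is again such a summand. Lemma \ref{lemSL}(ii) then applies and splits the dualized sequence, whence $(P/M_1)^\circ\simeq F^\circ\oplus M_2^\circ$ and therefore $P/M_1\simeq M_2\oplus F$. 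This gives $\rho_G(M_1)=[M_2\oplus F]=[M_2]+[F]=[M_2]+\rho_G(M)$. I expect this splitting to be the main obstacle: the rest is formal, but it is exactly here that the invertibility of $M_2$ (rather than mere flabbiness) is used, through Lemma \ref{lemSL}(ii).

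To finish the first assertion I would record that for an invertible lattice the flabby class is the monoid inverse. Writing $M_2\oplus M_2'\simeq P_2$ with $P_2$ permutation and $M_2'$ invertible, the split sequence $0\to M_2\to P_2\to M_2'\to 0$ is a flabby resolution, so $\rho_G(M_2)=[M_2']$ and $[M_2]+\rho_G(M_2)=[M_2\oplus M_2']=[P_2]=0$ in $C(G)/S(G)$. Adding $\rho_G(M_2)$ to $\rho_G(M_1)=[M_2]+\rho_G(M)$ and using $[M_2]+\rho_G(M_2)=0$ then yields $\rho_G(M_1)+\rho_G(M_2)=\rho_G(M)$, as claimed.

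For the final ``in particular'', the same computation shows $\rho_G(E)=-[E]$ for every invertible lattice $E$. Since $\rho_G$ is well defined on similarity classes (Definition \ref{defFlabby}), if $\rho_G(M_1)$ is invertible, say $\rho_G(M_1)=[E]$ with $E$ invertible, then applying $\rho_G$ to this class gives $\rho_G(\rho_G(M_1))=\rho_G(E)=-[E]=-\rho_G(M_1)$, which is the desired identity.
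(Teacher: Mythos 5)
Your proof is correct and complete. The paper itself gives no argument for this lemma---it simply defers to \cite[Lemma 3.1]{Swa10}---so there is no in-text proof to compare against; Swan's own argument runs in the opposite direction (pushing a flabby resolution of the sublattice forward to one of the middle term via a pushout), whereas you pull a flabby resolution of $M$ back to one of $M_1$ via the quotient $P/M_1$. Both routes work, and yours is a clean self-contained verification. The one genuinely delicate point is the splitting of $0\to M_2\to P/M_1\to F\to 0$: Lemma \ref{lemSL}(ii) requires the \emph{quotient} to be invertible and the \emph{sub} to be coflabby, which the sequence as written does not satisfy (only $F$'s flabbiness is known), and you correctly resolve this by dualizing, using $H^1(H,F^\circ)\simeq\widehat H^{-1}(H,F)$ to get coflabbiness of $F^\circ$ and self-duality of permutation lattices to get invertibility of $M_2^\circ$. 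The remaining steps---that $P/M_1$ is flabby by the long exact sequence, that $[M_2]+\rho_G(M_2)=0$ so the monoid identity $\rho_G(M_1)=[M_2]+\rho_G(M)$ rearranges without needing cancellation, and that $\rho_G(E)=-[E]$ for invertible $E$ yields the ``in particular'' clause---are all sound.
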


\begin{proof}
See \cite[Lemma 3.1]{Swa10}. 
\end{proof}
\begin{lemma}\label{lemp2}
Let $G\simeq {\rm Gal}(L/k)$ be a finite group and 
$M\simeq M_1\oplus M_2$ be a decomposable $G$-lattice.\\
{\rm (i)} $L(M)^G$ is retract $k$-rational 
if and only if both of $L(M_i)^G$ $(i=1,2)$ are retract $k$-rational.\\
{\rm (ii)} If $L(M_1)^G$ and $L(M_2)^G$ are stably $k$-rational, 
then $K(M)^G$ is stably $k$-rational.\\
{\rm (iii)} When \rank $M_i\leq 3$ $(i=1,2)$, 
$L(M)^G$ is stably $k$-rational if and only if 
both of $L(M_i)^G$ $(i=1,2)$ are stably $k$-rational.
\end{lemma}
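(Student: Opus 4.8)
The plan is to translate all three statements into assertions about the flabby class $[M]^{fl}=\rho_G(M)$ in the commutative monoid $C(G)/S(G)$ and to exploit the additivity
\[
[M]^{fl}=[M_1]^{fl}+[M_2]^{fl}
\]
from Lemma \ref{lemp1} (i) together with the dictionary of Theorem \ref{thEM73}: $L(M)^G$ is stably $k$-rational iff $[M]^{fl}=0$, and retract $k$-rational iff $[M]^{fl}$ is invertible. Since $M_1$ and $M_2$ are themselves $G$-lattices, the same dictionary applies verbatim to each $L(M_i)^G$, so the whole lemma becomes a purely lattice-theoretic question about how invertibility and vanishing of flabby classes behave under the sum in $C(G)/S(G)$.

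For part (i), first I would record the characterization: for any $G$-lattice $F$, the class $[F]$ equals $[E]$ for some invertible $E$ if and only if $[F]$ is a unit of the monoid $C(G)/S(G)$ (i.e. admits an additive inverse). Indeed, if $F$ is invertible then $P\simeq F\oplus F^\prime$ for some permutation $P$, whence $[F]+[F^\prime]=0$; conversely if $[F]+[A]=0$ then $F\oplus A\oplus P_1\simeq P_2$ is permutation, so $F$ is a direct summand of a permutation lattice and hence invertible. Granting this, part (i) reduces to the elementary monoid fact that in a commutative monoid a sum $a+b$ is a unit if and only if both $a$ and $b$ are units: if $(a+b)+c=0$ then $a+(b+c)=0$ exhibits $b+c$ as an inverse of $a$ (and symmetrically for $b$), while the reverse implication is clear. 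Applying this to $a=[M_1]^{fl}$, $b=[M_2]^{fl}$ and invoking Theorem \ref{thEM73} (iii) gives exactly (i).

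Part (ii) is then immediate: if both $L(M_i)^G$ are stably $k$-rational, then $[M_1]^{fl}=[M_2]^{fl}=0$ by Theorem \ref{thEM73} (i), so $[M]^{fl}=0$ by additivity, and thus $L(M)^G$ is stably $k$-rational. I would emphasize that the reverse implication fails without a rank hypothesis, which is precisely the point of (iii) and of the $F_{20}$ and $S_5$ examples in Theorem \ref{th1M}: there $[M_G]^{fl}$ is invertible but nonzero, so the two factors can be merely retract (not stably) rational while the product is stably rational.

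For part (iii) the nontrivial direction is that stable rationality of $L(M)^G$ descends to the two factors, and my plan is to route through retract rationality so as to sidestep the monoid obstruction. Since $[M]^{fl}=0$ implies that $[M]^{fl}$ is invertible, $L(M)^G$ stably $k$-rational forces it retract $k$-rational, and then part (i) yields that each $L(M_i)^G$ is retract $k$-rational. Now the rank hypothesis enters: regarding $M_i$ as a $G/N_i$-lattice via Lemma \ref{lemp1} (ii), (iii) with $N_i$ the kernel of the action, $L(M_i)^G$ is the function field of a torus of dimension $\rank M_i\leq 3$ split over $L^{N_i}$. Here I would invoke the low-dimensional classification: all tori of dimension $\leq 2$ are rational by Theorem \ref{thVo}, and by Theorem \ref{thKu} a $3$-dimensional torus that is not $k$-rational is not even retract $k$-rational. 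In every case this says that for dimension $\leq 3$, retract $k$-rationality already implies $k$-rationality, hence stable $k$-rationality; so each $L(M_i)^G$ is stably $k$-rational, completing (iii). The main obstacle is exactly this last step: invertibility of a flabby class does not in general imply its vanishing, and the sole reason the implication holds here is the restriction $\rank M_i\leq 3$, which collapses the distinction between retract and stable rationality via Theorems \ref{thVo} and \ref{thKu}.
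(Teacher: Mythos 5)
Your proof is correct, and it is genuinely different in character from what the paper does: the paper disposes of this lemma with a bare citation to \cite[Theorem 6.5]{HKKi}, whereas you supply a self-contained derivation from results already established in the paper. Your key observation --- that $[F]^{fl}$ being represented by an invertible lattice is the same as $[F]^{fl}$ being a \emph{unit} of the commutative monoid $C(G)/S(G)$, so that invertibility of a sum $a+b$ is equivalent to invertibility of both summands --- is exactly the right abstraction, and it makes (i) and (ii) formal consequences of the additivity $\rho_G(M)=\rho_G(M_1)+\rho_G(M_2)$ of Lemma \ref{lemp1} (i) together with Theorem \ref{thEM73}. Your routing of (iii) through retract rationality and the collapse of ``retract'' to ``rational'' in dimension $\leq 3$ (Theorems \ref{thVo} and \ref{thKu}) is also the natural argument, since no purely monoid-theoretic reasoning can force each summand of a vanishing class to vanish. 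The only point worth making explicit is one you touch on only in part (iii): since $G$ need not act faithfully on $M_i$, applying the dictionary of Theorem \ref{thEM73} to $L(M_i)^G=(L^{N_i})(M_i)^{G/N_i}$ requires the transfer statements of Lemma \ref{lemp1} (ii), (iii) between $\rho_G(M_i)$ and $\rho_{G/N_i}(M_i)$; this is needed in part (i) as well, but it is available in the paper and costs nothing. What your approach buys is transparency: the reader sees precisely which structural facts (additivity of flabby classes, the unit characterization, and the low-dimensional classification) the lemma rests on, rather than being sent to an external reference.
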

\begin{proof}
See, for example, \cite[Theorem 6.5]{HKK14}.
\end{proof}

Let $H$ be a subgroup of $G$.
For a $G$-lattice $M$,
it can be regarded as a $H$-lattice
by restricting the action of $G$ to $H$.
We write this $H$-lattice as $M|_H$.

\begin{lemma}\label{lemp3}
Let $G$ be a finite subgroup of $\GL(n,\bZ)$ 
and $M_G$ be the corresponding $G$-lattice 
as in Definition \ref{defMG}. 
Let $H$ be a subgroup of $G$.\\
{\rm (i)} If $\rho_G(M_G)=0$, then $\rho_H(M_H)=0$.\\
{\rm (ii)} If $\rho_G(M_G)$ is invertible, then $\rho_H(M_H)$ is invertible.
\end{lemma}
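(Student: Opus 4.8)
The plan is to restrict a flabby resolution of $M_G$ from $G$ down to $H$ and to check that it stays a flabby resolution. The point to notice at the outset is that, since $H$ is a subgroup of $G\leq\GL(n,\bZ)$, the group $H$ is itself a finite subgroup of $\GL(n,\bZ)$, and the lattice $M_H$ furnished by Definition \ref{defMG} is nothing but the restriction $M_G|_H$: the underlying lattice and the acting matrices are unchanged, only the acting group has shrunk from $G$ to $H$. Thus it suffices to understand how the flabby class transforms under restriction.

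First I would record three compatibilities of restriction to $H$. It is exact, since it alters neither the underlying abelian groups nor the $\bZ$-linear maps. It sends permutation $G$-lattices to permutation $H$-lattices: for a transitive piece $\bZ[G/H_i]$ the Mackey double-coset decomposition $\bZ[G/H_i]|_H\simeq\bigoplus_{g\in H\backslash G/H_i}\bZ[H/(H\cap gH_ig^{-1})]$ exhibits it as permutation, and a direct sum of such restricts to a permutation lattice. It sends flabby $G$-lattices to flabby $H$-lattices: if $\widehat H^{-1}(K,F)=0$ for every $K\leq G$, then in particular this holds for every $K\leq H$, and $\widehat H^{-1}(K,F|_H)=\widehat H^{-1}(K,F)$ because Tate cohomology over $K$ depends only on the module as a $K$-module. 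Finally it sends invertible lattices to invertible lattices: a direct summand of a permutation $G$-lattice restricts to a direct summand of the corresponding permutation $H$-lattice. Consequently, starting from a flabby resolution $0\rightarrow M_G\rightarrow P\rightarrow F\rightarrow 0$ as $G$-lattices with $P$ permutation and $F$ flabby (Theorem \ref{thEM}), restriction yields an exact sequence $0\rightarrow M_H\rightarrow P|_H\rightarrow F|_H\rightarrow 0$ of $H$-lattices with $P|_H$ permutation and $F|_H$ flabby, i.e. a flabby resolution of $M_H$. By the well-definedness of the flabby class (Definition \ref{defFlabby}), $\rho_H(M_H)=[F|_H]$.

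With this in hand both parts follow by tracking the similarity class. For (i), $\rho_G(M_G)=[F]=0$ means $F$ is stably permutation, say $F\oplus P_1\simeq P_2$ with $P_1,P_2$ permutation $G$-lattices; restricting gives $F|_H\oplus P_1|_H\simeq P_2|_H$ with $P_1|_H,P_2|_H$ permutation, so $F|_H$ is stably permutation and $\rho_H(M_H)=[F|_H]=0$. For (ii), $\rho_G(M_G)=[F]$ invertible means $[F]=[E]$ for some invertible $G$-lattice $E$; restriction keeps $E|_H$ invertible, and $[F]=[E]$ gives $[F|_H]=[E|_H]$, so $\rho_H(M_H)=[F|_H]$ is invertible. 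There is no serious obstacle here: the whole argument reduces to the stability of the three classes (permutation, flabby, invertible) under restriction to a subgroup, the only mildly computational point being the Mackey formula for permutation lattices. One may alternatively cite this as a known restriction property of flabby classes, but the direct verification above is self-contained.
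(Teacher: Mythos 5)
Your proof is correct and follows essentially the same route as the paper: restrict a flabby resolution $0\rightarrow M_G\rightarrow P\rightarrow F\rightarrow 0$ to $H$, observe that $P|_H$ remains permutation and $F|_H$ remains flabby so that $\rho_H(M_H)=[F|_H]$, and then note that stably permutation (resp. invertible) lattices restrict to stably permutation (resp. invertible) ones. The only difference is that you spell out the Mackey double-coset justification, which the paper leaves implicit.
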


\begin{proof}
Let $0 \rightarrow M_G
\rightarrow P \rightarrow F \rightarrow 0$
be a flabby resolution of $M_G$ as a $G$-lattice. 
Then $0 \rightarrow M_G|_H
\rightarrow P|_H \rightarrow F|_H
\rightarrow 0$ is a flabby resolution of $M_G|_H=M_H$ as a $H$-lattice,
because $P|_H$ is a permutation $H$-lattice 
and $F|_H$ is a flabby $H$-lattice.
This shows that $\rho_H(M_H)=[F|_H]$
as a $H$-lattice.
If $G$-lattice $F$ is stably permutation (resp. invertible), 
then $F|_H$ is stably permutation (resp. invertible) as a $H$-lattice.
\end{proof}

\bigskip
 
%%%%%%%%%%%%%%%%%%%%%%%%%%%%%%%%%%%%%%%%%%%%%%%%%%%%%%%%%%%%%%%%%%%%%%%%%%%%%%%%%%%%%%
\section{CARAT ID of the $\bZ$-classes in dimensions $5$ and $6$}\label{seCarat}

In this section, we will explain how to access the GAP ID 
and the CARAT ID of a finite subgroup $G$ of $\GL(n,\bZ)$ $(n\leq 6)$. 
We need the GAP (\cite{GAP}) packages CrystCat and CARAT to do 
the computations below.

The CrystCat package of GAP provides a catalog of $\bQ$-classes 
and $\bZ$-classes (conjugacy classes) of finite subgroups $G$ 
of $\GL(n,\bQ)$ and $\GL(n,\bZ)$ $(2\leq n\leq 4)$. 
For $2\leq n\leq 4$, the GAP ID $(n,i,j,k)$ of a finite subgroup 
$G$ of $\GL(n,\bZ)$ means that $G$ belongs to 
the $k$-th $\bZ$-class of the $j$-th $\bQ$-class of 
the $i$-th crystal system of dimension $n$ in GAP 
(see also \cite[Table 1]{BBNWZ78}).

The CARAT\footnote{CARAT works on Linux or Mac OS X, but not on Windows.} 
(\cite{Carat}) package of GAP provides all conjugacy classes of finite 
subgroups of $\GL(n,\bQ)$ ($n \leq 6$) (see \cite{PS00}). 
There exist exactly $2$ (reps. $13$, $73$, $710$, $6079$, $85308$) 
$\bZ$-classes forming $2$ (resp. $10$, $32$, $227$, $955$, $7103$)
$\bQ$-classes in dimension $n=1$ (resp. $2$, $3$, $4$, $5$, $6$)
\footnote{In the old version of CARAT, 
the number of $\bQ$-classes of $\GL(6,\bQ)$, 7104, 
was not correct because of the overlapping two same $\bQ$-classes 
(cf. \cite{PS00}). 
The second-named author detected it and reported the correct number, 7103, 
to the CARAT group (see also \cite{Carat}). 
It also turned out that 
the correct number of $\bZ$-classes of $\GL(6,\bQ)$ is 85308 (in the old version, 
85311 was wrong). 
This has been fixed in the current version 2.1b1 of CARAT.}.

After unpacking the CARAT, we get the $\bQ$-catalog file
{\tt carat-2.1b1/tables/qcatalog.tar.gz}.
Unpacking this file,
we get lists of $\bQ$-classes of $\GL(n,\bQ)$ 
$(n=1,\ldots,6)$ in ${\tt qcatalog/data1}, \ldots, {\tt qcatalog/data6}$.
Generators of each group are in individual files under the folders
${\tt qcatalog/dim1/}, \ldots, {\tt qcatalog/dim6/}$.

The second-named author wrote the perl script {\tt crystlst.pl} to 
collect these generators into a single file. 
Files ${\tt cryst1.gap}, \ldots, {\tt cryst6.gap}$ are
lists of representatives of $\bQ$-classes
of $\GL(1,\bQ), \ldots, \GL(6,\bQ)$ respectively.
These files are available from  
{\tt http://www.math.h.kyoto-u.ac.jp/\~{}yamasaki/Algorithm/} as {\tt GLnQ.zip}.

Let $G$ be a finite subgroup of $\GL(n,\bZ)$. 
CARAT has a command {\tt ZClassRepsQClass(G)}
to compute the complete $\bZ$-class representatives of the 
$\bQ$-class of $G$. 
We split the $\bQ$-class of $G$ into $\bZ$-classes by the 
command {\tt ZClassRepsQClass(G)}. 
For the $l$-th group $\widetilde{G}$ in the list of $\bZ$-classes 
obtained by {\tt ZClassRepsQClass(}$G${\tt )} 
where $G$ is the $m$-th group ($\bQ$-class) in {\tt qcatalog/data$n$}, 
we say that the CARAT ID of $\widetilde{G}$ is $(n,m,l)$.

The second-named author wrote a GAP program to determine
the $\bQ$-class and the $\bZ$-class of a group $G$.
The files {\tt crystcat.gap} and {\tt caratnumber.gap}
contain programs related to the GAP ID and the CARAT ID respectively.
The file {\tt caratnumber.gap} uses other files which are packed in the 
{\tt crystdat.zip}.
This zip file should be packed at the current directory.

All the files above are available from 
{\tt http://www.math.h.kyoto-u.ac.jp/\~{}yamasaki/Algorithm/}.

\bigskip

\noindent
{\tt MatGroupZClass(n,i,j,k)} (build-in function of GAP) returns 
the group $G\leq \GL(n,\bZ)$ of the GAP ID $(n,i,j,k)$ when $2\leq n\leq 4$.\\
{\tt CaratMatGroupZClass(n,i,j)} returns the group $G\leq \GL(n,\bZ)$ 
of the CARAT ID $(n,i,j)$ when $1\leq n\leq 6$.\\
%%%%
{\tt CrystCatZClass(G)} returns the GAP ID $(n,i,j,k)$ of $G\leq \GL(n,\bZ)$ 
when $1\leq n\leq 4$.\\
{\tt CaratZClass(G)} returns the CARAT ID $(n,i,j)$ of 
$G\leq \GL(n,\bZ)$ when $1\leq n\leq 6$.\\
%%%%%%
{\tt NrQClasses(n)} returns the number of $\bQ$-classes in dimension $n$ when 
$1\leq n\leq 6$.\\
{\tt NrZClasses(n,m)} returns the number of $\bZ$-classes 
in the $m$-th $\bQ$-class in dimension $n$ 
when $1\leq n\leq 6$.\\
{\tt CrystCat2Carat(l)} returns the CARAT ID of the group $G$ of the GAP ID $l$.\\
{\tt Carat2CrystCat(l)} returns the GAP ID of the group $G$ of the CARAT ID $l$.

\bigskip

\begin{example}[Functions in {\tt crystcat.gap} and {\tt caratnumber.gap}]
We give some examples of the functions in {\tt crystcat.gap} and 
{\tt caratnumber.gap}. 
Note that {\tt caratnumber.gap} needs the CARAT package in GAP. 

Let $C_n$ be the cyclic group of order $n$ and 
$J_n=J_{C_n}$ be the Chevalley module of rank $n-1$ 
which is the dual of $I_n={\rm Ker}\ \varepsilon$ where 
$\varepsilon : \bZ[C_n]\rightarrow \bZ$ is the 
augmentation map (see Section \ref{seInt}). 
Then $K(J_n)^{C_n}$ is the function field of the norm 
one torus $R_{K/k}^{(1)}(\bG_m)$ where $K$ is a cyclic Galois 
extension of $k$ of degree $n$. 

\bigskip

\begin{verbatim}
gap> Read("crystcat.gap");
gap> Read("caratnumber.gap");

gap> List([1..6],n->NrQClasses(n)); # # of Q-classes in dimension n
[ 2, 10, 32, 227, 955, 7103 ]
gap> List([1..6],n->Sum([1..NrQClasses(n)],i->NrZClasses(n,i))); # # of Z-classes
[ 2, 13, 73, 710, 6079, 85308 ]

gap> J5:=Group([ [ 
> [ 0, 1, 0, 0 ], 
> [ 0, 0, 1, 0 ], 
> [ 0, 0, 0, 1 ], 
> [ -1, -1, -1, -1 ] ] ]);
gap> CrystCatZClass(J5);
[ 4, 27, 1, 1 ]
gap> G:=MatGroupZClass(4,27,1,1); # G=C5
MatGroupZClass( 4, 27, 1, 1 )
gap> CrystCat2Carat([4,27,1,1]);
[ 4, 227, 1 ]
gap> Carat2CrystCat([4,227,1]);
[ 4, 27, 1, 1 ]
gap> GeneratorsOfGroup(G);   
[ [ [ 0, -1, 1, 0 ], 
    [ 0, -1, 0, 1 ], 
    [ 0, -1, 0, 0 ], 
    [ 1, -1, 0, 0 ] ] ]
gap> P:=RepresentativeAction(GL(4,Integers),J5,G);
[ [ 0, -1, 0, 1 ], 
  [ 0, 1, 0, 0 ], 
  [ 0, 0, -1, 0 ], 
  [ -1, 0, 1, 0 ] ]
gap> J5^P=G; #checking P^-1*J5*P=G
true

gap> J6:=Group([ [ 
> [ 0, 1, 0, 0, 0 ], 
> [ 0, 0, 1, 0, 0 ], 
> [ 0, 0, 0, 1, 0 ], 
> [ 0, 0, 0, 0, 1 ], 
> [ -1, -1, -1, -1, -1 ] ] ]);
<matrix group with 1 generators>
gap> CaratZClass(J6);
[ 5, 461, 4 ]
gap> G:=CaratMatGroupZClass(5,461,4); # G=C6
<matrix group with 1 generators>
gap> GeneratorsOfGroup(G);
[ [ [ -1, 0, 1, 0, 0 ], 
    [ 1, 0, 0, 0, 0 ], 
    [ -1, 0, 0, 0, -1 ], 
    [ 1, 1, 0, 0, 0 ], 
    [ 1, 0, 0, 1, 0 ] ] ]
gap> P:=RepresentativeAction(GL(5,Integers),J6,G);
[ [ 0, 0, 0, -1, 1 ], 
  [ 0, -1, 0, 1, 0 ], 
  [ 0, 1, 0, 0, 0 ], 
  [ 1, 0, 0, 0, 0 ], 
  [ -1, 0, 1, 0, 0 ] ]
gap> J6^P=G; #checking P^-1*J6*P=G
true
\end{verbatim}
\end{example}

Some programs related to a flabby resolution are available from\\
{\tt http://www.math.h.kyoto-u.ac.jp/\~{}yamasaki/Algorithm/}.

\bigskip

%
%%%%%%%%%%%%%%%%%%%%%%%%%%%%%%%%%%%%%%%%%%%%%%%%%%%%%%%%%%%%%%%%%%%%%%%%%%%%%%%%%%%%%%
\section{Krull-Schmidt theorem fails for dimension $5$}\label{seKSfail}

Let $G$ be a finite group and $\bZ[G]$ be the integral group ring of $G$. 
\begin{definition}[Decomposable and reducible $G$-lattices]
%Let $G$ be a finite group and $M$ be a $G$-lattice.\\
A $G$-lattice $M$ is said to be {\it reducible} if 
there exists a non-trivial $G$-invariant subspace of $M$. 
A $G$-lattice is said to be {\it irreducible} if it is not reducible. 
A $G$-lattice $M$ is said to be {\it decomposable} if there exist non-trivial 
$G$-lattices $U_1$ and $U_2$ such that $M\simeq U_1\oplus U_2$. 
A $G$-lattice is said to be {\it indecomposable} 
if it is not decomposable.
\end{definition}
If a $G$-lattice $M$ is decomposable, then it is reducible. 
By Maschke's theorem, the converse holds for $\bQ[G]$-modules, but not for 
$G$-lattices (i.e. finitely generated $\bZ$-free $\bZ[G]$-module). 

We say that {\it the direct sum cancellation holds for $G$-lattices} if 
$M_1\oplus N\simeq M_2\oplus N$ implies $M_1\simeq M_2$ for 
$G$-lattices $M_1$, $M_2$ and $N$.
We say that {\it the Krull-Schmidt theorem holds for $G$-lattices} 
if $M_1\oplus\cdots\oplus M_l\simeq N_1\oplus\cdots\oplus N_m$ 
for indecomposable $G$-lattices $M_i$ and $N_j$, 
then $l=m$ and, after a suitable renumbering of the $N_j$, $M_i\simeq N_i$ 
for any $1\leq i\leq m$ (see \cite{Fac03}). 
Clearly, the Krull-Schmidt theorem holds for $G$-lattices implies 
that the direct sum cancellation holds for $G$-lattices. 

By Krull-Schmidt-Azumaya theorem (\cite[Theorem 6.12]{CR81}, see also \cite{Azu50}), 
the Krull-Schmidt theorem holds 
for any $\bZ_p[G]$-lattice where 
$\bZ_p$ is the ring of $p$-adic integers. 
By \cite[Theorem 2]{Jon65} (see \cite[Theorem 36.1]{CR81}), 
for $p$-group $G$ where $p$ is odd prime, 
the Krull-Schmidt theorem holds for $\bZ_{(p)}[G]$-lattices 
where $\bZ_{(p)}$ is the localization of $\bZ$ at the prime ideal $(p)$. 
Note that the $G$-lattice $\bZ[G/H]$ is indecomposable 
for any $H\leq G$ (see \cite[Theorem 32.14]{CR87}).

%(resp. $\bZ_{(p)}[G]$-lattice, $\bZ_p[G]$-lattice) 
%(resp. $\bZ_{(p)}[G/H]$, $\bZ_p[G/H]$)

Endo and Hironaka \cite[Theorem, page 161]{EH79} 
(see \cite[Theorem 50.29]{CR87}) showed that 
if the direct sum cancellation holds for $G$-lattices, 
then $G$ is abelian, dihedral, $A_4$, $S_4$ or $A_5$ via 
the projective class group 
(see also \cite[Corollary 1.3 and Section 7]{Swa88}). 
The question whether the Krull-Schmidt theorem holds 
for $G$-lattices is solved except for the case where $G$ is 
the dihedral group $D_8$ of order $16$ 
(see also \cite[Theorem 9.1]{Fac03}). 
%
%\begin{remark}
%When $G$ is a $p$-group, Krull-Schmidt theorem holds 
%for any $\bZ_p[G]$-lattice where $\bZ_p$ is the ring of 
%$p$-adic integers by 
%Krull-Schmidt-Azumaya Theorem (cf. \cite{Azu50}). 
%This fact is used in \cite{EM82}, \cite{Kun90} and \cite{End11}. 
%\end{remark}
%
\begin{theorem}[{Hindman, Klingler and Odenthal \cite[Theorem 1.6]{HKO98}}]
Let $G$ be a finite group which is not the dihedral group 
$D_8$ of order $16$. 
Then the Krull-Schmidt theorem holds for $G$-lattices 
if and only if one of the following conditions holds: 
{\rm (i)} $G=C_p$ for prime $p\leq 19$; 
{\rm (ii)} $G=C_n$ for $n=1,4,8$ or $9$; 
{\rm (iii)} $G=C_2\times C_2$;
{\rm (iv)} $G$ is the dihedral group $D_4$ of order $8$.
\end{theorem}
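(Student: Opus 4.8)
The plan is to study the Krull-Schmidt property through the local--global theory of $\bZ[G]$-lattices, locating the obstruction in the arithmetic of cyclotomic class numbers. First I would recall that by Krull-Schmidt-Azumaya \cite[Theorem 6.12]{CR81} the theorem holds over each completion $\bZ_p[G]$, so any failure is a genuinely global phenomenon detected by the \emph{genus}: two $G$-lattices lie in the same genus when they become isomorphic after localizing at every prime, and by Jordan-Zassenhaus each genus meets only finitely many isomorphism classes. In this language the Krull-Schmidt theorem holds if and only if (a) every genus that contains an indecomposable lattice consists of a single isomorphism class, and (b) no indecomposable summand can be interchanged between two decompositions of a common lattice. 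I would express both conditions in terms of the reduced projective class group and the endomorphism rings of the indecomposables, so that the whole problem becomes the vanishing of certain class-group obstructions.

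For the sufficiency direction I would exploit conductor (Milnor) squares to reduce the lattice classification to hereditary orders over rings of cyclotomic integers. For $G=C_p$ the Rim square realizes $\bZ[C_p]$ as the pullback of $\bZ\to\bF_p\leftarrow\bZ[\zeta_p]$, so a $\bZ[C_p]$-lattice is a $\bZ$-module and a $\bZ[\zeta_p]$-lattice glued along $\bF_p$. The classical Diederichsen-Reiner description shows there are only finitely many indecomposables, their number being controlled by the ideal classes of $\bZ[\zeta_p]$; when $p\leq 19$ the field $\bQ(\zeta_p)$ has class number one, $\bZ[\zeta_p]$ is a principal ideal domain, and the finitely many genera are each a single class, yielding case (i). The prime-power groups $C_4, C_8, C_9$ and the groups $C_2\times C_2$ and $D_4$ are of finite lattice type, and here I would invoke the explicit finite list of indecomposables obtained by iterating conductor squares over the principal ideal domains $\bZ[i]$, $\bZ[\zeta_8]$, $\bZ[\zeta_9]$, and check case by case that each genus is a single class.

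For the necessity direction I would first apply the theorem of Endo and Hironaka \cite[Theorem 50.29]{CR87}: since Krull-Schmidt implies direct-sum cancellation, any group for which Krull-Schmidt holds must be abelian, dihedral, $A_4$, $S_4$ or $A_5$. It then remains to prune this family down to the stated list by exhibiting explicit failures. For $C_p$ with $p\geq 23$ the class number of $\bQ(\zeta_p)$ exceeds one, so two non-isomorphic ideals of $\bZ[\zeta_p]$ yield indecomposable lattices lying in a common genus whose direct sums recombine non-uniquely. For the surviving higher prime powers, the non-cyclic abelian groups such as $C_3\times C_3$ and $C_2^3$, the larger dihedral groups, and for $A_4, S_4, A_5$, I would transfer such recombinations along subgroups and quotients, using that restriction and induction are exact and adjoint (induction being available because $\bZ[G]$ is free over $\bZ[H]$), or construct the recombining lattices directly from the infinite representation type.

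The hard part will be the necessity direction, and precisely the transfer of a Krull-Schmidt failure between a group and its subquotients: unlike cancellation, uniqueness of the indecomposable decomposition is not formally inherited, so each reduction must be accompanied by an explicit pair of recombining decompositions rather than a soft inheritance argument. It is exactly this boundary behaviour that leaves the dihedral group $D_8$ of order $16$ undecided, which is why that group is excluded from the hypothesis of the statement.
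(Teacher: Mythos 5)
This statement is quoted in the paper from Hindman, Klingler and Odenthal \cite[Theorem 1.6]{HKO98} and is not proved there, so there is no internal proof to compare against; your proposal has to stand on its own, and as it stands it has a concrete gap on the sufficiency side. You claim that $C_4$, $C_8$, $C_9$, $C_2\times C_2$ and $D_4$ "are of finite lattice type," so that one can simply enumerate the indecomposables and check each genus. This is false for three of the five: by the Jones--Dade criterion, $\bZ[G]$ has finitely many indecomposable lattices only when every Sylow $p$-subgroup is cyclic of order at most $p^2$, which rules out $C_8$ (cyclic of order $2^3$), $C_2\times C_2$ and $D_4$ (non-cyclic Sylow $2$-subgroups). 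For these three groups there are infinitely many indecomposables, so no finite enumeration is possible, and one must instead control all genera and all possible exchanges of indecomposable summands at once; this is precisely where the substance of the Hindman--Klingler--Odenthal argument lies, and it is exactly this analysis that breaks down for $D_8$ of order $16$, which is why that group is excluded from the hypothesis. Your class-number argument for $C_p$, $p\le 19$, is sound in outline (the Diederichsen--Reiner list plus $h(\bQ(\zeta_p))=1$ for $p\le 19$), but note that class number one of the relevant cyclotomic ring is not by itself decisive: $h(\bQ(\zeta_{25}))=h(\bQ(\zeta_{27}))=h(\bQ(\zeta_{16}))=1$, yet Krull--Schmidt fails for $C_{25}$, $C_{27}$ and $C_{16}$, so the obstruction for composite and higher prime-power orders lives in the genus/exchange behaviour of the glued lattices rather than in the Steinitz class alone.

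On the necessity side, the reduction via Endo--Hironaka to abelian, dihedral, $A_4$, $S_4$, $A_5$ is legitimate (Krull--Schmidt implies cancellation), and the cyclotomic non-principality argument disposes of $C_p$ for $p\ge 23$. But everything after that --- excluding $C_6$, $C_{16}$, $C_{25}$, $C_{27}$, the non-cyclic abelian groups other than $C_2^2$, the dihedral groups other than $D_4$ and $D_8$, and $A_4$, $S_4$, $A_5$ --- is left as a promise to "construct the recombining lattices directly" or to transfer failures along subquotients, and you correctly observe yourself that such transfer is not formally inherited. As written, the proposal is a reasonable map of the terrain but not a proof: the two load-bearing steps (sufficiency for the infinite-representation-type groups $C_8$, $C_2^2$, $D_4$, and the explicit exhibition of Krull--Schmidt failures for every group surviving the Endo--Hironaka filter) are exactly the parts that are missing.
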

Two $G$-lattices $M$, $N$ are placed in the same genus ($M\approx N$)
if $M_{(p)}\simeq N_{(p)}$ for any prime ideal $(p)$. 
We say that {\it the generalized Krull-Schmidt theorem holds for $G$-lattices} 
if $M_1\oplus\cdots\oplus M_l\simeq N_1\oplus\cdots\oplus N_m$ 
for indecomposable $G$-lattices $M_i$ and $N_j$, 
then $l=m$ and, after a suitable renumbering of the $N_j$, $M_i\approx N_i$ 
for any $1\leq i\leq m$.
Clearly, the Krull-Schmidt theorem holds for $G$-lattices 
implies that the generalized Krull-Schmidt theorem holds for $G$-lattices.
The following theorem was pointed out to the authors by S. Endo:
\begin{theorem}[{Dress \cite[Section 1]{Dre70}}]
Let $p$ be a prime number. 
The following conditions are equivalent:\\
{\rm (i)} $G$ is a $p$-group;\\
{\rm (ii)} the generalized Krull-Schmidt theorem holds 
for invertible $G$-lattices.
\end{theorem}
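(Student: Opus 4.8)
The plan is to prove both implications through the local--global dictionary for the genus: two $G$-lattices satisfy $M\approx N$ precisely when their localizations $M_{(q)}$ (equivalently the completions $\bZ_q\otimes M$) are isomorphic for every prime $q$, and at each completed prime the Krull--Schmidt--Azumaya theorem \cite{Azu50} (see \cite[Theorem 6.12]{CR81}) gives a unique decomposition of $\bZ_q[G]$-lattices into indecomposables. Thus the content of the generalized Krull--Schmidt theorem is entirely about how a \emph{globally} indecomposable invertible lattice distributes itself among the local indecomposables at the various primes, and the dividing line between the two cases will be the primes dividing the group order.

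For the implication (i) $\Rightarrow$ (ii), assume $G$ is a $p$-group and let $M$ be invertible with $M\cong M_1\oplus\cdots\oplus M_l\cong N_1\oplus\cdots\oplus N_m$ into indecomposables. At a prime $q\neq p$ the order $\bZ_q[G]$ is maximal, so every lattice is projective and is determined up to isomorphism by its rational span; hence the family $(\bZ_q\otimes M)_{q\neq p}$ is recorded exactly by the rational representation $\bQ\otimes M$. Consequently, for a $p$-group the genus of $M$ is pinned down by the pair $(\bQ\otimes M,\ \bZ_p\otimes M)$. First I would compare rational spans, $\bigoplus_i\bQ\otimes M_i\cong\bigoplus_j\bQ\otimes N_j$, which by Maschke's theorem is a semisimple identity. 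Next I would compare $p$-completions and invoke Azumaya to match their indecomposable $\bZ_p[G]$-summands. The step that does the real work is to promote these two separate matchings to a single matching of the global summands up to genus; here I would use that, for a $p$-group, a globally indecomposable invertible lattice is genus-indecomposable, so that the genus classes $[M_i]$ and $[N_j]$ are the atoms of a free commutative monoid on which the identity $\sum_i[M_i]=\sum_j[N_j]$ forces $l=m$ and $M_i\approx N_{\pi(i)}$ after a suitable renumbering.

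For the implication (ii) $\Rightarrow$ (i) I would argue by contraposition: if $G$ is not a $p$-group then its order is divisible by two distinct primes $p\neq q$, and I would exhibit an isomorphism of permutation (hence invertible, by Lemma \ref{lemSL}) $G$-lattices whose indecomposable summands have mismatched genera. The prototype is the $D_6$-relation of Proposition \ref{propKSfailD6}, in which the two sides carry summands of manifestly different ranks (for instance the rank-$12$ lattice $\bZ[D_6]$ occurs only on one side), so the multisets of genus classes cannot agree. In general such a relation is produced from the augmentation ideal $I_G$: the sequence $0\to I_G\to\bZ[G]\to\bZ\to 0$ splits after inverting $|G|$ but not integrally, and the splittings forced at the two different primes $p$ and $q$ are incompatible, which is exactly what manufactures two decompositions of one permutation lattice into indecomposables of distinct genera.

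I expect the main obstacle to be the key step in (i) $\Rightarrow$ (ii): showing that over a $p$-group a globally indecomposable invertible lattice remains indecomposable in the genus monoid, equivalently that its $p$-completion does not split off a summand realizable by a genuine invertible sublattice. This is the point where one must use invertibility (not merely being a lattice), together with the fact that for $p$-groups the only prime at which nontrivial local splitting can occur is $p$ itself, so that the reflecting functor $M\mapsto(\bQ\otimes M,\ \bZ_p\otimes M)$ into a product of two Krull--Schmidt categories is faithful enough on invertible lattices to carry atoms to atoms. Establishing the uniform counterexample in (ii) $\Rightarrow$ (i) for \emph{every} non-$p$-group, rather than only those containing an element of order $pq$, is the secondary difficulty, and I would handle it by the augmentation-ideal construction above instead of by restricting to a cyclic subgroup.
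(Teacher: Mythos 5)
Your direction (i) $\Rightarrow$ (ii) is essentially the paper's argument: reduce the genus question to the single prime $p$ (at $q\neq p$ the order $\bZ_q[G]$ is maximal and the lattice is controlled by its rational span), apply Krull--Schmidt--Azumaya over $\bZ_p$ together with Maranda's theorem, and use that each $\bZ_{(p)}[G/H]$ is indecomposable. Like the paper, you assert rather than prove the pivotal point --- that a globally indecomposable invertible lattice over a $p$-group is an atom for the genus, equivalently lies in the genus of a single $\bZ[G/H]$ --- but you locate it correctly and correctly identify invertibility as the ingredient that makes it true (it is exactly here that one uses that $M_{(p)}$ is a summand of a permutation $\bZ_{(p)}[G]$-lattice and hence a sum of $\bZ_{(p)}[G/H_i]$'s).

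The genuine gap is in (ii) $\Rightarrow$ (i). Your proposed general mechanism --- that the counterexample ``is produced from the augmentation ideal $I_G$'' because $0\to I_G\to\bZ[G]\to\bZ\to 0$ splits rationally but not integrally, with ``incompatible splittings at the two primes'' --- does not work: that sequence is non-split integrally for \emph{every} nontrivial $G$, $p$-groups included, so it cannot detect the dichotomy of the theorem, and a single non-split extension does not by itself produce two decompositions of one permutation lattice into indecomposables of different genera. The construction that actually works (Dress's, and the one the paper uses) needs \emph{two Sylow subgroups for distinct primes}: for $H\leq G$ of order $p^lq^m$ with $l,m\geq 1$, the combined augmentation $\bZ[H/Sy_p(H)]\oplus\bZ[H/Sy_q(H)]\to\bZ$ is split surjective because the indices $q^m$ and $p^l$ are coprime --- Bezout produces an $H$-invariant element of augmentation $1$ --- whence $\bZ[H/Sy_p(H)]\oplus\bZ[H/Sy_q(H)]\simeq M\oplus\bZ$. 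The left-hand side is a sum of two indecomposables of ranks $q^m,p^l>1$, while the right-hand side has the rank-one summand $\bZ$, and genus preserves rank, so the generalized Krull--Schmidt matching is impossible; applying $\bZ[G]\otimes_{\bZ[H]}-$ transports the failure to $G$-lattices, with $\bZ[G/H]$ playing the role of $\bZ$. The coprimality of the two Sylow indices is precisely where ``not a $p$-group'' enters, and it is absent from your construction. Your appeal to the $D_6$ relation of Proposition \ref{propKSfailD6} is the right prototype, but you still owe a construction valid for an arbitrary non-$p$-group, and the $I_G$ sequence is not it.
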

\begin{proof} (This proof is due to S. Endo \cite{End12}.) 
{\rm (i)} $\Rightarrow$ {\rm (ii)}. 
If $p$ is odd prime, then the Krull-Schmidt theorem holds 
for $\bZ_{(p)}[G]$-lattices. 
Assume that $G$ is a $2$-group and $M$ is an invertible $G$-lattice. 
Since the Krull-Schmidt theorem holds 
for $\bZ_2[G]$-lattices, $M_2\simeq \bigoplus \bZ_2[G/H_i]$ 
where $M_2$ is the $2$-adic completion of $M$ and $H_i\leq G$. 
It follows from Maranda's theorem \cite[Theorem 30.14]{CR81} 
(see also \cite[Proposition 30.17]{CR81}) that 
$M_{(2)}\simeq \bigoplus \bZ_{(2)}[G/H_i]$ 
where $M_{(2)}$ is the localization of $M$ at prime $(2)$. 
The generalized Krull-Schmidt theorem holds 
for invertible $G$-lattices because $\bZ_{(p)}[G/H]$ is indecomposable 
for any prime $p$ and any subgroup $H\leq G$. 

{\rm (ii)} $\Rightarrow$ {\rm (i)}. 
Assume that $G$ is not a $p$-group and $H\leq G$ is of order $p^lq^m$ 
where $p$ and $q$ are different primes and $l,m\geq 1$. 
Let $Sy_p(H)$ be a $p$-Sylow subgroup of $H$. 
Then by \cite[Section 1]{Dre70} there exists $G$-lattice $M$ such that 
$\bZ[H/Sy_p(H)]\oplus\bZ[H/Sy_q(H)]\simeq M\oplus\bZ$. 
By taking the tensor product $\bZ[G]\otimes_{\bZ[H]}$ of both sides, 
we get $\bZ[G/Sy_p(H)]\oplus\bZ[G/Sy_q(H)]\simeq 
\bZ[G]\otimes_{\bZ[H]} M\oplus\bZ[G/H]$ as $G$-lattices. 
This contradicts that the generalized Krull-Schmidt theorem holds. 
\end{proof}
%

%Let $X$ be a $G$-set, i.e. the 
A $G$-set is a finite set with left $G$-action. 
The disjoint union $X\coprod X^\prime$ of $G$-sets $X$ and $X^\prime$ 
is also $G$-set.
%A permutation $G$-lattice $G/H$ is a typical example of $G$-set. 
Two $G$-sets are isomorphic if there exists a bijection between them 
which preserves the action of $G$. 
A $G$-set $X$ may be written uniquely up to isomorphism as 
$X\simeq \coprod_{H} a_H(X)G/H$ where $H$ runs through 
a set of representatives of conjugacy classes of subgroups of $G$ 
(see \cite[Chapter 11]{CR87}, \cite[Chapter 5]{Ben91}, \cite{GW93}, 
\cite{Bou00} for related materials, e.g. Burnside ring). 
For $G$-sets $X$ and $X^\prime$, 
the direct sum of permutation $G$-lattices $\bZ[X]$ 
and $\bZ[X^\prime]$ is also permutation: 
$\bZ[X]\oplus \bZ[X^\prime]\simeq \bZ[X\coprod X^\prime]$. 
A finite group $G$ is called {\it cyclic mod $p$} (or {\it $p$-hypoelementary}) 
if the quotient group $G/O_p(G)$ is cyclic where $O_p(G)$ 
is the largest normal $p$-subgroup of $G$. 

\begin{theorem}[{Dress \cite[Proposition 9.6]{Dre73}}]
Let $G$ be a finite group. The following conditions are equivalent:\\
{\rm (i)} $G$ is cyclic mod $p$ for some $p$;\\
{\rm (ii)} for any two $G$-sets $S$, $T$, 
$\bZ[S]\simeq \bZ[T]$ if and only if $S\simeq T$.\\
In particular, the Krull-Schmidt theorem holds for 
permutation $G$-lattices if and only if $G$ is cyclic mod $p$ 
for some $p$.
\end{theorem}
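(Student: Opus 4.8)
The plan is to prove the equivalence (i) $\Leftrightarrow$ (ii) and then read off the ``in particular'' clause. First I would dispatch that last sentence: since $\bZ[G/H]$ is indecomposable for every $H\leq G$, and every permutation lattice $\bZ[S]$ is the direct sum $\bigoplus_i\bZ[G/H_i]$ over a set of orbit representatives of $S$ (the $H_i$ being the point stabilizers), the orbit decomposition of $S$ \emph{is} a decomposition into indecomposables. Hence the Krull--Schmidt theorem for permutation lattices holds exactly when any isomorphism $\bZ[S]\simeq\bZ[T]$ forces the multisets of stabilizers to agree up to conjugacy, i.e.\ exactly when $\bZ[S]\simeq\bZ[T]\Rightarrow S\simeq T$; this is condition (ii), the reverse implication being trivial. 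So it suffices to prove (i) $\Leftrightarrow$ (ii).

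For a $G$-set $X$ and $H\leq G$ write $\phi_H(X)=|X^H|$ for the mark of $X$ at $H$. By Burnside's theorem the mark homomorphism $[X]\mapsto(\phi_H(X))_{(H)}$ embeds the Burnside ring as a full-rank sublattice of $\prod_{(H)}\bZ$, the product over conjugacy classes of subgroups; in particular $S\simeq T$ if and only if $\phi_H(S)=\phi_H(T)$ for all $H$. The whole argument rests on a detection statement that I would isolate as a proposition: \emph{for permutation lattices, $\bZ[S]\simeq\bZ[T]$ if and only if $\phi_H(S)=\phi_H(T)$ for every $p$-hypoelementary subgroup $H\leq G$ and every prime $p$.} Granting this, both implications are short. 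For (i) $\Rightarrow$ (ii), suppose $G$ is cyclic mod $p$, i.e.\ $p$-hypoelementary. If $H\leq G$ then $H\cap O_p(G)$ is a normal $p$-subgroup of $H$ and $H/(H\cap O_p(G))$ embeds in the cyclic group $G/O_p(G)$; hence $H/O_p(H)$ is cyclic and $H$ is again $p$-hypoelementary. Thus \emph{every} subgroup of $G$ is $p$-hypoelementary, so the detection proposition forces $\phi_H(S)=\phi_H(T)$ for all $H$, whence $S\simeq T$. For (ii) $\Rightarrow$ (i) I argue by contraposition: if $G$ is not cyclic mod $p$ for any $p$, then $G$ itself is a non-hypoelementary subgroup, so the number of conjugacy classes of hypoelementary subgroups is strictly smaller than the number of all subgroup classes. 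Therefore the projection of $\Omega(G)$ onto the hypoelementary mark coordinates has a nonzero kernel; choosing $0\neq x=[S_0]-[T_0]$ in this kernel, with $S_0,T_0$ honest $G$-sets built from the positive and negative parts of $x$, gives $G$-sets with equal hypoelementary marks but $S_0\not\simeq T_0$. By the detection proposition $\bZ[S_0]\simeq\bZ[T_0]$, so (ii) fails.

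The real work, and the main obstacle, is the detection proposition. For the forward direction I would use that $\bZ[S]\simeq\bZ[T]$ passes, after $p$-localization, to an isomorphism of $p$-permutation modules, and that the Brauer construction $M\mapsto M(Q)$ at a $p$-subgroup $Q$ is an isomorphism invariant with $\bF_p[X](Q)\cong\bF_p[X^Q]$ as $\bF_p N_G(Q)$-modules. Writing a $p$-hypoelementary $H$ as $Q\rtimes C$ with $Q=O_p(H)$ and $C$ cyclic of order prime to $p$ (so that $X^H=(X^Q)^C$ and $C$ normalizes $Q$), one recovers $\phi_H(X)=|(X^Q)^C|$ from the Brauer quotient followed by the $C$-species; hence the mark is a module-isomorphism invariant. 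This forward direction is unconditional and already yields (i) $\Rightarrow$ (ii) for all groups.

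For the reverse direction I would invoke Dress's induction theorem: the Green functor of (trivial source) permutation modules is computed on hypoelementary subgroups, so equality of all hypoelementary marks---equivalently, isomorphism of all restrictions $\bZ[\mathrm{Res}_H S]\simeq\bZ[\mathrm{Res}_H T]$ to hypoelementary $H$---forces the global isomorphism $\bZ[S]\simeq\bZ[T]$. Here the per-$H$ step uses only that for a hypoelementary $H$ every subgroup is hypoelementary, so the already-established forward direction gives $\bZ[\mathrm{Res}_HS]\simeq\bZ[\mathrm{Res}_HT]\Rightarrow \mathrm{Res}_HS\simeq\mathrm{Res}_HT$, with no circularity. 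The smallest group where (i) fails is $D_6$, whose only non-hypoelementary subgroup is $D_6$ itself; there the kernel above has rank one and is realized by the explicit relation of Proposition \ref{propKSfailD6}.
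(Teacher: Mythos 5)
The paper does not prove this statement at all: it is quoted verbatim from Dress \cite{Dre73} and used as a black box (the only thing the paper verifies itself is the explicit $D_6$ isomorphism of Proposition \ref{propKSfailD6}, which witnesses the failure of (ii) for the smallest non-hypoelementary group). So there is no in-paper argument to compare against; I can only assess your reconstruction on its own terms.

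Your architecture is the right one, and two of the three pieces are sound. The reduction of the ``in particular'' clause to (ii) via indecomposability of $\bZ[G/H]$ is correct. The implication (i) $\Rightarrow$ (ii) is also essentially complete: subgroups of a cyclic-mod-$p$ group are cyclic mod $p$ by your $H\cap O_p(G)\leq O_p(H)$ argument, and the mark $|S^H|$ for $H=Q\rtimes C$ (with $Q=O_p(H)$ normal and $C$ cyclic of order prime to $p$) is recovered from the isomorphism class of $\bF_p[S]$ by the Brauer quotient at $Q$ followed by the Brauer character at a generator of $C$; Burnside's theorem then gives $S\simeq T$. One small point worth making explicit is that cyclicity of $G/O_p(G)$ forces its order to be prime to $p$ (otherwise the preimage of its Sylow $p$-subgroup would enlarge $O_p(G)$), which is what legitimizes the semidirect decomposition and the Brauer-character step.

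The genuine gap is the reverse direction of your ``detection proposition,'' on which all of (ii) $\Rightarrow$ (i) rests: the claim that equality of marks at every hypoelementary subgroup forces $\bZ[S]\simeq\bZ[T]$. You justify it by saying that ``the Green functor of permutation modules is computed on hypoelementary subgroups,'' but that computability -- i.e., that the defect base of the linearization functor $\Omega(G)\to PR(G,\bZ)$ consists precisely of hypoelementary groups -- \emph{is} the nontrivial content of Dress's Proposition 9.6 (equivalently, of the tom Dieck--Dress computation of the kernel of the linearization map). The general Dress induction theorem is proved for functors such as $G_0(\bZ G)$ and the Swan ring; it does not apply off the shelf to the monoid of isomorphism classes of permutation modules, and your ``per-$H$ step'' only shows that restrictions to hypoelementary subgroups are isomorphic, not that these restrictions jointly detect the global isomorphism class. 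As written the argument therefore assumes the theorem it is meant to prove. A secondary, fixable issue: any induction-theoretic argument would at best give equality in the Grothendieck group, i.e., $\bZ[S]\oplus\bZ[U]\simeq\bZ[T]\oplus\bZ[U]$ for some permutation $U$; to contradict (ii) you should then replace $S_0,T_0$ by $S_0\sqcup U,\,T_0\sqcup U$, which you do not mention.
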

Using the algorithms in Sections \ref{seAlg} and \ref{seFC}, 
we will show the following proposition by constructing explicit isomorphism 
in Section \ref{seFC} (see Example \ref{exKSfailD6}). 
We remark that the dihedral group $D_6$ of order $12$ 
is the smallest group which is not cyclic mod $p$ for any $p$.
\begin{proposition-nn}[The Krull-Schmidt theorem fails for permutation $D_6$-lattices, see Proposition \ref{propKSfailD6}]
Let $D_6$ be the dihedral group of order $12$ and 
$\{1\}$, $C_2^{(1)}$, $C_2^{(2)}$, $C_2^{(3)}$, $C_3$, $C_2^2$, 
$C_6$, $S_3^{(1)}$, $S_3^{(2)}$ and $D_6$ be the 
conjugacy classes of subgroups of $D_6$. 
%The Krull-Schmidt theorem fails for the permutation $D_6$-lattices, 
%in particular, 
Then the following isomorphism of permutation $D_6$-lattices holds: 
\begin{align*}
& ~{} \bZ[D_6]\oplus\bZ[D_6/C_2^2]^{\oplus 2}\oplus\bZ[D_6/C_6]
\oplus\bZ[D_6/S_3^{(1)}]\oplus\bZ[D_6/S_3^{(2)}]\\
\simeq & ~{} \bZ[D_6/C_2^{(1)}]\oplus\bZ[D_6/C_2^{(2)}]
\oplus\bZ[D_6/C_2^{(3)}]\oplus\bZ[D_6/C_3]\oplus\bZ^{\oplus 2}.
\end{align*}
\end{proposition-nn}

\bigskip

\begin{definition}[Decomposition type]
Let $G$ be a finite group and $M$ be a $G$-lattice. 
When $M$ decomposes into indecomposable 
$G$-lattices $M\simeq U_1\oplus\cdots\oplus U_r$ of rank $n_1,\dots,n_r$, 
we say that a {\it decomposition type} ${\rm DT}(M)$ of $M$ is $(n_1,\dots,n_r)$. 
(This may not be unique.)
\end{definition}
Let $G$ be a finite subgroup of $\GL(n,\bZ)$ 
and $M_G$ be the corresponding $G$-lattice of rank $n$ 
as in Definition \ref{defMG}. 
The number of $G$-lattices $M_G$ for a given decomposition type 
${\rm DT}(M_G)$ is as follows (see Example \ref{exKS1} below):
\vspace*{3mm}

\noindent\ 
\begin{tabular}{l|c|c}
${\rm DT}(M_G)$ & $(1)$ & Total\\\hline
$\# M_G$ & 2 & 2
\end{tabular}\quad 
\begin{tabular}{l|cc|c}
${\rm DT}(M_G)$ & $(1,1)$ & $(2)$& Total\\\hline
$\# M_G$ & 4 & 9 & 13
\end{tabular}\quad 
\begin{tabular}{l|ccc|c}
${\rm DT}(M_G)$ & $(1,1,1)$ & $(2,1)$ & $(3)$ & Total\\\hline
$\# M_G$ & 8 & 31 & 34 & 73
\end{tabular}\\

\noindent\ 
\begin{tabular}{l|ccccc|c}
${\rm DT}(M_G)$ & $(1,1,1,1)$ & $(2,1,1)$ & $(2,2)$ & $(3,1)$ & $(4)$ & Total\\\hline
$\# M_G$ & 16 & 96 & 175 & 128 & 295 & 710
\end{tabular}\\

\noindent\ 
\begin{tabular}{l|ccccccc|c}
${\rm DT}(M_G)$ & $(1^5)$ & $(2,1^3)$ & $(2^2,1)$ & $(3,1^2)$ & $(3,2)$ & $(4,1)$ & $(5)$ & Total\\\hline
$\# M_G$ & 32 & 280 & 1004 & 442 & {\bf 1480} & {\bf 1400} & 1452 & ${\bf 6090} \atop (6079)$
\end{tabular}\\

\noindent\ 
\begin{tabular}{l|ccccccccccc|c}
${\rm DT}(M_G)$ & $(1^6)$ & $(2,1^4)$ & $(2^2,1^2)$ & $(2^3)$ & 
$(3,1^3)$ & $(3,2,1)$ & $(3^2)$ & $(4,1^2)$ & $(4,2)$ & $(5,1)$ & $(6)$ & Total\\\hline
$\# M_G$ & 68 & 824 & 4862 & 6878 & 1466 & {\bf 10662} & {\bf 4235} & {\bf 5944} & 
21573 & {\bf 9931} & 18996 & ${\bf 85439} \atop (85308)$
\end{tabular}
\vspace*{4mm}

For $n\leq 4$, we see that the Krull-Schmidt theorem holds for $M_G$ of rank $n$, 
i.e. if $M_G\simeq M_1\oplus\cdots\oplus M_l\simeq N_1\oplus\cdots\oplus N_m$ 
for indecomposable $G$-lattices $M_i$ and $N_j$, 
then $l=m$ and, after a suitable renumbering of the $N_j$, $M_i\simeq N_i$ 
for any $1\leq i\leq m$. 
However, it turns out that the Krull-Schmidt theorem fails for $M_G$ when the rank $n$ 
of $M_G$ is $5$. 
We split the Krull-Schmidt theorem for $M_G$ into the following two parts: \\

(KS1) If $M_G\simeq M_1\oplus\cdots\oplus M_l\simeq N_1\oplus\cdots\oplus N_m$ 
for indecomposable $G$-lattices $M_i$ and $N_j$, 
then $l=m$ and, after a suitable renumbering of the $N_j$, 
rank $M_i=$ rank $N_i$ for any $1\leq i\leq m$;

(KS2) If $M_G\simeq M_1\oplus\cdots\oplus M_m\simeq N_1\oplus\cdots\oplus N_m$ 
for indecomposable $G$-lattices $M_i$ and $N_i$ 
with \rank $M_i=$ \rank $N_i$ for any $1\leq i\leq m$, 
then after a suitable renumbering of the $N_i$, 
$M_i\simeq N_i$ for any $1\leq i\leq m$.\\

The Krull-Schmidt theorem holds for $M_G$ 
if and only if the conditions (KS1) and (KS2) hold for $M_G$. 

\begin{theorem}\label{thKS}
Let $G$ be a finite subgroup of $\GL(n,\bZ)$ and 
$M_G$ be the $G$-lattice as in Definition \ref{defMG}.\\
{\rm (i)} When $n\leq 4$, the Krull-Schmidt theorem holds for $M_G$, 
i.e. if $M_G\simeq M_1\oplus\cdots\oplus M_l\simeq N_1\oplus\cdots\oplus N_m$ 
for indecomposable $G$-lattices $M_i$ and $N_j$, 
then $l=m$ and, after a suitable renumbering of the $N_j$, $M_i\simeq N_i$ 
for any $1\leq i\leq m$.\\
{\rm (ii)}When $n=5$, {\rm (KS2)} holds for $M_G$, and 
the Krull-Schmidt theorem fails for $M_G$ if and only if {\rm (KS1)} fails for $M_G$ 
if and only if the {\rm CARAT ID} of $G$ is one of the $11$ triples
\vspace*{-3mm}

{\footnotesize 
\begin{align*}
(5,188,4),(5,189,4),(5,190,6),(5,191,6),(5,192,6),(5,193,4),
(5,205,6),(5,218,8),(5,219,8),(5,220,4),(5,221,4). 
\end{align*}
}

\vspace*{-3mm}
\noindent
For the exceptional $11$ cases, 
the decomposition types of $M_G$ are $(3,2)$ and $(4,1)$ 
and $G$ is a subgroup of the group $C_2\times D_6$ of the {\rm CARAT ID} $(5,205,6)$.\\
{\rm (iii)} When $n=6$, {\rm (KS1)} fails for $M_G$ 
if and only if 
the {\rm CARAT ID} of $G$ is one of the $131$ triples 
\vspace*{-3mm}

{\footnotesize 
\begin{align*}
&(6,2013,8),(6,2018,4),(6,2023,6),(6,2024,6),(6,2025,6),
(6,2026,6),(6,2033,6),(6,2042,8),(6,2043,8),(6,2044,4),\\
&(6,2045,4),(6,2048,5),(6,2049,8),(6,2050,8),(6,2051,8),
(6,2052,8),(6,2058,5),(6,2059,5),(6,2067,5),(6,2068,5),\\
&(6,2069,5),(6,2069,11),(6,2070,9),(6,2071,9),(6,2072,10),
(6,2072,11),(6,2076,24),(6,2076,25),(6,2077,24),(6,2077,25),\\
&(6,2078,24),(6,2078,25),(6,2079,24),(6,2079,25),(6,2087,15),
(6,2088,15),(6,2089,17),(6,2089,18),(6,2094,9),(6,2102,24),\ \,\\
&(6,2102,25),(6,2105,9),(6,2106,9),(6,2107,10),(6,2107,11),
(6,2108,15),(6,2109,15),(6,2110,17),(6,2110,18),(6,2111,15),\\
&(6,2139,9),
\end{align*}
}
\vspace*{-3mm}
\vspace*{-3mm}
{\footnotesize 
\begin{align*}
&(6,40,4),(6,41,4),(6,44,6),(6,45,6),(6,47,4),
(6,53,4),(6,54,4),(6,54,8),(6,55,4),(6,63,4),\\
&(6,64,6),(6,65,4),(6,66,6),(6,67,6),(6,75,4),
(6,75,8),(6,76,8),(6,76,12),(6,77,8),(6,77,12),\\
&(6,78,4),(6,78,8),(6,79,6),(6,80,4),(6,81,8),
(6,81,12),(6,90,4),(6,99,4),(6,108,4),(6,108,8),\\
&(6,109,8),(6,109,12),(6,110,4),(6,111,6),(6,112,8),
(6,112,12),(6,113,4),(6,114,6),(6,115,6),(6,145,4),\\
&(6,2070,10),(6,2070,11),(6,2071,10),(6,2071,11),(6,2072,12),
(6,2072,13),(6,2076,26),(6,2076,27),(6,2077,26),(6,2077,27),\\
&(6,2078,26),(6,2078,27),(6,2079,26),(6,2079,27),(6,2087,16),
(6,2087,17),(6,2088,16),(6,2088,17),(6,2089,19),(6,2089,20),\\
&(6,2094,10),(6,2094,11),(6,2102,26),(6,2102,27),(6,2105,10),
(6,2105,11),(6,2106,10),(6,2106,11),(6,2107,12),(6,2107,13),\\
&(6,2108,16),(6,2108,17),(6,2109,16),(6,2109,17),(6,2110,19),
(6,2110,20),(6,2111,16),(6,2111,17),(6,2139,10),(6,2139,11).
\end{align*}
}

\vspace*{-3mm}
\noindent
For the former $51$ cases $($resp. the latter $80$ cases$)$, 
the decomposition types of $M_G$ are $(3,2,1)$ and $(4,1,1)$ 
$($resp. $(3,3)$ and $(5,1)$$)$ and $G$ is a subgroup of the group 
$C_2^2\times D_6$ of the {\rm CARAT ID} $(6,2139,9)$ 
$($resp. $D_6\times D_4$ of the {\rm CARAT ID} $(6,145,4)$$)$.\\
{\rm (iv)} When $n=6$, {\rm (KS2)} fails for $M_G$ 
if and only if 
the {\rm CARAT ID} of $G$ is one of the $18$ triples 
\vspace*{-3mm}

{\footnotesize 
\begin{align*}
&(6,2072,14),(6,2076,28),(6,2077,28),(6,2078,28),(6,2079,28),
(6,2089,21),(6,2102,28),(6,2107,14),(6,2110,21),(6,2295,2),\\
&(6,3045,3),(6,3046,3),(6,3047,3),(6,3052,5),
(6,3053,5),(6,3054,3),(6,3061,5),(6,3066,3).
\end{align*}
}

\vspace*{-5mm}
\noindent
For the former $10$ cases, 
the decomposition type of $M_G$ is $(4,2)$ 
and $G$ is the group $D_6$ of the {\rm CARAT ID} $(6,2295,2)$ 
or a subgroup of the $3$ groups $C_2\times D_6$ of 
the {\rm CARAT ID}s $(6,2102,28)$, $(6,2107,14)$ and $(6,2110,21)$. 
For the latter $8$ cases, the decomposition type of $M_G$ 
is $(5,1)$ and $G$ is a subgroup of the group 
$C_2\times S_5$ of the {\rm CARAT ID} $(6,3054,3)$.
\end{theorem}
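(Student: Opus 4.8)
The plan is to reduce the whole statement to a finite computation over the complete lists of $\bZ$-classes supplied by CARAT (Section \ref{seCarat}), carried out with three algorithmic primitives: (a) enumeration of all finite subgroups $G\leq\GL(n,\bZ)$ up to $\bZ$-conjugacy for $n\leq 6$; (b) a routine that decomposes a given lattice $M_G$ into indecomposable summands and, crucially, enumerates \emph{every} such decomposition up to isomorphism; and (c) a test for $\bZ[G]$-lattice isomorphism. For (c) I would use the fact that $M_G\simeq N_G$ as $\bZ[G]$-lattices precisely when the two integral representations are conjugate in $\GL(n,\bZ)$, which is exactly what \texttt{RepresentativeAction(GL(n,Integers),...)} decides (cf. the examples in Section \ref{seCarat}). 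For (b), the decompositions of $M_G$ correspond to complete sets of orthogonal primitive idempotents of the order ${\rm End}_{\bZ[G]}(M_G)$ sitting inside the semisimple algebra ${\rm End}_{\bQ[G]}(M_G\otimes_\bZ\bQ)$; an idempotent $e$ has the rank of $eM_G$ equal to $\tr_\bQ(e)\in\{1,\dots,n-1\}$, so searching this finite-rank endomorphism ring for idempotents yields all indecomposable summands, two idempotent systems giving isomorphic decompositions iff they are conjugate by a unit of ${\rm End}_{\bZ[G]}(M_G)$.

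With these primitives in hand I would run (b) on each of the $2+13+73+710+6079+85308$ lattices $M_G$ and record, for each, the set of rank multisets ${\rm DT}(M_G)$ that occur. A single global run already produces the decomposition-type tables displayed before Theorem \ref{thKS}; the discrepancies $6090-6079=11$ and $85439-85308=131$ between the type counts and the $\bZ$-class counts are precisely the number of lattices admitting two distinct rank profiles, i.e. the (KS1)-failures, while the exact agreement of the totals $2,13,73,710$ for $n\leq 4$ certifies that (KS1) holds there. This serves as an internal consistency check against parts (i)--(iii). To certify that (KS1) \emph{fails} for each listed code I would exhibit two concrete decompositions of differing type --- e.g. the $(3,2)$ and $(4,1)$ splittings for the eleven rank-$5$ codes --- by producing explicit saturated, complemented $G$-sublattices of the stated ranks; to certify (KS1) \emph{holds} for every other $M_G$ I would verify that the search returns a unique rank profile.

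For the (KS2) assertions the rank profile is fixed and one must compare the isomorphism types of the equal-rank summands, applying primitive (c) to each matching pair and recording a failure when two decompositions of identical profile have non-isomorphic constituents; this is where part (i) and the (KS2) clause of part (ii) genuinely require the isomorphism tests and are not settled by counting alone. Structurally, all exceptional groups are subgroups of a few distinguished ``master'' groups --- $C_2\times D_6$ in rank $5$, and $C_2^2\times D_6$, $D_6\times D_4$, $C_2\times S_5$, $D_6$ in rank $6$ --- and the ultimate source of every failure is $D_6$, the smallest group that is not cyclic mod $p$ for any $p$. By Dress's theorem (\cite[Proposition 9.6]{Dre73}) Krull-Schmidt already fails for permutation $D_6$-lattices, and the explicit witness is Proposition \ref{propKSfailD6}. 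I would use this to explain and double-check the machine output: the failures for the subgroups are inherited from this master relation, and conversely the computation confirms that no failure occurs outside these families.

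The main obstacle is the lattice-isomorphism test (c) together with the exhaustiveness of (b): deciding $\GL(n,\bZ)$-conjugacy is the genuinely hard and expensive step, and for the ``holds'' and ``only if'' directions one must be sure the idempotent search has found every decomposition rather than merely one, so that uniqueness is actually established and not just observed. To make the search provably finite and verifiable I would lean on the local theory --- the Krull-Schmidt-Azumaya theorem \cite{Azu50} forces the $\bZ_p[G]$-decompositions of $M_G$, hence its genus and the genera of its summands, to be unique. Since each genus contains only finitely many isomorphism classes, this bounds the candidate indecomposable summands to a finite list against which the global summands can be matched, reducing the remaining work to a controlled number of $\GL(n,\bZ)$-conjugacy tests per lattice.
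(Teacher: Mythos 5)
Your route is genuinely different from the paper's. The paper never touches endomorphism rings or idempotents: it works ``from below''. It first determines the indecomposable \emph{maximal} finite subgroups of $\GL(r,\bZ)$ for $r\leq 6$ (Subsection \ref{ssIndmf}; the delicate point is that in dimension $6$ the irreducible maximal finite groups do not suffice and one extra group, ${\rm Indmf}(6,10,1)$ of CARAT code $(6,5517,4)$, must be added --- verified there by a Tate-cohomology argument). Then, for each partition $(r_1,\dots,r_m)$ of $n$, Algorithm KS1 enumerates all conjugacy classes of subgroups of the direct products of these maximal groups whose diagonal blocks are indecomposable of ranks $r_i$; since any $M_G$ with that decomposition type arises this way, comparing $\sum_{\lambda}\#(\text{codes for }\lambda)$ with $\#(\text{union of codes})$ detects exactly the (KS1) failures (the differences $6090-6079=11$ and $85439-85308=131$). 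Algorithm KS2 ({\tt InverseProjection}) then fixes a multiset of indecomposable summands, lists the subgroups of the product with full block projections, and identifies isomorphic lattices by conjugation under the normalizer generated by $N_{\GL(r_i,\bZ)}(G_i)$ and permutations of equal blocks; a CARAT code hit by two different summand multisets of the same rank profile is a (KS2) failure. Your ``from above'' idempotent search would, if completed, prove the same statement, and your counting consistency check is the same as the paper's; but you trade the paper's one hard input (the correct list of indecomposable maximal groups) for a much harder one, namely a provably exhaustive enumeration of all decompositions of each individual lattice.

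That exhaustiveness is where your proposal has a genuine gap. The idempotents of the order ${\rm End}_{\bZ[G]}(M_G)$ do not form a finite set, and the unit group by which you must quotient is in general infinite, so ``searching this finite-rank endomorphism ring for idempotents'' is not yet an algorithm. Your proposed repair via Krull--Schmidt--Azumaya is incorrect as stated: the local uniqueness of the $\bZ_p[G]$-decompositions does \emph{not} force the genera (nor even the ranks) of the global indecomposable summands to be unique --- the eleven rank-$5$ lattices admitting both a $(3,2)$ and a $(4,1)$ decomposition are precisely counterexamples to that claim, since a global summand may be locally decomposable and different global decompositions may regroup the local pieces differently. What survives is only that candidate summands correspond to groupings of the (finitely many) local indecomposable pieces, each grouping determining a genus with finitely many isomorphism classes; you would need to spell this out, together with an effective enumeration of a genus, before the ``(KS1) holds'' and ``only if'' directions are actually established rather than merely observed. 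The paper's subgroup-of-a-product formulation avoids this entirely, because exhaustiveness there reduces to the (finite, certifiable) claim that every indecomposable subgroup of $\GL(r,\bZ)$ lies in one of the listed maximal ones.
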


\bigskip

\setcounter{subsection}{-1}
%SS 4.0
%%%%%%%%%%%%%%%%%%%%%%%%%%%%%%%%%%%%%%%%%%%%%%%%%%%%%%%%%%%%%%%%%%%%%%
\subsection{Classification of indecomposable maximal finite groups $G\leq \GL(n,\bZ)$ of dimension $n\leq 6$}
\label{ssIndmf}

Let $G\leq \GL(n,\bZ)$ be a finite matrix group. 
$G$ is called {\it reducible} (resp. {\it irreducible, decomposable, indecomposable}) 
if $M_G$ is reducible (resp. irreducible, decomposable, indecomposable) 
where $M_G$ is the corresponding $G$-lattice as in Definition \ref{defMG}.
Let ${\rm Imf}(n,i,j)\leq \GL(n,\bZ)$ be the 
$j$-th $\bZ$-class of the $i$-th $\bQ$-class of the irreducible 
maximal finite group of dimension $n$ %, i.e. maximal finite subgroup of $\GL(n\bZ)$,  
which corresponds to 
the build-in function {\tt ImfMatrixGroup(n,i,j)} of GAP. 
For $n\leq 10$ and $n=p\leq 23$; prime, 
the irreducible maximal finite groups ${\rm Imf}(n,i,j)$ 
is determined by Plesken and Pohst \cite{PP77} ($n\leq 7$), 
\cite{PP80} ($n=8$, $9$), Plesken \cite{Ple85} ($n=p\leq 23$; prime) 
and Souvignier \cite{Sou94} ($n=10$).

For $n=2$, there exist exactly $2$ irreducible maximal finite groups ($\bZ$-classes) 
${\rm Imf}(2,1,1)\simeq D_4$ and ${\rm Imf}(2,2,1)\simeq D_6$ of order $8$ and $12$ 
of the GAP IDs $(2,3,2,1)$ and $(2,4,4,1)$. 

For $n=3$, there exist exactly $3$ irreducible maximal finite groups 
${\rm Imf}(3,1,1)\simeq {\rm Imf}(3,1,2)\simeq {\rm Imf}(3,1,3)\simeq C_2\times S_4$ 
of order $48$ of the GAP IDs $(3,7,5,1)$, $(3,7,5,2)$ and $(3,7,5,3)$. 

For $n=4$, there exist exactly $6$ irreducible maximal finite groups 
${\rm Imf}(4,1,1)$, ${\rm Imf}(4,2,1)\simeq D_6^2\rtimes C_2$, 
${\rm Imf}(4,3,1)\simeq {\rm Imf}(4,3,2)\simeq C_2\times S_5$, 
${\rm Imf}(4,4,1)\simeq C_2^4\rtimes S_4$ and 
${\rm Imf}(4,5,1)\simeq C_2\times(S_3^2\rtimes C_2)$ 
of order $1152$, $288$, $240$, $240$, $384$ and $144$ 
of the GAP IDs 
$(4,33,16,1)$, $(4,30,13,1)$, $(4,31,7,1)$, $(4,31,7,2)$, 
$(4,32,21,1)$ and $(4,29,9,1)$ respectively. 
Note that the first one is isomorphic to 
the Wyle group $W(F_4)$ of type $F_4$ of order $1152$. 

For $n=5$, there exist exactly $7$ irreducible maximal finite groups 
${\rm Imf}(5,1,1)\simeq {\rm Imf}(5,1,2)\simeq {\rm Imf}(5,1,3)\simeq 
C_2^5\rtimes S_5$ 
of order $3840$ of the {\rm CARAT ID}s 
$(5,942,1)$, $(5,942,2)$, $(5,942,3)$ and 
${\rm Imf}(5,2,1)\simeq {\rm Imf}(5,2,2)\simeq {\rm Imf}(5,2,3)\simeq {\rm Imf}(5,2,4)\simeq 
C_2\times S_6$ of order $1440$ of the {\rm CARAT ID}s $(5,949,1)$, 
$(5,949,4)$, $(5,949,2)$, $(5,949,3)$.

For $n=6$, there exist exactly $17$ irreducible maximal finite groups 
${\rm Imf}(6,1,1)\simeq {\rm Imf}(6,1,2)\simeq {\rm Imf}(6,1,3)\simeq C_2^6\rtimes S_6$, 
${\rm Imf}(6,2,1)\simeq D_6^3\rtimes S_3$, 
${\rm Imf}(6,3,1)\simeq {\rm Imf}(6,3,2)$, 
${\rm Imf}(6,4,1)\simeq {\rm Imf}(6,4,2)\simeq C_2\times S_7$, 
${\rm Imf}(6,5,1)\simeq C_2\times {\rm PGL}(2,\bF_7)$, 
${\rm Imf}(6,6,1)\simeq {\rm Imf}(6,6,2)\simeq {\rm Imf}(6,6,3)\simeq C_2\times S_5$, 
${\rm Imf}(6,7,1)\simeq {\rm Imf}(6,7,2)\simeq (C_2\times S_4)^2\rtimes C_2$, 
${\rm Imf}(6,8,1)\simeq (C_2^5 \rtimes A_6) \rtimes C_2$, 
${\rm Imf}(6,9,1)\simeq {\rm Imf}(6,9,2)\simeq D_6\times S_4$ 
of order $46080$, $46080$, $46080$, $10368$, $103680$, $103680$, $10080$, $10080$, 
$672$, $240$, $240$, $240$, $4608$, $4608$, $23040$, $288$ and $288$ 
of the {\rm CARAT ID}s 
$(6,2773,1)$, $(6,2773,3)$, $(6,2773,2)$, $(6,2803,1)$, $(6,2804,2)$,
$(6,2804,1)$, $(6,2932,1)$, $(6,2932,2)$, $(6,2945,1)$, $(6,2952,1)$,
$(6,2952,3)$, $(6,2952,2)$, $(6,2772,2)$, $(6,2772,5)$, $(6,2750,4)$,
$(6,2866,2)$ and $(6,2866,3)$ respectively.

Let ${\rm Indmf}(n,i,j)\leq \GL(n,\bZ)$ be the 
$j$-th $\bZ$-class of the $i$-th $\bQ$-class of the indecomposable 
maximal finite group of dimension $n$. 
We see that all the groups ${\rm Indmf}(n,i,j)$ coincide with 
${\rm Imf}(n,i,j)$ for $n\leq 5$. 
However, this is not true for $n=6$. 
This phenomenon is suggested by \cite[Section V]{Ple78} 
(see also \cite[Section V]{PH84}). 
Indeed, it turns out that 
we need the one additional group 
${\rm Indmf}(6,10,1)\simeq (C_2\times S_4)^2$ of order $2304$ 
of the {\rm CARAT ID} $(6,5517,4)$ in order to get all 
the indecomposable maximal finite groups. 
Namely, there exist exactly $18$ indecomposable maximal finite groups 
of dimension $6$. 

In summary, there exist exactly $2$ (resp. $3$, $6$, $7$, $17$) 
irreducible maximal finite groups $G\leq \GL(n,\bZ)$ ($\bZ$-classes)
and $2$ (resp. $3$, $6$, $7$, $18$) 
indecomposable maximal finite groups $G\leq \GL(n,\bZ)$ ($\bZ$-classes) 
of dimension $2$ 
(resp. $3$, $4$, $5$, $6$).

We will check this in the next subsection (see 
Example \ref{exKS1} in Subsection \ref{ssKS1} and Example \ref{ex6101}). 
The algorithms given in this section are available from 
{\tt http://math.h.kyoto-u.ac.jp/\~{}yamasaki/Algorithm/} 
as {\tt KS.gap}.\\

\noindent
{\tt IndmfMatrixGroup(n,i,j)} returns ${\rm Indmf}(n,i,j)$ of dimension $n$ 
(this works only for $n\leq 6$).\\
{\tt IndmfNumberQClasses(n)} returns the number of $\bQ$-classes of all 
the indecomposable maximal finite groups of dimension $n$ 
(this works only for $n\leq 6$).\\
{\tt IndmfNumberZClasses(n,i)} returns the number of $\bZ$-classes in 
the $i$-th $\bQ$-class of
the indecomposable maximal finite groups ${\rm Imf}(n,i,j)$ of dimension $n$ 
(this works only for $n\leq 6$).\\
{\tt AllImfMatrixGroups(n)} returns all the irreducible maximal 
finite groups of dimension $n$.\\
{\tt AllIndmfMatrixGroups(n)} returns all the indecomposable maximal 
finite groups of dimension $n$ 
(this works only for $n\leq 6$).

\bigskip

\begin{algorithmIndmf}[Constructing all the indecomposable maximal 
finite groups (Indmf) of dimension $n \leq 6$]
\hfill\break
$($The following algorithm needs the CARAT package of GAP and {\tt caratnumber.gap}$)$. \\
\begin{verbatim}
IndmfMatrixGroup:= function(d,q,z)
    local ans;
    if d=6 and q=10 and z=1 then
        ans:=CaratMatGroupZClass(6,5517,4);
        SetName(ans,"IndmfMatrixGroup(6,10,1)");
        return ans;
    else
        return ImfMatrixGroup(d,q,z);
    fi;
end;

IndmfNumberQClasses:= function(d)
    if d=6 then
        return 10;
    else
        return ImfNumberQClasses(d);
    fi;
end;

IndmfNumberZClasses:= function(d,q)
    if d=6 and q=10 then
        return 1;
    else
        return ImfNumberZClasses(d,q);
    fi;
end;

AllImfMatrixGroups:= function(n)
    local l;
    l:=List([1..ImfNumberQClasses(n)],
      x->List([1..ImfNumberZClasses(n,x)],y->ImfMatrixGroup(n,x,y)));
    return Concatenation(l);
end;

AllIndmfMatrixGroups:= function(n)
    local l;
    l:=List([1..ImfNumberQClasses(n)],
      x->List([1..ImfNumberZClasses(n,x)],y->ImfMatrixGroup(n,x,y)));
    l:=Concatenation(l);
    if n=6 then
        l[18]:=IndmfMatrixGroup(6,10,1);
    fi;
    return l;
end;
\end{verbatim}
\end{algorithmIndmf}

\bigskip

\begin{example}[All the indecomposable maximal finite groups of dimension $n\leq 6$]
{}~{}\\
\begin{verbatim}
gap> Imf2:=AllImfMatrixGroups(2); # Imf2=Indmf2: 2 groups
[ ImfMatrixGroup(2,1,1), ImfMatrixGroup(2,2,1) ]
gap> Length(Imf2);
2
gap> List(Imf2,CrystCatZClass);
[ [ 2, 3, 2, 1 ], [ 2, 4, 4, 1 ] ]
gap> List(Imf2,Size);      
[ 8, 12 ]
gap> List([1..ImfNumberQClasses(2)],                                               
> x->List([1..ImfNumberZClasses(2,x)],y->ImfInvariants(2,x,y).isomorphismType));
[ [ "C2 wr C2 = D8" ], [ "C2 x S3 = C2 x W(A2) = D12" ] ]

gap> Imf3:=AllImfMatrixGroups(3); # Imf3=Indmf3: 3 groups
[ ImfMatrixGroup(3,1,1), ImfMatrixGroup(3,1,2), ImfMatrixGroup(3,1,3) ]
gap> Length(Imf3);                
3
gap> List(Imf3,CrystCatZClass);
[ [ 3, 7, 5, 1 ], [ 3, 7, 5, 2 ], [ 3, 7, 5, 3 ] ]
gap> List(Imf3,Size);      
[ 48, 48, 48 ]
gap> List([1..ImfNumberQClasses(3)],                                               
> x->List([1..ImfNumberZClasses(3,x)],y->ImfInvariants(3,x,y).isomorphismType));
[ [ "C2 wr S3 = C2 x S4 = W(B3)", "C2 wr S3 = C2 x S4 = C2 x W(A3)", 
    "C2 wr S3 = C2 x S4 = C2 x W(A3)" ] ]

gap> Imf4:=AllImfMatrixGroups(4); # Imf4=Indmf4: 6 groups
[ ImfMatrixGroup(4,1,1), ImfMatrixGroup(4,2,1), ImfMatrixGroup(4,3,1), 
  ImfMatrixGroup(4,3,2), ImfMatrixGroup(4,4,1), ImfMatrixGroup(4,5,1) ]
gap> Length(Imf4);
6
gap> List(Imf4,CrystCatZClass);
[ [ 4, 33, 16, 1 ], [ 4, 30, 13, 1 ], [ 4, 31, 7, 1 ], 
  [ 4, 31, 7, 2 ], [ 4, 32, 21, 1 ], [ 4, 29, 9, 1 ] ]
gap> List(Imf4,Size); # the first one is the Wyle group W(F4) of type F4 
[ 1152, 288, 240, 240, 384, 144 ]
gap> List([1..ImfNumberQClasses(4)],                                               
> x->List([1..ImfNumberZClasses(4,x)],y->ImfInvariants(4,x,y).isomorphismType));
[ [ "W(F4)" ], [ "D12 wr C2 = (C2 x W(A2)) wr C2" ], 
  [ "C2 x S5 = C2 x W(A4)", "C2 x S5 = C2 x W(A4)" ], [ "C2 wr S4 = W(B4)" ], 
  [ "(D12 Y D12):C2" ] ]

gap> Imf5:=AllImfMatrixGroups(5); # Imf5=Indmf5: 7 groups
[ ImfMatrixGroup(5,1,1), ImfMatrixGroup(5,1,2), ImfMatrixGroup(5,1,3), 
  ImfMatrixGroup(5,2,1), ImfMatrixGroup(5,2,2), ImfMatrixGroup(5,2,3), 
  ImfMatrixGroup(5,2,4) ]
gap> Length(Imf5);
7
gap> List(Imf5,CaratZClass);     
[ [ 5, 942, 1 ], [ 5, 942, 2 ], [ 5, 942, 3 ], 
  [ 5, 949, 1 ], [ 5, 949, 4 ], [ 5, 949, 2 ], 
  [ 5, 949, 3 ] ]
gap> List(Imf5,Size);
[ 3840, 3840, 3840, 1440, 1440, 1440, 1440 ]
gap> List([1..ImfNumberQClasses(5)],                                               
> x->List([1..ImfNumberZClasses(5,x)],y->ImfInvariants(5,x,y).isomorphismType));
[ [ "C2 wr S5 = W(B5)", "C2 wr S5 = C2 x W(D5)", "C2 wr S5 = C2 x W(D5)" ], 
  [ "C2 x S6", "C2 x S6", "C2 x S6", "C2 x S6" ] ]

gap> Imf6:=AllImfMatrixGroups(6); # Imf6: 17 groups
[ ImfMatrixGroup(6,1,1), ImfMatrixGroup(6,1,2), ImfMatrixGroup(6,1,3), 
  ImfMatrixGroup(6,2,1), ImfMatrixGroup(6,3,1), ImfMatrixGroup(6,3,2), 
  ImfMatrixGroup(6,4,1), ImfMatrixGroup(6,4,2), ImfMatrixGroup(6,5,1), 
  ImfMatrixGroup(6,6,1), ImfMatrixGroup(6,6,2), ImfMatrixGroup(6,6,3), 
  ImfMatrixGroup(6,7,1), ImfMatrixGroup(6,7,2), ImfMatrixGroup(6,8,1), 
  ImfMatrixGroup(6,9,1), ImfMatrixGroup(6,9,2) ]
gap> Length(Imf6);
17
gap> List(Imf6,CaratZClass);     
[ [ 6, 2773, 1 ], [ 6, 2773, 3 ], [ 6, 2773, 2 ], 
  [ 6, 2803, 1 ], [ 6, 2804, 2 ], [ 6, 2804, 1 ], 
  [ 6, 2932, 1 ], [ 6, 2932, 2 ], [ 6, 2945, 1 ], 
  [ 6, 2952, 1 ], [ 6, 2952, 3 ], [ 6, 2952, 2 ], 
  [ 6, 2772, 2 ], [ 6, 2772, 5 ], [ 6, 2750, 4 ], 
  [ 6, 2866, 2 ], [ 6, 2866, 3 ] ]
gap> List(Imf6,Size);
[ 46080, 46080, 46080, 10368, 103680, 103680, 10080, 10080, 672, 240, 
  240, 240, 4608, 4608, 23040, 288, 288 ]
gap> List([1..ImfNumberQClasses(6)],                                               
> x->List([1..ImfNumberZClasses(6,x)],y->ImfInvariants(6,x,y).isomorphismType));
[ [ "C2 wr S6 = W(B6)", "C2 wr S6 = C2 x W(D6)", "C2 wr S6 = C2 x W(D6)" ], 
  [ "(C2 x S3) wr S3 = (C2 x W(A2)) wr S3 = D12 wr S3" ], [ "C2 x W(E6)", "C2 x W(E6)" ], 
  [ "C2 x S7 = C2 x W(A6)", "C2 x S7 = C2 x W(A6)" ], [ "C2 x PGL(2,7)" ], 
  [ "C2 x S5", "C2 x S5", "C2 x S5" ], 
  [ "(C2 x S4) wr C2 = (C2 x W(A3)) wr C2", "(C2 x S4) wr C2 = (C2 x W(A3)) wr C2" ], 
  [ "subgroup of index 2 of C2 wr S6" ], 
  [ "C2 x S3 x S4 = D12 x S4 = C2 x W(A2) x W(A3)", 
    "C2 x S3 x S4 = D12 x S4 = C2 x W(A2) x W(A3)" ] ]

gap> Indmf2:=AllIndmfMatrixGroups(2);;
gap> Indmf3:=AllIndmfMatrixGroups(3);;
gap> Indmf4:=AllIndmfMatrixGroups(4);;
gap> Indmf5:=AllIndmfMatrixGroups(5);;
gap> Indmf6:=AllIndmfMatrixGroups(6);;
gap> List([Indmf2,Indmf3,Indmf4,Indmf5,Indmf6],Length);
[ 2, 3, 6, 7, 18 ]
gap> [Imf2=Indmf2,Imf3=Indmf3,Imf4=Indmf4,Imf5=Indmf5,Imf6=Indmf6];
[ true, true, true, true, false ]

gap> Indmf6; # Indmf6: 18 (=17+1) groups
[ ImfMatrixGroup(6,1,1), ImfMatrixGroup(6,1,2), ImfMatrixGroup(6,1,3), 
  ImfMatrixGroup(6,2,1), ImfMatrixGroup(6,3,1), ImfMatrixGroup(6,3,2), 
  ImfMatrixGroup(6,4,1), ImfMatrixGroup(6,4,2), ImfMatrixGroup(6,5,1), 
  ImfMatrixGroup(6,6,1), ImfMatrixGroup(6,6,2), ImfMatrixGroup(6,6,3), 
  ImfMatrixGroup(6,7,1), ImfMatrixGroup(6,7,2), ImfMatrixGroup(6,8,1), 
  ImfMatrixGroup(6,9,1), ImfMatrixGroup(6,9,2), IndmfMatrixGroup(6,10,1) ]
gap> CaratZClass(IndmfMatrixGroup(6,10,1));
[ 6, 5517, 4 ]
gap> Size(IndmfMatrixGroup(6,10,1));
2304
gap> StructureDescription(IndmfMatrixGroup(6,10,1)); 
"C2 x C2 x S4 x S4"
\end{verbatim}
\end{example}

%gap> List(Imf2,StructureDescription);
%[ "D8", "D12" ]
%gap> List(Imf3,StructureDescription);
%[ "C2 x S4", "C2 x S4", "C2 x S4" ]
%gap> List(Imf4,StructureDescription);
%[ "(((((C2 x D8) : C2) : C3) : C3) : C2) : C2", "(C2 x C2 x S3 x S3) : C2", "C2 x S5", 
%  "C2 x S5", "((((C2 x D8) : C2) : C3) : C2) : C2", "C2 x ((S3 x S3) : C2)" ]
%gap> List(Imf5,StructureDescription);
%[ "C2 x (((C2 x C2 x C2 x C2) : A5) : C2)", "C2 x (((C2 x C2 x C2 x C2) : A5) : C2)", 
%  "C2 x (((C2 x C2 x C2 x C2) : A5) : C2)", "C2 x S6", "C2 x S6", "C2 x S6", "C2 x S6" ]

\bigskip

%%%%%%%%%%%%%%%%%%%%%%%%%%%%%%%%%%%%%%%%%%%%%%%%%%%%%%%%%%%%%%%%%%%%%%
\subsection{Krull-Schmidt theorem (1)}\label{ssKS1}

We will determine all the possible decompositions of $M_G$ into indecomposable 
ones for all finite subgroups $G$ of $\GL(n,\bZ)$ with $n \leq 6$ 
(see Examples \ref{exKS1} and Examples \ref{exKS2} below). 
Note that if a $G$-lattice $M$ splits into indecomposable $G$-lattices $M_1$ 
and $M_2$ of rank $i$ and $j$,
then $G$ is a subgroup of $G_1\times G_2$
where $G_1$ and $G_2$ are some indecomposable maximal finite groups
of $\GL(i,\bZ)$ and $\GL(j,\bZ)$ respectively. 

\bigskip

\noindent
{\tt LatticeDecompositions(n)} returns the list $\mathcal{L}=\{l_1,\ldots,l_s\}$ 
of the lists $l_i$ of the GAP IDs whose $i$-th list $l_i$ contains all the 
GAP IDs of the groups $G$ whose corresponding $G$-lattice $M_G$ 
is decomposable into the indecomposable $G$-lattices 
$M\simeq M_1\oplus\cdots\oplus M_m$ with rank $M_j=r_j$ 
where $(r_1,\ldots,r_m)$ corresponds to the $i$-th partitions 
{\tt Partitions(n)[i]} of the integer $2\leq n\leq 4$.\\
{\tt LatticeDecompositions(n:Carat)} returns the same as 
{\tt LatticeDecompositions(n)} but using the CARAT ID instead of the GAP ID. 
This algorithm is valid for $1\leq n\leq 6$.\\
{\tt LatticeDecompositions(n:Carat,FromPerm)} returns the same as 
{\tt LatticeDecompositions(n:Carat)} but using 
{\tt ConjugacyClassesSubgroupsFromPerm(G)} instead of 
{\tt ConjugacyClassesSubgroups2(G)} (see below).

\bigskip

In order to construct conjugacy classes of subgroups 
of a group $G$, we use the following GAP function 
{\tt ConjugacyClassesSubgroups2(G)} because the ordering of 
the conjugacy classes of subgroups of $G$ by the built-in function 
{\tt ConjugacyClassesSubgroups(G)} is not fixed for some groups. 

\bigskip

\begin{verbatim}
ConjugacyClassesSubgroups2:= function(g)
    Reset(GlobalMersenneTwister);
    Reset(GlobalRandomSource);
    return ConjugacyClassesSubgroups(g);
end;
\end{verbatim}

\bigskip

If a group $G$ is too big, {\tt ConjugacyClassesSubgroups2(G)} 
may not work well. 
For example, we should use 
{\tt ConjugacyClassesSubgroupsFromPerm(G)} instead of 
{\tt ConjugacyClassesSubgroups2(G)} 
for the 2nd $\bZ$-class of 
the 1st $\bQ$-class of the irreducible maximal 
finite group ${\rm Imf}(6,1,2)\simeq C_2^6\rtimes S_6$ of dimension $6$ 
of order $46080$. 

\bigskip

\begin{verbatim}
ConjugacyClassesSubgroupsFromPerm:= function(g)
    local iso,h,i;
    Reset(GlobalMersenneTwister);
    Reset(GlobalRandomSource);
    iso:=IsomorphismPermGroup(g);
    h:=ConjugacyClassesSubgroups2(Range(iso));
    h:=List(h,Representative);
    h:=List(h,x->PreImage(iso,x));
    return h;
end;
\end{verbatim}

\bigskip

\begin{algorithmKS1}[All the decomposable $G$-lattices $M_G$ of rank $n$]
\hfill\break
$($The following algorithm needs the CARAT package of GAP and {\tt caratnumber.gap}$)$. \\
\begin{verbatim}
DirectSumMatrixGroup:= function(l)
    local gg,gg1;
    gg:=List(l,GeneratorsOfGroup);
    if Length(Set(gg,Length))>1 then
        return fail;
    else
        gg1:=List([1..Length(gg[1])],x->DirectSumMat(List(gg,y->y[x])));
    fi;
    return Group(gg1,DirectSumMat(List(l,Identity)));
end;

DirectProductMatrixGroup:= function(l)
    local gg,gg1,o,o1,i,j,gx;
    gg:=List(l,GeneratorsOfGroup);
    gg1:=[];
    for i in [1..Length(l)] do
        o:=List(l,Identity);
        for j in gg[i] do
            o[i]:=j;
            Add(gg1,DirectSumMat(o));
        od;
    od;
    return Group(gg1,DirectSumMat(List(l,Identity)));
end;

AllDirectProductIndmfMatrixGroups:= function(l)
    local li;
    li:=List(Collected(l),
      x->UnorderedTuples(AllIndmfMatrixGroups(x[1]),x[2]));
    return List(Cartesian(li),
      x->DirectProductMatrixGroup(Concatenation(x)));
end;

PartialMatrixGroup:= function(G,l)
    local gg,gp;
    gg:=GeneratorsOfGroup(G);
    gp:=List(gg,x->x{l}{l});
    return Group(gp,IdentityMat(Length(l)));
end;

LatticeDecompositions:= function(n)
    local d,ind,pp,p1,subgr,ld;
    ind:=[];
    for d in [1..n] do
        pp:=Partitions(d);
        p1:=List(pp,SortedList);
        if ValueOption("fromperm")=true or ValueOption("FromPerm")=true then
            subgr:=List(p1,x->
              Concatenation(List(AllDirectProductIndmfMatrixGroups(x),
              y->ConjugacyClassesSubgroupsFromPerm(y))));
        else
            subgr:=List(p1,x->
              Concatenation(List(AllDirectProductIndmfMatrixGroups(x),
              y->List(ConjugacyClassesSubgroups2(y),Representative))));
        fi;
        subgr:=List([1..NrPartitions(d)],
          x->Filtered(subgr[x],y->ForAll([1..Length(p1[x])],z->
          p1[x][z]=1 or p1[x][z]=d or
          CaratZClass(PartialMatrixGroup(y,
          [Sum([1..z-1],w->p1[x][w])+1..Sum([1..z],w->p1[x][w])]))
          in ind[p1[x][z]])));
        ld:=List(subgr,x->Set(x,CaratZClass));
        ld[NrPartitions(d)]:=Difference(ld[NrPartitions(d)],
          Union(List([1..NrPartitions(d)-1],x->ld[x])));
        ind[d]:=ld[NrPartitions(d)];
    od;
    if ValueOption("carat")=true or ValueOption("Carat")=true then
        return ld;
    else
        return List(ld,x->Set(x,Carat2CrystCat));
    fi;
end;
\end{verbatim}
\end{algorithmKS1}

\bigskip

\begin{example}[Functions {\tt DirectSumMatrixGroup(l)} and {\tt DirectProductMatrixGroup(l)}]
{}~{}\\

\noindent
{\tt DirectSumMatrixGroup(l)} returns the direct sum of the groups $G_1,\ldots,G_n$ 
for the list $l=[G_1,\ldots,G_n]$.\\
{\tt DirectProductMatrixGroup(l)} returns the direct product of the groups $G_1,\ldots,G_n$ 
for the list $l=[G_1,\ldots,G_n]$.\\

\begin{verbatim}
gap> Read("caratnumber.gap");
gap> Read("KS.gap");

gap> G:=ImfMatrixGroup(1,1,1); # G=C2 of dimension 1
ImfMatrixGroup(1,1,1)
gap> GeneratorsOfGroup(G);
[ [ [ -1 ] ] ]
gap> Gs3:=DirectSumMatrixGroup([G,G,G]); # Gs3=C2 of dimension 3
Group([ [ [ -1, 0, 0 ], [ 0, -1, 0 ], [ 0, 0, -1 ] ] ])
gap> Order(Gs3);
2
gap> Gp3:=DirectProductMatrixGroup([G,G,G]); # Gp3=C2^3 of dimension 3
<matrix group with 3 generators>
gap> GeneratorsOfGroup(Gp3);                                                       
[ [ [ -1, 0, 0 ], 
    [ 0, 1, 0 ], 
    [ 0, 0, 1 ] ], 
  [ [ 1, 0, 0 ], 
    [ 0, -1, 0 ], 
    [ 0, 0, 1 ] ], 
  [ [ 1, 0, 0 ], 
    [ 0, 1, 0 ], 
    [ 0, 0, -1 ] ] ]
gap> Size(Gp3);
8
\end{verbatim}
\end{example}

\bigskip

\begin{example}[Algorithm KS1]\label{exKS1}
By Algorithm KS1, we may check that the condition 
(KS1) holds when $\rank M_G=n\leq 4$ and fails for 
the $11$ cases when $\rank M_G=5$ and 
the $131$ cases when $\rank M_G=6$ as in Theorem \ref{thKS}. 
The decomposition types of the $11$ $G$-lattices $M_G$ of rank $5$ are 
$(3,2)$ and $(4,1)$ and $G$ is a subgroup of the 
group $C_2\times D_6$ of the CARAT ID $(5,205,6)$.
The decomposition types of the former $51$ cases 
(resp. the latter $80$ cases) out of the $131$ cases 
are $(3,2,1)$ and $(4,1,1)$ 
$($resp. $(3,3)$ and $(5,1)$$)$ and $G$ is a subgroup of the group 
$C_2^2\times D_6$ of the CARAT ID $(6,2139,9)$ 
$($resp. $D_6\times D_4$ of the CARAT ID $(6,145,4)$$)$. 
For the groups $G_1$ and $G_2$ of the CARAT IDs 
$(6,2139,9)$ and $(5,205,6)$, we see that $M_{G_1}\simeq M_{G_2}\oplus N$ 
for some $G$-lattice $N$ of rank $1$. 

The computation in this example also confirms that the $2$ (resp. $3$, $6$, $7$) 
irreducible maximal finite groups ${\rm Imf}(n,i,j)$ of dimension $2$ (resp. $3$, $4$, $5$) 
become the indecomposable maximal finite groups, 
and the $18$ groups given in Subsection \ref{ssIndmf} are 
the indecomposable maximal finite groups of dimension $6$ (see also Example \ref{ex6101}). 
Indeed, we get the $18996$ indecomposable (conjugacy classes of) 
subgroups of $\GL(6,\bZ)$ as the subgroups of the $18$ indecomposable maximal finite 
groups although we obtain only $14348$ subgroups if we use the $17$ irreducible ones. 

\bigskip

\begin{verbatim}
gap> Read("caratnumber.gap");
gap> Read("KS.gap");

gap> ld1:=LatticeDecompositions(1:Carat);
[ [ [ 1, 1, 1 ], [ 1, 2, 1 ] ] ]

gap> ld2:=LatticeDecompositions(2);      
[ [ [ 2, 1, 1, 1 ], [ 2, 1, 2, 1 ], [ 2, 2, 1, 1 ], [ 2, 2, 2, 1 ] ], 
  [ [ 2, 2, 1, 2 ], [ 2, 2, 2, 2 ], [ 2, 3, 1, 1 ], [ 2, 3, 2, 1 ], [ 2, 4, 1, 1 ], 
    [ 2, 4, 2, 1 ], [ 2, 4, 2, 2 ], [ 2, 4, 3, 1 ], [ 2, 4, 4, 1 ] ] ]
gap> Partitions(2);
[ [ 1, 1 ], [ 2 ] ]
gap> List(ld2,Length);                   
[ 4, 9 ]
gap> [Length(Union(ld2)),Sum(ld2,Length)];
[ 13, 13 ]

gap> ld3:=LatticeDecompositions(3);                         
[ [ [ 3, 1, 1, 1 ], [ 3, 1, 2, 1 ], [ 3, 2, 1, 1 ], [ 3, 2, 2, 1 ], [ 3, 2, 3, 1 ], 
    [ 3, 3, 1, 1 ], [ 3, 3, 2, 1 ], [ 3, 3, 3, 1 ] ], 
  [ [ 3, 2, 1, 2 ], [ 3, 2, 2, 2 ], [ 3, 2, 3, 2 ], [ 3, 3, 1, 2 ], [ 3, 3, 2, 2 ], 
    [ 3, 3, 2, 3 ], [ 3, 3, 3, 2 ], [ 3, 4, 1, 1 ], [ 3, 4, 2, 1 ], [ 3, 4, 3, 1 ], 
    [ 3, 4, 4, 1 ], [ 3, 4, 5, 1 ], [ 3, 4, 6, 1 ], [ 3, 4, 6, 2 ], [ 3, 4, 7, 1 ], 
    [ 3, 5, 1, 2 ], [ 3, 5, 2, 2 ], [ 3, 5, 3, 2 ], [ 3, 5, 3, 3 ], [ 3, 5, 4, 2 ], 
    [ 3, 5, 4, 3 ], [ 3, 5, 5, 2 ], [ 3, 5, 5, 3 ], [ 3, 6, 1, 1 ], [ 3, 6, 2, 1 ], 
    [ 3, 6, 3, 1 ], [ 3, 6, 4, 1 ], [ 3, 6, 5, 1 ], [ 3, 6, 6, 1 ], [ 3, 6, 6, 2 ], 
    [ 3, 6, 7, 1 ] ], 
  [ [ 3, 3, 1, 3 ], [ 3, 3, 1, 4 ], [ 3, 3, 2, 4 ], [ 3, 3, 2, 5 ], [ 3, 3, 3, 3 ], 
    [ 3, 3, 3, 4 ], [ 3, 4, 1, 2 ], [ 3, 4, 2, 2 ], [ 3, 4, 3, 2 ], [ 3, 4, 4, 2 ], 
    [ 3, 4, 5, 2 ], [ 3, 4, 6, 3 ], [ 3, 4, 6, 4 ], [ 3, 4, 7, 2 ], [ 3, 5, 1, 1 ], 
    [ 3, 5, 2, 1 ], [ 3, 5, 3, 1 ], [ 3, 5, 4, 1 ], [ 3, 5, 5, 1 ], [ 3, 7, 1, 1 ], 
    [ 3, 7, 1, 2 ], [ 3, 7, 1, 3 ], [ 3, 7, 2, 1 ], [ 3, 7, 2, 2 ], [ 3, 7, 2, 3 ], 
    [ 3, 7, 3, 1 ], [ 3, 7, 3, 2 ], [ 3, 7, 3, 3 ], [ 3, 7, 4, 1 ], [ 3, 7, 4, 2 ], 
    [ 3, 7, 4, 3 ], [ 3, 7, 5, 1 ], [ 3, 7, 5, 2 ], [ 3, 7, 5, 3 ] ] ]
gap> Partitions(3);   
[ [ 1, 1, 1 ], [ 2, 1 ], [ 3 ] ]
gap> List(ld3,Length);             
[ 8, 31, 34 ]
gap> [Length(Union(ld3)),Sum(ld3,Length)];
[ 73, 73 ]

gap> ld4:=LatticeDecompositions(4);;
gap> Partitions(4);
[ [ 1, 1, 1, 1 ], [ 2, 1, 1 ], [ 2, 2 ], [ 3, 1 ], [ 4 ] ]
gap> List(ld4,Length);
[ 16, 96, 175, 128, 295 ]
gap> [Length(Union(ld4)),Sum(ld4,Length)];
[ 710, 710 ]

gap> ld5:=LatticeDecompositions(5:Carat);;
gap> Partitions(5);
[ [ 1, 1, 1, 1, 1 ], [ 2, 1, 1, 1 ], [ 2, 2, 1 ], [ 3, 1, 1 ], [ 3, 2 ], [ 4, 1 ], [ 5 ] ]
gap> List(ld5,Length);
[ 32, 280, 1004, 442, 1480, 1400, 1452 ]
gap> [Length(Union(ld5)),Sum(ld5,Length)];
[ 6079, 6090 ]
gap> last[2]-last[1];
11
gap> f32_41:=Intersection(ld5[5],ld5[6]); # type [ 3, 2 ] and [ 4, 1 ]
[ [ 5, 188, 4 ], [ 5, 189, 4 ], [ 5, 190, 6 ], [ 5, 191, 6 ], [ 5, 192, 6 ], 
  [ 5, 193, 4 ], [ 5, 205, 6 ], [ 5, 218, 8 ], [ 5, 219, 8 ], [ 5, 220, 4 ], 
  [ 5, 221, 4 ] ]
gap> Length(f32_41);
11

gap> f32_41g:=List(f32_41,x->CaratMatGroupZClass(x[1],x[2],x[3]));;
gap> List(f32_41g,Order);
[ 12, 12, 12, 12, 12, 12, 24, 6, 6, 6, 6 ]
gap> List(f32_41g,StructureDescription);
[ "D12", "C6 x C2", "D12", "D12", "D12", "D12", "C2 x C2 x S3", "S3", "S3", "C6", "C6" ]
gap> last[7]; # the 7th group of the CARAT ID [ 5, 205, 6 ] is maximal
"C2 x C2 x S3"
gap> f32_41gsub:=Set(ConjugacyClassesSubgroups2(f32_41g[7]),
> x->CaratZClass(Representative(x)));;
gap> Difference(f32_41,f32_41gsub);
[  ]
gap> f32_41[7];
[ 5, 205, 6 ]

gap> ld6:=LatticeDecompositions(6:Carat,FromPerm);;
gap> Partitions(6);
[ [ 1, 1, 1, 1, 1, 1 ], [ 2, 1, 1, 1, 1 ], [ 2, 2, 1, 1 ], [ 2, 2, 2 ], 
  [ 3, 1, 1, 1 ], [ 3, 2, 1 ], [ 3, 3 ], [ 4, 1, 1 ], [ 4, 2 ], [ 5, 1 ], [ 6 ] ]
gap> List(ld6,Length);
[ 68, 824, 4862, 6878, 1466, 10662, 4235, 5944, 21573, 9931, 18996 ]
gap> [Length(Union(ld6)),Sum(ld6,Length)];
[ 85308, 85439 ]
gap> last[2]-last[1];
131
gap> f321_411:=Intersection(ld6[6],ld6[8]); # type [ 3, 2, 1 ] and [ 4, 1, 1 ]
[ [ 6, 2013, 8 ], [ 6, 2018, 4 ], [ 6, 2023, 6 ], [ 6, 2024, 6 ], [ 6, 2025, 6 ], 
  [ 6, 2026, 6 ], [ 6, 2033, 6 ], [ 6, 2042, 8 ], [ 6, 2043, 8 ], [ 6, 2044, 4 ], 
  [ 6, 2045, 4 ], [ 6, 2048, 5 ], [ 6, 2049, 8 ], [ 6, 2050, 8 ], [ 6, 2051, 8 ], 
  [ 6, 2052, 8 ], [ 6, 2058, 5 ], [ 6, 2059, 5 ], [ 6, 2067, 5 ], [ 6, 2068, 5 ], 
  [ 6, 2069, 5 ], [ 6, 2069, 11 ], [ 6, 2070, 9 ], [ 6, 2071, 9 ], [ 6, 2072, 10 ], 
  [ 6, 2072, 11 ], [ 6, 2076, 24 ], [ 6, 2076, 25 ], [ 6, 2077, 24 ], [ 6, 2077, 25 ], 
  [ 6, 2078, 24 ], [ 6, 2078, 25 ], [ 6, 2079, 24 ], [ 6, 2079, 25 ], [ 6, 2087, 15 ], 
  [ 6, 2088, 15 ], [ 6, 2089, 17 ], [ 6, 2089, 18 ], [ 6, 2094, 9 ], [ 6, 2102, 24 ], 
  [ 6, 2102, 25 ], [ 6, 2105, 9 ], [ 6, 2106, 9 ], [ 6, 2107, 10 ], [ 6, 2107, 11 ], 
  [ 6, 2108, 15 ], [ 6, 2109, 15 ], [ 6, 2110, 17 ], [ 6, 2110, 18 ], [ 6, 2111, 15 ], 
  [ 6, 2139, 9 ] ]
gap> Length(f321_411);
51

gap> f321_411g:=List(f321_411,x->CaratMatGroupZClass(x[1],x[2],x[3]));;
gap> List(f321_411g,Order);
[ 12, 12, 12, 12, 12, 12, 24, 6, 6, 6, 6, 12, 12, 12, 12, 12, 12, 24, 6, 6, 6, 6, 12, 12, 
  12, 12, 12, 12, 12, 12, 12, 12, 12, 12, 12, 12, 12, 12, 24, 24, 24, 24, 24, 24, 24, 24, 
  24, 24, 24, 24, 48 ]
gap> List(f321_411g,StructureDescription);
[ "D12", "C6 x C2", "D12", "D12", "D12", "D12", "C2 x C2 x S3", "S3", "S3", "C6", "C6", 
  "C6 x C2", "D12", "D12", "D12", "D12", "D12", "C2 x C2 x S3", "C6", "C6", "S3", "S3", 
  "C6 x C2", "C6 x C2", "C6 x C2", "C6 x C2", "D12", "D12", "D12", "D12", "D12", "D12", 
  "D12", "D12", "D12", "D12", "D12", "D12", "C6 x C2 x C2", "C2 x C2 x S3", "C2 x C2 x S3", 
  "C2 x C2 x S3", "C2 x C2 x S3", "C2 x C2 x S3", "C2 x C2 x S3", "C2 x C2 x S3", 
  "C2 x C2 x S3", "C2 x C2 x S3", "C2 x C2 x S3", "C2 x C2 x S3", "C2 x C2 x C2 x S3" ]
gap> last[51]; # the 51th group of the CARAT ID [ 6, 2139, 9 ] is maximal
"C2 x C2 x C2 x S3"
gap> f321_411gsub:=Set(ConjugacyClassesSubgroups2(f321_411g[51]),
> x->CaratZClass(Representative(x)));;
gap> Difference(f321_411,f321_411gsub);
[  ]
gap> f321_411[51];
[ 6, 2139, 9 ]

gap> gg:=GeneratorsOfGroup(CaratMatGroupZClass(5,205,6));;
gap> gg:=List(gg,x->DirectSumMat([[1]],x));;
gap> Add(gg,-IdentityMat(6));
gap> CaratZClass(Group(gg));
[ 6, 2139, 9 ]

gap> f33_51:=Intersection(ld6[7],ld6[10]); # type [ 3, 3 ] and [ 5, 1 ]
[ [ 6, 40, 4 ], [ 6, 41, 4 ], [ 6, 44, 6 ], [ 6, 45, 6 ], [ 6, 47, 4 ],
  [ 6, 53, 4 ], [ 6, 54, 4 ], [ 6, 54, 8 ], [ 6, 55, 4 ], [ 6, 63, 4 ],
  [ 6, 64, 6 ], [ 6, 65, 4 ], [ 6, 66, 6 ], [ 6, 67, 6 ], [ 6, 75, 4 ],
  [ 6, 75, 8 ], [ 6, 76, 8 ], [ 6, 76, 12 ], [ 6, 77, 8 ], [ 6, 77, 12 ],
  [ 6, 78, 4 ], [ 6, 78, 8 ], [ 6, 79, 6 ], [ 6, 80, 4 ], [ 6, 81, 8 ],
  [ 6, 81, 12 ], [ 6, 90, 4 ], [ 6, 99, 4 ], [ 6, 108, 4 ], [ 6, 108, 8 ],
  [ 6, 109, 8 ], [ 6, 109, 12 ], [ 6, 110, 4 ], [ 6, 111, 6 ], [ 6, 112, 8 ],
  [ 6, 112, 12 ], [ 6, 113, 4 ], [ 6, 114, 6 ], [ 6, 115, 6 ], [ 6, 145, 4 ],
  [ 6, 2070, 10 ], [ 6, 2070, 11 ], [ 6, 2071, 10 ], [ 6, 2071, 11 ], [ 6, 2072, 12 ], 
  [ 6, 2072, 13 ], [ 6, 2076, 26 ], [ 6, 2076, 27 ], [ 6, 2077, 26 ], [ 6, 2077, 27 ], 
  [ 6, 2078, 26 ], [ 6, 2078, 27 ], [ 6, 2079, 26 ], [ 6, 2079, 27 ], [ 6, 2087, 16 ], 
  [ 6, 2087, 17 ], [ 6, 2088, 16 ], [ 6, 2088, 17 ], [ 6, 2089, 19 ], [ 6, 2089, 20 ], 
  [ 6, 2094, 10 ], [ 6, 2094, 11 ], [ 6, 2102, 26 ], [ 6, 2102, 27 ], [ 6, 2105, 10 ], 
  [ 6, 2105, 11 ], [ 6, 2106, 10 ], [ 6, 2106, 11 ], [ 6, 2107, 12 ], [ 6, 2107, 13 ], 
  [ 6, 2108, 16 ], [ 6, 2108, 17 ], [ 6, 2109, 16 ], [ 6, 2109, 17 ], [ 6, 2110, 19 ], 
  [ 6, 2110, 20 ], [ 6, 2111, 16 ], [ 6, 2111, 17 ], [ 6, 2139, 10 ], [ 6, 2139, 11 ] ]
gap> Length(f33_51);
80

gap> f33_51g:=List(f33_51,x->CaratMatGroupZClass(x[1],x[2],x[3]));;
gap> List(f33_51g,Order);
[ 12, 12, 12, 12, 24, 24, 24, 24, 24, 24, 24, 24, 24, 24, 24, 24, 24, 24, 24, 24, 24, 24, 
  24, 24, 24, 24, 48, 48, 48, 48, 48, 48, 48, 48, 48, 48, 48, 48, 48, 96, 12, 12, 12, 12, 
  12, 12, 12, 12, 12, 12, 12, 12, 12, 12, 12, 12, 12, 12, 12, 12, 24, 24, 24, 24, 24, 24, 
  24, 24, 24, 24, 24, 24, 24, 24, 24, 24, 24, 24, 48, 48 ]
gap> List(f33_51g,StructureDescription);
[ "C12", "C12", "C3 : C4", "C3 : C4", "C12 x C2", "C3 x D8", "C3 x D8", "C3 x D8", 
  "C3 x D8", "C4 x S3", "C4 x S3", "C4 x S3", "C4 x S3", "C2 x (C3 : C4)", "(C6 x C2) : C2", 
  "(C6 x C2) : C2", "(C6 x C2) : C2", "(C6 x C2) : C2", "(C6 x C2) : C2", "(C6 x C2) : C2", 
  "(C6 x C2) : C2", "(C6 x C2) : C2", "D24", "D24", "D24", "D24", "C6 x D8", "C2 x C4 x S3", 
  "D8 x S3", "D8 x S3", "C2 x ((C6 x C2) : C2)", "C2 x ((C6 x C2) : C2)", "D8 x S3", 
  "D8 x S3", "D8 x S3", "D8 x S3", "D8 x S3", "D8 x S3", "C2 x D24", "C2 x S3 x D8", 
  "C6 x C2", "C6 x C2", "C6 x C2", "C6 x C2", "C6 x C2", "C6 x C2", "D12", "D12", "D12", 
  "D12", "D12", "D12", "D12", "D12", "D12", "D12", "D12", "D12", "D12", "D12", 
  "C6 x C2 x C2", "C6 x C2 x C2", "C2 x C2 x S3", "C2 x C2 x S3", "C2 x C2 x S3", 
  "C2 x C2 x S3", "C2 x C2 x S3", "C2 x C2 x S3", "C2 x C2 x S3", "C2 x C2 x S3", 
  "C2 x C2 x S3", "C2 x C2 x S3", "C2 x C2 x S3", "C2 x C2 x S3", "C2 x C2 x S3", 
  "C2 x C2 x S3", "C2 x C2 x S3", "C2 x C2 x S3", "C2 x C2 x C2 x S3", "C2 x C2 x C2 x S3" ]
gap> last[40]; # the 40th group of the CARAT ID [ 6, 145, 4 ] is maximal
"C2 x S3 x D8"
gap> f33_51gsub:=Set(ConjugacyClassesSubgroups2(f33_51g[40]),
> x->CaratZClass(Representative(x)));;
gap> Difference(f33_51,f33_51gsub);
[  ]
gap> f33_51[40];
[ 6, 145, 4 ]
\end{verbatim}
\end{example}

\bigskip

\begin{example}[The generators of the exceptional $11$ groups $G\leq \GL(5,\bZ)$ 
whose corresponding $G$-lattices $M_G\simeq M_1\oplus M_2\simeq N_1\oplus N_2$ 
with rank $M_1=4$, rank $M_2=1$, rank $N_1=3$ and rank $N_2=2$]
Let $I$ be the identity matrix of rank $5$. 
Let
\[
X=
{\scriptsize 
\left(
\begin{array}{cccc|c}
 0 & 1 & 0 & 1 & 0 \\
 1 & 0 & 0 & 1 & 0 \\
 0 & 0 & 0 & 1 & 0 \\
 0 & 0 &-1 &-1 & 0 \\\hline
 0 & 0 & 0 & 0 & 1
\end{array}
\right)},\ \ 
Y=
{\scriptsize 
\left(
\begin{array}{cccc|c}
 1 & 0 & 0 & 0 & 0 \\
 0 & 1 & 0 & 0 & 0 \\
 0 & 0 & 0 &-1 & 0 \\
 0 & 0 &-1 & 0 & 0 \\\hline
 0 & 0 & 0 & 0 & 1
\end{array}
\right)}.
\]
Then the CARAT ID and the generators of the exceptional 
$11$ groups $G\leq \GL(5,\bZ)$ are given as 

%\begin{center}
%\begin{tabular}{l|llllll}
%CARAT ID &(5,188,4) & (5,189,4) & (5,190,6) & (5,191,6) & (5,192,6) & (5,193,4)\\\hline
%$G$ & $\langle X^2, XY, -I\rangle\simeq S_3\times C_2$ &
% $\langle X, -I\rangle\simeq C_6\times C_2$ & 
%$\langle -X, Y\rangle\simeq D_6$ &
% $\langle -X, XY\rangle\simeq D_6$ & 
%$\langle X, Y\rangle\simeq C_6$ &
% $\langle X, -Y\rangle\simeq D_6$ 
%\end{tabular}\vspace*{2mm}\\
%\begin{tabular}{l|llllll}
%CARAT ID & (5,205,6) & (5,218,8) & (5,219,8) & (5,220,4) & (5,221,4) & \\\hline
%$G$ & $\langle X, Y, -I\rangle\simeq C_6\times C_2$ &
% $\langle X^2, XY\rangle\simeq S_3$ &
% $\langle X^2, -XY\rangle\simeq S_3$ &
% $\langle X\rangle\simeq C_6$ &
% $\langle -X\rangle\simeq C_6$ &\hspace*{16mm}
%\end{tabular}
%\end{center}
%
\begin{center}
\begin{tabular}{lll}
$G$ & CARAT ID & Generators\\\hline
$S_3\times C_2\simeq D_6$ & $(5,188,4)$ & $\langle X^2, XY, -I\rangle$\\
$C_2\times C_6$ & $(5,189,4)$ & $\langle X, -I\rangle$\\
$D_6$ & $(5,190,6)$ & $\langle -X, Y\rangle$\\
$D_6$ & $(5,191,6)$ & $\langle -X, XY\rangle$\\
$D_6$ & $(5,192,6)$ & $\langle X, Y\rangle$\\
$D_6$ & $(5,193,4)$ & $\langle X, -Y\rangle$\\
$D_6\times C_2$ & $(5,205,6)$ & $\langle X, Y, -I\rangle$\\
$S_3$ & $(5,218,8)$ & $\langle X^2, XY\rangle$\\
$S_3$ & $(5,219,8)$ & $\langle X^2, -XY\rangle$\\
$C_6$ & $(5,220,4)$ & $\langle X\rangle$\\
$C_6$ & $(5,221,4)$ & $\langle -X\rangle$
\end{tabular}
\end{center}
The group $\langle X, Y \rangle\leq \GL(5,\bZ)$ may be regarded as the 
group embedded directly by the group $G\leq \GL(4,\bZ)$ 
of the GAP ID $(4,14,8,2)$. 
We also see that
\begin{align*} 
P^{-1}XP=
{\scriptsize 
\left(
\begin{array}{ccc|cc}
 0 & 1 & 0 & 0 & 0 \\
 0 & 0 & 1 & 0 & 0 \\
 1 & 0 & 0 & 0 & 0 \\\hline
 0 & 0 & 0 & 0 & 1 \\
 0 & 0 & 0 & 1 & 0
\end{array}
\right)},\ \ 
P^{-1}YP=
{\scriptsize 
\left(
\begin{array}{ccc|cc}
 0 & 1 & 0 & 0 & 0 \\
 1 & 0 & 0 & 0 & 0 \\
 0 & 0 & 1 & 0 & 0 \\\hline
 0 & 0 & 0 & 1 & 0 \\
 0 & 0 & 0 & 0 & 1
\end{array}
\right)}
\ \ {\rm where}\ \ 
P={\scriptsize \begin{pmatrix}
 0 & 0 & 1 & 0 & 1 \\
 0 & 0 & 1 & 1 & 0 \\
 0 &-1 & 1 & 0 & 0 \\
 1 & 0 &-1 & 0 & 0 \\
 1 & 1 & 1 & 1 & 1
\end{pmatrix}}.
\end{align*}
This shows that 
$M_G\simeq M_1\oplus M_2\simeq N_1\oplus N_2$ 
where $M_1$, $M_2$, $N_1$ and $N_2$ are indecomposable 
$G$-lattices with 
rank $M_1=4$, rank $M_2=1$, rank $N_1=3$ and rank $N_2=2$. 
\end{example}

\bigskip

%%%%%%%%%%%%%%%%%%%%%%%%%%%%%%%%%%%%%%%%%%%%%%%%%%%%%%%%%%%%%%%%%%%%%%%%%%%%%%%
\subsection{Krull-Schmidt theorem (2)}
%%%%%%%%%%%%%%%%%%%%%%%%%%%%%%%%%%%%%%%%%%%%%%%%%%
We will determine whether the condition {\rm (KS2)} holds.
%%%%%%%%%%%%%%%%%%%%%%%%%%%%%%%%%%%%%%%%%%%%%%%%
%\noindent
%{\tt PartialMatrixGroup(G,l)} returns
%{\tt InverseProjection(l)} returns 
%for the list $l=\{
%\overbrace{[n_1,i_1,j_1,k_1],\ldots,[n_1,i_1,j_1,k_1]}^{m_1\;\mathrm{times}},
%\ldots,
%\overbrace{[n_1,i_1,j_1,k_1],\ldots,[n_s,i_s,j_s,k_s]}^{m_s\;\mathrm{times}}
%\}$ of the GAP IDs. 
%%%%%%%%%%%%%%%%%%%%%%%%%%%%%%%%%%%%%%%%%%%%%%%

\bigskip

\begin{algorithmKS2}[Determination whether the condition {\rm (KS2)} holds]
\label{AlgKS2}
\hfill\break
$($The following algorithm needs the CARAT package of GAP and {\tt caratnumber.gap}$)$. \\
\begin{verbatim}
InverseProjection:= function(l)
    local lc,lcg,lg,ll,G,N,i,j,k,gn,gn1,gn2,o,gN,h,h1,ans,ans1;
    lc:=Collected(l);
    if ValueOption("carat")=true or ValueOption("Carat")=true then
        lcg:=List(lc,x->
          [CaratMatGroupZClass(x[1][1],x[1][2],x[1][3]),x[2]]);
    else
        lcg:=List(lc,x->
          [MatGroupZClass(x[1][1],x[1][2],x[1][3],x[1][4]),x[2]]);
    fi;
    lg:=Concatenation(List(lcg,x->List([1..x[2]],y->x[1])));
    ll:=Concatenation(List(lc,x->List([1..x[2]],y->x[1][1])));
    G:=DirectProductMatrixGroup(lg);
    gn:=[];
    for i in [1..Length(lc)] do
        if lc[i][1][1]=1 then
            gn1:=[[[-1]]];
        else
            gn1:=GeneratorsOfGroup(Normalizer(GL(lc[i][1][1],Integers),
              lcg[i][1]));
        fi;
        gn2:=[];
        for j in [1..lc[i][2]] do
            o:=List([1..lc[i][2]],x->IdentityMat(lc[i][1][1]));
            for k in gn1 do
                o[j]:=k;
                Add(gn2,DirectSumMat(o));
            od;
        od;
        gn1:=GeneratorsOfGroup(SymmetricGroup(lc[i][2]));
        gn2:=Concatenation(gn2,List(gn1,
          x->KroneckerProduct(PermutationMat(x,lc[i][2]),
          IdentityMat(lc[i][1][1]))));
        Add(gn,Group(gn2));
    od;
    N:=DirectProductMatrixGroup(gn);
    ans1:=List(ConjugacyClassesSubgroups2(G),Representative);
    ans1:=Filtered(ans1,x->
      ForAll([1..Length(lg)],y->PartialMatrixGroup(x,
      [Sum([1..y-1],z->ll[z])+1..Sum([1..y],z->ll[z])])=lg[y]));
    ans:=[];
    gN:=GeneratorsOfGroup(N);
    while ans1<>[] do
        h:=[];
        h1:=[ans1[1]];
        while h1<>[] do
            h:=Union(h,h1);
            h1:=Difference(Concatenation(List(h1,x->List(gN,y->x^y))),h);
        od;
        Add(ans,ans1[1]);
        ans1:=Difference(ans1,h);
    od;
    return ans;
end;
\end{verbatim}
\end{algorithmKS2}

\bigskip

\begin{example}[{{\tt InverseProjection([l1,l2])}}] 
Let $G_1\simeq C_4\leq \GL(2,\bZ)$ (resp. $G_2\simeq D_6\leq \GL(2,\bZ)$) 
be a cyclic group of order $4$ 
(resp. dihedral group of order $12$) of the GAP ID $l_1=(2,3,1,1)$ 
(resp. $l_2=(2,4,4,1)$). \\

\noindent
{\tt InverseProjection([l1,l2])} returns the list of all groups 
$G\leq \GL(4,\bZ)$ such that $M_G\simeq M_{G_1}\oplus M_{G_2}$ and 
the GAP ID of $G_1$ (resp. $G_2$) is $l_1$ (resp. $l_2$). \\
{\tt InverseProjection([l1,l2]:Carat)} returns the same as 
{\tt InverseProjection([l1,l2])} but with respect to the CARAT ID 
$l_1$ and $l_2$ instead of the GAP ID.\\

\begin{verbatim}
Read("crystcat.gap");
Read("caratnumber.gap");
Read("KS.gap");

gap> pgs:=InverseProjection([[2,3,1,1],[2,4,4,1]]);
[ <matrix group of size 24 with 4 generators>, 
  <matrix group of size 48 with 5 generators>, 
  <matrix group of size 24 with 4 generators>, 
  <matrix group of size 24 with 4 generators> ]
gap> List(pgs,StructureDescription);
[ "C2 x (C3 : C4)", "C2 x C4 x S3", "C4 x S3", "C4 x S3" ]
gap> List(pgs,CrystCatZClass);
[ [ 4, 20, 14, 1 ], [ 4, 20, 15, 1 ], [ 4, 20, 9, 2 ], [ 4, 20, 9, 1 ] ]
\end{verbatim}
\end{example}

\bigskip

\begin{example}[{Verification of $[M_G]^{fl}\neq 0$ and that $[M_G]^{fl}$ is not invertible 
where $M_G\simeq M_{G_1}\oplus M_{G_2}$ is a decomposable $G$-lattice}]\label{exN}
%%%%
By using {\tt InverseProjection} in Algorithm KS2, 
we may obtain Table $3$ of Theorems \ref{th1} and \ref{th1M} and 
Tables $11$ to $14$ in Theorems \ref{th2} and \ref{th2M}. 

Let $G\leq \GL(4,\bZ)$ where $M_G\simeq M_{G_1}\oplus M_{G_2}$ is a decomposable 
$G$-lattice of rank $4$ with $G_1\leq\GL(3,\bZ)$ and $G_2\leq \GL(1,\bZ)$. 
Let $\mathcal{N}_3$ be the set of the $15$ GAP IDs as in Table $1$. 
By Lemma \ref{lemp1}, we have $[M_G]^{fl}=[M_1]^{fl}$. 
Hence, by Theorem \ref{thKu} and Theorem \ref{thEM}, 
$L(M_G)^G$ is not retract $k$-rational 
if and only if $L(M_G)^G$ is not stably $k$-rational 
if and only if the GAP ID of $G_1$ is $l\in\mathcal{N}_3$. 

All the GAP IDs $\mathcal{N}_{31}$, as in Table $3$ of Theorem \ref{th1}, 
of such groups $G$ which satisfy 
$M_G\simeq M_{G_1}\oplus M_{G_2}$, $G_1\leq\GL(3,\bZ)$ of the GAP ID 
$l \in\mathcal{N}_3$ and $G_2\leq \GL(1,\bZ)$ may be obtained as follows. 

Similarly, by using the result $\mathcal{I}_4$ and $\mathcal{N}_4$ as 
in Tables $2$ and $4$ of Theorems \ref{th1} and \ref{th1M}, 
we may obtain the CARAT IDs 
$\mathcal{I}_{41}$, $\mathcal{N}_{311}$, $\mathcal{N}_{32}$ and 
$\mathcal{N}_{41}$ as in Tables $11$ to $14$ of 
Theorems \ref{th2} and \ref{th2M}.

\bigskip

\begin{verbatim}
Read("crystcat.gap");
Read("caratnumber.gap");
Read("KS.gap");

gap> ind1:=LatticeDecompositions(1:Carat)[NrPartitions(1)];
[ [ 1, 1, 1 ], [ 1, 2, 1 ] ]
gap> ind2:=LatticeDecompositions(2:Carat)[NrPartitions(2)];
[ [ 2, 3, 2 ], [ 2, 4, 2 ], [ 2, 5, 1 ], [ 2, 6, 1 ], [ 2, 7, 1 ], 
  [ 2, 8, 1 ], [ 2, 9, 1 ], [ 2, 10, 1 ], [ 2, 10, 2 ] ]
gap> ind3:=LatticeDecompositions(3:Carat)[NrPartitions(3)];
[ [ 3, 6, 3 ], [ 3, 6, 4 ], [ 3, 7, 4 ], [ 3, 7, 5 ], [ 3, 8, 3 ], [ 3, 8, 4 ], 
  [ 3, 9, 2 ], [ 3, 10, 2 ], [ 3, 11, 2 ], [ 3, 12, 2 ], [ 3, 13, 2 ], [ 3, 14, 2 ], 
  [ 3, 14, 4 ], [ 3, 15, 2 ], [ 3, 17, 2 ], [ 3, 22, 2 ], [ 3, 25, 2 ], [ 3, 26, 2 ], 
  [ 3, 27, 2 ], [ 3, 28, 1 ], [ 3, 28, 2 ], [ 3, 28, 3 ], [ 3, 29, 1 ], [ 3, 29, 2 ], 
  [ 3, 29, 3 ], [ 3, 30, 1 ], [ 3, 30, 2 ], [ 3, 30, 3 ], [ 3, 31, 1 ], [ 3, 31, 2 ], 
  [ 3, 31, 3 ], [ 3, 32, 1 ], [ 3, 32, 2 ], [ 3, 32, 3 ] ]
gap> ind4:=LatticeDecompositions(4:Carat)[NrPartitions(4)];;
gap> ind5:=LatticeDecompositions(5:Carat)[NrPartitions(5)];;
gap> List([ind1,ind2,ind3,ind4,ind5],Length);
[ 2, 9, 34, 295, 1452 ]

gap> N3:=[ [ 3, 3, 1, 3 ], [ 3, 3, 3, 3 ], [ 3, 3, 3, 4 ], [ 3, 4, 3, 2 ], [ 3, 4, 4, 2 ], 
>  [ 3, 4, 6, 3 ], [ 3, 4, 7, 2 ], [ 3, 7, 1, 2 ], [ 3, 7, 2, 2 ], [ 3, 7, 2, 3 ], 
>  [ 3, 7, 3, 2 ], [ 3, 7, 3, 3 ], [ 3, 7, 4, 2 ], [ 3, 7, 5, 2 ], [ 3, 7, 5, 3 ] ];;
gap> Length(N3);
15
gap> N3c:=List(N3,CrystCat2Carat);
[ [ 3, 6, 3 ], [ 3, 8, 3 ], [ 3, 8, 4 ], [ 3, 15, 2 ], [ 3, 13, 2 ], 
  [ 3, 14, 4 ], [ 3, 9, 2 ], [ 3, 28, 1 ], [ 3, 29, 1 ], [ 3, 29, 3 ], 
  [ 3, 31, 1 ], [ 3, 31, 3 ], [ 3, 30, 1 ], [ 3, 32, 1 ], [ 3, 32, 3 ] ]
gap> N31g:=List(Cartesian([N3c,ind1]),x->InverseProjection(x:Carat));;
gap> N31:=Set(Concatenation(N31g),CrystCatZClass);
[ [ 4, 4, 3, 6 ], [ 4, 4, 4, 4 ], [ 4, 4, 4, 6 ], [ 4, 5, 1, 9 ], [ 4, 5, 2, 4 ], 
  [ 4, 5, 2, 7 ], [ 4, 6, 1, 4 ], [ 4, 6, 1, 8 ], [ 4, 6, 2, 4 ], [ 4, 6, 2, 8 ], 
  [ 4, 6, 2, 9 ], [ 4, 6, 3, 3 ], [ 4, 6, 3, 6 ], [ 4, 7, 3, 2 ], [ 4, 7, 4, 3 ], 
  [ 4, 7, 5, 2 ], [ 4, 7, 7, 2 ], [ 4, 12, 2, 4 ], [ 4, 12, 3, 7 ], [ 4, 12, 4, 6 ], 
  [ 4, 12, 4, 8 ], [ 4, 12, 4, 9 ], [ 4, 12, 5, 6 ], [ 4, 12, 5, 7 ], [ 4, 13, 1, 3 ], 
  [ 4, 13, 2, 4 ], [ 4, 13, 3, 4 ], [ 4, 13, 4, 3 ], [ 4, 13, 5, 3 ], [ 4, 13, 6, 3 ], 
  [ 4, 13, 7, 6 ], [ 4, 13, 7, 7 ], [ 4, 13, 7, 8 ], [ 4, 13, 8, 3 ], [ 4, 13, 8, 4 ], 
  [ 4, 13, 9, 3 ], [ 4, 13, 10, 3 ], [ 4, 24, 1, 5 ], [ 4, 24, 2, 3 ], [ 4, 24, 2, 5 ], 
  [ 4, 24, 3, 5 ], [ 4, 24, 4, 3 ], [ 4, 24, 4, 5 ], [ 4, 24, 5, 3 ], [ 4, 24, 5, 5 ], 
  [ 4, 25, 1, 2 ], [ 4, 25, 1, 4 ], [ 4, 25, 2, 4 ], [ 4, 25, 3, 2 ], [ 4, 25, 3, 4 ], 
  [ 4, 25, 4, 4 ], [ 4, 25, 5, 2 ], [ 4, 25, 5, 4 ], [ 4, 25, 6, 2 ], [ 4, 25, 6, 4 ], 
  [ 4, 25, 7, 2 ], [ 4, 25, 7, 4 ], [ 4, 25, 8, 2 ], [ 4, 25, 8, 4 ], [ 4, 25, 9, 4 ], 
  [ 4, 25, 10, 2 ], [ 4, 25, 10, 4 ], [ 4, 25, 11, 2 ], [ 4, 25, 11, 4 ] ]
gap> Length(N31);
64

gap> I4:=[ [ 4, 31, 1, 3 ], [ 4, 31, 1, 4 ], [ 4, 31, 2, 2 ], [ 4, 31, 4, 2 ], 
>  [ 4, 31, 5, 2 ],  [ 4, 31, 7, 2 ], [ 4, 33, 2, 1 ] ];;
gap> Length(I4);
7
gap> N4:=[ [ 4, 5, 1, 12 ], [ 4, 5, 2, 5 ], [ 4, 5, 2, 8 ], [ 4, 5, 2, 9 ], [ 4, 6, 1, 6 ], 
>  [ 4, 6, 1, 11 ], [ 4, 6, 2, 6 ], [ 4, 6, 2, 10 ], [ 4, 6, 2, 12 ], [ 4, 6, 3, 4 ], 
>  [ 4, 6, 3, 7 ], [ 4, 6, 3, 8 ], [ 4, 12, 2, 5 ], [ 4, 12, 2, 6 ], [ 4, 12, 3, 11 ], 
>  [ 4, 12, 4, 10 ], [ 4, 12, 4, 11 ], [ 4, 12, 4, 12 ], [ 4, 12, 5, 8 ], [ 4, 12, 5, 9 ], 
>  [ 4, 12, 5, 10 ], [ 4, 12, 5, 11 ], [ 4, 13, 1, 5 ], [ 4, 13, 2, 5 ], [ 4, 13, 3, 5 ], 
>  [ 4, 13, 4, 5 ], [ 4, 13, 5, 4 ], [ 4, 13, 5, 5 ], [ 4, 13, 6, 5 ], [ 4, 13, 7, 9 ], 
>  [ 4, 13, 7, 10 ], [ 4, 13, 7, 11 ], [ 4, 13, 8, 5 ], [ 4, 13, 8, 6 ], [ 4, 13, 9, 4 ], 
>  [ 4, 13, 9, 5 ], [ 4, 13, 10, 4 ], [ 4, 13, 10, 5 ], [ 4, 18, 1, 3 ], [ 4, 18, 2, 4 ], 
>  [ 4, 18, 2, 5 ], [ 4, 18, 3, 5 ], [ 4, 18, 3, 6 ], [ 4, 18, 3, 7 ], [ 4, 18, 4, 4 ], 
>  [ 4, 18, 4, 5 ], [ 4, 18, 5, 5 ], [ 4, 18, 5, 6 ], [ 4, 18, 5, 7 ], [ 4, 19, 1, 2 ], 
>  [ 4, 19, 2, 2 ], [ 4, 19, 3, 2 ], [ 4, 19, 4, 3 ], [ 4, 19, 4, 4 ], [ 4, 19, 5, 2 ], 
>  [ 4, 19, 6, 2 ], [ 4, 22, 1, 1 ], [ 4, 22, 2, 1 ], [ 4, 22, 3, 1 ], [ 4, 22, 4, 1 ], 
>  [ 4, 22, 5, 1 ], [ 4, 22, 5, 2 ], [ 4, 22, 6, 1 ], [ 4, 22, 7, 1 ], [ 4, 22, 8, 1 ], 
>  [ 4, 22, 9, 1 ], [ 4, 22, 10, 1 ], [ 4, 22, 11, 1 ], [ 4, 24, 2, 4 ], [ 4, 24, 2, 6 ], 
>  [ 4, 24, 4, 4 ], [ 4, 24, 5, 4 ], [ 4, 24, 5, 6 ], [ 4, 25, 1, 3 ], [ 4, 25, 2, 3 ], 
>  [ 4, 25, 2, 5 ], [ 4, 25, 3, 3 ], [ 4, 25, 4, 3 ], [ 4, 25, 5, 3 ], [ 4, 25, 5, 5 ], 
>  [ 4, 25, 6, 3 ], [ 4, 25, 6, 5 ], [ 4, 25, 7, 3 ], [ 4, 25, 8, 3 ], [ 4, 25, 9, 3 ], 
>  [ 4, 25, 9, 5 ], [ 4, 25, 10, 3 ], [ 4, 25, 10, 5 ], [ 4, 25, 11, 3 ], [ 4, 25, 11, 5 ], 
>  [ 4, 29, 1, 1 ], [ 4, 29, 1, 2 ], [ 4, 29, 2, 1 ], [ 4, 29, 3, 1 ], [ 4, 29, 3, 2 ], 
>  [ 4, 29, 3, 3 ], [ 4, 29, 4, 1 ], [ 4, 29, 4, 2 ], [ 4, 29, 5, 1 ], [ 4, 29, 6, 1 ], 
>  [ 4, 29, 7, 1 ], [ 4, 29, 7, 2 ], [ 4, 29, 8, 1 ], [ 4, 29, 8, 2 ], [ 4, 29, 9, 1 ], 
>  [ 4, 32, 1, 2 ], [ 4, 32, 2, 2 ], [ 4, 32, 3, 2 ], [ 4, 32, 4, 2 ], [ 4, 32, 5, 2 ], 
>  [ 4, 32, 5, 3 ], [ 4, 32, 6, 2 ], [ 4, 32, 7, 2 ], [ 4, 32, 8, 2 ], [ 4, 32, 9, 4 ], 
>  [ 4, 32, 9, 5 ], [ 4, 32, 10, 2 ], [ 4, 32, 11, 2 ], [ 4, 32, 11, 3 ], [ 4, 32, 12, 2 ], 
>  [ 4, 32, 13, 3 ], [ 4, 32, 13, 4 ], [ 4, 32, 14, 3 ], [ 4, 32, 14, 4 ], [ 4, 32, 15, 2 ], 
>  [ 4, 32, 16, 2 ], [ 4, 32, 16, 3 ], [ 4, 32, 17, 2 ], [ 4, 32, 18, 2 ], [ 4, 32, 18, 3 ], 
>  [ 4, 32, 19, 2 ], [ 4, 32, 19, 3 ], [ 4, 32, 20, 2 ], [ 4, 32, 20, 3 ], [ 4, 32, 21, 2 ], 
>  [ 4, 32, 21, 3 ], [ 4, 33, 1, 1 ], [ 4, 33, 3, 1 ], [ 4, 33, 4, 1 ], [ 4, 33, 5, 1 ], 
>  [ 4, 33, 6, 1 ], [ 4, 33, 7, 1 ], [ 4, 33, 8, 1 ], [ 4, 33, 9, 1 ], [ 4, 33, 10, 1 ], 
>  [ 4, 33, 11, 1 ], [ 4, 33, 12, 1 ], [ 4, 33, 13, 1 ], [ 4, 33, 14, 1 ], [ 4, 33, 14, 2 ], 
>  [ 4, 33, 15, 1 ], [ 4, 33, 16, 1 ] ];;
gap> Length(N4);
152

gap> I4c:=List(I4,x->CaratZClass(MatGroupZClass(x[1],x[2],x[3],x[4])));;
gap> I41g:=List(Cartesian([I4c,ind1]),x->InverseProjection(x:Carat));;
gap> I41:=Set(Concatenation(I41g),CaratZClass);;                   
[ [ 5, 692, 1 ], [ 5, 693, 1 ], [ 5, 736, 1 ], [ 5, 911, 1 ], [ 5, 912, 1 ], 
  [ 5, 914, 1 ], [ 5, 916, 1 ], [ 5, 917, 1 ], [ 5, 917, 5 ], [ 5, 918, 1 ], 
  [ 5, 918, 5 ], [ 5, 919, 1 ], [ 5, 921, 1 ], [ 5, 922, 1 ], [ 5, 923, 1 ], 
  [ 5, 924, 1 ], [ 5, 925, 1 ], [ 5, 926, 1 ], [ 5, 926, 3 ], [ 5, 927, 1 ], 
  [ 5, 928, 1 ], [ 5, 928, 3 ], [ 5, 929, 1 ], [ 5, 930, 1 ], [ 5, 932, 1 ] ]
gap> Length(I41);
25
gap> N311g:=List(Cartesian([UnorderedTuples(N3c,1),UnorderedTuples(ind1,2)]),
> x->InverseProjection(Concatenation(x):Carat));;
gap> N311:=Set(Concatenation(N311g),CaratZClass);;
gap> Length(N311);
245
gap> N32g:=List(Cartesian([N3c,ind2]),x->InverseProjection(x:Carat));;
gap> N32:=Set(Concatenation(N32g),CaratZClass);;
gap> Length(N32);
849
gap> N4c:=List(N4,x->CaratZClass(MatGroupZClass(x[1],x[2],x[3],x[4])));;
gap> N41g:=List(Cartesian([N4c,ind1]),x->InverseProjection(x:Carat));;
gap> N41:=Set(Concatenation(N41g),CaratZClass);;
Length(N41);
768
\end{verbatim}
\end{example}

\bigskip

\begin{example}[Verification of the condition (KS2)]\label{exKS2}
By using Algorithm KS2, we may check that the condition (KS2) holds 
for $n\leq 5$. 
For $n=6$, there exist exactly $10$ (resp. $8$) 
$G$-lattices of decomposition type $(4,2)$ (resp. $(5,1)$) 
which are decomposable in two different ways 
$M_G\simeq M_1\oplus M_2\simeq N_1\oplus N_2$ as follows:
\vspace*{2mm} 

\begin{center}
{\small 
\noindent
\begin{tabular}{ll|ll|ll|ll|ll} 
\multicolumn{2}{l|}{$M_G\simeq M_1\oplus M_2\simeq N_1\oplus N_2$} & 
\multicolumn{2}{l|}{$M_1$} & 
\multicolumn{2}{l|}{$M_2$} & 
\multicolumn{2}{l|}{$N_1$} & 
\multicolumn{2}{l}{$N_2$}\\\hline
$(6,2072,14)$ & $C_2\times C_6$ & 
$(2,2,1,2)$ & $C_2$ & $(4,14,4,2)$ & $C_2\times C_6$ & 
$(2,2,2,2)$ & $C_2^2$ & $(4,14,1,2)$ & $C_6$\\
$(6,2076,28)$ & $D_6$ & 
$(2,2,1,2)$ & $C_2$ & $(4,14,9,2)$ & $D_6$ & 
$(2,2,2,2)$ & $C_2^2$ & $(4,14,3,4)$ & $S_3$\\
$(6,2077,28)$ & $D_6$ & 
$(2,2,2,2)$ & $C_2^2$ & $(4,14,5,2)$ & $D_6$ & 
$(2,2,2,2)$ & $C_2^2$ & $(4,14,7,2)$ & $D_6$\\
$(6,2078,28)$ & $D_6$ & 
$(2,2,1,2)$ & $C_2$ & $(4,14,5,2)$ & $D_6$ & 
$(2,2,2,2)$ & $C_2^2$ & $(4,14,6,2)$ & $D_6$\\
$(6,2079,28)$ & $D_6$ & 
$(2,2,1,2)$ & $C_2$ & $(4,14,8,2)$ & $D_6$ & 
$(2,2,1,2)$ & $C_2$ & $(4,14,3,3)$ & $S_3$\\
%%%%%%%%%%%%%%%%%%%%%%%%%%%%%%%%%%%%%%%%%%%%%%%%%%%%
$(6,2089,21)$ & $D_6$ & 
$(2,2,1,2)$ & $C_2$ & $(4,14,7,2)$ & $D_6$ & 
$(2,2,2,2)$ & $C_2^2$ & $(4,14,6,2)$ & $D_6$\\
$(6,2102,28)$ & $C_2\times D_6$ & 
$(2,2,2,2)$ & $C_2^2$ & $(4,14,5,2)$ & $D_6$ & 
$(2,2,2,2)$ & $C_2^2$ & $(4,14,10,2)$ & $C_2\times D_6$\\
$(6,2107,14)$ & $C_2\times D_6$ & 
$(2,2,2,2)$ & $C_2^2$ & $(4,14,6,2)$ & $D_6$ & 
$(2,2,1,2)$ & $C_2$ & $(4,14,10,2)$ & $C_2\times D_6$\\
$(6,2110,21)$ & $C_2\times D_6$ & 
$(2,2,2,2)$ & $C_2^2$ & $(4,14,7,2)$ & $D_6$ & 
$(2,2,2,2)$ & $C_2^2$ & $(4,14,10,2)$ & $C_2\times D_6$\\
$(6,2295,2)$ & $D_6$ & 
$(2,4,2,2)$ & $S_3$ & $(4,21,3,1)$ & $D_6$ & 
$(2,4,2,1)$ & $S_3$ & $(4,21,3,2)$ & $D_6$
\end{tabular}
}
\end{center}

\bigskip

\begin{center}
{\small
\begin{tabular}{ll|ll|ll|ll|ll} 
\multicolumn{2}{l|}{$M_G\simeq M_1\oplus M_2\simeq N_1\oplus N_2$} & 
\multicolumn{2}{l|}{$M_1$} & 
\multicolumn{2}{l|}{$M_2$} & 
\multicolumn{2}{l|}{$N_1$} & 
\multicolumn{2}{l}{$N_2$}\\\hline
$(6,3045,3)$ & $C_2\times A_5$ & 
$(1,2,1)$ & $C_2$ & $(5,910,3)$ & $C_2\times A_5$ & 
$(1,2,1)$ & $C_2$ & $(5,910,4)$ & $C_2\times A_5$\\
$(6,3046,3)$ & $S_5$ & 
$(1,1,1)$ & $\{1\}$ & $(5,911,3)$ & $S_5$ & 
$(1,1,1)$ & $\{1\}$ & $(5,911,4)$ & $S_5$\\
$(6,3047,3)$ & $S_5$ & 
$(1,2,1)$ & $C_2$ & $(5,912,3)$ & $S_5$ & 
$(1,2,1)$ & $C_2$ & $(5,912,4)$ & $S_5$\\
$(6,3052,5)$ & $F_{20}$ & 
$(1,2,1)$ & $C_2$ & $(5,917,3)$ & $F_{20}$ & 
$(1,2,1)$ & $C_2$ & $(5,917,4)$ & $F_{20}$\\
$(6,3053,5)$ & $F_{20}$ & 
$(1,1,1)$ & $\{1\}$ & $(5,918,3)$ & $F_{20}$ & 
$(1,1,1)$ & $\{1\}$ & $(5,918,4)$ & $F_{20}$\\
$(6,3054,3)$ & $C_2\times S_5$ & 
$(1,2,1)$ & $C_2$ & $(5,919,3)$ & $C_2\times S_5$ & 
$(1,2,1)$ & $C_2$ & $(5,919,4)$ & $C_2\times S_5$\\
$(6,3061,5)$ & $C_2\times F_{20}$ & 
$(1,2,1)$ & $C_2$ & $(5,926,5)$ & $C_2\times F_{20}$ & 
$(1,2,1)$ & $C_2$ & $(5,926,6)$ & $C_2\times F_{20}$\\
$(6,3066,3)$ & $A_5$ & 
$(1,1,1)$ & $\{1\}$ & $(5,931,3)$ & $A_5$ & 
$(1,1,1)$ & $\{1\}$ & $(5,931,4)$ & $A_5$
\end{tabular}
}
\end{center}

\bigskip

In particular, we see that the $3$ 
$G$-lattices $M_G$ for the groups 
$S_5$, $F_{20}$ and $A_5$ of the CARAT IDs 
$(5,911,4)$, $(5,918,4)$ and $(5,931,4)$ respectively 
are not permutation but stably permutation 
because the $G$-lattices $M_G$ for the groups 
$S_5$, $F_{20}$ and $A_5$ of the CARAT IDs 
$(5,911,3)$, $(5,918,3)$ and $(5,931,3)$ are permutation. 
We will study this $3$ cases in 
Section \ref{seFC} again (see Theorem \ref{thfac} (iii), Table $8$ 
and Example \ref{exMI2}). 

\bigskip

\begin{verbatim}
Read("crystcat.gap");
Read("caratnumber.gap");
Read("KS.gap");

gap> ind1:=LatticeDecompositions(1:Carat)[NrPartitions(1)];;
gap> ind2:=LatticeDecompositions(2:Carat)[NrPartitions(2)];;
gap> ind3:=LatticeDecompositions(3:Carat)[NrPartitions(3)];;
gap> ind4:=LatticeDecompositions(4:Carat)[NrPartitions(4)];;
gap> ind5:=LatticeDecompositions(5:Carat)[NrPartitions(5)];;
gap> List([ind1,ind2,ind3,ind4,ind5],Length);
[ 2, 9, 34, 295, 1452 ]

gap> ips21:=List(Cartesian([ind2,ind1]),x->InverseProjection(x:Carat));;
gap> List([Flat(last),Set(List(Flat(last),CaratZClass))],Length);
[ 31, 31 ]

gap> ips211:=List(Cartesian([UnorderedTuples(ind2,1),UnorderedTuples(ind1,2)]),
> x->InverseProjection(Concatenation(x):Carat));;
gap> List([Flat(last),Set(List(Flat(last),CaratZClass))],Length);
[ 96, 96 ]
gap> ips22:=List(UnorderedTuples(ind2,2),x->InverseProjection(x:Carat));;
gap> List([Flat(last),Set(List(Flat(last),CaratZClass))],Length);
[ 175, 175 ]
gap> ips31:=List(Cartesian([ind3,ind1]),x->InverseProjection(x:Carat));;
gap> List([Flat(last),Set(List(Flat(last),CaratZClass))],Length);
[ 128, 128 ]

gap> ips2111:=List(Cartesian([UnorderedTuples(ind2,1),UnorderedTuples(ind1,3)]),
> x->InverseProjection(Concatenation(x):Carat));;
gap> List([Flat(last),Set(List(Flat(last),CaratZClass))],Length);
[ 280, 280 ]
gap> ips221:=List(Cartesian([UnorderedTuples(ind2,2),UnorderedTuples(ind1,1)]),
> x->InverseProjection(Concatenation(x):Carat));;
gap> List([Flat(last),Set(List(Flat(last),CaratZClass))],Length);
[ 1004, 1004 ]
gap> ips311:=List(Cartesian([UnorderedTuples(ind3,1),UnorderedTuples(ind1,2)]),
> x->InverseProjection(Concatenation(x):Carat));;
gap> List([Flat(last),Set(List(Flat(last),CaratZClass))],Length);
[ 442, 442 ]
gap> ips32:=List(Cartesian([ind3,ind2]),x->InverseProjection(x:Carat));;
gap> List([Flat(last),Set(List(Flat(last),CaratZClass))],Length);
[ 1480, 1480 ]
gap> ips41:=List(Cartesian([ind4,ind1]),x->InverseProjection(x:Carat));;
gap> List([Flat(last),Set(List(Flat(last),CaratZClass))],Length);
[ 1400, 1400 ]

gap> ips21111:= List(Cartesian([UnorderedTuples(ind2,1),UnorderedTuples(ind1,4)]),
> x->InverseProjection(Concatenation(x):Carat));;
gap> List([Flat(last),Set(List(Flat(last),CaratZClass))],Length);
[ 824, 824 ]
gap> ips2211:= List(Cartesian([UnorderedTuples(ind2,2),UnorderedTuples(ind1,2)]),
> x->InverseProjection(Concatenation(x):Carat));;
gap> List([Flat(last),Set(List(Flat(last),CaratZClass))],Length);
[ 4862, 4862 ]
gap> ips222:= List(UnorderedTuples(ind2,3),x->InverseProjection(x:Carat));;
gap> List([Flat(last),Set(List(Flat(last),CaratZClass))],Length);
[ 6878, 6878 ]
gap> ips3111:= List(Cartesian([UnorderedTuples(ind3,1),UnorderedTuples(ind1,3)]),
> x->InverseProjection(Concatenation(x):Carat));;
gap> List([Flat(last),Set(List(Flat(last),CaratZClass))],Length);
[ 1466, 1466 ]
gap> ips321:= List(Cartesian([ind3,ind2,ind1]),x->InverseProjection(x:Carat));;
gap> List([Flat(last),Set(List(Flat(last),CaratZClass))],Length);
[ 10662, 10662 ]
gap> ips33:= List(UnorderedTuples(ind3,2),x->InverseProjection(x:Carat));;
gap> List([Flat(last),Set(List(Flat(last),CaratZClass))],Length);
[ 4235, 4235 ]
gap> ips411:= List(Cartesian([UnorderedTuples(ind4,1),UnorderedTuples(ind1,2)]),
> x->InverseProjection(Concatenation(x):Carat));;
gap> List([Flat(last),Set(List(Flat(last),CaratZClass))],Length);
[ 5944, 5944 ]
gap> ips42:= List(Cartesian([ind4,ind2]),x->InverseProjection(x:Carat));;
gap> List([Flat(last),Set(List(Flat(last),CaratZClass))],Length);
[ 21583, 21573 ]
gap> last[1]-last[2];
10
gap> ips51:= List(Cartesian([ind5,ind1]),x->InverseProjection(x:Carat));;
gap> List([Flat(last),Set(List(Flat(last),CaratZClass))],Length);
[ 9939, 9931 ]
gap> last[1]-last[2];
8

gap> Filtered(Collected(List(Flat(ips42),CaratZClass)),x->x[2]>1); # type [4,2]
[ [ [ 6, 2072, 14 ], 2 ], [ [ 6, 2076, 28 ], 2 ], [ [ 6, 2077, 28 ], 2 ], 
  [ [ 6, 2078, 28 ], 2 ], [ [ 6, 2079, 28 ], 2 ], [ [ 6, 2089, 21 ], 2 ], 
  [ [ 6, 2102, 28 ], 2 ], [ [ 6, 2107, 14 ], 2 ], [ [ 6, 2110, 21 ], 2 ], 
  [ [ 6, 2295, 2 ], 2 ] ]
gap> KSfail42:= List(last,x->x[1]);
[ [ 6, 2072, 14 ], [ 6, 2076, 28 ], [ 6, 2077, 28 ], [ 6, 2078, 28 ], [ 6, 2079, 28 ], 
  [ 6, 2089, 21 ], [ 6, 2102, 28 ], [ 6, 2107, 14 ], [ 6, 2110, 21 ], [ 6, 2295, 2 ] ]
gap> Length(KSfail42);
10
gap> KSfail42g:=List(KSfail42,x->CaratMatGroupZClass(x[1],x[2],x[3]));;
gap> List(KSfail42g,Order);
[ 12, 12, 12, 12, 12, 12, 24, 24, 24, 12 ]
gap> List(KSfail42g,StructureDescription);
[ "C6 x C2", "D12", "D12", "D12", "D12", "D12", "C2 x C2 x S3", "C2 x C2 x S3", 
  "C2 x C2 x S3", "D12" ]
gap> [last[7],last[8],last[9],last[10]]; # 7th, 8th, 9th, 10th groups are maximal
[ "C2 x C2 x S3", "C2 x C2 x S3", "C2 x C2 x S3", "D12" ]
gap> KSfail42gsub1:=Set(ConjugacyClassesSubgroups2(KSfail42g[7]),
> x->CaratZClass(Representative(x)));;
gap> KSfail42gsub2:=Set(ConjugacyClassesSubgroups2(KSfail42g[8]),
> x->CaratZClass(Representative(x)));;
gap> KSfail42gsub3:=Set(ConjugacyClassesSubgroups2(KSfail42g[9]),
> x->CaratZClass(Representative(x)));;
gap> List(KSfail42,x->x in KSfail42gsub1); 
[ false, true, true, true, true, false, true, false, false, false ]
gap> List(KSfail42,x->x in KSfail42gsub2);
[ true, true, false, true, false, true, false, true, false, false ]
gap> List(KSfail42,x->x in KSfail42gsub3);
[ true, false, true, false, true, true, false, false, true, false ]
gap> ips42f:=Flat(ips42);;
gap> ips42fc:=List(ips42f,CaratZClass);;
gap> KSfail42i:=List(KSfail42,x->Filtered([1..21583],y->ips42fc[y]=x));
[ [ 9822, 10444 ], [ 10162, 10496 ], [ 9901, 9960 ], [ 9890, 10028 ], [ 10095, 10518 ], 
  [ 9956, 10033 ], [ 9895, 10257 ], [ 10030, 10230 ], [ 9963, 10248 ], [ 10647, 10720 ] ]
gap> KSfail42gg:=List(KSfail42i,x->[ips42f[x[1]],ips42f[x[2]]]);;
gap> List(KSfail42gg,x->List(x,y->List([[1,2],[3..6]],
> z->CrystCatZClass(PartialMatrixGroup(y,z)))));
[ [ [ [ 2, 2, 1, 2 ], [ 4, 14, 4, 2 ] ], [ [ 2, 2, 2, 2 ], [ 4, 14, 1, 2 ] ] ], 
  [ [ [ 2, 2, 1, 2 ], [ 4, 14, 9, 2 ] ], [ [ 2, 2, 2, 2 ], [ 4, 14, 3, 4 ] ] ], 
  [ [ [ 2, 2, 2, 2 ], [ 4, 14, 5, 2 ] ], [ [ 2, 2, 2, 2 ], [ 4, 14, 7, 2 ] ] ], 
  [ [ [ 2, 2, 1, 2 ], [ 4, 14, 5, 2 ] ], [ [ 2, 2, 2, 2 ], [ 4, 14, 6, 2 ] ] ], 
  [ [ [ 2, 2, 1, 2 ], [ 4, 14, 8, 2 ] ], [ [ 2, 2, 1, 2 ], [ 4, 14, 3, 3 ] ] ], 
  [ [ [ 2, 2, 1, 2 ], [ 4, 14, 7, 2 ] ], [ [ 2, 2, 2, 2 ], [ 4, 14, 6, 2 ] ] ], 
  [ [ [ 2, 2, 2, 2 ], [ 4, 14, 5, 2 ] ], [ [ 2, 2, 2, 2 ], [ 4, 14, 10, 2 ] ] ], 
  [ [ [ 2, 2, 2, 2 ], [ 4, 14, 6, 2 ] ], [ [ 2, 2, 1, 2 ], [ 4, 14, 10, 2 ] ] ], 
  [ [ [ 2, 2, 2, 2 ], [ 4, 14, 7, 2 ] ], [ [ 2, 2, 2, 2 ], [ 4, 14, 10, 2 ] ] ], 
  [ [ [ 2, 4, 2, 2 ], [ 4, 21, 3, 1 ] ], [ [ 2, 4, 2, 1 ], [ 4, 21, 3, 2 ] ] ] ]
gap> List(last,y->List([y[1][1],y[1][2],y[2][1],y[2][2]],
> x->StructureDescription(MatGroupZClass(x[1],x[2],x[3],x[4]))));
[ [ "C2", "C6 x C2", "C2 x C2", "C6" ], 
  [ "C2", "D12", "C2 x C2", "S3" ], 
  [ "C2 x C2", "D12", "C2 x C2", "D12" ], 
  [ "C2", "D12", "C2 x C2", "D12" ], 
  [ "C2", "D12", "C2", "S3" ], 
  [ "C2", "D12", "C2 x C2", "D12" ], 
  [ "C2 x C2", "D12", "C2 x C2", "C2 x C2 x S3" ], 
  [ "C2 x C2", "D12", "C2", "C2 x C2 x S3" ], 
  [ "C2 x C2", "D12", "C2 x C2", "C2 x C2 x S3" ], 
  [ "S3", "D12", "S3", "D12" ] ]

gap> Filtered(Collected(List(Flat(ips51),CaratZClass)),x->x[2]>1); # type [5,1]
[ [ [ 6, 3045, 3 ], 2 ], [ [ 6, 3046, 3 ], 2 ], [ [ 6, 3047, 3 ], 2 ],
  [ [ 6, 3052, 5 ], 2 ], [ [ 6, 3053, 5 ], 2 ], [ [ 6, 3054, 3 ], 2 ],
  [ [ 6, 3061, 5 ], 2 ], [ [ 6, 3066, 3 ], 2 ] ]
gap> KSfail51:= List(last,x->x[1]);
[ [ 6, 3045, 3 ], [ 6, 3046, 3 ], [ 6, 3047, 3 ], [ 6, 3052, 5 ],
  [ 6, 3053, 5 ], [ 6, 3054, 3 ], [ 6, 3061, 5 ], [ 6, 3066, 3 ] ]
gap> Length(KSfail51);
8
gap> KSfail51g:=List(KSfail51,x->CaratMatGroupZClass(x[1],x[2],x[3]));;
gap> List(KSfail51g,Order);
[ 120, 120, 120, 20, 20, 240, 40, 60 ]
gap> List(KSfail51g,StructureDescription);
[ "C2 x A5", "S5", "S5", "C5 : C4", "C5 : C4", "C2 x S5", "C2 x (C5 : C4)", "A5" ]
gap> last[6]; # the 6th group of the CARAT ID [ 6, 3054, 3 ] is maximal
"C2 x S5"
gap> KSfail51gsub:=Set(ConjugacyClassesSubgroups2(KSfail51g[6]),
> x->CaratZClass(Representative(x)));;
gap> Difference(KSfail51,KSfail51gsub);
[  ]
gap> ips51f:=Flat(ips51);;
gap> ips51fc:=List(ips51f,CaratZClass);;
gap> KSfail51i:=List(KSfail51,x->Filtered([1..9939],y->ips51fc[y]=x));
[ [ 9630, 9633 ], [ 9635, 9638 ], [ 9642, 9645 ], [ 9648, 9651 ], 
  [ 9653, 9656 ], [ 9660, 9665 ], [ 9670, 9675 ], [ 9679, 9681 ] ]
gap> KSfail51gg:=List(KSfail51i,x->[ips51f[x[1]],ips51f[x[2]]]);;
gap> List(KSfail51gg,x->List(x,y->List([[1],[2..6]],
> z->CaratZClass(PartialMatrixGroup(y,z)))));
[ [ [ [ 1, 2, 1 ], [ 5, 910, 3 ] ], [ [ 1, 2, 1 ], [ 5, 910, 4 ] ] ], 
  [ [ [ 1, 1, 1 ], [ 5, 911, 3 ] ], [ [ 1, 1, 1 ], [ 5, 911, 4 ] ] ], 
  [ [ [ 1, 2, 1 ], [ 5, 912, 3 ] ], [ [ 1, 2, 1 ], [ 5, 912, 4 ] ] ], 
  [ [ [ 1, 2, 1 ], [ 5, 917, 3 ] ], [ [ 1, 2, 1 ], [ 5, 917, 4 ] ] ], 
  [ [ [ 1, 1, 1 ], [ 5, 918, 3 ] ], [ [ 1, 1, 1 ], [ 5, 918, 4 ] ] ], 
  [ [ [ 1, 2, 1 ], [ 5, 919, 3 ] ], [ [ 1, 2, 1 ], [ 5, 919, 4 ] ] ], 
  [ [ [ 1, 2, 1 ], [ 5, 926, 5 ] ], [ [ 1, 2, 1 ], [ 5, 926, 6 ] ] ], 
  [ [ [ 1, 1, 1 ], [ 5, 931, 3 ] ], [ [ 1, 1, 1 ], [ 5, 931, 4 ] ] ] ]
gap> List(last,y->List([y[1][1],y[1][2],y[2][1],y[2][2]],
> x->StructureDescription(CaratMatGroupZClass(x[1],x[2],x[3]))));
[ [ "C2", "C2 x A5", "C2", "C2 x A5" ], 
  [ "1", "S5", "1", "S5" ], 
  [ "C2", "S5", "C2", "S5" ],
  [ "C2", "C5 : C4", "C2", "C5 : C4" ], 
  [ "1", "C5 : C4", "1", "C5 : C4" ], 
  [ "C2", "C2 x S5", "C2", "C2 x S5" ], 
  [ "C2", "C2 x (C5 : C4)", "C2", "C2 x (C5 : C4)" ], 
  [ "1", "A5", "1", "A5" ] ]
\end{verbatim}
\end{example}

%\begin{proof}[Proof of Theorem \ref{thKS}]
%By the algorithm KS1, the condition 
%(KS1) holds for $n\leq 4$. 
%For $n=5$, the condition (KS1) fails 
%if and only if the CARAT ID of $G$ is one of the $11$ cases 
%as in (ii) (see Example \ref{exKS1}). 
%By the algorithm KS2, the condition 
%(KS2) holds for $n\leq 5$ (see Example \ref{exKS2}). 
%Hence (i), (ii) and (iii) follow. 
%
%For (iv), we see that $M_G\oplus\bZ\simeq\bZ[S_5/S_4]\oplus\bZ$ 
%for the group $G\simeq S_5$ of the CARAT ID $(5,911,4)$. 
%We postpone the proof to Section \ref{seFC} (see Example \ref{exMI2}). 
%\end{proof}

\bigskip

%%%%%%%%%%%%%%%%%%%%%%%%%%%%%%%%%%%%%%%%%%%%%%%%%%%%%%%%%%%%%%%%%%%%%%%%%%%%%%%]
\subsection{Maximal finite groups $G\leq \GL(n,\bZ)$ of dimension $n\leq 6$}\label{ssMax}
%%%%%%%%%%%%%%%%%%%%%%%%%%%%%%%%%%%%%%%%%%%%%%%%%%

In this subsection, we consider not only maximal irreducible (resp. indecomposable) 
groups $G\leq \GL(n,\bZ)$ but also reducible (resp. decomposable) ones, 
i.e. we consider all the maximal groups $G\leq \GL(n,\bZ)$.

Let $L$ be a set of $\bZ$-classes of finite groups $G$ in $\GL(n,\bZ)$. 
The following algorithm enable us to determine which $\bZ$-classes 
are maximal ones in $L$.

\bigskip

\noindent
{\tt MaximalGroupsID(L)}
returns the list of the GAP IDs of the maximal $\bZ$-classes 
in the groups of the GAP IDs $L$. 

\noindent
{\tt MaximalGroupsID(L:Carat)} returns the same as
{\tt MaximalGroupsID(L)} but using the CARAT ID instead of the GAP ID.

\noindent
{\tt MaximalGroupsID(L:FromPerm)}
(resp. {\tt MaximalGroupsID(L:Carat,FromPerm)}) returns the same as\\
{\tt MaximalGroupsID(L)} (resp. {\tt MaximalGroupsID(L:Carat)})
but using 
{\tt ConjugacyClassesSubgroupsFromPerm(G)} instead of 
{\tt ConjugacyClassesSubgroups(G)}.

\bigskip

\begin{verbatim}
MaximalGroupsID:= function(L)
    local G,O,m,m0,r,ri,ro,i,j,id,sg;
    if ValueOption("carat")=true or ValueOption("Carat")=true then
        G:=List(L,x->CaratMatGroupZClass(x[1],x[2],x[3]));
    else
        G:=List(L,x->MatGroupZClass(x[1],x[2],x[3],x[4]));
    fi;
    O:=List(G,Order);
    r:=L;
    ri:=List(r,x->Position(L,x));
    ro:=Set(ri,x->O[x]);
    m:=[];
    m0:=Filtered(ri,x->Number(ro,y->y mod O[x]=0)=1);
    while m0<>[] do
        for i in m0 do
            if ValueOption("fromperm")=true or ValueOption("FromPerm")=true then
                sg:=ConjugacyClassesSubgroupsFromPerm(G[i]);
            else
                sg:=List(ConjugacyClassesSubgroups(G[i]),Representative);
            fi;
            for j in sg do
                if Order(j) in List(ri,x->O[x]) then
                    if ValueOption("carat")=true or ValueOption("Carat")=true then
                        id:=CaratZClass(j);
                    else
                        id:=CrystCatZClass(j);
                    fi;
                    r:=Difference(r,[id]);
                fi;
            od;
        od;
        ri:=List(r,x->Position(L,x));
        ro:=Set(ri,x->O[x]);
        m:=Concatenation(m,m0);
        m0:=Filtered(ri,x->Number(ro,y->y mod O[x]=0)=1);
    od;
    return List(SortedList(m),x->L[x]);
end;
\end{verbatim}

\bigskip

\begin{example}[Maximal finite groups $G\leq \GL(n,\bZ)$ of dimension $n\leq 6$]
Using {\tt MaximalGroupsID(L)} for all $\bZ$-classes $L$ of finite subgroups 
$G\leq \GL(n,\bZ)$, we can determine all the maximal finite groups 
$G\leq \GL(n,\bZ)$ of dimension $n\leq 6$. 
%It turns out that 
There exist $2$ (resp. $4$, $9$, $17$, $39$) maximal groups $G\leq \GL(n,\bZ)$ 
($\bZ$-classes) of dimension $n=2$ (resp. $3$, $4$, $5$, $6$) 
(see \cite{Dad65} for $n=4$, 
\cite{Rys72a}, \cite{Rys72b}, \cite{RL80} for $n=5$, 
\cite[Section 1.10]{Lor05} for $n=6$).

\bigskip

\begin{verbatim}
gap> Read("caratnumber.gap");
gap> Read("KS.gap");

gap> GL2Zcr:=Concatenation(List([1..NrCrystalSystems(2)],
> x->Concatenation(List([1..NrQClassesCrystalSystem(2,x)],
> y->List([1..NrZClassesQClass(2,x,y)],z->[2,x,y,z])))));;
gap> GL3Zcr:=Concatenation(List([1..NrCrystalSystems(3)],
> x->Concatenation(List([1..NrQClassesCrystalSystem(3,x)],
> y->List([1..NrZClassesQClass(3,x,y)],z->[3,x,y,z])))));;
gap> GL4Zcr:=Concatenation(List([1..NrCrystalSystems(4)],
> x->Concatenation(List([1..NrQClassesCrystalSystem(4,x)],
> y->List([1..NrZClassesQClass(4,x,y)],z->[4,x,y,z])))));;
gap> GL5Zca:=Concatenation(List([1..CaratNrQClasses(5)],
> x->List([1..CaratNrZClasses(5,x)],y->[5,x,y])));;
gap> GL6Zca:=Concatenation(List([1..CaratNrQClasses(6)],
> x->List([1..CaratNrZClasses(6,x)],y->[6,x,y])));;
gap> MaximalGroupsID(GL2Zcr);
[ [ 2, 3, 2, 1 ], [ 2, 4, 4, 1 ] ]
gap> Length(last); # there exist 2 maximal groups in dimension 2
2
gap> MaximalGroupsID(GL3Zcr);
[ [ 3, 6, 7, 1 ], [ 3, 7, 5, 1 ], [ 3, 7, 5, 2 ], [ 3, 7, 5, 3 ] ]
gap> Length(last); # there exist 4 maximal groups in dimension 3
4
gap> MaximalGroupsID(GL4Zcr);
[ [ 4, 20, 22, 1 ], [ 4, 25, 11, 2 ], [ 4, 25, 11, 4 ], [ 4, 29, 9, 1 ], 
  [ 4, 30, 13, 1 ], [ 4, 31, 7, 1 ], [ 4, 31, 7, 2 ], [ 4, 32, 21, 1 ], 
  [ 4, 33, 16, 1 ] ]
gap> Length(last); # there exist 9 maximal groups in dimension 4
9
gap> MaximalGroupsID(GL5Zca:Carat);
[ [ 5, 559, 3 ], [ 5, 559, 4 ], [ 5, 626, 1 ], [ 5, 626, 2 ], [ 5, 626, 3 ], 
  [ 5, 690, 1 ], [ 5, 836, 2 ], [ 5, 866, 1 ], [ 5, 930, 1 ], [ 5, 930, 2 ], 
  [ 5, 942, 1 ], [ 5, 942, 2 ], [ 5, 942, 3 ], [ 5, 949, 1 ], [ 5, 949, 2 ], 
  [ 5, 949, 3 ], [ 5, 949, 4 ] ]
gap> Length(last); # there exist 17 maximal groups in dimension 5
17
gap> MaximalGroupsID(GL6Zca:Carat,FromPerm);
[ [ 6, 2750, 4 ], [ 6, 2772, 2 ], [ 6, 2772, 5 ], [ 6, 2773, 1 ], [ 6, 2773, 2 ], 
  [ 6, 2773, 3 ], [ 6, 2803, 1 ], [ 6, 2804, 1 ], [ 6, 2804, 2 ], [ 6, 2866, 2 ], 
  [ 6, 2866, 3 ], [ 6, 2932, 1 ], [ 6, 2932, 2 ], [ 6, 2945, 1 ], [ 6, 2952, 1 ], 
  [ 6, 2952, 2 ], [ 6, 2952, 3 ], [ 6, 3120, 1 ], [ 6, 3120, 2 ], [ 6, 3193, 1 ], 
  [ 6, 3193, 2 ], [ 6, 3272, 3 ], [ 6, 3272, 5 ], [ 6, 3296, 1 ], [ 6, 3296, 2 ], 
  [ 6, 3296, 4 ], [ 6, 3296, 5 ], [ 6, 3795, 3 ], [ 6, 3891, 1 ], [ 6, 4232, 2 ], 
  [ 6, 5209, 1 ], [ 6, 5209, 5 ], [ 6, 5517, 3 ], [ 6, 5517, 4 ], [ 6, 5517, 5 ], 
  [ 6, 5517, 8 ], [ 6, 6509, 1 ], [ 6, 6568, 1 ], [ 6, 6878, 2 ] ]
gap> Length(last); # there exist 39 maximal groups in dimension 6
39
\end{verbatim}
\end{example}

\bigskip

%%%%%%%%%%%%%%%%%%%%%%%%%%%%%%%%%%%%%%%%%%%%%%%%%%%%%%%%%%%%%%%%%%%%%%%%%%%%%%%
\subsection{Bravais groups of dimension $n\leq 6$ and corresponding quadratic forms}
\label{ssBravais}
%%%%%%%%%%%%%%%%%%%%%%%%%%%%%%%%%%%%%%%%%%%%%%%%%%

\bigskip

\begin{definition}[Bravais group]
Let $G\leq \GL(n,\bZ)$ be finite subgroup and 
$\bR_{\rm sym}^{n\times n}$ be the set of $n\times n$ symmetric matrices 
(which correspond to quadratic forms) %in $M(n,\bR)$, matrices 
whose entries are in $\bR$.\\
{\rm (i)} $\mathcal{F}(G)=\{F\in\bR_{\rm sym}^{n\times n}\mid g\,F\, {}^tg=F\ 
({\rm for\ any}\ g\in G)\}$ is called {\it the space of invariant forms} 
%
%metric tensors 
%
of $G$;\\
{\rm (ii)} For $\mathcal{F}\leq \bR_{\rm sym}^{n\times n}$, 
$\mathcal{B}(\mathcal{F})=\{g\in \GL(n,\bZ)\mid g\, F\, {}^tg=F\ 
({\rm for\ any}\ F\in \mathcal{F})\}$ 
is called {\it the Bravais group of $\mathcal{F}$};\\ 
{\rm (iii)} $B(G):=\mathcal{B}(\mathcal{F}(G))$ is 
called {\it the Bravais group of $G$};\\
{\rm (iv)} $G\leq \GL(n,\bZ)$ is simply called a {\it Bravais group} 
of dimension $n$ if $B(G)=G$. 
\end{definition}

Note that if $\mathcal{F}$ contains a positive definite matrix, 
then $\mathcal{B}(\mathcal{F})$ becomes a finite set. 
By definitions, 
$\mathcal{F}(G)$ contains a positive definite matrix, 
and hence $G\leq B(G)\leq \GL(n,\bZ)$ is finite. 
In particular, all the maximal finite groups $G\leq \GL(n,\bZ)$ (as in the last subsection) 
are Bravais groups. 

\begin{example}[Bravais groups $G\leq \GL(n,\bZ)$ where $n\leq 6$]
By using  CARAT (\cite{Carat}), 
e.g. the build-in function {\tt BravaisGroupsCrystalFamily}, 
we may obtain the GAP ID and the Carat ID of 
all the $\bZ$-classes of Bravais groups $G\leq\GL(n,\bZ)$ for $n\leq 6$ as follows. 
There exists $1$ (resp. $5$, $14$, $64$, $189$, $841$)\footnote{In \cite{PH84}, 
the number, $826$, of Bravais groups of dimension $6$ was not correct. 
This is modified by \cite{OPS98} with the correct number $841$.} Bravais group of 
dimension $n=1$ (resp. $2$, $3$, $4$, $5$, $6$) 
(see \cite[Section 1.3]{BBNWZ78}, \cite{PP77}, \cite{Ple81}, \cite{PH84}, \cite{OPS98}). 

\bigskip

\begin{verbatim}
gap> Read("caratnumber.gap");

gap> b1:=Flat(List(CaratCrystalFamilies[1],BravaisGroupsCrystalFamily));;
gap> b2:=Flat(List(CaratCrystalFamilies[2],BravaisGroupsCrystalFamily));;
gap> b3:=Flat(List(CaratCrystalFamilies[3],BravaisGroupsCrystalFamily));;
gap> b4:=Flat(List(CaratCrystalFamilies[4],BravaisGroupsCrystalFamily));;
gap> b5:=Flat(List(CaratCrystalFamilies[5],BravaisGroupsCrystalFamily));;
gap> b6:=Flat(List(CaratCrystalFamilies[6],BravaisGroupsCrystalFamily));;
gap> List([b1,b2,b3,b4,b5,b6],Length);
[ 1, 5, 14, 64, 189, 841 ]

gap> b1ca:=List(b1,CaratZClass); 
[ [ 1, 2, 1 ] ]  
gap> Length(last); # there exists 1 Bravais group of dimension 1
1
gap> b2cr:=List(b2,CrystCatZClass);
[ [ 2, 1, 2, 1 ], [ 2, 2, 2, 1 ], [ 2, 2, 2, 2 ], [ 2, 3, 2, 1 ], [ 2, 4, 4, 1 ] ]
gap> Length(last); # there exist 5 Bravais groups of dimension 2
5
gap> b3cr:=List(b3,CrystCatZClass);
[ [ 3, 1, 2, 1 ], [ 3, 2, 3, 1 ], [ 3, 2, 3, 2 ], [ 3, 3, 3, 1 ], [ 3, 3, 3, 2 ], 
  [ 3, 3, 3, 3 ], [ 3, 3, 3, 4 ], [ 3, 4, 7, 1 ], [ 3, 4, 7, 2 ], [ 3, 6, 7, 1 ], 
  [ 3, 5, 5, 1 ], [ 3, 7, 5, 3 ], [ 3, 7, 5, 1 ], [ 3, 7, 5, 2 ] ]
gap> Length(last); # there exist 14 Bravais groups of dimension 3
14
gap> b4cr:=List(b4,CrystCatZClass);
[ [ 4, 1, 2, 1 ], [ 4, 2, 3, 1 ], [ 4, 2, 3, 2 ], [ 4, 3, 2, 1 ], [ 4, 3, 2, 2 ], 
  [ 4, 3, 2, 3 ], [ 4, 4, 4, 1 ], [ 4, 4, 4, 3 ], [ 4, 4, 4, 2 ], [ 4, 4, 4, 6 ], 
  [ 4, 4, 4, 5 ], [ 4, 4, 4, 4 ], [ 4, 6, 3, 1 ], [ 4, 6, 3, 2 ], [ 4, 6, 3, 6 ], 
  [ 4, 6, 3, 8 ], [ 4, 6, 3, 5 ], [ 4, 6, 3, 3 ], [ 4, 6, 3, 7 ], [ 4, 6, 3, 4 ], 
  [ 4, 5, 2, 1 ], [ 4, 10, 1, 1 ], [ 4, 16, 1, 1 ], [ 4, 16, 1, 2 ], [ 4, 16, 1, 3 ], 
  [ 4, 7, 7, 1 ], [ 4, 7, 7, 2 ], [ 4, 13, 10, 1 ], [ 4, 13, 10, 3 ], [ 4, 13, 10, 2 ], 
  [ 4, 13, 10, 5 ], [ 4, 13, 10, 4 ], [ 4, 12, 5, 1 ], [ 4, 19, 6, 1 ], [ 4, 19, 6, 2 ], 
  [ 4, 18, 5, 1 ], [ 4, 20, 22, 1 ], [ 4, 11, 2, 1 ], [ 4, 17, 2, 1 ], [ 4, 17, 2, 2 ], 
  [ 4, 9, 7, 1 ], [ 4, 8, 5, 1 ], [ 4, 15, 12, 1 ], [ 4, 15, 12, 2 ], [ 4, 14, 10, 1 ], 
  [ 4, 14, 10, 2 ], [ 4, 23, 11, 1 ], [ 4, 22, 11, 1 ], [ 4, 21, 4, 1 ], [ 4, 25, 11, 2 ], 
  [ 4, 25, 11, 3 ], [ 4, 24, 5, 1 ], [ 4, 25, 11, 1 ], [ 4, 25, 11, 5 ], [ 4, 25, 11, 4 ], 
  [ 4, 32, 21, 1 ], [ 4, 33, 16, 1 ], [ 4, 26, 2, 1 ], [ 4, 30, 13, 1 ], [ 4, 29, 9, 1 ], 
  [ 4, 28, 2, 1 ], [ 4, 31, 7, 1 ], [ 4, 31, 7, 2 ], [ 4, 27, 4, 1 ] ]
gap> Length(last); # there exist 64 Bravais groups of dimension 4
64
gap> b5ca:=List(b5,CaratZClass);
[ [ 5, 2, 1 ], [ 5, 5, 1 ], [ 5, 5, 2 ], [ 5, 8, 1 ], [ 5, 8, 2 ], 
  [ 5, 8, 3 ], [ 5, 12, 1 ], [ 5, 12, 2 ], [ 5, 12, 3 ], [ 5, 12, 4 ], 
  [ 5, 12, 5 ], [ 5, 12, 6 ], [ 5, 16, 1 ], [ 5, 16, 2 ], [ 5, 16, 4 ], 
  [ 5, 16, 3 ], [ 5, 16, 5 ], [ 5, 16, 6 ], [ 5, 16, 8 ], [ 5, 16, 7 ], 
  [ 5, 16, 9 ], [ 5, 17, 1 ], [ 5, 17, 2 ], [ 5, 17, 3 ], [ 5, 17, 4 ], 
  [ 5, 17, 5 ], [ 5, 17, 6 ], [ 5, 17, 7 ], [ 5, 17, 10 ], [ 5, 17, 8 ], 
  [ 5, 17, 11 ], [ 5, 17, 9 ], [ 5, 17, 12 ], [ 5, 17, 13 ], [ 5, 17, 14 ], 
  [ 5, 17, 17 ], [ 5, 17, 16 ], [ 5, 17, 15 ], [ 5, 20, 18 ], [ 5, 29, 1 ], 
  [ 5, 29, 2 ], [ 5, 29, 3 ], [ 5, 29, 4 ], [ 5, 29, 5 ], [ 5, 29, 6 ], 
  [ 5, 29, 8 ], [ 5, 29, 10 ], [ 5, 29, 7 ], [ 5, 29, 9 ], [ 5, 29, 11 ], 
  [ 5, 29, 12 ], [ 5, 29, 14 ], [ 5, 29, 13 ], [ 5, 29, 15 ], [ 5, 29, 16 ], 
  [ 5, 25, 31 ], [ 5, 25, 32 ], [ 5, 25, 33 ], [ 5, 35, 1 ], [ 5, 35, 2 ], 
  [ 5, 36, 4 ], [ 5, 36, 5 ], [ 5, 36, 1 ], [ 5, 36, 2 ], [ 5, 36, 3 ], 
  [ 5, 36, 6 ], [ 5, 36, 7 ], [ 5, 40, 1 ], [ 5, 40, 2 ], [ 5, 56, 1 ], 
  [ 5, 56, 2 ], [ 5, 56, 4 ], [ 5, 56, 5 ], [ 5, 56, 3 ], [ 5, 56, 6 ], 
  [ 5, 56, 7 ], [ 5, 48, 15 ], [ 5, 97, 1 ], [ 5, 97, 2 ], [ 5, 97, 3 ], 
  [ 5, 97, 4 ], [ 5, 97, 5 ], [ 5, 97, 6 ], [ 5, 97, 7 ], [ 5, 97, 9 ], 
  [ 5, 97, 8 ], [ 5, 97, 10 ], [ 5, 97, 11 ], [ 5, 97, 12 ], [ 5, 88, 39 ], 
  [ 5, 88, 40 ], [ 5, 88, 41 ], [ 5, 104, 1 ], [ 5, 104, 2 ], [ 5, 104, 4 ], 
  [ 5, 104, 3 ], [ 5, 104, 5 ], [ 5, 151, 17 ], [ 5, 151, 18 ], [ 5, 151, 19 ], 
  [ 5, 273, 1 ], [ 5, 273, 2 ], [ 5, 372, 5 ], [ 5, 372, 6 ], [ 5, 163, 1 ], 
  [ 5, 167, 2 ], [ 5, 172, 2 ], [ 5, 171, 2 ], [ 5, 172, 1 ], [ 5, 171, 4 ], 
  [ 5, 180, 1 ], [ 5, 176, 3 ], [ 5, 215, 1 ], [ 5, 215, 2 ], [ 5, 207, 5 ], 
  [ 5, 207, 6 ], [ 5, 205, 5 ], [ 5, 205, 6 ], [ 5, 266, 1 ], [ 5, 266, 2 ], 
  [ 5, 266, 3 ], [ 5, 257, 10 ], [ 5, 266, 4 ], [ 5, 257, 12 ], [ 5, 257, 14 ], 
  [ 5, 257, 13 ], [ 5, 257, 15 ], [ 5, 421, 1 ], [ 5, 394, 2 ], [ 5, 465, 6 ], 
  [ 5, 395, 2 ], [ 5, 465, 7 ], [ 5, 460, 2 ], [ 5, 416, 4 ], [ 5, 517, 4 ], 
  [ 5, 517, 5 ], [ 5, 511, 6 ], [ 5, 517, 2 ], [ 5, 517, 3 ], [ 5, 517, 1 ], 
  [ 5, 518, 8 ], [ 5, 518, 9 ], [ 5, 518, 10 ], [ 5, 518, 11 ], [ 5, 540, 15 ], 
  [ 5, 540, 16 ], [ 5, 518, 12 ], [ 5, 540, 17 ], [ 5, 518, 3 ], [ 5, 518, 4 ], 
  [ 5, 518, 5 ], [ 5, 518, 6 ], [ 5, 518, 7 ], [ 5, 518, 1 ], [ 5, 518, 2 ], 
  [ 5, 559, 4 ], [ 5, 559, 5 ], [ 5, 559, 1 ], [ 5, 559, 2 ], [ 5, 559, 3 ], 
  [ 5, 626, 3 ], [ 5, 626, 2 ], [ 5, 638, 4 ], [ 5, 626, 1 ], [ 5, 638, 2 ], 
  [ 5, 643, 1 ], [ 5, 643, 2 ], [ 5, 772, 4 ], [ 5, 772, 5 ], [ 5, 690, 1 ], 
  [ 5, 772, 3 ], [ 5, 793, 1 ], [ 5, 866, 1 ], [ 5, 801, 3 ], [ 5, 836, 2 ], 
  [ 5, 801, 6 ], [ 5, 908, 1 ], [ 5, 904, 3 ], [ 5, 930, 2 ], [ 5, 919, 3 ], 
  [ 5, 919, 4 ], [ 5, 930, 1 ], [ 5, 942, 1 ], [ 5, 942, 3 ], [ 5, 942, 2 ], 
  [ 5, 949, 4 ], [ 5, 949, 1 ], [ 5, 949, 3 ], [ 5, 949, 2 ] ]
gap> Length(last); # there exist 189 Bravais groups of dimension 5
189
gap> b6ca:=List(b6,CaratZClass);
[ [ 6, 2710, 1 ], [ 6, 3, 1 ], [ 6, 3, 2 ], [ 6, 6, 1 ], [ 6, 6, 2 ], 
  [ 6, 6, 3 ], [ 6, 10, 1 ], [ 6, 10, 2 ], [ 6, 10, 3 ], [ 6, 10, 4 ], 
  [ 6, 10, 6 ], [ 6, 10, 5 ], [ 6, 12, 1 ], [ 6, 12, 2 ], [ 6, 12, 3 ], 
  [ 6, 12, 4 ], [ 6, 17, 1 ], [ 6, 17, 2 ], [ 6, 17, 3 ], [ 6, 17, 4 ], 
  [ 6, 17, 5 ], [ 6, 17, 6 ], [ 6, 17, 7 ], [ 6, 17, 8 ], [ 6, 17, 9 ], 
  [ 6, 17, 10 ], [ 6, 17, 11 ], [ 6, 17, 12 ], [ 6, 4617, 1 ], [ 6, 4617, 2 ], 
  [ 6, 4617, 4 ], [ 6, 4617, 6 ], [ 6, 4617, 3 ], [ 6, 4617, 5 ], [ 6, 4617, 8 ], 
  [ 6, 4617, 11 ], [ 6, 4617, 14 ], [ 6, 4617, 9 ], [ 6, 4617, 7 ], [ 6, 4617, 13 ], 
  [ 6, 4617, 10 ], [ 6, 4617, 12 ], [ 6, 4617, 18 ], [ 6, 4617, 16 ], [ 6, 4617, 17 ], 
  [ 6, 4617, 15 ], [ 6, 4620, 19 ], [ 6, 4627, 1 ], [ 6, 4627, 2 ], [ 6, 4627, 3 ], 
  [ 6, 4627, 6 ], [ 6, 4627, 4 ], [ 6, 4627, 7 ], [ 6, 4627, 8 ], [ 6, 4627, 5 ], 
  [ 6, 4627, 9 ], [ 6, 4627, 11 ], [ 6, 4627, 10 ], [ 6, 4627, 12 ], [ 6, 4628, 1 ], 
  [ 6, 4628, 2 ], [ 6, 4628, 6 ], [ 6, 4628, 3 ], [ 6, 4628, 4 ], [ 6, 4628, 7 ], 
  [ 6, 4628, 5 ], [ 6, 4628, 12 ], [ 6, 4628, 9 ], [ 6, 4628, 14 ], [ 6, 4628, 8 ], 
  [ 6, 4628, 13 ], [ 6, 4628, 19 ], [ 6, 4628, 23 ], [ 6, 4628, 16 ], [ 6, 4628, 11 ], 
  [ 6, 4628, 21 ], [ 6, 4628, 10 ], [ 6, 4628, 15 ], [ 6, 4628, 18 ], [ 6, 4628, 22 ], 
  [ 6, 4628, 17 ], [ 6, 4628, 20 ], [ 6, 4628, 26 ], [ 6, 4628, 30 ], [ 6, 4628, 25 ], 
  [ 6, 4628, 27 ], [ 6, 4628, 31 ], [ 6, 4628, 29 ], [ 6, 4628, 33 ], [ 6, 4628, 35 ], 
  [ 6, 4628, 24 ], [ 6, 4628, 28 ], [ 6, 4628, 32 ], [ 6, 4628, 34 ], [ 6, 4631, 36 ], 
  [ 6, 4628, 37 ], [ 6, 4628, 36 ], [ 6, 4631, 39 ], [ 6, 4643, 1 ], [ 6, 4643, 2 ], 
  [ 6, 4643, 3 ], [ 6, 4643, 4 ], [ 6, 4643, 5 ], [ 6, 4643, 6 ], [ 6, 4643, 7 ], 
  [ 6, 4643, 8 ], [ 6, 4643, 11 ], [ 6, 4643, 14 ], [ 6, 4643, 10 ], [ 6, 4643, 13 ], 
  [ 6, 4643, 16 ], [ 6, 4643, 12 ], [ 6, 4643, 15 ], [ 6, 4643, 20 ], [ 6, 4643, 18 ], 
  [ 6, 4643, 22 ], [ 6, 4643, 24 ], [ 6, 4643, 25 ], [ 6, 4643, 26 ], [ 6, 4643, 23 ], 
  [ 6, 4643, 27 ], [ 6, 4643, 9 ], [ 6, 4643, 19 ], [ 6, 4643, 17 ], [ 6, 4643, 21 ], 
  [ 6, 4643, 34 ], [ 6, 4643, 39 ], [ 6, 4643, 35 ], [ 6, 4643, 36 ], [ 6, 4643, 40 ], 
  [ 6, 4643, 44 ], [ 6, 4643, 45 ], [ 6, 4643, 29 ], [ 6, 4643, 33 ], [ 6, 4643, 32 ], 
  [ 6, 4643, 37 ], [ 6, 4643, 38 ], [ 6, 4643, 30 ], [ 6, 4643, 43 ], [ 6, 4643, 31 ], 
  [ 6, 4643, 42 ], [ 6, 4643, 28 ], [ 6, 4643, 41 ], [ 6, 4637, 98 ], [ 6, 4637, 99 ], 
  [ 6, 4636, 50 ], [ 6, 4636, 51 ], [ 6, 4643, 48 ], [ 6, 4643, 47 ], [ 6, 4643, 49 ], 
  [ 6, 4643, 46 ], [ 6, 4637, 101 ], [ 6, 4637, 100 ], [ 6, 4636, 52 ], [ 6, 4636, 53 ], 
  [ 6, 4663, 1 ], [ 6, 4663, 2 ], [ 6, 4663, 3 ], [ 6, 4663, 4 ], [ 6, 4663, 5 ], 
  [ 6, 4663, 6 ], [ 6, 4663, 8 ], [ 6, 4663, 10 ], [ 6, 4663, 12 ], [ 6, 4663, 9 ], 
  [ 6, 4663, 11 ], [ 6, 4663, 13 ], [ 6, 4663, 15 ], [ 6, 4663, 14 ], [ 6, 4663, 16 ], 
  [ 6, 4663, 17 ], [ 6, 4663, 7 ], [ 6, 4663, 24 ], [ 6, 4663, 25 ], [ 6, 4663, 26 ], 
  [ 6, 4663, 28 ], [ 6, 4663, 27 ], [ 6, 4663, 29 ], [ 6, 4663, 19 ], [ 6, 4663, 23 ], 
  [ 6, 4663, 21 ], [ 6, 4663, 20 ], [ 6, 4663, 22 ], [ 6, 4663, 18 ], [ 6, 4659, 107 ], 
  [ 6, 4659, 109 ], [ 6, 4659, 112 ], [ 6, 4663, 33 ], [ 6, 4663, 34 ], [ 6, 4663, 31 ], 
  [ 6, 4663, 35 ], [ 6, 4663, 32 ], [ 6, 4663, 30 ], [ 6, 4659, 127 ], [ 6, 4659, 131 ], 
  [ 6, 4659, 128 ], [ 6, 4659, 132 ], [ 6, 4657, 107 ], [ 6, 4659, 129 ], [ 6, 4659, 133 ], 
  [ 6, 4663, 36 ], [ 6, 4658, 37 ], [ 6, 4659, 134 ], [ 6, 4657, 109 ], [ 6, 4666, 1 ], 
  [ 6, 4669, 1 ], [ 6, 4669, 2 ], [ 6, 4669, 3 ], [ 6, 4670, 1 ], [ 6, 4670, 2 ], 
  [ 6, 4670, 3 ], [ 6, 4670, 4 ], [ 6, 4670, 6 ], [ 6, 4670, 5 ], [ 6, 4672, 7 ], 
  [ 6, 4678, 1 ], [ 6, 4678, 2 ], [ 6, 4677, 3 ], [ 6, 25, 1 ], [ 6, 4680, 2 ], 
  [ 6, 4680, 1 ], [ 6, 4680, 3 ], [ 6, 4681, 1 ], [ 6, 4681, 2 ], [ 6, 4681, 3 ], 
  [ 6, 4681, 4 ], [ 6, 4681, 5 ], [ 6, 4681, 6 ], [ 6, 4681, 7 ], [ 6, 4681, 8 ], 
  [ 6, 4681, 9 ], [ 6, 4690, 1 ], [ 6, 4690, 2 ], [ 6, 4690, 3 ], [ 6, 4690, 4 ], 
  [ 6, 4690, 6 ], [ 6, 4690, 5 ], [ 6, 4689, 34 ], [ 6, 4689, 35 ], [ 6, 4690, 7 ], 
  [ 6, 4690, 8 ], [ 6, 4690, 9 ], [ 6, 4690, 10 ], [ 6, 4690, 11 ], [ 6, 4690, 12 ],
  [ 6, 4690, 14 ], [ 6, 4690, 13 ], [ 6, 4690, 15 ], [ 6, 4690, 16 ], [ 6, 4689, 21 ], 
  [ 6, 4690, 17 ], [ 6, 4690, 18 ], [ 6, 4690, 19 ], [ 6, 4690, 20 ], [ 6, 4690, 21 ], 
  [ 6, 4689, 27 ], [ 6, 4688, 22 ], [ 6, 4704, 1 ], [ 6, 4704, 2 ], [ 6, 4700, 3 ], 
  [ 6, 4700, 4 ], [ 6, 4704, 3 ], [ 6, 4704, 4 ], [ 6, 4704, 5 ], [ 6, 4700, 11 ], 
  [ 6, 4704, 6 ], [ 6, 4704, 7 ], [ 6, 4700, 18 ], [ 6, 4701, 13 ], [ 6, 39, 2 ], 
  [ 6, 39, 1 ], [ 6, 39, 3 ], [ 6, 4705, 1 ], [ 6, 4705, 2 ], [ 6, 4721, 1 ], 
  [ 6, 4721, 3 ], [ 6, 4721, 2 ], [ 6, 4721, 4 ], [ 6, 4721, 5 ], [ 6, 4721, 7 ], 
  [ 6, 4721, 6 ], [ 6, 4719, 8 ], [ 6, 4742, 1 ], [ 6, 4742, 2 ], [ 6, 4742, 3 ], 
  [ 6, 4742, 4 ], [ 6, 4742, 6 ], [ 6, 4742, 5 ], [ 6, 4742, 7 ], [ 6, 4742, 8 ], 
  [ 6, 4737, 9 ], [ 6, 4742, 9 ], [ 6, 4737, 11 ], [ 6, 4800, 1 ], [ 6, 4800, 5 ], 
  [ 6, 4800, 2 ], [ 6, 4800, 6 ], [ 6, 4800, 7 ], [ 6, 4800, 3 ], [ 6, 4800, 4 ], 
  [ 6, 4800, 8 ], [ 6, 4800, 9 ], [ 6, 4800, 19 ], [ 6, 4800, 14 ], [ 6, 4800, 20 ], 
  [ 6, 4800, 11 ], [ 6, 4800, 17 ], [ 6, 4800, 12 ], [ 6, 4800, 10 ], [ 6, 4800, 13 ], 
  [ 6, 4800, 21 ], [ 6, 4800, 22 ], [ 6, 4800, 15 ], [ 6, 4800, 18 ], [ 6, 4800, 23 ], 
  [ 6, 4800, 16 ], [ 6, 4787, 34 ], [ 6, 4787, 35 ], [ 6, 4798, 24 ], [ 6, 4798, 26 ], 
  [ 6, 4800, 25 ], [ 6, 4800, 26 ], [ 6, 4800, 27 ], [ 6, 4800, 24 ], [ 6, 4787, 41 ], 
  [ 6, 4787, 40 ], [ 6, 4798, 30 ], [ 6, 4798, 31 ], [ 6, 4815, 1 ], [ 6, 4815, 2 ], 
  [ 6, 4815, 3 ], [ 6, 4815, 4 ], [ 6, 4815, 5 ], [ 6, 4815, 6 ], [ 6, 4815, 7 ], 
  [ 6, 4815, 8 ], [ 6, 4815, 10 ], [ 6, 4815, 12 ], [ 6, 4815, 9 ], [ 6, 4815, 11 ], 
  [ 6, 4815, 13 ], [ 6, 4815, 16 ], [ 6, 4815, 17 ], [ 6, 4815, 14 ], [ 6, 4815, 18 ], 
  [ 6, 4815, 15 ], [ 6, 4815, 19 ], [ 6, 4815, 20 ], [ 6, 4815, 21 ], [ 6, 4880, 54 ], 
  [ 6, 4880, 56 ], [ 6, 4880, 59 ], [ 6, 4815, 23 ], [ 6, 4815, 25 ], [ 6, 4815, 22 ], 
  [ 6, 4815, 26 ], [ 6, 4815, 24 ], [ 6, 4815, 27 ], [ 6, 4815, 29 ], [ 6, 4815, 28 ], 
  [ 6, 4880, 76 ], [ 6, 4880, 80 ], [ 6, 4880, 77 ], [ 6, 4880, 81 ], [ 6, 4880, 82 ], 
  [ 6, 4880, 78 ], [ 6, 4881, 31 ], [ 6, 4881, 32 ], [ 6, 4875, 58 ], [ 6, 4815, 30 ], 
  [ 6, 4880, 83 ], [ 6, 4881, 33 ], [ 6, 4882, 31 ], [ 6, 4875, 61 ], [ 6, 2319, 1 ], 
  [ 6, 2319, 2 ], [ 6, 2319, 3 ], [ 6, 2319, 4 ], [ 6, 2319, 5 ], [ 6, 2366, 5 ], 
  [ 6, 2366, 7 ], [ 6, 2319, 6 ], [ 6, 2366, 9 ], [ 6, 2366, 10 ], [ 6, 2425, 1 ], 
  [ 6, 2425, 5 ], [ 6, 2425, 2 ], [ 6, 2425, 6 ], [ 6, 2425, 3 ], [ 6, 2425, 4 ], 
  [ 6, 2425, 7 ], [ 6, 2425, 12 ], [ 6, 2425, 9 ], [ 6, 2425, 14 ], [ 6, 2379, 15 ], 
  [ 6, 2379, 16 ], [ 6, 2379, 17 ], [ 6, 2425, 10 ], [ 6, 2425, 8 ], [ 6, 2425, 11 ], 
  [ 6, 2425, 13 ], [ 6, 2382, 19 ], [ 6, 2382, 21 ], [ 6, 2538, 25 ], [ 6, 2379, 21 ], 
  [ 6, 2425, 15 ], [ 6, 2379, 22 ], [ 6, 2379, 19 ], [ 6, 2379, 20 ], [ 6, 2379, 23 ], 
  [ 6, 2379, 24 ], [ 6, 2382, 22 ], [ 6, 2384, 50 ], [ 6, 2379, 25 ], [ 6, 2538, 30 ], 
  [ 6, 2436, 35 ], [ 6, 2665, 1 ], [ 6, 2665, 2 ], [ 6, 2665, 3 ], [ 6, 2665, 4 ], 
  [ 6, 2636, 5 ], [ 6, 2636, 7 ], [ 6, 2604, 15 ], [ 6, 2637, 5 ], [ 6, 2636, 8 ], 
  [ 6, 2700, 12 ], [ 6, 2604, 17 ], [ 6, 748, 1 ], [ 6, 748, 2 ], [ 6, 684, 7 ], 
  [ 6, 1985, 1 ], [ 6, 2293, 2 ], [ 6, 2293, 1 ], [ 6, 46, 1 ], [ 6, 46, 2 ], 
  [ 6, 145, 3 ], [ 6, 145, 4 ], [ 6, 251, 1 ], [ 6, 251, 3 ], [ 6, 251, 2 ], 
  [ 6, 251, 4 ], [ 6, 164, 12 ], [ 6, 164, 16 ], [ 6, 164, 15 ], [ 6, 164, 14 ], 
  [ 6, 164, 17 ], [ 6, 251, 5 ], [ 6, 184, 6 ], [ 6, 164, 18 ], [ 6, 498, 27 ], 
  [ 6, 1343, 1 ], [ 6, 1616, 4 ], [ 6, 1487, 2 ], [ 6, 1966, 1 ], [ 6, 1967, 1 ], 
  [ 6, 1971, 2 ], [ 6, 1971, 3 ], [ 6, 1976, 1 ], [ 6, 1976, 2 ], [ 6, 1972, 3 ], 
  [ 6, 1972, 4 ], [ 6, 1999, 1 ], [ 6, 1992, 2 ], [ 6, 1986, 2 ], [ 6, 2255, 2 ], 
  [ 6, 2255, 1 ], [ 6, 2261, 1 ], [ 6, 2257, 5 ], [ 6, 2257, 6 ], [ 6, 2261, 2 ], 
  [ 6, 2257, 2 ], [ 6, 2277, 3 ], [ 6, 2277, 4 ], [ 6, 2271, 9 ], [ 6, 2271, 10 ], 
  [ 6, 2277, 1 ], [ 6, 2277, 2 ], [ 6, 2271, 3 ], [ 6, 2271, 4 ], [ 6, 2270, 5 ], 
  [ 6, 2270, 6 ], [ 6, 2296, 2 ], [ 6, 2313, 8 ], [ 6, 2300, 4 ], [ 6, 2296, 1 ], 
  [ 6, 2313, 3 ], [ 6, 2300, 2 ], [ 6, 2006, 1 ], [ 6, 2002, 3 ], [ 6, 2041, 1 ], 
  [ 6, 2041, 2 ], [ 6, 2033, 5 ], [ 6, 2031, 3 ], [ 6, 2033, 6 ], [ 6, 2031, 4 ], 
  [ 6, 2066, 1 ], [ 6, 2066, 2 ], [ 6, 2059, 3 ], [ 6, 2059, 5 ], [ 6, 2066, 3 ], 
  [ 6, 2059, 6 ], [ 6, 2153, 1 ], [ 6, 2153, 3 ], [ 6, 2153, 2 ], [ 6, 2153, 4 ], 
  [ 6, 2139, 5 ], [ 6, 2152, 13 ], [ 6, 2139, 9 ], [ 6, 2139, 7 ], [ 6, 2139, 10 ], 
  [ 6, 2152, 18 ],[ 6, 2152, 17 ], [ 6, 2152, 15 ], [ 6, 2152, 19 ], [ 6, 2153, 6 ], 
  [ 6, 2153, 5 ], [ 6, 2139, 12 ], [ 6, 2139, 11 ], [ 6, 2152, 21 ], [ 6, 2152, 20 ], 
  [ 6, 2158, 1 ], [ 6, 2158, 2 ], [ 6, 2158, 3 ], [ 6, 2158, 4 ], [ 6, 2243, 19 ], 
  [ 6, 2243, 23 ], [ 6, 2243, 26 ], [ 6, 2243, 27 ], [ 6, 2243, 28 ], [ 6, 2243, 29 ], 
  [ 6, 2158, 6 ], [ 6, 2158, 5 ], [ 6, 2158, 7 ], [ 6, 2243, 32 ], [ 6, 2243, 31 ], 
  [ 6, 2243, 33 ], [ 6, 2243, 34 ], [ 6, 2243, 35 ], [ 6, 2158, 8 ], [ 6, 2254, 9 ], 
  [ 6, 2243, 36 ], [ 6, 2235, 26 ], [ 6, 904, 1 ], [ 6, 878, 4 ], [ 6, 876, 2 ], 
  [ 6, 951, 7 ], [ 6, 943, 2 ], [ 6, 899, 4 ], [ 6, 951, 8 ], [ 6, 951, 9 ], 
  [ 6, 1198, 1 ], [ 6, 1198, 2 ], [ 6, 1112, 3 ], [ 6, 1113, 7 ], [ 6, 998, 5 ], 
  [ 6, 1113, 8 ], [ 6, 1112, 4 ], [ 6, 998, 8 ], [ 6, 1342, 2 ], [ 6, 1190, 6 ], 
  [ 6, 998, 9 ], [ 6, 1001, 9 ], [ 6, 998, 10 ], [ 6, 1001, 10 ], [ 6, 1098, 10 ], 
  [ 6, 1342, 4 ], [ 6, 1190, 8 ], [ 6, 1098, 12 ], [ 6, 1758, 1 ], [ 6, 1956, 2 ], 
  [ 6, 1872, 3 ], [ 6, 1846, 2 ], [ 6, 1915, 3 ], [ 6, 1748, 4 ], [ 6, 1872, 4 ], 
  [ 6, 1870, 5 ], [ 6, 1915, 4 ], [ 6, 4897, 4 ], [ 6, 4897, 2 ], [ 6, 4897, 5 ], 
  [ 6, 4897, 1 ], [ 6, 4897, 3 ], [ 6, 4897, 6 ], [ 6, 4894, 3 ], [ 6, 4894, 7 ], 
  [ 6, 4894, 9 ], [ 6, 4913, 1 ], [ 6, 4913, 2 ], [ 6, 4912, 3 ], [ 6, 4913, 3 ], 
  [ 6, 4913, 4 ], [ 6, 4913, 5 ], [ 6, 4914, 10 ], [ 6, 4914, 12 ], [ 6, 4914, 11 ], 
  [ 6, 4914, 13 ], [ 6, 4914, 14 ], [ 6, 4952, 7 ], [ 6, 4952, 8 ], [ 6, 4914, 15 ], 
  [ 6, 4914, 16 ], [ 6, 4952, 10 ], [ 6, 4952, 11 ], [ 6, 4964, 6 ], [ 6, 4964, 8 ], 
  [ 6, 4964, 10 ], [ 6, 4964, 11 ], [ 6, 4914, 1 ], [ 6, 4914, 3 ], [ 6, 4914, 2 ], 
  [ 6, 4914, 4 ], [ 6, 4914, 5 ], [ 6, 4914, 7 ], [ 6, 4914, 6 ], [ 6, 4914, 8 ], 
  [ 6, 4914, 9 ], [ 6, 4979, 13 ], [ 6, 4979, 14 ], [ 6, 4979, 15 ], [ 6, 4979, 16 ], 
  [ 6, 4979, 17 ], [ 6, 4979, 18 ], [ 6, 4966, 10 ], [ 6, 4966, 12 ], [ 6, 4966, 15 ], 
  [ 6, 4979, 19 ], [ 6, 4979, 20 ], [ 6, 4979, 21 ], [ 6, 4979, 22 ], [ 6, 4979, 23 ], 
  [ 6, 4966, 22 ], [ 6, 4966, 26 ], [ 6, 4966, 23 ], [ 6, 4966, 27 ], [ 6, 4966, 24 ], 
  [ 6, 4966, 28 ], [ 6, 4979, 24 ], [ 6, 4966, 29 ], [ 6, 4978, 25 ], [ 6, 4979, 1 ], 
  [ 6, 4979, 2 ], [ 6, 4979, 3 ], [ 6, 4979, 4 ], [ 6, 4979, 5 ], [ 6, 4979, 6 ], 
  [ 6, 4979, 8 ], [ 6, 4979, 7 ], [ 6, 4979, 9 ], [ 6, 4979, 10 ], [ 6, 4979, 11 ], 
  [ 6, 4979, 12 ], [ 6, 4979, 25 ], [ 6, 4979, 26 ], [ 6, 4979, 27 ], [ 6, 4979, 28 ], 
  [ 6, 5144, 9 ], [ 6, 5144, 11 ], [ 6, 5144, 10 ], [ 6, 5144, 12 ], [ 6, 5144, 13 ], 
  [ 6, 5116, 6 ], [ 6, 5116, 8 ], [ 6, 5144, 14 ], [ 6, 5118, 21 ], [ 6, 5116, 9 ], 
  [ 6, 5144, 1 ], [ 6, 5144, 3 ], [ 6, 5144, 2 ], [ 6, 5144, 4 ], [ 6, 5144, 5 ], 
  [ 6, 5144, 6 ], [ 6, 5144, 7 ], [ 6, 5144, 8 ], [ 6, 5209, 1 ], [ 6, 5209, 2 ], 
  [ 6, 5448, 3 ], [ 6, 5447, 3 ], [ 6, 5448, 4 ], [ 6, 5347, 11 ], [ 6, 5209, 3 ], 
  [ 6, 5310, 2 ], [ 6, 5209, 4 ], [ 6, 5448, 7 ], [ 6, 5310, 4 ], [ 6, 5491, 14 ], 
  [ 6, 5448, 8 ], [ 6, 5491, 16 ], [ 6, 5209, 5 ], [ 6, 5310, 8 ], [ 6, 5448, 12 ], 
  [ 6, 5491, 24 ], [ 6, 5517, 6 ], [ 6, 5517, 7 ], [ 6, 5503, 8 ], [ 6, 5557, 11 ], 
  [ 6, 5517, 3 ], [ 6, 5517, 4 ], [ 6, 5517, 8 ], [ 6, 5517, 1 ], [ 6, 5517, 2 ], 
  [ 6, 5529, 2 ], [ 6, 5529, 4 ], [ 6, 5533, 5 ], [ 6, 5517, 5 ], [ 6, 5529, 8 ], 
  [ 6, 5517, 9 ], [ 6, 5529, 13 ], [ 6, 5533, 17 ], [ 6, 5557, 15 ], [ 6, 5533, 18 ], 
  [ 6, 5566, 1 ], [ 6, 5566, 2 ], [ 6, 5582, 1 ], [ 6, 5582, 2 ], [ 6, 5582, 3 ], 
  [ 6, 5582, 4 ], [ 6, 5582, 5 ], [ 6, 5578, 11 ], [ 6, 5584, 1 ], [ 6, 5584, 2 ], 
  [ 6, 5599, 5 ], [ 6, 5601, 1 ], [ 6, 5749, 3 ], [ 6, 5749, 4 ], [ 6, 5667, 1 ], 
  [ 6, 5749, 2 ], [ 6, 5700, 3 ], [ 6, 5830, 8 ], [ 6, 5830, 9 ], [ 6, 5830, 10 ], 
  [ 6, 5830, 11 ], [ 6, 5830, 12 ], [ 6, 5889, 11 ], [ 6, 5993, 1 ], [ 6, 5830, 4 ], 
  [ 6, 5993, 2 ], [ 6, 5830, 6 ], [ 6, 6090, 6 ], [ 6, 5830, 7 ], [ 6, 6098, 8 ], 
  [ 6, 6322, 1 ], [ 6, 6322, 2 ], [ 6, 6404, 5 ], [ 6, 6509, 1 ], [ 6, 6322, 5 ], 
  [ 6, 6213, 11 ], [ 6, 6878, 2 ], [ 6, 6568, 1 ], [ 6, 6722, 2 ], [ 6, 3313, 1 ], 
  [ 6, 3328, 1 ], [ 6, 3328, 2 ], [ 6, 3330, 1 ], [ 6, 3332, 2 ], [ 6, 3358, 1 ], 
  [ 6, 3362, 2 ], [ 6, 3446, 1 ], [ 6, 3376, 3 ], [ 6, 3416, 3 ], [ 6, 3376, 6 ], 
  [ 6, 3478, 1 ], [ 6, 3478, 2 ], [ 6, 3635, 5 ], [ 6, 3635, 6 ], [ 6, 3513, 7 ], 
  [ 6, 3513, 8 ], [ 6, 3719, 5 ], [ 6, 3719, 6 ], [ 6, 3635, 11 ], [ 6, 3635, 12 ], 
  [ 6, 3891, 1 ], [ 6, 3988, 2 ], [ 6, 3988, 4 ], [ 6, 3795, 3 ], [ 6, 4349, 1 ], 
  [ 6, 4586, 4 ], [ 6, 4616, 2 ], [ 6, 4232, 2 ], [ 6, 4586, 8 ], [ 6, 4528, 7 ], 
  [ 6, 2965, 1 ], [ 6, 2961, 3 ], [ 6, 2988, 1 ], [ 6, 2988, 2 ], [ 6, 2982, 5 ], 
  [ 6, 2982, 6 ], [ 6, 2989, 1 ], [ 6, 3020, 1 ], [ 6, 3065, 2 ], [ 6, 3054, 3 ], 
  [ 6, 3065, 1 ], [ 6, 3113, 3 ], [ 6, 3113, 4 ], [ 6, 3097, 5 ], [ 6, 3097, 6 ], 
  [ 6, 3097, 7 ], [ 6, 3097, 8 ], [ 6, 3113, 1 ], [ 6, 3113, 2 ], [ 6, 3120, 2 ], 
  [ 6, 3120, 1 ], [ 6, 3127, 2 ], [ 6, 3193, 2 ], [ 6, 3193, 1 ], [ 6, 3272, 1 ], 
  [ 6, 3272, 2 ], [ 6, 3272, 5 ], [ 6, 3272, 3 ], [ 6, 3272, 4 ], [ 6, 3258, 5 ], 
  [ 6, 3296, 5 ], [ 6, 3296, 6 ], [ 6, 3282, 8 ], [ 6, 3282, 9 ], [ 6, 3296, 1 ], 
  [ 6, 3296, 4 ], [ 6, 3282, 5 ], [ 6, 3296, 2 ], [ 6, 3296, 3 ], [ 6, 2773, 1 ], 
  [ 6, 2773, 2 ], [ 6, 2773, 3 ], [ 6, 2772, 5 ], [ 6, 2772, 2 ], [ 6, 2750, 4 ], 
  [ 6, 2803, 1 ], [ 6, 2804, 1 ], [ 6, 2804, 2 ], [ 6, 2866, 3 ], [ 6, 2866, 2 ], 
  [ 6, 2930, 1 ], [ 6, 2932, 2 ], [ 6, 2932, 1 ], [ 6, 2945, 1 ], [ 6, 2949, 1 ], 
  [ 6, 2952, 1 ], [ 6, 2952, 2 ], [ 6, 2952, 3 ], [ 6, 2953, 2 ], [ 6, 2953, 3 ], 
  [ 6, 2953, 1 ] ]
gap> Length(last); # there exist 841 Bravais groups of dimension 6
841
\end{verbatim}
\end{example}

\bigskip

\begin{remark}
We can get all the maximal group $G\leq \GL(n,\bZ)$ $(n\leq 6)$
by the function {\tt MaximalGroupsID(L)} as in Subsection \ref{ssMax} 
applying to only the Bravais groups $L$ (cf. Subsection \ref{ssMax}). 
For example, we get the following for $n=6$ which agrees 
the result in Subsection \ref{ssMax}: 

\bigskip

\begin{verbatim}
gap> Read("caratnumber.gap");
gap> Read("KS.gap");

gap> MaximalGroupsID(b6ca:Carat,FromPerm);
[ [ 6, 5209, 1 ], [ 6, 5209, 5 ], [ 6, 5517, 3 ], [ 6, 5517, 4 ], [ 6, 5517, 8 ], 
  [ 6, 5517, 5 ], [ 6, 6509, 1 ], [ 6, 6878, 2 ], [ 6, 6568, 1 ], [ 6, 3891, 1 ], 
  [ 6, 3795, 3 ], [ 6, 4232, 2 ], [ 6, 3120, 2 ], [ 6, 3120, 1 ], [ 6, 3193, 2 ], 
  [ 6, 3193, 1 ], [ 6, 3272, 5 ], [ 6, 3272, 3 ], [ 6, 3296, 5 ], [ 6, 3296, 1 ], 
  [ 6, 3296, 4 ], [ 6, 3296, 2 ], [ 6, 2773, 1 ], [ 6, 2773, 2 ], [ 6, 2773, 3 ], 
  [ 6, 2772, 5 ], [ 6, 2772, 2 ], [ 6, 2750, 4 ], [ 6, 2803, 1 ], [ 6, 2804, 1 ], 
  [ 6, 2804, 2 ], [ 6, 2866, 3 ], [ 6, 2866, 2 ], [ 6, 2932, 2 ], [ 6, 2932, 1 ], 
  [ 6, 2945, 1 ], [ 6, 2952, 1 ], [ 6, 2952, 2 ], [ 6, 2952, 3 ] ]
gap> Length(last); # there exist 39 maximal groups in dimension 6
39
\end{verbatim}
\end{remark}

\bigskip

%%%%%%%%%%%%%%%%%%%%%%%%%%%%%%%%%%%

Voskresenskii (see \cite{Vos83}, \cite[Section 8]{Vos98}) 
investigated the rationality problem for algebraic tori $T$ 
which correspond to Bravais groups $G\leq\GL(n,\bZ)$. 
%, i.e. the rationality problem $L(M)^G$
Using the classification of Bravais groups above, 
we will show that 
$H^1(G,[M_G]^{fl})=0$ for any Bravais group $G$ of dimension $n\leq 6$ 
in Section \ref{seBravais} (see Theorem \ref{thBravais6}).

%%%%%%%%%%%%%%%%%%%

By using the algorithms below, one can obtain (positive definite) 
invariant quadratic forms $f$ (or corresponding invariant forms $F\in\bR_{\rm sym}^{n\times n}$) under the action of Bravais group $G\leq\GL(n,\bZ)$. 
%for any $G\leq \GL(n,\bZ)$ with $n\leq 6$. 
%
We will present the result only for the maximal groups $G\leq\GL(n,\bZ)$ 
and for $n\leq 6$ in the end of this section. 
For the maximal cases, 
see also \cite{Dad65} for $n=4$, \cite{Rys72a}, \cite{Rys72b}, 
\cite{RL80} for $n=5$, \cite{PP77} for $n=6$ (irreducible cases). 
%see also \cite{}. 
%
The following algorithms 
are available from 
{\tt http://math.h.kyoto-u.ac.jp/\~{}yamasaki/Algorithm/} 
as {\tt Bravais.gap}.

\bigskip

\noindent
{\tt InvariantFormsMatrix(G)} returns 
a basis of the space of invariant forms 
$\mathcal{F}(G)=\{F\in\bR_{\rm sym}^{n\times n}\mid g\,F\, {}^tg=F\ 
({\rm for\ any}\ g\in G)\}$ for a finite group $G\leq\GL(n,\bZ)$. 
%a $\bZ$-basis of the set of all metric tensors
%of a finite matrix group $G$.

\noindent
{\tt InvariantQuadraticForms(G)}
returns a %$\bZ$-
basis of the set of all invariant quadratic forms
under the action of $G\leq\GL(n,\bZ)$.

\noindent
{\tt AutomorphismGroupOfMatrix(F)}
returns the automorphism group $G=\{g\in\GL(n,\bZ)\mid g\,F\, {}^tg=F\}$ of a positive 
definite symmetric matrix $F\in\bR_{\rm sym}^{n\times n}$.

\noindent
{\tt QuadraticFormToMatrix(f)} 
returns a symmetric matrix $F$ which corresponds to quadratic form $f$.

\noindent
{\tt QuadraticFormToMatrix(f,R)},
(resp. {\tt QuadraticFormToMatrix(f,[}$x_1,\dots,x_n${\tt ])}) 
returns the same as\\ {\tt QuadraticFormToMatrix(f)} but 
with respect to the base ring $R$ (resp. variables $x_1,\dots,x_n$). 

\noindent
{\tt MatrixToQuadraticForm(F)} returns a quadratic form $f$ 
which corresponds to a symmetric matrix $F$.

\noindent
{\tt MatrixToQuadraticForm(F,R)},
(resp. {\tt MatrixToQuadraticForm(F,[}$x_1,\dots,x_n${\tt ])}) 
returns the same as\\ {\tt MatrixToQuadraticForm(F)} 
but with respect to the base ring $R$ (resp. variables $x_1,\dots,x_n$).

\noindent
{\tt IsPositiveDefinite(F)}
returns whether a symmetric matrix $F$ is positive definite or not.

\bigskip

\begin{verbatim}
InvariantFormsMatrix:= function(G)
    local d,gg,ut,bs,b,i,m,l,g,p;
    d:=Length(Identity(G));
    gg:=GeneratorsOfGroup(G);
    ut:=UnorderedTuples([1..d],2);
    bs:=[];
    for i in ut do
        b:=NullMat(d,d);
        b[i[1]][i[2]]:=1;
        b[i[2]][i[1]]:=1;
        Add(bs,b);
    od;
    if gg=[] then
        return bs;
    fi;
    m:=[];
    for b in bs do
        l:=[];
        for g in gg do
            p:=g*b*TransposedMat(g)-b;
            l:=Concatenation(l,List(ut,x->p[x[1]][x[2]]));
        od;
        Add(m,l);
    od;
    return List(NullspaceIntMat(m),x->x*bs);
end;

InvariantQuadraticForms:= function(G)
    local d,R,xx,gg,ut,bp,bm,b,i,bx,m,l,g,p;
    d:=Length(Identity(G));
    R:=PolynomialRing(Integers,d);
    xx:=IndeterminatesOfPolynomialRing(R);
    gg:=GeneratorsOfGroup(G);
    ut:=UnorderedTuples([1..d],2);
    bp:=List(ut,x->xx[x[1]]*xx[x[2]]);
    bm:=[];
    for i in ut do
        b:=NullMat(d,d);
        if i[1]=i[2] then
            b[i[1]][i[2]]:=1;
        else
            b[i[1]][i[2]]:=1/2;
            b[i[2]][i[1]]:=1/2;
        fi;
        Add(bm,b);
    od;
    if gg=[] then
        bx:=bp;
    else
        m:=[];
        for b in bm do
            l:=[];
            for g in gg do
                p:=g*b*TransposedMat(g)-b;
                l:=Concatenation(l,List(ut,x->p[x[1]][x[2]]*Length(Set(x))));
            od;
            Add(m,l);
        od;
        bx:=List(NullspaceIntMat(m),x->x*bp);
    fi;
    return bx;
end;

AutomorphismGroupOfMatrix:= function(M)
    local d,LM,M0,P,nb,m,vv,cb,ii,sp,gg,g;
    d:=Length(M);
    LM:=LLLReducedGramMat(M);
    M0:=LM.remainder;
    P:=LM.transformation;
    nb:=List([1..d],x->M0[x][x]);
    m:=Maximum(nb);
    vv:=ShortestVectors(M0,m);
    cb:=List(nb,x->Filtered(vv.vectors,y->y*M0*y=x));
    cb:=List(cb,x->Concatenation(List(x,y->[y,-y])));
    ii:=List([1..d],Zero);
    ii[1]:=1;
    sp:=1;
    gg:=[];
    while sp>0 do
        if ii[sp]>Length(cb[sp]) then
            sp:=sp-1;
            if sp>0 then
                ii[sp]:=ii[sp]+1;
            fi;
        elif ForAll([1..sp-1],x->cb[x][ii[x]]*M0*cb[sp][ii[sp]]=M0[x][sp]) then
            if sp<d then
                sp:=sp+1;
                ii[sp]:=1;
            else
                g:=List([1..d],x->cb[x][ii[x]])^P;
                if not g in Group(gg,IdentityMat(d)) then
                    Add(gg,g);
                fi;
                ii[sp]:=ii[sp]+1;
            fi;
        else
            ii[sp]:=ii[sp]+1;
        fi;
    od;
    return Group(gg);
end;

QuadraticFormToMatrix:= function(arg)
    local f,R,xx,d,M;
    f:=arg[1];
    if Length(arg)=1 then
        R:=DefaultRing(f);
        xx:=IndeterminatesOfPolynomialRing(R);
    elif IsRing(arg[2]) then
        R:=arg[2];
        xx:=IndeterminatesOfPolynomialRing(R);
    else
        xx:=arg[2];
    fi;
    d:=Length(xx);
    M:=List([1..d],i->List([1..d],j->Derivative(Derivative(f,xx[i]),xx[j])));
    M:=List([1..d],i->List([1..d],j->Value(M[i][j],xx,List(xx,x->0))));
    return M/2;
end;

MatrixToQuadraticForm:= function(arg)
    local M,d,R,xx;
    M:=arg[1];
    d:=Length(M);
    if Length(arg)=1 then
        R:=PolynomialRing(Integers,d);
        xx:=IndeterminatesOfPolynomialRing(R);
    elif IsRing(arg[2]) then
        R:=arg[2];
        xx:=IndeterminatesOfPolynomialRing(R);
    else
        xx:=arg[2];
    fi;
    return xx*M*xx;
end;

IsPositiveDefinite:= function(M)
    local d;
    d:=Length(M);
    return ForAll([1..d],x->Determinant(M{[1..x]}{[1..x]})>0);
end;
\end{verbatim}

\bigskip

\begin{example}[Functions related to Bravais groups $G\leq\GL(n,\bZ)$ in {\tt Bravais.gap}]
{}~{}\\
\begin{verbatim}
gap> Read("Bravais.gap");
gap> Read("caratnumber.gap");
gap> Read("KS.gap");

gap> InvariantFormsMatrix(ImfMatrixGroup(4,1,1));
[ [ [ 2, 1, 0, 0 ], 
    [ 1, 2, 1, 1 ], 
    [ 0, 1, 2, 0 ], 
    [ 0, 1, 0, 2 ] ] ]
gap> InvariantFormsMatrix(IndmfMatrixGroup(6,10,1));
[ [ [ 1, 0, 0, 0, 0, 0 ], 
    [ 0, 1, 0, 0, 0, 0 ], 
    [ 0, 0, 0, 0, 0, 0 ], 
    [ 0, 0, 0, 0, 0, 0 ], 
    [ 0, 0, 0, 0, 1, 0 ], 
    [ 0, 0, 0, 0, 0, 0 ] ], 
  [ [ 0, 3, -2, -4, -3, -6 ], 
    [ 3, 0, -2, -4, -3, -6 ], 
    [ -2, -2, 4, 4, 2, 4 ], 
    [ -4, -4, 4, 8, 4, 8 ], 
    [ -3, -3, 2, 4, 0, 6 ],
    [ -6, -6, 4, 8, 6, 12 ] ] ]
gap> InvariantQuadraticForms(ImfMatrixGroup(4,1,1));
[ x_1^2+x_1*x_2+x_2^2+x_2*x_3+x_2*x_4+x_3^2+x_4^2 ]
gap> InvariantQuadraticForms(IndmfMatrixGroup(6,10,1));
[ x_1^2+x_2^2+x_5^2, 
  3*x_1*x_2-2*x_1*x_3-4*x_1*x_4-3*x_1*x_5-6*x_1*x_6-2*x_2*x_3
  -4*x_2*x_4-3*x_2*x_5-6*x_2*x_6+2*x_3^2+4*x_3*x_4+2*x_3*x_5
  +4*x_3*x_6+4*x_4^2+4*x_4*x_5+8*x_4*x_6+6*x_5*x_6+6*x_6^2 ]

gap> AutomorphismGroupOfMatrix([[1,0],[0,1]]);
Group([ [ [ 0, 1 ], [ 1, 0 ] ], [ [ 0, 1 ], [ -1, 0 ] ] ])
gap> StructureDescription(last); # the dihedral group of order 8
"D8"
gap> AutomorphismGroupOfMatrix([[2,1],[1,2]]);
Group([ [ [ 0, 1 ], [ -1, 1 ] ], [ [ 0, 1 ], [ 1, 0 ] ] ])
gap> StructureDescription(last); # the dihedral group of order 12
"D12"

gap> R:=PolynomialRing(Integers,4);
Integers[x_1,x_2,x_3,x_4]
gap> [R.1,R.2,R.3,R.4];
[ x_1, x_2, x_3, x_4 ]
gap> QuadraticFormToMatrix(R.1^2+R.1*R.2+R.2^2+R.3^2+R.3*R.4+R.4^2);
[ [ 1, 1/2, 0, 0 ], [ 1/2, 1, 0, 0 ], [ 0, 0, 1, 1/2 ], [ 0, 0, 1/2, 1 ] ]
gap> QuadraticFormToMatrix(R.1^2+R.1*R.2+R.2^2+R.3^2+R.3*R.4+R.4^2,R);
[ [ 1, 1/2, 0, 0 ], [ 1/2, 1, 0, 0 ], [ 0, 0, 1, 1/2 ], [ 0, 0, 1/2, 1 ] ]
gap> QuadraticFormToMatrix(R.1^2+R.1*R.2+R.2^2+R.3^2+R.3*R.4+R.4^2,[R.1,R.2,R.3,R.4]);
[ [ 1, 1/2, 0, 0 ], [ 1/2, 1, 0, 0 ], [ 0, 0, 1, 1/2 ], [ 0, 0, 1/2, 1 ] ]
gap> QuadraticFormToMatrix(R.1^2+R.1*R.2+R.2^2+R.3^2+R.3*R.4+R.4^2,[R.1,R.3,R.2,R.4]);
[ [ 1, 0, 1/2, 0 ], [ 0, 1, 0, 1/2 ], [ 1/2, 0, 1, 0 ], [ 0, 1/2, 0, 1 ] ]
gap> MatrixToQuadraticForm(IdentityMat(4));
x_1^2+x_2^2+x_3^2+x_4^2
gap> MatrixToQuadraticForm(IdentityMat(4),R);
x_1^2+x_2^2+x_3^2+x_4^2
gap> MatrixToQuadraticForm(IdentityMat(4),[R.1,R.2,R.3,R.4]);
x_1^2+x_2^2+x_3^2+x_4^2

gap> IsPositiveDefinite([[2,1],[1,2]]);
true
gap> IsPositiveDefinite([[2,2],[2,2]]);
false
\end{verbatim}
\end{example}

\newpage
%\bigskip

{\small
\begin{longtable}{lll}
GAP ID/\\
CARAT ID & Imf/Indmf & Invariant (positive definite) quadratic form of maximal group $G\leq\GL(n,\bZ)$\\
\hline
(1,2,1) & Imf(1,1,1) & $a_1x_1^2 \ (a_1>0)$ \\
\hline
%%%%%%%%%%%%%%%%%%%%%%%%%%%%%%%%%%%%%%%%%%%%%%%%%%%%%%%%%%%%%%%%%
(2,3,2,1) & Imf(2,1,1) & $a_1 (x_1^2+x_2^2) \ (a_1>0)$ \\

(2,4,4,1) & Imf(2,2,1) & $a_1 (x_1^2-x_1x_2+x_2^2) \ (a_1>0)$ \\
\hline
%%%%%%%%%%%%%%%%%%%%%%%%%%%%%%%%%%%%%%%%%%%%%%%%%%%%%%%%%%%%%%%%%
(3,6,7,1) & Imf(2,2,1)$\times$Imf(1,1,1) & $a_1(x_1^2-x_1x_2+x_2^2)+a_2x_3^2 \ (a_1,a_2>0)$ \\

(3,7,5,1) & Imf(3,1,1) & $a_1(x_1^2+x_2^2+x_3^2) \ (a_1>0)$ \\

(3,7,5,2) & Imf(3,1,2) & $a_1(3x_1^2-2x_1x_2-2x_1x_3+3x_2^2-2x_2x_3+3x_3^2) \ (a_1>0)$\\

(3,7,5,3) & Imf(3,1,3) & $a_1(x_1^2+x_1x_2+x_1x_3+x_2^2+x_2x_3+x_3^2) \ (a_1>0)$\\
\hline
%%%%%%%%%%%%%%%%%%%%%%%%%%%%%%%%%%%%%%%%%%%%%%%%%%%%%%%%%%%%%%%%%
(4,20,22,1) & Imf(2,1,1)$\times$Imf(2,2,1) &
$a_1(x_1^2+x_2^2)+a_2(x_3^2-x_3x_4+x_4^2) \ (a_1,a_2>0)$\\

(4,25,11,2) & Imf(3,1,3)$\times$Imf(1,1,1) &
$a_1(x_1^2+x_1x_2+x_1x_3+x_2^2+x_2x_3+x_3^2)+a_2x_4^2 \ (a_1,a_2>0)$\\

(4,25,11,4) & Imf(3,1,2)$\times$Imf(1,1,1) &
 $a_1(3x_1^2-2x_1x_2-2x_1x_3+3x_2^2-2x_2x_3+3x_3^2)+a_2x_4^2 \ (a_1,a_2>0)$\\

(4,29,9,1) & Imf(4,5,1) &
$a_1(2x_1^2-2x_1x_2-2x_1x_3+x_1x_4+2x_2^2+x_2x_3-2x_2x_4+2x_3^2-2x_3x_4+2x_4^2) \ (a_1>0)$\\

(4,30,13,1) & Imf(4,2,1) &
$a_1(x_1^2-x_1x_2+x_2^2+x_3^2-x_3x_4+x_4^2) \ (a_1>0)$\\

(4,31,7,1) & Imf(4,3,1) &
$a_1(x_1^2+x_1x_2+x_1x_3+x_1x_4+x_2^2+x_2x_3+x_2x_4+x_3^2+x_3x_4+x_4^2) \ (a_1>0)$\\

(4,31,7,2) & Imf(4,3,2) &
$a_1(2x_1^2-x_1x_2-x_1x_3-x_1x_4+2x_2^2-x_2x_3-x_2x_4+2x_3^2-x_3x_4+2x_4^2) \ (a_1>0)$\\

(4,32,21,1) & Imf(4,4,1) & $a_1(x_1^2+x_2^2+x_3^2+x_4^2) \ (a_1>0)$ \\

(4,33,16,1) & Imf(4,1,1) &
$a_1(x_1^2+x_1x_2+x_2^2+x_2x_3+x_2x_4+x_3^2+x_4^2) \ (a_1>0)$\\
\hline
%%%%%%%%%%%%%%%%%%%%%%%%%%%%%%%%%%%%%%%%%%%%%%%%%%%%%%%%%%%%%%%%%
(5,559,3) & Imf(3,1,2)$\times$Imf(2,1,1) &
$a_1(3x_1^2-2x_1x_2-2x_1x_3+3x_2^2-2x_2x_3+3x_3^2)+a_2(x_4^2+x_5^2) \ (a_1,a_2>0)$\\

(5,559,4) & Imf(3,1,3)$\times$Imf(2,1,1) &
$a_1(x_1^2+x_1x_2+x_1x_3+x_2^2+x_2x_3+x_3^2)+a_2(x_4^2+x_5^2) \ (a_1,a_2>0)$\\

(5,626,1) & Imf(3,1,2)$\times$Imf(2,2,1) &
$a_1(3x_1^2-2x_1x_2-2x_1x_3+3x_2^2-2x_2x_3+3x_3^2)+a_2(x_4^2-x_4x_5+x_5^2) \ (a_1,a_2>0)$\\

(5,626,2) & Imf(3,1,1)$\times$Imf(2,2,1) &
$a_1(x_1^2+x_2^2+x_3^2)+a_2(x_4^2-x_4x_5+x_5^2) \ (a_1,a_2>0)$\\

(5,626,3) & Imf(3,1,3)$\times$Imf(2,2,1) &
$a_1(x_1^2+x_1x_2+x_1x_3+x_2^2+x_2x_3+x_3^2)+a_2(x_4^2-x_4x_5+x_5^2) \ (a_1,a_2>0)$\\

(5,690,1) & Imf(4,1,1)$\times$Imf(1,1,1) &
$a_1(x_1^2+x_1x_2+x_2^2+x_2x_3+x_2x_4+x_3^2+x_4^2)+a_2x_5^2 \ (a_1,a_2>0)$\\

(5,836,2) & Imf(4,5,1)$\times$Imf(1,1,1) &
$a_1(2x_1^2-2x_1x_2-2x_1x_3+x_1x_4+2x_2^2+x_2x_3-2x_2x_4+2x_3^2-2x_3x_4+2x_4^2)$\\
&&$+a_2x_5^2 \ (a_1,a_2>0)$\\

(5,866,1) & Imf(4,2,1)$\times$Imf(1,1,1) &
$a_1(x_1^2-x_1x_2+x_2^2+x_3^2-x_3x_4+x_4^2)+a_2x_5^2 \ (a_1,a_2>0)$\\

(5,930,1) & Imf(4,3,2)$\times$Imf(1,1,1) &
$a_1(2x_1^2-x_1x_2-x_1x_3-x_1x_4+2x_2^2-x_2x_3-x_2x_4+2x_3^2-x_3x_4+2x_4^2)$\\
&&$+a_2x_5^2 \ (a_1,a_2>0)$\\

(5,930,2) & Imf(4,3,1)$\times$Imf(1,1,1) &
$a_1(x_1^2+x_1x_2+x_1x_3+x_1x_4+x_2^2+x_2x_3+x_2x_4+x_3^2+x_3x_4+x_4^2)+a_2x_5^2$\\
&&$\ (a_1,a_2>0)$\\

(5,942,1) & Imf(5,1,1) &
$a_1(x_1^2+x_2^2+x_3^2+x_4^2+x_5^2) \ (a_1>0)$ \\

(5,942,2) & Imf(5,1,2) &
$a_1(x_1^2+x_1x_2+x_2^2+x_2x_3+x_3^2+x_3x_4+x_3x_5+x_4^2+x_5^2) \ (a_1>0)$\\

(5,942,3) & Imf(5,1,3) &
$a_1(4x_1^2+4x_1x_5+4x_2^2+4x_2x_5+4x_3^2+4x_3x_5+4x_4^2+4x_4x_5+5x_5^2) \ (a_1>0)$\\

(5,949,1) & Imf(5,2,1) &
$a_1(5x_1^2-2x_1x_2-2x_1x_3-2x_1x_4-2x_1x_5+5x_2^2-2x_2x_3-2x_2x_4-2x_2x_5$\\
&&$+5x_3^2-2x_3x_4-2x_3x_5+5x_4^2-2x_4x_5+5x_5^2) \ (a_1>0)$\\

(5,949,2) & Imf(5,2,3) &
$a_1(2x_1^2+x_1x_2-2x_1x_3-2x_1x_4-2x_1x_5+2x_2^2+x_2x_3-2x_2x_4+x_2x_5$\\
&&$+2x_3^2+x_3x_4+x_3x_5+2x_4^2+x_4x_5+2x_5^2) \ (a_1>0)$\\

(5,949,3) & Imf(5,2,4) &
$a_1(3x_1^2+2x_1x_2-2x_1x_3-2x_1x_4-2x_1x_5+3x_2^2+2x_2x_3-2x_2x_4+2x_2x_5$\\
&&$+3x_3^2+2x_3x_4+2x_3x_5+3x_4^2+2x_4x_5+3x_5^2) \ (a_1>0)$\\

(5,949,4) & Imf(5,2,2) &
$a_1(x_1^2+x_1x_2+x_1x_3+x_1x_4+x_1x_5+x_2^2+x_2x_3+x_2x_4+x_2x_5$\\
&&$+x_3^2+x_3x_4+x_3x_5+x_4^2+x_4x_5+x_5^2) \ (a_1>0)$\\
%%%%%%%%%%%%%%%%%%%%%%%%%%%%%%%%%%%%%%%%%%%%%%%%%%%%%%%%%%%%
\hline
(6,5209,1) & Imf(3,1,3)$\times$Imf(2,2,1)& 
$a_1(x_1^2+x_1x_2+x_1x_3+x_2^2+x_2x_3+x_3^2)+a_2(x_4^2-x_4x_5+x_5^2)+a_3x_6^2$
\\
&$\times$Imf(1,1,1) & 
$\ (a_1,a_2,a_3>0)$
\\

(6,5209,5) & Imf(3,1,2)$\times$Imf(2,2,1)& 
$a_1(3x_1^2-2x_1x_2-2x_1x_3+3x_2^2-2x_2x_3+3x_3^2)+a_2(x_4^2-x_4x_5+x_5^2)+a_3x_6^2$\\
&$\times$Imf(1,1,1) &
$\ (a_1,a_2>0)$\\

(6,5517,3) & Imf(3,1,1)$\times$Imf(3,1,3) &
$a_1(x_1^2+x_2^2+x_3^2)+a_2(x_4^2+x_4x_5+x_4x_6+x_5^2+x_5x_6+x_6^2) \ (a_1,a_2>0)$\\

(6,5517,4) & Indmf(6,10,1) &
$a_1(x_1^2+x_2^2+x_5^2)+a_2(3x_1x_2-2x_1x_3-4x_1x_4-3x_1x_5-6x_1x_6$\\
&&$-2x_2x_3-4x_2x_4-3x_2x_5-6x_2x_6+2x_3^2+4x_3x_4+2x_3x_5+4x_3x_6$\\
&&$+4x_4^2+4x_4x_5+8x_4x_6+6x_5x_6+6x_6^2) \ (\frac{2}{3}a_1>a_2>0)$\\

(6,5517,8) & Imf(3,1,2)$\times$Imf(3,1,3) &
$a_1(3x_1^2-2x_1x_2-2x_1x_3+3x_2^2-2_2x_3+3x_3^2)$\\
&&$+a_2(x_4^2+x_4x_5+x_4x_6+x_5^2+x_5x_6+x_6^2) \ (a_1,a_2>0)$\\

(6,5517,5) & Imf(3,1,1)$\times$Imf(3,1,2) &
$a_1(x_1^2+x_2^2+x_3^2)+a_2(3x_4^2-2x_4x_5-2x_4x_6+3x_5^2-2x_5x_6+3x_6^2) \ (a_1,a_2>0)$\\

(6,6509,1) & Imf(4,1,1)$\times$Imf(2,1,1) &
$a_1(x_1^2+x_1x_2+x_2^2+x_2x_3+x_2x_4+x_3^2+x_4^2)+a_2(x_5^2+x_6^2) \ (a_1,a_2>0)$\\

(6,6878,2) & Imf(4,4,1)$\times$Imf(2,2,1) &
$a_1(x_1^2+x_2^2+x_3^2+x_4^2)+a_2(x_5^2-x_5x_6+x_6^2) \ (a_1,a_2>0)$\\

(6,6568,1) & Imf(4,1,1)$\times$Imf(2,2,1) &
$a_1(x_1^2+x_1x_2+x_2^2+x_2x_3+x_2x_4+x_3^2+x_4^2)+a_2(x_5^2-x_5x_6+x_6^2) \ (a_1,a_2>0)$\\

(6,3891,1) & Imf(4,2,1)$\times$Imf(2,1,1) &
$a_1(x_1^2-x_1x_2+x_2^2+x_3^2-x_3x_4+x_4^2)+a_2(x_5^2+x_6^2) \ (a_1,a_2>0)$\\

(6,3795,3) & Imf(4,5,1)$\times$Imf(2,1,1) &
$a_1(2x_1^2-2x_1x_2-2x_1x_3+x_1x_4+2x_2^2+x_2x_3-2x_2x_4$\\
&&$+2x_3^2-2x_3x_4+2x_4^2)+a_2(x_5^2+x_6^2) \ (a_1,a_2>0)$\\

(6,4232,2) & Imf(4,5,1)$\times$Imf(2,2,1) &
$a_1(2x_1^2-2x_1x_2-2x_1x_3+x_1x_4+2x_2^2+x_2x_3-2x_2x_4+2x_3^2-2x_3x_4+2x_4^2)$\\
&&$+a_2(x_5^2-x_5x_6+x_6^2) \ (a_1,a_2>0)$\\

(6,3120,2) & Imf(4.3,1)$\times$Imf(2,1,1) &
$a_1(x_1^2+x_1x_2+x_1x_3+x_1x_4+x_2^2+x_2x_3+x_2x_4+x_3^2+x_3x_4+x_4^2)$\\
&&$+a_2(x_5^2+x_6^2) \ (a_1,a_2>0)$\\

(6,3120,1) & Imf(4,3,2)$\times$Imf(2,1,1) &
$a_1(2x_1^2-x_1x_2-x_1x_3-x_1x_4+2x_2^2-x_2x_3-x_2x_4+2x_3^2-x_3x_4+2x_4^2)$\\
&&$+a_2(x_5^2+x_6^2) \ (a_1,a_2>0)$\\

(6,3193,2) & Imf(4,3,1)$\times$Imf(2,2,1)&
$a_1(x_1^2+x_1x_2+x_1x_3+x_1x_4+x_2^2+x_2x_3+x_2x_4+x_3^2+x_3x_4+x_4^2)$\\
&&$+a_2(x_5^2-x_5x_6+x_6^2) \ (a_1,a_2>0)$\\

(6,3193,1) & Imf(4,3,2)$\times$Imf(2,2,1) &
$a_1(2x_1^2-x_1x_2-x_1x_3-x_1x_4+2x_2^2-x_2x_3-x_2x_4+2x_3^2-x_3x_4+2x_4^2)$\\
&&$+a_2(x_5^2-x_5x_6+x_6^2) \ (a_1,a_2>0)$\\

(6,3272,5) & Imf(5,1,3)$\times$Imf(1,1,1) &
$a_1(4x_1^2+4x_1x_5+4x_2^2+4x_2x_5+4x_3^2+4x_3x_5+4x_4^2+4x_4x_5+5x_5^2)$\\
&&$+a_2x_6^2 \ (a_1,a_2>0)$\\

(6,3272,3) & Imf(5,1,2)$\times$Imf(1,1,1) &
$a_1(x_1^2+x_1x_2+x_2^2+x_2x_3+x_3^2+x_3x_4+x_3x_5+x_4^2+x_5^2)+a_2x_6^2 \ (a_1,a_2>0)$\\

(6,3296,5) & Imf(5,2,2)$\times$Imf(1,1,1)&
$a_1(x_1^2+x_1x_2+x_1x_3+x_1x_4+x_1x_5+x_2^2+x_2x_3+x_2x_4+x_2x_5$\\
&&$+x_3^2+x_3x_4+x_3x_5+x_4^2+x_4x_5+x_5^2)+a_2x_6^2 \ (a_1,a_2>0)$\\

(6,3296,1) & Imf(5,2,1)$\times$Imf(1,1,1) &
$a_1(5x_1^2-2x_1x_2-2x_1x_3-2x_1x_4-2x_1x_5+5x_2^2-2x_2x_3-2x_2x_4-2x_2x_5$\\
&&$+5x_3^2-2x_3x_4-2x_3x_5+5x_4^2-2x_4x_5+5x_5^2)+a_2x_6^2 \ (a_1,a_2>0)$\\

(6,3296,4) & Imf(5,2,4)$\times$Imf(1,1,1) &
$a_1(3x_1^2+2x_1x_2-2x_1x_3-2x_1x_4-2x_1x_5+3x_2^2+2x_2x_3-2x_2x_4+2x_2x_5$\\
&&$+3x_3^2+2x_3x_4+2x_3x_5+3x_4^2+2x_4x_5+3x_5^2)+a_2x_6^2 \ (a_1,a_2>0)$\\

(6,3296,2) & Imf(5,2,3)$\times$Imf(1,1,1) &
$a_1(2x_1^2+x_1x_2-2x_1x_3-2x_1x_4-2x_1x_5+2x_2^2+x_2x_3-2x_2x_4+x_2x_5$\\
&&$+2x_3^2+x_3x_4+x_3x_5+2x_4^2+x_4x_5+2x_5^2)+a_2x_6^2 \ (a_1,a_2>0)$\\

(6,2773,1) & Imf(6,1,1) &
$a_1(x_1^2+x_2^2+x_3^2+x_4^2+x_5^2+x_6^2) \ (a_1>0)$\\

(6,2773,2) & Imf(6,1,3) &
$a_1(2x_1^2+2x_1x_6+2x_2^2+2x_2x_6+2x_3^2+2x_3x_6+2x_4^2+2x_4x_6$\\
&&$+2x_5^2+2x_5x_6+3x_6^2) \ (a_1>0)$\\

(6,2773,3) & Imf(6,1,2) &
$a_1(x_1^2+x_1x_2+x_2^2+x_2x_3+x_3^2+x_3x_4+x_4^2+x_4x_5+x_4x_6+x_5^2+x_6^2) \ (a_1>0)$\\

(6,2772,5) & Imf(6,7,2) &
$a_1(3x_1^2-2x_1x_2-2x_1x_3+3x_2^2-2x_2x_3+3x_3^2+3x_4^2-2x_4x_5-2x_4x_6$\\
&&$+3x_5^2-2x_5x_6+3x_6^2) \ (a_1>0)$\\

(6,2772,2) & Imf(6,7,1) &
$a_1(x_1^2+x_1x_2+x_1x_3+x_2^2+x_2x_3+x_3^2+x_4^2+x_4x_5+x_4x_6$\\
&&$+x_5^2+x_5x_6+x_6^2) \ (a_1>0)$\\

(6,2750,4) & Imf(6,8,1) &
$a_1(3x_1^2+2x_1x_2+2x_1x_3+2x_1x_4+2x_1x_5+2x_1x_6+3x_2^2+2x_2x_3$\\
&&$+2x_2x_4+2x_2x_5+2x_2x_6+3x_3^2+2x_3x_4+2x_3x_5-2x_3x_6+3x_4^2$\\
&&$+2x_4x_5-2x_4x_6+3x_5^2+2x_5x_6+3x_6^2) \ (a_1>0)$\\

(6,2803,1) & Imf(6,2,1) &
$a_1(x_1^2-x_1x_2+x_2^2+x_3^2-x_3x_4+x_4^2+x_5^2-x_5x_6+x_6^2) \ (a_1>0)$\\

(6,2804,1) & Imf(6,3,2) &
$a_1(2x_1^2+x_1x_2-2x_1x_3-2x_1x_4+x_1x_5+x_1x_6+2x_2^2+x_2x_3-2x_2x_4$\\
&&$-2x_2x_5+x_2x_6+2x_3^2+x_3x_4-2x_3x_5-2x_3x_6$\\
&&$+2x_4^2+x_4x_5-2x_4x_6+2x_5^2+x_5x_6+2x_6^2) \ (a_1>0)$\\

(6,2804,2) & Imf(6,3,1) &
$a_1(x_1^2-x_1x_3+x_2^2-x_2x_4+x_3^2-x_3x_4+x_4^2-x_4x_5+x_5^2-x_5x_6+x_6^2) \ (a_1>0)$\\

(6,2866,3) & Imf(6,9,2) &
$a_1(3x_1^2-2x_1x_2-2x_1x_3-3x_1x_4+x_1x_5+x_1x_6+3x_2^2-2x_2x_3+x_2x_4$\\
&&$-3x_2x_5+x_2x_6+3x_3^2+x_3x_4+x_3x_5-3x_3x_6$\\
&&$+3x_4^2-2x_4x_5-2x_4x_6+3x_5^2-2x_5x_6+3x_6^2) \ (a_1>0)$\\

(6,2866,2) & Imf(6,9,1) &
$a_1(2x_1^2+2x_1x_2+2x_1x_3-2x_1x_4-x_1x_5-x_1x_6+2x_2^2+2x_2x_3-x_2x_4$\\
&&$-2x_2x_5-x_2x_6+2x_3^2-x_3x_4-x_3x_5-2x_3x_6+2x_4^2+2x_4x_5+2x_4x_6$\\
&&$+2x_5^2+2x_5x_6+2x_6^2) \ (a_1>0)$\\

(6,2932,2) & Imf(6,4,2) &
$a_1(x_1^2+x_1x_2+x_1x_3+x_1x_4+x_1x_5+x_1x_6+x_2^2+x_2x_3+x_2x_4+x_2x_5+x_2x_6$\\
&&$+x_3^2+x_3x_4+x_3x_5+x_3x_6+x_4^2+x_4x_5+x_4x_6+x_5^2+x_5x_6+x_6^2)\ (a_1>0)$\\

(6,2932,1) & Imf(6,4,1) &
$a_1(3x_1^2-x_1x_2-x_1x_3-x_1x_4-x_1x_5-x_1x_6+3x_2^2-x_2x_3-x_2x_4-x_2x_5$\\
&&$-x_2x_6+3x_3^2-x_3x_4-x_3x_5-x_3x_6+3x_4^2-x_4x_5-x_4x_6+3x_5^2-x_5x_6$\\
&&$+3x_6^2) \ (a_1>0)$\\

(6,2945,1) & Imf(6,5,1) &
$a_1(2x_1^2-x_1x_2-2x_1x_3+x_1x_4+x_1x_5-2x_1x_6+2x_2^2-x_2x_3-2x_2x_4+$\\
&&$x_2x_5+x_2x_6+2x_3^2-x_3x_4-2x_3x_5+x_3x_6+2x_4^2-x_4x_5-2x_4x_6+2x_5^2$\\
&&$-x_5x_6+2x_6^2) \ (a_1>0)$\\

(6,2952,1) & Imf(6,6,1) &
$a_1(3x_1^2-2x_1x_2-2x_1x_3+2x_1x_4+2x_1x_5+3x_2^2-2x_2x_3-2x_2x_4+2x_2x_6$\\
&&$+3x_3^2-2x_3x_5-2x_3x_6+3x_4^2-2x_4x_5+2x_4x_6+3x_5^2-2x_5x_6+3x_6^2) \ (a_1>0)$\\

(6,2952,2) & Imf(6,6,3) &
$a_1(5x_1^2+2x_1x_2-2x_1x_3-2x_1x_4+2x_1x_5+4x_1x_6+5x_2^2+2x_2x_3-2x_2x_4$\\
&&$-2x_2x_5+4x_2x_6+5x_3^2+2x_3x_4-2x_3x_5+4x_3x_6+5x_4^2+2x_4x_5+4x_4x_6$\\
&&$+5x_5^2+4x_5x_6+5x_6^2) \ (a_1>0)$\\

(6,2952,3) & Imf(6,6,2) &
$a_1(2x_1^2+x_1x_2+2x_1x_3+2x_1x_4+x_1x_5+2x_2^2+x_2x_3+2x_2x_4+2x_2x_5+x_2x_6$\\
&&$+2x_3^2+x_3x_4+2x_3x_5-x_3x_6+2x_4^2+x_4x_5-x_4x_6+2x_5^2+x_5x_6+2x_6^2) \ (a_1>0)$\\
\hline
\end{longtable}
}

\bigskip

%
%
%%%%%%%%%%%%%%%%%%%%%%%%%%%%%%%%%%%%%%%%%%%%%%%%%%%%%%%%%%%%%%%%%%%%%%%%%%%%%%%%%%%%%%
\section{GAP algorithms: the flabby class $[M_G]^{fl}$}\label{seAlg}

Let $G$ be a finite subgroup of $\GL(n,\bZ)$ and 
$M=M_G$ be the $G$-lattice as in Definition \ref{defMG}. 
When $G\simeq {\rm Gal}(L/k)$ for a Galois extension 
$L/k$, the field $L(M_G)^G$ may be regarded as the function field of 
an algebraic $k$-torus $T$ which splits $L$ via (\ref{acts}) (see Section \ref{seInt}).

In this section, we provide some GAP algorithms for 
computing the flabby class $[M]^{fl}$ of $M$ 
and for verifying whether $[M]^{fl}$ is invertible and $[M]^{fl}=0$. 
The following flow chart shows the structure of the GAP algorithms: 
~\\
\begin{center}
\fbox{
\begin{minipage}{6cm}
Algorithm F1.
Compute $[M_G]^{fl}$.
\end{minipage}}\hspace*{80mm}

$\bigg\downarrow$ \hspace*{4mm}\hspace*{50mm}

\fbox{
\begin{minipage}{6cm}
Algorithm F2.
Is $[M_G]^{fl}$ invertible?
\end{minipage}}$\xrightarrow[\textrm{No}]{\hspace*{1cm}}$ 
$L(M)^G$ is not retract $k$-rational.\hspace*{18mm}

$\bigg\downarrow$ {\scriptsize Yes}\hspace*{50mm}

\fbox{
\begin{minipage}{6cm}
Algorithm F3.
Is $[[M_G]^{fl}]^{fl}=0$ ?
\end{minipage}}$\xrightarrow[\textrm{Yes}]{\hspace*{1cm}}$
$L(M)^G$ is stably $k$-rational.\hspace*{25mm}

$\bigg\downarrow$ {\scriptsize No}\hspace*{50mm}

\fbox{
\begin{minipage}{6cm}
Algorithm F4.
Is $[M_G]^{fl}=0$ possible?
\end{minipage}}$\xrightarrow[\textrm{No}]{\hspace*{1cm}}$
$L(M)^G$ is not stably but retract $k$-rational.

$\bigg\downarrow$ {\scriptsize Yes}\hspace*{50mm}

%\fbox{
%\begin{minipage}{6cm}
%Algorithm F5.
%Is $[M_G]^{fl}$=0?\ \ \ or\ \ \\
%Algorithm F6. 
%Is $[M_G]^{fl}$=0?
%\end{minipage}}$\xrightarrow[\textrm{Yes}]{\hspace*{1cm}}$
%$L(M)^G$ is stably $k$-rational.\hspace*{24mm}
%\end{center}
\fbox{
\begin{minipage}{6cm}
Algorithm F5.\ \\
Algorithm F6. \ 
Is $[M_G]^{fl}$=0?\\
Algorithm F7.\ 
\end{minipage}}$\xrightarrow[\textrm{Yes}]{\hspace*{1cm}}$
$L(M)^G$ is stably $k$-rational.\hspace*{24mm}
\end{center}

\bigskip

%Alg. 0
\setcounter{subsection}{-1}
%SS 5.0
%%%%%%%%%%%%%%%%%%%%%%%%%%%%%%%%%%%%%%%%%%%%%%%%%%%%%%%%%%%%%%%%%%%%%%%%%%%%%%%%%%%%%%
\subsection{Determination whether $M_G$ is flabby (coflabby)}\label{ss50}~\\

Let $G$ be a finite subgroup of $\GL(n,\bZ)$ 
and $M=M_G$ be the corresponding $G$-lattice of rank $n$ as in Definition \ref{defMG}. 
We provide GAP some algorithms for computing $\widehat H^{-1}(G,M_G)$ and $H^1(G,M_G)$ 
and verifying whether the $G$-lattice $M_G$ is flabby (resp. coflabby).\\

\noindent
{\tt Hminus1(G)} returns the Tate cohomology group $\widehat H^{-1}(G,M_G)$.\\
{\tt H1(G)} returns the cohomology group $H^1(G,M_G)$.\\
{\tt IsFlabby(G)} returns whether $G$-lattice $M_G$ is flabby or not.\\
{\tt IsCoflabby(G)} returns whether $G$-lattice $M_G$ is coflabby or not.\\

Note that in the algorithms below 
we use the formulas $(\widehat B^{-1}(G,M) \otimes_\bZ \bQ) \cap M=\widehat Z^{-1}(G,M)$,
$(\widehat B^0(G,M) \otimes_\bZ \bQ) \cap M=\widehat Z^0(G,M)$ and 
$(B^1(G,M) \otimes_\bZ \bQ) \cap C^1(G,M)=Z^1(G,M)$ 
to compute $\widehat H^{-1}(G,M)$, $\widehat H^0(G,M)$ and $H^1(G,M)$ respectively.

\bigskip

%Alg. 0
\begin{algorithmF}[Determination whether $M_G$ is flabby (coflabby)]
{}~\\
\begin{verbatim}
Hminus1:= function(g)
    local m,gg,i,s,r;
    m:=[];
    gg:=GeneratorsOfGroup(g);
    if gg=[] then
        return [];
    else
        for i in gg do
            m:=Concatenation(m,i-Identity(g));
        od;
        s:=SmithNormalFormIntegerMat(m);
        r:=Rank(s);
        return List([1..r],x->s[x][x]);
    fi;
end;

H1:= function(g)
    local m,gg,i,s,r;
    m:=[];
    gg:=GeneratorsOfGroup(g);
    if gg=[] then
        return [];
    else
        for i in gg do
            m:=Concatenation(m,TransposedMat(i)-Identity(g));
        od;
        m:=TransposedMat(m);
        s:=SmithNormalFormIntegerMat(m);
        r:=Rank(s);
        return List([1..r],x->s[x][x]);
    fi;
end;

IsFlabby:= function(g)
    local h;
    h:=List(ConjugacyClassesSubgroups2(g),Representative);
    return ForAll(h,x->Product(Hminus1(x))=1);
end;

IsCoflabby:= function(g)
    local h;
    h:=List(ConjugacyClassesSubgroups2(g),Representative);
    return ForAll(h,x->Product(H1(x))=1);
end;
\end{verbatim}
\end{algorithmF}

\bigskip
%SS 5.1
%%%%%%%%%%%%%%%%%%%%%%%%%%%%%%%%%%%%%%%%%%%%%%%%%%%%%%%%%%%%%%%%%%%%%%%%%%%%%%%%%%%%%%
\subsection{Construction of the flabby class $[M_G]^{fl}$ of the $G$-lattice $M_G$}
\label{ss51}~\\

Let $G$ be a finite subgroup of $\GL(n,\bZ)$ 
and $M=M_G$ be the corresponding $G$-lattice of rank $n$ as in Definition \ref{defMG}. 
We want to construct a flabby resolution
$0 \rightarrow M \rightarrow P \rightarrow F \rightarrow 0$
with $\rank F$ not too large.
If $0 \rightarrow Q \rightarrow R \rightarrow M^\circ \rightarrow 0$
is a coflabby resolution of $M^\circ=\Hom_\bZ(M,\bZ)$
where $R$ is permutation and $Q$ is coflasque,
we can get a flasque resolution by taking a dual exact sequence
$0 \rightarrow M \rightarrow \Hom_\bZ(R,\bZ) \rightarrow \Hom_\bZ(Q,\bZ)
\rightarrow 0$.

We construct a coflabby resolution of $M^\circ$ first. 
Let $P^\circ$ be a permutation $G$-lattice and $P^\circ \xrightarrow{f} M^\circ$ be a 
$G$-homomorphism.
For a subgroup $H$ of $G$, suppose that $f$ maps $(P^\circ)^H$ surjectively to 
$(M^\circ)^{H}$, i.e.
\begin{equation} \label{coflabby}
(P^\circ)^H \xrightarrow{f|_{(P^\circ)^H}} (M^\circ)^H \rightarrow 0 \text{ is exact.} 
\end{equation}
Then $\widehat H^0(H,P^\circ) \rightarrow \widehat H^0(H,M^\circ)$ is surjective,
so that $\widehat H^1(H,Q)=0$ where $Q=\ker f$ by Lemma \ref{longexact}.
In order to get a coflabby resolution of $M^\circ$,
it is enough to construct a permutation $G$-lattice $P^\circ$ and a $G$-homomorphism $f$
such that (\ref{coflabby}) is satisfied
for all subgroups $H$ of $G$.

Let $\mathcal{P}^\circ$ be a finite subset of $M^\circ$ 
which is closed under the action of $G$.
Let $P^\circ$ be a free $\bZ$-module over $\mathcal{P}^\circ$, i.e. 
$P^\circ=\bZ[\mathcal{P}^\circ]$. 
The $G$-lattice $P^\circ$ is permutation naturally, and for 
$p \in \mathcal{P}^\circ, \, P^\circ \ni p \mapsto p \in 
\mathcal{P}^\circ \subset M^\circ$
defines a $G$-homomorphism $f: P^\circ \rightarrow M^\circ$.
If $\mathcal{P}^\circ$ contains a $\bZ$-basis of $(M^\circ)^H$ for all subgroups $H$ of $G$, 
then the condition (\ref{coflabby}) is satisfied for all subgroups $H\subset G$.

The condition (\ref{coflabby}) is consistent with the conjugation.
Namely, (\ref{coflabby}) is satisfied for $H$ if and only if
it is satisfied for $H^g=g^{-1}Hg$.
Thus it is enough to consider only the subgroups $H$ not mutually conjugate.

Let $\mathcal{H}=\{H_i\}_{i \in I}$ be the set of all conjugacy classes of subgroups of $G$.
Let $\mathcal{Q}_i$ be a free $\bZ$-basis of $(M^\circ)^{H_i}$.
Then 
\begin{align*}
\mathcal{P}^\circ=\bigcup_{r \in\mathcal{R}} {\rm Orb}_G (r),\ 
\mathcal{R}=\bigcup_{i\in I} \mathcal{Q}_i%\label{eqP}
\end{align*}
provides $P^\circ=\bZ[\mathcal{P}^\circ]$ which satisfies (\ref{coflabby}) 
for all $H_i \subset G$. 
Therefore we may obtain a coflabby resolution of $M$:
\begin{align}
0\rightarrow {\rm Ker} f\rightarrow P^\circ\xrightarrow{f} M^\circ\rightarrow 0.
\label{cr}
\end{align}
The following algorithm (Algorithm \ref{Alg1}) tries to find some ``reduced'' subset 
$\mathcal{R}^{\prime} \subset \mathcal{R}$ such that 
$P^\circ=\bZ[\mathcal{P}^\circ]$ satisfies the condition (\ref{coflabby}) where 
\begin{align}
\mathcal{P}^\circ=\bigcup_{r \in\mathcal{R}^{\prime}} {\rm Orb}_G (r).\label{eqPP}
\end{align}
This trial is performed in a computer calculation as follows:\\

The following algorithms construct a ``reduced'' flabby resolution of $M_G$: 
$0\rightarrow M_G\xrightarrow{\iota} P\xrightarrow{\phi} F\rightarrow 0$.\\ 

\noindent
{\tt Z0lattice(G)} returns a $\bZ$-basis of $(M_G)^G$.
%\hfill\break
%{\tt ReduceCoflabbyResolutionBase(G,HH,P)} returns a smaller $P^\circ$ 
%satisfying (\ref{coflabby}) for any $H\in\mathcal{H}$.
\hfill\break
{\tt FindCoflabbyResolutionBase(G,HH)} returns a $\bZ$-basis of a 
smaller permutation 
lattice $P^\circ$ satisfying (\ref{coflabby}) for any $H\in\mathcal{H}$.
\hfill\break
{\tt FlabbyResolution(G)} returns a flabby resolution of $M_G$ as follows: 
\hfill\break
{\tt FlabbyResolution(G).actionP} returns the matrix representation of 
the action of $G$ on $P$;
\hfill\break
{\tt FlabbyResolution(G).actionF} returns the matrix representation of 
the action of $G$ on $F$;
\hfill\break
{\tt FlabbyResolution(G).injection} returns the matrix which corresponds to 
the injection $\iota : M_G\rightarrow P$;
\hfill\break
{\tt FlabbyResolution(G).surjection} returns the matrix which corresponds to 
the surjection $\phi : P\rightarrow F$.

\bigskip

%Alg. 1
\begin{algorithmF}[{Construction of the flabby class $[M_G]^{fl}$ of the $G$-lattice $M_G$}]
\label{Alg1}
{}~\\
\begin{verbatim}
Z0lattice:= function(g)
    local gg,m,i;
    gg:=GeneratorsOfGroup(g);
    if gg=[] then
        return Identity(g);
    else
        m:=[];
        for i in gg do
            m:=Concatenation(m,TransposedMat(i)-Identity(g));
        od;
        m:=TransposedMat(m);
        return NullspaceIntMat(m);
    fi;
end;

ReduceCoflabbyResolutionBase:= function(g,hh,mi)
    local o,oo,z0,mi2;
    oo:=Orbits(g,mi);
    z0:=List(hh,Z0lattice);
    for o in oo do
       mi2:=Filtered(mi,x->(x in o)=fail);
       if ForAll([1..Length(hh)],i->LatticeBasis(List(Orbits(hh[i],mi2),Sum))=z0[i]) then
           mi:=mi2;
       fi;
    od;
    return mi;
end;

FindCoflabbyResolutionBase:= function(g,hh)
    local d,mi,h,z0,ll,i,o;
    d:=Length(Identity(g));
    mi:=[];
    for h in hh do
        z0:=Z0lattice(h);
        ll:=LatticeBasis(List(Orbits(h,mi),Sum));
        for i in z0 do
            if LatticeBasis(Concatenation(ll,[i]))<>ll then
                o:=Orbit(g,i);
                mi:=Concatenation(mi,o);
                o:=List(Orbits(h,o),Sum);
                ll:=LatticeBasis(Concatenation(ll,o));
            fi;
        od;
    od;
    return    ReduceCoflabbyResolutionBase(g,hh,mi);
end;

FlabbyResolution:= function(g)
    local tg,gg,d,th,mi,ms,o,r,gg1,gg2,v1,mg,img;
    tg:=TransposedMatrixGroup(g);
    gg:=GeneratorsOfGroup(tg);
    d:=Length(Identity(g));
    th:=List(ConjugacyClassesSubgroups2(tg),Representative);
    mi:=FindCoflabbyResolutionBase(tg,th);
    r:=Length(mi);
    o:=IdentityMat(r);
    gg1:=List(gg,x->PermutationMat(Permutation(x,mi),r));
    if r=d then
        return rec(injection:=TransposedMat(mi),
                   surjection:=NullMat(r,0),
                   actionP:=TransposedMatrixGroup(Group(gg1,o))
        );
    else
        ms:=NullspaceIntMat(mi);
        v1:=NullspaceIntMat(TransposedMat(ms));
        mg:=Concatenation(v1,ms);
        img:=mg^-1;
        gg2:=List(gg1,x->mg*x*img);
        gg2:=List(gg2,x->x{[d+1..r]}{[d+1..r]});
        return rec(injection:=TransposedMat(mi),
                   surjection:=TransposedMat(ms),
                   actionP:=TransposedMatrixGroup(Group(gg1)),
                   actionF:=TransposedMatrixGroup(Group(gg2))
        );
    fi;
end;
\end{verbatim}
\end{algorithmF}

\bigskip

\begin{example}[Algorithm \ref{Alg1}]
The $G$-lattice $M_G$ has the trivial flabby class $[M_G]^{fl}=0$ for 
$G=
{\tiny \left\langle 
\begin{pmatrix} 0\!\! &\!\!1 \\ -1\!\! &\!\! -1 \\ \end{pmatrix}
\right\rangle}\simeq C_3$.\\ 

\begin{verbatim}
Read("FlabbyResolution.gap");

gap> G:=Group([[[0,1],[-1,-1]]]);
Group([ [ [ 0, 1 ], [ -1, -1 ] ] ])
gap> FlabbyResolution(G);
rec( injection := [ [ 1, 0, -1 ], [ 0, -1, 1 ] ], 
  surjection := [ [ 1 ], [ 1 ], [ 1 ] ], 
  actionP := Group([ [ [ 0, 0, 1 ], [ 1, 0, 0 ], [ 0, 1, 0 ] ] ]), 
  actionF := Group([ [ [ 1 ] ] ]) )
gap> FlabbyResolution(G).actionF; # F is trivial of rank 1
Group([ [ [ 1 ] ] ]) )
\end{verbatim}
\end{example}

\bigskip

%SS 5.2
%%%%%%%%%%%%%%%%%%%%%%%%%%%%%%%%%%%%%%%%%%%%%%%%%%%%%%%%%%%%%%%%%%%%%%%%%%%%%%%%%%%%%%
\subsection{Determination whether $[M_G]^{fl}$ is invertible}\label{ss52}~\\

Let $G$ be a finite subgroup of $\GL(n,\bZ)$ 
and $M=M_G$ be the corresponding $G$-lattice of rank $n$ as in Definition \ref{defMG}. 
We provide the algorithm (Algorithm F2) for the determination whether $[M_G]^{fl}$ is invertible. 
First, take a flabby resolution of $M$: 
\begin{align*}
0 \rightarrow M\rightarrow  P \rightarrow F \rightarrow 0.
\end{align*}
If $F$ is not coflabby, then $[M]^{fl}$ is not invertible. 
If $F$ is coflabby, then take a flabby resolution
\begin{align}
0 \rightarrow F \xrightarrow{\iota} Q \rightarrow E \rightarrow 0.\label{ex1}
\end{align}
By Lemma \ref{lemSL}, we find that 
$[M]^{fl}$ is invertible $\iff$ 
$F$ is invertible $\iff$ 
$E$ is invertible $\iff$ $(\ref{ex1})$ splits. 
Thus it remains to check whether (\ref{ex1}) splits, 
i.e. whether there exists $\pi : Q\rightarrow F$ such that $\pi\circ\iota=\id_F$. 

We divide the standard basis of $Q$ into $G$-orbits, 
and take a complete representative system $\{x_\lambda\}$ of the $G$-orbits.
Let $H_\lambda$ be the stabilizer of $x_\lambda$ in $G$. 
Then $\pi(x_\lambda)$ in $F^{H_\lambda}$.
Conversely if we fix an element of $F^H$ as $\pi(x_\lambda)$, 
then $\pi$ is determined by the representatives $\{x_\lambda\}$.

We may have the necessary and the sufficient condition for $\pi \circ \iota=\id_M$, 
and this becomes a system of linear equations. 
Hence this system of linear equations has an integer solution $\iff$ 
the section $\pi: Q\rightarrow F$ exists $\iff$ $[M]^{fl}$ is invertible.
This computation is performed in GAP as follows:\\

{\tt IsInvertibleF(G)} returns whether $[M_G]^{fl}$ is invertible.

\bigskip

%Alg.2
%%%%%%%%%%%%%%%%%%%%%%%%%%%%%%%%%%%%%%%%%%%%%%%%%%%%%%%%%%%%%%%%%%%%%%%%%%%%%%%%%
\begin{algorithmF}[{Determination whether $[M_G]^{fl}$ is invertible}]\label{Alg2}
{}~\\
\begin{verbatim}
IsInvertibleF:= function(g)
    local tg,gg,d,th,mi,mi2,ms,z0,ll,h,r,i,j,k,gg1,gg2,g1,g2,oo,iso,ker,
      tg2,th2,h2,l1,l2,v1,m1,m2;
    tg:=TransposedMatrixGroup(g);
    gg:=GeneratorsOfGroup(tg);
    d:=Length(Identity(g));
    th:=List(ConjugacyClassesSubgroups2(tg),Representative);
    mi:=FindCoflabbyResolutionBase(tg,th);
    r:=Length(mi);
    gg1:=List(gg,x->PermutationMat(Permutation(x,mi),r));
    if r=d then
        return true;
    else
        ms:=NullspaceIntMat(mi);
        v1:=NullspaceIntMat(TransposedMat(ms));
        m1:=Concatenation(v1,ms);
        m2:=m1^-1;
        gg2:=List(gg1,x->m1*x*m2);
        gg2:=List(gg2,x->x{[d+1..r]}{[d+1..r]});
        tg2:=Group(gg2);
        iso:=GroupHomomorphismByImages(tg,tg2,gg,gg2);
        ker:=Kernel(iso);
        th2:=List(Filtered(th,x->IsSubgroup(x,ker)),x->Image(iso,x));
        for h in th2 do
            if Product(Hminus1(h))>1 then
                return false;
            fi;
        od;
        for h in th do
            if Product(Hminus1(h))>1 then
                d:=r-d;
                mi:=FindCoflabbyResolutionBase(tg2,th2);
                r:=Length(mi);
                gg1:=List(gg2,x->Permutation(x,mi));
                gg2:=List(gg2,TransposedMat);
                g2:=Group(gg2);
                gg1:=List(gg1,x->x^-1);
                g1:=Group(gg1);
                mi:=TransposedMat(mi);
                iso:=GroupHomomorphismByImagesNC(g1,g2,gg1,gg2);
                oo:=OrbitsDomain(g1,[1..r]);
                m1:=[];
                for i in oo do
                    h:=Stabilizer(g1,i[1]);
                    ll:=List(RightCosetsNC(g1,h),Representative);
                    l1:=List(ll,x->i[1]^x);
                    l2:=List(ll,x->Image(iso,x));
                    z0:=Z0lattice(Image(iso,h));
                    for j in z0 do
                        m2:=NullMat(r,d);
                        for k in [1..Length(ll)] do
                            m2[l1[k]]:=j*l2[k];
                        od;
                        Add(m1,Flat(mi*m2));
                    od;
                od;
                m2:=Concatenation(m1,[Flat(IdentityMat(d))]);
                m1:=NullspaceIntMat(m2);
                return Gcd(TransposedMat(m1)[Length(m2)])=1;
            fi;
        od;
        gg1:=List(gg,x->Permutation(x,mi));
        gg:=GeneratorsOfGroup(g);
        gg1:=List(gg1,x->x^-1);
        g1:=Group(gg1);
        mi:=TransposedMat(mi);
        iso:=GroupHomomorphismByImagesNC(g1,g,gg1,gg);
        oo:=OrbitsDomain(g1,[1..r]);
        m1:=[];
        for i in oo do
            h:=Stabilizer(g1,i[1]);
            ll:=List(RightCosetsNC(g1,h),Representative);
            l1:=List(ll,x->i[1]^x);
            l2:=List(ll,x->Image(iso,x));
            z0:=Z0lattice(Image(iso,h));
            for j in z0 do
                m2:=NullMat(r,d);
                for k in [1..Length(ll)] do
                    m2[l1[k]]:=j*l2[k];
                od;
                Add(m1,Flat(mi*m2));
            od;
        od;
        m2:=Concatenation(m1,[Flat(IdentityMat(d))]);
        m1:=NullspaceIntMat(m2);
        return Gcd(TransposedMat(m1)[Length(m2)])=1;
    fi;
end;
\end{verbatim}
\end{algorithmF}

\bigskip

%SS 5.3
%%%%%%%%%%%%%%%%%%%%%%%%%%%%%%%%%%%%%%%%%%%%%%%%%%%%%%%%%%%%%%%%%%%%%%%%%%%%%%%%%%%%%%
\subsection{Computation of $E$ with $[[M_G]^{fl}]^{fl}=[E]$}\label{ss53}~\\

Let $G$ be a finite subgroup of $\GL(n,\bZ)$ 
and $M=M_G$ be the corresponding $G$-lattice of rank $n$ as in Definition \ref{defMG}. 
By a result of Section \ref{ss52} (Algorithm \ref{Alg2}), we assume that 
$[M]^{fl}$ is invertible. 
The next step is to determine whether $[M]^{fl}=0$. 
First, we give a sufficient condition for $[M]^{fl}=0$. 
Let $0 \rightarrow M \rightarrow P \rightarrow F \rightarrow 0$ be a flabby 
resolution of $M$. 
By Algorithm \ref{Alg2}, we can compute $F$, and by the assumption $F$ is invertible. 
If $F$ is still complicated, we take a flabby resolution of $F$: 
$0 \rightarrow F \rightarrow Q \rightarrow E \rightarrow 0$.  
Then $E=0\Longrightarrow F=0$ $\Longrightarrow$ $[M]^{fl}=0$. 
By the same way, we define $[M]^{{fl}^n}:=[[M]^{{fl}^{n-1}}]^{fl}$ inductively. 
Then we obtain a sufficient condition for $[M]^{fl}=0$, 
namely, $[M]^{{fl}^n}=0\Longrightarrow [M]^{fl}=0$. 
The following algorithm may compute $E$ with $[E]=[[M]^{fl}]^{fl}$ effectively. \\

\noindent
{\tt flfl(G)} returns the $G$-lattice $E$ with $[[M_G]^{fl}]^{fl}=[E]$.

\bigskip

%Alg. 5.3
\begin{algorithmF}[{Computation of $E$ with $[[M_G]^{fl}]^{fl}=[E]$}]\label{Alg3}
{}~\\
\begin{verbatim}
flfl:= function(g)
    local tg,gg,d,th,mi,ms,r,gg1,gg2,v1,mg,img,tg2,iso,ker;
    tg:=TransposedMatrixGroup(g);
    gg:=GeneratorsOfGroup(tg);
    d:=Length(Identity(g));
    th:=List(ConjugacyClassesSubgroups2(tg),Representative);
    mi:=FindCoflabbyResolutionBase(tg,th);
    r:=Length(mi);
    gg1:=List(gg,x->PermutationMat(Permutation(x,mi),r));
    if r=d then
        return [];
    else
        ms:=NullspaceIntMat(mi);
        v1:=NullspaceIntMat(TransposedMat(ms));
        mg:=Concatenation(v1,ms);
        img:=mg^-1;
        gg2:=List(gg1,x->mg*x*img);
        gg2:=List(gg2,x->x{[d+1..r]}{[d+1..r]});
        tg2:=Group(gg2);
        iso:=GroupHomomorphismByImages(tg,tg2,gg,gg2);
        ker:=Kernel(iso);
        th:=List(Filtered(th,x->IsSubset(x,ker)),x->Image(iso,x));
        tg:=tg2;
        gg:=gg2;
        d:=r-d;
        mi:=FindCoflabbyResolutionBase(tg,th);
        r:=Length(mi);
        gg1:=List(gg,x->PermutationMat(Permutation(x,mi),r));
        if r=d then
            return [];
        else
            ms:=NullspaceIntMat(mi);
            v1:=NullspaceIntMat(TransposedMat(ms));
            mg:=Concatenation(v1,ms);
            img:=mg^-1;
            gg2:=List(gg1,x->mg*x*img);
            gg2:=List(gg2,x->x{[d+1..r]}{[d+1..r]});
            return TransposedMatrixGroup(Group(gg2));
        fi;
    fi;
end;
\end{verbatim}
\end{algorithmF}

\bigskip

\begin{example}[Algorithm \ref{Alg3}]\label{ex3}
Let ${\rm Imf}(n,i,j)\leq \GL(n,\bZ)$ be the 
$j$-th $\bZ$-class of the $i$-th $\bQ$-class of the irreducible 
maximal finite group of dimension $n$ (which corresponds to 
the GAP command {\tt ImfMatrixGroup(n,i,j)}).  
Note that the maximal irreducible finite groups ${\rm Imf}(n,i,j)$
coincide with the maximal indecomposable finite groups ${\rm Indmf}(n,i,j)$ when $n\leq 5$ 
(see Subsection \ref{ssIndmf}). 

By using the algorithm {\tt flfl}, 
we may confirm that $[M_G]^{fl}=0$ for $G={\rm Imf}(4,4,1)$ and ${\rm Imf}(5,1,1)$. 
By Lemma \ref{lemp3}, we obtain $[M_H]^{fl}=0$ for any subgroups $H$ of $G$. 
There are $193$ (resp. $953$) conjugacy classes of subgroups 
of ${\rm Imf}(4,4,1)$ (resp. ${\rm Imf}(5,1,1)$).

\bigskip

\begin{verbatim}
Read("crystcat.gap");
Read("caratnumber.gap");
Read("FlabbyResolution.gap");

gap> G:=ImfMatrixGroup(4,4,1);
ImfMatrixGroup(4,4,1)
gap> Size(G);
384
gap> CrystCatZClass(G);                                 
[ 4, 32, 21, 1 ]
gap> flfl(G);
[  ]
gap> Length(List(ConjugacyClassesSubgroups2(G),Representative)); # # of conjugacy classes
193

gap> G:=ImfMatrixGroup(5,1,1);                                 
ImfMatrixGroup(5,1,1)
gap> Size(G);
3840
gap> CaratZClass(G);
[ 5, 942, 1 ]
gap> flfl(G);
[  ]
gap> Length(List(ConjugacyClassesSubgroups2(G),Representative)); # # of conjugacy classes
953
\end{verbatim}
\end{example}

\bigskip

\begin{example}[Kunyavskii's birational classification of %the 
algebraic $k$-tori of dimension $3$ (Theorem \ref{thKu})]\label{exKun}
By Voskresenskii's theorem (Theorem \ref{thVo}), 
$[M_G]^{fl}=0$ for all $G$-lattices $M_G$ of rank $\leq 2$. 
Using the algorithm {\tt flfl}, 
we may confirm Kunyavskii's theorem (Theorem \ref{thKu}). 
There exist $39$ (resp. $34$) decomposable (resp. indecomposable) 
$G$-lattices $M_G$ of rank $3$ (see Example \ref{exKS1}). 

By Voskresenskii's theorem and Lemma \ref{lemp1}, 
$[M_G]^{fl}=0$ for $39$ decomposable $G$-lattices $M_G$. 

Using {\tt flfl} and by Lemma \ref{lemp3}, 
we may see that $[M_G]^{fl}=0$ for any subgroups $G$ of 
${\rm Imf(3,1,1)}\simeq C_2\times S_4$, the group $G_1\simeq D_4$ 
of the GAP ID $(3,4,5,2)$ and the group $G_2\simeq S_4$ 
of the GAP ID $(3,7,4,3)$. 
Namely, $L(M_G)^G$ is stably $k$-rational. 
Note that Kunyavskii's theorem claims not only the stably $k$-rationality 
but also the $k$-rationality, and we could not confirm the $k$-rationality 
by this method. 

There exist exactly $15$ groups which are not subgroups of {\rm Imf(3,1,1)}, 
$G_1$ and $G_2$ (see Table $2$ in Theorem \ref{thVo}). 
Indeed, using the algorithm {\tt IsInvertibleF}, we may confirm that $[M_G]^{fl}$ 
is not invertible for all the $15$ groups $G$. 
Namely, $L(M_G)^G$ is not retract $k$-rational. 

\bigskip

\begin{verbatim}
Read("crystcat.gap");
Read("FlabbyResolution.gap");

gap> G:=ImfMatrixGroup(3,1,1); # G=C2xS4
ImfMatrixGroup(3,1,1)
gap> CrystCatZClass(G);
[ 3, 7, 5, 1 ]
gap> flfl(G);
[  ]
gap> flfl(MatGroupZClass(3,4,5,2)); # G=D4
[  ]
gap> flfl(MatGroupZClass(3,7,4,3)); # G=S4
[  ]

gap> ld3:=LatticeDecompositions(3);;
gap> Partitions(3);
[ [ 1, 1, 1 ], [ 2, 1 ], [ 3 ] ]
gap> List(ld3,Length);
[ 8, 31, 34 ]
gap> ind3:=ld3[3];;
gap> imf311sub:=Set(ConjugacyClassesSubgroups2(G),
> x->CrystCatZClass(Representative(x)));;
gap> G3452sub:=Set(ConjugacyClassesSubgroups2(MatGroupZClass(3,4,5,2)),
> x->CrystCatZClass(Representative(x)));;
gap> G3743sub:=Set(ConjugacyClassesSubgroups2(MatGroupZClass(3,7,4,3)),
> x->CrystCatZClass(Representative(x)));;
gap> N3:=Difference(ind3,Union(imf311sub,G3452sub,G3743sub));
[ [ 3, 3, 1, 3 ], [ 3, 3, 3, 3 ], [ 3, 3, 3, 4 ], [ 3, 4, 3, 2 ], [ 3, 4, 4, 2 ], 
  [ 3, 4, 6, 3 ], [ 3, 4, 7, 2 ], [ 3, 7, 1, 2 ], [ 3, 7, 2, 2 ], [ 3, 7, 2, 3 ], 
  [ 3, 7, 3, 2 ], [ 3, 7, 3, 3 ], [ 3, 7, 4, 2 ], [ 3, 7, 5, 2 ], [ 3, 7, 5, 3 ] ]
gap> Length(N3);                                             
15
gap> Filtered(N3,x->IsInvertibleF(MatGroupZClass(x[1],x[2],x[3],x[4]))=true);
[  ]
\end{verbatim}
\end{example}

\bigskip

%SS 5.4
%%%%%%%%%%%%%%%%%%%%%%%%%%%%%%%%%%%%%%%%%%%%%%%%%%%%%%%%%%%%%%%%%%%%%%%%%%%%%%%%%%%%%%
\subsection{Possibility for $[M_G]^{fl}=0$}\label{ss54}~\\

We will devise an algorithm which, in a favorable situation, 
will enable us to show that some $G$-lattices are not stably permutation. 

Let $G$ be a finite subgroup of $\GL(n,\bZ)$ 
and $M=M_G$ be the corresponding $G$-lattice of rank $n$ as in Definition \ref{defMG}. 
By a result of Section \ref{ss52} (Algorithm \ref{Alg2}), we assume that 
$[M]^{fl}$ is invertible. 

Each isomorphism class of irreducible permutation $G$-lattices 
corresponds to a conjugacy class of subgroup $H$ of $G$ by
$H \leftrightarrow \bZ[G/H]$. 
Let $H_1,\dots,H_r$ be conjugacy classes of subgroups of $G$ 
whose ordering corresponds to the GAP function {\tt ConjugacyClassesSubgroups2(G)} 
(see Section \ref{seKSfail}). 
Let $F$ be the flabby class of $M_G$. 

We assume that $F$ is stably permutation, i.e. for $x_{r+1}=\pm 1$, 
\begin{align*}
\left(\bigoplus_{i=1}^r \bZ[G/H_i]^{\oplus x_i}\right)\oplus F^{\oplus x_{r+1}}
\ \simeq\ \bigoplus_{i=1}^r \bZ[G/H_i]^{\oplus y_i}.%\label{eqpos}
\end{align*}
Define $a_i=x_i-y_i$ and $b_1=x_{r+1}$. Then we have for $b_1=\pm 1$, 
\begin{align}
\bigoplus_{i=1}^r \bZ[G/H_i]^{\oplus a_i}\ \simeq\ F^{\oplus(-b_1)}.\label{eqpos2}
\end{align}

By computing some $\bZ$-class invariants, 
we will give a necessary condition for $[M_G]^{fl}=0$. 

Let $\{c_1,\ldots,c_r\}$ be a set of complete representatives of 
the conjugacy classes of $G$. 
Let $A_i(c_j)$ be the matrix representation of the factor coset 
action of $c_j\in G$ on $\bZ[G/H_i]$ and 
$B(c_j)$ be the matrix representation of the action of $c_j\in G$ on $F$. 
By (\ref{eqpos2}), for each $c_j\in G$, we have 
\begin{align}
\sum_{i=1}^r a_i\, {\rm tr} A_i(c_j)+ b_1\, {\rm tr} B(c_j)=0\label{eqp1}
\end{align}
where tr\,$A$ is the trace of the matrix $A$. 
Similarly, we consider the rank of $H^0=\widehat Z^0$.  
For each $H_j$, we get
\begin{align}
\sum_{i=1}^r a_i\, \rank \widehat Z^0(H_j,\bZ[G/H_i])
+b_1\, \rank \widehat Z^0(H_j,F)=0.\label{eqp2}
\end{align}
Finally, we compute $\widehat H^0$.
Let $Sy_p(A)$ be a $p$-Sylow subgroup of an abelian group $A$.
$Sy_p(A)$ can be written as a direct product of cyclic groups uniquely.
Let $n_{p,e}(Sy_p(A))$ be the number of direct summands 
of cyclic groups of order $p^e$. 
For each $H_j,p,e$, we get 
\begin{align}
\sum_{i=1}^r a_i\, n_{p,e}(Sy_p(\widehat H^0(H_j,\bZ[G/H_i])))
+b_1\, n_{p,e}(Sy_p(\widehat H^0(H_j,F)))=0.\label{eqp3}
\end{align}
By the equalities (\ref{eqp1}), (\ref{eqp2}) and (\ref{eqp3}), we may get a 
system of linear equations in $a_1,\dots,a_r,b_1$ over $\bZ$.
Namely, we have that 
$[M_G]^{fl}=0\Longrightarrow$ there exist $a_1,\ldots,a_r\in\bZ$ and $b_1=\pm 1$ 
which satisfy (\ref{eqpos2}) $\Longrightarrow$ 
this system of linear equations has a integer solution in $a_1,\ldots,a_r$ with $b_1=\pm 1$. 
In particular, if this system of linear equations has no integer solutions, 
then we conclude that $[M_G]^{fl}\neq 0$. \\

\noindent 
{\tt H0(G)} returns the Tate cohomology group $\widehat H^0(G,M_G)$.\\
{\tt PossibilityOfStablyPermutationF(G)} returns a basis 
$\mathcal{L}=\{l_1,\dots,l_s\}$ of the solution space 
of the system of linear equations which is obtained by the equalities 
(\ref{eqp1}), (\ref{eqp2}) and (\ref{eqp3}).\\
{\tt PossibilityOfStablyPermutationM(G)} returns the same as 
{\tt PossibilityOfStablyPermutationF(G)} but with respect to $M_G$ 
instead of $F$. (We will use this in Section \ref{seFC}.)

\bigskip

%Alg. 4
\begin{algorithmF}[{Possibility for $[M_G]^{fl}=0$}]\label{Alg4}
{}~\\
\begin{verbatim}
H0:= function(g)
    local m,s,r;
    m:=Sum(g);
    s:=SmithNormalFormIntegerMat(m);
    r:=Rank(s);
    return List([1..r],x->s[x][x]);
end;

PossibilityOfStablyPermutationH:= function(g,hh)
    local gg,hg,hgg,hom,c,h,m,m1,m2,v,h0,og,oh,p,e;
    gg:=GeneratorsOfGroup(g);
    hg:=List(hh,x->RightCosets(g,x));
    hgg:=List(hg,x->List(gg,y->Permutation(y,x,OnRight)));
    hom:=List(hgg,x->GroupHomomorphismByImages(g,Group(x,()),gg,x));
    c:=List(ConjugacyClasses(g),Representative);
    og:=Order(g);
    m:=List(c,x->List([1..Length(hh)],y->og/Order(hh[y])-NrMovedPoints(Image(hom[y],x))));
    v:=List(c,Trace);
    for h in hh do
        h0:=H0(h);
        oh:=Order(h);
        m1:=List([1..Length(hh)],
          x->List(OrbitLengths(Image(hom[x],h),[1..og/Order(hh[x])]),y->oh/y));
        Add(m,List(m1,Length));
        Add(v,Length(h0));
        if oh>1 then
            for p in Set(FactorsInt(oh)) do
                for e in [1..PadicValuation(oh,p)] do
                    Add(m,List(m1,x->Number(x,y->PadicValuation(y,p)=e)));
                    Add(v,Number(h0,x->PadicValuation(x,p)=e));
                od;
            od;
        fi;
    od;
    m:=TransposedMat(m);
    m:=Concatenation(m,[v]);
    return NullspaceIntMat(m);
end;

PossibilityOfStablyPermutationF:= function(g)
    local tg,gg,d,th,mi,ms,r,gg1,gg2,g2,iso,ker,tg2,th2,h2,m1,m2,v;
    tg:=TransposedMatrixGroup(g);
    gg:=GeneratorsOfGroup(tg);
    d:=Length(Identity(g));
    th:=List(ConjugacyClassesSubgroups2(tg),Representative);
    mi:=FindCoflabbyResolutionBase(tg,th);
    r:=Length(mi);
    gg1:=List(gg,x->PermutationMat(Permutation(x,mi),r));
    if r=d then
        return [0,1];
    else
        ms:=NullspaceIntMat(mi);
        v:=NullspaceIntMat(TransposedMat(ms));
        m1:=Concatenation(v,ms);
        m2:=m1^-1;
        gg2:=List(gg1,x->m1*x*m2);
        gg2:=List(gg2,x->x{[d+1..r]}{[d+1..r]});
        tg2:=Group(gg2);
        iso:=GroupHomomorphismByImages(tg,tg2,gg,gg2);
        ker:=Kernel(iso);
        th2:=List(Filtered(th,x->IsSubgroup(x,ker)),x->Image(iso,x));
        g2:=TransposedMatrixGroup(tg2);
        h2:=List(th2,TransposedMatrixGroup);
        return PossibilityOfStablyPermutationH(g2,h2);
    fi;
end;

PossibilityOfStablyPermutationM:= function(g)
    local hh;
    hh:=List(ConjugacyClassesSubgroups2(g),Representative);
    return PossibilityOfStablyPermutationH(g,hh);
end;
\end{verbatim}
\end{algorithmF}

\bigskip

\begin{example}[{Algorithm \ref{Alg4}: Possibility for $[M_G]^{fl}=0$}]
\label{exal4} 
Let $G\simeq F_{20}$ be the Frobenius group of order $20$ 
of the GAP ID $(4,31,1,3)$ or $(4,31,1,4)$ as in Table $2$. 
By Algorithm \ref{Alg2}, we may check that $[M_G]^{fl}$ is invertible. 
We will show that $[M_G]^{fl}\neq 0$. 
There exist $6$ conjugacy classes of subgroups
$\{1\}$, $H_2\simeq C_2$, $H_3\simeq C_4$, $H_4\simeq C_5$, $H_5\simeq D_5$ 
and $G$ of 
$G$ of order $1,2,4,5,10$ and $20$ respectively.
The corresponding permutation $G$-lattices 
are $\bZ[G]$, $\bZ[G/H_2]$, $\bZ[G/H_3]$, $\bZ[G/H_4]$, 
$\bZ[G/H_5]$ and $\bZ$ of rank $20$, $10$, $5$, $4$, $2$ and $1$ 
(this ordering is determined by the GAP function
{\tt ConjugacyClassesSubgroups2(G)} (see Section \ref{seKSfail})).
By Algorithm \ref{Alg1}, we may obtain the flabby $G$-lattice $F$ 
of rank $16$ via the function {\tt FlabbyResolution(G).actionF} where 
$0\rightarrow M_G\rightarrow P\rightarrow F$ 
is a flabby resolution of $M_G$. 

Let $\mathcal{L}=\{l_1,\dots,l_s\}$ be a list of lists obtained by
the GAP function 
{\tt PossibilityOfStablyPermutationF(G)}. 
Put $\mathcal{U}=\bZ l_1+\dots+\bZ l_s$
i.e. the set of all linear combinations of $\mathcal{L}$ over $\bZ$.
When $[a_1,\dots,a_r,b_1] \in \mathcal{U}$,
there is a possibility that
$$\bigoplus_{i=1}^r \bZ[G/H_i]^{\oplus a_i}
\simeq F^{\oplus (-b_1)}.$$
By $\mathcal{U}=\langle[1,1,0,1,-1,0,-2]\rangle$, there is a possibility that
\[
\bZ[G] \oplus \bZ[G/H_2] \oplus \bZ[G/H_4] \simeq \bZ[G/H_5] \oplus F^{\oplus 2}.
\]
However, $b_1$ should be $\pm 1$. 
This implies that $F$ is not stably permutation, hence $[M_G]^{fl}\neq 0$. 
In other words, $L(M_G)^G$ is not stably but retract $k$-rational. 

By $H^2(G,\bZ[G])=0$, $H^2(G,\bZ[G/H_2])=\bZ/2\bZ$ 
and $H^2(G,\bZ[G/H_4])=\bZ/5\bZ$ while 
$H^2(G,\bZ[G/H_5])=\bZ/2\bZ$ and $H^2(G,F)=0$, 
we also see that $F^{\oplus k}$ is not stably permutation for all $k\geq 1$.

\bigskip

\begin{verbatim}
Read("FlabbyResolution.gap");

gap> G:=MatGroupZClass(4,31,1,3);; # G=F20
gap> Rank(FlabbyResolution(G).actionF.1); # F is of rank 16
16
gap> IsInvertibleF(G);
true
gap> List(ConjugacyClassesSubgroups2(G),x->StructureDescription(Representative(x)));
[ "1", "C2", "C4", "C5", "D10", "C5 : C4" ]
gap> PossibilityOfStablyPermutationF(G); # checking [M]^{fl}: non-zero
[ [ 1, 1, 0, 1, -1, 0, -2 ] ]

gap> G:=MatGroupZClass(4,31,1,4);; # G=F20
gap> Rank(FlabbyResolution(G).actionF.1); # F is of rank 16
16
gap> IsInvertibleF(G);                  
true
gap> List(ConjugacyClassesSubgroups2(G),x->StructureDescription(Representative(x)));
[ "1", "C2", "C4", "C5", "D10", "C5 : C4" ]
gap> PossibilityOfStablyPermutationF(G); # checking [M]^{fl}: non-zero
[ [ 1, 1, 0, 1, -1, 0, -2 ] ]
\end{verbatim}
\end{example}

\bigskip

%SS 5.5
%%%%%%%%%%%%%%%%%%%%%%%%%%%%%%%%%%%%%%%%%%%%%%%%%%%%%%%%%%%%%%%%%%%%%%%%%%%%%%%%%%%%%%
\subsection{Verification of $[M_G]^{fl}=0$: Method I}\label{ss55}~\\

Let $G$ be a finite subgroup of $\GL(n,\bZ)$ 
and $M=M_G$ be the corresponding $G$-lattice of rank $n$ as in Definition \ref{defMG}. 
Next we try to check whether the possibility of the isomorphism as 
in (\ref{eqpos2}) actually holds. 
The following algorithm tries to find the isomorphism 
using the GAP function {\tt RepresentativeAction(GL(n,Integers),G1,G2)}.\\

\noindent
{\tt Nlist(l)} returns the negative part of the list $l$.\\
{\tt Plist(l)} returns the positive part of the list $l$.\\
{\tt StablyPermutationFCheck(G,L1,L2)} returns 
the matrix $P$ which satisfies $G_1P=PG_2$ 
where $G_1$ (resp. $G_2$) is the matrix representation group of the action of $G$ 
on $(\oplus_{i=1}^r \bZ[G/H_i]^{\oplus a_i})\oplus F^{\oplus b_1}$ 
(resp. $(\oplus_{i=1}^r \bZ[G/H_i]^{\oplus a_i^{\prime}})\oplus F^{\oplus b_1^{\prime}}$) 
with the isomorphism 
\begin{align}
\left(\bigoplus_{i=1}^r \bZ[G/H_i]^{\oplus a_i}\right)\oplus F^{\oplus b_1}
\simeq
\left(\bigoplus_{i=1}^r \bZ[G/H_i]^{\oplus a_i^{\prime}}\right)\oplus F^{\oplus b_1^{\prime}}
\label{eqisom}
\end{align}
for lists $L_1=[a_1,\ldots,a_r,b_1]$ and 
$L_2=[a_1^{\prime},\ldots,a_r^{\prime},b_1^{\prime}]$, if $P$ exists. 
If such $P$ does not exist, this returns {\tt false}.\\
{\tt StablyPermutationMCheck(G,L1,L2)} returns 
the same as {\tt StablyPermutationFCheck(G,L1,L2)} 
but with respect to $M_G$ 
instead of $F$. (We will use this in Section \ref{seFC}.)\\

If the rank of $F$ is small enough, 
{\tt StablyPermutationFCheck} works well. 
However, if the rank of $F$ is not small, 
{\tt StablyPermutationFCheck} does not return the answer in a suitable time, 
and then we need to make more efforts. 
We will explain this in the next section (Section \ref{ss56}). 

\bigskip

%Alg 5
\begin{algorithmF}[{Verification of $[M_G]^{fl}=0$: Method I}]\label{Alg5}
\hfill\break
$($The following algorithm needs the CARAT package, in particular 
for the command {\tt RepresentativeAction}.$)$\\

\begin{verbatim}
Nlist:= function(l)
    return List(l,x->Maximum([-x,0]));
end;

Plist:= function(l)
    return List(l,x->Maximum([x,0]));
end;

CosetRepresentation:= function(g,h)
    local gg,hg,og;
    gg:=GeneratorsOfGroup(g);
    hg:=SortedList(RightCosets(g,h));
    og:=Length(hg);
    return List(gg,x->PermutationMat(Permutation(x,hg,OnRight),og));
end;

StablyPermutationCheckH:= function(g,hh,c1,c2)
    local gg,g1,g2,dx,m,i,j,d;
    gg:=List(hh,x->CosetRepresentation(g,x));
    Add(gg,GeneratorsOfGroup(g));
    g1:=[];
    g2:=[];
    for i in [1..Length(gg[1])] do
        m:=[];
        for j in [1..Length(gg)] do
            m:=Concatenation(m,List([1..c1[j]],x->gg[j][i]));
        od;
        Add(g1,DirectSumMat(m));
        m:=[];
        for j in [1..Length(gg)] do
            m:=Concatenation(m,List([1..c2[j]],x->gg[j][i]));
        od;
        Add(g2,DirectSumMat(m));
    od;
    d:=Length(g1[1]);
    if d<>Length(g2[1]) then
        return fail;
    else
        return RepresentativeAction(GL(d,Integers),Group(g1),Group(g2));
    fi;
end;

StablyPermutationMCheck:= function(g,c1,c2)
    local h;
    h:=List(ConjugacyClassesSubgroups2(g),Representative);
    return StablyPermutationCheckH(g,h,c1,c2);
end;

StablyPermutationFCheck:= function(g,c1,c2)
    local tg,gg,d,th,mi,ms,o,h,r,gg1,gg2,g2,iso,ker,tg2,th2,h2,m1,m2,v;
    tg:=TransposedMatrixGroup(g);
    gg:=GeneratorsOfGroup(tg);
    d:=Length(Identity(g));
    th:=List(ConjugacyClassesSubgroups2(tg),Representative);
    mi:=FindCoflabbyResolutionBase(tg,th);
    r:=Length(mi);
    gg1:=List(gg,x->PermutationMat(Permutation(x,mi),r));
    if r=d then
        return true;
    else
        ms:=NullspaceIntMat(mi);
        v:=NullspaceIntMat(TransposedMat(ms));
        m1:=Concatenation(v,ms);
        m2:=m1^-1;
        gg2:=List(gg1,x->m1*x*m2);
        gg2:=List(gg2,x->x{[d+1..r]}{[d+1..r]});
        tg2:=Group(gg2);
        iso:=GroupHomomorphismByImages(tg,tg2,gg,gg2);
        ker:=Kernel(iso);
        th2:=List(Filtered(th,x->IsSubgroup(x,ker)),x->Image(iso,x));
        g2:=TransposedMatrixGroup(tg2);
        h2:=List(th2,TransposedMatrixGroup);
        return StablyPermutationCheckH(g2,h2,c1,c2);
    fi;
end;
\end{verbatim}
\end{algorithmF}

\bigskip

\begin{example}[Algorithm \ref{Alg5}: Method I (1)]\label{ex5m11}
Let $G={\rm Imf}(4,3,1)\simeq S_5\times C_2$ be the group of order $240$ 
of the GAP ID $(4,31,7,1)$. 
By Algorithm \ref{Alg1}, 
the rank of the flabby class $F$ of $G$ is $6$. 
By Algorithm \ref{Alg5}, 
the following possibility of the isomorphism exists: 
$\bZ[G/H_{52}]\oplus\bZ[G/H_{54}]\simeq\bZ\oplus F$ 
on which the matrix representation groups of the 
action of $G$ both sides are $G_1$ and $G_2$ respectively 
where $H_{52}\simeq S_4\times C_2$ and $H_{54}\simeq S_5$. 
We may confirm the isomorphism %(\ref{eqisom})
via {\tt StablyPermutationFCheck(G,Nlist(l),Plist(l))} which returns the 
matrix $P$ with $G_1P=PG_2$. 
This implies $[M_G]^{fl}=0$, and hence $L(M_G)^G$ is stably $k$-rational. 

\bigskip

\begin{verbatim}
Read("crystcat.gap");
Read("FlabbyResolution.gap");

gap> G:=ImfMatrixGroup(4,3,1); # G=C2xS5
ImfMatrixGroup(4,3,1)
gap> CrystCatZClass(G);
[ 4, 31, 7, 1 ]
gap> Rank(FlabbyResolution(G).actionF.1); # F is of rank 6
6
gap> ll:=PossibilityOfStablyPermutationF(G);;
gap> Length(ll);
18
gap> l:=ll[Length(ll)];                              
[ 0, 0, 0, 0, 0, 0, 0, 0, 0, 0, 0, 0, 0, 0, 0, 0, 0, 0, 0, 0, 0, 0, 0, 0, 0, 
  0, 0, 0, 0, 0, 0, 0, 0, 0, 0, 0, 0, 0, 0, 0, 0, 0, 0, 0, 0, 0, 0, 0, 0, 0, 
  0, 1, 0, 1, 0, 0, -1, -1 ]
gap> Length(l);
58
gap> ss:=List(ConjugacyClassesSubgroups2(G),x->StructureDescription(Representative(x)));
[ "1", "C2", "C2", "C2", "C2", "C2", "C3", "C2 x C2", "C2 x C2", "C2 x C2", "C4", 
  "C2 x C2", "C2 x C2", "C4", "C2 x C2", "C2 x C2", "C5", "C6", "S3", "C6", "C6", 
  "S3", "S3", "S3", "C2 x C2 x C2", "D8", "C4 x C2", "C2 x C2 x C2", "D8", "D8", "D8", 
  "D10", "C10", "D10", "A4", "C6 x C2", "D12", "D12", "D12", "D12", "D12", "D12", 
  "C2 x D8", "D20", "C5 : C4", "C5 : C4", "C2 x A4", "S4", "S4", "C2 x C2 x S3", 
  "C2 x (C5 : C4)", "C2 x S4", "A5", "S5", "C2 x A5", "S5", "C2 x S5" ]
gap> Length(ss);
57
gap> Nlist(l);
[ 0, 0, 0, 0, 0, 0, 0, 0, 0, 0, 0, 0, 0, 0, 0, 0, 0, 0, 0, 0, 0, 0, 0, 0, 0, 
  0, 0, 0, 0, 0, 0, 0, 0, 0, 0, 0, 0, 0, 0, 0, 0, 0, 0, 0, 0, 0, 0, 0, 0, 0, 
  0, 0, 0, 0, 0, 0, 1, 1 ]
gap> Plist(l);
[ 0, 0, 0, 0, 0, 0, 0, 0, 0, 0, 0, 0, 0, 0, 0, 0, 0, 0, 0, 0, 0, 0, 0, 0, 0, 
  0, 0, 0, 0, 0, 0, 0, 0, 0, 0, 0, 0, 0, 0, 0, 0, 0, 0, 0, 0, 0, 0, 0, 0, 0, 
  0, 1, 0, 1, 0, 0, 0, 0 ]
gap> [ss[52], ss[54], ss[57]];       
[ "C2 x S4", "S5", "C2 x S5" ]
gap> P:=StablyPermutationFCheck(G,Nlist(l),Plist(l));
[ [ 1, 1, 1, 1, 1, 2, 2 ], 
  [ 0, 0, 0, -1, 0, 0, -1 ], 
  [ 0, 0, 0, 0, -1, -1, 0 ], 
  [ 0, 0, 0, 0, -1, 0, -1 ], 
  [ -1, 0, 0, 0, 0, 0, -1 ], 
  [ 0, -1, 0, 0, 0, -1, 0 ], 
  [ 0, 0, -1, 0, 0, 0, -1 ] ]
\end{verbatim}
%%%%%%%%%%%%%%%%%%%%%%%%%%%%%%%%%%
% this is Imf(4,3,1)
\end{example}

\bigskip

\begin{example}[Algorithm \ref{Alg5}: Method I (2)]\label{ex52}
It may be needed to add some more $G$-lattices 
to make the both hand side of $G$-lattices isomorphic 
before applying {\tt StablePermutationFCheck} as in Method I. 
The following example shows that 
$F\oplus \bZ\simeq \bZ[S_5/S_4]\oplus \bZ$ 
for the group $G\simeq S_5$ of the CARAT ID $(5,946,2)$ 
and $F$ is of rank $5$ which satisfies $[F]=[M_G]^{fl}$ 
although $F\not\simeq \bZ[S_5/S_4]$. 

\bigskip

\begin{verbatim}
Read("caratnumber.gap");
Read("FlabbyResolution.gap");

gap> G:=CaratMatGroupZClass(5,946,2);; # G=S5
gap> Rank(FlabbyResolution(G).actionF.1); # F is of rank 5
5
gap> CaratZClass(FlabbyResolution(G).actionF);  
[ 5, 911, 4 ]
gap> ll:=PossibilityOfStablyPermutationF(G);                 
[ [ 1, 0, 0, -1, 0, 0, -4, 0, -2, 1, 2, 0, -1, -1, 0, 4, 0, 1, -4, 4 ], 
  [ 0, 1, 0, 0, 0, -1, -1, 0, -1, 0, 0, 0, 0, 0, 1, 1, 0, 0, -1, 1 ], 
  [ 0, 0, 1, 0, 0, 0, -2, 0, -1, 0, 1, 0, -1, -1, 0, 2, 0, 1, -2, 2 ], 
  [ 0, 0, 0, 0, 1, 2, -2, 0, -2, 1, 2, -2, -1, -2, -2, 2, 0, 1, -2, 4 ], 
  [ 0, 0, 0, 0, 0, 0, 0, 0, 0, 0, 0, 0, 0, 0, 0, 0, 1, 0, 0, -1 ] ]
gap> l:=ll[Length(ll)];
[ 0, 0, 0, 0, 0, 0, 0, 0, 0, 0, 0, 0, 0, 0, 0, 0, 1, 0, 0, -1 ]
gap> Length(l);
20
gap> ss:=List(ConjugacyClassesSubgroups2(G),x->StructureDescription(Representative(x)));
[ "1", "C2", "C2", "C3", "C2 x C2", "C2 x C2", "C4", "C5", "C6", 
  "S3", "S3", "D8", "D10", "A4", "D12", "C5 : C4", "S4", "A5", "S5" ]
gap> ss[17];
"S4"
gap> StablyPermutationFCheck(G,Nlist(l),Plist(l));
fail
gap> l2:=IdentityMat(Length(l))[Length(l)-1];
[ 0, 0, 0, 0, 0, 0, 0, 0, 0, 0, 0, 0, 0, 0, 0, 0, 0, 0, 1, 0 ]
gap> StablyPermutationFCheck(G,Nlist(l)+l2,Plist(l)+l2);
[ [ 2, 2, 2, 2, 2, 3 ], 
  [ 0, 0, -1, -1, -1, -1 ], 
  [ -1, -1, -1, 0, 0, -1 ], 
  [ -1, 0, 0, -1, -1, -1 ], 
  [ 0, -1, -1, 0, -1, -1 ], 
  [ 0, -1, 0, -1, -1, -1 ] ]
\end{verbatim}
\end{example}

\bigskip

%SS 5.6
%%%%%%%%%%%%%%%%%%%%%%%%%%%%%%%%%%%%%%%%%%%%%%%%%%%%%%%%%%%%%%%%%%%%%%%%%%%%%%%%%%%%%%
\subsection{Verification of $[M_G]^{fl}=0$: Method II}\label{ss56}~\\

Let $G$ be a finite subgroup of $\GL(n,\bZ)$ and $M=M_G$ be 
the corresponding $G$-lattice of rank $n$ as in Definition \ref{defMG}. 
The function 
{\tt RepresentativeAction(GL($n$,Integers),$G_1$,$G_2$)} 
in Algorithm \ref{Alg5} may not work well when $n$ is not small. 
We will provide another method in order to confirm the isomorphism (\ref{eqisom}), 
i.e. 
\setcounter{equation}{9}
\begin{align}
\left(\bigoplus_{i=1}^r \bZ[G/H_i]^{\oplus a_i}\right)\oplus F^{\oplus b_1}
\simeq
\left(\bigoplus_{i=1}^r \bZ[G/H_i]^{\oplus a_i^{\prime}}\right)\oplus F^{\oplus b_1^{\prime}}, 
\label{eqiso2}
\end{align} 
although it is needed by trial and error.

Our aim is to find the matrix $P$ which satisfies $G_1 P=P G_2$ rapidly.
%\footnote{Finding a matrix $P$ may take a lot of time.  
%A parallel computation on OpenMP C may be 
%effective to do this search.}. 
If we can choose a matrix with determinant det $P=\pm 1$, 
$G_1$ and $G_2$ are $\GL(n,\bZ)$-conjugate, 
and hence the isomorphism (\ref{eqisom}) established. 
This implies that the flabby class $[M_G]^{fl}=0$.
\hfill\break

\noindent
{\tt StablyPermutationFCheckP(G,L1,L2)} returns 
a basis $\mathcal{P}=\{P_1,\dots,P_m\}$ 
of the solution space of $G_1P=PG_2$ 
where $G_1$ (resp. $G_2$) is the matrix representation group of the action of $G$ 
on $(\oplus_{i=1}^r \bZ[G/H_i]^{\oplus a_i})\oplus F^{\oplus b_1}$ (resp. 
$(\oplus_{i=1}^r \bZ[G/H_i]^{\oplus a_i^{\prime}})\oplus F^{\oplus b_1^{\prime}}$) 
with the isomorphism (\ref{eqisom}) 
for lists $L_1=[a_1,\ldots,a_r,b_1]$ and 
$L_2=[a_1^{\prime},\ldots,a_r^{\prime},b_1^{\prime}]$, if $P$ exists. 
If such $P$ does not exist, this returns {\tt [ ]}.\\
{\tt StablyPermutationMCheckP(G,L1,L2)} 
returns the same as 
{\tt StablyPermutationFCheckP(G,L1,L2)}  
but with respect to $M_G$ 
instead of $F$. (We will use these in Section \ref{seFC}.)\\

\noindent
{\tt StablyPermutationFCheckMat(G,L1,L2,P)} returns 
{\tt true} if $G_1P=PG_2$ and det $P=\pm 1$ where $G_1$ (resp. $G_2$) 
is the matrix representation group of the action of $G$ 
on $(\oplus_{i=1}^r \bZ[G/H_i]^{\oplus a_i})\oplus F^{\oplus b_1}$ (resp. 
$(\oplus_{i=1}^r \bZ[G/H_i]^{\oplus a_i^{\prime}})\oplus F^{\oplus b_1^{\prime}}$) 
with the isomorphism (\ref{eqiso2})
for lists $L_1=[a_1,\ldots,a_r,b_1]$ and 
$L_2=[a_1^{\prime},\ldots,a_r^{\prime},b_1^{\prime}]$. 
If not, this returns {\tt false}.\\
{\tt StablyPermutationMCheckMat(G,L1,L2,P)} 
returns the same as 
{\tt StablyPermutationFCheckMat(G,L1,L2,P)} 
but with respect to $M_G$ 
instead of $F$. (We will use these in Section \ref{seFC}.)\\

\noindent
{\tt StablyPermutationFCheckGen(G,L1,L2)} returns 
the list $[\mathcal{M}_1,\mathcal{M}_2]$ where 
$\mathcal{M}_1=[g_1,\ldots,g_t]$ (resp. $\mathcal{M}_2=[g_1^{\prime},\ldots,g_t^{\prime}]$) 
is a list of the generators of $G_1$ (resp. $G_2$) 
which is the matrix representation group of the action of $G$ 
on $(\oplus_{i=1}^r \bZ[G/H_i]^{\oplus a_i})\oplus F^{\oplus b_1}$ (resp. 
$(\oplus_{i=1}^r \bZ[G/H_i]^{\oplus a_i^{\prime}})\oplus F^{\oplus b_1^{\prime}}$) 
with the isomorphism (\ref{eqiso2})
for lists $L_1=[a_1,\ldots,a_r,b_1]$ and 
$L_2=[a_1^{\prime},\ldots,a_r^{\prime},b_1^{\prime}]$.\\
{\tt StablyPermutationMCheckGen(G,L1,L2)} 
returns the same as  
{\tt StablyPermutationFCheckGen(G,L1,L2)} 
but with respect to $M_G$ 
instead of $F$. (We will use these in Section \ref{seFC}.)

\bigskip

%Alg. 6
\begin{algorithmF}[{Verification of $[M_G]^{fl}=0$: Method II}]\label{Alg6}
{}~\\
\begin{verbatim}
TransformationMat:= function(l1,l2)
    local d1,d2,l,m,p,i,j;
    d1:=Length(l1[1]);
    d2:=Length(l2[1]);
    l:=Length(l1);
    m:=[];
    for i in [1..d1] do
        for j in [1..d2] do
            p:=NullMat(d1,d2);
            p[i][j]:=1;
            Add(m,Flat(List([1..l],x->l1[x]*p-p*l2[x])));
        od;
    od;
    p:=NullspaceIntMat(m);
    return List(p,x->List([1..d1],y->x{[(y-1)*d2+1..y*d2]}));
end;

StablyPermutationCheckHP:= function(g,hh,c1,c2)
    local gg,l,m,m1,m2,tm,d1,d2,s1,s2,i,j,k;
    gg:=List(hh,x->CosetRepresentation(g,x));
    Add(gg,GeneratorsOfGroup(g));
    l:=List([1..Length(gg)],x->Length(gg[x][1]));
    d1:=Sum([1..Length(gg)],x->c1[x]*l[x]);
    d2:=Sum([1..Length(gg)],x->c2[x]*l[x]);
    m:=[];
    s1:=0;
    for i in [1..Length(gg)] do
        if c1[i]>0 then
            m1:=[];
            s2:=0;
            for j in [1..Length(gg)] do
                if c2[j]>0 then
                    tm:=TransformationMat(gg[i],gg[j]);
                    for k in [1..c2[j]] do
                        m2:=List(tm,x->TransposedMat(Concatenation(
                          [NullMat(s2,l[i]),TransposedMat(x),
                          NullMat(d2-s2-l[j],l[i])])));
                        m1:=Concatenation(m1,m2);
                        s2:=s2+l[j];
                    od;
                fi;
            od;
            m1:=LatticeBasis(List(m1,Flat));
            m1:=List(m1,x->List([1..l[i]],y->x{[(y-1)*s2+1..y*s2]}));
            for k in [1..c1[i]] do
                m:=Concatenation(m,List(m1,x->Concatenation(
                  [NullMat(s1,d2),x,NullMat(d1-s1-l[i],d2)])));
                s1:=s1+l[i];
            od;
        fi;
    od;
    return m;
end;

StablyPermutationMCheckP:= function(g,c1,c2)
    local h;
    h:=List(ConjugacyClassesSubgroups2(g),Representative);
    return StablyPermutationCheckHP(g,h,c1,c2);
end;

StablyPermutationFCheckP:= function(g,c1,c2)
    local tg,gg,d,th,mi,ms,o,h,r,gg1,gg2,g2,iso,ker,tg2,th2,h2,m1,m2,v;
    tg:=TransposedMatrixGroup(g);
    gg:=GeneratorsOfGroup(tg);
    d:=Length(Identity(g));
    th:=List(ConjugacyClassesSubgroups2(tg),Representative);
    mi:=FindCoflabbyResolutionBase(tg,th);
    r:=Length(mi);
    gg1:=List(gg,x->PermutationMat(Permutation(x,mi),r));
    if r=d then
        return true;
    else
        ms:=NullspaceIntMat(mi);
        v:=NullspaceIntMat(TransposedMat(ms));
        m1:=Concatenation(v,ms);
        m2:=m1^-1;
        gg2:=List(gg1,x->m1*x*m2);
        gg2:=List(gg2,x->x{[d+1..r]}{[d+1..r]});
        tg2:=Group(gg2);
        iso:=GroupHomomorphismByImages(tg,tg2,gg,gg2);
        ker:=Kernel(iso);
        th2:=List(Filtered(th,x->IsSubgroup(x,ker)),x->Image(iso,x));
        g2:=TransposedMatrixGroup(tg2);
        h2:=List(th2,TransposedMatrixGroup);
        return StablyPermutationCheckHP(g2,h2,c1,c2);
    fi;
end;

StablyPermutationCheckHMat:= function(g,hh,c1,c2,p)
    local gg,g1,g2,dx,m,i,j,d;
    gg:=List(hh,x->CosetRepresentation(g,x));
    Add(gg,GeneratorsOfGroup(g));
    g1:=[];
    g2:=[];
    for i in [1..Length(gg[1])] do
        m:=[];
        for j in [1..Length(gg)] do
            m:=Concatenation(m,List([1..c1[j]],x->gg[j][i]));
        od;
        Add(g1,DirectSumMat(m));
        m:=[];
        for j in [1..Length(gg)] do
            m:=Concatenation(m,List([1..c2[j]],x->gg[j][i]));
        od;
        Add(g2,DirectSumMat(m));
    od;
    d:=Length(g1[1]);
    if d<>Length(g2[1]) or d<>Length(p) or DeterminantMat(p)^2<>1 then
        return fail;
    else
        return List(g1,x->x^p)=g2;
    fi;
end;

StablyPermutationMCheckMat:= function(g,c1,c2,p)
    local h;
    h:=List(ConjugacyClassesSubgroups2(g),Representative);
    return StablyPermutationCheckHMat(g,h,c1,c2,p);
end;

StablyPermutationFCheckMat:= function(g,c1,c2,p)
    local tg,gg,d,th,mi,ms,o,h,r,gg1,gg2,g2,iso,ker,tg2,th2,h2,m1,m2,v;
    tg:=TransposedMatrixGroup(g);
    gg:=GeneratorsOfGroup(tg);
    d:=Length(Identity(g));
    th:=List(ConjugacyClassesSubgroups2(tg),Representative);
    mi:=FindCoflabbyResolutionBase(tg,th);
    r:=Length(mi);
    gg1:=List(gg,x->PermutationMat(Permutation(x,mi),r));
    if r=d then
        return true;
    else
        ms:=NullspaceIntMat(mi);
        v:=NullspaceIntMat(TransposedMat(ms));
        m1:=Concatenation(v,ms);
        m2:=m1^-1;
        gg2:=List(gg1,x->m1*x*m2);
        gg2:=List(gg2,x->x{[d+1..r]}{[d+1..r]});
        tg2:=Group(gg2);
        iso:=GroupHomomorphismByImages(tg,tg2,gg,gg2);
        ker:=Kernel(iso);
        th2:=List(Filtered(th,x->IsSubgroup(x,ker)),x->Image(iso,x));
        g2:=TransposedMatrixGroup(tg2);
        h2:=List(th2,TransposedMatrixGroup);
        return StablyPermutationCheckHMat(g2,h2,c1,c2,p);
    fi;
end;

StablyPermutationCheckHGen:= function(g,hh,c1,c2)
    local gg,g1,g2,dx,m,i,j;
    gg:=List(hh,x->CosetRepresentation(g,x));
    Add(gg,GeneratorsOfGroup(g));
    g1:=[];
    g2:=[];
    for i in [1..Length(gg[1])] do
        m:=[];
        for j in [1..Length(gg)] do
            m:=Concatenation(m,List([1..c1[j]],x->gg[j][i]));
        od;
        Add(g1,DirectSumMat(m));
        m:=[];
        for j in [1..Length(gg)] do
            m:=Concatenation(m,List([1..c2[j]],x->gg[j][i]));
        od;
        Add(g2,DirectSumMat(m));
    od;
    return [g1,g2];
end;

StablyPermutationMCheckGen:= function(g,c1,c2)
    local h;
    h:=List(ConjugacyClassesSubgroups2(g),Representative);
    return StablyPermutationCheckHGen(g,h,c1,c2);
end;

StablyPermutationFCheckGen:= function(g,c1,c2)
    local tg,gg,d,th,mi,ms,o,h,r,gg1,gg2,g2,iso,ker,tg2,th2,h2,m1,m2,v;
    tg:=TransposedMatrixGroup(g);
    gg:=GeneratorsOfGroup(tg);
    d:=Length(Identity(g));
    th:=List(ConjugacyClassesSubgroups2(tg),Representative);
    mi:=FindCoflabbyResolutionBase(tg,th);
    r:=Length(mi);
    gg1:=List(gg,x->PermutationMat(Permutation(x,mi),r));
    if r=d then
        return true;
    else
        ms:=NullspaceIntMat(mi);
        v:=NullspaceIntMat(TransposedMat(ms));
        m1:=Concatenation(v,ms);
        m2:=m1^-1;
        gg2:=List(gg1,x->m1*x*m2);
        gg2:=List(gg2,x->x{[d+1..r]}{[d+1..r]});
        tg2:=Group(gg2);
        iso:=GroupHomomorphismByImages(tg,tg2,gg,gg2);
        ker:=Kernel(iso);
        th2:=List(Filtered(th,x->IsSubgroup(x,ker)),x->Image(iso,x));
        g2:=TransposedMatrixGroup(tg2);
        h2:=List(th2,TransposedMatrixGroup);
        return StablyPermutationCheckHGen(g2,h2,c1,c2);
    fi;
end;
\end{verbatim}
\end{algorithmF}

\bigskip

\begin{example}[Algorithm \ref{Alg6}: Method II]\label{ex421}
Let $M_G$ be the $G$-lattice where $G={\rm Imf}(4,2,1)
\simeq D_6^2\rtimes C_2$ is the group of order $288$ of 
the GAP ID $(4,30,13,1)$. 
We will show that $[M]^{fl}=[F]=0$. 
Indeed, we can verify that $F$ is of rank $8$ and 
\begin{align*}
\bZ[G/H_{196}]\oplus\bZ[G/H_{212}]\simeq F\oplus\bZ[G/H_{217}]
\end{align*}
where $H_{196}\simeq C_2^2\times D_6$, $H_{212}\simeq C_2\times S_3^2$ and 
$H_{217}\simeq D_6^2$ (the rank of the both sides is 
$6+4=8+2=10$). 
By comparing with Algorithm \ref{Alg5}: Method I, we may 
obtain the matrix representations 
of $G_1$ and $G_2$ which corresponds to the action of $G$ on 
$\bZ[G/H_{196}]\oplus\bZ[G/H_{212}]$ and $F\oplus\bZ[G/H_{217}]$ 
respectively and the matrix $P$ which satisfies $\sigma_1 P=P \sigma_2$ 
for any $\sigma_1\in G_1$ and $\sigma_2\in G_2$. 

\bigskip

\begin{verbatim}
gap> Read("crystcat.gap");
gap> Read("FlabbyResolution.gap");

gap> G:=ImfMatrixGroup(4,2,1);
ImfMatrixGroup(4,2,1)
gap> StructureDescription(G);
"(C2 x C2 x S3 x S3) : C2"
gap> CrystCatZClass(G);
[ 4, 30, 13, 1 ]
gap> GeneratorsOfGroup(FlabbyResolution(G).actionF); # F is of rank 8
[ [ [ 0, 1, 0, 0, 0, 0, 0, 0 ], 
    [ 1, 0, 0, 0, 0, 0, 0, 0 ], 
    [ 0, 0, 1, 0, 0, 0, 0, 0 ], 
    [ 0, 0, 0, 0, 0, 1, 0, 0 ], 
    [ 0, 0, 0, 0, 1, 0, 0, 0 ], 
    [ 0, 0, 0, 1, 0, 0, 0, 0 ], 
    [ 0, 0, 0, 0, 0, 0, 1, 0 ], 
    [ 0, 0, 0, 0, 0, 0, 0, 1 ] ], 
  [ [ 0, 0, 0, 0, 0, 1, 0, 0 ], 
    [ 1, 0, 0, 0, 0, 0, 0, 0 ], 
    [ 0, 0, 1, 0, 0, 0, 0, 0 ], 
    [ 0, 1, 0, 0, 0, 0, 0, 0 ], 
    [ 0, 0, 0, 0, 1, 0, 0, 0 ], 
    [ 0, 1, 0, 1, 0, -1, 0, 0 ], 
    [ 0, 0, 0, 0, 0, 0, 1, 0 ], 
    [ 0, 0, 0, 0, 0, 0, 0, 1 ] ], 
  [ [ 0, 0, 1, 0, 0, 0, 0, 0 ], 
    [ 0, 0, 0, 0, 1, 0, 0, 0 ], 
    [ 1, 0, 0, 0, 0, 0, 0, 0 ], 
    [ 0, 0, 0, 0, 0, 0, 1, 0 ], 
    [ 0, 1, 0, 0, 0, 0, 0, 0 ], 
    [ 0, 0, 0, 0, 0, 0, 0, 1 ], 
    [ 0, 0, 0, 1, 0, 0, 0, 0 ], 
    [ 0, 0, 0, 0, 0, 1, 0, 0 ] ] ]
gap> ll:=PossibilityOfStablyPermutationF(G);;
gap> l:=ll[Length(ll)];
[ 0, 0, 0, 0, 0, 0, 0, 0, 0, 0, 0, 0, 0, 0, 0, 0, 0, 0, 0, 0, 0, 0, 0, 0, 0, 
  0, 0, 0, 0, 0, 0, 0, 0, 0, 0, 0, 0, 0, 0, 0, 0, 0, 0, 0, 0, 0, 0, 0, 0, 0, 
  0, 0, 0, 0, 0, 0, 0, 0, 0, 0, 0, 0, 0, 0, 0, 0, 0, 0, 0, 0, 0, 0, 0, 0, 0, 
  0, 0, 0, 0, 0, 0, 0, 0, 0, 0, 0, 0, 0, 0, 0, 0, 0, 0, 0, 0, 0, 0, 0, 0, 0, 
  0, 0, 0, 0, 0, 0, 0, 0, 0, 0, 0, 0, 0, 0, 0, 0, 0, 0, 0, 0, 0, 0, 0, 0, 0, 
  0, 0, 0, 0, 0, 0, 0, 0, 0, 0, 0, 0, 0, 0, 0, 0, 0, 0, 0, 0, 0, 0, 0, 0, 0, 
  0, 0, 0, 0, 0, 0, 0, 0, 0, 0, 0, 0, 0, 0, 0, 0, 0, 0, 0, 0, 0, 0, 0, 0, 0, 
  0, 0, 0, 0, 0, 0, 0, 0, 0, 0, 0, 0, 0, 0, 0, 0, 0, 0, 0, 0, 1, 0, 0, 0, 0, 
  0, 0, 0, 0, 0, 0, 0, 0, 0, 0, 0, 1, 0, 0, 0, 0, -1, 0, 0, 0, 0, 0, 0, 0, -1 ]
gap> Length(l);
225
gap> [l[196],l[212],l[217],l[225]];
[ 1, 1, -1, -1 ]
gap> ss:=List(ConjugacyClassesSubgroups2(G),x->StructureDescription(Representative(x)));;
gap> [ss[196],ss[212],ss[217]];
[ "C2 x C2 x C2 x S3", "(S3 x S3) : C2", "C2 x C2 x S3 x S3" ]
gap> bp:=StablyPermutationFCheckP(G,Nlist(l),Plist(l));;
gap> Length(bp); 
10
gap> Length(bp[1]); # rank of the both sides of (10) is 10
10
gap> cc:=Filtered(Tuples([0,1],10),x->Determinant(x*bp)^2=1);
[ [ 0, 1, 1, 0, 0, 0, 1, 0, 1, 0 ], [ 0, 1, 1, 0, 0, 0, 1, 1, 0, 0 ], 
  [ 0, 1, 1, 0, 0, 1, 0, 0, 1, 0 ], [ 0, 1, 1, 0, 0, 1, 0, 1, 0, 0 ], 
  [ 0, 1, 1, 0, 1, 0, 1, 0, 1, 1 ], [ 0, 1, 1, 0, 1, 0, 1, 1, 0, 1 ], 
  [ 0, 1, 1, 0, 1, 1, 0, 0, 1, 1 ], [ 0, 1, 1, 0, 1, 1, 0, 1, 0, 1 ], 
  [ 1, 0, 0, 1, 0, 0, 1, 0, 1, 0 ], [ 1, 0, 0, 1, 0, 0, 1, 1, 0, 0 ], 
  [ 1, 0, 0, 1, 0, 1, 0, 0, 1, 0 ], [ 1, 0, 0, 1, 0, 1, 0, 1, 0, 0 ], 
  [ 1, 0, 0, 1, 1, 0, 1, 0, 1, 1 ], [ 1, 0, 0, 1, 1, 0, 1, 1, 0, 1 ], 
  [ 1, 0, 0, 1, 1, 1, 0, 0, 1, 1 ], [ 1, 0, 0, 1, 1, 1, 0, 1, 0, 1 ] ]
gap> p:=cc[1]*bp;
[ [ 0, 1, 0, 0, 1, 1, 1, 1, 0, 0 ], 
  [ 1, 0, 1, 1, 0, 0, 0, 0, 1, 1 ], 
  [ 0, 0, 0, 0, 0, 1, 0, 1, 0, 0 ], 
  [ 0, 0, 0, 0, 1, 0, 1, 0, 0, 0 ], 
  [ 0, 0, 0, 1, 0, 0, 0, 0, 0, 1 ], 
  [ 0, 1, 0, 0, 0, 0, 0, 1, 0, 0 ], 
  [ 0, 0, 1, 0, 0, 0, 0, 0, 1, 0 ], 
  [ 0, 1, 0, 0, 0, 0, 1, 0, 0, 0 ], 
  [ 1, 0, 0, 0, 0, 0, 0, 0, 0, 1 ], 
  [ 1, 0, 0, 0, 0, 0, 0, 0, 1, 0 ] ]
gap> Determinant(p);
1
gap> gg:=StablyPermutationFCheckGen(G,Nlist(l),Plist(l));
[ [ [ [ 1, 0, 0, 0, 0, 0, 0, 0, 0, 0 ], [ 0, 1, 0, 0, 0, 0, 0, 0, 0, 0 ], 
      [ 0, 0, 0, 1, 0, 0, 0, 0, 0, 0 ], [ 0, 0, 1, 0, 0, 0, 0, 0, 0, 0 ], 
      [ 0, 0, 0, 0, 1, 0, 0, 0, 0, 0 ], [ 0, 0, 0, 0, 0, 0, 0, 1, 0, 0 ], 
      [ 0, 0, 0, 0, 0, 0, 1, 0, 0, 0 ], [ 0, 0, 0, 0, 0, 1, 0, 0, 0, 0 ], 
      [ 0, 0, 0, 0, 0, 0, 0, 0, 1, 0 ], [ 0, 0, 0, 0, 0, 0, 0, 0, 0, 1 ] ], 
    [ [ 1, 0, 0, 0, 0, 0, 0, 0, 0, 0 ], [ 0, 1, 0, 0, 0, 0, 0, 0, 0, 0 ], 
      [ 0, 0, 0, 0, 0, 0, 0, 1, 0, 0 ], [ 0, 0, 1, 0, 0, 0, 0, 0, 0, 0 ], 
      [ 0, 0, 0, 0, 1, 0, 0, 0, 0, 0 ], [ 0, 0, 0, 1, 0, 0, 0, 0, 0, 0 ], 
      [ 0, 0, 0, 0, 0, 0, 1, 0, 0, 0 ], [ 0, 0, 0, 1, 0, 1, 0, -1, 0, 0 ], 
      [ 0, 0, 0, 0, 0, 0, 0, 0, 1, 0 ], [ 0, 0, 0, 0, 0, 0, 0, 0, 0, 1 ] ], 
    [ [ 0, 1, 0, 0, 0, 0, 0, 0, 0, 0 ], [ 1, 0, 0, 0, 0, 0, 0, 0, 0, 0 ], 
      [ 0, 0, 0, 0, 1, 0, 0, 0, 0, 0 ], [ 0, 0, 0, 0, 0, 0, 1, 0, 0, 0 ], 
      [ 0, 0, 1, 0, 0, 0, 0, 0, 0, 0 ], [ 0, 0, 0, 0, 0, 0, 0, 0, 1, 0 ], 
      [ 0, 0, 0, 1, 0, 0, 0, 0, 0, 0 ], [ 0, 0, 0, 0, 0, 0, 0, 0, 0, 1 ], 
      [ 0, 0, 0, 0, 0, 1, 0, 0, 0, 0 ], [ 0, 0, 0, 0, 0, 0, 0, 1, 0, 0 ] ] ], 
  [ [ [ 1, 0, 0, 0, 0, 0, 0, 0, 0, 0 ], [ 0, 1, 0, 0, 0, 0, 0, 0, 0, 0 ], 
      [ 0, 0, 1, 0, 0, 0, 0, 0, 0, 0 ], [ 0, 0, 0, 1, 0, 0, 0, 0, 0, 0 ], 
      [ 0, 0, 0, 0, 0, 1, 0, 0, 0, 0 ], [ 0, 0, 0, 0, 1, 0, 0, 0, 0, 0 ], 
      [ 0, 0, 0, 0, 0, 0, 0, 1, 0, 0 ], [ 0, 0, 0, 0, 0, 0, 1, 0, 0, 0 ], 
      [ 0, 0, 0, 0, 0, 0, 0, 0, 1, 0 ], [ 0, 0, 0, 0, 0, 0, 0, 0, 0, 1 ] ], 
    [ [ 1, 0, 0, 0, 0, 0, 0, 0, 0, 0 ], [ 0, 0, 0, 0, 1, 0, 0, 0, 0, 0 ], 
      [ 0, 0, 1, 0, 0, 0, 0, 0, 0, 0 ], [ 0, 0, 0, 1, 0, 0, 0, 0, 0, 0 ], 
      [ 0, 0, 0, 0, 0, 1, 0, 0, 0, 0 ], [ 0, 1, 0, 0, 0, 0, 0, 0, 0, 0 ], 
      [ 0, 0, 0, 0, 0, 0, 0, 1, 0, 0 ], [ 0, 0, 0, 0, 0, 0, 1, 0, 0, 0 ], 
      [ 0, 0, 0, 0, 0, 0, 0, 0, 1, 0 ], [ 0, 0, 0, 0, 0, 0, 0, 0, 0, 1 ] ], 
    [ [ 0, 1, 0, 0, 0, 0, 0, 0, 0, 0 ], [ 1, 0, 0, 0, 0, 0, 0, 0, 0, 0 ], 
      [ 0, 0, 0, 0, 1, 0, 0, 0, 0, 0 ], [ 0, 0, 0, 0, 0, 1, 0, 0, 0, 0 ], 
      [ 0, 0, 1, 0, 0, 0, 0, 0, 0, 0 ], [ 0, 0, 0, 1, 0, 0, 0, 0, 0, 0 ], 
      [ 0, 0, 0, 0, 0, 0, 0, 0, 1, 0 ], [ 0, 0, 0, 0, 0, 0, 0, 0, 0, 1 ], 
      [ 0, 0, 0, 0, 0, 0, 1, 0, 0, 0 ], [ 0, 0, 0, 0, 0, 0, 0, 1, 0, 0 ] ] ] ]
gap> List(gg[1],x->p^-1*x*p)=gg[2];
true
gap> StablyPermutationFCheckMat(G,Nlist(l),Plist(l),p);
true
\end{verbatim}
\end{example}

\bigskip

%SS 5.7
%%%%%%%%%%%%%%%%%%%%%%%%%%%%%%%%%%%%%%%%%%%%%%%%%%%%%%%%%%%%%%%%%%%%%%%%%%%%%%%%%%%%%%
\subsection{Verification of $[M_G]^{fl}=0$: Method III}\label{ss57}~\\

Let $G$ be a finite subgroup of $\GL(n,\bZ)$ and $M=M_G$ be 
the corresponding $G$-lattice of rank $n$ as in Definition \ref{defMG}. 
In order to confirm that $[M]^{fl}=0$, Method I and Method II 
in the previous two section gave how to find the explicit isomorphism 
\setcounter{equation}{9}
\begin{align}
\left(\bigoplus_{i=1}^r \bZ[G/H_i]^{\oplus a_i}\right)\oplus F^{\oplus b_1}
\simeq
\left(\bigoplus_{i=1}^r \bZ[G/H_i]^{\oplus a_i^{\prime}}\right)\oplus F^{\oplus b_1^{\prime}}.
\label{eqiso3}
\end{align} 
However, if the rank of the $G$-lattices in the both sides of (\ref{eqiso3}) 
is large, the algorithms in Method I and Method II do not work. 
In this case, we should try to reduce the rank of the $G$-lattices in (\ref{eqiso3}). 
The following algorithm (Algorithm \ref{Alg7}) can search suitable 
base change of the permutation $G$-lattice $P^\circ$ in a coflabby resolution 
$0\rightarrow {\rm Ker}\, f\rightarrow P^\circ\xrightarrow{f} M^\circ\rightarrow 0$ 
of $M^\circ$ as in (\ref{cr}) 
in order to reduce the rank of ${\rm Ker}\, f$. 
Then we may get a ``reduced'' flabby resolution of $M$: 
$0\rightarrow M\rightarrow P\rightarrow ({\rm Ker}\, f)^\circ\rightarrow 0$.

\bigskip

\noindent
{\tt SearchCoflabbyResolutionBase(G,b)} returns a list 
$\mathcal{L}=\{m_1,\dots,m_s\}$ where the $m_i$'s are all of the 
lists which satisfy that 
$m_i=\mathcal{P}^\circ$, $P^\circ=\bZ[\mathcal{P}^\circ]$ and 
$P^\circ$ satisfies (\ref{eqPP}) with $\#\mathcal{R}^{\prime}\leq b$. \\

\noindent
{\tt FlabbyResolutionFromBase(G,mi)},\\
{\tt PossibilityOfStablyPermutationFFromBase(G,mi)},\\ 
{\tt StablyPermutationFCheckFromBase(G,mi,L1,L2)},\\
{\tt StablyPermutationFCheckPFromBase(G,mi,L1,L2)},\\
{\tt StablyPermutationFCheckMatFromBase(G,mi,L1,L2,P)}\\

\noindent
return the same as\\ 

\noindent
{\tt FlabbyResolution(G)} (in Algorithm \ref{Alg1}),\\
{\tt PossibilityOfStablyPermutationF(G)} (in Algorithm \ref{Alg4}),\\
{\tt StablyPermutationFCheck(G,L1,L2)} (in Algorithm \ref{Alg5}: Method I),\\
{\tt StablyPermutationFCheckP(G,L1,L2)} (in Algorithm \ref{Alg6}: Method II),\\
{\tt StablyPermutationFCheckMat(G,L1,L2,P)} (in Algorithm \ref{Alg6}: Method II)\\

\noindent
respectively 
but with respect to $m_i=\mathcal{P}^\circ$ 
instead of the original $\mathcal{P}^\circ$ as in (\ref{eqPP}).

\bigskip

\begin{algorithmF}[Base change of a flabby resolution of $M_G$: 
Method III]\label{Alg7}
{}~\\
\begin{verbatim}
CheckCoflabbyResolutionBaseH:= function(g,hh,mi)
    local z0;
    z0:=List(hh,Z0lattice);
    return ForAll([1..Length(hh)],i->LatticeBasis(List(OrbitsDomain(hh[i],mi),Sum))=z0[i]);
end;

CheckCoflabbyResolutionBase:= function(g,mi)
    local hh,z0;
    hh:=List(ConjugacyClassesSubgroups2(g),Representative);
    return CheckCoflabbyResolutionBaseH(g,hh,mi);
end;

SearchCoflabbyResolutionBaseH:= function(g,hh,b)
    local z0,orbs,imgs,i,j,mi,mis;
    mis:=[];
    z0:=List(hh,Z0lattice);
    orbs:=Set(Union(z0),x->Orbit(g,x));
    imgs:=List(orbs,x->List(hh,y->LatticeBasis(List(OrbitsDomain(y,x),Sum))));
    if b=0 then
        for i in [1..Length(orbs)] do
            for j in Combinations([1..Length(orbs)],i) do
                if ForAll([1..Length(hh)],x->LatticeBasis(
                  Union(List(j,y->imgs[y][x])))=z0[x]) then
                    mi:=Union(List(j,x->orbs[x]));
                    if mis=[] or Length(mi)<Length(mis) then
                        mis:=mi;
                    fi;
                fi;
            od;
            if mis<>[] then
                return mis;
            fi;
       od;
    else
        for i in [1..b] do
            for j in Combinations([1..Length(orbs)],i) do
                if ForAll([1..Length(hh)],x->LatticeBasis(
                  Union(List(j,y->imgs[y][x])))=z0[x]) then
                    mi:=Union(List(j,x->orbs[x]));
                    Add(mis,mi);
                fi;
            od;
        od;
        return Set(mis);
    fi;
end;

SearchCoflabbyResolutionBase:= function(g,b)
    local hh;
    hh:=List(ConjugacyClassesSubgroups2(g),Representative);
    return SearchCoflabbyResolutionBaseH(g,hh,b);
end;

FlabbyResolutionFromBase:= function(g,mi)
    local tg,gg,d,th,ms,o,r,gg1,gg2,v1,mg,img;
    tg:=TransposedMatrixGroup(g);
    gg:=GeneratorsOfGroup(tg);
    d:=Length(Identity(g));
    th:=List(ConjugacyClassesSubgroups2(tg),Representative);
    r:=Length(mi);
    o:=IdentityMat(r);
    gg1:=List(gg,x->PermutationMat(Permutation(x,mi),r));
    if r=d then
        return rec(injection:=TransposedMat(mi),
                   surjection:=NullMat(r,0),
                   actionP:=TransposedMatrixGroup(Group(gg1,o))
        );
    else
        ms:=NullspaceIntMat(mi);
        v1:=NullspaceIntMat(TransposedMat(ms));
        mg:=Concatenation(v1,ms);
        img:=mg^-1;
        gg2:=List(gg1,x->mg*x*img);
        gg2:=List(gg2,x->x{[d+1..r]}{[d+1..r]});
        return rec(injection:=TransposedMat(mi),
                   surjection:=TransposedMat(ms),
                   actionP:=TransposedMatrixGroup(Group(gg1)),
                   actionF:=TransposedMatrixGroup(Group(gg2))
        );
    fi;
end;

PossibilityOfStablyPermutationFFromBase:= function(g,mi)
    local tg,gg,d,th,ms,r,gg1,gg2,g2,iso,ker,tg2,th2,h2,m1,m2,v;
    tg:=TransposedMatrixGroup(g);
    gg:=GeneratorsOfGroup(tg);
    d:=Length(Identity(g));
    th:=List(ConjugacyClassesSubgroups2(tg),Representative);
    r:=Length(mi);
    gg1:=List(gg,x->PermutationMat(Permutation(x,mi),r));
    if r=d then
        return [0,1];
    else
        ms:=NullspaceIntMat(mi);
        v:=NullspaceIntMat(TransposedMat(ms));
        m1:=Concatenation(v,ms);
        m2:=m1^-1;
        gg2:=List(gg1,x->m1*x*m2);
        gg2:=List(gg2,x->x{[d+1..r]}{[d+1..r]});
        tg2:=Group(gg2);
        iso:=GroupHomomorphismByImages(tg,tg2,gg,gg2);
        ker:=Kernel(iso);
        th2:=List(Filtered(th,x->IsSubgroup(x,ker)),x->Image(iso,x));
        g2:=TransposedMatrixGroup(tg2);
        h2:=List(th2,TransposedMatrixGroup);
        return PossibilityOfStablyPermutationH(g2,h2);
    fi;
end;

StablyPermutationFCheckFromBase:= function(g,mi,c1,c2)
    local tg,gg,d,th,ms,o,h,r,gg1,gg2,g2,iso,ker,tg2,th2,h2,m1,m2,v;
    tg:=TransposedMatrixGroup(g);
    gg:=GeneratorsOfGroup(tg);
    d:=Length(Identity(g));
    th:=List(ConjugacyClassesSubgroups2(tg),Representative);
    r:=Length(mi);
    gg1:=List(gg,x->PermutationMat(Permutation(x,mi),r));
    if r=d then
        return true;
    else
        ms:=NullspaceIntMat(mi);
        v:=NullspaceIntMat(TransposedMat(ms));
        m1:=Concatenation(v,ms);
        m2:=m1^-1;
        gg2:=List(gg1,x->m1*x*m2);
        gg2:=List(gg2,x->x{[d+1..r]}{[d+1..r]});
        tg2:=Group(gg2);
        iso:=GroupHomomorphismByImages(tg,tg2,gg,gg2);
        ker:=Kernel(iso);
        th2:=List(Filtered(th,x->IsSubgroup(x,ker)),x->Image(iso,x));
        g2:=TransposedMatrixGroup(tg2);
        h2:=List(th2,TransposedMatrixGroup);
        return StablyPermutationCheckH(g2,h2,c1,c2);
    fi;
end;

StablyPermutationFCheckPFromBase:= function(g,mi,c1,c2)
    local tg,gg,d,th,ms,o,h,r,gg1,gg2,g2,iso,ker,tg2,th2,h2,m1,m2,v;
    tg:=TransposedMatrixGroup(g);
    gg:=GeneratorsOfGroup(tg);
    d:=Length(Identity(g));
    th:=List(ConjugacyClassesSubgroups2(tg),Representative);
    r:=Length(mi);
    gg1:=List(gg,x->PermutationMat(Permutation(x,mi),r));
    if r=d then
        return true;
    else
        ms:=NullspaceIntMat(mi);
        v:=NullspaceIntMat(TransposedMat(ms));
        m1:=Concatenation(v,ms);
        m2:=m1^-1;
        gg2:=List(gg1,x->m1*x*m2);
        gg2:=List(gg2,x->x{[d+1..r]}{[d+1..r]});
        tg2:=Group(gg2);
        iso:=GroupHomomorphismByImages(tg,tg2,gg,gg2);
        ker:=Kernel(iso);
        th2:=List(Filtered(th,x->IsSubgroup(x,ker)),x->Image(iso,x));
        g2:=TransposedMatrixGroup(tg2);
        h2:=List(th2,TransposedMatrixGroup);
        return StablyPermutationCheckHP(g2,h2,c1,c2);
    fi;
end;

StablyPermutationFCheckMatFromBase:= function(g,mi,c1,c2,p)
    local tg,gg,d,th,ms,o,h,r,gg1,gg2,g2,iso,ker,tg2,th2,h2,m1,m2,v;
    tg:=TransposedMatrixGroup(g);
    gg:=GeneratorsOfGroup(tg);
    d:=Length(Identity(g));
    th:=List(ConjugacyClassesSubgroups2(tg),Representative);
    r:=Length(mi);
    gg1:=List(gg,x->PermutationMat(Permutation(x,mi),r));
    if r=d then
        return true;
    else
        ms:=NullspaceIntMat(mi);
        v:=NullspaceIntMat(TransposedMat(ms));
        m1:=Concatenation(v,ms);
        m2:=m1^-1;
        gg2:=List(gg1,x->m1*x*m2);
        gg2:=List(gg2,x->x{[d+1..r]}{[d+1..r]});
        tg2:=Group(gg2);
        iso:=GroupHomomorphismByImages(tg,tg2,gg,gg2);
        ker:=Kernel(iso);
        th2:=List(Filtered(th,x->IsSubgroup(x,ker)),x->Image(iso,x));
        g2:=TransposedMatrixGroup(tg2);
        h2:=List(th2,TransposedMatrixGroup);
        return StablyPermutationCheckHMat(g2,h2,c1,c2,p);
    fi;
end;

StablyPermutationFCheckGenFromBase:= function(g,mi,c1,c2)
    local tg,gg,d,th,ms,o,h,r,gg1,gg2,g2,iso,ker,tg2,th2,h2,m1,m2,v;
    tg:=TransposedMatrixGroup(g);
    gg:=GeneratorsOfGroup(tg);
    d:=Length(Identity(g));
    th:=List(ConjugacyClassesSubgroups2(tg),Representative);
    r:=Length(mi);
    gg1:=List(gg,x->PermutationMat(Permutation(x,mi),r));
    if r=d then
        return true;
    else
        ms:=NullspaceIntMat(mi);
        v:=NullspaceIntMat(TransposedMat(ms));
        m1:=Concatenation(v,ms);
        m2:=m1^-1;
        gg2:=List(gg1,x->m1*x*m2);
        gg2:=List(gg2,x->x{[d+1..r]}{[d+1..r]});
        tg2:=Group(gg2);
        iso:=GroupHomomorphismByImages(tg,tg2,gg,gg2);
        ker:=Kernel(iso);
        th2:=List(Filtered(th,x->IsSubgroup(x,ker)),x->Image(iso,x));
        g2:=TransposedMatrixGroup(tg2);
        h2:=List(th2,TransposedMatrixGroup);
        return StablyPermutationCheckHGen(g2,h2,c1,c2);
    fi;
end;
\end{verbatim}
\end{algorithmF}

\bigskip

\begin{example}[Algorithm \ref{Alg7}: Method III]
Let $M_G$ be the $G$-lattice where $G\simeq C_2\times S_4$ 
is the group of order $48$ of the CARAT ID $(5,533,8)$. 
By using {\tt FlabbyResolution(G).actionF} as in Algorithm \ref{Alg1}, 
we obtain a flabby resolution 
$0\rightarrow M_G\rightarrow P\rightarrow F\rightarrow 0$ of $M_G$. 
However, the flabby $G$-lattice $F$ is of rank $44$. 

By {\tt FlabbyResolutionFromBase(G,mi).actionF}, 
we get a suitable flabby resolution of $M_G$: 
$0\rightarrow M_G\rightarrow P^\prime\rightarrow F^\prime\rightarrow 0$ 
where the flabby $G$-lattice $F^\prime$ is of rank $10$. 

By using {\tt PossibilityOfStablyPermutationF(G)}, 
it may be possible that the isomorphism 
$\bZ[G/H_{20}]\oplus\bZ[G/H_{22}]\oplus\bZ\simeq
\bZ[G/H_{29}]\oplus F^\prime$ occurs where 
$H_{20}\simeq C_2^3$, $H_{22}\simeq D_4$ and $H_{29}\simeq C_2\times D_4$. 

In order to confirm the isomorphism, 
we will use Mersenne Twister (cf. \cite{MN98}) to search an appropriate 
coefficients $c_i$ to get a transformation matrix $P=\sum_i c_i P_i$ 
which satisfies $G_1P=PG_2$ as in (\ref{eqisom})
(one can use the classical global random generator via 
{\tt IsGlobalRandomSource} which is given in 
\cite[Algorithm A in 3.2.2 with lag 30]{Knu98}). 
We should choose suitable integers $n_1$, $n_2$, $n_3$ in
\begin{center}
{\tt rr:=List([1..n1],x->List([1..20],y->Random(rs,[n2..n3])))}
\end{center}
to get the desired coefficients.
%Using this $F^\prime$, we may confirm that $[M_G]^{fl}=0$. 

\bigskip

\begin{verbatim}
Read("caratnumber.gap");
Read("FlabbyResolution.gap");

gap> G:=CaratMatGroupZClass(5,533,8);; # G=C2xS4
gap> Rank(FlabbyResolution(G).actionF.1); # F is of rank 44
44
gap> mis:=SearchCoflabbyResolutionBase(TransposedMatrixGroup(G),3);; # Method III
gap> List(mis,Length);
[ 29, 29, 15, 15, 15, 15 ]
gap> mi:=mis[3]; 
[ [ -1, -1, 0, -1, 0 ], [ -1, -1, 0, 0, -1 ], [ 0, -1, 0, -1, 0 ], 
  [ 0, -1, 0, 0, -1 ], [ 0, -1, 1, -1, 0 ], [ 0, -1, 1, 0, -1 ], 
  [ 0, 0, 0, -1, 0 ], [ 0, 0, 0, -1, 1 ], [ 0, 0, 0, 0, -1 ], 
  [ 0, 0, 0, 1, -1 ], [ 0, 0, 1, -1, 0 ], [ 0, 0, 1, 0, -1 ], 
  [ 0, 1, -1, 1, 1 ], [ 1, 0, 1, -1, 0 ], [ 1, 0, 1, 0, -1 ] ]
gap> FF:=FlabbyResolutionFromBase(G,mi).actionF;
<matrix group with 2 generators>
gap> Rank(FF.1); # FF is of rank 10 (=15-5)
10
gap> ll:=PossibilityOfStablyPermutationFFromBase(G,mi);
[ [ 1, 0, 0, 0, 0, -2, -1, 0, 0, 0, 0, 0, 0, 0, 0, 0, 0, 0, 2, 0, 0, 1, 0, 0, 
      1, -1, 0, 0, 1, 1, -1, 1, -1, -1 ], 
  [ 0, 1, 0, 0, 0, 0, 0, 0, 0, 0, 0, 0, 0, 0, 0, -2, 0, -1, 0, 0, 0, 1, 0, 0, 
      0, 0, 0, 2, 1, 1, 0, 0, -1, -1 ], 
  [ 0, 0, 0, 0, 1, -1, 0, 0, 0, 0, 0, 0, 0, 0, 0, 0, -1, 0, 1, 0, 0, 0, 0, 0, 
      1, -1, 0, 0, 0, 0, -1, 1, 0, 0 ], 
  [ 0, 0, 0, 0, 0, 0, 0, 1, 0, 0, 0, 0, 0, 0, 0, 0, 0, 0, 0, 0, 0, 1, 0, 0, 
      -1, -1, -1, 0, 1, 1, 1, 1, -1, -1 ], 
  [ 0, 0, 0, 0, 0, 0, 0, 0, 0, 0, 0, 0, 0, 0, 0, 0, 0, 0, 0, 1, 0, 1, 0, 0, 
      0, 0, 0, 0, -1, 0, 0, 0, 1, -1 ] ]
gap> l:=ll[Length(ll)];
[ 0, 0, 0, 0, 0, 0, 0, 0, 0, 0, 0, 0, 0, 0, 0, 0, 0, 0, 0, 1, 
  0, 1, 0, 0, 0, 0, 0, 0, -1, 0, 0, 0, 1, -1 ]
gap> Length(l);
34
gap> [l[20],l[22],l[33],l[29],l[34]];
[ 1, 1, 1, -1, -1 ]
gap> ss:=List(ConjugacyClassesSubgroups2(G),x->StructureDescription(Representative(x)));
[ "1", "C2", "C2", "C2", "C2", "C2", "C3", "C2 x C2", "C2 x C2", "C2 x C2", "C2 x C2", 
  "C2 x C2", "C4", "C4", "C2 x C2", "C2 x C2", "C6", "S3", "S3", "C2 x C2 x C2", "D8", 
  "D8", "C2 x C2 x C2", "C4 x C2", "D8", "D8", "A4", "D12", "C2 x D8", "C2 x A4", 
  "S4", "S4", "C2 x S4" ]
gap> [ss[20],ss[22],ss[33],ss[29]];
[ "C2 x C2 x C2", "D8", "C2 x S4", "C2 x D8" ]
gap> bp:=StablyPermutationFCheckPFromBase(G,mi,Nlist(l),Plist(l));;
gap> Length(bp); 
20
gap> Length(bp[1]); # rank of the both sides of (10) is 13
13
gap> rs:=RandomSource(IsMersenneTwister);
<RandomSource in IsMersenneTwister>
gap> rr:=List([1..1000],x->List([1..20],y->Random(rs,[0..1])));;
gap> Filtered(rr,x->Determinant(x*bp)^2=1); # MT found 3 solutions
[ [ 1, 1, 0, 1, 1, 1, 1, 1, 0, 1, 1, 1, 1, 1, 0, 1, 1, 0, 1, 1 ], 
  [ 0, 1, 1, 0, 0, 1, 1, 0, 1, 0, 0, 1, 1, 1, 0, 1, 1, 0, 1, 1 ], 
  [ 0, 1, 1, 1, 1, 1, 1, 1, 1, 1, 1, 0, 1, 0, 1, 1, 1, 1, 1, 1 ] ]
gap> p:=last[1]*bp;                                            
[ [ 1, 1, 1, 1, 0, 0, 1, 1, 1, 1, 1, 1, 1 ], 
  [ 0, 1, 1, 0, 1, 1, 1, 1, 1, 1, 1, 1, 1 ], 
  [ 1, 0, 0, 1, 1, 1, 1, 1, 1, 1, 1, 1, 1 ], 
  [ 1, 1, 0, 1, 1, 1, 1, 1, 1, 0, 1, 1, 1 ], 
  [ 0, 1, 1, 1, 1, 1, 1, 1, 0, 1, 1, 1, 1 ], 
  [ 1, 1, 1, 1, 0, 1, 1, 1, 1, 1, 1, 0, 1 ], 
  [ 1, 0, 1, 1, 1, 1, 1, 1, 1, 1, 0, 1, 1 ], 
  [ 1, 1, 1, 0, 1, 1, 1, 0, 1, 1, 1, 1, 1 ], 
  [ 1, 1, 1, 1, 1, 0, 0, 1, 1, 1, 1, 1, 1 ], 
  [ 1, 1, 1, 0, 1, 1, 0, 1, 1, 1, 1, 1, 1 ], 
  [ 0, 0, 1, 1, 1, 0, 1, 1, 1, 1, 1, 1, 1 ], 
  [ 1, 1, 0, 0, 0, 1, 1, 1, 1, 1, 1, 1, 1 ], 
  [ 1, 1, 1, 1, 0, 1, 1, 1, 1, 1, 0, 1, 1 ] ]
gap> Determinant(p);
1
gap> StablyPermutationFCheckMatFromBase(G,mi,Nlist(l),Plist(l),p);
true

gap> rs:=RandomSource(IsGlobalRandomSource); # alternatively 
<RandomSource in IsGlobalRandomSource>
gap> rr:=List([1..1000],x->List([1..20],y->Random(rs,[0..1])));;
gap> Filtered(rr,x->Determinant(x*bp)^2=1); # found 1 solution
[ [ 0, 1, 0, 1, 0, 1, 0, 0, 1, 0, 0, 0, 0, 1, 0, 0, 1, 1, 1, 1 ] ]
\end{verbatim}
\end{example}

\bigskip

%
%%%%%%%%%%%%%%%%%%%%%%%%%%%%%%%%%%%%%%%%%%%%%%%%%%%%%%%%%%%%%%%%%%%%%%%%%%%%%%%%%%%%%%
\section{Flabby and coflabby $G$-lattices}\label{seFC}

In this section, we will determine all the flabby and coflabby 
$G$-lattices $M$ of rank $M\leq 6$ by using the algorithms 
{\tt IsFlabby} and {\tt IsCoflabby} which are given in Section \ref{seAlg}. 

First we make a list of all $\bZ$-class of $\GL(n,\bZ)$ and 
we filter off the groups $G$ such that $H^{-1}(G) \not= 0$ 
or $H^1(G) \not= 0$ form the list.
Next we use the derived subgroup $D(G)=[G,G]$,
the center $Z(G)$ and a 2-Sylow subgroup $Sy_2(G)$ of $G$ 
to filter the groups off. 
Finally we filter off the groups which are not flabby or not coflabby. 
Because {\tt IsFlabby(G)} and {\tt IsCoflabby(G)} are much slower 
than {\tt Hminus1(G)} and {\tt H1(G)}, 
we use {\tt Hminus(G)} and {\tt H1(G)} to the specified subgroups 
of $G$ as above. 
The following algorithms are available from 
{\tt http://math.h.kyoto-u.ac.jp/\~{}yamasaki/Algorithm/} 
as {\tt KS.gap}.\\

\noindent
{\tt AllFlabbyCoflabbyZClasses(n)} returns the list of all the GAP IDs
of $G$ such that $M_G$ is a flabby and coflabby $G$-lattice
of rank $n$ when $2\leq n\leq 4$.\\
{\tt AllFlabbyCoflabbyZClasses(n:Carat)} returns the same as 
{\tt AllFlabbyCoflabbyZClasses(n)} but using the CARAT ID instead 
of the GAP ID when $1\leq n\leq 6$. \\

\noindent
{\tt AllPermutationZClasses(n)} returns the list of all the GAP IDs
of $G$ such that $M_G$ is a permutation $G$-lattice when $2\leq n\leq 4$.\\
{\tt AllPermutationZClasses(n:Carat)} returns the same as 
{\tt AllPermutation(n)} but using the CARAT ID instead of the GAP ID 
when $1\leq n\leq 6$. 

\bigskip

\begin{algorithmFC}[Flabby and coflabby $G$-lattices]\label{algFC}
{}~\\
\begin{verbatim}
AllFlabbyCoflabbyZClasses:= function(n)
    local glnz,listg;
    glnz:=Concatenation(List([1..Length(cryst[n])],
      x->List([1..Length(cryst[n][x])],y->[n,x,y])));
    listg:=List(glnz,x->CaratMatGroupZClass(x[1],x[2],x[3]));
    listg:=Filtered(listg,
      x->Product(Hminus1(x))=1 and Product(H1(x))=1);
    listg:=Filtered(listg,
      x->ForAll([DerivedSubgroup(x),Centre(x),SylowSubgroup(x,2)],
      y->Product(Hminus1(y))=1 and Product(H1(y))=1));
    listg:=Filtered(listg,
      x->ForAll(List(ConjugacyClassesSubgroups2(x),Representative),
      y->Product(Hminus1(y))=1 and Product(H1(y))=1));
    if ValueOption("carat")=true or ValueOption("Carat")=true then
        return Set(listg,CaratZClass);
    else
        return Set(listg,CrystCatZClass);
    fi;
end;

AllPermutationZClasses:= function(n)
    local Sn,Snsub;
    Sn:=Group(List(GeneratorsOfGroup(SymmetricGroup(n)),
      x->PermutationMat(x,n)));
    Snsub:=List(ConjugacyClassesSubgroups2(Sn),Representative);
    if ValueOption("carat")=true or ValueOption("Carat")=true then
        return Set(Snsub,CaratZClass);
    else
        return Set(Snsub,CrystCatZClass);
    fi;
end;
\end{verbatim}
\end{algorithmFC}

\bigskip

\begin{example}[All flabby and coflabby $G$-lattices of rank $n\leq 6$ 
which are not permutation]
We may compute all the permutation $G$-lattices of rank $n\leq 6$ 
and all the flabby and coflabby $G$-lattices by using Algorithm \ref{algFC}. 

There exist $2$ (resp. $4$, $11$, $19$, $56$) permutation $G$-lattices of 
rank $2$ (resp. $3$, $4$, $5$, $6$). 
All the permutation $G$-lattices of rank $2\leq n\leq 5$ are given as follows:

\bigskip

\begin{tabular}{l|ll}
$G$ : permutation & $\{1\}$ & $C_2$\\\hline
GAP ID & $(2,1,1,1)$ & $(2,2,1,2)$
\end{tabular}\\

\begin{tabular}{l|llll}
$G$ : permutation & $\{1\}$ & $C_2$ & $C_3$ & $S_3$\\\hline
GAP ID & $(3,1,1,1)$ & $(3,2,2,2)$ & $(3,5,1,1)$ & $(3,5,4,1)$
\end{tabular}\\

\begin{tabular}{l|llllll}
$G$ : permutation & $\{1\}$ & $C_2$ & $C_2$ & $C_2^2$ & $C_2^2$ & $C_3$ \\\hline
GAP ID & $(4,1,1,1)$ & $(4,2,1,2)$ & $(4,3,1,3)$ & $(4,4,1,5)$ & $(4,5,1,1)$ & $(4,8,1,1)$ 
\end{tabular}\\

\begin{tabular}{l|lllll}
$G$ : permutation & $S_3$ & $C_4$ & $D_4$ & $A_4$ & $S_4$\\\hline
GAP ID & $(4,8,3,1)$ & $(4,12,1,1)$ & $(4,12,3,1)$ & $(4,24,1,1)$ & $(4,24,3,1)$ 
\end{tabular}\\

\begin{tabular}{l|lllllll}
$G$ : permutation & $\{1\}$ & $C_2$ & $C_2$ & $C_2^2$ & $C_2^2$ & $C_4$ & $D_4$\\\hline
CARAT ID & $(5,1,1)$ & $(5,4,2)$ & $(5,7,3)$ & $(5,9,6)$ & $(5,19,18)$ & 
$(5,58,8)$ & $(5,62,8)$
\end{tabular}\\

\begin{tabular}{l|llllll}
$G$ : permutation & $C_3$ & $S_3$ & $D_6$ & $S_3$ & $C_6$ & $A_4$\\\hline
CARAT ID & $(5,181,2)$ & $(5,186,2)$ & $(5,192,6)$ & $(5,218,8)$ & $(5,220,4)$ 
& $(5,502,3)$ 
\end{tabular}\\

\begin{tabular}{l|llllll}
$G$ : permutation & $C_4$ & $D_5$ & $C_5$ & $S_5$ & $F_{20}$ & $A_5$\\\hline
CARAT ID & $(5,506,6)$ & $(5,901,3)$ & $(5,909,2)$ & $(5,911,3)$ & $(5,918,3)$ & $(5,931,3)$ 
\end{tabular}

\bigskip

\begin{verbatim}
Read("caratnumber.gap");
Read("KS.gap");

gap> l2f:=AllFlabbyCoflabbyZClasses(2);
[ [ 2, 1, 1, 1 ], [ 2, 2, 1, 2 ] ]
gap> l2p:=AllPermutationZClasses(2);
[ [ 2, 1, 1, 1 ], [ 2, 2, 1, 2 ] ]

gap> l3f:=AllFlabbyCoflabbyZClasses(3);
[ [ 3, 1, 1, 1 ], [ 3, 2, 2, 2 ], [ 3, 5, 1, 1 ], [ 3, 5, 4, 1 ] ]
gap> l3p:=AllPermutationZClasses(3);   
[ [ 3, 1, 1, 1 ], [ 3, 2, 2, 2 ], [ 3, 5, 1, 1 ], [ 3, 5, 4, 1 ] ]

gap> l4f:=AllFlabbyCoflabbyZClasses(4);
[ [ 4, 1, 1, 1 ], [ 4, 2, 1, 2 ], [ 4, 3, 1, 3 ], [ 4, 4, 1, 5 ], [ 4, 5, 1, 1 ], 
  [ 4, 8, 1, 1 ], [ 4, 8, 3, 1 ], [ 4, 12, 1, 1 ], [ 4, 12, 3, 1 ], [ 4, 14, 2, 2 ], 
  [ 4, 14, 3, 3 ], [ 4, 14, 3, 4 ], [ 4, 14, 8, 2 ], [ 4, 24, 1, 1 ], [ 4, 24, 3, 1 ] ]
gap> Length(l4f);
15
gap> l4p:=AllPermutationZClasses(4);;   
gap> Length(l4p);
11
gap> Difference(l4f,l4p);              
[ [ 4, 14, 2, 2 ], [ 4, 14, 3, 3 ], [ 4, 14, 3, 4 ], [ 4, 14, 8, 2 ] ]
gap> Length(last);
4

gap> l5f:=AllFlabbyCoflabbyZClasses(5:Carat);
[ [ 5, 1, 1 ], [ 5, 4, 2 ], [ 5, 7, 3 ], [ 5, 9, 6 ], [ 5, 19, 18 ], 
  [ 5, 58, 8 ], [ 5, 62, 8 ], [ 5, 181, 2 ], [ 5, 186, 2 ], [ 5, 192, 6 ], 
  [ 5, 218, 4 ], [ 5, 218, 8 ], [ 5, 220, 4 ], [ 5, 502, 3 ], [ 5, 506, 6 ], 
  [ 5, 901, 3 ], [ 5, 909, 2 ], [ 5, 911, 3 ], [ 5, 911, 4 ], [ 5, 918, 3 ], 
  [ 5, 918, 4 ], [ 5, 931, 3 ], [ 5, 931, 4 ] ]
gap> Length(l5f);
23
gap> l5p:=AllPermutationZClasses(5:Carat);;   
gap> Length(l5p);
19
gap> Difference(l5f,l5p);                    
[ [ 5, 218, 4 ], [ 5, 911, 4 ], [ 5, 918, 4 ], [ 5, 931, 4 ] ]
gap> Length(last);
4

gap> l6f:=AllFlabbyCoflabbyZClasses(6:Carat);
[ [ 6, 2, 2 ], [ 6, 4, 3 ], [ 6, 8, 6 ], [ 6, 11, 4 ], [ 6, 15, 12 ], 
  [ 6, 159, 14 ], [ 6, 161, 14 ], [ 6, 161, 28 ], [ 6, 197, 14 ], [ 6, 226, 14 ], 
  [ 6, 226, 40 ], [ 6, 231, 39 ], [ 6, 238, 27 ], [ 6, 246, 21 ], [ 6, 366, 27 ], 
  [ 6, 891, 7 ], [ 6, 894, 6 ], [ 6, 927, 9 ], [ 6, 984, 6 ], [ 6, 993, 16 ], 
  [ 6, 1087, 20 ], [ 6, 1090, 18 ], [ 6, 1142, 8 ], [ 6, 1199, 16 ], [ 6, 1968, 3 ], 
  [ 6, 2007, 2 ], [ 6, 2010, 3 ], [ 6, 2026, 6 ], [ 6, 2043, 4 ], [ 6, 2043, 8 ], 
  [ 6, 2044, 4 ], [ 6, 2051, 9 ], [ 6, 2068, 6 ], [ 6, 2069, 6 ], [ 6, 2069, 12 ], 
  [ 6, 2070, 12 ], [ 6, 2079, 14 ], [ 6, 2079, 28 ], [ 6, 2088, 18 ], [ 6, 2105, 12 ], 
  [ 6, 2154, 26 ], [ 6, 2156, 40 ], [ 6, 2156, 80 ], [ 6, 2188, 39 ], [ 6, 2263, 6 ], 
  [ 6, 2278, 8 ], [ 6, 2709, 1 ], [ 6, 2958, 3 ], [ 6, 2966, 2 ], [ 6, 2968, 4 ], 
  [ 6, 2969, 4 ], [ 6, 2969, 8 ], [ 6, 2977, 6 ], [ 6, 3046, 3 ], [ 6, 3053, 5 ], 
  [ 6, 3066, 3 ], [ 6, 3068, 7 ], [ 6, 3068, 8 ], [ 6, 3071, 7 ], [ 6, 3071, 8 ], 
  [ 6, 3073, 7 ], [ 6, 3073, 8 ], [ 6, 3073, 15 ], [ 6, 3073, 16 ], [ 6, 3076, 7 ], 
  [ 6, 3076, 8 ], [ 6, 3091, 11 ], [ 6, 3091, 12 ], [ 6, 3276, 9 ], [ 6, 3297, 9 ], 
  [ 6, 3299, 9 ],  [ 6, 3302, 9 ], [ 6, 3575, 8 ], [ 6, 3662, 8 ], [ 6, 3663, 12 ], 
  [ 6, 3749, 10 ], [ 6, 4618, 18 ], [ 6, 4618, 19 ], [ 6, 4621, 18 ], [ 6, 4630, 52 ], 
  [ 6, 4647, 101 ], [ 6, 4722, 8 ], [ 6, 4733, 8 ], [ 6, 4743, 13 ], [ 6, 4750, 13 ], 
  [ 6, 4762, 41 ], [ 6, 4807, 41 ], [ 6, 4811, 41 ], [ 6, 4814, 82 ], [ 6, 4898, 3 ], 
  [ 6, 4904, 3 ], [ 6, 4915, 20 ], [ 6, 4919, 20 ], [ 6, 4929, 11 ], [ 6, 5210, 8 ], 
  [ 6, 5210, 14 ], [ 6, 5262, 11 ], [ 6, 5311, 16 ], [ 6, 5318, 8 ], [ 6, 5321, 6 ], 
  [ 6, 5321, 14 ], [ 6, 5421, 6 ], [ 6, 5424, 16 ], [ 6, 5475, 6 ], [ 6, 5477, 11 ], 
  [ 6, 5487, 11 ] ]
gap> Length(l6f);
106
gap> l6p:=AllPermutationZClasses(6:Carat);
gap> Length(l6p);                            
56
gap> Difference(l6f,l6p);
[ [ 6, 159, 14 ], [ 6, 161, 14 ], [ 6, 161, 28 ], [ 6, 197, 14 ], [ 6, 226, 14 ], 
  [ 6, 226, 40 ], [ 6, 231, 39 ], [ 6, 238, 27 ], [ 6, 246, 21 ], [ 6, 366, 27 ], 
  [ 6, 1087, 20 ], [ 6, 1090, 18 ], [ 6, 1142, 8 ], [ 6, 2043, 4 ], [ 6, 2051, 9 ], 
  [ 6, 2068, 6 ], [ 6, 2069, 6 ], [ 6, 2069, 12 ], [ 6, 2070, 12 ], [ 6, 2079, 14 ], 
  [ 6, 2079, 28 ], [ 6, 2088, 18 ], [ 6, 2105, 12 ], [ 6, 2154, 26 ], [ 6, 2156, 40 ], 
  [ 6, 2156, 80 ], [ 6, 2188, 39 ], [ 6, 2968, 4 ], [ 6, 2969, 4 ], [ 6, 2969, 8 ], 
  [ 6, 2977, 6 ], [ 6, 3068, 7 ], [ 6, 3068, 8 ], [ 6, 3071, 7 ], [ 6, 3071, 8 ], 
  [ 6, 3073, 7 ], [ 6, 3073, 8 ], [ 6, 3073, 15 ], [ 6, 3073, 16 ], [ 6, 3076, 7 ], 
  [ 6, 3076, 8 ], [ 6, 3091, 11 ], [ 6, 3091, 12 ], [ 6, 5210, 14 ], [ 6, 5262, 11 ], 
  [ 6, 5321, 6 ], [ 6, 5421, 6 ], [ 6, 5475, 6 ], [ 6, 5477, 11 ], [ 6, 5487, 11 ] ]
gap> Length(last);
50
\end{verbatim}
\end{example}

\bigskip

%%%%%%%%%%%%%%%%%%%%%%%%%%%%%%%%%%%%%%%%%%%%%%%%%%%%%%%%%%%%%%%%%%%%%%%%%

\begin{theorem}\label{thfac}
Let $G$ be a finite subgroup of $\GL(n,\bZ)$ and 
$M_G$ be the $G$-lattice as in Definition \ref{defMG}.\\
{\rm (i)}\ When $n \leq 3$, $M_G$ is flabby and coflabby if and only if
$M_G$ is permutation.\\
{\rm (ii)}\ When $n=4$, $M_G$ is flabby and coflabby if and only if
$M_G$ is permutation or the {\rm GAP ID} of $G$ is one of
$(4,14,2,2), (4,14,3,3), (4,14,3,4), (4,14,8,2)$.\\
$($There are $11$ conjugacy classes of subgroups of $S_4$ 
and hence $15$ flabby and coflabby $G$-lattices of rank $4$ in total.$)$
{\rm (iii)}\ When $n=5$, $M_G$ is flabby and coflabby if and only if
$M_G$ is permutation or the {\rm CARAT ID} of $G$ is one of
$(5,218,4), (5,911,4), (5,918,4), (5,931,4)$.\\
$($There are $19$ conjugacy classes of subgroups of $S_5$ 
and hence $23$ flabby and coflabby $G$-lattices of rank $5$ in total.$)$\\
{\rm (iv)}\ When $n=6$, $M_G$ is flabby and coflabby if and only if
$M_G$ is permutation or the {\rm CARAT ID} of $G$ is one of the $50$ triples 
\begin{center}
{\rm
\begin{tabular}{lllll}
$(6,159,14)$,&$(6,161,14)$,&$(6,161,28)$,&$(6,197,14)$,&$(6,226,14)$,\\
$(6,226,40)$,&$(6,231,39)$,&$(6,238,27)$,&$(6,246,21)$,&$(6,366,27)$,\\
$(6,1087,20)$,&$(6,1090,18)$,&$(6,1142,8)$,&$(6,2043,4)$,&$(6,2051,9)$,\\
$(6,2068,6)$,&$(6,2069,6)$,&$(6,2069,12)$,&$(6,2070,12)$,&$(6,2079,14)$,\\
$(6,2079,28)$,&$(6,2088,18)$,&$(6,2105,12)$,&$(6,2154,26)$,&$(6,2156,40)$,\\
$(6,2156,80)$,&$(6,2188,39)$,&$(6,2968,4)$,&$(6,2969,4)$,&$(6,2969,8)$,\\
$(6,2977,6)$,&$(6,3068,7)$,&$(6,3068,8)$,&$(6,3071,7)$,&$(6,3071,8)$,\\
$(6,3073,7)$,&$(6,3073,8)$,&$(6,3073,15)$,&$(6,3073,16)$,&$(6,3076,7)$,\\
$(6,3076,8)$,&$(6,3091,11)$,&$(6,3091,12)$,&$(6,5210,14)$,&$(6,5262,11)$,\\
$(6,5321,6)$,&$(6,5421,6)$,&$(6,5475,6)$,&$(6,5477,11)$,&$(6,5487,11)$.
\end{tabular}
}
\end{center}
\hfill\break
$($There are $56$ conjugacy classes of subgroups of $S_6$ 
and hence $106$ flabby and coflabby $G$-lattices of rank $6$ in total.$)$
\end{theorem}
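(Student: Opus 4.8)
The proof will be a computer-assisted verification resting on the algorithms of Section~\ref{seAlg}, organized around the defining cohomological criterion: by definition $M_G$ is flabby (resp. coflabby) if and only if $\widehat H^{-1}(H,M_G)=0$ (resp. $H^1(H,M_G)=0$) for \emph{every} subgroup $H\leq G$. The plan is first to certify that the GAP routines decide these conditions correctly, then to enumerate all flabby and coflabby $\bZ$-classes for each $n\leq 6$ and subtract the permutation ones. For correctness of \texttt{Hminus1} and \texttt{H1}, I would invoke the cochain formulas of Definition~\ref{defTate} together with the integral identities $(\widehat B^{-1}(H,M)\otimes_\bZ\bQ)\cap M=\widehat Z^{-1}(H,M)$ and $(B^1(H,M)\otimes_\bZ\bQ)\cap C^1(H,M)=Z^1(H,M)$ recorded in Section~\ref{ss50}: these show that the two cohomology groups are read off from the Smith normal form of the relevant integral coboundary matrices, so the routines return the correct invariant factors. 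Since $\widehat H^i(H,M_G)$ depends on $H$ only up to conjugacy, it suffices to test one representative per conjugacy class of subgroups, which is precisely what \texttt{IsFlabby} and \texttt{IsCoflabby} do via \texttt{ConjugacyClassesSubgroups2}.

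Next I would pin down the permutation lattices. The $G$-lattice $M_G$ is permutation exactly when $G$ is $\GL(n,\bZ)$-conjugate to a group of permutation matrices, i.e. up to $\bZ$-class $G$ is a subgroup of $S_n\leq\GL(n,\bZ)$. The routine \texttt{AllPermutationZClasses} maps each conjugacy class of subgroups of $S_n$ to its $\bZ$-class and collects the results; the resulting counts $2,4,11,19,56$ for $n=2,\ldots,6$ agree with the numbers of conjugacy classes of subgroups of $S_n$, which confirms that the correspondence is a bijection. By Lemma~\ref{lemSL}, permutation $\Rightarrow$ invertible $\Rightarrow$ flabby and coflabby, so every permutation class already lies in the target list; the content of the theorem is therefore the identification of the finitely many \emph{non-permutation} flabby and coflabby classes.

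I would then run \texttt{AllFlabbyCoflabbyZClasses($n$)} over all $\bZ$-classes and form the set difference with the permutation classes. For $n\leq 5$ the number of $\bZ$-classes ($2,13,73,710,6079$) is small enough that a direct sweep is feasible, and one reads off the exceptional codes, verifies the stated totals ($15=11+4$ for $n=4$, $23=19+4$ for $n=5$), and checks that the four exceptional groups for each of $n=4,5$ are the small groups indicated. The main obstacle is $n=6$: with $85308$ $\bZ$-classes, applying the full subgroup-by-subgroup test to every class is prohibitive. Here the plan is to exploit the filtering hierarchy built into \texttt{AllFlabbyCoflabbyZClasses}. Vanishing of $\widehat H^{-1}$ and $H^1$ on $G$ itself, and then on the cheap-to-compute distinguished subgroups $D(G)$, $Z(G)$ and $Sy_2(G)$, are necessary conditions for flabbiness and coflabbiness; these successive filters eliminate all but a short list of candidates, and only on the survivors is the expensive loop over all conjugacy classes of subgroups invoked.

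Finally I would assemble the result: confirm that the surviving flabby and coflabby classes, minus the $56$ permutation ones, are exactly the listed $50$ CARAT codes, giving the total $106=56+50$. As an independent sanity check on the machine output, each exceptional case can be inspected individually---the algorithm certifies that $M_G$ is genuinely flabby and coflabby, while its absence from the \texttt{AllPermutationZClasses} output certifies that $M_G$ is genuinely non-permutation. The chief source of risk, and the step I expect to require the most care, is the $n=6$ computation: one must be confident that the necessary-condition filters are applied \emph{before}, and never in place of, the definitive loop over all conjugacy classes of subgroups, so that no genuinely flabby and coflabby class is discarded and no class is wrongly admitted.
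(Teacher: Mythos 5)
Your proposal is correct and follows essentially the same route as the paper: Section~\ref{seFC} proves the theorem precisely by running \texttt{AllFlabbyCoflabbyZClasses($n$)} (which applies the cheap necessary-condition filters on $G$, $D(G)$, $Z(G)$, $Sy_2(G)$ before the definitive loop over all conjugacy classes of subgroups via \texttt{Hminus1}/\texttt{H1}) and taking the set difference with \texttt{AllPermutationZClasses($n$)}. The correctness justifications you supply (Smith normal form identities, conjugacy invariance, permutation $\Rightarrow$ flabby and coflabby via Lemma~\ref{lemSL}) match the paper's Section~\ref{ss50} and Algorithm~FC.
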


\bigskip

By Theorem \ref{thEM82}, when 
any $p$-Sylow subgroup of $G$ is cyclic for odd $p$ and 
cyclic or dihedral $($including Klein's four group$)$ for $p=2$, 
$G$-lattice $M$ is flabby and coflabby if and only if $M$ is invertible. 
Moreover, for rank $M\leq 6$, $M$ is flabby and coflabby 
if and only if $M$ is stably permutation. 

\begin{theorem}\label{thCFSP}
Let $G$ be a finite subgroup of $\GL(n,\bZ)$ and 
$M_G$ be the $G$-lattice as in Definition \ref{defMG}. 
When $n \leq 6$, $M_G$ is flabby and coflabby if and only if
$M_G$ is stably permutation. 
Indeed, flabby and coflabby $G$-lattices $M_G$ which are not permutation 
as in Theorem \ref{thfac} are stably permutation as in Table $8$.
\end{theorem}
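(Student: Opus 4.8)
The plan is to prove the two implications separately, the forward one being immediate and the converse reducing to a finite, algorithmic verification. The implication ``$M_G$ stably permutation $\Rightarrow$ $M_G$ flabby and coflabby'' is already recorded among the basic implications in Section \ref{seInt}: permutation $\Rightarrow$ stably permutation $\Rightarrow$ invertible $\Rightarrow$ flabby and coflabby, the last step being Lemma \ref{lemSL} (i). So for this direction nothing further is required.

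For the converse I would invoke Theorem \ref{thfac}, which gives a complete enumeration of the flabby and coflabby $G$-lattices $M_G$ of rank $n\leq 6$: each such $M_G$ is either permutation, in which case it is trivially stably permutation, or it falls into one of the finitely many exceptional $\bZ$-classes listed in Theorem \ref{thfac} (ii), (iii), (iv) (four cases for $n=4$, four for $n=5$, fifty for $n=6$). Thus the statement reduces to checking, for each of these finitely many non-permutation lattices, that $M_G$ is stably permutation, i.e.\ exhibiting permutation $G$-lattices $P$ and $P'$ with $M_G\oplus P\simeq P'$.

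To carry this out I would apply the algorithms of Section \ref{seAlg} directly to $M_G$, using their ``$M$'' variants, since $M_G$ is itself flabby here and there is no need to pass to the flabby class $F$. Concretely, for each exceptional $G$ I would first run {\tt PossibilityOfStablyPermutationM(G)} (Algorithm \ref{Alg4}) to obtain, from the trace, $\rank\widehat Z^0$ and $\widehat H^0$ constraints, candidate coefficients $a_1,\dots,a_r$ together with $b_1=\pm1$ describing a potential isomorphism $\bigl(\bigoplus_{i} a_i\,\bZ[G/H_i]\bigr)\oplus b_1 M_G \simeq \bigoplus_{i} y_i\,\bZ[G/H_i]$; then I would certify the candidate by producing an explicit $\GL$-conjugating unimodular matrix, using {\tt StablyPermutationMCheck} (Method I, Algorithm \ref{Alg5}) when the ranks are small, and {\tt StablyPermutationMCheckP} together with {\tt StablyPermutationMCheckMat} (Method II, Algorithm \ref{Alg6}) when {\tt RepresentativeAction} is too slow. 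The three rank-$5$ cases $(5,911,4)$, $(5,918,4)$, $(5,931,4)$, the lattices of $S_5$, $F_{20}$ and $A_5$, are already handled in Example \ref{exKS2}, where they appear as non-permutation but stably permutation summands arising from the failure of Krull--Schmidt; the resulting explicit isomorphisms for all exceptional cases are collected in Table $8$.

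The hard part will be computational and concentrated in the $n=6$ cases: several of the exceptional lattices sit inside large maximal finite groups (cf.\ the orders listed in Subsection \ref{ssIndmf}), so the base produced by {\tt FindCoflabbyResolutionBase} can have large rank, and both the integer linear algebra and the search for a determinant-$\pm1$ integral combination of the solutions of $G_1P=PG_2$ become expensive. Here I expect to need Method III (Algorithm \ref{Alg7}): first reduce the rank of the permutation lattice in the coflabby resolution of $M_G^\circ$ via {\tt SearchCoflabbyResolutionBase}, and only then run the possibility and verification steps on the smaller resolution, searching for a unimodular combination with a pseudorandom generator as in the example following Algorithm \ref{Alg7}. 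Once a single unimodular intertwiner is found in a given case, the verification is a routine finite check, so the difficulty is purely in making the search terminate in a reasonable time rather than in any conceptual obstacle.
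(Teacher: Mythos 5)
Your proposal is correct and follows essentially the same route as the paper: the forward direction is the standard chain of implications, and the converse reduces via Theorem \ref{thfac} to the finitely many exceptional lattices, each certified stably permutation by running {\tt PossibilityOfStablyPermutationM} and then Method I ({\tt StablyPermutationMCheck}) or Method II ({\tt StablyPermutationMCheckP}/{\tt MCheckMat}), exactly as in Examples \ref{exM11}, \ref{exMI2} and the $(6,161,14)$ computation. The only minor deviation is your anticipated recourse to Method III for the rank-$6$ cases; the paper states that Methods I and II suffice for all cases (and Method III's base-reduction targets the flabby resolution rather than the direct stably-permutation check on $M_G$), but this does not affect the correctness of the argument.
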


\newpage
\begin{center}
%\vspace*{4mm}
Table $8$: flabby and coflabby $G$-lattices $M=M_G$ of rank $\leq 6$ 
which are not permutation 
\vspace*{2mm}\\
{\small
%{\scriptsize
%{\footnotesize
\begin{tabular}{llrcl}
GAP/CARAT ID& $G$ & \multicolumn{3}{c}{$M$ is stably permutation, 
$H_i$ is the $i$th conjugacy class of subgroups of $G$}\\\hline
(4,14,2,2) & $C_6$ & $M \oplus \mathbb{Z}$ & $\simeq$ & $\mathbb{Z}[G/H_2] \oplus \mathbb{Z}[G/H_3]$ \\
(4,14,3,3) & $S_3$ & $M \oplus \mathbb{Z}$ & $\simeq$ & $\mathbb{Z}[G/H_2] \oplus \mathbb{Z}[G/H_3]$ \\
(4,14,3,4) & $S_3$ & $M \oplus \mathbb{Z}[G/H_2]$ & $\simeq$ & $\mathbb{Z}[G] \oplus \mathbb{Z}$ \\
(4,14,8,2) & $D_6$ & $M \oplus \mathbb{Z}$ & $\simeq$ & $\mathbb{Z}[G/H_6] \oplus 
\mathbb{Z}[G/H_9]$\\\hline
(5,218,4) & $S_3$ & $M \oplus \mathbb{Z}[G/H_2]$ & $\simeq$ & $\mathbb{Z}[G] \oplus 
\mathbb{Z}^{\oplus 2}$ \\
(5,911,4) & $S_5$ & $M \oplus \mathbb{Z}$ & $\simeq$ & $\mathbb{Z}[G/H_{17}] \oplus 
\mathbb{Z}$ \\
(5,918,4) & $F_{20}$ & $M \oplus \mathbb{Z}$ & $\simeq$ & $\mathbb{Z}[G/H_3] \oplus 
\mathbb{Z}$ \\
(5,931,4) & $A_5$ & $M \oplus \mathbb{Z}$ & $\simeq$ & $\mathbb{Z}[G/H_8] \oplus 
\mathbb{Z}$\\\hline
(6,159,14) & $C_{12}$ & $M \oplus \mathbb{Z}$ & $\simeq$ & $\mathbb{Z}[G/H_3] \oplus 
\mathbb{Z}[G/H_4]$ \\
(6,161,14) & $Q_{12}$ & $M \oplus \mathbb{Z}[G/H_4] \oplus \mathbb{Z}[G/H_5]$ & $\simeq$ & 
$\mathbb{Z}[G/H_2] \oplus \mathbb{Z}[G/H_3] \oplus \mathbb{Z}$ \\
(6,161,28) & $Q_{12}$ & $M \oplus \mathbb{Z}$ & $\simeq$ & $\mathbb{Z}[G/H_3] \oplus 
\mathbb{Z}[G/H_4]$ \\
(6,197,14) & $D_4 \times C_3$ & $M \oplus \mathbb{Z}$ & $\simeq$ & $\mathbb{Z}[G/H_{10}] 
\oplus \mathbb{Z}[G/H_{12}]$ \\
(6,226,14) & $C_3 \rtimes D_4$ & $M \oplus \mathbb{Z}[G/H_{12}] \oplus \mathbb{Z}[G/H_{13}]$ 
& $\simeq$ & $\mathbb{Z}[G/H_6] \oplus \mathbb{Z}[G/H_{10}] \oplus \mathbb{Z}$ \\
(6,226,40) & $C_3 \rtimes D_4$ & $M \oplus \mathbb{Z}$ & $\simeq$ & $\mathbb{Z}[G/H_{10}] 
\oplus \mathbb{Z}[G/H_{12}]$ \\
(6,231,39) & $D_{12}$ & $M \oplus \mathbb{Z}$ & $\simeq$ & $\mathbb{Z}[G/H_{10}] \oplus 
\mathbb{Z}[G/H_{12}]$ \\
(6,238,27) & $C_3 \rtimes D_4$ & $M \oplus \mathbb{Z}$ & $\simeq$ & $\mathbb{Z}[G/H_{11}] 
\oplus \mathbb{Z}[G/H_{12}]$ \\
(6,246,21) & $S_3 \times C_4$ & $M \oplus \mathbb{Z}$ & $\simeq$ & $\mathbb{Z}[G/H_{11}] 
\oplus \mathbb{Z}[G/H_{12}]$ \\
(6,366,27) & $D_4 \times S_3$ & $M \oplus \mathbb{Z}$ & $\simeq$ & $\mathbb{Z}[G/H_{43}] 
\oplus \mathbb{Z}[G/H_{46}]$ \\
(6,1087,20) & $S_3 \times C_3$ & $M \oplus \mathbb{Z}[G/H_7]$ & $\simeq$ & $\mathbb{Z}[G/H_4] 
\oplus \mathbb{Z}[G/H_6]$ \\
(6,1090,18) & $C_3^2 \rtimes C_2$ & $M \oplus \mathbb{Z}[G/H_9]$ & $\simeq$ 
& $\mathbb{Z}[G/H_5] \oplus \mathbb{Z}[G/H_{10}]$ \\
(6,1142,8) & $S_3 \times S_3$ & $M \oplus \mathbb{Z}[G/H_{17}]$ & $\simeq$ 
& $\mathbb{Z}[G/H_{10}] \oplus \mathbb{Z}[G/H_{18}]$ \\
(6,2043,4) & $S_3$ & $M \oplus \mathbb{Z}[G/H_2]$ & $\simeq$ & $\mathbb{Z}[G] \oplus 
\mathbb{Z}^{\oplus 3}$ \\
(6,2051,9) & $D_6$ & $M \oplus \mathbb{Z}$ & $\simeq$ & $\mathbb{Z}[G/H_6] \oplus 
\mathbb{Z}[G/H_9]^{\oplus 2}$ \\
(6,2068,6) & $C_6$ & $M \oplus \mathbb{Z}$ & $\simeq$ & $\mathbb{Z}[G/H_2] \oplus 
\mathbb{Z}[G/H_3]^{\oplus 2}$ \\
(6,2069,6) & $S_3$ & $M \oplus \mathbb{Z}[G/H_2]$ & $\simeq$ & $\mathbb{Z}[G] \oplus 
\mathbb{Z}[G/H_3] \oplus \mathbb{Z}$ \\
(6,2069,12) & $S_3$ & $M \oplus \mathbb{Z}$ & $\simeq$ & $\mathbb{Z}[G/H_2] \oplus 
\mathbb{Z}[G/H_3]^{\oplus 2}$ \\
(6,2070,12) & $C_6 \times C_2$ & $M \oplus \mathbb{Z}$ & $\simeq$ & $\mathbb{Z}[G/H_6] 
\oplus \mathbb{Z}[G/H_8] \oplus \mathbb{Z}[G/H_9]$ \\
(6,2079,14) & $D_6$ & $M \oplus \mathbb{Z}[G/H_6]$ & $\simeq$ & $\mathbb{Z}[G/H_2] 
\oplus \mathbb{Z}[G/H_9] \oplus \mathbb{Z}$ \\
(6,2079,28) & $D_6$ & $M \oplus \mathbb{Z}$ & $\simeq$ & $\mathbb{Z}[G/H_6] \oplus 
\mathbb{Z}[G/H_7] \oplus \mathbb{Z}[G/H_9]$ \\
(6,2088,18) & $D_6$ & $M \oplus \mathbb{Z}$ & $\simeq$ & $\mathbb{Z}[G/H_6] \oplus 
\mathbb{Z}[G/H_8] \oplus \mathbb{Z}[G/H_9]$ \\
(6,2105,12) & $D_6 \times C_2$ & $M \oplus \mathbb{Z}$ & $\simeq$ & $\mathbb{Z}[G/H_{24}] 
\oplus \mathbb{Z}[G/H_{29}] \oplus \mathbb{Z}[G/H_{31}]$ \\
(6,2154,26) & $C_6 \times C_2$ & $M \oplus \mathbb{Z}$ & $\simeq$ & $\mathbb{Z}[G/H_5] 
\oplus \mathbb{Z}[G/H_6]$ \\
(6,2156,40) & $D_6$ & $M \oplus \mathbb{Z}[G/H_6] \oplus \mathbb{Z}[G/H_9]$ & $\simeq$ 
& $\mathbb{Z}[G/H_2] \oplus \mathbb{Z}[G/H_5] \oplus \mathbb{Z}$ \\
(6,2156,80) & $D_6$ & $M \oplus \mathbb{Z}$ & $\simeq$ & $\mathbb{Z}[G/H_5] \oplus 
\mathbb{Z}[G/H_6]$ \\
(6,2188,39) & $D_6 \times C_2$ & $M \oplus \mathbb{Z}$ & $\simeq$ & $\mathbb{Z}[G/H_{19}] 
\oplus \mathbb{Z}[G/H_{24}]$ \\
(6,2968,4) & $C_{10}$ & $M \oplus \mathbb{Z}$ & $\simeq$ & $\mathbb{Z}[G/H_2] \oplus 
\mathbb{Z}[G/H_3]$ \\
(6,2969,4) & $D_5$ & $M \oplus \mathbb{Z}$ & $\simeq$ & $\mathbb{Z}[G/H_2] \oplus 
\mathbb{Z}[G/H_3]$ \\
(6,2969,8) & $D_5$ & $M \oplus \mathbb{Z}[G/H_2]$ & $\simeq$ & $\mathbb{Z}[G] \oplus 
\mathbb{Z}$ \\
(6,2977,6) & $D_{10}$ & $M \oplus \mathbb{Z}$ & $\simeq$ & $\mathbb{Z}[G/H_5] \oplus 
\mathbb{Z}[G/H_8]$ \\
(6,3068,7) & $A_5 \times C_2$ & $M \oplus \mathbb{Z}$ & $\simeq$ & $\mathbb{Z}[G/H_{20}] 
\oplus \mathbb{Z}[G/H_{21}]$ \\
(6,3068,8) & $A_5 \times C_2$ & $M \oplus \mathbb{Z}$ & $\simeq$ & $\mathbb{Z}[G/H_{20}] 
\oplus \mathbb{Z}[G/H_{21}]$ \\
(6,3071,7) & $S_5$ & $M \oplus \mathbb{Z}$ & $\simeq$ & $\mathbb{Z}[G/H_{17}] \oplus 
\mathbb{Z}[G/H_{18}]$ \\
(6,3071,8) & $S_5$ & $M \oplus \mathbb{Z}$ & $\simeq$ & $\mathbb{Z}[G/H_{17}] \oplus 
\mathbb{Z}[G/H_{18}]$ \\
(6,3073,7) & $F_{20}$ & $M \oplus \mathbb{Z}$ & $\simeq$ & $\mathbb{Z}[G/H_3] \oplus 
\mathbb{Z}[G/H_5]$ \\
(6,3073,8) & $F_{20}$ & $M \oplus \mathbb{Z}$ & $\simeq$ & $\mathbb{Z}[G/H_3] \oplus 
\mathbb{Z}[G/H_5]$ \\
(6,3073,15) & $F_{20}$ & $M \oplus \mathbb{Z}[G/H_3]$ & $\simeq$ & $\mathbb{Z}[G/H_2] \oplus 
\mathbb{Z}$ \\
(6,3073,16) & $F_{20}$ & $M \oplus \mathbb{Z}[G/H_3]$ & $\simeq$ & $\mathbb{Z}[G/H_2] \oplus 
\mathbb{Z}$ \\
(6,3076,7) & $S_5 \times C_2$ & $M \oplus \mathbb{Z}$ & $\simeq$ & $\mathbb{Z}[G/H_{52}] \oplus 
\mathbb{Z}[G/H_{56}]$ \\
(6,3076,8) & $S_5 \times C_2$ & $M \oplus \mathbb{Z}$ & $\simeq$ & $\mathbb{Z}[G/H_{52}] \oplus 
\mathbb{Z}[G/H_{56}]$ \\
(6,3091,11) & $F_{20}\times C_2$ & $M \oplus \mathbb{Z}$ & $\simeq$ & $\mathbb{Z}[G/H_9] \oplus 
\mathbb{Z}[G/H_{15}]$ \\
(6,3091,12) & $F_{20} \times C_2$ & $M \oplus \mathbb{Z}$ & $\simeq$ & $\mathbb{Z}[G/H_9] \oplus 
\mathbb{Z}[G/H_{15}]$ \\
(6,5210,14) & $A_4$ & $M \oplus \mathbb{Z}$ & $\simeq$ & $\mathbb{Z}[G/H_3] \oplus 
\mathbb{Z}[G/H_4]$ \\
(6,5262,11) & $S_4 \times S_3$ & $M \oplus \mathbb{Z}$ & $\simeq$ & $\mathbb{Z}[G/H_{64}] 
\oplus \mathbb{Z}[G/H_{65}]$ \\
(6,5321,6) & $S_4$ & $M \oplus \mathbb{Z}$ & $\simeq$ & $\mathbb{Z}[G/H_8] \oplus 
\mathbb{Z}[G/H_9]$ \\
(6,5421,6) & $A_4 \times C_3$ & $M \oplus \mathbb{Z}$ & $\simeq$ & $\mathbb{Z}[G/H_9] 
\oplus \mathbb{Z}[G/H_{13}]$ \\
(6,5475,6) & $S_4 \times C_3$ & $M \oplus \mathbb{Z}$ & $\simeq$ & $\mathbb{Z}[G/H_{20}] 
\oplus \mathbb{Z}[G/H_{21}]$ \\
(6,5477,11) & $A_4 \times S_3$ & $M \oplus \mathbb{Z}$ & $\simeq$ & $\mathbb{Z}[G/H_{22}] 
\oplus \mathbb{Z}[G/H_{24}]$ \\
(6,5487,11) & $(A_4 \times A_3) \rtimes C_2$ & $M \oplus \mathbb{Z}$ & $\simeq$ 
& $\mathbb{Z}[G/H_{24}] \oplus \mathbb{Z}[G/H_{28}]$\\\hline
\end{tabular}
}%\vspace*{4mm}
\end{center}

Note that in Table $8$, 
$H_i$ is the $i$-th conjugacy class of subgroups of $G$ 
which is determined by the function {\tt ConjugacyClassesSubgroups2} 
in GAP (see Section \ref{seKSfail}). 

All cases can be done by using 
Method I in Section \ref{ss55} and Method II in Section \ref{ss56}. 
We give the following $3$ typical examples instead of the full proof 
of Theorem \ref{thCFSP}. 
%to show that the corresponding $G$-lattices are stably permutation. 

\begin{example}[{\bf Method I} (1)]\label{exM11}
We use {\tt PossibilityOfStablyPermutationM} as in 
Algorithm \ref{Alg4} 
to get a possibility of the isomorphism.
Then we use {\tt StablyPermutationMCheck} 
as in Algorithm \ref{Alg5}: Method I to get the actual isomorphism. 

Let $G\simeq C_6$ be the group of the GAP ID $(4,14,2,2)$ 
which is generated by the matrix
$\sigma=
{\scriptsize 
\begin{pmatrix}
0 & 1 & 0 & 1 \\
1 & 0 & 0 & 1 \\
0 & 0 & 0 & 1 \\
0 & 0 &-1 &-1
\end{pmatrix}}$.
There exist $4$ conjugacy classes of subgroups $\{1\}$, $H_2$, $H_3$ and $G$ 
of $G$ which are isomorphic to cyclic groups of order $1,2,3$ and $6$.
Corresponding $G$-lattices $M_G$ are isomorphic to $\bZ[G]$,
$\bZ[G/\langle \sigma^3 \rangle]$, $\bZ[G/\langle \sigma^2 \rangle]$ and 
$\bZ$ of rank $6,3,2$ and $1$ respectively. 

Let $G_1$ (resp. $G_2$) be the matrix representation group of the action of $G$ 
on $\bZ\oplus M_G$ (resp. $\bZ[G/\langle \sigma^3 \rangle] \oplus
\bZ[G/\langle \sigma^2 \rangle]$. 
{\tt StablyPermutaionMCheck(G,Nlist(l),Plist(l))} shows 
that $G_1P=PG_2$ with the isomorphism 
\[
\bZ \oplus M_G \simeq
\bZ[G/\langle \sigma^3 \rangle] \oplus
\bZ[G/\langle \sigma^2 \rangle]
\]
where 
\[
{\scriptsize 
P=
\begin{pmatrix}
 1 & 1 & 1 & 1 & 1 \\
 0 & 0 &-1 & 0 &-1 \\
 0 & 0 &-1 &-1 & 0 \\
 1 & 0 &-1 & 0 & 0 \\
 0 &-1 & 1 & 0 & 0
\end{pmatrix}}.
\]

Similar examples for the groups $S_3$, $S_3$ and 
$D_6$ of the CARAT IDs $(4,14,3,3)$, $(4,14,3,4)$ and $(4,14,8,2)$ 
are given below.

\bigskip

\begin{verbatim}
Read("FlabbyResolution.gap");

gap> G:=MatGroupZClass(4,14,2,2);; # G=C6
gap> ll:=PossibilityOfStablyPermutationM(G);
[ [ 0, 1, 1, -1, -1 ] ]
gap> l:=ll[1];
[ 0, 1, 1, -1, -1 ]
gap> List(ConjugacyClassesSubgroups2(G),x->StructureDescription(Representative(x)));
[ "1", "C2", "C3", "C6" ]
gap> StablyPermutationMCheck(G,Nlist(l),Plist(l));
[ [ 1, 1, 1, 1, 1 ], 
  [ 0, 0, -1, 0, -1 ], 
  [ 0, 0, -1, -1, 0 ], 
  [ 1, 0, -1, 0, 0 ], 
  [ 0, -1, 1, 0, 0 ] ]

gap> G:=MatGroupZClass(4,14,3,3);; # G=S3
gap> ll:=PossibilityOfStablyPermutationM(G);
[ [ 0, 1, 1, -1, -1 ] ]
gap> l:=ll[1];
[ 0, 1, 1, -1, -1 ]
gap> List(ConjugacyClassesSubgroups2(G),x->StructureDescription(Representative(x)));
[ "1", "C2", "C3", "S3" ]
gap> StablyPermutationMCheck(G,Nlist(l),Plist(l));
[ [ 1, 1, 1, 1, 1 ], 
  [ 0, 0, -1, 0, -1 ], 
  [ 0, 0, -1, -1, 0 ], 
  [ 1, 0, -1, 0, 0 ], 
  [ 0, -1, 1, 0, 0 ] ]

gap> G:=MatGroupZClass(4,14,3,4);; # G=S3
gap> ll:=PossibilityOfStablyPermutationM(G);
[ [ 1, -1, 0, 1, -1 ] ]
gap> l:=ll[1];
[ 1, -1, 0, 1, -1 ]
gap> List(ConjugacyClassesSubgroups2(G),x->StructureDescription(Representative(x)));
[ "1", "C2", "C3", "S3" ]
gap> StablyPermutationMCheck(G,Nlist(l),Plist(l));                      
[ [ 0, 0, -1, 0, 0, -1, -1 ], 
  [ -1, 0, 0, 0, -1, 0, -1 ], 
  [ 0, -1, 0, -1, 0, 0, -1 ], 
  [ 0, 0, -1, -1, 0, -1, -1 ], 
  [ 1, 1, 0, 0, 1, 0, 1 ], 
  [ 1, 0, -1, -1, 1, 0, 0 ], 
  [ 0, -1, 1, 0, -1, 1, 0 ] ]

gap> G:=MatGroupZClass(4,14,8,2);; # G=D6
gap> ll:=PossibilityOfStablyPermutationM(G);
[ [ 1, -1, -1, -1, -1, 0, 1, 1, -1, 0, 2 ], 
  [ 0, 0, 0, 0, 0, 1, 0, 0, 1, -1, -1 ] ]
gap> l:=ll[Length(ll)];
[ 0, 0, 0, 0, 0, 1, 0, 0, 1, -1, -1 ]
gap> List(ConjugacyClassesSubgroups2(G),x->StructureDescription(Representative(x)));
[ "1", "C2", "C2", "C2", "C3", "C2 x C2", "C6", "S3", "S3", "D12" ]
gap> StablyPermutationMCheck(G,Nlist(l),Plist(l));
[ [ 1, 1, 1, 1, 1 ], 
  [ 0, 0, -1, 0, -1 ], 
  [ 0, 0, -1, -1, 0 ], 
  [ 1, 0, -1, 0, 0 ], 
  [ 0, -1, 1, 0, 0 ] ]
\end{verbatim}
\end{example}

\bigskip

\begin{example}[{\bf Method I} (2)]\label{exMI2}
Before applying {\tt StablePermutationMCheck} in Method I, 
in some cases, we have to add some more $G$-lattices 
to make both hand side of $G$-lattices isomorphic. 
%The following example gives the proof of Theorem \ref{thKS} 
%which we postponed it in Section \ref{seKSfail}. 
We will show that for the group $G\simeq S_5$ 
of the CARAT ID $(5,911,4)$, $M_G\not\simeq \bZ[S_5/S_4]$ but 
$M_G \oplus \bZ \simeq\bZ[S_5/S_4] \oplus \bZ$.

\bigskip

\begin{verbatim}
gap> Read("caratnumber.gap");
gap> Read("FlabbyResolution.gap");

gap> G:=CaratMatGroupZClass(5,911,4);; # G=S5
gap> ll:=PossibilityOfStablyPermutationM(G);
[ [ 1, 0, 0, -1, 0, 0, -4, 0, -2, 1, 2, 0, -1, -1, 0, 4, 0, 1, -4, 4 ], 
  [ 0, 1, 0, 0, 0, -1, -1, 0, -1, 0, 0, 0, 0, 0, 1, 1, 0, 0, -1, 1 ], 
  [ 0, 0, 1, 0, 0, 0, -2, 0, -1, 0, 1, 0, -1, -1, 0, 2, 0, 1, -2, 2 ], 
  [ 0, 0, 0, 0, 1, 2, -2, 0, -2, 1, 2, -2, -1, -2, -2, 2, 0, 1, -2, 4 ], 
  [ 0, 0, 0, 0, 0, 0, 0, 0, 0, 0, 0, 0, 0, 0, 0, 0, 1, 0, 0, -1 ] ]
gap> l:=ll[Length(ll)];
[ 0, 0, 0, 0, 0, 0, 0, 0, 0, 0, 0, 0, 0, 0, 0, 0, 1, 0, 0, -1 ]
gap> List(ConjugacyClassesSubgroups2(G),x->StructureDescription(Representative(x)));
[ "1", "C2", "C2", "C3", "C2 x C2", "C2 x C2", "C4", "C5", "S3", "S3", "C6", "D8", 
  "D10", "A4", "D12", "C5 : C4", "S4", "A5", "S5" ]
gap> StablyPermutationMCheck(G,Nlist(l),Plist(l));
fail
gap> l2:=IdentityMat(Length(l))[Length(l)-1];
[ 0, 0, 0, 0, 0, 0, 0, 0, 0, 0, 0, 0, 0, 0, 0, 0, 0, 0, 1, 0 ]
gap> StablyPermutationMCheck(G,Nlist(l)+l2,Plist(l)+l2);
[ [ 2, 2, 2, 2, 2, 3 ], 
  [ 0, -1, 0, -1, -1, -1 ], 
  [ 0, 1, -1, 0, 0, 0 ], 
  [ 1, 1, 0, 0, 1, 1 ], 
  [ 1, 0, 1, 1, 0, 1 ], 
  [ -1, -2, -1, -1, -1, -2 ] ]
\end{verbatim}
\end{example}

\bigskip

\begin{example}[{\bf Method II}]
When {\tt StablyPermutationMCheck} does not return any result
in an appropriate time, we may use the command 
{\tt StablyPermutatinoCheakP}. 

\bigskip

\begin{verbatim}
gap> Read("caratnumber.gap");
gap> Read("FlabbyResolution.gap");

gap> G:=CaratMatGroupZClass(6,161,14);; # G=Q12
gap> ll:=PossibilityOfStablyPermutationM(G);
[ [ 0, 1, 1, -1, -1, 1, -1 ] ]
gap> l:=ll[1];
[ 0, 1, 1, -1, -1, 1, -1 ]
gap> List(ConjugacyClassesSubgroups2(G),x->StructureDescription(Representative(x)));
[ "1", "C2", "C3", "C4", "C6", "C3 : C4" ]
gap> gg:=StablyPermutationMCheckGen(G,Nlist(l),Plist(l));
[ [ [ [ 1, 0, 0, 0, 0, 0, 0, 0, 0, 0, 0 ], 
      [ 0, 0, 1, 0, 0, 0, 0, 0, 0, 0, 0 ], 
      [ 0, 1, 0, 0, 0, 0, 0, 0, 0, 0, 0 ], 
      [ 0, 0, 0, 0, 1, 0, 0, 0, 0, 0, 0 ], 
      [ 0, 0, 0, 1, 0, 0, 0, 0, 0, 0, 0 ], 
      [ 0, 0, 0, 0, 0, 0, 1, 1, 1, 0, -1 ], 
      [ 0, 0, 0, 0, 0, -1, 0, 1, 0, 0, 0 ], 
      [ 0, 0, 0, 0, 0, 0, 0, 1, 0, 0, 0 ], 
      [ 0, 0, 0, 0, 0, 0, 0, -1, 0, 1, 0 ], 
      [ 0, 0, 0, 0, 0, 0, 0, 1, 1, 0, 0 ], 
      [ 0, 0, 0, 0, 0, 0, 0, 1, 0, 0, -1 ] ], 
    [ [ 0, 0, 1, 0, 0, 0, 0, 0, 0, 0, 0 ], 
      [ 1, 0, 0, 0, 0, 0, 0, 0, 0, 0, 0 ], 
      [ 0, 1, 0, 0, 0, 0, 0, 0, 0, 0, 0 ], 
      [ 0, 0, 0, 1, 0, 0, 0, 0, 0, 0, 0 ], 
      [ 0, 0, 0, 0, 1, 0, 0, 0, 0, 0, 0 ], 
      [ 0, 0, 0, 0, 0, 1, 0, -1, 0, 0, 0 ], 
      [ 0, 0, 0, 0, 0, 0, 1, -1, 0, 0, 0 ], 
      [ 0, 0, 0, 0, 0, 0, 0, -2, -1, 1, 1 ], 
      [ 0, 0, 0, 0, 0, 0, 0, 1, 1, 0, 0 ], 
      [ 0, 0, 0, 0, 0, 0, 0, -1, 0, 1, 0 ], 
      [ 0, 0, 0, 0, 0, 0, 0, -1, 0, 0, 1 ] ] ], 
  [ [ [ 0, 0, 0, 0, 1, 0, 0, 0, 0, 0, 0 ], 
      [ 0, 0, 0, 0, 0, 1, 0, 0, 0, 0, 0 ], 
      [ 0, 0, 0, 1, 0, 0, 0, 0, 0, 0, 0 ], 
      [ 0, 0, 1, 0, 0, 0, 0, 0, 0, 0, 0 ], 
      [ 1, 0, 0, 0, 0, 0, 0, 0, 0, 0, 0 ], 
      [ 0, 1, 0, 0, 0, 0, 0, 0, 0, 0, 0 ], 
      [ 0, 0, 0, 0, 0, 0, 0, 1, 0, 0, 0 ], 
      [ 0, 0, 0, 0, 0, 0, 0, 0, 0, 1, 0 ], 
      [ 0, 0, 0, 0, 0, 0, 1, 0, 0, 0, 0 ], 
      [ 0, 0, 0, 0, 0, 0, 0, 0, 1, 0, 0 ], 
      [ 0, 0, 0, 0, 0, 0, 0, 0, 0, 0, 1 ] ], 
    [ [ 0, 0, 1, 0, 0, 0, 0, 0, 0, 0, 0 ], 
      [ 1, 0, 0, 0, 0, 0, 0, 0, 0, 0, 0 ], 
      [ 0, 1, 0, 0, 0, 0, 0, 0, 0, 0, 0 ], 
      [ 0, 0, 0, 0, 1, 0, 0, 0, 0, 0, 0 ], 
      [ 0, 0, 0, 0, 0, 1, 0, 0, 0, 0, 0 ], 
      [ 0, 0, 0, 1, 0, 0, 0, 0, 0, 0, 0 ], 
      [ 0, 0, 0, 0, 0, 0, 1, 0, 0, 0, 0 ], 
      [ 0, 0, 0, 0, 0, 0, 0, 1, 0, 0, 0 ], 
      [ 0, 0, 0, 0, 0, 0, 0, 0, 1, 0, 0 ], 
      [ 0, 0, 0, 0, 0, 0, 0, 0, 0, 1, 0 ], 
      [ 0, 0, 0, 0, 0, 0, 0, 0, 0, 0, 1 ] ] ] ]
gap> bp:=StablyPermutationMCheckP(G,Nlist(l),Plist(l));;
gap> Length(bp);
19
gap> Length(bp[1]); # rank of the both sides is 11
11
gap> rs:=RandomSource(IsMersenneTwister);
<RandomSource in IsMersenneTwister>
gap> rr:=List([1..1000],x->List([1..19],y->Random(rs,[0,1])));;
gap> Filtered(rr,x->Determinant(x*bp)^2=1);
[ [ 0, 1, 0, 0, 1, 1, 0, 1, 0, 0, 0, 1, 0, 1, 0, 0, 1, 0, 1 ], 
  [ 0, 1, 1, 0, 1, 1, 0, 0, 1, 1, 1, 0, 1, 0, 0, 0, 1, 0, 1 ] ]
gap> p:=last[1]*bp;
[ [ 0, 1, 0, 0, 0, 1, 0, 0, 0, 0, 1 ], 
  [ 0, 0, 1, 0, 1, 0, 0, 0, 0, 0, 1 ], 
  [ 1, 0, 0, 1, 0, 0, 0, 0, 0, 0, 1 ], 
  [ 1, 1, 1, 0, 0, 0, 1, 0, 0, 1, 0 ], 
  [ 0, 0, 0, 1, 1, 1, 0, 1, 1, 0, 0 ], 
  [ 0, 1, 0, 1, 0, 1, 0, 0, 1, 0, 1 ], 
  [ -1, 0, -1, 0, -1, 0, 0, 0, 0, -1, -1 ], 
  [ -1, 1, 0, 0, -1, 1, 0, 0, 0, 0, 0 ], 
  [ 1, 0, 1, 1, 2, 1, 0, 1, 1, 0, 2 ], 
  [ 1, 2, 1, 1, 0, 1, 1, 0, 0, 1, 2 ], 
  [ -1, 0, -1, 1, 0, 1, -1, 1, 1, -1, 0 ] ]
gap> Determinant(p);
1
gap> List(gg[1],x->p^-1*x*p)=gg[2]; 
true
gap> StablyPermutationMCheckMat(G,Nlist(l),Plist(l),p);
true
\end{verbatim}
\end{example}

\bigskip

As an application of Example \ref{exM11}, we see that 
Krull-Schmidt theorem fails for permutation $D_6$-lattices 
by constructing explicit isomorphism. 
Let $M_G$ be the $G$-lattice where $G\simeq D_6\leq \GL(4,\bZ)$ 
is the group of the GAP ID $(4,14,8,2)$. 
Example \ref{exM11} confirms that the isomorphism
\begin{align}
M_G\oplus\bZ\simeq \bZ[G/H_6]\oplus\bZ[G/H_9]\label{eqM1}
\end{align}
holds. 
Note that $\bZ[G/H]$ is indecomposable for any $H\leq G$ (\cite[Theorem 32.4]{CR87}). 
We also see by Example \ref{exM11} that it is possible that 
\begin{align}
M_G^{\oplus 2}\oplus\bZ[G]\oplus\bZ[G/H_7]\oplus\bZ[G/H_8]
\simeq \bZ[G/H_2]\oplus\bZ[G/H_3]\oplus\bZ[G/H_4]
\oplus\bZ[G/H_5]\oplus\bZ[G/H_9].\label{eqM2}
\end{align}
If the isomorphism (\ref{eqM2}) holds, then it follows from (\ref{eqM1}) and 
(\ref{eqM2}) that the Krull-Schmidt theorem fails for the 
permutation $D_6$-lattices (the rank of the both sides is 
$12+2\times 3+2+2+2=6+6+6+4+2\times 1=24$):
\begin{align*}
\bZ[G]\oplus\bZ[G/H_6]^{\oplus 2}\oplus\bZ[G/H_7]\oplus\bZ[G/H_8]\oplus\bZ[G/H_9]
\simeq \bZ[G/H_2]\oplus\bZ[G/H_3]\oplus\bZ[G/H_4]
\oplus\bZ[G/H_5]\oplus\bZ^{\oplus 2}.
\end{align*}
We may check that this isomorphism actually holds 
(see Example \ref{exKSfailD6} below), namely we have:
\begin{proposition}[The Krull-Schmidt theorem fails for 
permutation $D_6$-lattices]\label{propKSfailD6}
Let $D_6$ be the dihedral group of order $12$ and 
$\{1\}$, $C_2^{(1)}$, $C_2^{(2)}$, $C_2^{(3)}$, $C_3$, $C_2^2$, 
$C_6$, $S_3^{(1)}$, $S_3^{(2)}$ and $D_6$ be the 
conjugacy classes of subgroups of $D_6$. 
Then the following isomorphism holds: 
\begin{align*}
& ~{} \bZ[D_6]\oplus\bZ[D_6/C_2^2]^{\oplus 2}\oplus\bZ[D_6/C_6]
\oplus\bZ[D_6/S_3^{(1)}]\oplus\bZ[D_6/S_3^{(2)}]\\
\simeq & ~{} \bZ[D_6/C_2^{(1)}]\oplus\bZ[D_6/C_2^{(2)}]
\oplus\bZ[D_6/C_2^{(3)}]\oplus\bZ[D_6/C_3]\oplus\bZ^{\oplus 2}.
\end{align*}
\end{proposition}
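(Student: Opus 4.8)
The plan is to reduce the assertion to the two stable isomorphisms already produced for the $D_6$-lattice $M_G$ of GAP code $(4,14,8,2)$, and then to upgrade the resulting \emph{stable} identity to a genuine isomorphism by exhibiting an explicit unimodular intertwiner. Write $G\simeq D_6$ with conjugacy classes of subgroups $H_1=\{1\},\ldots,H_{10}=D_6$ ordered as in Example \ref{exM11}, so that $C_2^{(i)}=H_{i+1}$ $(i=1,2,3)$, $C_3=H_5$, $C_2^2=H_6$, $C_6=H_7$, $S_3^{(1)}=H_8$ and $S_3^{(2)}=H_9$. Example \ref{exM11} supplies the isomorphism (\ref{eqM1}), $M_G\oplus\bZ\simeq\bZ[G/H_6]\oplus\bZ[G/H_9]$, while {\tt PossibilityOfStablyPermutationM} yields the candidate relation (\ref{eqM2}). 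Adding $\bZ[G]\oplus\bZ[G/H_7]\oplus\bZ[G/H_8]$ to the doubled (\ref{eqM1}) and using (\ref{eqM2}) to rewrite the left-hand side gives
\[
\bZ[G/H_2]\oplus\bZ[G/H_3]\oplus\bZ[G/H_4]\oplus\bZ[G/H_5]\oplus\bZ[G/H_9]\oplus2\bZ
\simeq \bZ[G]\oplus2\bZ[G/H_6]\oplus\bZ[G/H_7]\oplus\bZ[G/H_8]\oplus2\bZ[G/H_9],
\]
that is, the two sides of the Proposition become isomorphic after adjoining one extra copy of $\bZ[G/H_9]$.

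The first difficulty is that one is \emph{not} permitted to cancel this copy of $\bZ[G/H_9]=\bZ[G/S_3^{(2)}]$: direct-sum cancellation fails for $D_6$-lattices (precisely the phenomenon the Proposition is meant to illustrate), so the formal manipulation above only proves similarity of the two permutation lattices, not their isomorphism. Hence I would prove the Proposition directly. Realizing $G$ as a matrix group, I would form via {\tt CosetRepresentation} the integral representations $G_1$ and $G_2$ afforded by the left- and right-hand permutation lattices of the Proposition (each of rank $12+2\cdot3+2+2+2=6+6+6+4+2=24$), and apply {\tt StablyPermutationMCheckP} (Algorithm \ref{Alg6}, Method II) to compute a $\bZ$-basis $P_1,\ldots,P_m$ of the intertwiner lattice $\{P:G_1P=PG_2\}$. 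An isomorphism of the two permutation $G$-lattices is the same as a $\GL(24,\bZ)$-conjugacy $G_1=PG_2P^{-1}$, so it suffices to find an integer combination $P=\sum_i c_iP_i$ with $\det P=\pm1$; the existence of such a $P$ is then confirmed by {\tt StablyPermutationMCheckMat}.

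The main obstacle is therefore this final search: the intertwiner module sits inside a pair of rank-$24$ representations, and one must locate a unimodular element within it. In favorable cases a short enumeration of small coefficient vectors already yields a determinant-$\pm1$ combination, but if this stalls I would fall back on the randomized search of Method III (Algorithm \ref{Alg7}), sampling coefficient vectors with {\tt RandomSource(IsMersenneTwister)} until $\det(\sum_i c_iP_i)^2=1$, exactly as in Example \ref{ex421}. Producing one such $P$ completes the proof and, together with (\ref{eqM1}), exhibits the promised failure of Krull--Schmidt uniqueness for permutation $D_6$-lattices; this is consistent with Dress's theorem, since $D_6$ is the smallest group that is not cyclic mod $p$ for any prime $p$.
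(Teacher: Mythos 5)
Your proposal is correct and follows essentially the same route as the paper: there, too, the relations (\ref{eqM1}) and the candidate (\ref{eqM2}) serve only to motivate the rank-$24$ identity, and the Proposition is then established directly in Example \ref{exKSfailD6} by computing the intertwiner lattice of the two $24$-dimensional permutation representations with {\tt StablyPermutationMCheckP} and exhibiting, after a search, an explicit integer combination $P$ with $\det P=1$ confirmed by {\tt StablyPermutationMCheckMat}. Your observation that the extra copy of $\bZ[D_6/S_3^{(2)}]$ cannot simply be cancelled is precisely the reason the paper also falls back on this direct verification.
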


%\bigskip
%
%\begin{remark}
%Proposition \ref{propKSfailD6} provides 
%a minimal example of a finite group $G$ 
%which Krull-Schmidt theorem fails for 
%the permutation $G$-lattices. 
%\end{remark}

\bigskip

\begin{example}[Verification of Proposition \ref{propKSfailD6}: the Krull-Schmidt theorem fails for 
permutation $D_6$-lattices]\label{exKSfailD6}
{}~\\
\begin{verbatim}
Read("FlabbyResolution.gap");

gap> G:=MatGroupZClass(4,14,8,2);; # G=D6
gap> ll:=PossibilityOfStablyPermutationM(G);
[ [ 1, -1, -1, -1, -1, 0, 1, 1, -1, 0, 2 ], 
  [ 0, 0, 0, 0, 0, 1, 0, 0, 1, -1, -1 ] ]
gap> l:=ll[1]+2*ll[2];
[ 1, -1, -1, -1, -1, 2, 1, 1, 1, -2, 0 ]
gap> List(ConjugacyClassesSubgroups2(G),x->StructureDescription(Representative(x)));
[ "1", "C2", "C2", "C2", "C3", "C2 x C2", "C6", "S3", "S3", "D12" ]
gap> bp:=StablyPermutationMCheckP(G,Nlist(l),Plist(l));; 
gap> Length(bp);
68
gap> Length(bp[1]); # rank of the both sides is 24
24

# after some efforts we may get
gap> n:=[ 
> 1, -1, -1, -1, -1, -1, -1, -1, 0, -1, 0, 1, 0, 1, 1, 0, 0, 0, -1, 1, 1, -1, 1, 0, -1, 
> 1, -1, 0, -1, 0, 1, 1, 0, 1, 0, 0, -1, 1, 1, -1, -1, 0, -1, 0, -1, 1, 0, -1, -1, 1, 
> 1, 0, 0, 1, 0, 1, 1, 0, 1, 0, 1, 1, 1, 1, 0, -1, -1, 0 ];;
gap> p:=n*bp;
[ [ 1, -1, -1, -1, -1, -1, 1, -1, -1, -1, -1, -1, -1, -1, 0, -1, 0, 1, 0, 1, 1, 1, 0, 0 ], 
  [ -1, 1, -1, -1, -1, -1, -1, 1, -1, -1, -1, -1, -1, -1, 0, 0, -1, 1, 1, 0, 1, 1, 0, 0 ], 
  [ -1, -1, 1, -1, -1, -1, -1, -1, 1, -1, -1, -1, 0, -1, -1, 1, 0, -1, 1, 0, 1, 1, 0, 0 ], 
  [ -1, -1, -1, 1, -1, -1, -1, -1, -1, 1, -1, -1, 0, -1, -1, 1, -1, 0, 0, 1, 1, 1, 0, 0 ], 
  [ -1, -1, -1, -1, 1, -1, -1, -1, -1, -1, 1, -1, -1, 0, -1, 0, 1, -1, 0, 1, 1, 1, 0, 0 ], 
  [ -1, -1, -1, -1, -1, 1, -1, -1, -1, -1, -1, 1, -1, 0, -1, -1, 1, 0, 1, 0, 1, 1, 0, 0 ], 
  [ 0, 0, -1, 1, 1, -1, -1, 1, 1, -1, 0, 0, 1, 0, 0, -1, 1, 1, -1, -1, 0, -1, 0, 0 ], 
  [ 0, 0, 1, -1, -1, 1, 1, -1, 0, 0, 1, -1, 0, 1, 0, 1, -1, 1, -1, -1, -1, 0, 0, 0 ], 
  [ -1, 1, 0, 0, -1, 1, 1, -1, -1, 1, 0, 0, 0, 0, 1, 1, 1, -1, -1, -1, -1, 0, 0, 0 ], 
  [ 1, -1, 0, 0, 1, -1, 0, 0, 1, -1, -1, 1, 0, 1, 0, 1, -1, 1, -1, -1, 0, -1, 0, 0 ], 
  [ 1, -1, -1, 1, 0, 0, -1, 1, 0, 0, -1, 1, 0, 0, 1, 1, 1, -1, -1, -1, 0, -1, 0, 0 ], 
  [ -1, 1, 1, -1, 0, 0, 0, 0, -1, 1, 1, -1, 1, 0, 0, -1, 1, 1, -1, -1, -1, 0, 0, 0 ], 
  [ 1, 1, 0, 0, 1, 1, 1, 0, 0, 0, 0, 1, -1, 1, 1, 1, -1, -1, -1, -1, 0, 0, -1, 0 ], 
  [ 1, 1, 1, 1, 0, 0, 0, 1, 0, 1, 0, 0, 1, -1, 1, -1, 1, -1, -1, -1, 0, 0, -1, 0 ], 
  [ 0, 0, 1, 1, 1, 1, 0, 0, 1, 0, 1, 0, 1, 1, -1, -1, -1, 1, -1, -1, 0, 0, -1, 0 ], 
  [ 1, 0, 0, 0, 0, 1, 1, 1, 0, 0, 1, 1, -1, 1, 1, 1, -1, -1, -1, -1, 0, 0, 0, -1 ], 
  [ 0, 1, 0, 1, 0, 0, 1, 1, 1, 1, 0, 0, 1, -1, 1, -1, 1, -1, -1, -1, 0, 0, 0, -1 ], 
  [ 0, 0, 1, 0, 1, 0, 0, 0, 1, 1, 1, 1, 1, 1, -1, -1, -1, 1, -1, -1, 0, 0, 0, -1 ], 
  [ -1, 1, 1, -1, -1, 1, 0, -1, -1, 0, 0, -1, -1, -1, -1, 1, 1, 1, 1, 0, 0, 1, 0, 1 ], 
  [ 1, -1, -1, 1, 1, -1, -1, 0, 0, -1, -1, 0, -1, -1, -1, 1, 1, 1, 0, 1, 1, 0, 0, 1 ], 
  [ 0, -1, -1, 0, 0, -1, -1, 1, 1, -1, -1, 1, -1, -1, -1, 1, 1, 1, 1, 0, 1, 0, 1, 0 ], 
  [ -1, 0, 0, -1, -1, 0, 1, -1, -1, 1, 1, -1, -1, -1, -1, 1, 1, 1, 0, 1, 0, 1, 1, 0 ], 
  [ 1, 1, 1, 1, 1, 1, 1, 1, 1, 1, 1, 1, 0, 0, 0, 1, 1, 1, 0, 0, 1, 1, 1, 1 ], 
  [ 1, 1, 1, 1, 1, 1, 1, 1, 1, 1, 1, 1, 1, 1, 1, 0, 0, 0, -1, -1, -1, -1, 0, 0 ] ]
gap> Determinant(p);
1
gap> StablyPermutationMCheckMat(G,Nlist(l),Plist(l),p);
true
\end{verbatim}
\end{example}

\bigskip

%
%%%%%%%%%%%%%%%%%%%%%%%%%%%%%%%%%%%%%%%%%%%%%%%%%%%%%%%%%%%%%%%%%%%%%%%%%%%%%%%%%%%%%%
\section{$H^1(G,[M_G]^{fl})=0$ for any Bravais group $G$ of dimension $n\leq 6$}\label{seBravais}
\bigskip

Let $G$ be a finite subgroup of $\GL(n,\bZ)$ and 
$M_G$ be the $G$-lattice as in Definition \ref{defMG}. 
The rationality problem for algebraic tori $T$ which correspond to 
Bravais groups $G\leq\GL(n,\bZ)$ %, i.e. $L(M)^G$
was investigated by Voskresenskii (\cite{Vos83}, \cite[Section 8]{Vos98}). 
The referee of the paper pointed out that Voskresenskii asked a question 
whether $H^1(G,[M_G]^{fl})=0$ for any Bravais group $G$ 
(or, at least maximal finite subgroup $G\leq \GL(n,\bZ)$). 

By the classification of Bravais group $G$ of dimension $n\leq 6$ 
(see Subsection \ref{ssBravais}) and the computation of the flabby class $[M_G]^{fl}$ 
(see Subsection \ref{ss51}), we obtain the following: 

\begin{theorem}\label{thBravais6}
If $G$ is a Bravais group of dimension $n\leq 6$, 
then $H^1(G,[M_G]^{fl})=0$. 
In particular, if $G$ is a maximal finite subgroup 
$G\leq \GL(n,\bZ)$ where $n\leq 6$, then $H^1(G,[M_G]^{fl})=0$. 
\end{theorem}

In dimension $n=6$, 
we should use the following {\tt FlabbyResolutionFromPerm(G)} 
instead of {\tt FlabbyResolution(G)} 
for the 821st and the 822nd Bravais group of dimension $6$ (see Example \ref{exBravais6} below):\\

\noindent
{\tt FlabbyResolutionFromPerm(G)} returns the same as 
{\tt FlabbyResolution(G)} but using\\ 
{\tt ConjugacyClassesSubgroupsFromPerm(G)} instead of 
{\tt ConjugacyClassesSubgroups2(G)}.

\bigskip

\begin{verbatim}
FlabbyResolutionFromPerm:= function(g)
    local tg,gg,d,th,mi,ms,o,r,gg1,gg2,v1,mg,img;
    tg:=TransposedMatrixGroup(g);
    gg:=GeneratorsOfGroup(tg);
    d:=Length(Identity(g));
    th:=ConjugacyClassesSubgroupsFromPerm(tg);
    mi:=FindCoflabbyResolutionBase(tg,th);
    r:=Length(mi);
    o:=IdentityMat(r);
    gg1:=List(gg,x->PermutationMat(Permutation(x,mi),r));
    if r=d then
        return rec(injection:=TransposedMat(mi),
                   surjection:=NullMat(r,0),
                   actionP:=TransposedMatrixGroup(Group(gg1,o))
        );
    else
        ms:=NullspaceIntMat(mi);
        v1:=NullspaceIntMat(TransposedMat(ms));
        mg:=Concatenation(v1,ms);
        img:=mg^-1;
        gg2:=List(gg1,x->mg*x*img);
        gg2:=List(gg2,x->x{[d+1..r]}{[d+1..r]});
        return rec(injection:=TransposedMat(mi),
                   surjection:=TransposedMat(ms),
                   actionP:=TransposedMatrixGroup(Group(gg1)),
                   actionF:=TransposedMatrixGroup(Group(gg2))
        );
    fi;
end;
\end{verbatim}

\bigskip

\begin{example}[{Verification of $H^1(G,[M_G]^{fl})=0$ for any Bravais group of dimension $n\leq 6$}]\label{exBravais6}
{}~\\
\begin{verbatim}
gap> Read("caratnumber.gap");
gap> Read("FlabbyResolution.gap");

gap> b1:=Flat(List(CaratCrystalFamilies[1],BravaisGroupsCrystalFamily));;
gap> b2:=Flat(List(CaratCrystalFamilies[2],BravaisGroupsCrystalFamily));;
gap> b3:=Flat(List(CaratCrystalFamilies[3],BravaisGroupsCrystalFamily));;
gap> b4:=Flat(List(CaratCrystalFamilies[4],BravaisGroupsCrystalFamily));;
gap> b5:=Flat(List(CaratCrystalFamilies[5],BravaisGroupsCrystalFamily));;
gap> b6:=Flat(List(CaratCrystalFamilies[6],BravaisGroupsCrystalFamily));;

gap> List(b1,x->Product(H1(FlabbyResolution(x).actionF)));
[ 1 ]
gap> Length(last);                                        
1
gap> List(b2,x->Product(H1(FlabbyResolution(x).actionF)));
[ 1, 1, 1, 1, 1 ]
gap> Length(last); # there exist 5 Bravais groups of dimension 2 
5
gap> List(b3,x->Product(H1(FlabbyResolution(x).actionF)));
[ 1, 1, 1, 1, 1, 1, 1, 1, 1, 1, 1, 1, 1, 1 ]
gap> Length(last); # there exist 14 Bravais groups of dimension 3 
14
gap> List(b4,x->Product(H1(FlabbyResolution(x).actionF)));
[ 1, 1, 1, 1, 1, 1, 1, 1, 1, 1, 1, 1, 1, 1, 1, 1, 1, 1, 1, 1, 1, 1, 1, 1, 1, 
  1, 1, 1, 1, 1, 1, 1, 1, 1, 1, 1, 1, 1, 1, 1, 1, 1, 1, 1, 1, 1, 1, 1, 1, 1, 
  1, 1, 1, 1, 1, 1, 1, 1, 1, 1, 1, 1, 1, 1 ]
gap> Length(last); # there exist 64 Bravais groups of dimension 4 
64
gap> List(b5,x->Product(H1(FlabbyResolution(x).actionF)));
[ 1, 1, 1, 1, 1, 1, 1, 1, 1, 1, 1, 1, 1, 1, 1, 1, 1, 1, 1, 1, 1, 1, 1, 1, 1, 
  1, 1, 1, 1, 1, 1, 1, 1, 1, 1, 1, 1, 1, 1, 1, 1, 1, 1, 1, 1, 1, 1, 1, 1, 1, 
  1, 1, 1, 1, 1, 1, 1, 1, 1, 1, 1, 1, 1, 1, 1, 1, 1, 1, 1, 1, 1, 1, 1, 1, 1, 
  1, 1, 1, 1, 1, 1, 1, 1, 1, 1, 1, 1, 1, 1, 1, 1, 1, 1, 1, 1, 1, 1, 1, 1, 1, 
  1, 1, 1, 1, 1, 1, 1, 1, 1, 1, 1, 1, 1, 1, 1, 1, 1, 1, 1, 1, 1, 1, 1, 1, 1, 
  1, 1, 1, 1, 1, 1, 1, 1, 1, 1, 1, 1, 1, 1, 1, 1, 1, 1, 1, 1, 1, 1, 1, 1, 1, 
  1, 1, 1, 1, 1, 1, 1, 1, 1, 1, 1, 1, 1, 1, 1, 1, 1, 1, 1, 1, 1, 1, 1, 1, 1, 
  1, 1, 1, 1, 1, 1, 1, 1, 1, 1, 1, 1, 1, 1 ]
gap> Length(last); # there exist 189 Bravais groups of dimension 5 
189

gap> Length(b6); # there exist 841 Bravais groups of dimension 6 
841
gap> b6fl:=[1..841];
gap> for i in [1..820] do
> b6fl[i]:=FlabbyResolution(b6[i]);
> od;
gap> for i in [821..822] do # we need FlabbyResolutionFromPerm for i=821 and 822
> b6fl[i]:=FlabbyResolutionFromPerm(b6[i]);
> od;
gap> for i in [823..841] do
> b6fl[i]:=FlabbyResolution(b6[i]);
> od;
gap> List(b6fl,x->Product(H1(x.actionF)));
[ 1, 1, 1, 1, 1, 1, 1, 1, 1, 1, 1, 1, 1, 1, 1, 1, 1, 1, 1, 1, 1, 1, 1, 1, 1, 
  1, 1, 1, 1, 1, 1, 1, 1, 1, 1, 1, 1, 1, 1, 1, 1, 1, 1, 1, 1, 1, 1, 1, 1, 1, 
  1, 1, 1, 1, 1, 1, 1, 1, 1, 1, 1, 1, 1, 1, 1, 1, 1, 1, 1, 1, 1, 1, 1, 1, 1, 
  1, 1, 1, 1, 1, 1, 1, 1, 1, 1, 1, 1, 1, 1, 1, 1, 1, 1, 1, 1, 1, 1, 1, 1, 1, 
  1, 1, 1, 1, 1, 1, 1, 1, 1, 1, 1, 1, 1, 1, 1, 1, 1, 1, 1, 1, 1, 1, 1, 1, 1, 
  1, 1, 1, 1, 1, 1, 1, 1, 1, 1, 1, 1, 1, 1, 1, 1, 1, 1, 1, 1, 1, 1, 1, 1, 1, 
  1, 1, 1, 1, 1, 1, 1, 1, 1, 1, 1, 1, 1, 1, 1, 1, 1, 1, 1, 1, 1, 1, 1, 1, 1, 
  1, 1, 1, 1, 1, 1, 1, 1, 1, 1, 1, 1, 1, 1, 1, 1, 1, 1, 1, 1, 1, 1, 1, 1, 1, 
  1, 1, 1, 1, 1, 1, 1, 1, 1, 1, 1, 1, 1, 1, 1, 1, 1, 1, 1, 1, 1, 1, 1, 1, 1, 
  1, 1, 1, 1, 1, 1, 1, 1, 1, 1, 1, 1, 1, 1, 1, 1, 1, 1, 1, 1, 1, 1, 1, 1, 1, 
  1, 1, 1, 1, 1, 1, 1, 1, 1, 1, 1, 1, 1, 1, 1, 1, 1, 1, 1, 1, 1, 1, 1, 1, 1, 
  1, 1, 1, 1, 1, 1, 1, 1, 1, 1, 1, 1, 1, 1, 1, 1, 1, 1, 1, 1, 1, 1, 1, 1, 1, 
  1, 1, 1, 1, 1, 1, 1, 1, 1, 1, 1, 1, 1, 1, 1, 1, 1, 1, 1, 1, 1, 1, 1, 1, 1, 
  1, 1, 1, 1, 1, 1, 1, 1, 1, 1, 1, 1, 1, 1, 1, 1, 1, 1, 1, 1, 1, 1, 1, 1, 1, 
  1, 1, 1, 1, 1, 1, 1, 1, 1, 1, 1, 1, 1, 1, 1, 1, 1, 1, 1, 1, 1, 1, 1, 1, 1, 
  1, 1, 1, 1, 1, 1, 1, 1, 1, 1, 1, 1, 1, 1, 1, 1, 1, 1, 1, 1, 1, 1, 1, 1, 1, 
  1, 1, 1, 1, 1, 1, 1, 1, 1, 1, 1, 1, 1, 1, 1, 1, 1, 1, 1, 1, 1, 1, 1, 1, 1, 
  1, 1, 1, 1, 1, 1, 1, 1, 1, 1, 1, 1, 1, 1, 1, 1, 1, 1, 1, 1, 1, 1, 1, 1, 1, 
  1, 1, 1, 1, 1, 1, 1, 1, 1, 1, 1, 1, 1, 1, 1, 1, 1, 1, 1, 1, 1, 1, 1, 1, 1, 
  1, 1, 1, 1, 1, 1, 1, 1, 1, 1, 1, 1, 1, 1, 1, 1, 1, 1, 1, 1, 1, 1, 1, 1, 1, 
  1, 1, 1, 1, 1, 1, 1, 1, 1, 1, 1, 1, 1, 1, 1, 1, 1, 1, 1, 1, 1, 1, 1, 1, 1, 
  1, 1, 1, 1, 1, 1, 1, 1, 1, 1, 1, 1, 1, 1, 1, 1, 1, 1, 1, 1, 1, 1, 1, 1, 1, 
  1, 1, 1, 1, 1, 1, 1, 1, 1, 1, 1, 1, 1, 1, 1, 1, 1, 1, 1, 1, 1, 1, 1, 1, 1, 
  1, 1, 1, 1, 1, 1, 1, 1, 1, 1, 1, 1, 1, 1, 1, 1, 1, 1, 1, 1, 1, 1, 1, 1, 1, 
  1, 1, 1, 1, 1, 1, 1, 1, 1, 1, 1, 1, 1, 1, 1, 1, 1, 1, 1, 1, 1, 1, 1, 1, 1, 
  1, 1, 1, 1, 1, 1, 1, 1, 1, 1, 1, 1, 1, 1, 1, 1, 1, 1, 1, 1, 1, 1, 1, 1, 1, 
  1, 1, 1, 1, 1, 1, 1, 1, 1, 1, 1, 1, 1, 1, 1, 1, 1, 1, 1, 1, 1, 1, 1, 1, 1, 
  1, 1, 1, 1, 1, 1, 1, 1, 1, 1, 1, 1, 1, 1, 1, 1, 1, 1, 1, 1, 1, 1, 1, 1, 1, 
  1, 1, 1, 1, 1, 1, 1, 1, 1, 1, 1, 1, 1, 1, 1, 1, 1, 1, 1, 1, 1, 1, 1, 1, 1, 
  1, 1, 1, 1, 1, 1, 1, 1, 1, 1, 1, 1, 1, 1, 1, 1, 1, 1, 1, 1, 1, 1, 1, 1, 1, 
  1, 1, 1, 1, 1, 1, 1, 1, 1, 1, 1, 1, 1, 1, 1, 1, 1, 1, 1, 1, 1, 1, 1, 1, 1, 
  1, 1, 1, 1, 1, 1, 1, 1, 1, 1, 1, 1, 1, 1, 1, 1, 1, 1, 1, 1, 1, 1, 1, 1, 1, 
  1, 1, 1, 1, 1, 1, 1, 1, 1, 1, 1, 1, 1, 1, 1, 1, 1, 1, 1, 1, 1, 1, 1, 1, 1, 
  1, 1, 1, 1, 1, 1, 1, 1, 1, 1, 1, 1, 1, 1, 1, 1 ]
gap> Length(last); 
841
\end{verbatim}
\end{example}

\bigskip

%
%%%%%%%%%%%%%%%%%%%%%%%%%%%%%%%%%%%%%%%%%%%%%%%%%%%%%%%%%%%%%%%%%%%%%%%%%%%%%%%%%%%%%%
\section{Norm one tori}\label{seNorm1}

Let $K/k$ be a separable field extension of degree $n$ 
and $L/k$ be the Galois closure of $K/k$. 
Let $G={\rm Gal}(L/k)$ and $H={\rm Gal}(L/K)$. 
The Galois group $G$ may be regarded as a transitive subgroup of 
the symmetric group $S_n$ of degree $n$. 
Let $R^{(1)}_{K/k}(\bG_m)$ be the norm one torus of $K/k$,
i.e. the kernel of the norm map $R_{K/k}(\bG_m)\rightarrow \bG_m$. 
%(see Section \ref{seInt}). 
The norm one torus $R^{(1)}_{K/k}(\bG_m)$ has the 
Chevalley module $J_{G/H}$ as its character module 
and the field $L(J_{G/H})^G$ as its function field 
(see Section \ref{seInt}). 
The following algorithm is available from 
{\tt http://math.h.kyoto-u.ac.jp/\~{}yamasaki/Algorithm/} 
as {\tt FlabbyResolution.gap}.\\

\noindent
{\tt Norm1TorusJ(d,m)} returns the 
Chevalley module $J_{G/H}$ for the $m$-th transitive subgroup 
$G=dTm\leq S_d$ of degree $d$ where $H$ is the stabilizer of 
one of the letters in $G$. 

\bigskip

\begin{algorithmN1T}
[{Construction of Chevalley module $J_{G/H}$ for the transitive subgroups $G=dTm\leq S_d$}]
{}~\\
\begin{verbatim}
Norm1TorusJ:= function(d,m)
    local I,M1,M2,M,f,Sd,T;
    I:=IdentityMat(d-1);
    Sd:=SymmetricGroup(d);
    T:=TransitiveGroup(d,m);
    M1:=Concatenation(List([2..d-1],x->I[x]),[-List([1..d-1],One)]);
    if d=2 then
        M:=[M1];
    else
        M2:=Concatenation([I[2],I[1]],List([3..d-1],x->I[x]));
        M:=[M1,M2];
    fi;
    f:=GroupHomomorphismByImages(Sd,Group(M),GeneratorsOfGroup(Sd),M);
    return Image(f,T);
end;
\end{verbatim}
\end{algorithmN1T}

\bigskip

\begin{example}[{$[J_{G/H}]^{fl}=0$ for $G=5T4\simeq A_5$}]
By using Method II as in Algorithm \ref{Alg6}, 
we may verify that $[J_{G/H}]^{fl}=0$ for $G=5T4\simeq A_5$ and $H=A_4$. 

\bigskip

\begin{verbatim}
gap> Read("crystcat.gap");
gap> Read("FlabbyResolution.gap");

gap> J54:=Norm1TorusJ(5,4);
<matrix group with 2 generators>
gap> StructureDescription(J54);
"A5"
gap> CrystCatZClass(J54);
[ 4, 31, 3, 2 ]
gap> IsInvertibleF(J54);
true
gap> Rank(FlabbyResolution(J54).actionF.1); # F is of rank 16
16

gap> mis:=SearchCoflabbyResolutionBase(TransposedMatrixGroup(J54),5);;
gap> Set(List(mis,Length))-4; # Method III could not apply 
[ 16, 21, 26, 31, 36, 41, 46, 51, 56, 61, 66, 71, 76, 81 ]

gap> ll:=PossibilityOfStablyPermutationF(J54);
[ [ 1, -2, -1, 0, 0, 1, 1, 1, -1, 0 ], [ 0, 0, 0, 0, 1, 1, -1, 0, 0, -1 ] ]
gap> l:=ll[Length(ll)];
[ 0, 0, 0, 0, 1, 1, -1, 0, 0, -1 ]
gap> bp:=StablyPermutationFCheckP(J54,Nlist(l),Plist(l));;
gap> Length(bp);
11
gap> Length(bp[1]); # rank of the both sides of (10) is 22
22
gap> rs:=RandomSource(IsMersenneTwister);
<RandomSource in IsMersenneTwister>
gap> rr:=List([1..10000],x->List([1..11],y->Random(rs,[-1..2])));;    
gap> Filtered(rr,x->Determinant(x*bp)^2=1);                           
[ [ 2, 0, 0, -1, -1, 0, -1, 1, 0, 1, 1 ] ]
gap> p:=last[1]*bp;
[ [ 2, 0, 0, 0, 0, 0, 0, 0, 2, 0, 0, 0, 0, 0, -1, -1, -1, 0, 0, 0, -1, -1 ], 
  [ 0, 2, 0, 0, 0, 0, 0, 2, 0, 0, 0, 0, -1, 0, 0, 0, -1, -1, 0, -1, 0, -1 ], 
  [ 0, 0, 2, 0, 0, 0, 0, 0, 0, 0, 0, 2, 0, -1, 0, -1, 0, -1, 0, -1, -1, 0 ], 
  [ 0, 0, 0, 2, 0, 0, 0, 0, 0, 0, 2, 0, 0, -1, -1, 0, -1, 0, -1, -1, 0, 0 ], 
  [ 0, 0, 0, 0, 2, 2, 0, 0, 0, 0, 0, 0, -1, 0, -1, 0, 0, -1, -1, 0, -1, 0 ], 
  [ 0, 0, 0, 0, 0, 0, 2, 0, 0, 2, 0, 0, -1, -1, 0, -1, 0, 0, -1, 0, 0, -1 ], 
  [ -1, 0, -1, 1, 1, 0, -1, 1, -1, -1, 0, -1, 0, 1, 0, 1, 0, 0, 0, 0, 1, 1 ], 
  [ 1, -1, 1, 1, -1, -1, -1, -1, 0, -1, 0, 0, 1, 0, 0, 0, 0, 1, 1, 0, 0, 1 ], 
  [ -1, -1, 0, -1, 0, 1, 1, -1, -1, 0, -1, 1, 0, 0, 1, 0, 1, 0, 0, 1, 0, 1 ], 
  [ 1, 1, -1, -1, 0, 1, -1, 0, 0, -1, -1, -1, 0, 1, 0, 1, 0, 0, 1, 1, 0, 0 ], 
  [ -1, 0, 0, -1, -1, -1, 1, 1, -1, 0, -1, 1, 0, 0, 1, 0, 1, 0, 1, 0, 1, 0 ], 
  [ 0, -1, -1, 0, -1, -1, 0, -1, 1, 1, 1, -1, 1, 0, 0, 0, 0, 1, 0, 1, 1, 0 ], 
  [ -1, -1, 0, 0, 0, 1, -1, -1, -1, -1, 1, 1, 1, 0, 0, 1, 1, 0, 0, 0, 0, 1 ], 
  [ -1, 1, 1, -1, -1, -1, 0, 0, -1, 1, -1, 0, 0, 0, 1, 0, 1, 0, 1, 0, 1, 0 ], 
  [ -1, 0, -1, 1, -1, -1, 1, 1, -1, 0, 0, -1, 0, 0, 1, 1, 0, 1, 0, 0, 1, 0 ], 
  [ 0, -1, 0, 0, -1, -1, -1, -1, 1, -1, 1, 1, 1, 0, 0, 0, 0, 1, 1, 0, 0, 1 ], 
  [ 0, -1, -1, -1, 1, 0, 0, -1, 1, 1, -1, -1, 0, 1, 0, 0, 1, 1, 0, 1, 0, 0 ], 
  [ 0, 0, 0, -1, -1, -1, -1, 1, 1, -1, -1, 1, 1, 1, 1, 0, 0, 0, 1, 0, 0, 0 ], 
  [ -1, 1, -1, 0, -1, -1, 0, 0, -1, 1, 1, -1, 0, 0, 1, 1, 0, 1, 0, 0, 1, 0 ], 
  [ 1, -1, -1, 1, -1, -1, 1, -1, 0, 0, 0, -1, 1, 0, 0, 0, 0, 1, 0, 1, 1, 0 ], 
  [ 0, 0, -1, -1, 1, 0, -1, 1, 1, -1, -1, -1, 0, 1, 0, 1, 0, 0, 1, 1, 0, 0 ], 
  [ 1, -1, -1, -1, 0, 1, 1, -1, 0, 0, -1, -1, 0, 1, 0, 0, 1, 1, 0, 1, 0, 0 ] ]
gap> Determinant(p);
-1
gap> StablyPermutationFCheckMat(J54,Nlist(l),Plist(l),p);
true
\end{verbatim}
\end{example}

\bigskip

\begin{example}[{$[J_{G/H}]^{fl}=0$ for $G=6T3\simeq D_6$}]
By using Method II as in Algorithm \ref{Alg6}, 
we may verify that $[J_{G/H}]^{fl}=0$ for $G=6T3\simeq D_6$ and $H=C_2$. 

\bigskip

\begin{verbatim}
gap> Read("caratnumber.gap");
gap> Read("FlabbyResolution.gap");

gap> J63:=Norm1TorusJ(6,3);
<matrix group with 2 generators>
gap> StructureDescription(J63);
"D12"
gap> CaratZClass(J63);
[ 5, 391, 4 ]
gap> IsInvertibleF(J63);
true
gap> Rank(FlabbyResolution(J63).actionF.1); # F is of rank 13
13
gap> mis:=SearchCoflabbyResolutionBase(TransposedMatrixGroup(J63),3);; # Method III
gap> List(mis,Length);
[ 24, 20, 24, 18, 24, 20, 24, 20, 18, 14, 21, 17, 21, 24, 20, 24, 24, 20, 
  24, 18, 30, 26, 24, 20, 18, 20, 14, 24, 20, 18 ]
gap> mi:=mis[Length(mis)-3]; # (new) F is of rank 9 (=14-5)
[ [ -1, 0, 0, 0, 0 ], [ -1, 1, -1, 1, -1 ], [ -1, 1, 0, 0, 0 ], [ 0, -1, 1, 0, 0 ], 
  [ 0, 0, -1, 1, 0 ], [ 0, 0, 0, -1, 1 ], [ 0, 0, 0, 0, -1 ], [ 0, 0, 0, 0, 1 ], 
  [ 0, 0, 0, 1, -1 ], [ 0, 0, 1, -1, 0 ], [ 0, 1, -1, 0, 0 ], [ 1, -1, 0, 0, 0 ], 
  [ 1, -1, 1, -1, 1 ], [ 1, 0, 0, 0, 0 ] ]
gap> ll:=PossibilityOfStablyPermutationFFromBase(J63,mi);
[ [ 1, -1, 0, -1, 0, 2, 0, 1, 1, -1, -1 ], [ 0, 0, 1, 0, 1, 0, -1, 0, 0, 1, -1 ] ]
gap> l:=ll[Length(ll)];
[ 0, 0, 1, 0, 1, 0, -1, 0, 0, 1, -1 ]
gap> bp:=StablyPermutationFCheckPFromBase(J63,mi,Nlist(l),Plist(l));;
gap> Length(bp);
17
gap> Length(bp[1]); # rank of the both sides of (10) is 11
11
gap> rs:=RandomSource(IsMersenneTwister);                             
<RandomSource in IsMersenneTwister>
gap> rr:=List([1..5000],x->List([1..17],y->Random(rs,[-1..1])));;
gap> Filtered(rr,x->Determinant(x*bp)^2=1);                      
[ [ 0, 1, -1, 0, -1, 1, -1, 0, 1, 1, 0, -1, 1, -1, 0, 0, 1 ] ]
gap> p:=last[1]*bp;                                                 
[ [ 0, 1, 1, 0, 1, 0, -1, 0, 0, -1, -1 ], 
  [ 1, 0, 0, 1, 0, 1, 0, -1, -1, 0, -1 ], 
  [ 1, -1, 0, 1, -1, 1, 1, 0, -1, 1, -1 ], 
  [ 0, 0, 0, 0, 0, 0, 0, -1, 0, -1, 1 ], 
  [ 0, 1, 1, -1, 1, -1, -1, 1, 1, 0, -1 ], 
  [ 1, 0, -1, 1, -1, 1, 1, -1, 0, 1, -1 ], 
  [ -1, 1, 1, -1, 1, 0, -1, 1, 1, 0, -1 ], 
  [ 1, -1, -1, 1, 0, 1, 1, -1, 0, 1, -1 ], 
  [ -1, 1, 1, 0, 1, -1, -1, 1, 1, 0, -1 ], 
  [ -1, 1, 1, 0, 1, -1, 0, 1, 1, -1, -1 ], 
  [ 1, -1, -1, 1, 0, 1, 1, 0, -1, 1, -1 ] ]
gap> Determinant(p);
1
gap> StablyPermutationFCheckMatFromBase(J63,mi,Nlist(l),Plist(l),p);
true
\end{verbatim}
\end{example}

\bigskip

\begin{example}[{The flabby class $[J_{Q_8}]^{fl}$ is not invertible but 
flabby and coflabby}]\label{exQ8}
Let $J_{Q_8}$ be the Chevalley module of the quaternion 
group $Q_8$ of order $8$. The rank of $J_{Q_8}$ is $7$. 
Let $8T5$ be the 5th transitive subgroup of $S_8$. 
Then $8T5\simeq Q_8$. 
By Theorems \ref{th13-1}, \ref{thEM73} (ii) and \ref{thCTS77}, 
the flabby class $[J_{Q_8}]^{fl}$ of $J_{Q_8}$ 
is not invertible but flabby and coflabby. 
Using {\tt FlabbyResolution} as in Algorithm \ref{Alg1}, 
we may obtain the flabby class $[J_{Q_8}]^{fl}=[F]$ 
of $J_{Q_8}$ where $F$ is of rank $9$ which satisfies 
$0\rightarrow M_{Q_8}\rightarrow P\rightarrow F\rightarrow 0$.

\bigskip

\begin{verbatim}
gap> Read("FlabbyResolution.gap");

gap> J85:=Norm1TorusJ(8,5);                 
<matrix group with 2 generators>
gap> StructureDescription(J85);
"Q8"
gap> F:=FlabbyResolution(J85).actionF; 
<matrix group with 2 generators>
gap> Rank(F.1); # F is of rank 9
9

gap> mis:=SearchCoflabbyResolutionBase(TransposedMatrixGroup(J85),5);;
gap> Set(List(mis,Length))-7; # Method III could not apply
[ 9, 11, 13, 15, 17, 19, 21, 23, 25, 27, 29, 33 ]

gap> IsFlabby(F);
true
gap> IsCoflabby(F);
true
gap> IsInvertibleF(J85);
false
\end{verbatim}
\end{example}

\bigskip

\begin{example}[{$[J_{G/H}]^{fl}$ for $G=7Tm$ and $G=11Tm$}]
By Theorems \ref{th13-1}, \ref{th13-2}, \ref{th15}, \ref{thS} and \ref{thA}, 
and Lemma \ref{lemp3}, we may obtain %a birational 
the stably rational classification of %the 
norm one tori $R_{K/k}^{(1)}(\bG_m)$ of dimensions $6$ and $10$ 
via Algorithm N1T for 
$[J_{G/H}]^{fl}$ with $G=7Tm\leq S_7$ $(1\leq m\leq 7)$ and $G=11Tm\leq S_{11}$ 
$(1\leq m\leq 8)$ respectively. 
The result is as follows:\\

\begin{theorem}[Stably rational classification of %the 
norm one tori of dimensions $6$ and $10$]
~\\
{\rm (i)} $R_{K/k}^{(1)}(\bG_m)$ is stably $k$-rational 
for $7T1\simeq C_7$ and $7T2\simeq D_7$;\\
{\rm (ii)} $R_{K/k}^{(1)}(\bG_m)$ is not stably but retract $k$-rational 
for $7T3\simeq F_{21}$, $7T4\simeq F_{42}$, $7T5\simeq {\rm PSL}(2,7)$, 
$7T6\simeq A_7$ and $7T7\simeq S_7$;\\
{\rm (iii)} $R_{K/k}^{(1)}(\bG_m)$ is stably $k$-rational 
for $11T1\simeq C_{11}$ and $11T2\simeq D_{11}$;\\
{\rm (iv)} $R_{K/k}^{(1)}(\bG_m)$ is not stably but retract $k$-rational 
for $11T3\simeq F_{55}$, $11T4\simeq F_{110}$, $11T5\simeq {\rm PSL}(2,11)$, 
$11T6\simeq M_{11}$, $11T7\simeq A_{11}$ and $11T8\simeq S_{11}$ where 
$M_{11}$ is the Mathieu group of degree $11$.
\end{theorem}
\end{example}

\bigskip

%%%%%%%%%%%%%%%%%%%%%%%%%%%%%%%%%%%%%%%%%%%%%%%%%%%%%%%%%%%%%%%%%%%%%%%%%%%%%%%%%%%%%%
\section{Tate cohomology: GAP computations}\label{seTate}

In this section, we provide some algorithms of GAP 
for computing the Tate cohomology $\widehat H^n(G,M_G)$ by 
using the GAP package HAP (\cite{HAP}). 
We will use this for showing the main theorem (Theorem \ref{th1M}) 
in Section \ref{seProof1}. 
The following algorithms are available from 
{\tt http://math.h.kyoto-u.ac.jp/\~{}yamasaki/Algorithm/} 
as {\tt Hn.gap}.\\

\bigskip

\noindent 
{\tt TateCohomology(G,n)} returns the Tate cohomology group 
$\widehat H^n(G,M_G)$ for $n\in\bZ$.\\

\bigskip

\begin{algorithmTC}[Tate cohomology]
{}~\\
\begin{verbatim}
LoadPackage("HAP");

CochainComplexMatrixGroup:= function(M,n)
    local IA,G,A,R,TR;
    IA:=RegularActionHomomorphism(TransposedMatrixGroup(M));
    G:=Image(IA);
    A:=InverseGeneralMapping(IA);
    R:=ResolutionFiniteGroup(G,n);
    TR:=HomToIntegralModule(R,A);
    return TR;
end;

ChainComplexMatrixGroup:= function(M,n)
    local IA,G,A,R,TR;
    IA:=RegularActionHomomorphism(TransposedMatrixGroup(M));
    G:=Image(IA);
    A:=InverseGeneralMapping(IA);
    R:=ResolutionFiniteGroup(G,n);
    TR:=TensorWithIntegralModule(R,A);
    return TR;
end;

CochainComplexRightCosets:= function(g,h,n)
    local gg,hg,og,gp,A,R,TR;
    gg:=GeneratorsOfGroup(g);
    hg:=SortedList(RightCosets(g,h));
    og:=Length(hg);
    gp:=List(gg,x->PermutationMat(Permutation(x,hg,OnRight),og));
    A:=GroupHomomorphismByImages(g,Group(gp),gg,gp);
    R:=ResolutionFiniteGroup(g,n);
    TR:=HomToIntegralModule(R,A);
    return TR;
end;

ChainComplexRightCosets:= function(g,h,n)
    local gg,hg,og,gp,A,R,TR;
    gg:=GeneratorsOfGroup(g);
    hg:=SortedList(RightCosets(g,h));
    og:=Length(hg);
    gp:=List(gg,x->PermutationMat(Permutation(x,hg,OnRight),og));
    A:=GroupHomomorphismByImages(g,Group(gp),gg,gp);
    R:=ResolutionFiniteGroup(g,n);
    TR:=TensorWithIntegralModule(R,A);
    return TR;
end;

TateCohomology:= function(g,n)
    local m,s,r,TR;
    if n=0 then
        m:=Sum(g);
        s:=SmithNormalFormIntegerMat(m);
        r:=Rank(s);
        return List([1..r],x->s[x][x]);
    elif n>0 then
        TR:=CochainComplexMatrixGroup(g,n+1);
        return Cohomology(TR,n);
    else
        TR:=ChainComplexMatrixGroup(g,-n);
        return Homology(TR,-n-1);
    fi;
end;
\end{verbatim}
\end{algorithmTC}

\bigskip

\begin{example}[Tate cohomology $\widehat H^n(G,M_G)$ 
for the group $C_2^3$ of the GAP ID $(3,3,3,3)$]
Let $G\simeq C_2^3$ be the group of the GAP ID $(3,3,3,3)$. 
We may obtain the Tate cohomologies $\widehat H^n(G,M_G)$ 
for $-7\leq n\leq 7$ as follows:\\

\renewcommand{\arraystretch}{1.2}

\begin{tabular}{l|cccccccc} 
$n$ & $-7$ & $-6$ & $-5$ & $-4$ & $-3$ & $-2$ & $-1$ & $0$ \\\hline
$\widehat H^n(C_2^3,M_{C_2^3})$ 
& $(\bZ/2\bZ)^{\oplus 13}$& $(\bZ/2\bZ)^{\oplus 9}$
& $(\bZ/2\bZ)^{\oplus 7}$ & $(\bZ/2\bZ)^{\oplus 4}$
& $(\bZ/2\bZ)^{\oplus 3}$ & $\bZ/2\bZ$ & $\bZ/2\bZ$
& $0$ \\
\end{tabular}

\begin{tabular}{l|ccccccc} 
$n$ & $1$ & $2$ & $3$ & $4$ & $5$ & $6$ & $7$\\\hline
$\widehat H^n(C_2^3,M_{C_2^3})$ & 
$(\bZ/2\bZ)^{\oplus 2}$ & $(\bZ/2\bZ)^{\oplus 3}$
& $(\bZ/2\bZ)^{\oplus 6}$ & $(\bZ/2\bZ)^{\oplus 8}$ 
& $(\bZ/2\bZ)^{\oplus 12}$ & $(\bZ/2\bZ)^{\oplus 15}$ 
& $(\bZ/2\bZ)^{\oplus 20}$\\
\end{tabular}\\

\renewcommand{\arraystretch}{1}

\bigskip

\begin{verbatim}
gap> Read("Hn.gap");

gap> G:=MatGroupZClass(3,3,3,3);;
gap> List([-7..7],i->TateCohomology(G,i));
[ [ 2, 2, 2, 2, 2, 2, 2, 2, 2, 2, 2, 2, 2 ], [ 2, 2, 2, 2, 2, 2, 2, 2, 2 ], 
  [ 2, 2, 2, 2, 2, 2, 2 ], [ 2, 2, 2, 2 ], [ 2, 2, 2 ], [ 2 ], [ 2 ], [  ], 
  [ 2, 2 ], [ 2, 2, 2 ], [ 2, 2, 2, 2, 2, 2 ], [ 2, 2, 2, 2, 2, 2, 2, 2 ], 
  [ 2, 2, 2, 2, 2, 2, 2, 2, 2, 2, 2, 2 ], 
  [ 2, 2, 2, 2, 2, 2, 2, 2, 2, 2, 2, 2, 2, 2, 2 ], 
  [ 2, 2, 2, 2, 2, 2, 2, 2, 2, 2, 2, 2, 2, 2, 2, 2, 2, 2, 2, 2 ] ]
gap> List(last,Length);
[ 13, 9, 7, 4, 3, 1, 1, 0, 2, 3, 6, 8, 12, 15, 20 ]
\end{verbatim}
\end{example}

\bigskip

\begin{example}[{The group ${\rm Indmf}(6,10,1)$} is not a subgroup of the 
$17$ irreducible maximal finite groups but is indecomposable]\label{ex6101}
Let $G={\rm Indmf}(6,10,1)$ be the indecomposable maximal finite group 
of the CARAT ID $(6,5517,4)$. 
We will confirm that $G$ is not a subgroup of all the 
$17$ irreducible maximal finite groups ${\rm Imf}(6,i,j)$ of dimension $6$ 
but is indecomposable by using the Tate cohomology $\widehat H^{-1}(G)$, $\widehat H^1(G)$ 
and $\widehat H^2(G)$ where $\widehat H^i(G)=\widehat H^i(G,M_G)$ 
and $M_G$ is the corresponding $G$-lattice as in Definition \ref{defMG}. 

Let ${\rm Sy}_2(G)$ be a $2$-Sylow subgroup of $G$. 
We have the normalizer $N_{\GL(6,\bZ)}({\rm Sy}_2(G))$ of ${\rm Sy}_2(G)$ in $\GL(6,\bZ)$ 
is ${\rm Sy}_2(G)$. 
Suppose that there exists a group $G^\prime$ such that 
$G<G^\prime$ with even index. 
Then there exists a $2$-Sylow subgroup ${\rm Sy}_2(G^\prime)$ of $G^\prime$ 
such that ${\rm Sy}_2(G)<{\rm Sy}_2(G^\prime)$. 
But this is impossible because $N_{\GL(6,\bZ)}({\rm Sy}_2(G))={\rm Sy}_2(G)$. 
Hence such a group $G<G^\prime$ with even index does not exist. 

Only two groups $G^\prime={\rm Imf}(6,3,1)$ and $G^{\prime\prime}={\rm Imf}(6,3,2)$ 
out of the $17$ groups ${\rm Imf}(6,i,j)$ may have 
a subgroup $G$ with odd index.  
However, these two does not occur because 
$\widehat H^1({\rm Sy}_2(G))=\bZ/2\bZ\oplus\bZ/2\bZ$, 
$\widehat H^1({\rm Sy}_2(G^\prime))=\bZ/2\bZ$ and 
$\widehat H^1({\rm Sy}_2(G^{\prime\prime}))=\bZ/2\bZ$. 
This implies that $G$ is not a subgroup of all the 
$17$ irreducible maximal finite groups ${\rm Imf}(6,i,j)$ of dimension $6$. 

Next, we will show that $G$ is indecomposable. 
Over the field $\bC$ of complex numbers, $G$ splits into 
$2$ irreducible direct summands of degree $3$. 
Indeed, we also see that the $6$ groups of the CARAT IDs 
$(6,5517,1)$, $(6,5517,3)$, $(6,5517,5)$, $(6,5517,6)$, $(6,5517,8)$ and $(6,5517,9)$ 
are in the same $\bQ$-class of $G$ but splits into $2$ irreducible 
direct summands of degree $3$ in $\GL(3,\bZ)$. 
This implies that $G$ splits into them in $\GL(3,\bQ)$. 

We will confirm that $G$ is indecomposable in $\GL(6,\bZ)$. 
Because of the order of $G$ is $2304=48^2$, 
we see that $G$ should be a direct product of $2$ groups 
among $3$ groups $G_1={\rm Imf}(3,1,1)$, $G_2={\rm Imf}(3,1,2)$ and 
$G_3={\rm Imf}(3,1,3)$ which are isomorphic to the group $C_2\times S_4$ of order $48$. 
By computing the Tate cohomologies $\widehat H^i$, we have 
\begin{align*}
&\widehat H^{-1}(G)=\bZ/2\bZ,& 
&\widehat H^{-1}(G_1\times G_1)=(\bZ/2\bZ)^{\oplus 2},& 
&\widehat H^{-1}(G_2\times G_2)=(\bZ/2\bZ)^{\oplus 2},& 
&\widehat H^{-1}(G_3\times G_3)=0,\\
& & 
&\widehat H^{-1}(G_1\times G_2)=(\bZ/2\bZ)^{\oplus 2},& 
&\widehat H^{-1}(G_1\times G_3)=\bZ/2\bZ,& 
&\widehat H^{-1}(G_2\times G_3)=\bZ/2\bZ,\\
%gap> List([G,G11,G12,G13,G22,G23,G33],x->TateCohomology(x,-1));
%[ [ 2 ], [ 2, 2 ], [ 2, 2 ], [ 2 ], [ 2, 2 ], [ 2 ], [  ] ]
%%%%%%%%%%%%%%%%%%%%%%%%%%%%%%%%%%%%%%%%%%%%%%%%%%%%%%%%%%%%%%%%%%%%%%%%%%%%%%%%%%%%%%
&\widehat H^{1}(G)=\bZ/2\bZ,& 
&\widehat H^{1}(G_1\times G_1)=(\bZ/2\bZ)^{\oplus 2},& 
&\widehat H^{1}(G_2\times G_2)=0,& 
&\widehat H^{1}(G_3\times G_3)=(\bZ/2\bZ)^{\oplus 2},\\
& &
&\widehat H^{1}(G_1\times G_2)=\bZ/2\bZ,& 
&\widehat H^{1}(G_1\times G_3)=(\bZ/2\bZ)^{\oplus 2},& 
&\widehat H^{1}(G_2\times G_3)=\bZ/2\bZ,\\
%gap> List([G,G11,G12,G13,G22,G23,G33],x->TateCohomology(x,1));
%[ [ 2 ], [ 2, 2 ], [ 2 ], [ 2, 2 ], [  ], [ 2 ], [ 2, 2 ] ]
&\widehat H^{2}(G)=(\bZ/2\bZ)^{\oplus 4},& 
&\widehat H^{2}(G_1\times G_1)=(\bZ/2\bZ)^{\oplus 8},& 
&\widehat H^{2}(G_2\times G_2)=(\bZ/2\bZ)^{\oplus 2},& 
&\widehat H^{2}(G_3\times G_3)=(\bZ/2\bZ)^{\oplus 8},\\
& &
&\widehat H^{2}(G_1\times G_2)=(\bZ/2\bZ)^{\oplus 5},& 
&\widehat H^{2}(G_1\times G_3)=(\bZ/2\bZ)^{\oplus 8},& 
&\widehat H^{2}(G_2\times G_3)=(\bZ/2\bZ)^{\oplus 5}.
\end{align*}
Hence $G$ is indecomposable in $\GL(6,\bZ)$.

\bigskip

\begin{verbatim}
gap> Read("caratnumber.gap");
gap> Read("FlabbyResolution.gap");
gap> Read("Hn.gap");
gap> Read("KS.gap");

gap> G:=CaratMatGroupZClass(6,5517,4);; # G=Indmf(6,10,1)
gap> Order(G); # #G=2304=48^2=2^8*3^2
2304
gap> imf6:=AllImfMatrixGroups(6);
[ ImfMatrixGroup(6,1,1), ImfMatrixGroup(6,1,2), ImfMatrixGroup(6,1,3), 
  ImfMatrixGroup(6,2,1), ImfMatrixGroup(6,3,1), ImfMatrixGroup(6,3,2), 
  ImfMatrixGroup(6,4,1), ImfMatrixGroup(6,4,2), ImfMatrixGroup(6,5,1), 
  ImfMatrixGroup(6,6,1), ImfMatrixGroup(6,6,2), ImfMatrixGroup(6,6,3), 
  ImfMatrixGroup(6,7,1), ImfMatrixGroup(6,7,2), ImfMatrixGroup(6,8,1), 
  ImfMatrixGroup(6,9,1), ImfMatrixGroup(6,9,2) ]
gap> List(imf6,x->Order(x)/2304);
[ 20, 20, 20, 9/2, 45, 45, 35/8, 35/8, 7/24, 5/48, 5/48, 5/48, 2, 2, 10, 1/8, 1/8 ]
gap> G2:=SylowSubgroup(G,2); # G2 is a 2-sylow group of G
<group of 6x6 matrices of size 256 in characteristic 0>
gap> Normalizer(GL(6,Integers),G2);
<matrix group with 14 generators>
gap> Order(last); # (Normalizer of G2)=G2
256
gap> H1(G2);
[ 1, 1, 1, 1, 2, 2 ]
gap> H1(SylowSubgroup(ImfMatrixGroup(6,3,1),2));
[ 1, 1, 1, 1, 1, 2 ]
gap> H1(SylowSubgroup(ImfMatrixGroup(6,3,2),2));
[ 1, 1, 1, 1, 1, 2 ]

gap> Set(AllDirectProductIndmfMatrixGroups([3,3]),CaratZClass);
[ [ 6, 5517, 1 ], [ 6, 5517, 3 ], [ 6, 5517, 5 ], 
  [ 6, 5517, 6 ], [ 6, 5517, 8 ], [ 6, 5517, 9 ] ]

gap> ig:=IrreducibleRepresentations(G);;
gap> List(ig,x->Sum(G,y->Trace(y)*Trace(Image(x,y))));   
[ 0, 0, 0, 0, 0, 0, 0, 0, 0, 0, 0, 0, 0, 0, 0, 0, 0, 0, 0, 0, 
  0, 0, 0, 0, 0, 0, 0, 0, 0, 0, 0, 0, 0, 0, 0, 0, 0, 0, 0, 0, 
  0, 0, 0, 2304, 0, 0, 0, 0, 0, 0, 0, 0, 0, 0, 0, 0, 0, 0, 0, 0, 
  0, 0, 0, 0, 2304, 0, 0, 0, 0, 0, 0, 0, 0, 0, 0, 0, 0, 0, 0, 0, 
  0, 0, 0, 0, 0, 0, 0, 0, 0, 0, 0, 0, 0, 0, 0, 0, 0, 0, 0, 0 ]
gap> Filtered([1..Length(ig)],x->last[x]>0);
[ 44, 65 ]
gap> ig[44];
CompositionMapping( [ (2,6)(5,8)(7,11), (5,8), (1,12)(2,6)(4,5,9,8)(7,11), (6,7), 
  (1,2,6)(3,4,8)(5,10,9)(7,12,11), (1,2,6)(7,12,11), (4,9)(5,8), (1,12)(6,7), (2,11)(6,7), 
  (3,10)(4,9) ] -> 
[ [ [ -1, 0, 0 ], [ 0, 1, 0 ], [ 0, 0, 1 ] ], [ [ -1, 0, 0 ], [ 0, 1, 0 ], [ 0, 0, 1 ] ], 
  [ [ 0, 1, 0 ], [ -1, 0, 0 ], [ 0, 0, 1 ] ], [ [ 1, 0, 0 ], [ 0, 1, 0 ], [ 0, 0, 1 ] ], 
  [ [ 0, 0, 1 ], [ 1, 0, 0 ], [ 0, 1, 0 ] ], [ [ 1, 0, 0 ], [ 0, 1, 0 ], [ 0, 0, 1 ] ], 
  [ [ -1, 0, 0 ], [ 0, -1, 0 ], [ 0, 0, 1 ] ], [ [ 1, 0, 0 ], [ 0, 1, 0 ], [ 0, 0, 1 ] ], 
  [ [ 1, 0, 0 ], [ 0, 1, 0 ], [ 0, 0, 1 ] ], [ [ 1, 0, 0 ], [ 0, -1, 0 ], [ 0, 0, -1 ] ] ], 
  <action isomorphism> )
gap> ig[65];
CompositionMapping( [ (2,6)(5,8)(7,11), (5,8), (1,12)(2,6)(4,5,9,8)(7,11), (6,7), 
  (1,2,6)(3,4,8)(5,10,9)(7,12,11), (1,2,6)(7,12,11), (4,9)(5,8), (1,12)(6,7), (2,11)(6,7), 
  (3,10)(4,9) ] -> 
[ [ [ 1, 0, 0 ], [ 0, 0, 1 ], [ 0, 1, 0 ] ], [ [ 1, 0, 0 ], [ 0, 1, 0 ], [ 0, 0, 1 ] ], 
  [ [ -1, 0, 0 ], [ 0, 0, 1 ], [ 0, 1, 0 ] ], [ [ 1, 0, 0 ], [ 0, -1, 0 ], [ 0, 0, 1 ] ], 
  [ [ 0, 0, 1 ], [ 1, 0, 0 ], [ 0, 1, 0 ] ], [ [ 0, 0, 1 ], [ 1, 0, 0 ], [ 0, 1, 0 ] ], 
  [ [ 1, 0, 0 ], [ 0, 1, 0 ], [ 0, 0, 1 ] ], [ [ -1, 0, 0 ], [ 0, -1, 0 ], [ 0, 0, 1 ] ], 
  [ [ 1, 0, 0 ], [ 0, -1, 0 ], [ 0, 0, -1 ] ], [ [ 1, 0, 0 ], [ 0, 1, 0 ], [ 0, 0, 1 ] ] ], 
  <action isomorphism> )
gap> Imf3:=AllImfMatrixGroups(3);
[ ImfMatrixGroup(3,1,1), ImfMatrixGroup(3,1,2), ImfMatrixGroup(3,1,3) ]
gap> List(Imf3,Size);
[ 48, 48, 48 ]
gap> G11:=DirectProductMatrixGroup([Imf3[1],Imf3[1]]);;
gap> G12:=DirectProductMatrixGroup([Imf3[1],Imf3[2]]);;
gap> G13:=DirectProductMatrixGroup([Imf3[1],Imf3[3]]);;
gap> G22:=DirectProductMatrixGroup([Imf3[2],Imf3[2]]);;
gap> G23:=DirectProductMatrixGroup([Imf3[2],Imf3[3]]);;
gap> G33:=DirectProductMatrixGroup([Imf3[3],Imf3[3]]);;
gap> List([G11,G22,G33,G12,G13,G23],CaratZClass);
[ [ 6, 5517, 1 ], [ 6, 5517, 9 ], [ 6, 5517, 6 ], 
  [ 6, 5517, 5 ], [ 6, 5517, 3 ], [ 6, 5517, 8 ] ]
gap> List([G,G11,G22,G33,G12,G13,G23],x->TateCohomology(x,-1));
[ [ 2 ], [ 2, 2 ], [ 2, 2 ], [  ], [ 2, 2 ], [ 2 ], [ 2 ] ]
gap> List([G,G11,G22,G33,G12,G13,G23],x->TateCohomology(x,1));
[ [ 2 ], [ 2, 2 ], [  ], [ 2, 2 ], [ 2 ], [ 2, 2 ], [ 2 ] ]
gap> List([G,G11,G22,G33,G12,G13,G23],x->TateCohomology(x,2));
[ [ 2, 2, 2, 2 ], [ 2, 2, 2, 2, 2, 2, 2, 2 ], [ 2, 2 ], [ 2, 2, 2, 2, 2, 2, 2, 2 ], 
  [ 2, 2, 2, 2, 2 ], [ 2, 2, 2, 2, 2, 2, 2, 2 ], [ 2, 2, 2, 2, 2 ] ]
gap> List(last,Length);
[ 4, 8, 2, 8, 5, 8, 5 ]
\end{verbatim}
\end{example}

\bigskip

%%%%%%%%%%%%%%%%%%%%%%%%%%%%%%%%%%%%%%%%%%%%%%%%%%%%%%%%%%%%%%%%%%%%%%%%%%%%%%%%%%%%%%
\section{Proof of Theorem \ref{th1M}}\label{seProof1}
Let $G$ be a finite subgroup of $\GL(4,\bZ)$ and $M=M_G$ be 
the corresponding $G$-lattice of rank $4$ as in Definition \ref{defMG}. \\

%%%%%%%%%%%%%%%%%%%%%%%%%%%%%%%%%%

{\bf Step 1.} The case where $M_G$ is decomposable. \\

We assume that $M_G\simeq M_1\oplus M_2$ is decomposable with 
$\rank M_1\geq \rank M_2\geq 1$. 
Thus $\rank M_1\leq 3$ and $\rank M_2\leq 2$. 
By Theorem \ref{thVo}, we have $[M_2]^{fl}=0$. 
It follows from Lemma \ref{lemp1} that $[M_G]^{fl}=[M_1]^{fl}$. 
By Theorem \ref{thKu} and Lemma \ref{lemp1}, 
$[M_1]^{fl}\neq 0\iff [M_1]^{fl}\ {\rm is\ not\ invertible}\iff 
G/N_1$ is conjugate to one of the $15$ groups as in Table $1$ 
where $N_1=\{\sigma\in G\mid \sigma(v)=v\ {\rm for\ any}\ v\in M_1\}$. 
There exist exactly $64$ such $G$-lattices $M_G\simeq M_1\oplus M_2$ 
with $[M_G]^{fl}\neq 0$ 
(equivalently $[M_G]^{fl}$ is not invertible in this case) 
whose GAP IDs $\mathcal{N}_{31}$ are computed in Example \ref{exN} 
(see also Table $3$). \\

%%%%%%%%%%%%%%%%%%%%%%%%%%%%%%%%%%%%%%%%%%%%%%%%%%%%%%%%%%%%%%%%%%
{\bf Step 2.} $[M_G]^{fl}=0$ for $G={\rm Imf}(4,2,1)$, 
${\rm Imf}(4,3,1)$ and ${\rm Imf}(4,4,1)$. \\

We assume that $M_G$ is indecomposable. 
There exist $295$ indecomposable $G$-lattices $M_G$ 
of rank $4$ (see Example \ref{exKS1}). 
Let ${\rm Imf}(4,i,j)\leq \GL(4,\bZ)$ be the 
$j$-th $\bZ$-class of the $i$-th $\bQ$-class of the 
irreducible maximal finite group of dimension $4$
which corresponds to 
the GAP command {\tt ImfMatrixGroup(4,i,j)}. 
There exist exactly $6$ such maximal groups 
${\rm Imf}(4,1,1)$, ${\rm Imf}(4,2,1)$, ${\rm Imf}(4,3,1)$, 
${\rm Imf}(4,3,2)$, ${\rm Imf}(4,4,1)$ and ${\rm Imf}(4,5,1)$ 
of order $1152$, $288$, $240$, $240$, $384$ and $144$ respectively. 
They also form the maximal indecomposable finite groups of dimension $4$ 
(see Subsection \ref{ssIndmf}). 

By using {\tt flfl} in Algorithm \ref{Alg3}, 
we get $[M_G]^{fl}=0$ for $G={\rm Imf}(4,4,1)$ (see Example \ref{ex3}). 

We may also verify that $[M_G]^{fl}=0$ for $G={\rm Imf}(4,2,1)$ and 
${\rm Imf}(4,3,1)$ by using 
{\tt StablyPermutationFCheckP} in Algorithm \ref{Alg6}: Method II 
and 
{\tt StablyPermutationFCheck} in Algorithm \ref{Alg5}: Method I (1) 
respectively 
(see Example \ref{ex421} and Example \ref{ex5m11}). 
Hence, by Lemma \ref{lemp3}, 
$[M_H]^{fl}=0$ for any subgroup $H$ of 
${\rm Imf}(4,2,1)$, ${\rm Imf}(4,3,1)$ and ${\rm Imf}(4,4,1)$.\\

%%%%%%%%%%%%%%%%%%%%%%%%%%%%%%%%%%%%%%%%%%%%%%%%%%%%%%%%%%%%%
{\bf Step 3.} Determination of all $G$-lattices $M_G$ 
whose flabby class $[M_G]^{fl}$ is not invertible. \\

Assume that $M_G$ is indecomposable. 
By Step 2, $[M_H]^{fl}$ is invertible for 
any subgroups $H$ of ${\rm Imf}(4,2,1)$, 
${\rm Imf}(4,3,1)$ and ${\rm Imf}(4,4,1)$. 
We will check whether 
$[M_H]^{fl}$ is invertible for subgroups 
$H$ of ${\rm Imf}(4,1,1)$, 
${\rm Imf}(4,3,2)$ and ${\rm Imf}(4,5,1)$. 

There exist $182$, $36$ and $50$ indecomposable $G$-lattices 
$M_G$ where $G$ is (a conjugacy class of) a 
subgroup of ${\rm Imf}(4,1,1)$ 
${\rm Imf}(4,3,2)$ and ${\rm Imf}(4,5,1)$ respectively. 
Some of them are also subgroups of ${\rm Imf}(4,2,1)$, 
${\rm Imf}(4,3,1)$ or ${\rm Imf}(4,4,1)$ as in Step 2. 
Then we should treat 
$166$, $8$ and $27$ indecomposable $G$-lattices 
$M_G$ where $G$ is a subgroup of ${\rm Imf}(4,1,1)$ 
${\rm Imf}(4,3,2)$ and ${\rm Imf}(4,5,1)$ respectively 
but not a subgroup of ${\rm Imf}(4,2,1)$, 
${\rm Imf}(4,3,1)$ and ${\rm Imf}(4,4,1)$. 
We denote the set of the corresponding GAP IDs of $M_G$ 
in each case by $\widetilde{\mathcal{U}}_{411}$, 
$\widetilde{\mathcal{U}}_{432}$ and 
$\widetilde{\mathcal{U}}_{451}$, i.e. 
$\# \widetilde{\mathcal{U}}_{411}=166$, 
$\# \widetilde{\mathcal{U}}_{432}=8$, 
$\# \widetilde{\mathcal{U}}_{451}=27$, 
(see Example \ref{exp2} below). 

By using {\tt IsInvertibleF} in Algorithm \ref{Alg2}, 
we may determine all of the $G$-lattices $M_G$ whose flabby 
class $[M_G]^{fl}$ is not invertible. 
There exist $137$, $0$, $27$ $G$-lattices $M_G$ with $[M_G]^{fl}$ 
not invertible 
where the GAP ID of $G$ is $l\in\widetilde{\mathcal{U}}_{411}$, 
$\widetilde{\mathcal{U}}_{432}$, $\widetilde{\mathcal{U}}_{451}$ 
respectively. 
Note that 
$[M_G]^{fl}$ is invertible for $G={\rm Imf}(4,3,2)$.  
Because the $12$ $G$-lattices of them are overlapped, 
there exist $152$ $G$-lattices $M_G$ with $[M_G]^{fl}$ 
not invertible (see Example \ref{exp2} below and Table $4$). \\

%%%%%%%%%%%%%%%%%%%%%%%%%%%%%%%%%%%%%%%%%%%%%%%%%%%%%%%%%%%%%%%
{\bf Step 4.} Determination whether $[M_G]^{fl}=0$.\\

We should determine whether $[M_G]^{fl}=0$ 
for the remaining cases $\mathcal{U}_{411}$ and $\mathcal{U}_{432}$ of 
$\widetilde{\mathcal{U}}_{411}$ and $\widetilde{\mathcal{U}}_{432}$ 
as in Step $3$. 
We see that $\# \mathcal{U}_{411}=29$, $\# \mathcal{U}_{432}=8$ and 
$\mathcal{U}_{411}$ and $\mathcal{U}_{432}$ are given by 
\begin{align*}
\mathcal{U}_{411}=
\{& \text{(4,5,1,7),(4,5,1,11),(4,5,1,13),(4,6,1,10),(4,6,1,12),
(4,6,2,11),(4,12,1,6),(4,12,1,7),}\\
& \text{(4,12,3,10),(4,12,3,12),(4,12,3,13),(4,12,4,13),(4,13,1,6),
(4,13,2,6),(4,13,3,6),(4,13,4,6),}\\
& \text{(4,13,6,6),(4,13,7,12),(4,24,1,4),(4,24,1,6),(4,24,3,4),
(4,24,3,6),(4,24,4,6),(4,25,1,5),}\\
& \text{(4,25,3,5),(4,25,4,5),(4,25,7,5),(4,25,8,5),(4,33,2,1)}\},\\
\mathcal{U}_{432}=
\{& \text{(4,31,1,3),(4,31,1,4),(4,31,2,2),(4,31,3,2),(4,31,4,2),
(4,31,5,2),(4,31,6,2),(4,31,7,2)}\}.
\end{align*}

We will show that there exist exactly $7$ groups $G$ 
of the GAP IDs $(4,33,2,1)$ $\in \mathcal{U}_{411}$ and 
$(4,31,1,3)$, $(4,31,1,4)$, $(4,31,2,2)$, $(4,31,4,2)$, 
$(4,31,5,2)$ and $(4,31,7,2)$ $\in \mathcal{U}_{432}$
such that $[M_G]^{fl}\neq 0$.

First we treat the case of $\mathcal{U}_{411}$. 
Each group $G$ of the GAP ID 
$l\in\mathcal{U}_{411}$ is contained in 
at least one of the groups of 
the 21st, 27th, 28th and 29th GAP IDs, i.e. 
$(4,24,3,4)$, $(4,25,7,5)$, $(4,25,8,5)$ and $(4,33,2,1)$, 
in $\mathcal{U}_{411}$ (see Example \ref{exp3} below). 
By using {\tt flfl} in Algorithm \ref{Alg3}, 
(resp. {\tt StablyPermutationFCheckP} in 
Algorithm \ref{Alg6}: Method II), 
we may confirm that $[M_G]^{fl}=0$ for the 21st and 27th 
(resp. 28th) groups $G$ of 
the GAP ID $(4,24,3,4)$ and $(4,25,7,5)$ 
(resp. $(4,25,8,5)$) (see Example \ref{exp3} below). 
None of the groups $G$ of the GAP ID $l\in\mathcal{U}_{411}$ 
is contained in the 29th group $G$ other than itself. 
By Lemma \ref{lemp3},
$[M_G]^{fl}=0$ for all groups $G$ of the GAP ID 
$l\in\mathcal{U}_{411}$ except for the 29th group $G$ of the 
GAP ID $(4,33,2,1)$.
By using {\tt PossibilityOfStablyPermutationF} 
in Algorithm \ref{Alg5}, 
we may see that $[M_G]^{fl}\neq 0$ 
for the 29th group $G$ (see Example \ref{exp3} below). 

Next we will consider the case of $\mathcal{U}_{432}$. 
By using {\tt PossibilityOfStablyPermutationF} 
in Algorithm \ref{Alg5}, 
we see that $[M_G]^{fl}\neq 0$ 
for the 1st and the 2nd groups $G$ 
of the GAP IDs $(4,31,1,3)$ and $(4,31,1,4)$ 
(see Example \ref{exal4} and Example \ref{exp4} below). 
Because the 3rd, 5th, 6th and 8th groups $G$ 
of the GAP IDs $(4,31,2,2), (4,31,4,2), (4,31,5,2)$ and $(4,31,7,2)$ 
contain the 1st or 2nd ones, 
it follows from Lemma \ref{lemp3} that $[M_G]^{fl}\neq 0$. 
We may confirm that $[M_G]^{fl}=0$ for the 4th and 7th 
groups $G$ of the GAP IDs $(4,31,3,2)$ and $(4,31,6,2)$ 
by using {\tt StablyPermutationFCheckP} in 
Algorithm \ref{Alg6}: Method II 
(see Example \ref{exp4} below). \\

%%%%%%%%%%%%%%%%%%%%%%%%%%%%%%%%%%

{\bf Step 5.} $[M_G]^{fl}=0$ if and only if $[M_G]^{fl}$ is 
of finite order in $\cC(G)/\cS(G)$. \\

We should show that 
if $[M_G]^{fl}\neq 0$, then $[M_G]^{fl}$ is of infinite order in $\cC(G)/\cS(G)$. 
Note that $[M_G]^{fl}$ is of finite order if and only if 
there exist permutation $G$-lattices $P$, $P^\prime$ 
such that $[(M_G)^{\oplus r}]^{fl}\oplus P\simeq P^\prime$ 
for some $r\geq 1$. 
Thus if $[M_G]^{fl}$ is not invertible, then $[M_G]^{fl}$ is of 
infinite order. 
Hence it is enough to verify that 
$[(M_G)^{\oplus r}]^{fl}\neq 0$ for the $7$ cases as in Table $2$, i.e. 
the $7$ cases where $[M_G]^{fl}$ is not zero but invertible. 
By Step 4, we should show that 
$[(M_G)^{\oplus r}]^{fl}\neq 0$ for only the $3$ groups 
$G$ of the GAP IDs $(4,31,1,3)$, $(4,31,1,4)$ and $(4,33,2,1)$ 
because the remaining $4$ groups contain the $1$st or the $2$nd one. 

By using {\tt PossibilityOfStablyPermutationF} as in Algorithm \ref{Alg4}, 
for $2$ groups $G\simeq F_{20}$ of the GAP IDs $(4,31,1,3)$ and $(4,31,1,4)$, 
we get the flabby class $[F]=[M_G]^{fl}$ with rank $16$ and see that 
the only possibility of the isomorphism (\ref{eqisom}) is 
\begin{align}
\bZ[G]\oplus\bZ[G/C_2]\oplus\bZ[G/C_5]\simeq \bZ[G/D_5]\oplus F^{\oplus 2}\label{eqF201}
\end{align}
(the rank of the both sides is $20+10+4=2+2\times 16=34$). 
By using the algorithm {\tt TateCohomology} as in Section \ref{seTate}, 
however, for the group $G$ of the GAP ID $(4,31,1,3)$, 
we see that the $2$nd (Tate) cohomologies of the both sides are 
$\widehat H^2(G_1,M_{G_1})=\bZ/2\bZ$ and $\widehat H^2(G_2,M_{G_2})=\bZ/10\bZ$ 
where $G_1$ (resp. $G_2$) is the matrix representation group 
of the action of $G$ on the left (resp. right) hand side of (\ref{eqF201}). 
Thus the isomorphism (\ref{eqF201}) is impossible. 
Hence $[F^{\oplus r}]\neq 0$ for any $r\geq 1$. 
Similarly, for the group $G$ of the GAP ID $(4,31,1,4)$, 
we see that $\widehat H^2(G_1,M_{G_1})=\bZ/5\bZ\oplus \bZ/10\bZ$ and 
$\widehat H^2(G_2,M_{G_2})=\bZ/10\bZ$ (see Example \ref{exp5} below). 

For the group $G\simeq C_3\rtimes C_8$ of the GAP ID $(4,33,2,1)$, 
by using {\tt PossibilityOfStablyPermutationF} as in Algorithm \ref{Alg4}, 
the only possibility of (\ref{eqisom}) is 
\begin{align}
\bZ[G]\oplus\bZ[G/C_2]\oplus\bZ[G/C_3]\simeq\bZ[G/C_6]\oplus F^{\oplus 2}\label{eqG24}
\end{align}
where $F$ is of rank $20$ with $[F]=[M_G]^{fl}$ 
(the rank of the both sides is $24+12+8=4+2\times 20=44$). 
However, the Tate cohomologies of the both sides 
$\widehat H^n(G_1,M_{G_1})\simeq \widehat H^n(G_2,M_{G_2})$ coincide 
for $-7\leq n\leq 7$ (see Example \ref{exp5} below). 
Thus we need another observation. 

Assume that the isomorphism (\ref{eqG24}) holds. 
Let $G_1$ (resp. $G_2$) be the matrix representation group 
of the action of $G$ on the left (resp. right) hand side of (\ref{eqG24}). 
By using {\tt StablyPermutationFCheckGen} as in Algorithm \ref{Alg6}: Method II, 
we may obtain $G_1$ and $G_2$ of rank $44$ (see Example \ref{exp5} below). 
We choose the generators $a,b$ of $G$ such that 
$G=\langle a,b\mid a^3=b^8=1, b^{-1}ab=a^{-1}\rangle\simeq 
\langle a\rangle\rtimes \langle b\rangle$. 
We take the reduction $\overline{G_1}=\langle\overline{a_1},\overline{b_1}\rangle$, 
$\overline{G_2}=\langle\overline{a_2},\overline{b_2}\rangle\leq\GL(44,\bF_3)$ 
over the field $\bF_3$ of $3$ elements. 
Because the subgroup $\langle a\rangle\simeq C_3$ is normal in $G$, 
$v_i=\{ (\overline{a_i-I_{44}})^2\mid a_i\in G_i \}$ is $\overline{G_i}$-invariant $(i=1,2)$. 
Consider the action of $b$ on the $\overline{G_i}$-invariant space $v_i$ over the 
field $\bF_9$ of $9$ elements 
and compare the dimensions of each eigen spaces of the action matrix 
of $b$ on $v_i$ $(i=1,2)$ (we see dim$(v_1)=$ dim$(v_2)=12$). 
Then we get the (multi-)set of such dimensions 
$\{ 2, 2, 2, 2, 2, 0, 2, 0 \}$ and $\{ 2, 2, 1, 2, 1, 1, 2, 1 \}$ 
for $v_1$ and $v_2$ respectively (see Example \ref{exp5} below). 
This yields a contradiction. 
Hence the isomorphism (\ref{eqG24}) is impossible.\\

%%%%%%%%%%%%%%%%%%%%%%%%%%%%%%%%%%

{\bf Step 6.} $[M_G]^{fl}=-[J_{S_5/S_4}]^{fl}$ 
for the group $G\simeq S_5$ of the GAP ID $(4,32,5,2)$.\\

Let $G_1$, $G_2\simeq S_5\leq \GL(4,\bZ)$ be the groups of the GAP IDs 
$(4,31,4,2)$ and $(4,31,5,2)$. 
Note that $M_{G_1}\simeq J_{S_5/S_4}$. 
Let $M_G\simeq M_{G_1}\oplus M_{G_2}$ be the $S_5$-lattice of rank $8$. 
By Step 4, $[M_{G_1}]^{fl}$ and $[M_{G_2}]^{fl}$ is not zero but invertible, 
and we will show that $[M_G]^{fl}=[F]=0$. 

By using {\tt StablyPermutationFCheckGen} as in Algorithm \ref{Alg6}: Method II, 
it is possible that 
\begin{align}
\bZ[G/H_4]\oplus\bZ[G/H_{13}]\oplus F\simeq 
\bZ[G/H_8]\oplus\bZ[G/H_9]\oplus\bZ[G/H_{10}]\oplus\bZ[G/H_{11}]\label{eqS552}
\end{align}
where $H_4\simeq C_3$, $H_{13}\simeq D_5$, $H_8\simeq C_5$, 
$H_9\simeq C_6$, $H_{10}\simeq H_{11}\simeq S_3$ 
(the rank of the both sides is $40+12+32=24+20+20+20=84$). 
However, we could not establish the isomorphism (\ref{eqS552}). 

As in Example \ref{ex52} (Algorithm \ref{Alg5}: Method I (2)), 
after adding $\bZ$ to the both sides of (\ref{eqS552}), 
we may confirm that the isomorphism 
\begin{align*}
\bZ[G/C_3]\oplus\bZ[G/D_5]\oplus F\oplus\bZ\simeq 
\bZ[G/C_5]\oplus\bZ[G/C_6]\oplus\bZ[G/S_3^{(1)}]\oplus\bZ[G/S_3^{(2)}]\oplus\bZ
\end{align*}
holds (see Example \ref{exp6} below)\\

%%%%%%%%%%%%%%%%%%%%%%%%%%%%%%%%%%%%%%%%%%%%%
{\bf Step 7.} $[M_G]^{fl}=-[J_{F_{20}/C_4}]^{fl}$ 
for the group $G\simeq F_{20}$ of the GAP ID $(4,31,1,4)$.\\

Let $G_1^\prime$, $G_2^\prime\simeq F_{20}\leq \GL(4,\bZ)$ 
be the groups of the GAP IDs 
$(4,31,1,3)$ and $(4,31,1,4)$. 
Note that $M_{G_1^\prime}\simeq J_{F_{20}/C_4}$. 
Let $M_G\simeq M_{G_1^\prime}\oplus M_{G_2^\prime}$ be the $F_{20}$-lattice of rank $8$. 
By Step 4, $[M_{G_1^\prime}]^{fl}$ and $[M_{G_2^\prime}]^{fl}$ is 
not zero but invertible. 
We should show that $[M_G]^{fl}=[F]=0$. 
Because $G_1^\prime$ (resp. $G_2^\prime$) is a subgroup of 
$G_1$ (resp. $G_2$) where $G_1$ (resp. $G_2$) is the group $S_5$ 
as in Step 6, the assertion follows from Step 6 and Lemma \ref{lemp3}. 

We will give another (direct) proof below. 
By using {\tt StablyPermutationFCheckGen} as in Algorithm \ref{Alg6}: Method II, 
it is possible that 
\begin{align}
\bZ[G]\oplus\bZ[G/C_2]\oplus\bZ[G/C_5]\simeq \bZ[G/D_5]\oplus F\label{eqF2012}
\end{align}
(the rank of the both sides is $20+10+4=2+32=34$). 
And, after some efforts, we may confirm that the isomorphism (\ref{eqF2012}) 
actually holds (see Example \ref{exp7} below). 
\qed

\bigskip

\begin{example}[{Determination whether $[M_G]^{fl}$ is invertible}]
\label{exp2}
{}~\\
\begin{verbatim}
gap> Read("crystcat.gap");
gap> Read("caratnumber.gap");
gap> Read("FlabbyResolution.gap");
gap> Read("KS.gap");

gap> imf411sub:=Set(ConjugacyClassesSubgroups2(ImfMatrixGroup(4,1,1)),
> x->CrystCatZClass(Representative(x)));;
gap> imf421sub:=Set(ConjugacyClassesSubgroups2(ImfMatrixGroup(4,2,1)),
> x->CrystCatZClass(Representative(x)));;
gap> imf431sub:=Set(ConjugacyClassesSubgroups2(ImfMatrixGroup(4,3,1)),
> x->CrystCatZClass(Representative(x)));;
gap> imf432sub:=Set(ConjugacyClassesSubgroups2(ImfMatrixGroup(4,3,2)),
> x->CrystCatZClass(Representative(x)));;
gap> imf441sub:=Set(ConjugacyClassesSubgroups2(ImfMatrixGroup(4,4,1)),
> x->CrystCatZClass(Representative(x)));;
gap> imf451sub:=Set(ConjugacyClassesSubgroups2(ImfMatrixGroup(4,5,1)),
> x->CrystCatZClass(Representative(x)));;

gap> ind4:=LatticeDecompositions(4)[NrPartitions(4)];;
gap> imf411ind:=Intersection(imf411sub,ind4);;
gap> imf421ind:=Intersection(imf421sub,ind4);;
gap> imf431ind:=Intersection(imf431sub,ind4);;
gap> imf432ind:=Intersection(imf432sub,ind4);;
gap> imf441ind:=Intersection(imf441sub,ind4);;
gap> imf451ind:=Intersection(imf451sub,ind4);;
gap> List([imf411ind,imf421ind,imf431ind,imf432ind,imf441ind,imf451ind],Length)
[ 182, 50, 36, 36, 45, 50 ]

gap> U411t:=Difference(imf411ind,Union([imf421ind,imf431ind,imf441ind]));;
gap> U432t:=Difference(imf432ind,Union([imf421ind,imf431ind,imf441ind]));;
gap> U451t:=Difference(imf451ind,Union([imf421ind,imf431ind,imf441ind]));;
gap> List([U411t,U432t,U451t],Length);
[ 166, 8, 27 ]

gap> imf411n:=Filtered(U411t,x->IsInvertibleF(MatGroupZClass(x[1],x[2],x[3],x[4]))=false);;
gap> imf432n:=Filtered(U432t,x->IsInvertibleF(MatGroupZClass(x[1],x[2],x[3],x[4]))=false);;
gap> imf451n:=Filtered(U451t,x->IsInvertibleF(MatGroupZClass(x[1],x[2],x[3],x[4]))=false);;
gap> List([imf411n,imf432n,imf451n],Length);
[ 137, 0, 27 ]

gap> N4:=Union(imf411n,imf451n);;
gap> Length(N4);
152
gap> Intersection(imf411n,imf451n);
[ [ 4, 22, 1, 1 ], [ 4, 22, 2, 1 ], [ 4, 22, 3, 1 ], [ 4, 22, 4, 1 ], 
  [ 4, 22, 5, 1 ], [ 4, 22, 5, 2 ], [ 4, 22, 6, 1 ], [ 4, 22, 7, 1 ], 
  [ 4, 22, 8, 1 ], [ 4, 22, 9, 1 ], [ 4, 22, 10, 1 ], [ 4, 22, 11, 1 ] ]
gap> Length(last);
12

gap> U411:=Difference(U411t,imf411n);
[ [ 4, 5, 1, 7 ], [ 4, 5, 1, 11 ], [ 4, 5, 1, 13 ], [ 4, 6, 1, 10 ], [ 4, 6, 1, 12 ], 
  [ 4, 6, 2, 11 ], [ 4, 12, 1, 6 ], [ 4, 12, 1, 7 ], [ 4, 12, 3, 10 ], [ 4, 12, 3, 12 ], 
  [ 4, 12, 3, 13 ], [ 4, 12, 4, 13 ], [ 4, 13, 1, 6 ], [ 4, 13, 2, 6 ], [ 4, 13, 3, 6 ], 
  [ 4, 13, 4, 6 ], [ 4, 13, 6, 6 ], [ 4, 13, 7, 12 ], [ 4, 24, 1, 4 ], [ 4, 24, 1, 6 ], 
  [ 4, 24, 3, 4 ], [ 4, 24, 3, 6 ], [ 4, 24, 4, 6 ], [ 4, 25, 1, 5 ], [ 4, 25, 3, 5 ], 
  [ 4, 25, 4, 5 ], [ 4, 25, 7, 5 ], [ 4, 25, 8, 5 ], [ 4, 33, 2, 1 ] ]
gap> U432:=Difference(U432t,imf432n);
[ [ 4, 31, 1, 3 ], [ 4, 31, 1, 4 ], [ 4, 31, 2, 2 ], [ 4, 31, 3, 2 ], 
  [ 4, 31, 4, 2 ], [ 4, 31, 5, 2 ], [ 4, 31, 6, 2 ], [ 4, 31, 7, 2 ] ]
gap> U451:=Difference(U451t,imf451n);
[  ]
gap> List([U411,U432,U451],Length);
[ 29, 8, 0 ]
\end{verbatim}
\end{example}

\bigskip

\begin{example}[{Determination whether $[M_G]^{fl}=0$ for 
the GAP ID $l\in\mathcal{U}_{411}$}]\label{exp3}
{}~\\
\begin{verbatim}
gap> U411g:=List(U411,x->MatGroupZClass(x[1],x[2],x[3],x[4]));;
gap> U411sub:=List(U411g,x->Set(ConjugacyClassesSubgroups2(x),
> y->CrystCatZClass(Representative(y))));;
gap> U411inc:=List(U411sub,x->Set(Intersection(x,U411),y->Position(U411,y)));
[ [ 1 ], [ 2 ], [ 3 ], [ 2, 4 ], [ 3, 5 ], [ 2, 6 ], [ 7 ], [ 8 ], [ 1, 7, 9 ], 
  [ 2, 8, 10 ], [ 3, 8, 11 ], [ 3, 8, 12 ], [ 8, 13 ], [ 8, 14 ], [ 2, 3, 15 ],
  [ 2, 3, 16 ], [ 2, 3, 4, 5, 8, 10, 11, 13, 16, 17 ], 
  [ 2, 3, 5, 6, 8, 10, 12, 14, 15, 18 ], [ 1, 19 ], [ 3, 20 ], 
  [ 1, 7, 9, 19, 21 ], [ 3, 8, 11, 20, 22 ], [ 3, 8, 12, 20, 23 ], 
  [ 3, 5, 20, 24 ], [ 2, 3, 16, 20, 25 ], [ 2, 3, 15, 20, 26 ], 
  [ 2, 3, 4, 5, 8, 10, 11, 13, 16, 17, 20, 22, 24, 25, 27 ], 
  [ 2, 3, 5, 6, 8, 10, 12, 14, 15, 18, 20, 23, 24, 26, 28 ], [ 29 ] ]

gap> U411[21]; # checking [M]^{fl}=0
[ 4, 24, 3, 4 ]
gap> G:=MatGroupZClass(4,24,3,4);; # G=S4
gap> flfl(G);
[  ]

gap> U411[27]; # checking [M]^{fl}=0
[ 4, 25, 7, 5 ]
gap> G:=MatGroupZClass(4,25,7,5);; # G=C2xS4
gap> flfl(G);
[  ]

gap> U411[28]; # checking [M]^{fl}=0
[ 4, 25, 8, 5 ]
gap> G:=MatGroupZClass(4,25,8,5);; # G=C2xS4
gap> Rank(FlabbyResolution(G).actionF.1); # F is of rank 10
10

gap> mis:=SearchCoflabbyResolutionBase(TransposedMatrixGroup(G),5);;
gap> Set(List(mis,Length))-4; # Method III could not apply
[ 10, 16, 18, 22, 24, 26, 28, 30, 32, 34, 36, 38, 40, 42, 44, 46, 50 ]

gap> ll:=PossibilityOfStablyPermutationF(G);
[ [ 1, 0, 0, 0, 0, -2, -1, 0, 0, 0, 0, 0, 0, 0, 0, 0, 0, 2, 0, 0, 1, 1, 0, -1, 0, 
    0, 0, 0, 1, 1, -1, 1, -1, -1 ], 
  [ 0, 1, 0, 0, 0, 0, 0, 0, 0, 0, 0, 0, 0, 0, 0, -2, -1, 0, 0, 0, 1, 0, 0, 0, 0, 
    0, 0, 2, 1, 1, 0, 0, -1, -1 ], 
  [ 0, 0, 0, 0, 1, -1, 0, 0, 0, 0, 0, 0, 0, 0, 0, 0, 0, 1, -1, 0, 0, 1, 0, -1, 0, 
    0, 0, 0, 0, 0, -1, 1, 0, 0 ], 
  [ 0, 0, 0, 0, 0, 0, 0, 1, 0, 0, 0, 0, 0, 0, 0, 0, 0, 0, 0, 0, 1, -1, 0, -1, 0, 
    0, -1, 0, 1, 1, 1, 1, -1, -1 ], 
  [ 0, 0, 0, 0, 0, 0, 0, 0, 0, 0, 0, 0, 0, 0, 0, 0, 0, 0, 0, 1, 1, 0, 0, 0, 0, 
    0, 0, 0, -1, 0, 0, 0, 1, -1 ] ]
gap> l:=ll[Length(ll)];
[ 0, 0, 0, 0, 0, 0, 0, 0, 0, 0, 0, 0, 0, 0, 0, 0, 0, 0, 0, 1, 
  1, 0, 0, 0, 0, 0, 0, 0, -1, 0, 0, 0, 1, -1 ]
gap> bp:=StablyPermutationFCheckP(G,Nlist(l),Plist(l));;
gap> Length(bp);
20
gap> Length(bp[1]); # rank of the both sides of (10) is 13
13
gap> rs:=RandomSource(IsMersenneTwister);
<RandomSource in IsMersenneTwister>
gap> rr:=List([1..1000],x->List([1..20],y->Random(rs,[0,1])));;
gap> Filtered(rr,x->Determinant(x*bp)^2=1);
[ [ 0, 1, 1, 1, 1, 1, 0, 1, 1, 1, 0, 0, 1, 0, 0, 1, 0, 0, 0, 0 ],
  [ 0, 1, 0, 1, 1, 1, 0, 1, 0, 1, 0, 1, 0, 0, 1, 1, 0, 0, 0, 1 ] ]
gap> p:=last[1]*bp;
[ [ 0, 1, 1, 1, 0, 1, 1, 1, 1, 1, 1, 1, 1 ],
  [ 1, 0, 1, 0, 1, 1, 1, 1, 1, 1, 1, 1, 1 ],
  [ 1, 1, 0, 1, 1, 0, 1, 1, 1, 1, 1, 1, 1 ],
  [ 0, 0, 0, 1, 1, 1, 1, 1, 1, 1, 1, 1, 1 ],
  [ 1, 1, 1, 0, 0, 0, 1, 1, 1, 1, 1, 1, 1 ],
  [ 0, 0, 1, 0, 0, 2, 1, 0, 0, 0, 0, 0, 0 ],
  [ 0, 2, 0, 1, 0, 0, 1, 0, 0, 0, 0, 0, 0 ],
  [ 1, 0, 0, 0, 2, 0, 0, 1, 0, 0, 0, 0, 0 ],
  [ 0, 0, 1, 0, 0, 2, 0, 0, 0, 0, 0, 1, 0 ],
  [ 0, 0, 2, 0, 0, 1, 0, 1, 0, 0, 0, 0, 0 ],
  [ 2, 0, 0, 0, 1, 0, 0, 0, 1, 0, 0, 0, 0 ],
  [ 1, 0, 0, 0, 2, 0, 0, 0, 0, 0, 1, 0, 0 ],
  [ 2, 0, 0, 0, 1, 0, 0, 0, 0, 1, 0, 0, 0 ] ]
gap> Determinant(p);
1
gap> StablyPermutationFCheckMat(G,Nlist(l),Plist(l),p);
true

gap> U411[29]; # checking [M]^{fl}: non-zero
[ 4, 33, 2, 1 ]
gap> G:=MatGroupZClass(4,33,2,1);; # G=C3:C8
gap> PossibilityOfStablyPermutationF(G);
[ [ 1, 1, 1, 0, -1, 0, 0, 0, -2 ] ]

gap> i411:=[U411[29]];
[ [ 4, 33, 2, 1 ] ]
\end{verbatim}
\end{example}

\bigskip

\begin{example}[{Determination whether $[M_G]^{fl}=0$ for 
the GAP ID $l\in\mathcal{U}_{432}$}]\label{exp4}
{}~\\
\begin{verbatim}
gap> U432g:=List(U432,x->MatGroupZClass(x[1],x[2],x[3],x[4]));;
gap> U432sub:=List(U432g,x->Set(ConjugacyClassesSubgroups2(x),
> y->CrystCatZClass(Representative(y))));;
gap> U432inc:=List(U432sub,x->Set(Intersection(x,U432),y->Position(U432,y)));
[ [ 1 ], [ 2 ], [ 1, 2, 3 ], [ 4 ], [ 1, 4, 5 ], [ 2, 4, 6 ], [ 4, 7 ],
  [ 1, 2, 3, 4, 5, 6, 7, 8 ] ]

gap> U432[1]; # checking [M]^{fl}: non-zero
[ 4, 31, 1, 3 ]
gap> G:=MatGroupZClass(4,31,1,3);; # G=F20
gap> PossibilityOfStablyPermutationF(G);
[ [ 1, 1, 0, 1, -1, 0, -2 ] ]

gap> U432[2]; # checking [M]^{fl}: non-zero
[ 4, 31, 1, 4 ]
gap> G:=MatGroupZClass(4,31,1,4);; # G=F20
gap> PossibilityOfStablyPermutationF(G);
[ [ 1, 1, 0, 1, -1, 0, -2 ] ]

gap> U432[4]; # checking [M]^{fl}=0
[ 4, 31, 3, 2 ]
gap> G:=MatGroupZClass(4,31,3,2);; # G=A5
gap> Rank(FlabbyResolution(G).actionF.1); # F is of rank 16
16

gap> mis:=SearchCoflabbyResolutionBase(TransposedMatrixGroup(G),5);;
gap> Set(List(mis,Length))-4; # Method III could not apply
[ 16, 21, 26, 31, 36, 41, 46, 51, 56, 61, 66, 71, 76, 81 ]

gap> ll:=PossibilityOfStablyPermutationF(G);
[ [ 1, -2, -1, 0, 0, 1, 1, 1, -1, 0 ], [ 0, 0, 0, 0, 1, 1, -1, 0, 0, -1 ] ]
gap> l:=ll[Length(ll)];
[ 0, 0, 0, 0, 1, 1, -1, 0, 0, -1 ]
gap> bp:=StablyPermutationFCheckP(G,Nlist(l),Plist(l));;
gap> Length(bp); 
11
gap> Length(bp[1]); # rank of the both sides of (10) is 22
22
gap> rs:=RandomSource(IsMersenneTwister);                          
<RandomSource in IsMersenneTwister>
gap> rr:=List([1..100000],x->List([1..11],y->Random(rs,[-1..2])));;
gap> Filtered(rr,x->Determinant(x*bp)^2=1);                        
[ [ 2, 0, 2, -1, 0, 0, 1, 0, 0, 1, 1 ], [ 2, 0, 0, 1, -1, 1, 0, 1, 0, 1, 1 ] ]
gap> p:=last[1]*bp;
[ [ 2, 0, 0, 2, 0, 0, 0, 0, 0, 0, 0, 0, 2, -1, 2, 2, 2, -1, 2, -1, -1, -1 ], 
  [ 0, 2, 0, 0, 0, 2, 0, 0, 0, 0, 0, 0, 2, 2, -1, 2, -1, 2, -1, -1, -1, 2 ], 
  [ 0, 0, 2, 0, 0, 0, 0, 0, 0, 2, 0, 0, -1, 2, 2, 2, -1, -1, -1, 2, 2, -1 ], 
  [ 0, 0, 0, 0, 2, 0, 0, 0, 0, 0, 2, 0, 2, -1, -1, -1, 2, 2, -1, 2, 2, -1 ], 
  [ 0, 0, 0, 0, 0, 0, 2, 0, 0, 0, 0, 2, -1, 2, -1, -1, 2, -1, 2, 2, -1, 2 ], 
  [ 0, 0, 0, 0, 0, 0, 0, 2, 2, 0, 0, 0, -1, -1, 2, -1, -1, 2, 2, -1, 2, 2 ], 
  [ 0, 0, 0, 1, 0, 0, 0, 1, 0, 1, 0, 0, 0, 0, 1, 1, 0, 0, 1, 0, 1, 0 ], 
  [ 0, 0, 0, 1, 0, 0, 1, 1, 0, 0, 0, 0, 0, 0, 1, 0, 1, 0, 1, 0, 0, 1 ], 
  [ 0, 1, 1, 0, 0, 0, 1, 0, 0, 0, 0, 0, 0, 1, 0, 1, 0, 0, 0, 1, 0, 1 ], 
  [ 1, 0, 0, 0, 0, 0, 0, 0, 1, 0, 0, 1, 0, 0, 1, 0, 1, 0, 1, 0, 0, 1 ], 
  [ 0, 0, 0, 1, 1, 0, 1, 0, 0, 0, 0, 0, 1, 0, 0, 0, 1, 0, 1, 1, 0, 0 ], 
  [ 0, 0, 1, 0, 1, 0, 1, 0, 0, 0, 0, 0, 0, 1, 0, 0, 1, 0, 0, 1, 1, 0 ], 
  [ 1, 0, 1, 0, 0, 0, 0, 0, 1, 0, 0, 0, 0, 0, 1, 1, 0, 0, 1, 0, 1, 0 ], 
  [ 0, 0, 1, 0, 1, 0, 0, 0, 1, 0, 0, 0, 0, 0, 1, 0, 0, 1, 0, 1, 1, 0 ], 
  [ 0, 0, 0, 0, 1, 1, 0, 0, 1, 0, 0, 0, 1, 0, 0, 0, 0, 1, 0, 0, 1, 1 ], 
  [ 0, 0, 0, 0, 0, 1, 0, 0, 0, 1, 0, 1, 0, 1, 0, 1, 0, 0, 0, 1, 0, 1 ], 
  [ 1, 0, 0, 0, 0, 0, 0, 0, 0, 0, 1, 1, 1, 0, 0, 0, 1, 0, 1, 1, 0, 0 ], 
  [ 0, 0, 0, 0, 0, 0, 0, 0, 0, 1, 1, 1, 0, 1, 0, 0, 1, 0, 0, 1, 1, 0 ], 
  [ 0, 0, 0, 1, 1, 1, 0, 0, 0, 0, 0, 0, 1, 0, 0, 1, 1, 1, 0, 0, 0, 0 ], 
  [ 1, 1, 1, 0, 0, 0, 0, 0, 0, 0, 0, 0, 1, 1, 1, 1, 0, 0, 0, 0, 0, 0 ], 
  [ 0, 0, 0, 0, 0, 0, 0, 1, 0, 1, 1, 0, 0, 0, 1, 0, 0, 1, 0, 1, 1, 0 ], 
  [ 0, 0, 0, 0, 0, 1, 0, 0, 1, 0, 0, 1, 0, 1, 0, 0, 0, 1, 1, 0, 0, 1 ] ]
gap> Determinant(p);
1
gap> StablyPermutationFCheckMat(G,Nlist(l),Plist(l),p);            
true

gap> U432[7]; # checking [M]^{fl}=0
[ 4, 31, 6, 2 ]
gap> G:=MatGroupZClass(4,31,6,2);; # G=C2xA5
gap> Rank(FlabbyResolution(G).actionF.1); # F is of rank 16
16

gap> mis:=SearchCoflabbyResolutionBase(TransposedMatrixGroup(G),5);;
gap> Set(List(mis,Length))-4; # Method III could not apply
[ 16, 26, 36, 46, 56, 66, 76, 86, 96, 106, 116, 126, 136, 146, 156, 166 ]

gap> ll:=PossibilityOfStablyPermutationF(G);
[ [ 1, 0, 0, -2, 0, 0, 0, 0, -1, 0, -1, 1, 0, 0, 1, 0, -1, -1, 1, 2, 1, -2, 1 ], 
  [ 0, 1, 0, 0, 0, 0, 0, -2, 0, 0, -1, 0, 0, 0, 0, 0, 0, 1, 1, 1, 0, -1, 0 ], 
  [ 0, 0, 1, -1, 0, 0, 0, 0, 0, 0, 0, 1, 0, 0, 0, -1, -1, -1, 1, 1, 1, -1, 0 ], 
  [ 0, 0, 0, 0, 1, 0, 0, 0, -1, -1, -1, -1, 0, 0, 1, 1, 0, 1, -1, 0, 0, 0, 1 ], 
  [ 0, 0, 0, 0, 0, 0, 0, 0, 0, 0, 0, 0, 0, 1, 0, 0, 0, 1, -1, 0, 0, 0, -1 ] ]
gap> l:=ll[Length(ll)];
[ 0, 0, 0, 0, 0, 0, 0, 0, 0, 0, 0, 0, 0, 1, 0, 0, 0, 1, -1, 0, 0, 0, -1 ]
gap> bp:=StablyPermutationFCheckP(G,Nlist(l),Plist(l));;
gap> Length(bp);
11
gap> Length(bp[1]); # rank of the both sides of (10) is 22
22
gap> rs:=RandomSource(IsMersenneTwister);                       
<RandomSource in IsMersenneTwister>
gap> rr:=List([1..100000],x->List([1..11],y->Random(rs,[-1..2])));;
gap> Filtered(rr,x->Determinant(x*bp)^2=1);
[ [ 2, 1, 0, 1, 0, 0, -1, 0, 0, 0, -1 ], [ 2, 0, 1, 0, 1, -1, 0, 1, 0, 1, 1 ], 
  [ 2, 0, 1, 0, 1, -1, 0, 1, 0, 1, 1 ], [ 2, 0, 0, 1, 1, -1, -1, 0, 0, -1, -1 ], 
  [ 2, 1, 2, -1, 0, 0, -1, 0, -1, 1, 2 ], [ 2, 1, -1, 0, 0, 0, 0, -1, 0, 0, 1 ] ]
gap> p:=last[1]*bp;                        
[ [ 2, 1, 1, 1, 1, 1, 2, 1, 1, 1, 1, 1, 0, 1, 0, 1, 0, 0, 0, 1, 1, 1 ], 
  [ 1, 2, 1, 1, 1, 1, 1, 1, 1, 1, 1, 2, 1, 0, 0, 1, 1, 0, 1, 0, 1, 0 ], 
  [ 1, 1, 2, 1, 2, 1, 1, 1, 1, 1, 1, 1, 0, 0, 1, 0, 1, 0, 1, 1, 0, 1 ], 
  [ 1, 1, 1, 2, 1, 1, 1, 1, 1, 2, 1, 1, 1, 1, 0, 0, 0, 1, 1, 1, 0, 0 ], 
  [ 1, 1, 1, 1, 1, 2, 1, 1, 2, 1, 1, 1, 1, 0, 1, 1, 0, 1, 0, 0, 0, 1 ], 
  [ 1, 1, 1, 1, 1, 1, 1, 2, 1, 1, 2, 1, 0, 1, 1, 0, 1, 1, 0, 0, 1, 0 ], 
  [ 0, 0, -1, 0, 0, 0, 0, -1, 0, 0, 0, -1, 0, 0, 0, 0, -1, 0, 0, 0, 0, 0 ], 
  [ -1, 0, 0, 0, -1, 0, 0, 0, -1, 0, 0, 0, 0, 0, 0, 0, 0, 0, 0, 0, 0, -1 ], 
  [ -1, 0, 0, 0, -1, 0, 0, 0, 0, -1, 0, 0, 0, 0, 0, 0, 0, 0, 0, -1, 0, 0 ], 
  [ 0, 0, 0, -1, 0, 0, 0, 0, -1, 0, -1, 0, 0, 0, 0, 0, 0, -1, 0, 0, 0, 0 ], 
  [ 0, 0, -1, -1, 0, 0, 0, 0, 0, 0, 0, -1, 0, 0, 0, 0, 0, 0, -1, 0, 0, 0 ], 
  [ 0, 0, 0, -1, 0, 0, 0, 0, -1, 0, 0, -1, -1, 0, 0, 0, 0, 0, 0, 0, 0, 0 ], 
  [ 0, 0, -1, 0, 0, -1, 0, -1, 0, 0, 0, 0, 0, 0, -1, 0, 0, 0, 0, 0, 0, 0 ], 
  [ 0, 0, -1, 0, 0, -1, -1, 0, 0, 0, 0, 0, 0, 0, 0, 0, 0, 0, 0, 0, 0, -1 ], 
  [ 0, -1, 0, 0, 0, -1, -1, 0, 0, 0, 0, 0, 0, 0, 0, -1, 0, 0, 0, 0, 0, 0 ], 
  [ 0, 0, 0, -1, 0, 0, -1, 0, 0, 0, -1, 0, 0, -1, 0, 0, 0, 0, 0, 0, 0, 0 ], 
  [ 0, 0, 0, 0, -1, 0, 0, 0, -1, 0, -1, 0, 0, 0, -1, 0, 0, 0, 0, 0, 0, 0 ], 
  [ -1, 0, 0, 0, 0, 0, 0, -1, 0, -1, 0, 0, 0, -1, 0, 0, 0, 0, 0, 0, 0, 0 ], 
  [ 0, -1, 0, 0, -1, 0, 0, 0, 0, -1, 0, 0, 0, 0, 0, 0, 0, 0, -1, 0, 0, 0 ], 
  [ -1, 0, 0, 0, 0, 0, 0, -1, 0, 0, 0, -1, 0, 0, 0, 0, 0, 0, 0, 0, -1, 0 ], 
  [ 0, -1, 0, 0, 0, -1, 0, 0, 0, -1, 0, 0, -1, 0, 0, 0, 0, 0, 0, 0, 0, 0 ], 
  [ 0, -1, 0, 0, 0, 0, -1, 0, 0, 0, -1, 0, 0, 0, 0, 0, 0, 0, 0, 0, -1, 0 ] ]
gap> Determinant(p);
-1
gap> StablyPermutationFCheckMat(G,Nlist(l),Plist(l),p);
true

gap> i432:=Difference(U432,[U432[4],U432[7]]);
[ [ 4, 31, 1, 3 ], [ 4, 31, 1, 4 ], [ 4, 31, 2, 2 ], [ 4, 31, 4, 2 ],
  [ 4, 31, 5, 2 ], [ 4, 31, 7, 2 ] ]

gap> I4:=Union(i411,i432);
[ [ 4, 31, 1, 3 ], [ 4, 31, 1, 4 ], [ 4, 31, 2, 2 ], [ 4, 31, 4, 2 ],
  [ 4, 31, 5, 2 ], [ 4, 31, 7, 2 ], [ 4, 33, 2, 1 ] ]
gap> List(I4,x->StructureDescription(MatGroupZClass(x[1],x[2],x[3],x[4])));
[ "C5 : C4", "C5 : C4", "C2 x (C5 : C4)", "S5", "S5", "C2 x S5", "C3 : C8" ]
gap> Length(I4);
7
\end{verbatim}
\end{example}

\bigskip

\begin{example}
[{$[M_G]^{fl}=0$ if and only if $[M_G]^{fl}$ is 
of finite order in $\cC(G)/\cS(G)$ for the groups $G$ of 
the GAP IDs $(4,31,1,3)$, $(4,31,1,4)$ and $(4,33,2,1)$}]
\label{exp5}
{}~\\
\begin{verbatim}
gap> Read("Hn.gap");

gap> G:=MatGroupZClass(4,31,1,3);; # G=F20
gap> Rank(FlabbyResolution(G).actionF.1); # F is of rank 16
16
gap> ll:=PossibilityOfStablyPermutationF(G);
[ [ 1, 1, 0, 1, -1, 0, -2 ] ]
gap> l:=ll[1];
[ 1, 1, 0, 1, -1, 0, -2 ]
gap> List(ConjugacyClassesSubgroups2(G),x->StructureDescription(Representative(x)));
[ "1", "C2", "C4", "C5", "D10", "C5 : C4" ]
gap> gg:=StablyPermutationFCheckGen(G,Nlist(l),Plist(l));;
gap> G1:=Group(gg[1]);; 
gap> G2:=Group(gg[2]);;
gap> [Length(G1.1),Length(G2.1)]; # rank of the both sides is 34
34
gap> TateCohomology(G1,2);
[ 2 ]
gap> TateCohomology(G2,2);
[ 10 ]

gap> G:=MatGroupZClass(4,31,1,4);; # G=F20
gap> ll:=PossibilityOfStablyPermutationF(G); # F is of rank 16
[ [ 1, 1, 0, 1, -1, 0, -2 ] ]
gap> l:=ll[1];
[ 1, 1, 0, 1, -1, 0, -2 ]
gap> List(ConjugacyClassesSubgroups2(G),x->StructureDescription(Representative(x)));
[ "1", "C2", "C4", "C5", "D10", "C5 : C4" ]
gap> gg:=StablyPermutationFCheckGen(G,Nlist(l),Plist(l));;
gap> G1:=Group(gg[1]);;
gap> G2:=Group(gg[2]);;
gap> [Length(G1.1),Length(G2.1)]; # rank of the both sides is 34
34
gap> TateCohomology(G1,2);
[ 5, 10 ]
gap> TateCohomology(G2,2);
[ 10 ]

gap> G:=MatGroupZClass(4,33,2,1);; # G=C3:C8
gap> Rank(FlabbyResolution(G).actionF.1); # F is of rank 20
20
gap> ll:=PossibilityOfStablyPermutationF(G);
[ [ 1, 1, 1, 0, -1, 0, 0, 0, -2 ] ]
gap> l:=ll[1];
[ 1, 1, 1, 0, -1, 0, 0, 0, -2 ]
gap> List(ConjugacyClassesSubgroups2(G),x->StructureDescription(Representative(x)));
[ "1", "C2", "C3", "C4", "C6", "C8", "C12", "C3 : C8" ]
gap> gg:=StablyPermutationFCheckGen(G,Nlist(l),Plist(l));;

gap> G1:=Group(gg[1]);
<matrix group with 2 generators>
gap> G2:=Group(gg[2]);
<matrix group with 2 generators>
gap> [Length(G1.1),Length(G2.1)]; # rank of the both sides is 44
[ 44, 44 ]
gap> List([-7..7],i->TateCohomology(G1,i)); # Tate cohomologies are coincide 
[ [  ], [ 6 ], [  ], [ 6 ], [  ], [ 6 ], [ 0, 0, 0 ], [ 1, 1, 6 ], 
  [  ], [ 6 ], [  ], [ 6 ], [  ], [ 6 ], [  ] ]
gap> List([-7..7],i->TateCohomology(G2,i));
[ [  ], [ 6 ], [  ], [ 6 ], [  ], [ 6 ], [ 0, 0, 0 ], [ 1, 1, 6 ], 
  [  ], [ 6 ], [  ], [ 6 ], [  ], [ 6 ], [  ] ]

gap> Length(GeneratorsOfGroup(G)); # number of generators of G is 2
2
gap> List([G.1,G.2],Order); # order of two generators are 8, 12
[ 8, 12 ]
gap> a:=G.2^4;
[ [ -1, -1, -1, 1 ], [ 1, 0, 0, -1 ], [ 0, 0, -1, 1 ], [ 0, 0, -1, 0 ] ]
gap> b:=G.1;
[ [ 0, 0, -1, 0 ], [ -1, 0, 0, 0 ], [ 1, 1, 1, -2 ], [ 0, 1, 0, -1 ] ]
gap> List([a,b],Order);
[ 3, 8 ]
gap> a^b=a^2;
true
gap> G=Group(a,b);
true

gap> G1_3:=Group(gg[1]*Z(3));;
gap> G2_3:=Group(gg[2]*Z(3));;
gap> v1:=VectorSpace(GF(3^2),(G1_3.2^4-IdentityMat(44)*Z(3)^0)^2);
<vector space over GF(3^2), with 44 generators>
gap> v2:=VectorSpace(GF(3^2),(G2_3.2^4-IdentityMat(44)*Z(3)^0)^2);
<vector space over GF(3^2), with 44 generators>
gap> List([v1,v2],Dimension);
[ 12, 12 ]

gap> Eigenvalues(GF(3^2),G1_3.1);
[ Z(3), Z(3)^0, Z(3^2)^5, Z(3^2)^6, Z(3^2)^7, Z(3^2), Z(3^2)^2, Z(3^2)^3 ]
gap> Eigenvalues(GF(3^2),G2_3.1);
[ Z(3), Z(3)^0, Z(3^2)^5, Z(3^2)^6, Z(3^2)^7, Z(3^2), Z(3^2)^2, Z(3^2)^3 ]
gap> e1:=Eigenspaces(GF(3^2),G1_3.1);
[ <vector space over GF(3^2), with 7 generators>, 
  <vector space over GF(3^2), with 7 generators>, 
  <vector space over GF(3^2), with 4 generators>, 
  <vector space over GF(3^2), with 7 generators>, 
  <vector space over GF(3^2), with 4 generators>, 
  <vector space over GF(3^2), with 4 generators>, 
  <vector space over GF(3^2), with 7 generators>, 
  <vector space over GF(3^2), with 4 generators> ]
gap> e2:=Eigenspaces(GF(3^2),G2_3.1);
[ <vector space over GF(3^2), with 7 generators>, 
  <vector space over GF(3^2), with 7 generators>, 
  <vector space over GF(3^2), with 4 generators>, 
  <vector space over GF(3^2), with 7 generators>, 
  <vector space over GF(3^2), with 4 generators>, 
  <vector space over GF(3^2), with 4 generators>, 
  <vector space over GF(3^2), with 7 generators>, 
  <vector space over GF(3^2), with 4 generators> ]
gap> List(e1,x->Dimension(Intersection(x,v1)));
[ 2, 2, 2, 2, 2, 0, 2, 0 ]
gap> List(e2,x->Dimension(Intersection(x,v2)));                  
[ 2, 2, 1, 2, 1, 1, 2, 1 ]
\end{verbatim}
\end{example}

\bigskip

\begin{example}[{$[M_G]^{fl}+[J_{S_5/S_4}]^{fl}=0$ 
for the group $G\simeq S_5$ of the GAP ID $(4,31,5,2)$}]
\label{exp6}
{}~\\
\begin{verbatim}
gap> ip:=InverseProjection([[4,31,4,2],[4,31,5,2]]);
[ <matrix group of size 120 with 3 generators>, 
  <matrix group of size 14400 with 6 generators>, 
  <matrix group of size 7200 with 5 generators> ]
gap> G:=ip[1]; # G=S5
<matrix group of size 120 with 3 generators>
gap> Rank(FlabbyResolution(G).actionF.1); # F is of rank 32
32
gap> ll:=PossibilityOfStablyPermutationF(G);
[ [ 1, 0, 0, 0, 0, -4, 0, -1, 1, 0, -3, 0, 0, -1, 0, 4, 4, 1, -4, 1 ], 
  [ 0, 1, 0, 0, 0, -1, -1, 0, 0, 0, -1, 0, 0, 0, 1, 1, 1, 0, -1, 0 ], 
  [ 0, 0, 1, 0, 0, -2, 0, 0, 1, 0, -1, 0, -1, -1, 0, 2, 2, 1, -2, 0 ], 
  [ 0, 0, 0, 1, 0, 0, 0, -1, -1, -1, -1, 0, 1, 0, 0, 0, 0, 0, 0, 1 ], 
  [ 0, 0, 0, 0, 1, -2, 2, 0, 2, 1, -2, -2, -1, -2, -2, 2, 4, 1, -2, 0 ] ]
gap> l:=ll[4];
[ 0, 0, 0, 1, 0, 0, 0, -1, -1, -1, -1, 0, 1, 0, 0, 0, 0, 0, 0, 1 ]
gap> Length(l);
20
gap> [l[4],l[13],l[20],l[8],l[9],l[10],l[11]];
[ 1, 1, 1, -1, -1, -1, -1 ]
gap> ss:=List(ConjugacyClassesSubgroups2(G),x->StructureDescription(Representative(x)));
[ "1", "C2", "C2", "C3", "C2 x C2", "C2 x C2", "C4", "C5", "C6", "S3", 
  "S3", "D8", "D10", "A4", "D12", "C5 : C4", "S4", "A5", "S5" ]
gap> [ss[4],ss[13],ss[8],ss[9],ss[10],ss[11]];       
[ "C3", "D10", "C5", "C6", "S3", "S3" ]
gap> l2:=IdentityMat(Length(l))[Length(l)-1];
[ 0, 0, 0, 0, 0, 0, 0, 0, 0, 0, 0, 0, 0, 0, 0, 0, 0, 0, 1, 0 ]
gap> bp:=StablyPermutationFCheckP(G,Nlist(l)+l2,Plist(l)+l2);; 
gap> Length(bp);
85
gap> Length(bp[1]); # rank of the both sides is 85
85

# after some efforts we may get 
gap> n:=[ 
> -1, 1, 1, 1, -1, -1, -1, 0, 0, 1, 1, 1, 0, 0, -1, 0, 0, 0, -1, 0, 0, -1, -1, -1, 0, 
> 0, 0, 0, 0, -1, 0, 0, 0, 0, -1, 0, -1, 0, 1, 0, -1, -1, 1, 0, 0, -1, 0, 0, 1, 1, 
> 1, 0, 0, 0, 0, 1, 0, 1, 1, 0, 0, 0, 1, -1, -1, 0, -1, -1, -1, 1, 0, 1, -1, 1, -1, 
> -1, 1, 1, 1, 1, 5, 2, -3, 3, -2 ]
gap> p:=n*bp;;
gap> StablyPermutationFCheckMat(G,Nlist(l)+l2,Plist(l)+l2,p);
true
\end{verbatim}
\end{example}

\bigskip

\begin{example}[{$[M_G]^{fl}+[J_{F_{20}/C_4}]^{fl}=0$ 
for the group $G\simeq F_{20}$ of the GAP ID $(4,31,1,4)$}]
\label{exp7}
{}~\\
\begin{verbatim}
gap> ip:=InverseProjection([[4,31,1,3],[4,31,1,4]]);
[ <matrix group of size 20 with 3 generators>, 
  <matrix group of size 400 with 4 generators>, 
  <matrix group of size 200 with 5 generators>, 
  <matrix group of size 100 with 4 generators>, 
  <matrix group of size 100 with 4 generators> ]
gap> G:=ip[1];; # G=F20
gap> Rank(FlabbyResolution(G).actionF.1); # F is of rank 32
32
gap> ll:=PossibilityOfStablyPermutationF(G);
[ [ 1, 1, 0, 1, -1, 0, -1 ] ]
gap> l:=ll[1];
[ 1, 1, 0, 1, -1, 0, -1 ]
gap> List(ConjugacyClassesSubgroups2(G),x->StructureDescription(Representative(x)));
[ "1", "C2", "C4", "C5", "D10", "C5 : C4" ]
gap> bp:=StablyPermutationFCheckP(G,Nlist(l),Plist(l));;
gap> Length(bp);
62
gap> Length(bp[1]); # rank of the both sides of (10) is 34
34

# after some efforts we may get 
gap> n:=[ 
> 1, 0, 0, 1, 0, 1, 1, 0, 1, 1, 1, 1, 1, -1, 0, 1, 0, -1, 1, 1, 0, 0, -1, -1, 0, 
> 1, 1, -1, 0, 1, -1, -1, 0, 0, 1, -1, 0, 1, 1, 0, 0, 0, 0, 1, 1, -1, 0, -1, 1, -1, 
> 1, 0, 1, 1, 0, -1, -1, 1, -1, -1, 0, -1 ];
gap> p:=n*bp;;
gap> Determinant(p);
1
gap> StablyPermutationFCheckMat(G,Nlist(l),Plist(l),p);
true
\end{verbatim}
\end{example}

\bigskip

%%%%%%%%%%%%%%%%%%%%%%%%%%%%%%%%%%%%%%%%%%%%%%%%%%%%%%%%%%%%%%%%%%%%%%%%%%%%%%%%%%%%%%
\section{Proof of Theorem \ref{th2M}}\label{seProof2}
Let $G$ be a finite subgroup of $\GL(5,\bZ)$ and $M=M_G$ be 
the corresponding $G$-lattice of rank $5$ as in Definition \ref{defMG}. \\

%%%%%%%%%%%%%%%%%%%%%%%%%%%%%%%%%%

{\bf Step 1.} The case where $M_G$ is decomposable. \\

We assume that $M_G\simeq M_1\oplus M_2$ is decomposable with 
$\rank M_1\geq \rank M_2\geq 1$. 
Hence $\rank M_1\leq 4$ and $\rank M_2\leq 2$. 
It follows from Theorem \ref{thVo} and Lemma \ref{lemp1} 
that $[M_2]^{fl}=0$ and $[M_G]^{fl}=[M_1]^{fl}$. 

There exist exactly $25$ $G$-lattices $M_G\simeq M_1\oplus M_2$ 
with $[M_G]^{fl}\neq 0$ but invertible 
whose GAP IDs $\mathcal{I}_{41}$ are computed in Example \ref{exN} 
(see also Table $11$). 

There exist exactly $245$ (resp. $849$, $768$) $G$-lattices 
$M_G\simeq M_1\oplus M_2\oplus M_3$ 
with rank $M_1=3$, rank $M_2=1$, rank $M_3=1$ 
(resp. 
$M_G\simeq M_1\oplus M_2$ 
with rank $M_1=3$, rank $M_2=2$, 
$M_G\simeq M_1\oplus M_2$ 
with rank $M_1=4$, rank $M_2=1$) 
and $[M_G]^{fl}\neq 0$ 
whose GAP IDs $\mathcal{N}_{311}$ 
(resp. 
$\mathcal{N}_{32}$, $\mathcal{N}_{41}$) 
are computed in Example \ref{exN} 
(see also Tables $12$, $13$ and $14$). \\

%%%%%%%%%%%%%%%%%%%%%%%%%%%%%%%%%%%%%%%%%%%%%%%%%%%%%%%%%%%%%%%%%%
{\bf Step 2.}  Determination of all $G$-lattices $M_G$ 
whose flabby class $[M_G]^{fl}$ is not invertible. \\

We assume that $M_G$ is indecomposable. 
There exist $1452$ indecomposable $G$-lattices $M_G$ 
of rank $5$ (see Example \ref{exKS1}). 
By using {\tt IsInvertibleF} as in Algorithm \ref{Alg2}, 
we see that there exist exactly $1141$ $G$-lattices $M_G$ 
with $[M_G]^{fl}$ not invertible (see Example \ref{exq1} below). 
In the next step, we will show that all the 
remaining $311$ cases satisfy $[M_G]^{fl}=0$.\\

%%%%%%%%%%%%%%%%%%%%%%%%%%%%%%%%%%%%%%%%%%%%%%%%%%%%%%%%%%%%%%%
{\bf Step 3.} Verification of $[M_G]^{fl}=0$ for all 
the remaining $311$ cases.\\

First, we see that for 
all the remaining $311$ $G$-lattices $M_G$ 
$G$ is a subgroup of at least one of 
the $18$ groups of the CARAT IDs as in Table $9$ 
(see Example \ref{exq2} below). 
Hence by Lemma \ref{lemp3} it is enough to show that $[M_G]^{fl}=0$ 
for the $18$ groups $G$ in Table $9$.\\

\begin{center}
Table $9$: the maximal $18$ groups 
in the remaining $311$ cases\vspace*{2mm}\\
\begin{tabular}{llll} 
CARAT ID & $G$ & $\#G$ & Method\\\hline
$(5,942,1)$ & ${\rm Imf}(5,1,1)$ & $3840$ & {\tt flfl}\\
$(5,953,4)$ & $S_6$ & $720$ & {\tt flfl}\\
$(5,726,4)$ & 
%$((((C_2\times D_4)\rtimes C_2)\rtimes C_3)\rtimes C_2)\rtimes C_2$ 
& $384$ & {\tt flfl}\\
$(5,919,4)$ & $C_2\times S_5$ & $240$ &  Method III\\
$(5,801,3)$ & $C_2\times (S_3^2\rtimes C_2)$ & $144$ &  Method III\\
$(5,655,4)$ & $D_4^2\rtimes C_2$ & $128$ & {\tt flfl}\\
$(5,911,4)$ & $S_5$ & $120$ & Method I (1)\\
$(5,946,2)$ & $S_5$ & $120$ & Method I (2)\\
$(5,946,4)$ & $S_5$ & $120$ & Method II $+\alpha$
\end{tabular}\ \ 
\begin{tabular}{llll} 
CARAT ID & $G$ & $\#G$ & Method\\\hline
$(5,947,2)$ & $S_5$ & $120$ & Method III $+\alpha$\\
$(5,337,12)$ & $D_4\times S_3$ & $48$ & Method III\\
$(5,341,6)$ & $D_4\times S_3$ & $48$ & Method III\\
$(5,531,13)$ & $C_2\times S_4$ & $48$ & Method III\\
$(5,533,8)$ & $C_2\times S_4$ & $48$ & Method III\\
$(5,623,4)$ & $C_2\times S_4$ & $48$ & Method III\\
$(5,245,12)$ & $C_2^2\times S_3$ & $24$ & Method III\\
$(5,81,42)$ & $C_2\times D_4$ & $16$ & {\tt flfl}\\
$(5,81,48)$ & $C_2\times D_4$ & $16$ & {\tt flfl}
\end{tabular}
\end{center}

\bigskip

Using {\tt flfl} as in Algorithm \ref{Alg3} once or twice, 
we see that $[M_G]^{fl}=0$ for the $6$ groups $G$ of 
the CARAT IDs $(5,942,1)$, $(5,953,4)$, $(5,726,4)$, $(5,655,4)$, 
$(5,81,42)$ and $(5,81,48)$ 
(see Example \ref{exq3} below). 

Using functions in Algorithm \ref{Alg7}: Method III, 
we may confirm that $[M_G]^{fl}=0$ for the $8$ 
groups $G$ of the CARAT IDs $(5,919,4)$, $(5,801,3)$, 
$(5,337,12)$, $(5,341,6)$, $(5,531,13)$, $(5,533,8)$, $(5,623,4)$ and 
$(5,245,12)$ (see Example \ref{exq4} below). 
Thus there are $4$ remaining cases which 
are groups $G\simeq S_5$ of order $120$ 
of the CARAT IDs $(5,911,4)$, $(5,946,2)$, $(5,946,4)$, $(5,947,2)$. 

For two group $G$ of the CARAT IDs $(5,911,4)$ and $(5,946,2)$, 
by Method I (1) and Method I (2) respectively, we have that $[M_G]^{fl}=0$ 
(see Example \ref{exq5} below and Example \ref{ex52}). 
Indeed, for the group $G$ of the CARAT ID $(5,911,4)$, 
we see that $M_G$ is flabby and coflabby and hence 
stably permutation by Theorem \ref{thfac}. 
This implies $[M_G]^{fl}=0$. 
We also see that for the group $G$ of the CARAT ID $(5,946,2)$, 
the CARAT ID of $F$ with $[F]=[M_G]^{fl}$ is $(5,911,4)$. 
Hence we confirm that $[M_G]^{fl}=0$. 

For the group $G\simeq S_5$ of the CARAT ID $(5,947,2)$, 
using Algorithm \ref{Alg7}: Method III, 
we may find the flabby class $[M_G]^{fl}=[F]$ where $F$ is of rank $21$. 
However, the rank of the both sides of (\ref{eqisom}) becomes 
$81$. 
Thus we take another $F$ of rank $25$ 
(see Example \ref{exq6} below). 
Then the rank of the both sides of (\ref{eqisom}) becomes $55$. 
By using {\tt PossibilityOfStablyPermutationFFromBase}, 
it is possible that 
\begin{align}
\bZ[G/H_6]\oplus\bZ[G/H_{11}]\oplus\bZ[G/H_{17}]\simeq 
\bZ[G/H_{10}]\oplus\bZ[G/H_{15}]\oplus F\label{eq59472}
\end{align}
where 
$H_6\simeq C_2^2$, $H_{11}\simeq S_3$, $H_{17}\simeq S_4$, 
$H_{10}\simeq S_3$ and $H_{15}\simeq D_6$ (the rank of the both sides 
is $30+20+5=20+10+25=55$). 
We could not establish the isomorphism (\ref{eq59472}). 
However, as in Example \ref{ex52} (Algorithm \ref{Alg5}: Method I (2)), 
after adding $\bZ$ to the both sides of (\ref{eq59472}), 
we may confirm that the isomorphism 
\begin{align*}
\bZ[G/H_6]\oplus\bZ[G/H_{11}]\oplus\bZ[G/H_{17}]\oplus\bZ \simeq 
\bZ[G/H_{10}]\oplus\bZ[G/H_{15}]\oplus F\oplus\bZ
\end{align*}
holds (see Example \ref{exq6}). 

For the group $G\simeq S_5$ of the CARAT ID $(5,946,4)$, 
Method III does not work well. 
We may obtain $[M_G]^{fl}=[F]$ 
with $F$ rank $17$ by {\tt FlabbyResolution(G).actionF}. 
Using {\tt PossibilityOfStablyPermutationF}, it turns out that 
the isomorphism 
\begin{align}
&\bZ[G/H_6]\oplus\bZ[H_9]\oplus\bZ[G/H_{11}]\oplus\bZ[G/H_{16}]\oplus
\bZ[G/H_{17}]^{\oplus 2}\oplus\bZ[G/H_{18}]\label{eq59464}\\
&\simeq 
\bZ[G/H_7]\oplus\bZ[G/H_{10}]\oplus\bZ[G/H_{14}]\oplus\bZ\oplus F\nonumber
\end{align}
may occur where 
$H_6\simeq C_2^2$, $H_9\simeq H_{11}\simeq S_3$, 
$H_{16}\simeq F_{20}$, $H_{17}\simeq S_4$, $H_{18}\simeq A_5$, 
$H_7\simeq C_4$, $H_{10}\simeq C_6$, 
$H_{14}\simeq A_4$ and $H_{15}\simeq D_6$ 
(the rank of the both sides is $30+20+20+6+2\times 5+2=30+20+10+10+1+17=88$). 
After some efforts, we see that the isomorphism 
(\ref{eq59464}) actually holds (see Example \ref{exq7} below).\\

%%%%%%%%%%%%%%%%%%%%%%%%%%%%%%%%%%%%%%%%%%%%%%%%%%%%%%%%%%%%%%%
{\bf Step 4.} $[M_G]^{fl}=0$ if and only if $[M_G]^{fl}$ is 
of finite order in $\cC(G)/\cS(G)$. \\

We should show that if $G$ is one of the $25$ groups 
as in Table $11$, i.e. the $25$ cases where $[M_G]^{fl}$ is not zero but invertible, 
then $[(M_G)^{\oplus r}]^{fl}\neq 0$ for any $r\geq 1$ 
(see also Step 5 of Section \ref{seProof1}). 
By Remark \ref{rem25}, $[M_G]^{fl}=[M_1]^{fl}$ where 
$M_G\simeq M_1\oplus M_2$, $M_1$ is a $G/N_1$-lattice of rank $4$ which 
is one of the $7$ cases as in Table $2$. 
Note that $[M_1]^{fl}=\rho_G(M_1)=0$ if and only if 
$\rho_{G/N_1}(M_1)=0$ by Lemma \ref{lemp1}. 
Hence the assertion follows from Step 5 of Section \ref{seProof1}. 
\qed

\bigskip

\begin{example}[{Determination of all the cases where 
$[M_G]^{fl}$ is not invertible}]
\label{exq1}
{}~\\
\begin{verbatim}
gap> Read("caratnumber.gap");
gap> Read("FlabbyResolution.gap");
gap> Read("KS.gap");

gap> ind5:=LatticeDecompositions(5:Carat)[NrPartitions(5)];;
gap> Length(ind5);
1452
gap> N5:=Filtered(ind5,x->IsInvertibleF(CaratMatGroupZClass(x[1],x[2],x[3]))=false);;
gap> Length(N5);
1141
\end{verbatim}
\end{example}

\bigskip

\begin{example}[{The maximal $18$ groups 
in the remaining $311$ cases }]
\label{exq2}
{}~\\
\begin{verbatim}
gap> U5:=Difference(ind5,N5);;
gap> Length(U5);
311
gap> gg18:=[[5,942,1],[5,953,4],[5,726,4],[5,919,4],[5,801,3],[5,655,4], 
> [5,911,4],[5,946,2],[5,946,4],[5,947,2],[5,337,12],[5,341,6],
> [5,531,13],[5,533,8],[5,623,4],[5,245,12],[5,81,42],[5,81,48]];;
gap> Length(gg18);
18
gap> gg18sub:=Union(Set(gg18,y->Set(ConjugacyClassesSubgroups2(y),
> x->CaratZClass(Representative(x)))));
gap> Difference(e5a,gg18sub);
[  ]
\end{verbatim}
\end{example}

\bigskip

\begin{example}[{Verification of $[M_G]^{fl}=0$ for the $6$ 
groups $G$ of the CARAT ID $(5,942,1)$, 
$(5,953,4)$, $(5,726,4)$, $(5,655,4)$, $(5,81,42)$ and $(5,81,48)$: 
Method {\tt flfl}}]\label{exq3}
{}~\\
\begin{verbatim}
gap> flfl(CaratMatGroupZClass(5,942,1)); # G=Imf(5,1,1)
[  ]
gap> flfl(CaratMatGroupZClass(5,953,4)); # G=S6
[  ]
gap> flfl(CaratMatGroupZClass(5,726,4)); # #G=384
[  ]
gap> flfl(flfl(CaratMatGroupZClass(5,655,4))); # G=(D4xD4):C2
[  ]
gap> flfl(flfl(CaratMatGroupZClass(5,81,42))); # G=C2xD4
[  ]
gap> flfl(flfl(CaratMatGroupZClass(5,81,48))); # G=C2xD4
[  ]
\end{verbatim}
\end{example}

\bigskip

\begin{example}[{Verification of $[M_G]^{fl}=0$ for the $8$ 
groups $G$ of the CARAT IDs $(5,919,4)$, $(5,801,3)$, 
$(5,337,12)$, $(5,341,6)$, $(5,531,13)$, $(5,533,8)$, $(5,623,4)$ and 
$(5,245,12)$}: Method III]\label{exq4}
{}~\\
\begin{verbatim}
gap> G:=CaratMatGroupZClass(5,919,4);; # G=C2xS5 
gap> Rank(FlabbyResolution(G).actionF.1); # F is of rank 27
27
gap> mis:=SearchCoflabbyResolutionBase(TransposedMatrixGroup(G),3);; # Method III
gap> List(mis,Length);                                              
[ 42, 32, 42, 82, 72, 42, 32, 32, 42, 32, 12 ]
gap> mi:=mis[Length(mis)]; # (new) F is of rank 7 (=12-5)
[ [ -1, 0, 0, 0, -1 ], [ -1, 0, 1, 1, -2 ], [ -1, 1, 1, 0, -1 ], [ 0, -1, 0, 1, 0 ], 
  [ 0, 0, -1, 0, 1 ], [ 0, 0, 0, -1, 1 ], [ 0, 0, 0, 1, -1 ], [ 0, 0, 1, 0, -1 ], 
  [ 0, 1, 0, -1, 0 ], [ 1, -1, -1, 0, 1 ], [ 1, 0, -1, -1, 2 ], [ 1, 0, 0, 0, 1 ] ]
gap> ll:=PossibilityOfStablyPermutationFFromBase(G,mi);;
gap> Length(ll);
18
gap> l:=ll[Length(ll)];
[ 0, 0, 0, 0, 0, 0, 0, 0, 0, 0, 0, 0, 0, 0, 0, 0, 0, 0, 0, 0, 0, 0, 0, 0, 0, 
  0, 0, 0, 0, 0, 0, 0, 0, 0, 0, 0, 0, 0, 0, 0, 0, 0, 0, 0, 0, 0, 0, 0, 0, 0, 
  0, 1, 0, 0, 0, 1, 0, -1 ]
gap> StablyPermutationFCheckFromBase(G,mi,Nlist(l),Plist(l)); 
[ [ 0, 0, 0, -1, 0, 0, -1 ], 
  [ 1, 1, 1, 1, 1, 1, 3 ], 
  [ -1, 0, 0, 0, 0, 0, -1 ], 
  [ 0, -1, 0, 0, 0, 0, -1 ], 
  [ 0, 0, -1, 0, 0, -1, 0 ], 
  [ 0, 0, 0, 0, -1, -1, 0 ], 
  [ 0, 0, 0, 0, -1, 0, -1 ] ]

gap> G:=CaratMatGroupZClass(5,801,3);; # #G=144
gap> Rank(FlabbyResolution(G).actionF.1); # F is of rank 25
25
gap> mis:=SearchCoflabbyResolutionBase(TransposedMatrixGroup(G),3);; # Method III
gap> List(mis,Length);                                              
[ 48, 48, 32, 32, 66, 66, 50, 50, 48, 48, 32, 32, 32, 48, 42, 42, 30, 32, 48, 42, 
  30, 32, 26, 26, 14, 32, 26, 14 ]
gap> mi:=mis[Length(mis)]; # (new) F is of rank 9 (=14-5)
[ [ -1, -1, 0, 1, 0 ], [ -1, -1, 1, 1, -1 ], [ -1, 0, 0, 0, 0 ], [ 0, -1, 0, 0, 0 ], 
  [ 0, 0, -1, 0, 0 ], [ 0, 0, 0, -1, 0 ], [ 0, 0, 0, 0, -1 ], [ 0, 0, 0, 0, 1 ], 
  [ 0, 0, 0, 1, 0 ], [ 0, 0, 1, 0, 0 ], [ 0, 1, 0, 0, 0 ], [ 1, 0, 0, 0, 0 ], 
  [ 1, 1, -1, -1, 1 ], [ 1, 1, 0, -1, 0 ] ]
gap> ll:=PossibilityOfStablyPermutationFFromBase(G,mi);;
gap> Length(ll);
32              
gap> l:=ll[Length(ll)];
[ 0, 0, 0, 0, 0, 0, 0, 0, 0, 0, 0, 0, 0, 0, 0, 0, 0, 0, 0, 0, 0, 0, 0, 0, 0, 
  0, 0, 0, 0, 0, 0, 0, 0, 0, 0, 0, 0, 0, 0, 0, 0, 0, 0, 0, 0, 0, 0, 0, 0, 0, 
  0, 0, 0, 0, 0, 0, 0, 0, 0, 0, 0, 0, 0, 0, 0, 0, 1, 1, 0, 0, 0, 0, 0, 0, 0, 
  0, 0, 0, 0, -1, 0, 0, 0, 0, 0, 1, -1 ]
gap> bp:=StablyPermutationFCheckPFromBase(G,mi,Nlist(l),Plist(l));; 
gap> Length(bp);
16
gap> Length(bp[1]); # rank of the both sides of (10) is 11
11
gap> rr:=Filtered(Tuples([0,1],16),x->DeterminantMat(x*bp)^2=1);;
gap> Length(rr);
104
gap> rr[1];
[ 0, 1, 0, 1, 0, 0, 0, 1, 1, 0, 1, 0, 1, 0, 0, 0 ]
gap> p:=rr[1]*bp;
[ [ 0, 0, 1, 1, 0, 1, 0, 1, 1, 0, 0 ], 
  [ 1, 1, 0, 0, 1, 0, 1, 0, 0, 1, 0 ], 
  [ 0, 0, 0, 0, 0, 0, 0, 0, 1, 1, 1 ], 
  [ 0, 1, 0, 0, 1, 0, 1, 0, 0, 0, 0 ], 
  [ 0, 0, 0, 1, 0, 1, 0, 1, 0, 0, 0 ], 
  [ 0, 0, 1, 0, 0, 1, 0, 1, 0, 0, 0 ], 
  [ 1, 0, 0, 0, 1, 0, 1, 0, 0, 0, 0 ], 
  [ 0, 0, 1, 1, 0, 0, 0, 0, 1, 0, 0 ], 
  [ 1, 1, 0, 0, 0, 0, 0, 0, 0, 1, 0 ], 
  [ 1, 1, 0, 0, 0, 0, 1, 0, 0, 0, 0 ], 
  [ 0, 0, 1, 1, 0, 0, 0, 1, 0, 0, 0 ] ]
gap> StablyPermutationFCheckMatFromBase(G,mi,Nlist(l),Plist(l),p);
true

gap> G:=CaratMatGroupZClass(5,337,12);; # G=D4xS3
gap> Rank(FlabbyResolution(G).actionF.1); # F is of rank 17
17
gap> mis:=SearchCoflabbyResolutionBase(TransposedMatrixGroup(G),3);; # Method III
gap> List(mis,Length);
[ 34, 34, 22, 22, 22, 22, 22, 22, 22, 22, 22, 14, 14, 22, 22, 22, 22, 22, 22, 14, 
  14, 22, 22, 22, 22, 22, 22, 16, 16, 16, 16, 16, 16, 12, 10, 12, 10 ]
gap> mi:=mis[Length(mis)]; # (new) F is of rank 5 (=10-5)
[ [ -1, 1, 1, 1, -1 ], [ -1, 1, 1, 2, -1 ], [ -1, 1, 2, 1, -1 ], [ 0, -1, -1, -1, 1 ], 
  [ 0, 0, -1, -1, 2 ], [ 0, 0, 1, 1, -2 ], [ 0, 0, 1, 1, -1 ], [ 1, -1, -2, -1, 1 ], 
  [ 1, -1, -1, -2, 1 ], [ 1, 0, -1, -1, 1 ] ]
gap> ll:=PossibilityOfStablyPermutationFFromBase(G,mi);             
[ [ 1, -1, -1, -1, -1, 0, 1, -1, 1, -2, 2 ], [ 0, 0, 0, 0, 0, 1, 0, 1, 0, 0, -1 ] ]
gap> l:=ll[Length(ll)];
[ 0, 0, 0, 0, 0, 1, 0, 1, 0, 0, -1 ]
gap> StablyPermutationFCheckFromBase(G,mi,Nlist(l),Plist(l)); 
[ [ 1, 1, 1, 0, 2 ], 
  [ -1, 0, 0, 0, -1 ], 
  [ 0, -1, 0, 0, -1 ], 
  [ 0, 0, 0, 1, -1 ], 
  [ 0, 0, -1, -1, 0 ] ]

gap> G:=CaratMatGroupZClass(5,341,6);; # G=D4xS3
gap> Rank(FlabbyResolution(G).actionF.1); # F is of rank 14
14
gap> mis:=SearchCoflabbyResolutionBase(TransposedMatrixGroup(G),3);; # Method III
gap> List(mis,Length);
[ 19, 19, 19, 19, 11, 19, 19, 11, 19, 19, 19, 19, 10, 10, 10, 11, 7, 8, 8 ]
gap> mi:=mis[Length(mis)-2]; # (new) F is of rank 2 (=7-5)
[ [ -1, 0, 1, 1, -1 ], [ -1, 1, 1, 1, -1 ], [ -1, 1, 1, 1, 0 ], [ 0, 0, 0, 1, -1 ], 
  [ 0, 0, 1, 0, -1 ], [ 0, 0, 1, 1, -1 ], [ 0, 1, 1, 1, -1 ] ]
gap> FlabbyResolutionFromBase(G,mi).actionF; # (new) F is permutation
Group([ [ [ 1, 0 ], [ 0, 1 ] ], [ [ 1, 0 ], [ 0, 1 ] ], [ [ 0, 1 ], [ 1, 0 ] ] ])

gap> G:=CaratMatGroupZClass(5,531,13);; # G=C2xS4
gap> Rank(FlabbyResolution(G).actionF.1); # F is of rank 15
15
gap> mis:=SearchCoflabbyResolutionBase(TransposedMatrixGroup(G),3);; # Method III
gap> List(mis,Length);
[ 18, 18, 14, 14, 14, 14, 12, 12, 10, 12, 6, 7, 12, 6, 7 ]
gap> mi:=mis[Length(mis)-1]; # (new) F is trivial of rank 1 (=6-5)
[ [ 0, 0, 0, 1, 0 ], [ 0, 0, 1, 1, 1 ], [ 0, 1, 0, 1, 1 ], 
  [ 1, -1, -1, 0, -1 ], [ 1, 0, 0, 0, -1 ], [ 1, 0, 0, 0, 0 ] ]
gap> FlabbyResolutionFromBase(G,mi).actionF; # (new) F is trivial of rank 1
Group([ [ [ 1 ] ], [ [ 1 ] ] ])

gap> G:=CaratMatGroupZClass(5,533,8);; # G=C2xS4
gap> Rank(FlabbyResolution(G).actionF.1); # F is of rank 44
44
gap> mis:=SearchCoflabbyResolutionBase(TransposedMatrixGroup(G),3);; # Method III
gap> List(mis,Length);
[ 29, 29, 15, 15, 15, 15 ]
gap> mi:=mis[Length(mis)]; # (new) F is of rank 10 (=15-5)
[ [ -1, 0, -1, 0, 1 ], [ -1, 0, -1, 1, 0 ], [ 0, 0, -1, 0, 1 ], [ 0, 0, -1, 1, 0 ], 
  [ 0, 0, 0, 0, 1 ], [ 0, 0, 0, 1, 0 ], [ 0, 1, -1, 0, 1 ], [ 0, 1, -1, 0, 2 ], 
  [ 0, 1, -1, 1, 0 ], [ 0, 1, -1, 1, 1 ], [ 0, 1, -1, 2, 0 ], [ 0, 1, 0, 0, 1 ], 
  [ 0, 1, 0, 1, 0 ], [ 1, 1, 0, 0, 1 ], [ 1, 1, 0, 1, 0 ] ]
gap> ll:=PossibilityOfStablyPermutationFFromBase(G,mi);;
gap> Length(ll);
5
gap> l:=ll[Length(ll)];
[ 0, 0, 0, 0, 0, 0, 0, 0, 0, 0, 0, 0, 0, 0, 0, 0, 0, 0, 0, 1, 0, 1, 0, 0, 0, 0, 0, 
  0, -1, 0, 0, 0, 1, -1 ]
gap> bp:=StablyPermutationFCheckPFromBase(G,mi,Nlist(l),Plist(l));;
gap> Length(bp);
20
gap> Length(bp[1]); # rank of the both sides of (10) is 13
13
gap> rr:=Filtered(Tuples([0,1],20),x->DeterminantMat(x*bp)^2=1);;
gap> Length(rr);
2448
gap> rr[1];
[ 0, 0, 1, 0, 0, 1, 0, 0, 0, 0, 1, 0, 0, 0, 0, 1, 0, 1, 0, 1 ]
gap> p:=rr[1]*bp;
[ [ 0, 0, 1, 1, 0, 0, 0, 0, 0, 0, 0, 0, 1 ], 
  [ 1, 0, 0, 0, 0, 1, 0, 0, 0, 0, 0, 0, 1 ], 
  [ 0, 1, 0, 0, 1, 0, 0, 0, 0, 0, 0, 0, 1 ], 
  [ 0, 0, 0, 0, 1, 0, 0, 0, 0, 0, 0, 1, 0 ], 
  [ 0, 0, 1, 0, 0, 0, 0, 0, 0, 0, 1, 0, 0 ], 
  [ 0, 0, 0, 1, 0, 0, 0, 1, 0, 0, 0, 0, 0 ], 
  [ 1, 0, 0, 0, 0, 0, 1, 0, 0, 0, 0, 0, 0 ], 
  [ 0, 0, 0, 0, 0, 1, 0, 0, 0, 1, 0, 0, 0 ], 
  [ 0, 1, 0, 0, 0, 0, 0, 0, 1, 0, 0, 0, 0 ], 
  [ 0, 0, 0, 0, 0, 1, 0, 0, 1, 0, 0, 0, 0 ], 
  [ 1, 1, 1, 0, 0, 0, 0, 0, 0, 0, 0, 0, 1 ], 
  [ -2, -2, -2, -2, -2, -2, -1, -1, -1, -1, -1, -1, -2 ], 
  [ 0, 0, 0, 1, 0, 0, 1, 0, 0, 0, 0, 0, 0 ] ]
gap> StablyPermutationFCheckMatFromBase(G,mi,Nlist(l),Plist(l),p);
true

gap> G:=CaratMatGroupZClass(5,623,4);; # G=C2xS4                                 
gap> Rank(FlabbyResolution(G).actionF.1); # F is of rank 13
13
gap> mis:=SearchCoflabbyResolutionBase(TransposedMatrixGroup(G),3);; # Method III
gap> List(mis,Length);
[ 42, 30, 24, 33, 21, 15, 36, 38, 42, 33, 42, 36, 30, 24, 18, 26, 20, 30, 24, 21, 
  15, 24, 18, 12 ]
gap> mi:=mis[Length(mis)]; # (new) F is of rank 7 (=12-5)
[ [ -1, -1, 1, 0, 0 ], [ -1, 0, 1, 0, 0 ], [ -1, 1, -1, 1, -1 ], [ -1, 1, 0, 1, -1 ],
  [ 0, -1, 1, -1, 1 ], [ 0, 0, 1, -1, 1 ], [ 0, 1, -1, 1, -1 ], [ 0, 1, 0, 1, -1 ], 
  [ 1, -1, 0, -1, 1 ], [ 1, -1, 0, 0, 1 ], [ 1, 0, -1, -1, 0 ], [ 1, 0, -1, 0, 0 ] ]
gap> ll:=PossibilityOfStablyPermutationFFromBase(G,mi);
[ [ 0, 2, 1, -1, -1 ] ]
gap> l:=ll[1];
[ 0, 2, 1, -1, -1 ]
gap> bp:=StablyPermutationFCheckPFromBase(G,mi,Nlist(l),Plist(l));;
gap> Length(bp);
14
gap> Length(bp[1]); # rank of the both sides of (10) is 8
8
gap> rr:=Filtered(Tuples([0,1],14),x->DeterminantMat(x*bp)^2=1);;
gap> Length(rr);
224
gap> rr[1];
[ 0, 1, 1, 1, 0, 0, 0, 0, 0, 0, 0, 1, 0, 1 ]
gap> p:=rr[1]*bp;
[ [ 0, 0, 0, 1, 1, 1, 1, 1 ], 
  [ 1, 0, 0, 0, 0, 0, 0, 0 ], 
  [ 0, 0, 1, 0, 1, 0, 0, 1 ],
  [ 0, 1, 0, 0, 0, 0, 0, 0 ], 
  [ 0, 0, 1, 1, 0, 0, 1, 0 ], 
  [ 0, 1, 0, 0, 0, 1, 1, 0 ],
  [ 1, 0, 0, 0, 0, 1, 0, 1 ], 
  [ 0, 1, 0, 1, 0, 0, 0, 1 ] ]
gap> StablyPermutationFCheckMatFromBase(G,mi,Nlist(l),Plist(l),p);
true

gap> G:=CaratMatGroupZClass(5,245,12);; # G=C2^2xS3
gap> Rank(FlabbyResolution(G).actionF.1); # F is of rank 16
16
gap> mis:=SearchCoflabbyResolutionBase(TransposedMatrixGroup(G),3);; # Method III
gap> List(mis,Length);
[ 11 ]
gap> mi:=mis[1]; # (new) F is of rank 6 (=11-5)
[ [ -1, -1, 1, -2, 0 ], [ -1, 0, 1, -2, -1 ], [ 0, 0, -1, 1, 1 ], [ 0, 0, 0, -1, -1 ], 
  [ 0, 0, 0, 1, 1 ], [ 0, 0, 1, -2, -1 ], [ 0, 0, 1, -1, -1 ], [ 0, 1, -1, 2, 1 ], 
  [ 0, 1, 0, 0, 0 ], [ 1, 1, -1, 2, 0 ], [ 1, 1, -1, 2, 1 ] ]
gap> ll:=PossibilityOfStablyPermutationFFromBase(G,mi);;
gap> Length(ll);                                                    
8
gap> l:=ll[Length(ll)];
[ 0, 0, 0, 0, 0, 0, 0, 0, 0, 0, 0, 0, 0, 0, 0, 0, 0, 0, 0, 0, 0, 0, 0, 1, 0, 
  1, 0, 0, 1, 0, 0, -1, -1 ]
gap> StablyPermutationFCheckFromBase(G,mi,Nlist(l),Plist(l));
[ [ -1, -1, -1, 0, 0, -1, -1 ], 
  [ 1, 0, 1, 0, 0, 0, 1 ], 
  [ 1, 1, 1, 0, -1, 1, 0 ], 
  [ 0, 1, 1, 0, 0, 1, 0 ], 
  [ 1, 1, 0, 0, 0, 0, 1 ], 
  [ 1, 1, 0, 0, 0, 1, 0 ], 
  [ 1, 1, 1, -1, 0, 1, 0 ] ]
\end{verbatim}
\end{example}

\bigskip

\begin{example}[{Verification of $[M_G]^{fl}=0$ for two groups 
$G\simeq S_5$ of the CARAT IDs $(5,911,4)$ and $(5,946,2)$}]\label{exq5}
{}~\\
\begin{verbatim}
gap> G:=CaratMatGroupZClass(5,911,4);; # G=S5
gap> Rank(FlabbyResolution(G).actionF.1); # F is of rank 11
11
gap> mis:=SearchCoflabbyResolutionBase(TransposedMatrixGroup(G),3);; # Method III
gap> List(mis,Length);
[ 36, 16, 16, 16, 16, 6 ]
gap> mi:=mis[Length(mis)]; # (new) F is of rank 1 (=6-5)
[ [ 0, 0, -1, 0, 1 ], [ 0, 0, 0, -1, 1 ], [ 0, 1, 0, -1, 0 ], 
  [ 1, -1, -1, 0, 1 ], [ 1, 0, -1, -1, 2 ], [ 1, 0, 0, 0, 1 ] ]
gap> FlabbyResolutionFromBase(G,mi).actionF; # (new) F is trivial
Group([ [ [ 1 ] ], [ [ 1 ] ] ])

gap> G:=CaratMatGroupZClass(5,946,2);; # G=S5
gap> CaratZClass(FlabbyResolution(G).actionF);
[ 5, 911, 4 ]
\end{verbatim}
\end{example}

\bigskip

\begin{example}[{Verification of $[M_G]^{fl}=0$ for the group 
$G\simeq S_5$ of the CARAT ID $(5,947,2)$}]\label{exq6}
{}~\\
\begin{verbatim}
gap> G:=CaratMatGroupZClass(5,947,2);; # G=S5
gap> Rank(FlabbyResolution(G).actionF.1); # F is of rank 45
45

gap> mis:=SearchCoflabbyResolutionBase(TransposedMatrixGroup(G),3);; # Method III
gap> List(mis,Length);
[ 51, 81, 51, 51, 41, 105, 75, 75, 65, 45, 96, 66, 66, 56, 36, 96, 96, 86, 66, 66, 
  56, 36, 56, 36, 26, 120, 120, 110, 90, 90, 80, 60, 80, 60, 50, 30 ]
gap> mi26:=mis[25];; # (new) F is of rank 21 (=26-5)
gap> ll:=PossibilityOfStablyPermutationFFromBase(G,mi26);;
gap> Length(ll);
5
gap> l:=ll[Length(ll)];
[ 0, 0, 0, 0, 0, 1, -1, 0, 1, -1, 1, 0, 0, 0, -1, 1, 1, 0, 0, -1 ]
gap> bp:=StablyPermutationFCheckPFromBase(G,mi26,Nlist(l),Plist(l));;
gap> Length(bp[1]); # but the rank of the both sides of (10) is 81
81

gap> mi:=mis[Length(mis)]; # (new) F is of rank 25 (=30-5)
[ [ -2, -1, -1, 0, 1 ], [ -2, 0, -1, 1, 1 ], [ -1, -2, 0, -1, 1 ], [ -1, -1, -1, -1, 1 ], 
  [ -1, -1, -1, 0, 0 ], [ -1, -1, 0, -1, 0 ], [ -1, 0, 0, 1, 1 ], [ -1, 1, -1, 1, 1 ], 
  [ -1, 1, -1, 2, 0 ], [ -1, 1, 0, 1, 0 ], [ 0, -2, 1, -1, 1 ], [ 0, -1, 1, 0, 1 ], 
  [ 0, 0, -1, -1, 1 ], [ 0, 0, -1, 1, -1 ], [ 0, 0, 1, -1, -1 ], [ 0, 1, -1, 0, 1 ], 
  [ 0, 1, -1, 2, -1 ], [ 0, 1, 1, 0, -1 ], [ 1, -1, 1, -1, 1 ], [ 1, -1, 1, 0, 0 ], 
  [ 1, -1, 2, -1, 0 ], [ 1, 0, 0, -1, 1 ], [ 1, 0, 0, 1, -1 ], [ 1, 0, 2, -1, -1 ], 
  [ 1, 1, -1, 0, 0 ], [ 1, 1, -1, 1, -1 ], [ 1, 1, 0, -1, 0 ], [ 1, 1, 0, 1, -2 ], 
  [ 1, 1, 1, -1, -1 ], [ 1, 1, 1, 0, -2 ] ]
gap> ll:=PossibilityOfStablyPermutationFFromBase(G,mi);
[ [ 1, 0, 0, -1, 0, 0, -4, 0, 1, -2, 2, 0, -1, -1, 0, 4, 4, 1, -4, 0 ],
  [ 0, 1, 0, 0, 0, 0, -1, 0, 0, -2, 1, 0, 0, 0, 0, 1, 2, 0, -1, -1 ],
  [ 0, 0, 1, 0, 0, 0, -2, 0, 0, -1, 1, 0, -1, -1, 0, 2, 2, 1, -2, 0 ],
  [ 0, 0, 0, 0, 1, 0, -2, 0, 1, 0, 0, -2, -1, -2, 0, 2, 2, 1, -2, 2 ],
  [ 0, 0, 0, 0, 0, 1, 0, 0, 0, -1, 1, 0, 0, 0, -1, 0, 1, 0, 0, -1 ] ]
gap> l:=ll[Length(ll)];
[ 0, 0, 0, 0, 0, 1, 0, 0, 0, -1, 1, 0, 0, 0, -1, 0, 1, 0, 0, -1 ]
gap> Length(l);
20
gap> [l[6],l[11],l[17],l[10],l[15],l[20]];
[ 1, 1, 1, -1, -1, -1 ]
gap> ss:=List(ConjugacyClassesSubgroups2(G),x->StructureDescription(Representative(x)));
[ "1", "C2", "C2", "C3", "C2 x C2", "C2 x C2", "C4", "C5", "C6", "S3", "S3", 
  "D8", "D10", "A4", "D12", "C5 : C4", "S4", "A5", "S5" ]
gap> Length(ss);
19
gap> [ss[6],ss[11],ss[17],ss[10],ss[15]];  
[ "C2 x C2", "S3", "S4", "S3", "D12" ]
gap> bp:=StablyPermutationFCheckPFromBase(G,mi,Nlist(l),Plist(l));;
gap> Length(bp); 
40
gap> Length(bp[1]); # rank of the both sides of (10) is 55
55

gap> l2:=IdentityMat(Length(l))[Length(l)-1];
[ 0, 0, 0, 0, 0, 0, 0, 0, 0, 0, 0, 0, 0, 0, 0, 0, 0, 0, 1, 0 ]
gap> bp:=StablyPermutationFCheckPFromBase(G,mi,Nlist(l)+l2,Plist(l)+l2);;
gap> Length(bp); 
47
gap> Length(bp[1]); # rank of the both sides of (10) is 56
56

# after some efforts we may get 
gap> n:=[ 
> 1, 0, -1, 0, -1, -1, -1, 1, 0, 0, -1, 1, -1, 0, -1, 1, 1, 1, 1, 0, 0, 1, 0, 1, 0, 
> -1, -1, -1, 0, -1, 1, 2, 0, -1, 0, 0, 1, 1, 1, -1, -1, 0, -1, 0, 1, 0, 1 ];
gap> p:=n*bp;;
gap> Determinant(p);
-1
gap> StablyPermutationFCheckMatFromBase(G,mi,Nlist(l)+l2,Plist(l)+l2,p));
true
\end{verbatim}
\end{example}

\bigskip

\begin{example}[{Verification of $[M_G]^{fl}=0$ for the group 
$G\simeq S_5$ of the CARAT ID $(5,946,4)$}]\label{exq7}
{}~\\
\begin{verbatim}
gap> G:=CaratMatGroupZClass(5,946,4);; # G=S5
gap> Rank(FlabbyResolution(G).actionF.1); # F is of rank 17
17

gap> mis:=SearchCoflabbyResolutionBase(TransposedMatrixGroup(G),5);;
gap> Set(List(mis,Length))-5; # Method III could not apply
[ 17, 32, 35, 47, 50, 62, 65, 77, 80, 92, 95 ]

gap> ll:=PossibilityOfStablyPermutationF(G);
[ [ 1, 0, 0, -1, 0, 0, -4, 0, 1, -2, 2, 0, -1, -1, 0, 4, 4, 1, -4, 0 ], 
  [ 0, 1, 0, 0, 0, 0, -2, 0, 1, -2, 1, 0, 0, -1, 0, 2, 3, 1, -2, -1 ], 
  [ 0, 0, 1, 0, 0, 0, -2, 0, 0, -1, 1, 0, -1, -1, 0, 2, 2, 1, -2, 0 ], 
  [ 0, 0, 0, 0, 1, 0, 0, 0, -1, 0, 0, -2, -1, 0, 0, 0, 0, -1, 0, 2 ], 
  [ 0, 0, 0, 0, 0, 1, -1, 0, 1, -1, 1, 0, 0, -1, -1, 1, 2, 1, -1, -1 ] ]
gap> l:=ll[Length(ll)];
[ 0, 0, 0, 0, 0, 1, -1, 0, 1, -1, 1, 0, 0, -1, -1, 1, 2, 1, -1, -1 ]
gap> Length(l);
20
gap> [l[6],l[9],l[11],l[16],l[17],l[18],l[7],l[10],l[14],l[15],l[19],l[20]];
[ 1, 1, 1, 1, 2, 1, -1, -1, -1, -1, -1, -1 ]
gap> ss:=List(ConjugacyClassesSubgroups2(G),x->StructureDescription(Representative(x)));
[ "1", "C2", "C2", "C3", "C2 x C2", "C2 x C2", "C4", "C5", "S3", "C6", "S3", 
  "D8", "D10", "A4", "D12", "C5 : C4", "S4", "A5", "S5" ]
gap> [ss[6],ss[9],ss[11],ss[16],ss[17],ss[18],ss[7],ss[10],ss[14],ss[15],ss[19]];
[ "C2 x C2", "S3", "S3", "C5 : C4", "S4", "A5", "C4", "C6", "A4", "D12", "S5" ]
gap> bp:=StablyPermutationFCheckP(G,Nlist(l),Plist(l));;
gap> Length(bp);
122
gap> Length(bp[1]); # rank of the both sides of (10) is 88
88

# after some efforts we may get 
gap> n:=[ 
> -1, 1, 1, 0, 0, 1, 0, 1, 0, 0, 1, 1, 1, -1, 1, -1, 0, -1, 0, 0, -1, 1, 0, -1, 1, 
> 1, 1, 0, 0, 0, -1, -1, 0, -1, 1, 0, 1, 1, -1, 0, 1, 1, 0, -1, 1, 0, 1, 1, -1, 0, 
> 1, 1, -1, 0, -1, 1, -1, -1, 0, 1, 1, 0, 1, -1, 1, -1, -1, 0, 1, -1, 0, 0, 0, -1, 1, 
> 0, -1, -1, -1, -1, 0, -1, -1, 1, 1, 1, 0, 2, -2, 4, 0, 1, 3, -1, -1, -1, -1, -1, 0, -1,
> -1, -1, -1, 1, 0, 1, -1, 0, -1, 1, 0, -1, -1, 1, -1, 0, 0, -1, 1, 1, -1, 1 ];;
gap> p:=n*bp;;
gap> Determinant(p);
-1
gap> StablyPermutationFCheckMat(G,Nlist(l),Plist(l),p);
true
\end{verbatim}
\end{example}

\bigskip

%%%%%%%%%%%%%%%%%%%%%%%%%%%%%%%%%%%%%%%%%%%%%%%%%%%%%%%%%%%%%%%%%%%%%%%%%%%%%%%%%%%%%%
\section{Proof of Theorem \ref{th3}}\label{seProof3}

\begin{theorem}[{Yamasaki \cite[Lemma 4.3]{Yam12}}]\label{thY}
Let $k$ be a field of {\rm char} $k\neq 2$ and $k(x,y,z)$ be the rational 
function field over $k$ with variables $x,y,z$. 
Let $\sigma$ be a $k$-involution on $k(x,y,z)$ defined by 
\begin{align*}
\sigma : x\mapsto -x,\quad y\mapsto \frac{a}{y},\quad  z\mapsto \frac{-bx^2+c}{z}
\quad (a,b,c\in k^{\times}). 
\end{align*}
{\rm (i)} $k(x,y,z)^{\langle\sigma\rangle}=k(z_0,z_1,z_2,z_3)$ 
where 
\[
z_0^2=(z_1^2-a)(z_2^2-b)(z_3^2-c).
\]
{\rm (ii)} The fixed field $k(x,y,z)^{\langle\sigma\rangle}$ is $k$-rational 
if and only if $[k(\sqrt{a},\sqrt{b},\sqrt{c}) : k]\leq 2$ or 
$[k(\sqrt{a},\sqrt{b},\sqrt{c}) : k]=4$ with $abc\not\in k^{\times 2}$. 
In particular, if $k(x,y,z)^{\langle\sigma\rangle}$ is not $k$-rational, 
then $k(x,y,z)^{\langle\sigma\rangle}$ is not retract $k$-rational. 
\end{theorem}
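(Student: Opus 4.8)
The plan is to establish (i) by a direct computation of the $\sigma$-invariants and to deduce (ii) from Kunyavskii's classification of three-dimensional tori (Theorem \ref{thKu}). Since $\ch k\neq 2$ and $\sigma$ is an involution with $\sigma(x)=-x$, we have $k(x,y,z)=k(x,y,z)^{\langle\sigma\rangle}(x)$ with $[\,k(x,y,z):k(x,y,z)^{\langle\sigma\rangle}]=2$ and $u:=x^2$ invariant. First I would split each variable into its $\sigma$-symmetric and $\sigma$-antisymmetric parts: write $P=y+a/y$ (even) and $Q=y-a/y$ (odd, with $Q^2=P^2-4a$), and $R=z+\sigma(z)$ (even) and $S=z-\sigma(z)$ (odd, with $S^2=R^2-4(c-bu)$). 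The fixed field is the even part, generated over $k$ by $u,P,R$ together with products of pairs of the odd elements $x,Q,S$. Setting $z_1=P/2$, $z_3=R/2$, $z_2=S/(2x)$ and $z_0=\frac12\,xQ\,(z_2^2-b)$, a short calculation yields the two relations $u(z_2^2-b)=z_3^2-c$ and $(xQ)^2=4u(z_1^2-a)$, which combine into $z_0^2=(z_1^2-a)(z_2^2-b)(z_3^2-c)$; since $u=(z_3^2-c)/(z_2^2-b)$ and $xQ=2z_0/(z_2^2-b)$, one checks that $z_0,z_1,z_2,z_3$ generate the fixed field. This step is routine.

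For (ii) the idea is to recognize $k(z_0,z_1,z_2,z_3)$ as the function field of a $3$-dimensional $k$-torus split by $L=k(\sqrt a,\sqrt b,\sqrt c)$, so that Theorem \ref{thKu} applies. Put $G={\rm Gal}(L/k)$, a subgroup of $(\bZ/2\bZ)^3$. Over $L$, writing $a=\alpha^2$, $b=\beta^2$, $c=\gamma^2$, each factor splits as $z_i^2-a=(z_i-\alpha)(z_i+\alpha)$, and introducing $\xi=(z_1+\alpha)/(z_1-\alpha)$, $\eta=(z_2+\beta)/(z_2-\beta)$, $\zeta=(z_3+\gamma)/(z_3-\gamma)$ together with $s=z_0/\big((z_1-\alpha)(z_2-\beta)(z_3-\gamma)\big)$ turns the relation into $s^2=\xi\eta\zeta$. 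Hence $L(z_0,z_1,z_2,z_3)=L(\xi,\eta,s)$ is $L$-rational, confirming the torus structure. I would then compute the $G$-action on the multiplicative lattice $M$ generated by $\xi,\eta,\zeta,s$: the generator $\tau_a$ with $\tau_a(\alpha)=-\alpha$ fixes $\eta,\zeta$ and satisfies $\xi\mapsto\xi^{-1}$, $s\mapsto s\,\xi^{-1}$, and symmetrically for $\tau_b,\tau_c$. This exhibits $k(z_0,z_1,z_2,z_3)$ as $L(M)^G$ for an explicit rank-$3$ $G$-lattice $M$.

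With this identification I would run through the arithmetic cases according to $m=[L:k]$ and the $\bF_2$-relations among $a,b,c$ in $k^\times/k^{\times 2}$, pinning down the $\bZ$-class of $M$ in $\GL(3,\bZ)$ each time. For $m\leq 2$, $G$ is trivial or $C_2$ and $M$ is none of Kunyavskii's fifteen classes, so $L(M)^G$ is $k$-rational; for $m=4$ one has $G\cong C_2^2$, and the alternatives $abc\in k^{\times 2}$ or $abc\notin k^{\times 2}$ correspond to two distinct $\bZ$-classes of $C_2^2$, of which exactly the former lies in Table $1$; for $m=8$ one gets $G\cong C_2^3$ landing in a bad $C_2^3$ class of Table $1$. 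Invoking Theorem \ref{thKu} then delivers both the rationality criterion and the final assertion that the non-rational cases are not retract $k$-rational. The main obstacle will be the bookkeeping in this last part: producing the correct integral matrices of the $G$-action (especially the signs in $\tau(s)$) and matching each arithmetic case to its precise $\bZ$-class in Kunyavskii's table; these matchings can be cross-checked with the algorithms \texttt{flfl} and \texttt{IsInvertibleF} of this paper, which compute $[M]^{fl}$ and test its invertibility in the sense of Theorem \ref{thEM73}.
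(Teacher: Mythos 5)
Your proposal is correct in outline, but it takes a genuinely different route from what this paper records. The paper does not reprove the theorem in full: it quotes it from \cite[Lemma 4.3]{Yam12} and, in the example that follows, re-derives only the non-retract-rational half (the cases $m=4$ with $abc\in k^{\times 2}$, whence also $m=8$) by a change of variables performed on $x,y,z$ themselves. That substitution produces a rank-$3$ action of $C_2^2$ over $L$ which is only \emph{quasi}-monomial (e.g. $y_3\mapsto y_1(y_3+\alpha)/(y_1+\alpha y_3)$), so the paper must enlarge the lattice to rank $6$ by adjoining the denominators $t_1,t_2,t_3$, run \texttt{IsInvertibleF} on the resulting $C_2^2$-lattice, and invoke \cite[Theorem 2.10]{Yam12} to transfer non-retract-rationality back to the original field. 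You instead start from the hypersurface model of part (i) and pass to $\xi,\eta,\zeta,s$ with $s^2=\xi\eta\zeta$, which yields a \emph{purely} monomial, faithful action of $G={\rm Gal}(L/k)$ on a rank-$3$ lattice; this lets you quote Theorem \ref{thKu} verbatim and obtain both the rationality and the non-retract-rationality statements in one stroke, without the rank-$6$ detour or the quasi-monomial transfer theorem. That is a cleaner and more self-contained argument, and your computation of the relation in (i) and of the matrices of $\tau_a,\tau_b,\tau_c$ checks out. Two small corrections to your case analysis in (ii): the lattice is symmetric under permuting $(a,\xi),(b,\eta),(c,\zeta)$, so for $m=4$ there are \emph{three} subgroup types up to that symmetry (the single quadratic relation among $a,b,c$ is, up to permutation, of the form $a\in k^{\times 2}$, $ab\in k^{\times 2}$, or $abc\in k^{\times 2}$), not two; you must verify that the first two give $\bZ$-classes outside Table $1$ and only the third gives the bad $C_2^2$ class, and likewise that the full $C_2^3$ lands in a bad class for $m=8$ (alternatively, deduce the $m=8$ case from the $m=4$, $abc\in k^{\times 2}$ subgroup via the restriction argument of Lemma \ref{lemp3}). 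These matchings are finite and routine, and can indeed be cross-checked with \texttt{flfl} and \texttt{IsInvertibleF} as you suggest, but they are the actual content of part (ii) and must be carried out for the proof to be complete.
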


\begin{example}[Another proof of Theorem \ref{thY}: 
not retract $k$-rational cases] 
Assume that $[k(\sqrt{a},\sqrt{b},\sqrt{c}):k]=4$ and $abc\in k^{\times 2}$. 
We will show that $k(x,y,z)^{\langle\sigma\rangle}$ is not 
retract $k$-rational by using {\tt IsInvertibleF} in Algorithm \ref{Alg2}. 
This also implies that $k(x,y,z)^{\langle\sigma\rangle}$ is not 
retract $k$-rational when $[k(\sqrt{a},\sqrt{b},\sqrt{c}):k]=8$.

We may assume that $c=ab$. 
Put $\alpha=\sqrt{a}$, $\beta=\sqrt{b}$ and $L=k(\alpha,\beta)$.
Then $L=k(\sqrt{a},\sqrt{b},\sqrt{c})$ and $[L:k]=4$. 
Put $y':=\frac{y-\alpha}{y+\alpha}$, 
$z':=\frac{z-\beta x-\alpha\beta}{z+\beta x+\alpha\beta}$. 
Then $L(x,y,z)=L(x,y',z')$ and $\sigma$ acts on $L(x,y',z')$ by 
$\sigma : x\mapsto -x$, $y'\mapsto -y'$, $z'\mapsto -z'$.
Put $y_1:=x^2$, $y_2:=xy'$, $y_3:=xz'$. 
Then $k(x,y,z)^{\langle\sigma\rangle}
=(L(x,y,z)^{\langle\sigma\rangle})^{\langle\rho_a,\rho_b\rangle}
=L(y_1,y_2,y_3)^{\langle\rho_a,\rho_b\rangle}$ is $L$-rational 
where 
\begin{align*}
\rho_a &: \alpha\mapsto -\alpha,\ y_1\mapsto y_1,\ y_2\mapsto 
\frac{y_1}{y_2},\ y_3\mapsto \frac{y_1(y_3+\alpha)}{y_1+\alpha y_3},\\
\rho_b &: \beta\mapsto -\beta,\ y_1\mapsto y_1,\ y_2\mapsto y_2,\ 
y_3\mapsto \frac{y_1}{y_3}. 
\end{align*}

Let $G={\rm Gal}(L/k)=\langle\rho_a,\rho_b\rangle\simeq C_2\times C_2$. 
We consider the $G$-lattice $M=\langle y_1,y_2,y_3,t_1,t_2,t_3\rangle$ 
of rank $6$ where $(t_1,t_2,t_3)=(y_1-a, y_1+\alpha y_3, y_3+\alpha)$. 
The action of $G$ on $L(M)$ is given by 
\begin{align*}
\rho_a &: \alpha\mapsto -\alpha,\  y_1\mapsto y_1,\ 
y_2\mapsto \frac{y_1}{y_2},\ y_3\mapsto \frac{y_1t_3}{t_2},\ 
t_1\mapsto t_1,\ t_2\mapsto \frac{y_1t_1}{t_2},\ 
t_3\mapsto \frac{y_3t_1}{t_2},\\ 
\rho_b &: \beta\mapsto -\beta,\ y_1\mapsto y_1,\ 
y_2\mapsto y_2,\ 
y_3\mapsto \frac{y_1}{y_3},\ 
t_1\mapsto t_1,\ 
t_2\mapsto \frac{y_1t_3}{y_3},\ 
t_3\mapsto \frac{t_2}{y_3}.
\end{align*} 
The actions of $\rho_a$ and $\rho_b$ on $M$ are represented as matrices
\vspace*{2mm}
\begin{center}
{\scriptsize
$\left(
\begin{array}{ccccccccc}\vspace*{-0.5mm}
\!\! 1 &\!\! 0 &\!\! 0 &\!\! 0 &\!\! 0 &\!\! 0 \!\! \\\vspace*{-0.5mm}
\!\! 1 &\!\! -1 &\!\! 0 &\!\! 0 &\!\! 0 &\!\! 0 \!\! \\\vspace*{-0.5mm}
\!\! 1 &\!\! 0 &\!\! 0 &\!\! 0 &\!\! -1 &\!\! 1 \!\! \\\vspace*{-0.5mm}
\!\! 0 &\!\! 0 &\!\! 0 &\!\! 1 &\!\! 0 &\!\! 0 \!\! \\\vspace*{-0.5mm}
\!\! 1 &\!\! 0 &\!\! 0 &\!\! 1 &\!\! -1 &\!\! 0 \!\! \\\vspace*{-0.5mm}
\!\! 0 &\!\! 0 &\!\! 1 &\!\! 1 &\!\! -1 &\!\! 0 \!\!
\end{array}
\right)$}, 
{\scriptsize 
$\left(
\begin{array}{ccccccccc}\vspace*{-0.5mm}
\!\! 1 &\!\! 0 &\!\! 0 &\!\! 0 &\!\! 0 &\!\! 0 \!\! \\\vspace*{-0.5mm}
\!\! 0 &\!\! 1 &\!\! 0 &\!\! 0 &\!\! 0 &\!\! 0 \!\! \\\vspace*{-0.5mm}
\!\! 1 &\!\! 0 &\!\! -1 &\!\! 0 &\!\! 0 &\!\! 0 \!\! \\\vspace*{-0.5mm}
\!\! 0 &\!\! 0 &\!\! 0 &\!\! 1 &\!\! 0 &\!\! 0 \!\! \\\vspace*{-0.5mm}
\!\! 1 &\!\! 0 &\!\! -1 &\!\! 0 &\!\! 0 &\!\! 1 \!\! \\\vspace*{-0.5mm}
\!\! 0 &\!\! 0 &\!\! -1 &\!\! 0 &\!\! 1 &\!\! 0 \!\!
\end{array}
\right)$}. \vspace*{2mm}
\end{center}

By Algorithm \ref{Alg2}, we see that $[M]^{fl}$ is not invertible. 
Hence $L(M)^G$ is not retract $k$-rational. 
It follows from \cite[Theorem 2.10]{Yam12} that 
$k(x,y,z)^{\langle\sigma\rangle}$ is not retract $k$-rational 
(cf. \cite[Case 3 in the proof of Lemma 4.3]{Yam12}, in particular, we do not need 
to enlarge $M$ in order to vanish $H^1$).

\bigskip

\begin{verbatim}
gap> Read("FlabbyResolution.gap");

gap> v1:=[
> [1,0,0,0,0,0],
> [1,-1,0,0,0,0],
> [1,0,0,0,-1,1],
> [0,0,0,1,0,0],
> [1,0,0,1,-1,0],
> [0,0,1,1,-1,0]];;
gap> v2:=[
> [1,0,0,0,0,0],
> [0,1,0,0,0,0],
> [1,0,-1,0,0,0],
> [0,0,0,1,0,0],
> [1,0,-1,0,0,1],
> [0,0,-1,0,1,0]];;
gap> IsInvertibleF(Group(v1,v2)); 
false
\end{verbatim}
\end{example}

\bigskip

%%%%%%%%%%%%%%%%%%%%%%%%%%%%%%%%%%%%%%%%%%%%%%%%%%%%%%%%%%%%%%%%%%%%%%%%%%%%5

We generalize Theorem \ref{thY} as follows: 

\begin{theorem}\label{th3}
Let $k$ be a field of {\rm char} $k\neq 2$ and $k(x,y,z)$ be 
the rational function field over $k$ with variables $x,y,z$. 
Let $\sigma_{a,b,c,d}$ be a $k$-involution on $k(x,y,z)$ defined by 
\begin{align*}
\sigma_{a,b,c,d} : x\mapsto -x,\quad y\mapsto \frac{-ax^2+b}{y},\quad  z\mapsto 
\frac{-cx^2+d}{z}
\quad (a,b,c,d\in k^{\times})
\end{align*}
and $m=[k(\sqrt{a},\sqrt{b},\sqrt{c},\sqrt{d}):k]$.\\ 
{\rm (i)} $k(x,y,z)^{\langle\sigma_{a,b,c,d}\rangle}=k(t_1,t_2,t_3,t_4)$ 
where $t_1,t_2,t_3,t_4$ satisfy the relation 
\begin{align}
(t_1^2-a)(t_4^2-d)=(t_2^2-b)(t_3^2-c).\label{eq1}
\end{align}
{\rm (ii)} $k(x,y,z)^{\langle\sigma_{a,b,c,d}\rangle}$ 
is $k$-isomorphic to 
$k(x,y,z)^{\langle\sigma_{\tau(a),\tau(b),\tau(c),\tau(d)}\rangle}$ 
for $\tau\in D_4$ where $D_4=\langle (abdc),(ab)(cd)\rangle$ 
is the permutation group on the set $\{a,b,c,d\}$ which is isomorphic to 
the dihedral group of order $8$.\\
{\rm (iii)} If one of the following conditions holds, then 
$k(x,y,z)^{\langle\sigma_{a,b,c,d}\rangle}$ 
is not retract $k$-rational:\\
{\rm (C15)} $m=4$, 
{\rm (1)} $ab, acd\in k^{\times 2}$;
{\rm (2)} $bd, abc\in k^{\times 2}$;
{\rm (3)} $cd, abd\in k^{\times 2}$;
{\rm (4)} $ac, bcd\in k^{\times 2}$;\\
{\rm (C16)} $m=4$, 
{\rm (1)} $ad, abc\in k^{\times 2}$;
{\rm (2)} $bc, abd\in k^{\times 2}$;\\
{\rm (C18)} $m=8$, 
{\rm (1)} $ab\in k^{\times 2}$;
{\rm (2)} $ac\in k^{\times 2}$;
{\rm (3)} $bd\in k^{\times 2}$;
{\rm (4)} $cd\in k^{\times 2}$;\\
{\rm (C19)} $m=8$, 
{\rm (1)}  $ad\in k^{\times 2}$;
{\rm (2)} $bc\in k^{\times 2}$;\\
{\rm (C20)} $m=8$, 
{\rm (1)} $abc\in k^{\times 2}$;
{\rm (2)} $bcd\in k^{\times 2}$;
{\rm (3)} $abd\in k^{\times 2}$;
{\rm (4)} $acd\in k^{\times 2}$;\\
{\rm (C21)} $m=8$, $abcd\in k^{\times 2}$;\\
{\rm (C22)} $m=16$.
\end{theorem}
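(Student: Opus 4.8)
The plan is to prove the three parts in order: (i) by an explicit invariant-theoretic computation, (ii) as a combinatorial symmetry of the relation obtained in (i), and (iii) by reducing to a flabby-class computation of the kind carried out by the algorithms of Section \ref{seAlg}. For part (i) I would take the ``real and imaginary parts'' of $y$ and $z$ relative to $\sigma:=\sigma_{a,b,c,d}$. Since $\sigma(x)=-x$, $y\,\sigma(y)=-ax^2+b$ and $z\,\sigma(z)=-cx^2+d$, the elements
\[
t_1=\frac{y-\sigma(y)}{2x},\quad t_2=\frac{y+\sigma(y)}{2},\quad t_3=\frac{z-\sigma(z)}{2x},\quad t_4=\frac{z+\sigma(z)}{2}
\]
are all $\sigma$-invariant (each ``imaginary part'' is fixed because its numerator and $x$ both change sign), and $y=t_2+xt_1$, $z=t_4+xt_3$. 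A direct computation gives $t_2^2-x^2t_1^2=y\,\sigma(y)=-ax^2+b$, hence $t_2^2-b=x^2(t_1^2-a)$, and likewise $t_4^2-d=x^2(t_3^2-c)$; eliminating $x^2=(t_2^2-b)/(t_1^2-a)$ yields the single relation (\ref{eq1}). As $k(x,y,z)=k(x,t_1,t_2,t_3,t_4)$ is a degree-two extension of $k(t_1,t_2,t_3,t_4)$ on which $\sigma$ negates $x$ and fixes the $t_i$, this proves $k(x,y,z)^{\langle\sigma\rangle}=k(t_1,t_2,t_3,t_4)$.

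Part (ii) is then nearly immediate from the shape of (\ref{eq1}). After relabelling the $t_i$, the relation $(t_1^2-a)(t_4^2-d)=(t_2^2-b)(t_3^2-c)$ is invariant under exactly those permutations of $\{a,b,c,d\}$ that preserve the partition $\{\{a,d\},\{b,c\}\}$ up to interchange of its two blocks; this stabilizer is the dihedral group $D_4=\langle(abdc),(ab)(cd)\rangle$ of order $8$. For each $\tau\in D_4$ the induced relabelling of the generators, combined with (i), produces the asserted $k$-isomorphism $k(x,y,z)^{\langle\sigma_{a,b,c,d}\rangle}\simeq k(x,y,z)^{\langle\sigma_{\tau(a),\tau(b),\tau(c),\tau(d)}\rangle}$. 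This reduces the list (C15)--(C22) to a few representatives modulo $D_4$.

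For part (iii) I would follow the strategy of the alternative proof of Theorem \ref{thY}. Set $L=k(\sqrt a,\sqrt b,\sqrt c,\sqrt d)$ and $G={\rm Gal}(L/k)$, so that $G\simeq C_2^2$ when $m=4$ (cases (C15),(C16)), $G\simeq C_2^3$ when $m=8$ (cases (C18)--(C21)), and $G\simeq C_2^4$ when $m=16$ (case (C22)). Over $L$ the quartic relation (\ref{eq1}) factors, each $t_i^2-(\cdot)=(t_i-\sqrt{(\cdot)})(t_i+\sqrt{(\cdot)})$ being a norm, so (\ref{eq1}) becomes a single multiplicative (multinorm) relation among the eight linear forms $t_i\pm\sqrt{(\cdot)}$, on which $G$ acts by permuting each conjugate pair. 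This exhibits $k(x,y,z)^{\langle\sigma_{a,b,c,d}\rangle}$ as, stably, the invariant field $L(M)^G$ of an explicit $G$-lattice $M$ coming from the monomial $G$-action on these forms, enlarged by a permutation summand to a genuine lattice with vanishing $H^1$ as in the rank-$6$ model used for Theorem \ref{thY}. For one representative of each $D_4$-orbit I would present $M$ as an integral matrix group and apply {\tt IsInvertibleF} of Algorithm \ref{Alg2} to verify that $[M]^{fl}$ is not invertible; Theorem \ref{thEM73}~(iii) then shows $L(M)^G$ is not retract $k$-rational, and \cite[Theorem 2.10]{Yam12} transfers this to $k(x,y,z)^{\langle\sigma_{a,b,c,d}\rangle}$.

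The main obstacle will be the bookkeeping in part (iii): writing down, for each representative of (C15)--(C22), the $G$-lattice $M$ with the monomial action dictated by precisely which products of $a,b,c,d$ lie in $k^{\times2}$, and checking that $D_4$-equivalent conditions give $\GL$-conjugate lattices so that a single computation settles an entire orbit. Once $M$ is correctly encoded, the non-invertibility of $[M]^{fl}$ is a finite, mechanical GAP verification, and Theorem \ref{thEM73}~(iii) with the transfer result of \cite{Yam12} finishes the proof. By comparison part (i) is a brief computation whose only delicate point is tracking signs to land on exactly (\ref{eq1}), and part (ii) is purely combinatorial.
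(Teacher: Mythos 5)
Parts (i) and (ii) are correct and essentially the paper's own argument: the same $t_i$, the same degree-two count $[k(x,y,z):k(t_1,t_2,t_3,t_4)]\leq 2$, and (ii) read off from the symmetry of the relation (\ref{eq1}). The gap is in part (iii), at its heart: you never actually construct the $G$-lattice $M$ that is to be fed to {\tt IsInvertibleF}. Your plan is to extract $M$ from the factorization of $(t_1^2-a)(t_4^2-d)=(t_2^2-b)(t_3^2-c)$ into the eight linear forms $t_1\pm\sqrt{a},\dots,t_4\pm\sqrt{d}$ over $L$, but these forms do not present the fixed field, even stably, as $L(M)^G$ for a lattice $M$: each conjugate pair differs \emph{additively} by a constant, and in the natural multiplicative coordinates $v_i=(t_i-\sqrt{\cdot})/(t_i+\sqrt{\cdot})$ one has $t_i^2-(\cdot)=4(\cdot)\,v_i/(1-v_i)^2$, so (\ref{eq1}) becomes $ad\,v_1v_4(1-v_2)^2(1-v_3)^2=bc\,v_2v_3(1-v_1)^2(1-v_4)^2$, which is not a monomial relation and does not define a torus in any evident way. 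The paper does not touch (\ref{eq1}) in part (iii); instead, for (C16) and (C21) it returns to $L(x,y,z)$, factors $-ax^2+b=(\beta+\alpha x)(\beta-\alpha x)$ to replace $y$ by $y'=(\alpha x+\beta)/y$ with $\sigma(y')=1/y'$ (similarly for $z$), passes to the invariants $y_1=x^2$, $y_2=x(1-y')/(1+y')$, $y_3=\dots$, and obtains a quasi-monomial action of $G={\rm Gal}(L/k)$ on $L(y_1,y_2,y_3)$; the rank-$9$ lattice on which {\tt IsInvertibleF} is run is this rank-$3$ lattice enlarged by the six coefficient functions, exactly as \cite[Theorem 2.10]{Yam12} requires. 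Without this (or an equivalent) construction there is nothing to compute.

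You also forgo two reductions that the paper relies on, and which your outline would have to replace by unspecified computations over $G\simeq C_2^3$ and $C_2^4$. Case (C15) needs no lattice at all: $ab\in k^{\times 2}$ allows the substitution $Y=ay/(ax+\sqrt{ab})$, after which $\sigma:Y\mapsto a/Y$ and Theorem \ref{thY} applies directly. Cases (C18)--(C20) and (C22) follow from (C15), (C16) and (C21) applied over a suitable quadratic extension $k'$ of $k$: retract $k$-rationality would force retract rationality over $k'$, which those cases exclude. With these reductions only (C16) (with $G\simeq C_2^2$) and (C21) (with $G\simeq C_2^3$) require an explicit lattice and a GAP verification. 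If you repair the construction of $M$ as above and add the two reductions, your strategy becomes the paper's proof.
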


\bigskip

\begin{proof}[Proof of Theorem \ref{th3}]
We prove the assertion (i). 
Put 
\begin{align*}
&t_1:=\frac{1}{2x}\left(y-\frac{-ax^2+b}{y}\right),\ 
t_2:=\frac{1}{2}\left(y+\frac{-ax^2+b}{y}\right),\\ 
&t_3:=\frac{1}{2x}\left(z-\frac{-cx^2+d}{z}\right),\ 
t_4:=\frac{1}{2}\left(z+\frac{-cx^2+d}{z}\right).
\end{align*}
Then we see that $k(t_1,t_2,t_3,t_4)\subset k(x,y,z)^{\langle\sigma_{a,b,c,d}\rangle}$. 
It follows from the equalities 
\[
y=t_2+t_1x,\ z=t_4+t_3x,\ x^2(t_3^2-c)-(t_4^2-d)=0
\]
that $[k(x,y,z):k(t_1,t_2,t_3,t_4)]\leq 2$. 
Hence we get $k(x,y,z)^{\langle\sigma_{a,b,c,d}\rangle}=k(t_1,t_2,t_3,t_4)$. 
The relation $(t_1^2-a)(t_4^2-d)=(t_2^2-b)(t_3^2-c)$ may be obtained 
by the direct calculation. 
%%%%%%%%%%%%%%%%%%%%%%%%%%%%%%%%%%%%%%%%%%%%%%%%%%%%%%%%%%%%%%%%%%%%%%%%%%%%%
The assertion (ii) follows from (i). 
%%%%%%%%%%%%%%%%%%%%%%%%%%%%%%%%%%%%%%%%%%%%%%%%%%%%%%%%%%%%%%%%%%%%%%%%
We will prove the assertion (iii). \\

%%%%%%%%%%%%%%%%%%%%%%%%%%%%%%%%%%%%%%%%%%%%%%%%%%%%%%%%%%%%%%%%%%
The case {\rm (C15)}: $m=4$. By (ii), we should show only the case 
(1) $ab,acd\in k^{\times 2}$. 
Define $Y:=\frac{ay}{ax+\sqrt{ab}}$. Then 
$k(x,y,z)=k(x,Y,z)$ and $\sigma_{a,b,c,d}$ acts on $k(x,Y,z)$ by 
\begin{align*}
\sigma_{a,b,c,d} : x\mapsto -x,\quad Y\mapsto \frac{a}{Y},\quad  
z\mapsto \frac{-cx^2+d}{z}.
\end{align*}
By Theorem \ref{thY} (ii), $k(x,y,z)^{\langle\sigma_{a,b,c,d}\rangle}$ is 
not retract $k$-rational.\\

%%%%%%%%%%%%%%%%%%%%%%%%%%%%%%%%%%%%%%%%%%%%%%%%%%%%%%%%%%%%%%%%%%%%%
The case {\rm (C16)}: $m=4$. By (ii), we should treat only the case 
(1) $ad, abc\in k^{\times 2}$. 
Let $L=k(\alpha,\beta,\gamma,\delta)$ 
where $\alpha^2=a$, $\beta^2=b$, $\gamma^2=c$, $\delta^2=d$. 
Then $L=k(\alpha,\beta)$ and $[L:k]=4$. 
Put $y':=(\alpha x+\beta)/y$, $z':=(\gamma x+\delta)/z$. 
Then $L(x,y,z)=L(x,y',z')$ and $\sigma_{a,b,c,d}$ acts on $L(x,y',z')$ by 
\[
\sigma_{a,b,c,d} : x\mapsto x,\ y'\mapsto \frac{1}{y'},\ z'\mapsto \frac{1}{z'}. 
\]
We put 
\[
y_1:=x^2,\ 
y_2:=x\ \frac{1-y'}{1+y'}=x\ \frac{y-\alpha x-\beta}{y+\alpha x+\beta},\ 
y_3:=x\ \frac{1-z'}{1+z'}=x\ \frac{z-\gamma x-\delta}{y+\gamma x+\delta}.
\]
By the assumptions $ad, abc\in k^{\times 2}$, 
there exist $e, f\in k^\times$ such that $\gamma=\alpha\beta e$ 
and $\delta=\alpha f$. 
Then $k(x,y,z)^{\langle\sigma_{a,b,c,d}\rangle}
=(L(x,y,z)^{\langle\sigma_{a,b,c,d}\rangle})^{\langle\rho_a,\rho_b\rangle}
=L(y_1,y_2,y_3)^{\langle\rho_a,\rho_b\rangle}$, where 
\begin{align*}
\rho_a &: \alpha\mapsto -\alpha,\ y_1\mapsto y_1,\ y_2\mapsto 
\frac{\alpha y_1+\beta y_2}{\alpha y_2+\beta},\ y_3\mapsto \frac{y_1}{y_3},\\
\rho_b &: \beta\mapsto -\beta,\ y_1\mapsto y_1,\ y_2\mapsto \frac{y_1(\alpha y_2+\beta)}{\alpha y_1+\beta y_2},\ y_3\mapsto 
\frac{\beta e y_1+f y_3}{\beta e y_3+f}. 
\end{align*}
Let $G={\rm Gal}(L/k)=\langle\rho_a,\rho_b\rangle\simeq C_2\times C_2$. 
We consider the $G$-lattice $M=\langle y_1,y_2,y_3,t_1,t_2,t_3,t_4,u_1,u_2\rangle$ 
of rank $9$ where 
\[
(t_1,t_2,t_3,t_4,u_1,u_2)
=(\alpha y_1+\beta y_2, \alpha y_2+\beta, 
\beta e y_1+f y_3,\beta e y_3+f, ay_1-b,-be^2y_1+f^2).
\]
The action of $G$ on $L(M)$ is given by 
\begin{align*}
\rho_a &: \alpha\mapsto -\alpha,\  
y_2\mapsto \frac{t_1}{t_2},\ y_3\mapsto \frac{y_1}{y_3},\ 
t_1\mapsto -\frac{u_1y_2}{t_2},\ t_2\mapsto -\frac{u_1}{t_2},\ 
t_3\mapsto \frac{y_1t_4}{y_3},\ t_4\mapsto \frac{t_3}{y_3},\\ 
\rho_b &: \beta\mapsto -\beta,\ 
y_2\mapsto \frac{y_1t_2}{t_1},\ 
y_3\mapsto \frac{t_3}{t_4},\ 
t_1\mapsto \frac{y_1u_1}{t_1},\ 
t_2\mapsto \frac{u_1y_2}{t_1},\ 
t_3\mapsto \frac{u_2y_3}{t_4},\ 
t_4\mapsto \frac{u_2}{t_4} 
\end{align*}
where $y_1,u_1,u_2$ are invariants under the action of $G$. 
The actions of $\rho_a$ and $\rho_b$ on $M$ are represented 
as matrices

\begin{center}
{\scriptsize
$\left(
\begin{array}{ccccccccc}\vspace*{-0.5mm}
\!\! 1 &\!\! 0 &\!\! 0 &\!\! 0 &\!\! 0 &\!\! 0 &\!\! 0 &\!\! 0 &\!\! 0 \!\! \\\vspace*{-0.5mm}
\!\! 0 &\!\! 0 &\!\! 0 &\!\! 1 &\!\! -1 &\!\! 0 &\!\! 0 &\!\! 0 &\!\! 0 \!\! \\\vspace*{-0.5mm}
\!\! 1 &\!\! 0 &\!\! -1 &\!\! 0 &\!\! 0 &\!\! 0 &\!\! 0 &\!\! 0 &\!\! 0 \!\! \\\vspace*{-0.5mm}
\!\! 0 &\!\! 1 &\!\! 0 &\!\! 0 &\!\! -1 &\!\! 0 &\!\! 0 &\!\! 1 &\!\! 0 \!\! \\\vspace*{-0.5mm}
\!\! 0 &\!\! 0 &\!\! 0 &\!\! 0 &\!\! -1 &\!\! 0 &\!\! 0 &\!\! 1 &\!\! 0 \!\! \\\vspace*{-0.5mm}
\!\! 1 &\!\! 0 &\!\! -1 &\!\! 0 &\!\! 0 &\!\! 0 &\!\! 1 &\!\! 0 &\!\! 0 \!\! \\\vspace*{-0.5mm}
\!\! 0 &\!\! 0 &\!\! -1 &\!\! 0 &\!\! 0 &\!\! 1 &\!\! 0 &\!\! 0 &\!\! 0 \!\! \\\vspace*{-0.5mm}
\!\! 0 &\!\! 0 &\!\! 0 &\!\! 0 &\!\! 0 &\!\! 0 &\!\! 0 &\!\! 1 &\!\! 0 \!\! \\\vspace*{-0.5mm}
\!\! 0 &\!\! 0 &\!\! 0 &\!\! 0 &\!\! 0 &\!\! 0 &\!\! 0 &\!\! 0 &\!\! 1\!\!
\end{array}
\right)$}, 
{\scriptsize 
$\left(
\begin{array}{ccccccccc}\vspace*{-0.5mm}
\!\! 1 &\!\! 0 &\!\! 0 &\!\! 0 &\!\! 0 &\!\! 0 &\!\! 0 &\!\! 0 &\!\! 0 \!\! \\\vspace*{-0.5mm}
\!\! 1 &\!\! 0 &\!\! 0 &\!\! -1 &\!\! 1 &\!\! 0 &\!\! 0 &\!\! 0 &\!\! 0 \!\! \\\vspace*{-0.5mm}
\!\! 0 &\!\! 0 &\!\! 0 &\!\! 0 &\!\! 0 &\!\! 1 &\!\! -1 &\!\! 0 &\!\! 0 \!\! \\\vspace*{-0.5mm}
\!\! 1 &\!\! 0 &\!\! 0 &\!\! -1 &\!\! 0 &\!\! 0 &\!\! 0 &\!\! 1 &\!\! 0 \!\! \\\vspace*{-0.5mm}
\!\! 0 &\!\! 1 &\!\! 0 &\!\! -1 &\!\! 0 &\!\! 0 &\!\! 0 &\!\! 1 &\!\! 0 \!\! \\\vspace*{-0.5mm}
\!\! 0 &\!\! 0 &\!\! 1 &\!\! 0 &\!\! 0 &\!\! 0 &\!\! -1 &\!\! 0 &\!\! 1 \!\! \\\vspace*{-0.5mm}
\!\! 0 &\!\! 0 &\!\! 0 &\!\! 0 &\!\! 0 &\!\! 0 &\!\! -1 &\!\! 0 &\!\! 1 \!\! \\\vspace*{-0.5mm}
\!\! 0 &\!\! 0 &\!\! 0 &\!\! 0 &\!\! 0 &\!\! 0 &\!\! 0 &\!\! 1 &\!\! 0 \!\! \\\vspace*{-0.5mm}
\!\! 0 &\!\! 0 &\!\! 0 &\!\! 0 &\!\! 0 &\!\! 0 &\!\! 0 &\!\! 0 &\!\! 1\!\!
\end{array}
\right)$}. 
\end{center}

By Algorithm \ref{Alg2}, we see that $[M]^{fl}$ is not invertible 
(see Example \ref{exf} below). 
Hence $L(M)^G$ is not retract $k$-rational. 
It follows from \cite[Theorem 2.10]{Yam12} that 
$k(x,y,z)^{\langle\sigma_{a,b,c,d}\rangle}$ is not retract $k$-rational. \\

%%%%%%%%%%%%%%%%%%%%%%%%%%%%%%%%%%%%%%%%%%%%%%%%%%%%
The cases (C18), (C19) and (C20). 
By the result of (C15) and (C16), $k(x,y,z)^{\langle\sigma_{a,b,c,d}\rangle}$ is 
not retract rational over some quadratic extension of $k$, hence 
not retract $k$-rational. \\

%%%%%%%%%%%%%%%%%%%%%%%%%%%%%%%%%%%%%%%%%%%%%%%%%%%%%%%%%
The case (C21): $m=8$, $abcd\in k^{\times 2}$. 
Let $L=k(\sqrt{a},\sqrt{b},\sqrt{c},\sqrt{d})$. 
We assume $[L:k]=8$ and $abcd\in k^{\times 2}$, 
and hence $d=abce^2$ for some $e\in k^\times$. 
We put $\alpha:=\sqrt{a}$, $\beta:=\sqrt{b}$, $\gamma:=\sqrt{c}$. 
Then $L=k(\alpha,\beta,\gamma)$. 
We first see that $L(x,y,z)^{\langle\sigma_{a,b,c,d}\rangle}$ is $L$-rational as follows. 
Put 
\[
y_1:=x^2,\ y_2:=x\,\frac{y-\alpha x-\beta}{y+\alpha x+\beta},\ 
y_3:=x\,\frac{z-\gamma x-\alpha\beta\gamma e}{z+\gamma x+\alpha\beta\gamma e}.
\]
Then $L(x,y,z)=L(x,y_2,y_3)$ and the action of $\sigma_{a,b,c,d}$ on $L(x,y_2,y_3)$ is 
given by $\sigma_{a,b,c,d} : x\mapsto -x, y_2\mapsto y_2, y_3\mapsto y_3$. 
Hence the field $L(x,y,z)^{\langle\sigma_{a,b,c,d}\rangle}=L(y_1,y_2,y_3)$ is 
$L$-rational. 
Let $G=\langle \rho_a,\rho_b,\rho_c\rangle\simeq C_2\times C_2\times C_2$ 
be the Galois group ${\rm Gal}(L/k)$ of $L/k$ where 
\begin{align*}
\rho_a &: \alpha\mapsto -\alpha,\ \beta\mapsto \beta,\ \gamma\mapsto \gamma,\\ 
\rho_b &: \alpha\mapsto \alpha,\ \beta\mapsto -\beta,\ \gamma\mapsto \gamma,\\ 
\rho_c &: \alpha\mapsto \alpha,\ \beta\mapsto \beta,\ \gamma\mapsto -\gamma. 
\end{align*}
Then we get $k(x,y,z)^{\langle\sigma_{a,b,c,d}\rangle}=(L(x,y,z)^{\langle\sigma_{a,b,c,d}\rangle})^{G}
=L(y_1,y_2,y_3)^{G}$. 
The action of $G$ on $L(y_1,y_2,y_3)$ is given by 
\begin{align*}
\rho_a &: \alpha\mapsto -\alpha,\ y_1\mapsto y_1,\ 
y_2\mapsto \frac{\alpha y_1+\beta y_2}{\alpha y_2+\beta},\ 
y_3\mapsto \frac{y_1(y_3+\alpha\beta e)}{y_1+\alpha\beta e y_3},\\
\rho_b &: \beta\mapsto -\beta,\ y_1\mapsto y_1,\ 
y_2\mapsto \frac{y_1(\alpha y_2+\beta)}{\alpha y_1+\beta y_2},\ 
y_3\mapsto \frac{y_1(y_3+\alpha\beta e)}{y_1+\alpha\beta e y_3},\\
\rho_c &: \gamma\mapsto -\gamma,\ y_1\mapsto y_1,\ 
y_2\mapsto y_2,\ y_3\mapsto \frac{y_1}{y_3}.
\end{align*}
%%%%%%%%%%%%%%%%%%%%%%%%%%%%%%%%%%%%%%%%%%%%%%%%%%%%%%%%%%%%%%%%%%%%%%%%%%%%%%%%%%%%%%%%%%%%%
We consider the $G$-lattice $M=\langle y_1,y_2,y_3,t_1,t_2,t_3,t_4,u_1,u_2\rangle$ 
of rank $9$ where 
\[
(t_1,t_2,t_3,t_4,u_1,u_2)=(\alpha y_1+\beta y_2, \alpha y_2+\beta, y_3+\alpha\beta e, 
y_1+\alpha\beta e y_3, b-ay_1, y_1-abe^2).
\]
The action of $G$ on $L(M)$ is given by 
\begin{align*}
\rho_a &: \alpha\mapsto -\alpha,\ 
y_2\mapsto \frac{t_1}{t_2},\ y_3\mapsto \frac{y_1t_3}{t_4},\ 
t_1\mapsto \frac{u_1y_2}{t_2},\ t_2\mapsto \frac{u_1}{t_2},\ 
t_3\mapsto \frac{u_2y_3}{t_4},\ t_4\mapsto \frac{y_1u_2}{t_4},\\ 
\rho_b &: \beta\mapsto -\beta,\ 
y_2\mapsto \frac{y_1t_2}{t_1},\ y_3\mapsto \frac{y_1t_3}{t_4},\ 
t_1\mapsto -\frac{y_1u_2}{t_1},\ t_2\mapsto -\frac{u_1y_2}{t_1},\ 
t_3\mapsto \frac{u_2y_3}{t_4},\ t_4\mapsto \frac{y_1u_2}{t_4},\\
\rho_c &: \gamma\mapsto -\gamma,\ 
y_2\mapsto y_2,\ y_3\mapsto \frac{y_1}{y_3},\ t_1\mapsto t_1,\ 
t_2\mapsto t_2,\ t_3\mapsto \frac{t_4}{y_3},\ t_4\mapsto \frac{y_1t_3}{y_3}
\end{align*}
where $y_1,u_1,u_2$ are invariants under the action of $G$. 
The actions of $\rho_a$, $\rho_b$ and $\rho_c$ on $M$ 
are represented as matrices
\begin{center}
{\scriptsize
$\left(
\begin{array}{ccccccccc}\vspace*{-0.5mm}
\!\! 1 &\!\! 0 &\!\! 0 &\!\! 0 &\!\! 0 &\!\! 0 &\!\! 0 &\!\! 0 &\!\! 0 \!\! \\\vspace*{-0.5mm}
\!\! 0 &\!\! 0 &\!\! 0 &\!\! 1 &\!\! -1 &\!\! 0 &\!\! 0 &\!\! 0 &\!\! 0 \!\! \\\vspace*{-0.5mm}
\!\! 1 &\!\! 0 &\!\! 0 &\!\! 0 &\!\! 0 &\!\! 1 &\!\! -1 &\!\! 0 &\!\! 0 \!\! \\\vspace*{-0.5mm}
\!\! 0 &\!\! 1 &\!\! 0 &\!\! 0 &\!\! -1 &\!\! 0 &\!\! 0 &\!\! 1 &\!\! 0 \!\! \\\vspace*{-0.5mm}
\!\! 0 &\!\! 0 &\!\! 0 &\!\! 0 &\!\! -1 &\!\! 0 &\!\! 0 &\!\! 1 &\!\! 0 \!\! \\\vspace*{-0.5mm}
\!\! 0 &\!\! 0 &\!\! 1 &\!\! 0 &\!\! 0 &\!\! 0 &\!\! -1 &\!\! 0 &\!\! 1 \!\! \\\vspace*{-0.5mm}
\!\! 1 &\!\! 0 &\!\! 0 &\!\! 0 &\!\! 0 &\!\! 0 &\!\! -1 &\!\! 0 &\!\! 1 \!\! \\\vspace*{-0.5mm}
\!\! 0 &\!\! 0 &\!\! 0 &\!\! 0 &\!\! 0 &\!\! 0 &\!\! 0 &\!\! 1 &\!\! 0 \!\! \\\vspace*{-0.5mm}
\!\! 0 &\!\! 0 &\!\! 0 &\!\! 0 &\!\! 0 &\!\! 0 &\!\! 0 &\!\! 0 &\!\! 1\!\!
\end{array}
\right)$}, 
{\scriptsize 
$\left(
\begin{array}{ccccccccc}\vspace*{-0.5mm}
\!\! 1 &\!\! 0 &\!\! 0 &\!\! 0 &\!\! 0 &\!\! 0 &\!\! 0 &\!\! 0 &\!\! 0 \!\! \\\vspace*{-0.5mm}
\!\! 1 &\!\! 0 &\!\! 0 &\!\! -1 &\!\! 1 &\!\! 0 &\!\! 0 &\!\! 0 &\!\! 0 \!\! \\\vspace*{-0.5mm}
\!\! 1 &\!\! 0 &\!\! 0 &\!\! 0 &\!\! 0 &\!\! 1 &\!\! -1 &\!\! 0 &\!\! 0 \!\! \\\vspace*{-0.5mm}
\!\! 1 &\!\! 0 &\!\! 0 &\!\! -1 &\!\! 0 &\!\! 0 &\!\! 0 &\!\! 1 &\!\! 0 \!\! \\\vspace*{-0.5mm}
\!\! 0 &\!\! 1 &\!\! 0 &\!\! -1 &\!\! 0 &\!\! 0 &\!\! 0 &\!\! 1 &\!\! 0 \!\! \\\vspace*{-0.5mm}
\!\! 0 &\!\! 0 &\!\! 1 &\!\! 0 &\!\! 0 &\!\! 0 &\!\! -1 &\!\! 0 &\!\! 1 \!\! \\\vspace*{-0.5mm}
\!\! 1 &\!\! 0 &\!\! 0 &\!\! 0 &\!\! 0 &\!\! 0 &\!\! -1 &\!\! 0 &\!\! 1 \!\! \\\vspace*{-0.5mm}
\!\! 0 &\!\! 0 &\!\! 0 &\!\! 0 &\!\! 0 &\!\! 0 &\!\! 0 &\!\! 1 &\!\! 0 \!\! \\\vspace*{-0.5mm}
\!\! 0 &\!\! 0 &\!\! 0 &\!\! 0 &\!\! 0 &\!\! 0 &\!\! 0 &\!\! 0 &\!\! 1\!\!
\end{array}
\right)$}, 
{\scriptsize $\left(
\begin{array}{ccccccccc}\vspace*{-0.5mm}
\!\! 1 &\!\! 0 &\!\! 0 &\!\! 0 &\!\! 0 &\!\! 0 &\!\! 0 &\!\! 0 &\!\! 0\!\! \\\vspace*{-0.5mm}
\!\! 0 &\!\! 1 &\!\! 0 &\!\! 0 &\!\! 0 &\!\! 0 &\!\! 0 &\!\! 0 &\!\! 0\!\! \\\vspace*{-0.5mm}
\!\! 1 &\!\! 0 &\!\! -1 &\!\! 0 &\!\! 0 &\!\! 0 &\!\! 0 &\!\! 0 &\!\! 0\!\! \\\vspace*{-0.5mm}
\!\! 0 &\!\! 0 &\!\! 0 &\!\! 1 &\!\! 0 &\!\! 0 &\!\! 0 &\!\! 0 &\!\! 0\!\! \\\vspace*{-0.5mm}
\!\! 0 &\!\! 0 &\!\! 0 &\!\! 0 &\!\! 1 &\!\! 0 &\!\! 0 &\!\! 0 &\!\! 0\!\! \\\vspace*{-0.5mm}
\!\! 0 &\!\! 0 &\!\! -1 &\!\! 0 &\!\! 0 &\!\! 0 &\!\! 1 &\!\! 0 &\!\! 0\!\! \\\vspace*{-0.5mm}
\!\! 1 &\!\! 0 &\!\! -1 &\!\! 0 &\!\! 0 &\!\! 1 &\!\! 0 &\!\! 0 &\!\! 0\!\! \\\vspace*{-0.5mm}
\!\! 0 &\!\! 0 &\!\! 0 &\!\! 0 &\!\! 0 &\!\! 0 &\!\! 0 &\!\! 1 &\!\! 0\!\! \\\vspace*{-0.5mm}
\!\! 0 &\!\! 0 &\!\! 0 &\!\! 0 &\!\! 0 &\!\! 0 &\!\! 0 &\!\! 0 &\!\! 1\!\!
\end{array}
\right)$}.
\end{center}
By Algorithm \ref{Alg2}, we obtain that $[M]^{fl}$ is not invertible 
(see Example \ref{exf} below). 
Hence $L(M)^G$ is not retract $k$-rational. 
By \cite[Theorem 2.10]{Yam12}, 
$k(x,y,z)^{\langle\sigma_{a,b,c,d}\rangle}$ is not retract $k$-rational. \\

The case (C22): $m=16$. 
By the result of (C21), $k(x,y,z)^{\langle\sigma_{a,b,c,d}\rangle}$ is 
not retract rational over a quadratic extension of $k$, hence 
not retract $k$-rational. 
\end{proof}

\bigskip

\begin{example}[{$[M]^{fl}$ is not invertible for the cases  (C16) and (C21)}]\label{exf}
The following GAP computation confirms that 
$[M]^{fl}$ is not invertible hence 
$L(M)^{G}$ is not retract $k$-rational as 
in the cases (C16) and (C21) of 
the proof of Theorem \ref{th3} above. 

\bigskip

\begin{verbatim}
gap> Read("FlabbyResolution.gap");

gap> v1:=[
> [1,0,0,0,0,0,0,0,0],
> [0,0,0,1,-1,0,0,0,0],
> [1,0,-1,0,0,0,0,0,0],
> [0,1,0,0,-1,0,0,1,0],
> [0,0,0,0,-1,0,0,1,0],
> [1,0,-1,0,0,0,1,0,0],
> [0,0,-1,0,0,1,0,0,0],
> [0,0,0,0,0,0,0,1,0],
> [0,0,0,0,0,0,0,0,1]];;
gap> v2:=[
> [1,0,0,0,0,0,0,0,0],
> [1,0,0,-1,1,0,0,0,0],
> [0,0,0,0,0,1,-1,0,0],
> [1,0,0,-1,0,0,0,1,0],
> [0,1,0,-1,0,0,0,1,0],
> [0,0,1,0,0,0,-1,0,1],
> [0,0,0,0,0,0,-1,0,1],
> [0,0,0,0,0,0,0,1,0],
> [0,0,0,0,0,0,0,0,1]];;
gap> IsInvertibleF(Group(v1,v2));
false

gap> v1:=[
> [1,0,0,0,0,0,0,0,0],
> [0,0,0,1,-1,0,0,0,0],
> [1,0,0,0,0,1,-1,0,0],
> [0,1,0,0,-1,0,0,1,0],
> [0,0,0,0,-1,0,0,1,0],
> [0,0,1,0,0,0,-1,0,1],
> [1,0,0,0,0,0,-1,0,1],
> [0,0,0,0,0,0,0,1,0],
> [0,0,0,0,0,0,0,0,1]];;
gap> v2:=[
> [1,0,0,0,0,0,0,0,0],
> [1,0,0,-1,1,0,0,0,0],
> [1,0,0,0,0,1,-1,0,0],
> [1,0,0,-1,0,0,0,1,0],
> [0,1,0,-1,0,0,0,1,0],
> [0,0,1,0,0,0,-1,0,1],
> [1,0,0,0,0,0,-1,0,1],
> [0,0,0,0,0,0,0,1,0],
> [0,0,0,0,0,0,0,0,1]];;
gap> v3:=[
> [1,0,0,0,0,0,0,0,0],
> [0,1,0,0,0,0,0,0,0],
> [1,0,-1,0,0,0,0,0,0],
> [0,0,0,1,0,0,0,0,0],
> [0,0,0,0,1,0,0,0,0],
> [0,0,-1,0,0,0,1,0,0],
> [1,0,-1,0,0,1,0,0,0],
> [0,0,0,0,0,0,0,1,0],
> [0,0,0,0,0,0,0,0,1]];;
gap> IsInvertibleF(Group(v1,v2,v3));                                    
false
\end{verbatim}
\end{example}

\bigskip

\begin{lemma}\label{lem3}
Let $k$ be a field of {\rm char} $k\neq 2$ and $k(x,y,z)$ be 
the rational function field over $k$ with variables $x,y,z$. 
Let $\sigma_{a,b,c,d}$ be a $k$-involution on $k(x,y,z)$ defined by 
\begin{align*}
\sigma_{a,b,c,d} : x\mapsto -x,\quad y\mapsto \frac{-ax^2+b}{y},\quad  z\mapsto 
\frac{-cx^2+d}{z}
\quad (a,b,c,d\in k^{\times})
\end{align*}
and $m=[k(\sqrt{a},\sqrt{b},\sqrt{c},\sqrt{d}):k]$. 
If one of the following conditions holds, then 
$k(x,y,z)^{\langle\sigma_{a,b,c,d}\rangle}$ 
is $k$-rational:\\
{\rm (C1)} $m=1$;\\
{\rm (C2)} $m=2$, 
{\rm (1)} $a,b,c\in k^{\times 2}$;
{\rm (2)} $a,b,d\in k^{\times 2}$;
{\rm (3)} $a,c,d\in k^{\times 2}$;
{\rm (4)} $b,c,d\in k^{\times 2}$;\\
{\rm (C3)} $m=2$, 
{\rm (1)} $a,b,cd\in k^{\times 2}$;
{\rm (2)} $b,d,ac\in k^{\times 2}$;
{\rm (3)} $d,c,ab\in k^{\times 2}$;
{\rm (4)} $c,a,bd\in k^{\times 2}$;\\
{\rm (C5)} $m=2$, 
{\rm (1)} $a,bd,cd\in k^{\times 2}$;
{\rm (2)} $b,cd,ac\in k^{\times 2}$;
{\rm (3)} $d,ac,ab\in k^{\times 2}$;
{\rm (4)} $c,ab,bd\in k^{\times 2}$;\\
{\rm (C6)} $m=2$, $ab,ac,ad\in k^{\times 2}$;\\
%%%%%%%%%%%%%%%%%%%%%%%%%%%%%%%%%%%%%%%%%%%%%%%%
{\rm (C7)} $m=4$, 
{\rm (1)} $a, b\in k^{\times 2}$;
{\rm (2)} $b, d\in k^{\times 2}$;
{\rm (3)} $d, c\in k^{\times 2}$;
{\rm (4)} $c, a\in k^{\times 2}$;\\
{\rm (C10)} $m=4$, 
{\rm (1)} $a, bd\in k^{\times 2}$;
{\rm (2)} $b, dc\in k^{\times 2}$;
{\rm (3)} $d, ac\in k^{\times 2}$;
{\rm (4)} $c, ab\in k^{\times 2}$;\\
\hspace*{20.6mm}
{\rm (5)} $a, cd\in k^{\times 2}$;
{\rm (6)} $b, ac\in k^{\times 2}$;
{\rm (7)} $d, ab\in k^{\times 2}$;
{\rm (8)} $c, bd\in k^{\times 2}$;\\
{\rm (C12)} $m=4$, 
{\rm (1)} $ab,cd\in k^{\times 2}$;
{\rm (2)} $bd,ac\in k^{\times 2}$;\\
{\rm (C13)} $m=4$, 
{\rm (1)}  $ab,ac\in k^{\times 2}$;
{\rm (2)} $bd,ab\in k^{\times 2}$;
{\rm (3)} $cd,bd\in k^{\times 2}$;
{\rm (4)} $ac,cd\in k^{\times 2}$.
\end{lemma}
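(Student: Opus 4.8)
The plan is to reduce every case in the statement to Yamasaki's Theorem \ref{thY} by combining the $D_4$-symmetry of Theorem \ref{th3}(ii) with a single substitution that collapses one of the two ``blocks'' of $\sigma_{a,b,c,d}$.

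First I would record the key reduction. Suppose the product of the first two parameters is a square, say $ab\in k^{\times 2}$, and set $Y:=ay/(ax+\sqrt{ab})$. Then $k(x,y,z)=k(x,Y,z)$, and a direct computation using the factorization $a(-ax^2+b)=(\sqrt{ab}-ax)(\sqrt{ab}+ax)$ shows that $\sigma_{a,b,c,d}$ acts by $x\mapsto -x$, $Y\mapsto a/Y$, $z\mapsto (-cx^2+d)/z$. This is exactly the involution of Theorem \ref{thY} with parameters $(a,c,d)$. Since $ab\in k^{\times 2}$ forces $\sqrt{b}\in k(\sqrt{a})$, we have $[k(\sqrt{a},\sqrt{c},\sqrt{d}):k]=m$, so Theorem \ref{thY}(ii) yields that $k(x,y,z)^{\langle\sigma_{a,b,c,d}\rangle}$ is $k$-rational whenever $m\le 2$, or $m=4$ with $acd\notin k^{\times 2}$. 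By the symmetry of the construction the same reduction applies after interchanging the roles of $y$ and $z$, i.e. whenever $cd\in k^{\times 2}$.

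Next I would combine this with Theorem \ref{th3}(ii), by which the field depends only on the $D_4$-orbit of $(a,b,c,d)$, with $D_4=\langle(abdc),(ab)(cd)\rangle$ acting on $\{a,b,c,d\}$. A short orbit computation shows that the unordered pairs that some element of $D_4$ carries into the first (``$y$'') block are precisely the four ``edges'' $\{a,b\},\{a,c\},\{b,d\},\{c,d\}$ of the $4$-cycle $(abdc)$, while the two ``diagonals'' $\{a,d\},\{b,c\}$ cannot be so moved. Hence the reduction above is available exactly when one of $ab,\,ac,\,bd,\,cd$ lies in $k^{\times 2}$, and in that event the surviving triple extension still has degree $m$.

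Finally I would run through the nine families. For each of (C1),(C2),(C3),(C5),(C6),(C7),(C10),(C12),(C13) one checks directly that some edge-product is a square (for instance $ab$ in (C1),(C6),(C12)(1),(C13)(1),(C13)(2); $cd$ in (C2)(3),(C2)(4); $ac$ in (C3)(2); and so on for the remaining subcases), so the reduction applies. For (C1),(C2),(C3),(C5),(C6) we have $m\le 2$ and $k$-rationality is immediate from Theorem \ref{thY}. For the $m=4$ families (C7),(C10),(C12),(C13) it remains to verify the non-square condition of Theorem \ref{thY}; this is a routine computation in the $\bF_2$-vector space of square classes, in which the hypotheses collapse the relevant triple product to a single generator that is non-square precisely because $m=4$. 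I expect the genuine labor to be the bookkeeping of this last paragraph rather than any individual computation: one must check, across all subcases of all nine families, both that an edge-product is a square and that the Theorem \ref{thY} non-square condition holds. The conceptual content is clean, though — the method succeeds exactly on the ``edge'' configurations, which is why the complementary ``diagonal'' cases (C4),(C8),(C9),(C11),(C14),(C17) remain open, and the higher-$m$ diagonal cases are genuinely not retract rational by Theorem \ref{th3}(iii).
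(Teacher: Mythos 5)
Your proposal is correct and follows essentially the same route as the paper's proof: the substitution $Y=ay/(ax+\sqrt{ab})$ reducing to Theorem \ref{thY} whenever an ``edge'' product lies in $k^{\times 2}$, combined with the $D_4$-symmetry of Theorem \ref{th3}(ii). The only divergence is case (C7), which the paper handles by a separate direct factorization of the relation (\ref{eq1}) via $T_1=(t_1+\alpha)/(t_2+\beta)$, $T_2=(t_1-\alpha)/(t_2-\beta)$, whereas you fold it into the same reduction (this works, since $a\in k^{\times 2}$ gives $acd\equiv cd\not\in k^{\times 2}$ when $m=4$); your write-up is also more explicit than the paper's about verifying the non-square hypothesis of Theorem \ref{thY}(ii) in the $m=4$ families, a check the paper leaves implicit.
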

\begin{proof}
The case (C7): $m=4$. 
By Theorem \ref{th3} (ii), 
we should show only the case (1) $a,b\in k^{\times 2}$.
Take $\alpha, \beta\in k$ with $\alpha^2=a$ and $\beta^2=b$. 
The equation (\ref{eq1}) becomes 
\[
\frac{(t_1+\alpha)(t_1-\alpha)}{(t_2+\beta)(t_2-\beta)}=\frac{t_3^2-c}{t_4^2-d}.
\]
Define $T_1:=(t_1+\alpha)/(t_2+\beta)$, $T_2:=(t_1-\alpha)/(t_2-\beta)$. 
Then $k(x,y,z)^{\langle\sigma_{a,b,c,d}\rangle}=k(t_1,t_2,t_3,t_4)
=k(T_1,T_2,t_3,t_4)=k(T_1,t_3,t_4)$ is $k$-rational. 

The cases (C10), (C12) and (C13): $m=4$. 
By Theorem \ref{th3} (ii), 
we may assume that $ab\in k^{\times 2}$. 
Define $Y:=\frac{ay}{ax+\sqrt{ab}}$. Then 
$k(x,y,z)=k(x,Y,z)$ and $\sigma_{a,b,c,d}$ acts on $k(x,Y,z)$ by 
\begin{align*}
\sigma_{a,b,c,d} : x\mapsto -x,\quad Y\mapsto \frac{a}{Y},\quad  
z\mapsto \frac{-cx^2+d}{z}.
\end{align*}
By Theorem \ref{thY} (ii), $k(x,y,z)^{\langle\sigma_{a,b,c,d}\rangle}$ 
is $k$-rational.

The cases (C1), (C2), (C3), (C5) and (C6).  
By the results of (C10), (C12) and (C13), 
$k(x,y,z)^{\langle\sigma_{a,b,c,d}\rangle}$ 
is $k$-rational.
\end{proof}
\begin{remark}
We do not know whether $k(x,y,z)^{\langle\sigma_{a,b,c,d}\rangle}$ 
is $k$-rational (resp. stably $k$-rational, retract $k$-rational) 
for the following cases:\\
{\rm (C4)} $m=2$, 
{\rm (1)} $a, d, bc\in k^{\times 2}$;
{\rm (2)} $b, c, ad\in k^{\times 2}$;\\
{\rm (C8)} $m=4$, 
{\rm (1)} $a,d\in k^{\times 2}$;
{\rm (2)} $b, c\in k^{\times 2}$;\\
{\rm (C9)} $m=4$, 
{\rm (1)} $a, bc\in k^{\times 2}$;
{\rm (2)} $b, ad\in k^{\times 2}$;
{\rm (3)} $d, bc\in k^{\times 2}$;
{\rm (4)} $c, ad\in k^{\times 2}$;\\
{\rm (C11)} $m=4$, 
{\rm (1)}  $a,bcd\in k^{\times 2}$;
{\rm (2)} $b,acd\in k^{\times 2}$;
{\rm (3)} $d,abc\in k^{\times 2}$;
{\rm (4)} $c,abd\in k^{\times 2}$;\\
{\rm (C14)} $m=4$, $ad,bc\in k^{\times 2}$;\\
{\rm (C17)} $m=8$, 
{\rm (1)} $a\in k^{\times 2}$;
{\rm (2)} $b\in k^{\times 2}$;
{\rm (3)} $d\in k^{\times 2}$;
{\rm (4)} $c\in k^{\times 2}$.

For example, for the case of (C4) $m=2$, {\rm (1)} $a, d, bc\in k^{\times 2}$, 
$k(x,y,z)^{\langle\sigma_{a,b,c,d}\rangle}$ may be obtained as follows. 
Let $L=k(\alpha,\beta,\gamma,\delta)$ 
where $\alpha^2=a$, $\beta^2=b$, $\gamma^2=c$, $\delta^2=d$. 
Then $L=k(\beta)$ and 
$L(x,y,z)=L(x,y',z')$ where 
$y'=(\alpha x+\beta)/y$ and $z'=(\gamma x+\delta)/z$. 
We see that $\sigma_{a,b,c,d}$ acts 
on $L(x,y',z')$ by 
\[
\sigma_{a,b,c,d} : x\mapsto x,\ y'\mapsto \frac{1}{y'},\ z'\mapsto \frac{1}{z'}. 
\]
Hence $L(x,y',z')^{\langle\sigma_{a,b,c,d}\rangle}=L(y_1,y_2,y_3)$ where
\[
y_1=x^2,\ 
y_2=x\ \frac{1-y'}{1+y'}=x\ \frac{y-\alpha x-\beta}{y+\alpha x+\beta},\ 
y_3=x\ \frac{1-z'}{1+z'}=x\ \frac{z-\gamma x-\delta}{y+\gamma x+\delta}.
\]
By the assumption $bc\in k^{\times 2}$, 
there exists $e\in k^\times$ such that $\gamma=\beta e$.  
Then $k(x,y,z)^{\langle\sigma_{a,b,c,d}\rangle}
=(L(x,y,z)^{\langle\sigma_{a,b,c,d}\rangle})^{\langle\rho_b\rangle}$ 
$=$ $L(y_1,y_2,y_3)^{\langle\rho_b\rangle}$ and 
\begin{align*}
\rho_b : \beta\mapsto -\beta,\ 
y_1\mapsto y_1,\ 
y_2\mapsto \frac{y_1(\alpha y_2+\beta)}
{\alpha y_1+\beta y_2},\ y_3\mapsto 
\frac{\beta e y_1+\delta y_3}{\beta e y_3+\delta}. 
\end{align*}
Define
\begin{align*}
z_1:=\beta e\left(\frac{\beta e y_1+\delta y_3}
{\beta e y_3+\delta}-y_3\right)-2\delta,\ 
z_2:=e\left(\frac{\beta e y_1+\delta y_3}
{\beta e y_3+\delta}+y_3\right),\ 
z_3:=4be^2(\alpha y_1+\beta y_2).
\end{align*}
Then $L(y_1,y_2,y_3)=L(z_1,z_2,z_3)$ and 
\begin{align*}
\rho_b : \beta\mapsto -\beta,\ 
z_1\mapsto z_1,\ 
z_2\mapsto z_2,\ 
z_3\mapsto 
\frac{f(z_1,z_2)}{z_3}
\end{align*}
where $f(z_1,z_2)=(z_1^2-bz_2^2-4d)(a(z_1^2-bz_2^2-4d)+4b^2e^2)$. 
Define
\[
t_1:=\frac{1}{2}
\left(z_3+\frac{f(z_1,z_2)}{z_3}\right),\ 
t_2:=\frac{1}{2\beta}
\left(z_3-\frac{f(z_1,z_2)}{z_3}\right).
\]
Then $k(x,y,z)^{\langle\sigma_{a,b,c,d}\rangle}=
k(t_1,t_2,z_1,z_2)$ where 
\[
t_1^2-bt_2^2=f(z_1,z_2)=
(z_1^2-bz_2^2-4d)(a(z_1^2-bz_2^2-4d)+4b^2e^2).
\]
However, we do not know whether 
$k(x,y,z)^{\langle\sigma_{a,b,c,d}\rangle}$ is $k$-rational (resp. 
stably $k$-rational, retract $k$-rational). 
\end{remark}

\bigskip

%%%%%%%%%%%%%%%%%%%%%%%%%%%%%%%%%%%%%%%%%%%%%%%%%%%%%%%%%%%%%%%%%%%%%%%%%%%%%%%%%%%%
\section{Application of Theorem \ref{th3}}\label{seApp}

Let $G$ be a finite group acting on the rational function field 
$k(x_g\,|\,g\in G)$ by $k$-automorphisms $h(x_g)=x_{hg}$ for any $g,h\in G$. 
We denote the fixed field $k(x_g\,|\,g\in G)^G$ by $k(G)$. 
Noether's problem asks whether $k(G)$ is $k$-rational 
(see a survey paper of Swan \cite{Swa83} for Noether's problem for abelian groups). 
The unramified Brauer group $Br_{v,k}(L)$ of $L$ over $k$ is defined to be 
$Br_{v,k}(L)=\cap_R Br(R)$ where $Br(R)$ is the image of the injective map 
$Br(R)\rightarrow Br(L)$ of Brauer groups and $R$ runs over all discrete valuation 
domain with $k\subset R$ and the quotient field of $R$ is $L$ (cf. \cite{Sal84b}). 

Let $F$ be an algebraically closed field with ${\rm gcd}\{{\rm char}\,k,|G|\}=1$. 
By \cite[Theorem 12]{Sal90}, \cite[Theorem 1.3$'$]{Bog90}, 
$Br_{v,F}(F(G))=\cap_A {\rm Ker}({\rm res} : H^2(G,\mu)\rightarrow H^2(A,\mu))$ 
where $\mu\subset F^{\times}$ denotes the subgroup of roots of unity, 
${\rm res}$ is the restriction map of cohomology groups and 
$A$ runs over abelian subgroups of $G$ of rank $1$ or $2$. 
If $G$ is a $2$-group, this is valid not only over an algebraically closed field 
but also over a quadratically closed field $F$ (i.e. a field of ${\rm char}\, F \neq 2$ 
satisfying $\sqrt{a}\in F$ for any $a\in F$). 
We denote $Br_{v,F}(F(G))$ by $B_0(G)$ (cf. \cite{BMP04}, \cite{Kun10}, \cite{CHKK10}). 
By \cite[Proposition 3.2]{Sal84b} and \cite[Proposition 2.2]{Sal87}, 
if $L_1/L_2$ $(\supset k)$ is retract rational, then $Br_{v,k}(L_1)\simeq Br_{v,k}(L_2)$ 
and hence if $F(G)$ is retract $F$-rational, then $B_0(G)=0$. 

Saltman \cite{Sal84b} showed that for any prime $p$ 
there exists a meta-abelian $p$-group $G$ of order $p^9$ such that $B_0(G)\neq 0$. 
In particular, $F(G)$ is not retract $F$-rational. 
Moreover, Bogomolov \cite{Bog88} obtained that when char $F=0$ 
there exists a $p$-group $G$ of order $p^6$ such that $B_0(G)\neq 0$ 
for any prime $p$. 
Recently, Moravec \cite{Mor12} proved that there exist exactly 
$3$ groups of order $3^5$ such that $B_0(G)\neq 0$ by GAP computations. 
Hoshi, Kang and Kunyavskii \cite{HKK13} showed that 
there exist exactly $1+{\rm gcd}\{4,p-1\}+{\rm gcd}\{3,p-1\}$ 
groups of order $p^5$ such that $B_0(G)\neq 0$ for 
any prime $p\geq 5$.

In the case where $p=2$, by Chu and Kang \cite{CK01} and 
Chu, Hu, Kang and Prokhorov \cite{CHKP08}, 
if $G$ is a group of order $\leq 32$, then $F(G)$ is $F$-rational. 
Moreover Chu, Hu, Kang and Kunyavskii \cite{CHKK10} investigated 
the case where $G$ is a group of order $64$ as follows. 
There exist exactly $267$ non-isomorphic groups of order $64$. 

Let $C_n$ be the cyclic group of order $n$, $Z(G)$ be the center of the group $G$ 
and $[G,G]$ be the commutator subgroup of the group $G$. 
Let $G(n,i)$ be the $i$-th group of order $n$ in GAP \cite{GAP}. 

\begin{theorem}[{\cite[Theorem 1.8]{CHKK10}}]\label{thCHKK1}
Let $G$ be a group of order $64$ and $F$ be a quadratically closed field. 
Then the following conditions are equivalent:\\
{\rm (i)} $B_0(G)\neq 0$;\\
{\rm (ii)} $Z(G)\simeq C_2^2, [G,G]\simeq C_4\times C_2, G/[G,G]\simeq C_2^3$, 
$G$ has no abelian subgroup of index $2$, and $G$ has no faithful $4$-dimensional 
representation over $\bC$;\\
{\rm (iii)} $G$ is isomorphic to one of the nine groups $G(64,i)$ where 
$i=149$, $150$, $151$, $170$, $171$, $172$, $177$, $178$, $182$. 
\end{theorem}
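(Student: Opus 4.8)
The plan is to prove the three conditions equivalent by establishing the cycle (iii) $\Rightarrow$ (i) $\Rightarrow$ (ii) $\Rightarrow$ (iii), using Bogomolov's cohomological description of $B_0(G)$ throughout. Since $G$ is a $2$-group and $F$ is quadratically closed, I would start from the formula $B_0(G) = \bigcap_A \ker(\mathrm{res}^G_A : H^2(G,\mu) \to H^2(A,\mu))$, where $A$ runs over the abelian subgroups of $G$ of rank at most $2$; equivalently, it suffices to let $A$ range over the bicyclic subgroups. This reduces the computation of $B_0(G)$ for a fixed $G$ to linear algebra over $\bF$ (here $\bF_2$) once $H^2(G,\mu)$ and the restriction maps to the bicyclic subgroups have been tabulated.

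First I would assemble the standard vanishing criteria that eliminate the bulk of the $267$ isomorphism types of order $64$. If $G$ is abelian, or has an abelian subgroup of index $2$, then $F(G)$ is retract $F$-rational and hence $B_0(G)=0$; this already removes a large family. Next I would read off from the restriction formula the necessary conditions collected in (ii): tracking how $\mathrm{res}^G_A$ behaves forces $G/[G,G] \simeq C_2^3$ and pins down $Z(G) \simeq C_2^2$ and $[G,G] \simeq C_4 \times C_2$, while the absence of an index-$2$ abelian subgroup and of a faithful $4$-dimensional complex representation are precisely the obstructions that allow a nonzero class of $H^2(G,\mu)$ to restrict trivially to every bicyclic subgroup. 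The content of (i) $\Rightarrow$ (ii) is that these constraints are genuinely forced, not merely satisfied by the examples.

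The implication (ii) $\Rightarrow$ (iii) is then a finite bookkeeping problem: among the $267$ groups of order $64$, I would use the library data (e.g. in GAP) to list those whose center, commutator subgroup and abelianization have the prescribed isomorphism types and which fail to possess an index-$2$ abelian subgroup or a faithful $4$-dimensional representation over $\bC$, and check that exactly the nine groups $G(64,i)$ with $i \in \{149,150,151,170,171,172,177,178,182\}$ survive. Finally, for (iii) $\Rightarrow$ (i) I would compute $B_0(G) \neq 0$ for each of these nine groups by evaluating the intersection $\bigcap_A \ker(\mathrm{res}^G_A)$ directly, exhibiting in each case an explicit nonzero class in $H^2(G,\mu)$ that restricts to zero on every bicyclic subgroup.

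The main obstacle will be the implication (i) $\Rightarrow$ (ii): deriving the precise structural list from the vanishing of all restriction maps is delicate, since one must rule out every other possible shape of $Z(G)$, $[G,G]$ and $G/[G,G]$ rather than merely confirm the given one. I expect the cleanest route is to combine Bogomolov's general results on $p$-groups with $B_0 \neq 0$ (which already force the absence of an abelian subgroup of index $p$ and constrain the commutator structure) with a case analysis along the isoclinism family of $G$, so that the genuine cohomological computation need only be carried out on a few representatives. Once the restriction maps to the bicyclic subgroups are recorded, the remaining verifications are essentially mechanical.
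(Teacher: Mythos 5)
This statement is quoted in the paper as \cite[Theorem 1.8]{CHKK10}; the paper itself contains no proof of it, so there is no internal argument to compare against. Your proposal is therefore being measured against the strategy of the original reference, and at that level it is broadly the right picture: one uses Bogomolov's formula $B_0(G)=\bigcap_A \ker(\mathrm{res}:H^2(G,\mu)\to H^2(A,\mu))$ over bicyclic subgroups (valid over a quadratically closed field since $G$ is a $2$-group), one kills most of the $267$ groups of order $64$ by the vanishing criteria (abelian subgroup of index $2$, etc.), and one finishes by explicit computation on the survivors. The equivalence (ii) $\Leftrightarrow$ (iii) is indeed pure bookkeeping against the group library.

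The genuine gap is your proposed direct implication (i) $\Rightarrow$ (ii). You assert that "tracking how $\mathrm{res}^G_A$ behaves forces" the specific isomorphism types of $Z(G)$, $[G,G]$ and $G/[G,G]$, and that the absence of an index-$2$ abelian subgroup and of a faithful $4$-dimensional complex representation are "precisely the obstructions" to the vanishing of $B_0(G)$. No such structural derivation is available: conditions like the nonexistence of a faithful $4$-dimensional representation do not fall out of the cohomological restriction analysis in any known way, and there is no argument in the literature that deduces the list (ii) from $B_0(G)\neq 0$ without first identifying the groups. The way the equivalence is actually established is by closing the triangle through (iii): one verifies $B_0(G)=0$ for every group of order $64$ outside the nine listed (organizing the computation by isoclinism families, since $B_0$ is constant on them), verifies $B_0(G)\neq 0$ for the nine, and separately checks that the nine are exactly the groups satisfying (ii). Your cycle (iii) $\Rightarrow$ (i) $\Rightarrow$ (ii) $\Rightarrow$ (iii) should be replaced by (iii) $\Rightarrow$ (i), (i) $\Rightarrow$ (iii) (by exhaustive vanishing on the complement), and (ii) $\Leftrightarrow$ (iii); as written, the step you yourself flag as "delicate" is not merely delicate but unavailable, and the proof does not go through along that route.
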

\begin{theorem}[{\cite[Theorem 1.9]{CHKK10}}]\label{thCHKK2}
Let $G$ be a group of order $64$ and $F$ be a quadratically closed field. 
If $B_0(G)=0$, then $F(G)$ is $F$-rational except possibly 
for groups $G$ which is isomorphic to one of the five groups $G(64,i)$ where 
$241\leq i\leq 245$. 
\end{theorem}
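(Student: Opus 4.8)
The plan is to combine the structural dichotomy furnished by Theorem \ref{thCHKK1} with a toolbox of rationality criteria for $p$-group actions, working through the classification of the $267$ isomorphism types of groups of order $64$. By Theorem \ref{thCHKK1} the hypothesis $B_0(G)=0$ already excludes the nine groups $G(64,i)$ with $i\in\{149,150,151,170,171,172,177,178,182\}$, so what remains is to prove $F$-rationality of $F(G)$ for the other $258$ groups, tolerating the five declared exceptions $G(64,i)$ with $241\le i\le 245$. First I would organize the groups by the structural invariants that drive rationality: the isomorphism types of $Z(G)$, of $[G,G]$, and of $G/[G,G]$, and above all whether $G$ possesses an abelian subgroup of index $2$.

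The backbone of the argument is that the overwhelming majority of groups of order $64$ contain a (necessarily normal) abelian subgroup $A$ of index $2$. For such $G$ one has $G=\langle A,\tau\rangle$ with $\tau^2\in A$, and the regular representation linearizes, so that $F(G)$ becomes the fixed field of a monomial action on a rational function field. Since $F$ is quadratically closed and $A$ is abelian, diagonalizing the $A$-action and then descending through the remaining involution $\tau$ reduces the problem to a short tower of rational extensions. Concretely, I would invoke the established rationality theorems for groups with an abelian normal subgroup of cyclic (here order $2$) quotient, which over a quadratically closed field yield $F(G)=F(t_1,\dots,t_r)$ directly; this single criterion disposes of a large block of the $258$ groups.

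For the groups with no abelian subgroup of index $2$ I would exploit the inductive leverage of the theorem of Chu–Kang and Chu–Hu–Kang–Prokhorov (\cite{CK01}, \cite{CHKP08}) that $F(H)$ is $F$-rational for every group $H$ of order $\le 32$: writing $G$ as an extension of a quotient of order $\le 32$ by a central subgroup, and tracking the associated $G$-lattice through a faithful low-dimensional representation, one reduces $F(G)$ to $F(H)$ adjoined further variables and thence to a multiplicative invariant field whose stable rationality is governed by the flabby class $[M]^{fl}$ of Theorem \ref{thEM73}. Wherever $[M]^{fl}=0$ can be verified—either by a splitting of the relevant short exact sequence or, in practice, by the algorithms of Section \ref{seAlg}—rationality follows (upgraded from stable rationality by the explicit tower). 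The role of $B_0(G)=0$ here is to guarantee, via \cite{Sal84b}, that the governing obstruction is trivial, so that these reductions are not blocked at the first step.

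The main obstacle will be precisely the five groups $G(64,i)$, $241\le i\le 245$, which is why they are set aside rather than resolved: they occupy the narrow regime where $B_0(G)=0$ holds yet $G$ has neither an abelian subgroup of index $2$ nor a faithful low-dimensional representation that feeds the lattice reduction, so both the monomial-linearization criterion and the inductive reduction degenerate simultaneously. For them the flabby class is invertible but its vanishing cannot be decided by the uniform criteria, and a bespoke construction of a rational set of generators would be required; establishing rigorously that the general methods all fail here—and hence that these cases genuinely must be excluded—is the delicate point. The remaining work, that every group outside this quintet and outside the nine of Theorem \ref{thCHKK1} falls under one of the two reduction schemes, is a finite but lengthy verification over the \texttt{GAP} classification of the groups of order $64$.
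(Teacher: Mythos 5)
This statement is quoted verbatim from \cite[Theorem 1.9]{CHKK10}; the present paper contains no proof of it, so there is nothing internal to compare your argument against. Judged on its own terms, the first half of your plan does match the strategy actually used in \cite{CHKK10}: Theorem \ref{thCHKK1} removes the nine groups with $B_0(G)\neq 0$, and for the large block of groups of order $64$ possessing an abelian normal subgroup of index $2$, rationality of $F(G)$ over a quadratically closed field follows from the known linearization/descent theorems for extensions of an abelian group by a cyclic group.

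There are, however, two genuine gaps. First, you assign $B_0(G)=0$ a positive role (``the governing obstruction is trivial, so that these reductions are not blocked at the first step''). This inverts the logic: vanishing of the unramified Brauer group is only a \emph{necessary} condition for retract rationality and supplies no leverage whatsoever toward proving rationality --- indeed the five exceptional groups all satisfy $B_0(G)=0$ and their rationality is still open, which is exactly why the theorem is stated with the exception. The hypothesis is a filter, not a tool. Second, your proposed reduction of the remaining groups to groups of order $\leq 32$ by passing to a central quotient and ``adjoining further variables'' is not a valid implication: rationality of $F(G/N)$ does not propagate to $F(G)$, and no such inductive scheme is available. In \cite{CHKK10} those cases are handled by explicit faithful low-degree representations and monomial-action computations; in particular your characterization of the five exceptional groups as lacking a useful faithful representation is contradicted by Theorem \ref{thCHKK3}, which exhibits precisely such a representation for them --- the obstruction there is deciding rationality of the resulting three-dimensional torus invariant field $L$ over its rational base, not the absence of a representation. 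As written, the proposal is a plan whose second reduction scheme would fail, and the case-by-case verification that every non-exceptional group falls under a \emph{valid} criterion is entirely deferred.
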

\begin{theorem}[{\cite[Proposition 6.3]{CHKK10}}]\label{thCHKK3}
Let $G=G(64,i)$ where $241\leq i\leq 245$, 
$F$ be a quadratically closed field 
and $H=\langle f_1,f_2\rangle\simeq C_2\times C_2$ act on the rational function field 
$F(x_1,x_2,x_3,y_1,y_2,y_3)$ by $F$-automorphisms defined by 
\begin{align*}
f_1 &: x_1\mapsto \frac{1}{x_1},\ x_2\mapsto \frac{1}{x_1x_3},\ x_3\mapsto \frac{x_1}{x_2},\ 
y_1\mapsto y_1,\ y_2\mapsto \frac{y_1}{y_2},\ y_3\mapsto \frac{1}{y_1y_3},\\
f_2 &: x_1\mapsto \frac{1}{x_1},\ x_2\mapsto x_3,\ x_3\mapsto x_2,\ 
y_1\mapsto \frac{1}{y_1},\ y_2\mapsto y_3,\ y_3\mapsto y_2.
\end{align*}
Let $L=F(x_1,x_2,x_3,y_1,y_2,y_3)^H$. 
Then there is an $F$-injective homomorphism $\varphi : L\rightarrow F(G)$ so that 
$F(G)$ is a rational extension over $\rho(L)$.
\end{theorem}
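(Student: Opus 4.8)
The plan is to reduce the $64$-dimensional Noether problem $F(G)$ to the $6$-dimensional monomial problem $L$ by the standard two-step procedure for $p$-groups: first peel off an abelian normal subgroup to linearize the action, then discard the purely transcendental and permutation pieces using the no-name lemma. First I would examine the common group-theoretic structure of the five groups $G=G(64,i)$ with $241\leq i\leq 245$; as recorded in the classification leading to Theorem \ref{thCHKK1} and Theorem \ref{thCHKK2}, each such $G$ possesses an abelian normal subgroup $A$ with $G/A\simeq C_2\times C_2$. Since $A$ is abelian, conjugation factors through $\bar G=G/A\simeq H$, so $H$ acts on $A$ and hence on the dual group $\hat A={\rm Hom}(A,F^{\times})$; because $F$ is quadratically closed it contains all $2$-power roots of unity, so every character of the $2$-group $A$ takes values in $F^{\times}$ and the multiplicative action of $G$ on the variables indexed by $\hat A$ is defined over $F$.

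The key reduction step invokes the monomial-reduction theorem for $p$-groups over a field containing enough roots of unity (as used in \cite{CK01} and \cite{CHKK10}): writing $F(G)=F(V_{\rm reg})^G$ for the regular representation $V_{\rm reg}$, one first diagonalizes the restricted $A$-action via the eigenbasis furnished by $\hat A$. The diagonal action of the abelian group $A$ on a multiplicative lattice has $F$-rational fixed field, and taking $A$-invariants reduces $F(G)$ to a field on which only the quotient $H$ acts, now by a \emph{monomial} (Laurent) action on a lattice $M$ whose rank equals the number of $A$-orbit representatives surviving the reduction. I would then show that the $2$-cocycle measuring the non-splitting of $1\to A\to G\to H\to 1$ contributes trivially here: over the quadratically closed $F$ the relevant twist is a coboundary (a sign that can be absorbed into a rescaling of the multiplicative variables), so the induced $H$-action on $M$ is genuinely permutation-monomial rather than merely projective-monomial.

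Next I would strip $M$ down to the $6$-dimensional lattice appearing in the statement. By the no-name lemma together with Theorem \ref{thEM73}\,(i) and Theorem \ref{thVo}, any permutation or stably permutation $H$-summand of $M$ contributes only a rational extension to the fixed field, so it may be split off without changing the birational type over $F$; what remains, after this trimming, should be precisely the rank-$6$ monomial $H$-action written out via $f_1,f_2$. The homomorphism $\varphi$ is then the composite of these birational identifications, and $F(G)$ is rational over $\varphi(L)$ because each reduction step (diagonalization of $A$, removal of permutation summands, adjunction of the split variables) is a purely transcendental extension. I would carry out the matching uniformly, checking that all five groups $G(64,i)$ yield the same core lattice $M$ and differ only in the split-off permutation part.

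The main obstacle I anticipate is the bookkeeping in the second and third steps: one must verify, simultaneously for all five groups, that (i) the extension cocycle really does trivialize over a merely quadratically closed (not algebraically closed) $F$, so that no genuine central twist obstructs the descent, and (ii) the lattice complement that is discarded is honestly (stably) permutation, which is what guarantees its fixed field is rational. The first point is delicate because the distinction between these five groups and the nine groups of Theorem \ref{thCHKK1} with $B_0(G)\neq 0$ lives exactly in this cocycle and central data; the second point is where the $G$-lattice machinery of this paper---flabby classes and the criterion $[M]^{fl}=0$ of Theorem \ref{thEM73}\,(i)---is the natural tool for certifying rationality of the auxiliary factors.
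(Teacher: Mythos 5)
This statement is not proved in the paper at all: it is quoted verbatim from \cite[Proposition 6.3]{CHKK10}, so there is no in-paper argument to compare against. At the level of strategy your outline does match the argument in that source (peel off an abelian normal subgroup $A$ of index $4$, diagonalize its action on a faithful summand of the regular representation, pass to $A$-invariants to obtain a monomial action of $H=G/A\simeq C_2\times C_2$, and discard permutation pieces by the no-name lemma), so the overall route is the right one.

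As a proof, however, the proposal has two genuine gaps. First, the claim that ``the relevant twist is a coboundary'' over a quadratically closed field is asserted, not proved, and it is not a general fact: the coefficients $c_j(\sigma)$ of the induced monomial action define a cohomology class (in a group of the shape $H^1(H,\Hom(M,F^{\times}))$, or the class in $H^2(H,F^{\times})$ of the relevant projective representation) that need not vanish even over $\bC$ --- the Schur multiplier of $C_2\times C_2$ is already $C_2$. Whether the action can be rescaled to the \emph{purely} monomial one given by $f_1,f_2$ has to be extracted from the structure constants of each $G(64,i)$; quadratic closedness only guarantees that the coefficients lie in $F$ and admit $2$-power roots, not that they can be absorbed. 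Second, the identification of the residual lattice with the specific rank-$6$ action in the statement is entirely deferred, yet it is the actual content of the proposition: since $Z(G)\simeq C_2\times C_2$ is non-cyclic, a faithful representation must be a sum of two degree-$4$ irreducibles induced from linear characters of $A$, which is what produces the two blocks $\{x_1,x_2,x_3\}$ and $\{y_1,y_2,y_3\}$ after one variable is split off from each block; and the discarded complement must be verified to be (stably) permutation so that the no-name lemma applies. Without these computations your argument shows only that \emph{some} monomial reduction of $F(G)$ exists, not that $F(G)$ is rational over a copy of the specific field $L$ in the statement.
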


Let $k(x_1,\ldots,x_n)$ be the rational function field over $k$ with $n$ variables 
$x_1,\ldots,x_n$. 
Let $H$ be a finite subgroup of $\GL(n,\bZ)$ acting on $k(x_1,\ldots,x_n)$ by 
$k$-automorphisms 
\begin{align}
\sigma(x_j)=c_j(\sigma)\prod_{i=1}^n x_i^{a_{i,j}},\ \sigma=[a_{i,j}]_{1\leq i,j\leq n}\in H,\ 
c_j(\sigma)\in k^\times,\ {\rm for}\ 1\leq j\leq n,\label{acts2}
\end{align}
where $k^\times=k\setminus\{0\}$. 
This action of $H$ on $k(x_1,\ldots,x_n)$ is said to be monomial. 
If $c_j(\sigma)=1$ for any $\sigma\in G$ and any $1\leq j\leq n$, 
the action is said to be purely monomial. 
The rationality problem with respect to monomial action, 
i.e. whether $k(x_1,\ldots,x_n)^G$ is $k$-rational, 
is determined up to conjugacy in $\GL(n,\bZ)$ and 
solved affirmatively by Hajja \cite{Haj83}, \cite{Haj87} when $n=2$.
For $n=3$, the problem under purely monomial actions 
was solved affirmatively by Hajja and Kang \cite{HK92}, \cite{HK94} 
and Hoshi and Rikuna \cite{HR08} 
(see also \cite{KP10}, \cite{HK10}, \cite{Yam12} and \cite{HKY11} 
for non-purely monomial case). 

It is remarked in \cite[page 2537]{CHKK10} that although 
$L_0:=F(x_1,x_2,x_3)^H=F(t_1,t_2,t_3)$ is $F$-rational, 
the field $L$ as in Theorem \ref{thCHKK3} may be regarded as 
$L=L_0(\alpha,\beta)(y_1,y_2,y_3)^H$ 
the function field of a $3$-dimensional algebraic torus defined over $L_0$ 
and split over biquadratic Galois extension $F(x_1,x_2,x_3)=L_0(\alpha,\beta)$ of $L_0$ 
for some $\alpha,\beta$ with $\alpha^2,\beta^2\in L_0$. 
By Theorem \ref{thKu}, the field $L$ is not stably rational over 
$L_0=F(t_1,t_2,t_3)$. 
We restate Theorem \ref{thKu} of Kunyavskii after adopting the definition 
of the action of $G$ via $(\ref{acts2})$ (we should take $G$ instead of ${^t}G$ 
and hence the corresponding GAP IDs may change, cf. Theorem \ref{thKu}).
\begin{theorem}[{Kunyavskii \cite{Kun90}}]\label{thKu2}
Let $L/k$ be a Galois extension and $G\simeq 
{\rm Gal}(L/k)$ be a finite subgroup of $\GL(3,\bZ)$ 
which acts on $L(x_1,x_2,x_3)$ via $(\ref{acts2})$. 
Then $L(x_1,x_2,x_3)^G$ is not $k$-rational if and only if 
$G$ is conjugate to one of the $15$ groups which are given 
as in {\rm Table} $10$.  
Moreover, if $L(x_1,x_2,x_3)^G$ is not $k$-rational, 
then it is not retract $k$-rational. 
\end{theorem}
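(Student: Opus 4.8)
The plan is to deduce Theorem~\ref{thKu2} from Theorem~\ref{thKu} by passing to transposes. The essential point is that the two prescriptions $(\ref{acts})$ and $(\ref{acts2})$ for the action of a matrix group on $L(x_1,x_2,x_3)$ differ exactly by transposition of the defining matrices: in $(\ref{acts})$ the matrix $[a_{i,j}]$ records the exponents of $\sigma(x_i)$ along its $i$-th row, whereas in $(\ref{acts2})$ the same integers are read off the $j$-th column of $\sigma(x_j)$. First I would make this precise by checking that, once the constants $c_j(\sigma)$ are suppressed, the purely monomial action of $G$ via $(\ref{acts2})$ is $k$-isomorphic to the purely monomial action of the transposed group ${}^tG$ via $(\ref{acts})$, that is, to $L(M_{{}^tG})^{{}^tG}$ in the notation of Definition~\ref{defMG}. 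This reduces the rationality problem for $G$ under $(\ref{acts2})$ to the one already settled for ${}^tG$ under $(\ref{acts})$.

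Granting this reduction, the classification is immediate: by Theorem~\ref{thKu}, $L(M_{{}^tG})^{{}^tG}$ fails to be $k$-rational precisely when ${}^tG$ is conjugate to one of the $15$ groups of Table~$1$, and in that case it is not even retract $k$-rational. Transposing back, $L(x_1,x_2,x_3)^G$ under $(\ref{acts2})$ fails to be $k$-rational exactly when $G$ is conjugate to the transpose of one of those $15$ groups, and these transposed groups are what is tabulated in Table~$10$. I would verify the matching of the two tables (and the accompanying GAP codes) computationally, exactly as in Example~\ref{exKun}, where the algorithm {\tt IsInvertibleF} of Section~\ref{seAlg} confirms that $[M_G]^{fl}$ is non-invertible for each of the $15$ exceptional lattices; since non-invertibility of $[M_G]^{fl}$ is a property of the $\bZ$-class that is shared by $G$ and its transpose ${}^tG$ (their character lattices being dual to one another up to the automorphism $g\mapsto g^{-1}$), it passes across the transpose without recomputation.

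The \emph{moreover} clause on retract rationality is the robust half and should be handled by the flabby-class formalism rather than by explicit birational geometry. By Theorem~\ref{thEM73}(iii), retract $k$-rationality of a torus function field is equivalent to invertibility of $[M]^{fl}$, a property of the $G$-lattice alone; this is what lets me transport the conclusion of Theorem~\ref{thKu} across the transpose and, for this half of the statement, ignore the twisting constants $c_j(\sigma)\in k^{\times}$. Concretely, the constants define a cocycle twisting the underlying torus, and retract rationality, being governed only by the flabby class of the character module, is insensitive to such a twist.

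The hard part will be the constants $c_j(\sigma)$ in $(\ref{acts2})$ for the \emph{positive} direction, namely establishing genuine $k$-rationality (not merely stable or retract rationality) of $L(x_1,x_2,x_3)^G$ when $G$ is not among the $15$ exceptional classes. Suppressing $c_j(\sigma)$ trivializes the flabby class but not the birational type, so the twisted field is a priori the function field of a torsor rather than of the split torus, and my transpose reduction only delivers rationality for the untwisted case. To close this gap I would appeal to Kunyavskii's original rationality constructions underlying Theorem~\ref{thKu}, which produce explicit rational parametrizations compatible with the monomial twist; the algorithmic methods of this paper, as the authors note after Theorem~\ref{thKu}, detect only stable and retract rationality and cannot by themselves certify $k$-rationality, so this last step genuinely relies on the cited work rather than on the computations developed here.
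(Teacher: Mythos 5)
Your main reduction coincides with the paper's: Theorem~\ref{thKu2} is not given an independent proof there but is obtained from Theorem~\ref{thKu} by pure relabeling, since a matrix group $G$ acting via $(\ref{acts2})$ induces \emph{exactly the same} automorphisms of $L(x_1,x_2,x_3)$ as ${}^tG$ acting via $(\ref{acts})$; hence $L(x_1,x_2,x_3)^G$ in the one convention \emph{is} $L(x_1,x_2,x_3)^{{}^tG}$ in the other, and Table~$10$ merely lists the transposes of the groups of Table~$1$. Because the two invariant fields coincide as subfields of $L(x_1,x_2,x_3)$, there is nothing to ``transport across the transpose'': your auxiliary argument that non-invertibility of the flabby class passes from $M_{{}^tG}$ to $M_G$ via the duality $M_{{}^tG}\simeq (M_G)^{\circ}$ is unnecessary, and it is also unsupported --- duality interchanges flabby and coflabby, and neither the paper nor its references assert that $[M]^{fl}$ and $[M^{\circ}]^{fl}$ are simultaneously invertible. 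The retract-rationality check is simply Example~\ref{exKun} run on the lattices of Table~$1$, which are the ones actually underlying the field appearing in Theorem~\ref{thKu2}.

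The one genuine gap is your handling of the constants $c_j(\sigma)$. You are right that $(\ref{acts2})$ formally permits them and that the flabby-class machinery is the robust part, but your fix for the positive direction --- that Kunyavskii's rational parametrizations are ``compatible with the monomial twist'' --- is not something \cite{Kun90} provides: that paper classifies algebraic tori, i.e.\ the untwisted action, and the genuinely twisted three-dimensional problem was settled only later (\cite{HKY11}, \cite{Yam12}), with a different answer. The intended reading of Theorem~\ref{thKu2}, consistent with its attribution and with its use after Theorem~\ref{thCHKK3} (where the action becomes purely multiplicative over the splitting field), is the torus action with $c_j(\sigma)=1$; the reference to $(\ref{acts2})$ only fixes the index convention. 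If you insist on nontrivial constants you are proving a different statement, and for its negative half you would need \cite{Kan12} (which the paper cites for the last clause of Theorem~\ref{thKu}) rather than \cite{Kun90}.
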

%%%%%%%%%%%%%%%%%%%%%%%%%%%%%%%%%%%%%%%%%%%%%%%%%%%%%%%%%%%
%\input{table10}
\vspace*{2mm}
\begin{center}
Table $10$: $L(M)^G$ not retract $k$-rational, 
rank $M=3$, $M$: indecomposable ($15$ cases)\vspace*{2mm}\\
%{\tiny
%\fontsize{6pt}{8pt}\selectfont
\fontsize{8pt}{10pt}\selectfont
\begin{tabular}{lll} 
$G$ in \cite{Kun90} & GAP ID& $G$ \\\hline
$U_1$ & $(3,3,1,4)$ & $C_2^2$\\
$U_2$ & $(3,3,3,4)$ & $C_2^3$\\
$U_3$ & $(3,4,4,2)$ & $D_4$\\
$U_4$ & $(3,4,6,4)$ & $D_4$\\
$U_5$ & $(3,7,1,3)$ & $A_4$
\end{tabular}\quad
\begin{tabular}{lll}
$G$ in \cite{Kun90} & GAP ID & $G$ \\\hline
$U_6$ & $(3,4,7,2)$ & $D_4\times C_2$\\
$U_7$ & $(3,7,2,3)$ & $A_4\times C_2$\\
$U_8$ & $(3,7,3,2)$ & $S_4$\\
$U_9$ & $(3,7,3,3)$ & $S_4$\\
$U_{10}$ & $(3,7,4,3)$ & $S_4$
\end{tabular}\quad
\begin{tabular}{lll} 
$G$ in \cite{Kun90} & GAP ID & $G$ \\\hline
$U_{11}$ & $(3,7,5,2)$ & $S_4\times C_2$\\
$U_{12}$ & $(3,7,5,3)$ & $S_4\times C_2$\\
$W_1$ & $(3,4,3,2)$ & $C_4\times C_2$\\
$W_2$ & $(3,3,3,3)$ & $C_2^3$\\
$W_3$ & $(3,7,2,2)$ & $A_4\times C_2$
\end{tabular}
%}
\end{center}
\vspace*{4mm}

Let $H=\langle f_1,f_2\rangle\simeq C_2\times C_2$ 
act on $k(x_1,x_2,x_3,y_1,y_2,y_3)$ by $k$-automorphisms 
as the same in Theorem \ref{thCHKK3}. 
The purely monomial actions of $H$ 
on $k(x_1,x_2,x_3)$ and on $k(y_1,y_2,y_3)$ correspond to the same conjugacy class 
of the GAP ID $(3,3,1,4)$. 
Indeed, if we define 
$(X_1,X_2,X_3,Y_1,Y_2,Y_3):=(x_1/x_2,1/(x_1x_3),x_3,1/(y_1y_3),y_1/y_2,y_3)$, 
then 
$k(x_1,x_2,x_3,y_1,y_2,y_3)=k(X_1,X_2,X_3,Y_1,Y_2,Y_3)$ and the action of 
$H$ on $k(X_1,X_2,X_3,Y_1,Y_2,Y_3)$ is given by 
\begin{align*}
\sigma_1 &: X_1\mapsto X_3,\ X_2\mapsto \frac{1}{X_1X_2X_3},\ X_3\mapsto X_1,\ 
Y_1\mapsto Y_3,\ Y_2\mapsto \frac{1}{Y_1Y_2Y_3},\ Y_3\mapsto Y_1,\\
\sigma_2 &: X_1\mapsto X_2,\ X_2\mapsto X_1,\ X_3\mapsto \frac{1}{X_1X_2X_3},\ 
Y_1\mapsto Y_2,\ Y_2\mapsto Y_1,\ Y_3\mapsto \frac{1}{Y_1Y_2Y_3}. 
\end{align*}
We define
\begin{align*}
s_1:=\frac{1+X_1X_3}{1-X_1X_3},\ s_2:=\frac{1+X_2X_3}{1-X_2X_3},\ s_3:=X_3,\ 
s_4:=\frac{1+Y_1Y_3}{1-Y_1Y_3},\ s_5:=\frac{1+Y_2Y_3}{1-Y_2Y_3},\ s_6:=Y_3.
\end{align*}
Then $k(X_1,X_2,X_3,Y_1,Y_2,Y_3)=k(s_1,\ldots,s_6)$ 
and the actions of $\sigma_1$ and $\sigma_2$ on $k(s_1,\ldots,s_6)$ are given by 
\begin{align*}
\sigma_1 :&s_1\mapsto s_1, s_2\mapsto -s_2, s_3\mapsto \frac{s_1-1}{s_3(s_1+1)},
s_4\mapsto s_4, s_5\mapsto -s_5, s_6\mapsto \frac{s_4-1}{s_6(s_4+1)},\\
\sigma_2 :&s_1\mapsto -s_1, s_2\mapsto -s_2, 
s_3\mapsto \frac{s_3(s_1+1)(s_2+1)}{(s_1-1)(s_2-1)}, 
s_4\mapsto -s_4, s_5\mapsto -s_5, s_6\mapsto \frac{s_6(s_4+1)(s_5+1)}{(s_4-1)(s_5-1)}.
\end{align*}
We also define 
\begin{align*}
t_1:=s_1s_2,\ t_2:=\frac{(s_1s_2+1)s_3}{(s_1-1)(s_2-1)},\ 
t_3:=\frac{(s_4s_5+1)s_6}{(s_4-1)(s_5-1)},\  
t_4:=s_2^2,\ t_5:=\frac{s_1}{s_4},\ t_6:=\frac{s_2}{s_5}.
\end{align*}
Then we have $k(s_1,\ldots,s_6)=k(t_1,t_2,t_3,s_2,t_5,t_6)$ 
and $\sigma_2 : s_2\mapsto -s_2,$ $t_i\mapsto t_i$, $1\leq i\leq 6$.  
It follows that $k(x_1,x_2,x_3,y_1,y_2,y_3)^{\langle \sigma_2\rangle}$ $=$ $k(t_1,\ldots,t_6)$ and 
the action of $\sigma_1$ on $k(t_1,\ldots,t_6)$ is given by 
\begin{align*}
\sigma_1 :&\ t_1\mapsto -t_1,\ t_2\mapsto \frac{(t_1^2-1)t_4}{t_2(t_1^2-t_4)(t_4-1)},\ 
t_3\mapsto \frac{t_4(t_5^2t_6^2-t_1^2)}{t_3(t_6^2-t_4)(t_1^2-t_4t_5^2)},\ 
t_4\mapsto t_4,\ t_5\mapsto t_5,\ t_6\mapsto t_6.
\end{align*}
Define 
\begin{align*}
u_1:=\frac{t_1}{t_4},\ u_2:=\frac{t_1+1}{t_2},\ u_3:=\frac{t_5t_6+t_1}{t_3},\ 
u_4:=t_4,\ u_5:=t_5,\ u_6:=t_6.
\end{align*}
Then we get $k(t_1,\ldots,t_6)=k(u_1,\ldots,u_6)$ and the action of $\sigma_1$ on 
$k(u_1,\ldots,u_6)$ is given by 
\begin{align}
\sigma_1 :&\ u_1\mapsto -u_1,\ u_2\mapsto -\frac{(u_4-1)(u_1^2u_4-1)}{u_2},\ 
u_3\mapsto -\frac{(u_4-u_6^2)(u_1^2u_4-u_5^2)}{u_3},\label{equ}\\ 
&\ u_4\mapsto u_4,\ u_5\mapsto u_5,\ u_6\mapsto u_6.\nonumber
\end{align}
As a consequence of Theorem \ref{th3}, we have:
\begin{proposition}\label{prop1}
Let $k$ be a field of ${\rm char}\, k\neq 2$ and 
$\langle\sigma_1,\sigma_2\rangle\simeq C_2\times C_2$ act on 
$k(X_1,X_2,X_3,Y_1,Y_2,Y_3)$ by $k$-automorphisms 
\begin{align*}
\sigma_1 &: X_1\mapsto X_3,\ X_2\mapsto \frac{1}{X_1X_2X_3},\ X_3\mapsto X_1,\ 
Y_1\mapsto Y_3,\ Y_2\mapsto \frac{1}{Y_1Y_2Y_3},\ Y_3\mapsto Y_1,\\
\sigma_2 &: X_1\mapsto X_2,\ X_2\mapsto X_1,\ X_3\mapsto \frac{1}{X_1X_2X_3},\ 
Y_1\mapsto Y_2,\ Y_2\mapsto Y_1,\ Y_3\mapsto \frac{1}{Y_1Y_2Y_3}. 
\end{align*}
There exist algebraically independent elements 
$u_4,u_5,u_6\in k(X_1,X_2,X_3,Y_1,Y_2,Y_3)^{\langle\sigma_1,\sigma_2\rangle}$
over $k$ and algebraically dependent elements 
$z_1,z_2,z_3,z_4\in k(X_1,X_2,X_3,Y_1,Y_2,Y_3)^{\langle\sigma_1,\sigma_2\rangle}$ 
over $k(u_4,u_5,u_6)$ which satisfy the following conditions:\\
{\rm (i)} 
$k(X_1,X_2,X_3,Y_1,Y_2,Y_3)^{\langle\sigma_1,\sigma_2\rangle}=k(z_1,z_2,z_3,z_4,u_4,u_5,u_6)$ 
and 
\begin{align*}
(z_1^2-a)(z_4^2-d)=(z_2^2-b)(z_3^2-c)
\end{align*}
where $a=u_4(u_4-1), b=u_4-1, c=u_4(u_4-u_6^2), d=u_5^2(u_4-u_6^2)$;\\
{\rm (ii)} 
$k(z_1,z_2,z_3,z_4,u_4,u_5,u_6)$ is 
not retract rational over $k(u_4,u_5,u_6)$. 
\end{proposition}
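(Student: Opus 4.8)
The plan is to recognize the displayed action (\ref{equ}) of $\sigma_1$ as an instance of the involution $\sigma_{a,b,c,d}$ of Theorem \ref{th3}, now taken over the base field $K_0:=k(u_4,u_5,u_6)$, and then to read off both assertions from Theorem \ref{th3}. By the chain of purely transcendental substitutions carried out above,
\[
k(X_1,X_2,X_3,Y_1,Y_2,Y_3)^{\langle\sigma_1,\sigma_2\rangle}
=\bigl(k(X_1,\dots,Y_3)^{\langle\sigma_2\rangle}\bigr)^{\langle\sigma_1\rangle}
=k(u_1,\dots,u_6)^{\langle\sigma_1\rangle},
\]
where $\sigma_1$ fixes $u_4,u_5,u_6$ and acts on $u_1,u_2,u_3$ by (\ref{equ}). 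First I would verify, by expanding the numerators in (\ref{equ}), the identities
\[
-\frac{(u_4-1)(u_1^2u_4-1)}{u_2}=\frac{-au_1^2+b}{u_2},\qquad
-\frac{(u_4-u_6^2)(u_1^2u_4-u_5^2)}{u_3}=\frac{-cu_1^2+d}{u_3}
\]
with $a=u_4(u_4-1)$, $b=u_4-1$, $c=u_4(u_4-u_6^2)$, $d=u_5^2(u_4-u_6^2)$. Setting $x=u_1$, $y=u_2$, $z=u_3$, this shows that $\sigma_1$ acting on $K_0(u_1,u_2,u_3)$ is exactly the $K_0$-involution $\sigma_{a,b,c,d}$ (note $a,b,c,d\in K_0^{\times}$ and ${\rm char}\,K_0={\rm char}\,k\neq 2$, so Theorem \ref{th3} applies verbatim over $K_0$).

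For part (i) I would then apply Theorem \ref{th3}(i) over $K_0$: the fixed field $K_0(u_1,u_2,u_3)^{\langle\sigma_{a,b,c,d}\rangle}$ equals $K_0(z_1,z_2,z_3,z_4)$, where the four generators are the explicit elements
\[
z_1=\frac{1}{2u_1}\Bigl(u_2-\frac{-au_1^2+b}{u_2}\Bigr),\quad
z_2=\frac12\Bigl(u_2+\frac{-au_1^2+b}{u_2}\Bigr),\quad
z_3=\frac{1}{2u_1}\Bigl(u_3-\frac{-cu_1^2+d}{u_3}\Bigr),\quad
z_4=\frac12\Bigl(u_3+\frac{-cu_1^2+d}{u_3}\Bigr),
\]
and they satisfy $(z_1^2-a)(z_4^2-d)=(z_2^2-b)(z_3^2-c)$. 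Combining this with the identification of the fixed field above yields $k(X_1,\dots,Y_3)^{\langle\sigma_1,\sigma_2\rangle}=k(z_1,z_2,z_3,z_4,u_4,u_5,u_6)$, which is (i); here $u_4,u_5,u_6$ are algebraically independent over $k$ and carry the single relation among $z_1,\dots,z_4$, as a transcendence-degree count (the fixed field has degree $6$, and seven generators with one relation give $3+3=6$) confirms.

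For part (ii) the aim is to determine which hypothesis of Theorem \ref{th3}(iii) holds over $K_0$, and here the main — though still elementary — work is a square-class computation in $K_0^{\times}/K_0^{\times 2}$. Since $K_0=k(u_4,u_5,u_6)$ with $u_4,u_5,u_6$ independent indeterminates, unique factorization in $k[u_4,u_5,u_6]$ shows that $u_4$, $u_4-1$, $u_4-u_6^2$ are irreducible and generate independent classes modulo squares; because $u_5\in K_0^{\times}$, one computes that $\sqrt{a},\sqrt{b},\sqrt{c},\sqrt{d}$ together generate $K_0(\sqrt{u_4},\sqrt{u_4-1},\sqrt{u_4-u_6^2})$, so $m:=[K_0(\sqrt a,\sqrt b,\sqrt c,\sqrt d):K_0]=8$. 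Finally one checks $abcd=u_4^2u_5^2(u_4-1)^2(u_4-u_6^2)^2=\bigl(u_4u_5(u_4-1)(u_4-u_6^2)\bigr)^2\in K_0^{\times 2}$, so condition (C21) of Theorem \ref{th3}(iii) is met; hence $k(z_1,z_2,z_3,z_4,u_4,u_5,u_6)$ is not retract rational over $K_0=k(u_4,u_5,u_6)$, giving (ii). The one delicate point is the irreducibility and mutual independence of the square classes of $u_4$, $u_4-1$, $u_4-u_6^2$ (which pins down $m=8$ and separates (C21) from the $m=16$ case); this is the crux, but it follows from plain unique factorization, and everything else reduces to formula matching and the product computation above.
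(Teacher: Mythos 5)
Your proposal is correct and follows exactly the route of the paper's own (very terse) proof: take $k(u_4,u_5,u_6)$ as the base field, recognize the action $(\ref{equ})$ of $\sigma_1$ on $u_1,u_2,u_3$ as the involution $\sigma_{a,b,c,d}$ of Theorem \ref{th3} with the stated $a,b,c,d$, deduce (i) from Theorem \ref{th3}(i), and deduce (ii) from Theorem \ref{th3}(iii), case (C21), after checking $abcd\in k(u_4,u_5,u_6)^{\times 2}$ and $m=8$. The only difference is that you spell out the square-class computation pinning down $m=8$, which the paper asserts without detail.
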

\begin{proof}
If we take a field $k(u_4,u_5,u_6)$ as the base field, 
the assertion (i) follows from (\ref{equ}) and Theorem \ref{th3} (i). 
For (ii), we see that $abcd\in k(u_4,u_5,u_6)^{\times 2}$ 
and $m=[k(u_4,u_5,u_6)(\sqrt{a},\sqrt{b},\sqrt{c},\sqrt{d}):k(u_4,u_5,u_6)]=8$. 
Thus (ii) follows from Theorem \ref{th3} (iii). 
\end{proof}
Furthermore, we define
\begin{align*}
v_1:=u_1,\ v_2:=\frac{u_2}{1-u_1^2u_4},\ v_3:=\frac{(u_1^2-1)u_3}{1-u_1^2u_4},\ 
v_4:=\frac{u_4-1}{1-u_1^2u_4},\ 
v_5:=u_5,\ v_6:=u_6. 
\end{align*}
Then $k(X_1,X_2,X_3,Y_1,Y_2,Y_3)^{\langle\sigma_2\rangle}
=k(u_1,\ldots,u_6)=k(v_1,\ldots,v_6)$ and 
\begin{align*}
\sigma_1 :&v_1\mapsto -v_1, v_2\mapsto \frac{v_4}{v_2}, 
v_3\mapsto -\frac{(v_1^2v_4v_5^2-v_1^2v_4-v_1^2+v_5^2)(v_1^2v_4v_6^2+v_6^2-v_4-1)}{v_3}, 
v_4\mapsto v_4, v_5\mapsto v_5, v_6\mapsto v_6.
\end{align*}
We also put 
\begin{align*}
w_1:=v_1,\ w_2:=v_3,\ w_3:=\frac{1}{2}\left(v_2+\frac{v_4}{v_2}\right),\ 
w_4:=\frac{1}{2v_1}\left(v_2-\frac{v_4}{v_2}\right),\ 
w_5:=v_5,\ w_6:=v_6. 
\end{align*}
Then $k(v_1,\ldots,v_6)=k(w_1,\ldots,w_6)$ and 
\begin{align*}
\sigma_1 :&w_1\mapsto -w_1, w_2\mapsto 
-\frac{\left(w_4^2(w_5^2-1)w_1^4+(w_3^2-w_3^2w_5^2+1)w_1^2-w_5^2\right)
\left(w_4^2w_6^2w_1^4-(w_4^2+w_3^2w_6^2)w_1^2+w_3^2-w_6^2+1\right)}{w_2},\\ 
&w_3\mapsto w_3, 
w_4\mapsto w_4, w_5\mapsto w_5, w_6\mapsto w_6.
\end{align*}
Define $t=w_1^2$. 
Then $k(X_1,X_2,X_3,Y_1,Y_2,Y_3)^{\langle\sigma_1,\sigma_2\rangle}
=k(u,v,t,w_3,w_4,w_5,w_6)$ with the relation
\begin{align*}
u^2-tv^2=-\left(w_4^2(w_5^2-1)t^2+(w_3^2-w_3^2w_5^2+1)t-w_5^2\right)
\left(w_4^2w_6^2t^2-(w_4^2+w_3^2w_6^2)t+w_3^2-w_6^2+1\right)
\end{align*}
(cf. \cite[Section 6]{HKK14}). 

We do not know whether 
$k(X_1,X_2,X_3,Y_1,Y_2,Y_3)^{\langle\sigma_1,\sigma_2\rangle}$ 
is $k$-rational (resp. 
stably $k$-rational, retract $k$-rational). 
We may also obtain the following description of 
$k(X_1,X_2,X_3,Y_1,Y_2,Y_3)^{\langle\sigma_1,\sigma_2\rangle}$ 
although the proof is omitted.

\begin{theorem}
The field 
$k(X_1,X_2,X_3,Y_1,Y_2,Y_3)^{\langle\sigma_1,\sigma_2\rangle}=k(m_0,\ldots,m_6)$ where
\begin{align*}
m_0^2=(4m_3+m_3m_4^2+m_4^2)(m_3-m_5^2+1)(m_1^2m_3+m_6^2-1)(4m_3+m_1^2m_2^2m_3+m_2^2m_6^2).
\end{align*}
\end{theorem}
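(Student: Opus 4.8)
The plan is to continue the explicit chain of birational substitutions begun in the paragraphs immediately preceding the statement, whose final output was the presentation
$$k(X_1,X_2,X_3,Y_1,Y_2,Y_3)^{\langle\sigma_1,\sigma_2\rangle}=k(u,v,t,w_3,w_4,w_5,w_6),\qquad u^2-tv^2=-P(t)Q(t),$$
where
$$P(t)=w_4^2(w_5^2-1)t^2+(w_3^2-w_3^2w_5^2+1)t-w_5^2,\quad Q(t)=w_4^2w_6^2t^2-(w_4^2+w_3^2w_6^2)t+w_3^2-w_6^2+1.$$
First I would dissolve the binary quadratic form on the left-hand side. Since the relation reads $u^2=tv^2-P(t)Q(t)$, one may take $v,t,w_3,w_4,w_5,w_6$ as six algebraically independent generators and regard $u$ as a square root, so that
$$k(X_1,\ldots,Y_3)^{\langle\sigma_1,\sigma_2\rangle}=k(v,t,w_3,w_4,w_5,w_6)\bigl(\sqrt{\delta}\,\bigr),\qquad \delta:=tv^2-P(t)Q(t),$$
a quadratic extension of a rational function field of transcendence degree $6$. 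The isomorphism class of such an extension depends only on the class of $\delta$ modulo squares in $k(v,t,w_3,w_4,w_5,w_6)^{\times}$, so it suffices to exhibit new rational generators realizing $\delta$, up to a square, as the displayed quartic product.

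The core step is then to produce an explicit birational substitution $(v,t,w_3,w_4,w_5,w_6)\mapsto(m_1,m_2,m_3,m_4,m_5,m_6)$, fixing $m_3=t$ (since $\delta$ is a quartic in $t$ and the target is a quartic in $m_3$, the natural choice is $m_3=t$ itself, possibly up to a M\"obius transformation), together with a scaling factor $\rho\in k(m_1,\ldots,m_6)^{\times}$ for which
$$\rho^2\,\delta=(4m_3+m_3m_4^2+m_4^2)(m_3-m_5^2+1)(m_1^2m_3+m_6^2-1)(4m_3+m_1^2m_2^2m_3+m_2^2m_6^2).$$
Concretely this requires the quartic $tv^2-P(t)Q(t)$ to split, after the substitution, into four factors linear in $t$ of the prescribed shapes: the factors $(4m_3+m_3m_4^2+m_4^2)(m_3-m_5^2+1)$ should absorb the contribution of $P$ (whose coefficients involve $w_4,w_5$) and the remaining pair the contribution of $Q$ (involving $w_4,w_6$), with the extra variable $v$ consumed so as to render the relevant discriminants perfect squares. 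Setting $m_0:=\rho\,u$ then yields $m_0^2=\rho^2\delta$ equal to the required product, and since the substitution is invertible over $k$ this gives $k(X_1,\ldots,Y_3)^{\langle\sigma_1,\sigma_2\rangle}=k(m_0,m_1,\ldots,m_6)$, as claimed.

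I expect the main obstacle to be precisely the discovery and verification of this substitution and of the scaling $\rho$: matching the four coefficient patterns of the linear factors against the quartic $tv^2-P(t)Q(t)$ is a delicate elimination, because the perturbing term $tv^2$ mixes the contributions of $P$ and $Q$ across the factorization rather than respecting it, so one must arrange the rational parametrization so that both quadratic discriminants become squares simultaneously. Once a candidate substitution is written down, however, the entire verification collapses to a single polynomial identity in $k[m_0,m_1,\ldots,m_6]$, which can be confirmed by direct expansion (or, in the spirit of the other computations of this paper, checked in a computer algebra system).
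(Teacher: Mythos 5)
The paper itself states that the proof of this theorem ``is omitted,'' so there is no argument in the text to compare yours against; your proposal has to be judged on its own. Your setup is correct and consistent with the paper: the displayed presentation $k(u,v,t,w_3,w_4,w_5,w_6)$ with $u^2-tv^2=-P(t)Q(t)$ is indeed the last stage reached in Section~\ref{seApp}, and the reduction you describe --- take $m_1,\dots,m_6$ to be a birational change of the six rational generators $v,t,w_3,\dots,w_6$, set $m_0=\rho u$, and check that $\rho^2\bigl(tv^2-P(t)Q(t)\bigr)$ becomes the stated quartic product --- is the natural (and essentially the only reasonable) strategy for a statement of this shape.

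The gap is that you never produce the substitution, and that substitution \emph{is} the theorem. Everything before it is bookkeeping: that the field is a quadratic extension of a rational function field, that the square class of $\delta=tv^2-P(t)Q(t)$ determines the presentation, and that a candidate identity can be verified by expansion, are all routine. What is not routine is the existence of a birational change of $(v,t,w_3,\dots,w_6)$ under which $\delta$ becomes, up to a square factor, a product of four polynomials each linear in $m_3$ of the prescribed shapes. As you yourself note, $-P(t)Q(t)$ only factors into two quadratics whose $t$-discriminants need not be squares, and the perturbation $tv^2$ destroys even that partial factorization; arranging for a simultaneous splitting by spending the one extra parameter $v$ is a nontrivial Diophantine/elimination problem over $k(w_3,\dots,w_6)$, and you give no evidence that it has a solution, let alone the solution matching the four specific coefficient patterns $(4m_3+m_3m_4^2+m_4^2)$, $(m_3-m_5^2+1)$, $(m_1^2m_3+m_6^2-1)$, $(4m_3+m_1^2m_2^2m_3+m_2^2m_6^2)$. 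There is also a tacit restriction you should at least flag: you assume $m_1,\dots,m_6$ can be chosen inside the rational subfield $k(v,t,w_3,\dots,w_6)$ and $m_0$ proportional to $u$; this is the natural ansatz but it is an extra hypothesis, not a consequence of the statement. As it stands the proposal is a correct proof \emph{plan} with its decisive step --- the explicit generators $m_0,\dots,m_6$ and the scaling $\rho$ --- left blank, so it does not establish the theorem.
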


\newpage

%\bigskip

%%%%%%%%%%%%%%%%%%%%%%%%%%%%%%%%%%%%%%%%%%%%%%%%%%%%%%%%%%%%%%%%%%%%%%%%%%%%%%%%%%%%%%
\section{Tables for the stably rational classification of algebraic $k$-tori of dimension $5$}\label{tables}
%%a birational classification 
%of %the 
%algebraic $k$-tori of dimension $5$}\label{tables}
%
%\input{table11}
\begin{center}
\vspace*{4mm}
Table $11$: $L(M)^G$ not stably but retract $k$-rational, 
$M=M_1\oplus M_2$,\\ rank $M_1=4$, rank $M_2=1$, 
$M_i$ : indecomposable ($25$ cases)\vspace*{2mm}\\
{\tiny 
% [inline block 0: 67 envs, 95261 chars -> data_tex | \begin{tabular}{ll} CARAT & $G$ \\\hline...]

%}
\end{center}

%%%%%%%%%%%%%%%%%%%%%%%%%%%%%%%%%%%%%%%%%%%%%%%%%%%%%%%%%%%%%%%%%%%%%%%%%

\end{document}